\documentclass{amsart}

\usepackage{amsmath,amsthm,amscd,amsfonts,wasysym,amssymb,epic,eepic,bbm,tikz-cd,mathtools,multicol,enumitem,mathrsfs}
\usepackage[pagebackref,colorlinks=true,linkcolor=blue,citecolor=blue]{hyperref}
\usepackage[noabbrev]{cleveref}
\usepackage{MnSymbol}

\makeatletter
\newcounter{dummy}
\newcommand\myitem[1][]{\item[#1]\refstepcounter{dummy}\def\@currentlabel{#1}}
\makeatother

\allowdisplaybreaks

\newtheorem{thm}{Theorem}[section]
\newtheorem{cor}[thm]{Corollary}

\newtheorem{lem}[thm]{Lemma}
\newtheorem{prop}[thm]{Proposition}

\theoremstyle{definition}
\newtheorem{definition}[thm]{Definition}

\newtheoremstyle{boldremark}
  {6pt}   
  {6pt}   
  {\normalfont} 
  {}      
  {\bfseries}   
  {.}     
  { }     
  {}      

\theoremstyle{boldremark}
\newtheorem{rem}{Remark}[subsection]

\newcounter{remarkscounter}

\numberwithin{equation}{subsection}

\newcommand{\quash}[1]{}

\theoremstyle{definition}
\newtheorem{defn}[thm]{Definition}

\renewcommand{\bar}{\overline}

\def\D{{\mathcal D}}

\def\G{{\mathbb G}}

\def\g{{\mathfrak g}}

\def\Hom{{\mathrm{Hom}}}

\def\Aut{{\mathrm{Aut}\,}}
\def\End{{\mathrm{End}}}

\def\Id{{\mathrm{Id}}}
\def\gr{{\mathrm{gr}}}
\def\ker{{\mathrm{ker}\,}}

\def\coker{{\mathrm{coker}}}
\def\im{{\mathrm{im}\,}}
\def\ord{{\mathrm{ord}}}
\def\tr{{\mathrm{tr}}}

\def\Spec{{\mathrm{Spec}}}
\def\Sym{{\mathrm{Sym}}}

\def\dim{{\mathrm{dim}}}
\def\Ad{{\mathrm{Ad}}}
\def\ad{{\mathrm{ad}}}

\def\GL{{\mathrm{GL}}}

\def\SL{{\mathrm{SL}}}

\def\sl{{\mathfrak{sl}}}
\def\O{{\mathrm{O}}}
\def\SO{{\mathrm{SO}}}
\def\so{{\mathfrak{so}}}
\def\Sp{{\mathrm{Sp}}}

\def\u{{\mathfrak{u}}}

\def\Span{{\mathrm{Span}}}

\def\Mat{{\mathrm{Mat}}}

\def\R{{\mathbb{R}}}
\def\C{{\mathbb{C}}}

\def\A{{\mathbb{A}}}
\def\Z{{\mathbb{Z}}}
\def\N{{\mathbb{N}}}

\def\P{{\mathbb{P}}}

\def\Gr{{Gr}}

\def\Res{{\mathrm{Res}}}

\def\Ind{{\mathrm{Ind}}}

\def\res{{\mathrm{res}}}

\def\rk{{\mathrm{rk}}}

\def\op{{\mathrm{op}}}

\begin{document}

\title[The Geometric Schr\"odinger Model]{Geometrization of the Schr\"odinger Model for the Minimal Representation of an Even Orthogonal Group: The de Rham Setting}

\author{Aaron Slipper}
\address{Department of Mathematics\\
Duke University\\
Durham, NC 27708}
\email{aaron.slipper@duke.edu}

\begin{abstract}
We construct and compare three $\mathcal D$-module models for the minimal representation of the conformal group of an even-dimensional quadratic space. Let $V$ be a quadratic space over a field $\kappa$ of characteristic $0$, $C\subset V^*$ be the isotropic cone, $\Delta\in D_V$ be the associated Laplace--Beltrami operator, $G$ be the conformal group of $V$, and $D_C$ be the algebra of Grothendieck differential operators on $C$. We prove that the category of finitely generated $D_C$-modules is equivalent both to a Kazhdan--Laumon glued category attached to the smooth locus $C^o$ and to a category of ``harmonic'' twisted $\mathcal D$-modules on the projective conformal compactification $G/P \supset V$. The gluing is governed by the quadric Fourier transform, while the harmonic model is built from a distinguished $G$-equivariant sheaf $\mathcal H$ on $G/P$ extending the local quotient $D_V/D_V\Delta$. We prove a new geometric interpretation of higher symmetries of the Laplacian as global sections of $\mathcal H$, and use this connection to give a geometric proof of the theorem of Levasseur, Smith, and Stafford that $D_C$ is Noetherian despite the singularity of $C$. We also study, via a descent procedure we call ``$F$-moment descent,'' the algebraic geometry of the closure of the minimal nilpotent orbit of $G$, which is the quasiclassical analogue of the minimal representation. Finally, we analyze the filtered structure of $D_V/D_V\Delta$ as a right $D_C$-module, identifying its associated graded layers through a flat degeneration of an affine flag multicone whose special fiber is the Rees space of a natural ideal in $\kappa[\overline{\mathbb O}_{\min}]$.
\end{abstract}

\maketitle

\setcounter{tocdepth}{2}
\tableofcontents

\section{Introduction}

The primary object of this paper is to provide three different models for the categorification of the minimal representation of the even orthogonal group via $\mathcal{D}$-modules, and to prove their equivalence.\footnote{A followup paper will consider an $\ell$-adic geometrization of this representation, along the lines of the geometrization of the Weil representation due to P. Deligne, S. Gurevich, and R. Hadani \cite{DeligneKazhdanFourier, GurevichHadani2007}.} In particular, let $V$ be a vector space of dimension $n = 2k$ over a field $\kappa$ of characteristic 0, with nondegenerate quadratic form $Q$; let $C \subset V^*$ be the isotropic locus of $Q^*$, and $C^o$ its smooth locus. We will show the equivalence of three different categories:

\vspace{3mm}

i) the category of modules over the algebra $D_C$ of (global) Grothendieck differential operators on $C$;

\vspace{3mm}

ii) The Kazhdan-Laumon glued category of $\mathcal{D}$-modules on the smooth locus of the cone $C^o$ (see Section \ref{KazhLaumGlueSect}); and

\vspace{3mm}

iii) The category of ``harmonic" $\mathcal{D}$-modules on the conformal compactification of $V$.

\vspace{3mm}

We will also discuss the quasiclassical analogue of the Schr\"odinger Model. We note that because $C$ is singular, it is something of a miracle that $D_C$ is well-behaved; as a byproduct of the proofs of the above equivalences, this paper will also provide a new, geometric proof that $D_C$ is Noetherian.

Let $G$ be the conformal group of $V$; it is an even orthogonal group containing $\O(Q)$ as a Levi factor. Unlike $\O(Q)$, the group $G$ does \textit{not} act geometrically on $C$. However, it manifestly acts on the conformal compactification of $V$, and so on the category iii). As we will explain, this categorical action on the space of harmonic $\mathcal{D}$-modules can be viewed as a geometrization of the minimal representation of $G$. The corresponding $G$-action on $D_C$-modules then provides a geometrization of the ``Schr\"odinger model" of this minimal representation.

In the present introductory section, we will first provide some background on the Schr\"odinger model of the minimal representation of the even orthogonal group, stressing its similarity to the metaplectic Weil representation. We will then offer an overview of T. Kobayashi's $F$-method, which is the specific approach that we categorify. Finally, we provide a more detailed outline of the present paper.

\subsection{\texorpdfstring{The Minimal Representation of $\O(p+1,q+1)$ and the Metaplectic Weil Representation}{The Minimal Representation of O(p+1,q+1) and the Metaplectic Weil Representation}} 
Let $C$ denote the isotropic cone of a quadratic form of signature $(p,q)$ in $\R^n= \R^{p+q}$: 

\begin{align}\label{Cquadric}
Q(x_1, \ldots, x_n) :&=  x_1^2 + \cdots + x_p^2-x_{p+1}^2 -\cdots -x_n^2, \\
C :&= \{(x_1, \ldots, x_n) \in \R^n : Q(x) = 0\}.
\end{align}

\noindent Now and for the entirety of this paper, we will assume that $n$ is even and at least 4.

Consider the Hilbert space $L^2(C)$, taken with respect to a natural measure on $C$ coming from the embedding into $\R^n$. Clearly  $\text{O}(p,q)$ acts on $L^2(C)$, as $\text{O}(p,q)$ acts geometrically on $C$. It is somewhat more surprising that this representation extends to an action of $\text{O}(p+1, q+1)$ on $L^2(C)$, where $\O(p,q)$ lives in $\O(p+1, q+1)$ as a Levi factor of a maximal parabolic. This action does \textit{not} arise as an action on the underlying geometry of $C$. It turns out that $L^2(C)$ is a model -- the so-called Schr\"odinger Model -- for the minimal representation of $\text{O}(p+1, q+1)$ \cite{Kobayashi:Mano}. That is to say, $L^2(C)$ is the representation with minimal Gelfand-Kirillov dimension \cite{GanSavin}.

As pointed out by Kazhdan \cite{GurK:Cone}, the Schr\"odinger model of the minimal representation of $\O(p+1,q+1)$ has a striking resemblance to the metaplectic Weil representation. We now briefly recall the Weil representation. Let $\mathcal{V}$ be a symplectic vector space, with $\mathcal{L}$ a Lagrangian subspace. $\GL(\mathcal{L})$ manifestly acts on $L^2(\mathcal{L})$ (unitarized appropriately), since $\GL(\mathcal{L})$ acts on $\mathcal{L}$. This ``geometric" action extends to a representation of $\textrm{Mp}_{2n}(\R)$ on $L^2(\mathcal{L})$, where $\textrm{Mp}_{2n}(\R)$ is a double-cover of $\Sp_{2n}(\R)$  -- the Weil Representation. $\GL_n$ lives as a Levi factor of a Siegel parabolic of $\textrm{Mp}_{2n}$. 

It is standard to present the Weil representation's action on $L^2(\mathcal{L})$ via generators of $\textrm{Mp}_{2n}$. Under this presentation, the Levi $GL_n \subseteq \textrm{Mp}_{2n}$ acts via the geometric action on $\mathcal{L}$; the unipotent radical of the Siegel parabolic acts via pointwise multiplication by certain Gaussians. (These are often called ``modulations" \cite{getz2025modulationgroups}.) Finally, the Weyl element that conjugates the Siegel parabolic to its opposite acts via the standard linear Fourier transform on $L^2(\mathcal{L})$ \cite{Prasad:Weil}.

A parallel story exists for the minimal representation of $O(p+1,q+1)$. One has $\O(p,q) \subseteq P \subseteq \O(p+1,q+1)$, where $P$ is a maximal parabolic with Levi $\G_m \times \O(p,q)$. The unipotent radical of $P$ acts on functions in $L^2(C)$ via pointwise multiplication by unitary characters of $\R^n$ (``modulations"). And, finally, the analogous Weyl element in $\O(p+1,q+1)$, which conjugates $P$ to $P^{\op}$, corresponds to an involutive unitary operator on $L^2(C)$, which is the natural analogue of the Fourier transform for the quadric cone $C$. Explicitly, one may write this transform as a convolution by a kind of Bessel function \cite{Kobayashi:Mano}, just as the classical Fourier transform can be written as a convolution with an exponential.

The Schr\"odinger model of the minimal representation of the even orthogonal group has been extended to arbitrary local fields by N. Gurevich and D. Kazhdan \cite{GurK:Cone}. Moreover, the Bessel Fourier transform given by the action of the Weyl element is perhaps the simplest example of a ``nonlinear" Fourier transform of the kind expected (e.g., in Braverman-Kazhdan theory and relative Langlands duality \cite{BK-lifting, Ngo:Hankel, benzvi2024relativelanglandsduality}) to exist in great generality. Remarkably, J. Getz has provided a full Poisson Summation formula on quadric cones using this transform (with a complete geometric description of ``boundary terms" and no restriction on test functions) \cite{Getz:Quadric}. Such Poisson summation formulae are expected exist on function spaces associated to a large class of algebraic varieties (e.g., reductive monoids, spherical varieties \cite{SakellaridisSph, Ngo:Hankel, SakellaridisICM2022}) and, in the mold of Tate's thesis \cite{Tate1967FourierAnalysis}, are expected to yield many instances of Langlands functoriality. The analogue of the metaplectic/minimal representation is expected to arise in this more general context too: geometric symmetries of the variety, along with modulations, and these (conjectural) Fourier transforms give rise to so-called ``modulation groups" \cite{getz2025modulationgroups}.

The function space of a quadric cone is, after the case of vector spaces, one of the simplest and best-understood examples of the above paradigm. It is worth noting, however, that the quadric cone's Fourier transform is already a great deal more subtle than the (very classical) Fourier transform on vector spaces: the Bessel kernel is singular \cite{Kobayashi:Mano}, and there are nontrivial boundary terms in the Poisson Summation Formula \cite{Getz:Quadric}. In particular, these appear to be related to the (algebro-geometric) singularity of the quadric cone at the origin.

Finally, we note that in the \textit{global} setting (that is to say, on the moduli stack of principal $\SO_{2n}$-torsors over a global curve, which corresponds to the geometrization of the Adelic minimal representation), V. Lafforgue and S. Lysenko have constructed a (de-Rham) geometrization of the minimal representation \cite{LafforgueLysenko2013Geometrizing} in the spirit of the Geometric Langlands Program. (Lysenko constructed a global geometrization of the metaplectic Weil representation in \cite{Lysenko2006MetaplecticThetaSheaves}.) In this paper, we work in the more classical setting over a field $\kappa$ of characteristic 0; we do not consider stacky or derived phenomena. 

\subsection{\texorpdfstring{The $F$-Method}{The F Method}} T. Kobayashi introduced the idea of the $F$-method ($F$ for ``Fourier" \cite{KobayashiPevzner2016Fmethod}), which we will consider from a purely algebraic standpoint. Presently, we shall offer a brief review of the $F$-method (in the Archimedean setting) for quadric cones.

The basic idea is that while $G := \O(p+1, q+1)$ does not act regularly on $\R^n = \R^{p,q}$, it \textit{does} act rationally via conformal transformations of $\R^n$ with respect to the quadratic form $Q$.\footnote{Physicists will recognize this phenomenon: $O(2, 4)$ is the conformal group of Minkowski space-time $\R^{1,3}$.}  We note, in particular, that the Weyl element $w_0$ (which corresponds to the quadratic Fourier transform) acts on $\R^n$ via a conformal inversion: for $v \in \R^n$,
\begin{equation}\label{inversion}
w_0(v) = -\frac{v}{Q(v)}.
\end{equation}

Given a function on $C \subset \R^n$, we may view it as a distribution on $\R^n$ supported on $C$ and consider its (inverse) Fourier transform on all of $\R^n$. Because the Fourier transform of the resulting distribution on $\R^n$ is supported on $C$, the resulting function is harmonic, that is to say, annihilated by the Laplacian.\footnote{In Minkowski spacetime $\R^{1,3}$, functions/distributions whose Fourier transforms are supported on the light cone are called ``massless waves."} Thus the usual linear Fourier transform gives us an isomorphism of function spaces $L^2(C) \to \mathcal{C}_{\textrm{harmonic}}(\R^n)$.\footnote{We are intentionally not getting too bogged down in the details of the functional analysis here.} But, up to a conformal factor (which may be absorbed into the action of $G$), conformal transformations preserve harmonic functions.\footnote{Physicists again may recognize this phenomenon in the conformal invariance of electro-magnetism.}

More explicitly, a distribution on $\R^n$ which arises as a linear Fourier transform of a distribution supported on $C$ is annihilated by the Laplacian,
\begin{equation}\label{Laplace}
\Delta:=\frac{\partial^2}{\partial x_1 ^2}+ \cdots + \frac{\partial^2}{\partial x_p ^2}- \frac{\partial^2}{\partial x_{p+1} ^2} -\cdots-\frac{\partial^2}{\partial x_{n} ^2}.
\end{equation}

Let $\varphi \in \O(p+1, q+1)$, viewed as a conformal diffeomorphism of $\R^{p,q}$. There is a function 
\begin{equation}\label{OGConfFactr}
    \Omega_{\varphi} : \R^{p,q} \to \R,
\end{equation} 

\noindent the so-called conformal factor, defined by 
\begin{equation}
    \varphi^*g_0 = \Omega_\varphi^2g_0 
\end{equation}

\noindent where $g_0$ is the flat metric on $\R^n$ associated with the quadratic form $Q$. (Such a scalar function $\Omega_\varphi$ exists by the definition of conformality.) Then
\begin{equation}
\Delta(f) = 0 \implies \Delta \left( \Omega_\varphi(x)^{\frac{n-2}{2}} (\varphi^*f)(x)\right) = 0.
\end{equation}

\noindent Defining the action 
\begin{equation}\label{conformalaction}
\varphi:\{ x\mapsto f(x)\} \mapsto \{x \mapsto \Omega_\varphi(x)^{\frac{n-2}{2}} (\varphi^*f)(x)\},
\end{equation}

\noindent we see that $\O(p+1,q+1)$ acts\footnote{\label{Kelvinfootnote}These ``twisted" conformal transforms may be thought of as generalizations of the classical Kelvin transform \cite{AxlerBourdonRamey2001KelvinTransform, Thomson1845Kelvin}. In particular, our Weyl-Fourier element $w_0$ (see \ref{WeylFourier} below) corresponds to a standard inversion and so is precisely the classical Kelvin transform.} on the space of harmonic distributions
on $\R^n$, 
and so, 
by transfer of structure, on distributions on 
$C$ 
\cite{KobayashiOrsted2003MinimalOpqI}.
This is precisely the Schr\"odinger model of the minimal representation of the orthogonal group.

While conformal transformations come from differential geometry and are not necessarily algebraic, one may recast the above into a more algebro-geometric mold. In particular, these conformal transformations may be understood as coming from a bona fide action of $G$ on a flag space $G/P$, which compactifies $\R^{p,q}$.  Moreover, as we will see, the theory of $\mathcal{D}$-modules offers a purely algebro-geometric interpretation of the conformal factors; they may be interpreted as sections of a certain line bundle over $G/P$, while the harmonic functions themselves will be interpreted as sections of the solutions sheaf of a corresponding \textit{twisted} $\mathcal{D}$-module. 

Hence part of this paper may be viewed as offering a purely algebraic interpretation of the theory of conformal densities and the Kelvin Transform (see Footnote \ref{Kelvinfootnote}).

\subsection{Outline of This Paper} We now outline the structure and main results of this paper.

\vspace{3mm}

Section \ref{GeometryofConfPact} fixes the geometric notation used throughout the paper. 

\vspace{3mm}

Section \ref{orthgroupsubsect} introduces the split quadratic space $(V,Q)$, the orthogonal group $H=\O(V,Q)$, and the conformal group $G=\O(V^+,Q^+)$.\footnote{This is also known as the ``modulation group" of the cone under the standard embedding $C \subset V^*$ \cite{getz2025modulationgroups}.} We single out a parabolic subgroup $P\subset G$, the stabilizer of an isotropic line. 

\vspace{5mm}

Section \ref{FlagGeomSection} studies the partial flag variety $G/P$, also known as the projective conformal compactification of $V$.\footnote{Physicists will recognize this as Penrose's conformal compactification of spacetime \cite{penrose1963asymptotic, penrose1964conformal, penrose2011republication}.} Explicitly, it is the smooth quadric hypersurface $\{[a:v:b]:Q(v)+ab=0\}\subset\P^{n+1}$. We identify the vector space $V\cong U^{\op}P/P$ with the big Bruhat cell in $G/P$; the left action of the Weyl element $w_0$ on this chart is given by the rational conformal inversion $v\mapsto -v/Q(v)$. We will always let $C \subset V^*$ denote the isotropic cone in the \textit{dual} of $V$; this paper is primarily dedicated to studying the surprising geometric properties of this variety.
 
\vspace{3mm}

Section \ref{CotangentandDiffOpSect} studies the affinization of $T^*C^o$ and the algebra $D_C$ of Grothendieck differential operators on $C$.

\vspace{3mm}

In Section \ref{cotangentsubsect}, we prove that the locally closed embedding $C^o\subset V^*$ satisfies what we call $F$\textit{-moment descent}: after applying the quasiclassical Fourier transform, the moment map for the rational conformal action of $G$ on $T^*V$ descends from $C^o\times V$ to $T^*C^o$. This gives an explicit presentation of $\kappa[T^*C^o]$ by invariant functions and identifies the affinization of $T^*C^o$ with the closure of the minimal nilpotent orbit in $\g^*$.\footnote{It is known in folklore that the affinization of the cotangent bundle of the smooth locus of a quadric cone is the closure of a minimal nilpotent orbit of an orthogonal group; this result appeared explicitly in \cite{getz2025modulationgroups}.}

\vspace{3mm}

\noindent \textbf{Proposition~\ref{CotBundDescr}}. \textit{The affinization of $T^*C^o$ is isomorphic to $\overline{\mathbb O}_{\min}$, and the matrix coefficients of the descended $F$-moment map generate $\kappa[T^*C^o]$.}

\vspace{3mm}

\noindent We then discuss an algebro-geometric presentation of this orbit closure, first as the scheme of square-zero rank $\le 2$ orthogonal linear operators on $V^+$, and then, with the reduced scheme structure, as the subvariety of $\g$ cut out by the square-zero equations, the orthogonality equations, and the $4\times4$ Pfaffians.

\vspace{3mm}

Section \ref{DiffOpSect} outlines the quantization of $F$-moment descent. The final payoff from Section \ref{CotangentandDiffOpSect} is the construction of a map $\rho:\g\to D_C$, obtained by composing the linear Weyl-algebra Fourier involution with the infinitesimal conformal action of $G$ on $V$ and, after the appropriate normalization, descending to the cone. This is the algebraic avatar of Kobayashi's $F$-method. 

\vspace{3mm}

Section \ref{TwistedGActSection} shows how to properly twist the action of $G$ on $\kappa(V) = \kappa(G/P)$ so that the quantized $F$-moment map will ultimately descend to $C$. This provides the purely algebraic avatar of conformal transformations from differential geometry. Moreover, we explain how to interpret this twisted action as arising from the action of $G$ on the line bundle $\mathcal{L} \cong \mathcal{O}(1-k)$. In particular, the action of $G$ on local sections of $\mathcal{L}$, trivialized by a fixed local section of $\mathcal{L}$ over the big Bruhat cell $V \subset G/P$, gives precisely the extra conformal factors. The action of $G$ on $\kappa(V)$ can be transferred to a rational action of $G$ on local differential operators via conjugation. Section \ref{TwistedGActSection} concludes by calculating the infinitesimalization of this action explicitly. 

\vspace{3mm}

Section \ref{InteractionLaplaceSection} shows that, under the correctly twisted action from Section \ref{TwistedGActSection}, $G$ simply scales the Laplace-Beltrami operator $\Delta$ by a nonzero cocycle. Section \ref{InteractionLaplaceSection} also provides an explicit formula for the twisted action of $w_0$ on functions; in the local coordinates of $V$, this is known classically as the Kelvin transform $K$ \cite{AxlerBourdonRamey2001HarmonicFunctionTheory}. We use the transform $K$ throughout the rest of the paper. 

\vspace{3mm}

Section \ref{rhoambSect} calculates the composition of the infinitesimalized action of $G$ on $D_V$ with the linear Fourier transform $\tau$. This does \textit{not} yet lead to a map $\mathcal{U}(\g) \to D_C$.

\vspace{3mm}

Section \ref{AmbientCorrectionSection} corrects the map $\rho_{\textrm{amb}}$ appearing in Section \ref{rhoambSect}. This is the $\mathcal{D}$-module analogue of passing from distributions to functions. The result is a Lie algebra homomorphism $\rho:\g\to D_C$, which extends to a map $\mathcal{U}(\g)\to D_C$. This map was first discovered by Goncharov \cite{Goncharov1976Weil}; it was later proved to be surjective in \cite{LevasseurSmithStafford1989Joseph}. Later in the paper (Section \ref{H1VanishingNoetherianSection}) we provide an independent proof of surjectivity.

\vspace{3mm}

Finally, Section \ref{QuadricExplicitSect} offers a detailed study of the quadric Fourier transform $\mathcal F$, the automorphism of $D_C$ induced by the Weyl element $w_0$. We note that many of the calculations here appear, with a different normalization of the quadric $Q$, in \cite{Kobayashi:Mano}.

\vspace{3mm}

Section \ref{KazhLaumGlueSect} proves the Kazhdan--Laumon model of the category of $D_C$-modules. The quadric Fourier transform $\mathcal F$ supplies the gluing operator relating two copies of the category of $\mathcal D$-modules on $C^o$. This section is very much modeled on \cite{BezrukavnikovBravermanPositselskii2002Gluing}. 

\vspace{3mm}

Section \ref{TwoGradings} decomposes $\kappa[C]$ and $D_C$ as representations of $H = \O(Q)$ and the scaling action of $\G_m$. This is in direct analogy with the decomposition of $\mathcal{O}(G/U)$ and $D(G/U)$ via highest weights under the left action of $G$ and via characters under the right action of $T$.

\vspace{3mm}

Section \ref{ShapovalovDetSect} provides the main calculation behind the proof of the gluing theorem: the quadric analogue of the Shapovalov determinant \cite{Shapovalov1974Structure}. For $d\geq 0$, let $P^d:=\kappa[C]_d$ be the space of degree-$d$ functions on the cone. The quadratic form gives a nondegenerate $H$-invariant pairing $\langle\ ,\ \rangle_d$ on $P^d$.

\vspace{3mm}

\noindent \textbf{Proposition~\ref{Shapovalovdet}}. \textit{Let $\{f_i\}$ be a basis of $P^d$, and let $\{g_i\}$ be the dual basis with respect to $\langle\ ,\ \rangle_d$. Then the lowering-then-raising quadric Shapovalov operator is $\sum_i f_i\mathcal F(g_i)=\prod_{j=1}^{d}(E-j+1)\prod_{j=1}^{d}(E+k-j-1)$, while the opposite, raising-then-lowering operator is $\sum_i\mathcal F(g_i)f_i=\prod_{j=1}^{d}(E+2k+j-3)\prod_{j=1}^{d}(E+k+j-1)$.}

\vspace{3mm}

\noindent We note that this explicit presentation of the quadric Shapovalov determinant is used again in the proof of Theorem~\ref{DeltaGradedIdealTheorem} (Section~\ref{FiltrationSubSect}) and in the proof of Theorem~\ref{SecondEquiv} (Section~\ref{ProofofEquivSubSect}). 

\vspace{3mm}

Section \ref{GluingProofSect} proves the gluing theorem:

\vspace{3mm}

\noindent \textbf{Theorem~\ref{thm:glued-equiv}}. \textit{The category obtained by the Kazhdan--Laumon gluing \cite{KazhdanLaumon1988Gluing, Polishchuk2001GluingBasicAffine} of two copies of the category of coherent $\mathcal D$-modules on $C^o$ along the quadric Fourier transform $\mathcal F$ is equivalent to the category of finitely generated modules over the algebra $D_C$ of Grothendieck differential operators on $C$.}

\vspace{3mm}

\noindent This is the quadric-cone analogue of the gluing theorem of Bezrukavnikov, Braverman, and Positselskii for basic affine space \cite{BezrukavnikovBravermanPositselskii2002Gluing}, and it is a remarkably direct consequence of the fact that the two Fourier-dual Shapovalov operators share no common roots.\footnote{We note that independently this result was recently generalized to all Braverman-Kazhdan spaces associated to a maximal parabolic by C.-H. Hsu \cite{HsuWeylAlgebrasBK}.}

\vspace{3mm}

Section \ref{DModHarmSect} introduces the category of ``harmonic" $\mathcal{D}$-modules on $G/P$, and its basic object, the harmonic sheaf $\mathcal{H}$. 

\vspace{3mm}

Section \ref{IntroHarmSheavesSection} introduces the category $\mathscr{H}$ in analogy with T. Kobayashi's $F$-method, and states the main equivalence between $\mathscr{H}$ and $D_C\textbf{-mod}^{\textrm{fg}}$. 

\vspace{3mm}

Section \ref{IntroHarmsheaf} constructs the harmonic ideal $\mathcal{J}_\Delta$ and the harmonic sheaf $\mathcal{H}:= \mathcal{D}_\mathcal{L}/\mathcal{J}_\Delta$ on $G/P$. The sheaves $\mathcal{J}_\Delta$ and $\mathcal{H}$ are global geometric replacements for the left harmonic ideal $D_V\Delta \subset D_V$ and the harmonic quotient $D_V/D_V\Delta$, respectively. 

\vspace{3mm}

We next turn to global sections of $\mathcal{H}$. Sections \ref{SymmetryImpliesGlobalSect} and \ref{GlobalImpliesSymmetrySect} prove the following theorem: 

\vspace{3mm}

\noindent \textbf{Theorem~\ref{GlobalSectionsTheorem}}. \textit{Restriction of $\Gamma(\mathcal{H})$ to the big Bruhat cell identifies $\Gamma(G/P,\mathcal H)$ with the normalizer quotient $N(D_V\Delta)/D_V\Delta$, and hence with $D_C$. Equivalently, global sections of $\mathcal H$ are the higher symmetries of the Laplacian.}

\vspace{3mm}

\noindent This gives a geometric interpretation of higher symmetries of the Laplacian in the sense of Eastwood and Michel \cite{Eastwood2005HigherSymmetries, Michel2014HigherSymmetries}. 

\vspace{3mm}

More specifically, Section \ref{SymmetryImpliesGlobalSect} proves that every symmetry $\xi \bmod D_V\Delta \in N(D_V\Delta)/D_V\Delta$ extends globally as a section of $\mathcal H$. The key local input is a principal-symbol argument: one first proves injectivity for the Hamiltonian derivation $\{Q^*,-\}$ on the localized quotient $\left(\kappa[V\times V^*]/(Q^*)\right)[1/Q]$, and then lifts this, using the order filtration, to the corresponding injectivity statement for $\ad_\Delta$ on the pole quotient $D_{V_Q}/(D_V+D_{V_Q}\Delta)$. The snake lemma then shows that the Kelvin conjugate $K\xi K$ has a regular representative on the standard chart; equivalently, that $\xi$ has a compatible representative on the opposite Bruhat chart. These representatives glue over $X_{\textrm{big}}=V\cup w_0V$, and a filtered Hartogs argument extends the resulting section uniquely across the codimension-$2$ complement $G/P\setminus X_{\textrm{big}}=\P(C_V)$.

\vspace{3mm}

Section \ref{GlobalImpliesSymmetrySect} proves the converse of Section \ref{SymmetryImpliesGlobalSect}: a global section of $\mathcal{H}$ is a symmetry of the Laplacian. This argument is quite delicate. The section begins by developing the Fischer decomposition for $\kappa[V]$, together with the corresponding right Fischer decomposition for differential operators. These tools give precise control over pole orders after applying the Kelvin transform. The main argument goes as follows: a global section $\xi$ is regular on both $V$ and $w_0V$; hence both $\xi$ and its Kelvin conjugate $K\xi K \in D_{V_{w_0}}/D_{V_{w_0}}\Delta$ naturally lie in $D_V/D_V\Delta \subset D_{V_Q}/D_{V_Q\Delta}$. Writing $\Delta\xi=\delta_\xi\Delta+R$ by the right Fischer decomposition, with $R\in\kappa[V]\otimes_\kappa Z_\partial$, it remains to prove that the Fischer remainder $R$ vanishes. Now, Kelvin-regularity, Fischer decomposition, and the identity $K\Delta K= Q^2\Delta$ imply that $R(Kh)$ lies in $K(Q^2\kappa[V])$ for every harmonic polynomial $h\in Z_\Delta$. The final step is a contrapositive pole-order argument. If $R\neq 0$, then we may choose a suitable nonzero null linear form $\ell$ (depending on $R$). Since $\ell$ is null, all powers $\ell^N$ are harmonic. For all sufficiently large $N$, we find that $R(K(\ell^N))$ has a pole too large to lie in $K(Q^2\kappa[V])$. Hence $R=0$, so $\Delta\xi=\delta_\xi\Delta$, and $\xi\bmod D_V\Delta$ is a symmetry of the Laplacian.

\vspace{3mm}

Section \ref{SingSupptRichardsonSection} describes the geometry of the moment map of $G/P$, the associated Richardson variety, and the singular support of harmonic sheaves. The main results can be summarized as follows:

\vspace{3mm}

\noindent \textbf{Proposition}. \textit{The moment map $\mu: T^*G/P \to \g^*$ is birational onto its image, the Richardson variety corresponding to the closure of the nilpotent orbit associated to the partition $(3, 1^{2k-1})$. The closure of the \textnormal{minimal} nilpotent orbit, $\overline{\mathbb{O}}_{\min}$, corresponding to the partition $(2, 1^{2k})$, is a proper closed subset of this Richardson variety, with preimage $G \times^P C \subset G \times^P \mathfrak{u} \cong T^*G/P$. The harmonic sheaf $\mathcal{H}$ has singular support exactly equal to $G \times^P C$, while a general object of $\mathscr{H}$ has singular support contained in a conical closed subvariety of $G \times^P C$.}

\vspace{3mm}

\noindent Section \ref{SingSupptRichardsonSection} also shows that the infinitesimal action map $\mathcal{U}(\g) \to \Gamma(\mathcal{D}_\mathcal{L})$ is surjective onto the global sections of $\mathcal{L}$-twisted differential operators.

\vspace{3mm}

Section \ref{H1VanishingNoetherianSection} proves the cohomological vanishing needed to lift global sections of $\mathcal H$ to $\Gamma(\mathcal{D}_\mathcal{L})$:

\vspace{3mm}

\noindent \textbf{Theorem~\ref{HOneJDeltaVanishingTheorem}}. \textit{$H^1(G/P,\mathcal J_\Delta)=0$.}

\vspace{3mm}

\noindent The proof is carried out by an explicit \v{C}ech-cocycle computation on the open set $V \cup w_0V$: we decompose a local section of the harmonic ideal on $V_Q = V \cap w_0V$ into a difference of local sections of the harmonic ideal over $V$ and over $w_0V$. This decomposition is algorithmic; it is accomplished by induction on the so-called ``positive excess" of an operator in $D_V[1/Q]\cdot\Delta$. We then use a depth/codimension-$2$ argument to extend the vanishing from this open set to all of $G/P$.

\vspace{3mm}

\noindent Together with the generation of global twisted differential operators by the infinitesimal $G$-action, this gives a geometric proof of the theorem of Levasseur, Smith, and Stafford on the Joseph ideal for even orthogonal groups \cite{LevasseurSmithStafford1989Joseph}:

\vspace{3mm}

\noindent \textbf{Theorem~\ref{rhoSurjective}}. \textit{The map $\rho:\mathcal U(\g)\to D_C$ is surjective. In particular, $D_C$ is finitely generated and Noetherian.}

\vspace{3mm}

\noindent The Noetherianness of $D_C$ is notable because $C$ is singular; for contrast, the algebra of differential operators on the cubic cone is not finitely generated \cite{BernsteinGelfandGelfand1972CubicCone}.

\vspace{3mm}

Section \ref{CompatibilitiesSect} uses Theorem~\ref{GlobalSectionsTheorem} to construct a $(\mathcal{D}_{\mathcal{L}}, D_C)$-bimodule structure on $\mathcal{H}$. It also checks that the adjoint $G$-action on $\mathcal{U}(\g)$ descends under the map $\rho: \mathcal{U}(\g) \to D_C$ to the manifest geometric action of $G$ on $\Gamma(\mathcal{H})$, once we identify $\Gamma(\mathcal{H})$ and $D_C$ via Theorem~\ref{GlobalSectionsTheorem}. This gives a geometric interpretation for the mysterious $G$-action on $D_C$.

\vspace{3mm}

Section \ref{FiltrationSubSect} studies the filtered structure of the local harmonic quotient $M:=D_V/D_V\Delta$. There are two natural filtrations on $M$: the $\Delta$-filtration $F_i=\ker(\Delta^{i+1}:M\to M)$, and the Kelvin filtration, defined by measuring the pole order of a local section $m \in M = \mathcal{H}(V)$ along the boundary divisor $G/P \setminus V$. In degree $0$, these two filtrations agree: this is precisely Theorem~\ref{GlobalSectionsTheorem}. A class in $D_V/D_V\Delta$ extends to a global section of $\mathcal H$ exactly when it lies in $F_0$; equivalently, when it is a higher symmetry of the Laplacian. It is quite surprising that the filtrations agree in general:

\vspace{3mm}

\noindent \textbf{Theorem~\ref{FiltrationTheorem}}. \textit{For every $i\geq -1$, the $\Delta$-filtration and the Kelvin filtration on $D_V/D_V\Delta$ agree.}

\vspace{3mm}

\noindent The $\Delta$-filtration $F_i$ filters $M$ by right $D_C$-submodules. Thus the maps $\Delta^i:F_i/F_{i-1}\hookrightarrow F_0\cong D_C$ produce right ideals $I^{[i]}\subset D_C$. To compute them, we pass to the Bernstein associated graded. There $\gr_{\mathsf B}M$ is $R=\kappa[V\times C]$, and $\ad_\Delta$ becomes the locally nilpotent derivation $\delta=\{Q^*,-\}$, whose invariant ring is $S=R^\delta\cong\kappa[\overline{\mathbb O}_{\min}]$. The filtration $\mathscr F_iR=\ker(\delta^{i+1})$ of the commutative algebra $R$ has a geometric description in terms of the affine flag multicone $\widehat{\operatorname{OFl}}(1,2;V^+)$. The fiber $u_+=1$ is $V\times C$, while the special fiber $u_+=0$ is the affine Rees space of the ideal $\mathfrak a:=((\nu^\sharp)_1,\ldots,(\nu^\sharp)_{2k},\alpha)\subset S=\kappa[\overline{\mathbb O}_{\min}]$, where $\alpha=\langle\nu,v\rangle$. Under the identification $\overline{\mathbb O}_{\min}\cong\overline{T^*C^o}$, this is the reduced ideal of the fiber over $0\in C$; equivalently, the radical ideal generated by the quasiclassical Fourier-dual coordinate functions of the zero section.

\vspace{3mm}

\noindent \textbf{Theorem~\ref{MulticoneReesDeformationTheorem}}. \textit{The morphism $\widehat{\operatorname{OFl}}(1,2;V^+)\to\A^1$ defined by $u_+$ is flat. Its nonzero fibers are isomorphic to $V\times C$, and its special fiber is $\Spec\bigl(\bigoplus_{i\geq0}\mathfrak a^it^i\bigr)$.}

\vspace{3mm}

\noindent This deformation identifies the associated graded of the classical $\delta$-filtration with the Rees algebra of $\mathfrak a$. Quantizing this calculation gives the associated graded layers of the $\Delta$-filtration, and hence the ideals $I^{[i]}$.

\vspace{3mm}

\noindent \textbf{Theorem~\ref{DeltaGradedIdealTheorem}}. \textit{If $I^{[i]}$ is the image of $\Delta^i:F_i/F_{i-1}\to F_0\cong D_C$, then $\gr_{\mathsf B}I^{[i]}=\mathfrak a^i$, and $I^{[i]}=P^iD_C+c_i(E)D_C$, where $c_i(E)=\prod_{r=1}^{i}(E+k-r-1)$.}

\vspace{3mm}

Finally, Section \ref{ProofofEquivSubSect} proves the equivalence between $D_C$-modules and harmonic twisted $\mathcal D$-modules on $G/P$. The proof shows that the global sections functor $\Gamma$, while not conservative, is exact on the category of coherent $\mathcal{D}_\mathcal{L}$-modules. The argument is inspired by Bezrukavnikov's proof of Beilinson-Bernstein localization via differential operators on $G/U$ \cite{BezrukavnikovLocalizationNotes}. In particular, it makes critical use of the Shapovalov determinant formula in the spirit of \cite{BezrukavnikovBravermanPositselskii2002Gluing}. We note that while this argument uses ideas related to Beilinson--Bernstein localization, it is \textit{not} a direct application of Beilinson--Bernstein localization: the twist $\mathcal L\cong\mathcal O(1-k)$ lies outside the usual Beilinson--Bernstein range. 

\vspace{3mm}

\noindent \textbf{Theorem~\ref{SecondEquiv}}. \textit{Let $\mathscr H$ be the category of coherent $\mathcal D_{\mathcal L}$-modules admitting a finite $(\mathcal H,D_C)$-presentation. Then the harmonic transform $N\mapsto\mathcal H\otimes_{D_C}N$, resulting from the $(\mathcal{D}_{\mathcal{L}}, D_C)$-bimodule structure of $\mathcal{H}$, gives an equivalence between $D_C\textbf{-mod}^{\,\mathrm{fg}}$ and $\mathscr H$, with an inverse functor given by global sections.}

\vspace{3mm}

An appendix (Section \ref{examplesSect}) considers explicit examples of both the quadric Fourier transform and the harmonic transform of various $D_C$-modules, including the structure sheaf $\kappa[C]$, the apex Dirac mass, and a nonzero Dirac mass. A final section discusses the special case of $n = 6$ (and $k=3$), where the $\Z/2\Z$-symmetry of the Fourier transform extends to a full $S_3$-action on $D_C$. This is simultaneously an incarnation of the Gelfand-Graev action for $\SL_3/U$ and of the triality symmetry of $\so(8)$. Our quadric Fourier transform in this case corresponds to the longest element of the Weyl group in $S_3$.

\vspace{3mm}

In summary, this paper shows that the category of finitely generated $D_C$-modules, viewed as a de Rham categorification of the Schr\"odinger model $L^2(C)$, admits two complementary geometric descriptions. It is a Kazhdan--Laumon glued category, where the gluing is governed by the quadric Fourier transform, and it is also equivalent to a finite-presentation category of harmonic twisted $\mathcal D$-modules on the smooth projective quadric $G/P$. These two models explain, respectively, the Fourier transform and the conformal $G$-action on the Schr\"odinger model, neither of which comes from an ordinary geometric action of $G$ on the singular cone $C$.

\subsection{Acknowledgments} I would like to thank Jayce Getz and my advisor Ng\^o Bao Ch\^au for their generous mentorship, and Laurens Gunnarsen for his support, encouragement, and help throughout this project, especially with the physics-related ideas. This project was inspired in part by work with Marie-H\'el\`ene Tome under Duke's Research Scholars program. I would also like to thank Victor Ginzburg for first teaching me about the mathematics of $\mathcal{D}$-modules, and for several helpful comments on the manuscript; my co-advisor Kazuya Kato for his encouragement; Tom Gannon, Nikolay Grantcharov, Chun-Hsien Hsu, Takeshi Kobayashi, Zhilin Luo, Calder Morton-Ferguson, Semon Rezchikov, Minh-T\^am Trinh, Tong Zhou, and Jialiang Zou for helpful discussions related to this project. I would also like to thank Scott Larson for helping to correct an error in an earlier version of this work. This work was partly supported by Duke University’s Number Theory RTG under DMS-2231514.

I also acknowledge extensive use of ChatGPT (OpenAI) in the preparation of this manuscript.\footnote{Here we are following a developing convention for disclosing the use of AI tools in mathematical writing; compare the AI tool disclosure in \cite{TaoLocalBernstein}.} The tool was used as an interactive research assistant for exposition, LaTeX editing, notation checks, bibliography formatting, and for designing, running, and interpreting symbolic checks of selected computational identities. It also helped identify places where earlier draft formulations of the paper, including the definition of the harmonic category and line-bundle conventions, needed to be made consistent with the final version. All mathematical arguments, final statements, citations, and uses of AI-generated suggestions were reviewed and edited by the author, who is solely responsible for the content of the paper.

\section{Geometry of the Conformal Compactification}\label{GeometryofConfPact}

We begin by discussing in detail the relevant geometric structures.

\subsection{Presentation of the Orthogonal Group}\label{orthgroupsubsect} Throughout, we shall work over a field $\kappa$ of characteristic 0.\footnote{Some of the results here hold for odd characteristic as well. However, the ring of Grothendieck differential operators on affine space is quite a bit larger than the usual Weyl algebra in characteristic $p$; thus many of the results of Sections \ref{DiffOpSect}, \ref{KazhLaumGlueSect}, and \ref{DModHarmSect} would need to be altered to account for, e.g., the presence of divided powers. This would be an interesting direction for further investigation.} Let $V = V_1 \oplus V_2$ denote a quadratic vector space, where $V_1 \cong V_2 \cong \A^k$, and $V \cong \A^{2k} =\A^n$. Thus $n = 2k$ is even; we assume $k \ge 2$ and $n \ge 4$. 

For $(x_1, \ldots, x_k; y_1, \ldots, y_k) \in V$, we define the quadratic form
\begin{equation}\label{QDef}
    Q(x_1, \ldots, x_k; y_1, \ldots, y_k) := x_1y_k + \cdots + x_k y_1.
\end{equation}

\noindent Let $B(v,w) := Q(v+w) - Q(v) - Q(w)$ denote the associated bilinear form: 
\begin{equation}
B\left(\left(x_1, \ldots, x_k; y_1, \ldots, y_k\right),\left(u_1, \ldots, u_k; v_1, \ldots, v_k\right)\right) = x_1 v_k + \cdots + x_kv_1 + y_1u_k + \cdots + y_k u_1.
\end{equation}
Much of this paper is independent of the isomorphism class of $Q$ (so long as it is nondegenerate); we will try to point out where we assume that $Q$ is given explicitly by \ref{QDef} (and in particular, is in the trivial Grothendieck-Witt class). We let $H$ denote the orthogonal group of $V$ with respect to the quadratic form $Q$.

Throughout this paper, we will attempt to carefully distinguish between $V$ and $V^*$, though naturally both are identified by the bilinear form $B$. Under the isomorphism $V \simeq V^*$ induced by $B$, we will write $v^\flat \in V^*$ for the covector corresponding to $v \in V$, and $\nu^{\sharp} \in V$ for the vector corresponding to $\nu \in V^*$.

Next we let $V^{+}:=\{(x_0, x_1, \ldots, x_k; y_1, y_2, \ldots, y_{k+1})\}\cong \A^{n+2}$, under the quadratic form 
\begin{equation}
Q^+(x_0, x_1, \ldots, x_k; y_1, y_2, \ldots, y_k, y_{k+1}) = x_0y_{k+1} + x_1 y_k + \cdots + x_ky_1,
\end{equation}

\noindent and we let $G$ denote the corresponding orthogonal group. Inspired by physicists' notation, we will let 
\begin{equation}\label{eplusminus}
    e_+ := (1, 0, \ldots, 0, 0) \in V^+  \qquad e_- := (0, 0, \ldots, 0, 1) \in V^+.
\end{equation}
Importantly, although we will make an effort to present results which are agnostic with respect to the isomorphism class of $Q$, it is essential that $Q^+ = Q + e_+^\flat e_{-}^{\flat}$. In particular, $Q^+$ must be in the same Grothendieck-Witt class as $Q$.

We observe that $H \subset G$; explicitly, $H \times \G_m$ is a Levi factor of a parabolic $P$. In matrices, we let
\begin{equation}\label{defJ}
J\;:=\;J_k \;:=\;
\left(\begin{smallmatrix}
0 & 0 & \cdots & 0 & 1\\
0 & 0 & \cdots & 1 & 0\\
\vdots & \vdots & \ddots & \vdots & \vdots\\
0 & 1 & \cdots & 0 & 0\\
1 & 0 & \cdots & 0 & 0
\end{smallmatrix}\right) \in M_k; \,\,\,\,\,\,\, J_V \;:=\;
\left(\begin{smallmatrix}
0   & J_k\\
J_k & 0
\end{smallmatrix}\right) \in M_{n};
\end{equation}

\noindent with $H \;:=\; \{\, h\in \GL(V) \mid h^{\mathsf T} J_V h = J_V \,\}$. Next we let
\begin{equation}\label{defJ+}
J_+ \;:=\;
\left(\begin{smallmatrix}
0 & 0   & 1\\
0 & J_V & 0\\
1 & 0   & 0
\end{smallmatrix}\right) \in M_{n+2},
\end{equation}

\noindent with $
G \;:=\; \{\, g\in \GL_{2k+2} \mid g^{\mathsf T} J_+ g = J_+ \,\}$. We have:
\begin{equation}
\iota : H \hookrightarrow G,\qquad
h \longmapsto
\left(\begin{smallmatrix}
1 & 0 & 0\\
0 & h & 0\\
0 & 0 & 1
\end{smallmatrix}\right).
\end{equation}

\noindent Next we define $P$ and $P^{\op}$ and the Levi $L = \G_m\times H$.
\begin{equation}
P \;=\;
\left\{
\left(\begin{smallmatrix}
t & -\,t\,v^{\mathsf T}J_V & -\,t\,Q(v)\\
0 & h & h v\\
0 & 0 & t^{-1}
\end{smallmatrix}\right)
\ \middle|\ t\in\G_m,\ h\in H,\ v\in V
\right\},
\end{equation}
\begin{equation}
P^{\mathrm{op}} \;=\;
\left\{
\left(\begin{smallmatrix}
t & 0 & 0\\
h v & h & 0\\
-\,t^{-1}Q(v) & -\,t^{-1}v^{\mathsf T}J_V & t^{-1}
\end{smallmatrix}\right)
\ \middle|\ t\in\G_m,\ h\in H,\ v\in V
\right\};
\end{equation}

\noindent with 
\begin{equation}\label{unipotents}
U \;=\;
\left\{
u_v :=\left(\begin{smallmatrix}
1 & -\,v^{\mathsf T}J_V & -\,Q(v)\\
0 & I & v\\
0 & 0 & 1
\end{smallmatrix}\right)
\ \middle|\ v\in V
\right\},
\qquad
U^{\mathrm{op}} \;=\;
\left\{
u_v^{\op}:=\left(\begin{smallmatrix}
1 & 0 & 0\\
v & I & 0\\
-\,Q(v) & -\,v^{\mathsf T}J_V & 1
\end{smallmatrix}\right)
\ \middle|\ v\in V
\right\};
\end{equation}

\noindent and 
\begin{equation}\label{LLevi}
L \;=\;
\left\{
\left(\begin{smallmatrix}
t & 0 & 0\\
0 & h & 0\\
0 & 0 & t^{-1}
\end{smallmatrix}\right)
\ \middle|\ t\in\G_m,\ h\in H
\right\}.
\end{equation}

\noindent We repeat that we shall refer to the elements in \ref{unipotents} as $u_v$ and $u_v^{\op}$ respectively.

Almost everything we write below will hold for any quadratic form $Q$ with matrix $J_V$; importantly, however, $Q^+$ (resp. $J_+$) will always mean the direct sum of $(V,Q)$ with a hyperbolic space (resp., $J_+$ will always be defined as in \ref{defJ+}). We will try to make it clear when we actually utilize the assumption that $J_V$ is given by \ref{defJ}, such as in certain explicit coordinate-based calculations for differential operators.

\subsection{The Geometry of the Flag Variety}\label{FlagGeomSection} The flag variety $G/P$ is given by the quadric hypersurface in $\P^{n+1}$ corresponding to the null-cone of $Q^{+}$ in $V^+$. It is the moduli space of isotropic lines in $V^+$. In homogeneous coordinates:
\begin{equation}
G/P \cong \{[a: v: b] : Q(v) + ab = 0\}.
\end{equation}

\noindent We observe that $U^{\op} \cong V$ lives in $G/P$ via $U^{\op}P/P$; in coordinates, it is the dense affine open subset
\begin{equation}
\{[1: v: -Q(v)]: v \in V\}.
\end{equation}

\noindent This is the ``big Bruhat cell"; its complement consists of the union of the affine quadric ``cone at infinity"
\begin{equation}
C_{\infty}:=\{[0: v : 1] : Q(v) = 0\},
\end{equation}

\noindent along with the (projective) boundary of $C_{\infty}$,
\begin{equation}
\P(C_V) := \{[0: v : 0] : Q(v) = 0\}.
\end{equation}

We may transfer $Q$ to $U^{\op}$; $Q$ is the rational function on $G/P$ given by
\begin{equation}
Q ([a:v:b]) = Q(v)/a^2 = -b/a.
\end{equation}

\noindent On $V \cong U^{\op}$, this is simply $Q([1: v: -Q(v)]) = Q(v)$. We let 
\begin{equation}\label{defc*}
C_V := \{[1: v: 0]: Q(v) = 0\} \subset G/P.
\end{equation}

\noindent The bilinear form $B$ permits us to define a quadratic form $Q^*$ on $V^*$, and we will let 
\begin{equation}
C := C_{V^*}
\end{equation}

\noindent denote the isotropic cone of $Q^*$ in $V^*$.

The ``deep" boundary 
\begin{equation}\label{deepboundary}
    \P(C_V)
\end{equation} 

\noindent is simultaneously the boundary of the isotropic cone $C_V \subset V$ and of the infinite null-cone $C_{\infty}$; their closures intersect transversely in $\P(C_V)$.

\begin{rem}
    From now on, we will make the following identifications: 

    \begin{align}
        V \cong U^{\op} \cong U^{\op}&P/P \subset G/P;\\
        V^* &\cong U.
    \end{align}

\noindent We shall also view the cone $C$ and $C^o = C \setminus\{0\}$ as living in $V^* \cong U$, while we shall use $C_V$ to denote the locally closed subvariety $C_V \subset U^{\op}P/P \subset G/P$ of \ref{defc*}. Viewing ``the" cone $C$ as living in $U$ (and so $V^*$) rather than $U^{\op}$ is more mathematically parsimonious, and will clarify matters in later sections.
\end{rem}

In physics, $G/P$ is the Penrose conformal compactification \cite{penrose1963asymptotic, penrose1964conformal, penrose2011republication}, and the point 
\begin{equation}\label{SpiDef}
[0:0:1] \in C_\infty,
\end{equation} 

\noindent the vertex of $C_\infty$ (which is singular in $C_\infty$, though smooth in $G/P$), is often called ``spatial infinity" or ``Spi" \cite{ashtekarhansen1978unifiedI}.

In $G$, there is an element
\begin{equation}\label{WeylFourier}
w_0 = \left(\begin{smallmatrix}
    0 & 0 & 1\\
    0 & \Id & 0\\
    1 & 0 & 0
\end{smallmatrix}\right),
\end{equation}

\noindent which is an involution and satisfies $w_0Pw_0^{-1} = P^{\op}$. Geometrically, the action of $w_0$ on $G/P$ is given by 
\begin{equation}
w_0: [a: v: b] \mapsto [b: v: a].
\end{equation}

\noindent Thus $w_0$ exchanges $C_V$ and $C_\infty$, fixing $\P(C_V)$; we also see from this that the rational action of $w_0$ on $V$ is indeed the conformal inversion of \eqref{inversion}.

The variety $C^{o}_V = C^{\textrm{sm}}_V = C_V \setminus \{0\}$ is an $H$-orbit of 
\begin{equation}\label{p0def}
v_0 := [1:1: 0 : \cdots :0: 0 : \cdots :0:0 : 0] \in V \subset G/P;
\end{equation}

\noindent its closure in $U^{\op}$ is the singular quadric cone $C_V$, and its closure in $G/P$ is $\overline{C}_V = C_V \cup \P(C_V)$. We note that this is in fact the intersection of the quadric hypersurface $G/P \subset \P^{n+1}$ with the hyperplane $\{[a: v: 0]\}\subset\P^{n+1}$. The $\G_m$ factor of the Levi $L$ acts on $C_V^o$ as well, via scaling. It is perhaps somewhat surprising that the entirety of $P$ acts on $\overline{C}_V$ with dense open orbit $\overline{C}_V\setminus \{[1:0:0]\}$; in particular, the unipotent radical $U$ acts via:
\begin{equation}
u_v=\left(\begin{smallmatrix}
1 & -\,v^{\mathsf T}J_V & -\,Q(v)\\
0 & I & v\\
0 & 0 & 1
\end{smallmatrix}\right) \;:\; [1: x: 0] \mapsto [1 - B(v, x): x : 0].
\end{equation}

\noindent The scaling $\G_m$ action fixes the origin, spatial infinity, and the deep boundary $\P(C_V)$.

\begin{rem}
As an algebraic group, $G$ is disconnected, with two connected components distinguished by the determinant; equivalently, its identity component is $\SO(Q^+)$. The Weyl element $w_0$ of \eqref{WeylFourier} lies in the non-identity connected component, since $\det(w_0)=-1$.

Nevertheless, the flag variety $G/P$ is connected. Indeed, $G/P$ is the smooth quadric hypersurface in $\P^{n+1}$ cut out by $Q^+$, and hence is irreducible. Consequently, if we let $V_{w_0}:=w_0V\subset G/P$, then $V$ and $V_{w_0}$ are both dense open subsets of the connected variety $G/P$, and so $V\cap V_{w_0}\neq\emptyset$.

In fact, one checks from the formula \eqref{inversion} that
\begin{equation}
V_Q := V\cap V_{w_0}=V\setminus \{Q=0\}.
\end{equation}

\noindent This observation will be used repeatedly below.
\end{rem}

\section{The Cotangent Bundle and Differential Operators}\label{CotangentandDiffOpSect}

\subsection{\texorpdfstring{The Cotangent Bundle of the Quadric Cone and $F$-Moment Descent}{The Cotangent Bundle of the Quadric Cone and F-Moment Descent}}\label{cotangentsubsect}

We now discuss the cotangent bundle of $C^o$. It is known in folklore that the cotangent bundle of a quadric cone is a minimal nilpotent orbit of an orthogonal group; an explicit presentation of this fact is found in \cite{getz2025modulationgroups}, Proposition 7.2. We will review this in detail because our treatment of it here will highlight a guiding principle which we shall call ``$F$-moment descent.''

Recall that, for $v\in V$, we write $v^\flat:=B(v,-)\in V^*$, and, for $\nu\in V^*$, we write $\nu^\sharp\in V$ for the unique vector satisfying $(\nu^\sharp)^\flat=\nu$. Thus $Q^*(\nu)=Q(\nu^\sharp)$. We observe that $C^o$ is smooth and is naturally a locally closed subvariety of $U\cong V^*$. We have a natural exact sequence:
\begin{equation}
\begin{tikzcd}
0 \arrow[r] & TC^o \arrow[r] & C^o\times V^* \arrow[r] & \mathcal{L} \arrow[r] & 0
\end{tikzcd}
\end{equation}

\noindent where $TC^o$ denotes the tangent bundle, $C^o\times V^*$ denotes the trivial vector bundle with fiber $V^*$ over $C^o$, and $\mathcal{L}$ denotes the rank-$1$ normal bundle. Dualizing this exact sequence gives
\begin{equation}\label{cotexactseq}
\begin{tikzcd}
0 \arrow[r] & \mathcal{L}^{\vee} \arrow[r] & C^o \times V \arrow[r, "\pi"] & T^*C^o \arrow[r] & 0
\end{tikzcd}
\end{equation}

\noindent where the fiber of $\mathcal{L}^{\vee}$ over $\nu\in C^o$ is the line $\kappa\nu^\sharp\subset V$. This presents $T^*C^o$ as a quotient and allows us to describe its global functions via pullback under $\pi$.

\begin{prop}\label{invfunctioniso}
The ring of functions $\kappa[T^*C^o]$ is the invariant subring of $\kappa[C^o\times V]$ under the $\G_a$-action
\begin{equation}\label{Gasymm}
\G_a\times(C^o\times V)\longrightarrow C^o\times V,
\qquad
(t,(\nu,v))\longmapsto(\nu,v+t\nu^\sharp).
\end{equation}
\end{prop}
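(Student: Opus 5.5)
Since \eqref{cotexactseq} is an exact sequence of vector bundles over $C^o$, the map $\pi : C^o\times V\to T^*C^o$ is a principal $\mathcal{L}^\vee$-bundle; we show $\pi^*$ identifies $\kappa[T^*C^o]$ with the invariant ring. The key observation is that the $\G_a$-action \eqref{Gasymm} is translation along the fibers of $\mathcal{L}^{\vee}$. The map $\nu\mapsto\nu^\sharp$ is a nowhere-vanishing section of $\mathcal{L}^\vee$ over $C^o$, since $\nu\neq 0$ and $\sharp$ is an isomorphism. So $\mathcal{L}^\vee\cong C^o\times\A^1$ is trivial, and the action of $t\in\G_a$ is translation by $t\cdot\nu^\sharp$ in the kernel of $\pi$. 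In particular $\pi$ is $\G_a$-invariant, so $\pi^*$ lands in $\kappa[C^o\times V]^{\G_a}$. It is injective because $\pi$ is surjective, hence dominant, and all varieties are reduced.

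For the reverse inclusion, work Zariski-locally on $C^o$. Over an affine open $W\subset C^o$ on which $T^*C^o$ is trivial, the surjection of vector bundles $W\times V\to T^*W$ splits, because a short exact sequence of vector bundles on an affine scheme splits. Such a splitting gives a $\G_a$-equivariant isomorphism
\[
W\times V\cong T^*W\times\A^1,
\]
with $\G_a$ acting by translation on the $\A^1$ factor; here we use that the kernel is trivialized by $\nu^\sharp$. Hence $\kappa[W\times V]=\kappa[T^*W][s]$, where $t$ acts by $s\mapsto s+t$. The invariants of this action are exactly $\kappa[T^*W]$: a polynomial $f(s)$ with $f(s+t)=f(s)$ for all $t$ must be constant, since we are in characteristic $0$ and $\kappa$ is infinite, so one may compare the top coefficient after a finite difference.

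Finally, glue. An invariant $f\in\kappa[C^o\times V]^{\G_a}$ restricts on each $\pi^{-1}(T^*W)=W\times V$ to a function $\pi^*g_W$ with $g_W\in\kappa[T^*W]$. On overlaps the $g_W$ agree by the injectivity of $\pi^*$ established above. So they glue to a global function $g$ on $T^*C^o$, where $T^*C^o$ denotes the total space, not its affinization, with $\pi^*g=f$. This gives $\kappa[T^*C^o]\cong\kappa[C^o\times V]^{\G_a}$.

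The only mildly delicate point is the local splitting, which turns $\pi$ into a trivial $\G_a$-torsor. An alternative is to note that $\pi$ is a torsor under the vector-bundle group $\mathcal{L}^\vee\cong\G_a\times C^o$, and that torsors under a vector group over an affine base are trivial, since $H^1(W,\mathcal{O})=0$. Either route works, and we use the splitting argument.
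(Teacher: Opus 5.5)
Your proof is correct and follows the same route as the paper: you pull back along $\pi$ for one inclusion, and descend invariant functions along $\pi$ for the other. The paper justifies the descent only by observing that the fibers of $\pi$ are exactly the $\G_a$-orbits, whereas your local splitting (which makes $\pi$ locally the trivial torsor $T^*W\times\A^1\to T^*W$), the computation $\kappa[T^*W][s]^{\G_a}=\kappa[T^*W]$, and the gluing step actually show that this set-theoretic fact yields regular functions, so your version is the more complete argument.
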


\begin{proof}
Any global function on $T^*C^o$ pulls back, by \eqref{cotexactseq}, to a function on $C^o\times V$ invariant under translation by the line subbundle $\mathcal{L}^{\vee}$. Conversely, the fibers of $\pi:C^o\times V\to T^*C^o$ are precisely the $\G_a$-orbits in \eqref{Gasymm}, so every invariant regular function descends to $T^*C^o$.
\end{proof}

Now, as we have discussed, $G$ acts rationally on $V$ via its natural action on $G/P$. However, the infinitesimal action of this rational action gives a \textit{regular} map:

\begin{align}\label{actiondiff}
\mathfrak{g}\times V &\longrightarrow V\\
(\xi,v) &\longmapsto X_\xi(v),\nonumber
\end{align}

\noindent where, identifying $T_vV\cong V$, we have $X_\xi(v)=\left.\frac{d}{dt}\right|_{t=0}\exp(t\xi)(v)$; more algebraically, $X_\xi$ comes from taking the left-invariant vector field on $G$ corresponding to $\xi\in\g$, pushing it forward along the map $d\pi:TG\to T(G/P)$, and then restricting to the big Bruhat cell. Notice that for any fixed $v\in V$, the map $\mathfrak{g}\times\{v\}\to V$ is linear; this map is \textit{not} linear in the $v$-factor.

For ease, we write \eqref{actiondiff} in coordinates. Firstly,
\begin{equation}
\mathfrak g
=
\mathfrak{o}(Q^+)
=
\Bigl\{
\xi\in\Mat_{2k+2}(\kappa)\ \bigm|\ \xi^{\mathsf T}J_+ + J_+\xi = 0
\Bigr\}.
\end{equation}

\noindent Thus, if
\begin{equation}\label{genericg}
\xi
:=
\left(\begin{smallmatrix}
\alpha & -\lambda^{\mathsf T}J_V & 0\\
\mu    & X                       & \lambda\\
0      & -\mu^{\mathsf T}J_V     & -\alpha
\end{smallmatrix}\right),
\qquad
\alpha\in\kappa;\ \mu,\lambda\in V;\ X\in\mathfrak{o}(Q),
\end{equation}

\noindent then
\begin{equation}\label{infactionformula}
X_\xi(v)
=
\mu+Xv-\alpha v+B(\lambda,v)v-Q(v)\lambda.
\end{equation}

We may also dualize \eqref{actiondiff} and \eqref{infactionformula}, giving the moment map
\begin{equation}\label{momentmap}
\Phi:V\times V^*\longrightarrow\g^*.
\end{equation}

\noindent It is defined by $\Phi(v,\nu)(\xi)=\langle\nu,X_\xi(v)\rangle$, or, in coordinates,
\begin{equation}\label{momentmapcoord}
\begin{aligned}
\Phi(v,\nu)(\alpha,X,\mu,\lambda)
={}&\langle\nu,\mu\rangle+\langle\nu,Xv\rangle-\alpha\langle\nu,v\rangle\\
&+B(\lambda,v)\langle\nu,v\rangle-Q(v)\langle\nu,\lambda\rangle.
\end{aligned}
\end{equation}

\noindent Equivalently, each pairing with $\nu$ may be written using $B$ and $\nu^\sharp$. We point out once more that, despite the symmetric appearance of \eqref{momentmapcoord}, the map $\Phi$ is only linear on the fibers $\{v\}\times V^*$; the maps $V\times\{\nu\}\to\mathfrak{g}^*$ are not usually linear.

We now introduce the critical geometric relationship between the cone $C$, the vector space $V$, and the group $G$.

\begin{definition}\label{momentmapdesc}
Consider a locally closed smooth embedding $C^o\subset V^*$, where $V$ is a vector space and $V^*$ its dual, and let $C$ denote the closure of $C^o$ in $V^*$. Suppose that we are given a rational\footnote{In particular, the action need neither be linear nor even regular.} action of a group $G$ on $V$, and consider the associated moment map \eqref{momentmap}. Define the composite
\begin{equation}
\Phi^F:C^o\times V\hookrightarrow V^*\times V\xrightarrow{\tau}V\times V^*\xrightarrow{\Phi}\mathfrak{g}^*,
\end{equation}
\noindent where $\tau(\nu,v)=(v,-\nu)$ is the quasiclassical Fourier transform. Suppose we have the following commutative diagram:
\begin{equation}\label{momentmapdescent}
\begin{tikzcd}
C^o \times V \arrow[d, "\pi"'] \arrow[r, "\Phi^F"] & \mathfrak{g}^* \\
T^*C^o \arrow[ru, "\overline{\mu}"']               &
\end{tikzcd}
\end{equation}

\noindent where $\pi$ is the map from \eqref{cotexactseq}. Then we say that $C$ satisfies $F$-\textbf{moment descent} with respect to the action of $G$.
\end{definition}

\begin{prop}\label{quadricmomentdescent}
Let $C^o$ be the smooth locus of the quadric cone of $Q^*$ in $V^*$, and let $G$ be the orthogonal group of $(V^+,Q^+)$. Consider the rational action of $G$ on $V$ given by identifying $V\cong U^{\op}$ with the big Bruhat cell of $G/P$ and restricting the regular action of $G$ on $G/P$ to $U^{\op}P/P$. Then $C$ satisfies $F$-moment descent.
\end{prop}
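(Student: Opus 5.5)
The plan is to verify directly that $\Phi^F$ is constant along the fibers of $\pi$, and then to descend it using Proposition~\ref{invfunctioniso}. Since $\g^*$ is affine, a morphism $T^*C^o\to\g^*$ is the same as a linear map $\g\to\kappa[T^*C^o]$. Concretely, for each $\xi\in\g$ the matrix coefficient $(\nu,v)\mapsto\Phi^F(\nu,v)(\xi)$ is a regular function on $C^o\times V$. If it is invariant under the $\G_a$-action \eqref{Gasymm}, Proposition~\ref{invfunctioniso} identifies it with a unique pullback $\pi^*f_\xi$ for some $f_\xi\in\kappa[T^*C^o]$. The assignment $\xi\mapsto f_\xi$ is linear because $\pi^*$ is injective ($\pi$ is surjective). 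It therefore defines $\overline{\mu}:T^*C^o\to\g^*$ with $\overline{\mu}\circ\pi=\Phi^F$. So everything reduces to proving the invariance
\begin{equation*}
\Phi(v+t\nu^\sharp,-\nu)=\Phi(v,-\nu)\qquad\text{for all }\nu\in C^o,\ v\in V,\ t\in\kappa .
\end{equation*}

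For this I would use the explicit formula \eqref{momentmapcoord} with $\xi$ as in \eqref{genericg}, and write $w:=v+t\nu^\sharp$. The key inputs are two scalar identities. First, $\langle\nu,w\rangle=\langle\nu,v\rangle+tB(\nu^\sharp,\nu^\sharp)=\langle\nu,v\rangle+2tQ^*(\nu)=\langle\nu,v\rangle$, since $\nu\in C$. Second, $Q(w)=Q(v)+t\langle\nu,v\rangle+t^2Q(\nu^\sharp)=Q(v)+t\langle\nu,v\rangle$. Then I check the terms one at a time.
\begin{itemize}
\item The term $\langle\nu,\mu\rangle$ does not involve $v$.
\item The term $-\alpha\langle\nu,w\rangle$ is unchanged by the first identity.
\item The term $\langle\nu,Xw\rangle$ changes by $tB(\nu^\sharp,X\nu^\sharp)$, which vanishes because $X\in\mathfrak o(Q)$ is $B$-skew.
\item The last two terms of \eqref{momentmapcoord} must be handled together. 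Using $B(\lambda,w)=B(\lambda,v)+t\langle\nu,\lambda\rangle$ and the two identities, $B(\lambda,w)\langle\nu,w\rangle-Q(w)\langle\nu,\lambda\rangle$ equals the same expression at $v$ plus $t\langle\nu,\lambda\rangle\langle\nu,v\rangle-t\langle\nu,v\rangle\langle\nu,\lambda\rangle=0$.
\end{itemize}
The sign change $\nu\mapsto-\nu$ coming from $\tau$ is global, so it does not affect invariance.

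The only step that is not pure bookkeeping is the cancellation in the quadratic terms. This is exactly where the nonlinear (special conformal) part of the rational $G$-action enters, and both the isotropy of $\nu$ and the matching of the cross term $t\langle\nu,v\rangle$ in $Q(w)$ against $B(\lambda,w)$ are needed. I would therefore double-check it against the normalization $B(x,x)=2Q(x)$ and the coordinate formula \eqref{infactionformula}. Once this is done, the descent argument in the first paragraph finishes the proof.
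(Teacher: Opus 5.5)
Your proposal is correct and matches the paper's proof: reduce via Proposition~\ref{invfunctioniso} to $\G_a$-invariance of $\Phi^F$, remove the sign coming from $\tau$ using linearity in the covector, and check \eqref{momentmapcoord} term by term using isotropy of $\nu^\sharp$, $B$-skewness of $X$, and the cancellation of the $t$-terms between the two quadratic summands. Your extra paragraph explaining why it suffices to descend the matrix coefficients (a morphism to the affine space $\g^*$ is determined by its coordinate functions) just makes explicit a step the paper leaves implicit.
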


\begin{proof}
By Proposition~\ref{invfunctioniso}, it suffices to verify that the map $\Phi^F$ of Definition~\ref{momentmapdesc} is invariant under the $\G_a$-action \eqref{Gasymm}; that is, under $(\nu,v)\mapsto(\nu,v+t\nu^\sharp)$. Since $\tau(\nu,v)=(v,-\nu)$, we have $\Phi^F(\nu,v)=\Phi(v,-\nu)=-\Phi(v,\nu)$, because $\Phi$ is linear in its second variable. Thus it suffices to verify that \eqref{momentmapcoord} is invariant under
\begin{equation}\label{cotangentivarianceproperty}
(v,\nu)\longmapsto(v+t\nu^\sharp,\nu)=:(v',\nu)
\end{equation}
\noindent when $Q^*(\nu)=0$. Indeed,
\begin{align}
\Phi(v',\nu)-\Phi(v,\nu)
={}&B(\nu^\sharp,X(v'-v))-\alpha B(\nu^\sharp,v'-v)\nonumber\\
&+B(\lambda,v')B(\nu^\sharp,v')-B(\lambda,v)B(\nu^\sharp,v)\nonumber\\
&-(Q(v')-Q(v))B(\nu^\sharp,\lambda).
\end{align}

\noindent Since $v'-v=t\nu^\sharp$, the first term is $tB(\nu^\sharp,X\nu^\sharp)=0$ because $X$ is $B$-skew, while the second is $-t\alpha B(\nu^\sharp,\nu^\sharp)=0$ because $Q^*(\nu)=Q(\nu^\sharp)=0$. Moreover, $B(\nu^\sharp,v')=B(\nu^\sharp,v)$ and $Q(v')=Q(v)+tB(v,\nu^\sharp)$, so
\begin{align}
&B(\lambda,v')B(\nu^\sharp,v')-Q(v')B(\nu^\sharp,\lambda)\nonumber\\
&\qquad=\bigl(B(\lambda,v)+tB(\lambda,\nu^\sharp)\bigr)B(\nu^\sharp,v)
-\bigl(Q(v)+tB(v,\nu^\sharp)\bigr)B(\nu^\sharp,\lambda)\nonumber\\
&\qquad=B(\lambda,v)B(\nu^\sharp,v)-Q(v)B(\nu^\sharp,\lambda),
\end{align}
\noindent since the $t$-terms cancel by the symmetry of $B$. We conclude that $\Phi(v',\nu)=\Phi(v,\nu)$, and hence that $\Phi^F$ descends along \eqref{Gasymm}.
\end{proof}

In fact, we may use the map $\overline{\mu}:T^*C^o\to\mathfrak{g}^*$ to give a presentation of the invariant functions generating $\kappa[T^*C^o]$. Let us return once more to the moment map of the natural regular $G$-action on $G/P$. Recall that $G$ has a Hamiltonian action on
\begin{equation}
T^*(G/P)\cong G\times^P(\mathfrak{g}/\mathfrak{p})^*,
\end{equation}
\noindent with moment map $\mu:T^*(G/P)\to\g^*$ given by $\mu([g,\xi])=\Ad^*(g)(\xi)$. We identify $(\mathfrak{g}/\mathfrak{p})^*$ with $\mathfrak{p}^{\perp}\subset\g^*$, and $\mathfrak{p}^{\perp}$ with $\mathfrak{u}$, via the Killing form. Restricting to the big Bruhat cell $V\cong U^{\op}\cong U^{\op}P/P\subset G/P$, we obtain
\begin{equation}\label{unpackmoment}
\mu:V\times V^*\longrightarrow\mathfrak{g}^*,
\qquad
(v,\nu)\longmapsto u_v^{\op}n_\nu^+u_v^{{\op}^{-1}},
\end{equation}
\noindent where
\begin{equation}
n_\nu^+
:=
\left(\begin{smallmatrix}
0 & -(\nu^\sharp)^{\mathsf T}J_V & 0\\
0 & 0 & \nu^\sharp\\
0 & 0 & 0
\end{smallmatrix}\right),
\qquad
n_\nu^-
:=
\left(\begin{smallmatrix}
0 & 0 & 0\\
\nu^\sharp & 0 & 0\\
0 & -(\nu^\sharp)^{\mathsf T}J_V & 0
\end{smallmatrix}\right).
\end{equation}

\noindent Here $u_v^{\op}$ is defined in \eqref{unipotents}, and we identify $\mathfrak{g}^*$ with $\mathfrak{g}$ via the Killing form. Restricting \eqref{unpackmoment} to $\nu\in C^o$, so that $Q^*(\nu)=Q(\nu^\sharp)=0$, and expanding in coordinates, we obtain
\begin{equation}\label{invariantfunccot}
\left(\begin{smallmatrix}
\alpha & -(\nu^\sharp)^{\mathsf T}J_V & 0\\
\mu & v\wedge\nu^\sharp & \nu^\sharp\\
0 & -\mu^{\mathsf T}J_V & -\alpha
\end{smallmatrix}\right),
\end{equation}
\noindent where
\begin{equation}\label{dotprodinv}
\alpha:=\langle\nu,v\rangle=B(\nu^\sharp,v)=v^{\mathsf T}J_V\nu^\sharp,
\end{equation}
\begin{equation}\label{dualconeinv}
\mu:=\alpha v-Q(v)\nu^\sharp,
\end{equation}
\noindent and the middle block $v\wedge\nu^\sharp\in\mathfrak{o}(Q)$ is the rank-$2$ skew endomorphism
\begin{equation}\label{wedges}
(v\wedge\nu^\sharp)(z):=B(v,z)\nu^\sharp-B(\nu^\sharp,z)v.
\end{equation}
\noindent Equivalently,
\begin{equation}
v\wedge\nu^\sharp
=
\nu^\sharp v^{\mathsf T}J_V-v(\nu^\sharp)^{\mathsf T}J_V
=
\nu^\sharp v^\flat-v\nu.
\end{equation}

\noindent Here and below, juxtaposition of a vector and a covector denotes the corresponding rank-$1$ endomorphism.

It follows from Proposition~\ref{quadricmomentdescent} that each of \eqref{dotprodinv}, \eqref{dualconeinv}, and \eqref{wedges} is invariant under the $\G_a$-action \eqref{Gasymm}. This can also be verified directly. Thus they all descend to functions on $T^*C^o$.

\begin{prop}\label{CotBundDescr}
The affinization of $T^*C^o$ is isomorphic to the closure of the minimal nilpotent orbit $\overline{\mathbb{O}}_{\min}$ of $\mathfrak{g}^*$. The matrix coefficients of \eqref{invariantfunccot} generate the global regular functions on $T^*C^o$.
\end{prop}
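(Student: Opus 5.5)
The plan is to use Proposition~\ref{invfunctioniso}, which identifies $\kappa[T^*C^o]$ with the $\G_a$-invariants $\kappa[C^o\times V]^{\G_a}$. First we reduce to the whole cone. Since $C$ is normal of dimension $n-1\ge 3$, its singular point $0$ has codimension $\ge 2$, so $\kappa[C^o\times V]=\kappa[C\times V]$. The $\G_a$-action \eqref{Gasymm} is given by the same formula on $C\times V$, so
\[
\kappa[T^*C^o]=\kappa[C\times V]^{\G_a}=:S .
\]
By Proposition~\ref{quadricmomentdescent} and the explicit formula \eqref{invariantfunccot}, the matrix coefficients $\alpha$, $\mu$, $\nu^\sharp$ and $v\wedge\nu^\sharp$ all lie in $S$. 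They define a $G$-equivariant map $\overline\mu:T^*C^o\to\g^*\cong\g$.

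Next we identify the image of $\overline\mu$. The matrix \eqref{invariantfunccot} is $\Ad(u_v^{\op})n_\nu^+$, and $n_\nu^+$ for isotropic $\nu\ne0$ is a square-zero, rank-$2$ element of $\mathfrak o(Q^+)$ whose image is an isotropic plane. Such elements form the minimal nilpotent orbit. The image is $G$-stable and has dimension $2(n-1)=2n-2=\dim\mathbb O_{\min}$ for $\mathfrak{o}_{n+2}$. Hence $\overline\mu$ has dense image in $\overline{\mathbb O}_{\min}$ and is dominant onto it.

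Next we check that $\overline\mu$ is an open immersion onto its image. From $\overline\mu(\nu,v)$ we read off $\nu^\sharp$, hence the point $\nu\in C^o$. We also read off $v\wedge\nu^\sharp$, which determines $v$ modulo $\kappa\nu^\sharp$, i.e.\ the covector in the fiber of $T^*C^o$. So $\overline\mu$ is injective. Its image is the open subset where the $\nu^\sharp$-block is nonzero, and we expect the inverse there to be regular, since it is given by these explicit formulas. Therefore $\overline\mu$ is birational onto $\overline{\mathbb O}_{\min}$, inducing inclusions
\[
\kappa[\overline{\mathbb O}_{\min}]\hookrightarrow S\subset \kappa(\overline{\mathbb O}_{\min}).
\]

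The main obstacle is to show that $S$ is no bigger than $\kappa[\overline{\mathbb O}_{\min}]$, i.e.\ that $S$ is integral over it and that $\overline{\mathbb O}_{\min}$ is normal. Normality of $\overline{\mathbb O}_{\min}$ is classical (Kraft--Procesi, or Vinberg--Popov for minimal orbits), and we would cite it. The boundary of $\mathbb O_{\min}$ in its closure is just $\{0\}$, of codimension $2n-2\ge 2$. So, granting that the image of $\overline\mu$ is all of $\mathbb O_{\min}$, any $f\in S$ is regular on $\mathbb O_{\min}$ and hence, by normality and Hartogs, regular on $\overline{\mathbb O}_{\min}$. It remains to show the image is all of $\mathbb O_{\min}$. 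This follows by $G$-equivariance from $n_\nu^+$ lying in the image together with transitivity of $G$ on $\mathbb O_{\min}$. Combining these steps, $S=\kappa[\overline{\mathbb O}_{\min}]$, generated by the coordinate functions on $\g$, i.e.\ the matrix coefficients of \eqref{invariantfunccot}. This identifies the affinization of $T^*C^o$ with $\overline{\mathbb O}_{\min}$, as Proposition~\ref{CotBundDescr} asserts.
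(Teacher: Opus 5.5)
\textbf{There is a genuine gap in your last step.} $G$ does not act on $T^*C^o$, so $\overline\mu$ is not $G$-equivariant. Only the subgroup $P^{\op}$ acts compatibly: the Levi $L\cong\G_m\times H$, together with the translations $v\mapsto v+w$ coming from $U^{\op}$. Consequently you cannot conclude that the image of $\overline\mu$ is all of $\mathbb O_{\min}$, and in fact it is not. Every matrix \eqref{invariantfunccot} in the image has $\lambda$-block equal to $\nu^\sharp\neq 0$, as you yourself noted one paragraph earlier. On the other hand, $n^-_\nu=\Ad(w_0)n^+_\nu\in\mathbb O_{\min}$ has vanishing $\lambda$-block. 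In terms of $\P(\mathbb O_{\min})\cong\operatorname{OGr}(2,V^+)$, the image consists exactly of the isotropic planes not contained in $V\oplus\langle e_-\rangle$, and the complementary locus $Z$ is nonempty.

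Your Hartogs step therefore does not apply as written. It only uses that $\overline{\mathbb O}_{\min}\setminus\mathbb O_{\min}=\{0\}$ has large codimension. But an element of $S=\kappa[T^*C^o]$ is a priori regular only on the image of $\overline\mu$, and it could acquire poles along $Z$ if $Z$ were a divisor.

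\textbf{What is missing is the estimate $\operatorname{codim}Z\geq 2$, which is the heart of the paper's argument.} A generic isotropic plane in $V\oplus\langle e_-\rangle$ projects isomorphically onto an isotropic plane $L\subset V$. It is then determined by a linear map $L\to\langle e_-\rangle$. This gives $\dim Z=(4k-7)+2=4k-5$, against $\dim\P(\mathbb O_{\min})=4k-3$. With this codimension-$2$ statement in hand, normality of $\overline{\mathbb O}_{\min}$ and Hartogs give $S=\kappa[\overline{\mathbb O}_{\min}]$ exactly as you intended.

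A smaller point: you should prove, not just expect, that $\overline\mu$ is an open immersion onto its image. The paper does this by showing $\overline\mu$ is injective and \'etale. Alternatively, in characteristic $0$, injectivity together with normality of the orbit and Zariski's Main Theorem would suffice.
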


\begin{proof}
(Cf.\ \cite{getz2025modulationgroups}.) We first note that $\overline{\mu}:T^*C^o\to\mathfrak{g}^*$ has image contained in a nilpotent orbit: every point in the image is conjugate to some $n_\nu^+$ with $\nu\in C^o$. Moreover, $n_\nu^+$ is a square-zero matrix of rank $2$, so all Jordan blocks have size at most $2$, and there are exactly two Jordan blocks of size $2$. Hence every matrix in the image of $\overline{\mu}$ has Jordan type $(2,2,1^{n-2})$, corresponding to the minimal nilpotent orbit $\mathbb{O}_{\min}$ in $\mathfrak{g}^*$ \cite{Jantzen:Nilpotent}.

A straightforward calculation shows that $\overline{\mu}$ is injective. Indeed, a point of $T^*C^o$ may be represented by a pair $(\nu,[v])$, with $\nu\in C^o$ and $[v]\in V/\kappa\nu^\sharp$. From \eqref{invariantfunccot}, one recovers $\nu^\sharp$ from the rightmost column, hence also $\nu=(\nu^\sharp)^\flat$, and then recovers $[v]$ from the middle block $v\wedge\nu^\sharp$. In fact, if $(v-v')\wedge\nu^\sharp=0$, then $v-v'\in\kappa\nu^\sharp$, since for any $z\in(\nu^\sharp)^\perp$ one has
\begin{equation}
0=((v-v')\wedge\nu^\sharp)(z)=B(v-v',z)\nu^\sharp,
\end{equation}
\noindent so $B(v-v',z)=0$ for all $z\in(\nu^\sharp)^\perp$, whence $v-v'\in((\nu^\sharp)^\perp)^\perp=\kappa\nu^\sharp$.

We next claim that $\overline{\mu}$ is \'{e}tale. Since $\dim T^*C^o=2(2k-1)=4k-2=\dim\mathbb{O}_{\min}$, it suffices to show that the differential is injective. Let $F:C^o\times V\to\mathfrak{g}^*$ denote the lift of $\overline{\mu}$ defined by \eqref{invariantfunccot}, so that $F$ is invariant under the $\G_a$-action \eqref{Gasymm}. Writing $\alpha=\langle\nu,v\rangle$ and $\mu=\alpha v-Q(v)\nu^\sharp$, a tangent vector at $(\nu,v)\in C^o\times V$ is a pair $(\dot\nu,\dot v)$ with $\dot\nu\in T_\nu C^o$, equivalently $B(\nu^\sharp,\dot\nu^\sharp)=0$. Differentiating \eqref{invariantfunccot} gives
\begin{equation}
dF_{(\nu,v)}(\dot\nu,\dot v)
=
\left(\begin{smallmatrix}
\dot\alpha & -(\dot\nu^\sharp)^{\mathsf T}J_V & 0\\
\dot\mu & \dot v\wedge\nu^\sharp+v\wedge\dot\nu^\sharp & \dot\nu^\sharp\\
0 & -\dot\mu^{\mathsf T}J_V & -\dot\alpha
\end{smallmatrix}\right),
\end{equation}
\noindent where
\begin{equation}
\dot\alpha=B(\dot\nu^\sharp,v)+B(\nu^\sharp,\dot v),
\qquad
\dot\mu=\dot\alpha v+\alpha\dot v-B(v,\dot v)\nu^\sharp-Q(v)\dot\nu^\sharp.
\end{equation}

If $dF_{(\nu,v)}(\dot\nu,\dot v)=0$, then the rightmost column shows that $\dot\nu^\sharp=0$, and hence $\dot\nu=0$. The middle block then gives $\dot v\wedge\nu^\sharp=0$, so $\dot v\in\kappa\nu^\sharp$ by the same argument as above. Thus $(\dot\nu,\dot v)$ is tangent to the $\G_a$-orbit through $(\nu,v)$, and the induced differential
\begin{equation}
d\overline{\mu}_{(\nu,[v])}:T_{(\nu,[v])}(T^*C^o)\longrightarrow T_{\overline{\mu}(\nu,[v])}\mathbb{O}_{\min}
\end{equation}
\noindent is injective. Since the source and target have the same dimension, $\overline{\mu}$ is \'{e}tale. In particular, it identifies $T^*C^o$ with an open subset of $\mathbb{O}_{\min}$.

The variety $T^*C^o$ is Noetherian and regular, hence normal, while the minimal nilpotent orbit closure is affine and normal; see \cite{Jantzen:Nilpotent}, Section 8.6.

Finally, following \cite{Tome}, we examine the complement of the image of $\overline{\mu}$ in $\mathbb{O}_{\min}$ and show that it has codimension $2$. The image of $\overline{\mu}$ is conical, so it suffices to compute the codimension after projectivizing. The variety $\P(\mathbb{O}_{\min})$ is the space of isotropic $2$-planes in $V^+=\langle e_+\rangle\oplus V\oplus\langle e_-\rangle$. The image of $\overline{\mu}$ corresponds precisely to the isotropic $2$-planes not contained in $V\oplus\langle e_-\rangle$ (following the notation of \ref{eplusminus}).

Let $Z$ denote the complementary locus of isotropic $2$-planes contained in $V\oplus\langle e_-\rangle$. Since $Q^+(0,v,b)=Q(v)$, a generic such plane projects onto an isotropic $2$-plane $L\subset V$. To recover the original plane, it suffices to specify a map $\lambda\in\Hom(L,\langle e_-\rangle)$, giving the plane $\{(0,v,\lambda(v)e_-):v\in L\}$. Thus $\dim Z=[2(2k-2)-3]+2=4k-5$, where the bracketed term is the dimension of the isotropic Grassmannian of $2$-planes in $V$, while $\dim\P(\mathbb{O}_{\min})=2(2k)-3=4k-3$. We conclude that $Z$ has codimension $2$ in $\P(\mathbb{O}_{\min})$, and hence that the complement of $T^*C^o$ in $\overline{\mathbb{O}}_{\min}$ has codimension $2$.

We now know that $\overline{\mu}:T^*C^o\to\overline{\mathbb{O}}_{\min}$ identifies $T^*C^o$ with an open subset whose complement has codimension $2$. Hartogs's lemma therefore shows that $\overline{\mathbb{O}}_{\min}$ is the affinization of $T^*C^o$. Since the matrix entries of \eqref{invariantfunccot} generate the coordinate ring of $\mathfrak{g}^*$, their restrictions generate the coordinate ring of $\overline{\mathbb{O}}_{\min}$.
\end{proof}

\begin{prop}\label{CotBundlePresent}
Let $X\in\mathfrak{o}(Q)$, let $\mu\in V$ and $\nu\in V^*$, let $\alpha\in\kappa$, and form the block matrix
\begin{equation}
M
:=
\left(\begin{smallmatrix}
\alpha & -\nu & 0\\
\mu & X & \nu^\sharp\\
0 & -\mu^\flat & -\alpha
\end{smallmatrix}\right).
\end{equation}

\noindent Then $\kappa[T^*C^o]\cong\kappa[\overline{\mathbb{O}}_{\min}]$ is the coordinate ring of the reduced closed subvariety of such matrices satisfying
\begin{equation}
M^2=0
\qquad\text{and}\qquad
\rk(M)\leq 2.
\end{equation}

Equivalently, if $S:=\kappa[\alpha,\mu,\nu,X]$ and $\mathfrak{J}\subset S$ is the ideal generated by the entries of $M^2$ together with the $3\times3$ minors of $M$, then $\kappa[T^*C^o]\cong S/\sqrt{\mathfrak{J}}$.

The condition $M^2=0$ is equivalent to the relations
\begin{equation}\label{wmurelat}
Q^*(\nu)=0,
\qquad
Q(\mu)=0,
\qquad
\langle\nu,\mu\rangle=\alpha^2,
\end{equation}
\begin{equation}\label{XwandXmurelat}
X\nu^\sharp=\alpha\nu^\sharp,
\qquad
X\mu=-\alpha\mu,
\end{equation}
\noindent and
\begin{equation}\label{X^2Relat}
X^2=\nu^\sharp\mu^\flat+\mu\nu.
\end{equation}

\noindent Among the consequences of the condition $\rk(M)\leq2$ are
\begin{equation}\label{outerrankrelat}
\alpha X=\nu^\sharp\mu^\flat-\mu\nu
\end{equation}
\noindent and the ``Pl\"ucker relations''
\begin{equation}
X\wedge X=0.
\end{equation}

Explicitly, for the form $Q$ of \eqref{QDef}, write $\bar{i}:=2k+1-i$. Then the condition $X=(X_{ij})\in\mathfrak{o}(Q)$ is $X_{ij}=-X_{\bar{j}\bar{i}}$ for all $1\leq i,j\leq2k$, while the Pl\"ucker relations on $X$ are
\begin{equation}\label{plcukerrelat}
X_{i\bar{j}}X_{\ell\bar{m}}-X_{i\bar{\ell}}X_{j\bar{m}}+X_{i\bar{m}}X_{j\bar{\ell}}=0
\end{equation}
\noindent for all $1\leq i<j<\ell<m\leq2k$. These displayed equations are useful consequences of the rank condition, but the ideal $\mathfrak{J}$ retains the full collection of $3\times3$ minors of $M$.
\end{prop}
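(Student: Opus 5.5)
The plan has three parts: identify $\overline{\mathbb O}_{\min}\subset\g\cong\g^*$ with the reduced locus $\{M^2=0,\ \rk M\le 2\}$, pass to coordinate rings, and then expand $M^2=0$ blockwise.

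\emph{Identification of the locus.} Note first that a general element of $\g$, written as in \eqref{genericg}, is exactly a matrix of the displayed shape $M$. Here $\lambda=\nu^\sharp$, so $-\lambda^{\mathsf T}J_V=-\nu$, and the bottom row is $-\mu^\flat$. Hence the locus in question is a closed subvariety of $\g$. By Proposition~\ref{CotBundDescr}, $\kappa[T^*C^o]\cong\kappa[\overline{\mathbb O}_{\min}]$, so the first claim reduces to the set-theoretic equality
\[
\overline{\mathbb O}_{\min}=\{M\in\g:\ M^2=0,\ \rk M\le 2\}.
\]
For the inclusion $\subseteq$, every element of $\mathbb O_{\min}$ is conjugate to some $n_\nu^+$, which has square zero and rank $2$. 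Both conditions are closed, so they persist to the closure.

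For $\supseteq$, let $M\in\g$ with $M^2=0$ and $\rk M\le2$. Then $\operatorname{im}M\subset\ker M$. Since $M$ is $B^+$-skew, $\ker M=(\operatorname{im}M)^\perp$, so $\operatorname{im}M$ is an isotropic subspace of dimension at most $2$.
\begin{itemize}
\item The Jordan type of $M$ is $(2^r,1^{n+2-2r})$ with $r=\rk M\le 2$.
\item For orthogonal nilpotents, even parts must occur with even multiplicity. So $r=1$ is impossible, and $r\in\{0,2\}$.
\item If $r=2$, the type is $(2,2,1^{n-2})$, which is $\mathbb O_{\min}$. This uses the classification cited from \cite{Jantzen:Nilpotent} in the proof of Proposition~\ref{CotBundDescr}; since $G$ is the full orthogonal group, there is no splitting into two orbits.
\item If $r=0$, then $M=0$, which lies in the closure.
\end{itemize}
This part is soft and I do not expect trouble.

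\emph{Passage to coordinate rings.} The reduced closed subvariety has coordinate ring $S/I(Z)$, where $Z$ is the zero locus. By the Nullstellensatz, $I(Z)=\sqrt{\mathfrak J}$. Since $\kappa$ need not be algebraically closed, I would argue over $\bar\kappa$: the radical is compatible with flat base change in characteristic $0$, because $S/\sqrt{\mathfrak J}$ is geometrically reduced there. This gives the equivalent formulation $S/\sqrt{\mathfrak J}$.

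\emph{Expanding $M^2=0$.} I would expand $M^2$ block by block, using $X^{\mathsf T}J_V=-J_VX$ and $\nu\nu^\sharp=Q^*(\nu)$ up to the normalization $B(v,v)=2Q(v)$. That normalization should be tracked, and I would absorb the factor into the statement's conventions. The blocks give the following.
\begin{itemize}
\item The $(1,1)$ entry gives $\alpha^2-\langle\nu,\mu\rangle$.
\item The $(1,3)$ entry gives $\nu\nu^\sharp$, hence $Q^*(\nu)=0$.
\item The $(3,1)$ entry gives $Q(\mu)=0$.
\item The $(2,1)$ and $(1,2)$ blocks give $X\mu=-\alpha\mu$.
\item The $(2,3)$ block gives $X\nu^\sharp=\alpha\nu^\sharp$.
\item The middle block gives $X^2=\nu^\sharp\mu^\flat+\mu\nu$, after fixing signs.
\end{itemize}
Conversely, these relations give back every block, so the system is equivalent to $M^2=0$.

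\emph{Consequences of the rank condition.} For \eqref{outerrankrelat} and the Plücker relations, I would take suitable $3\times3$ minors involving the first row or column together with a middle $2\times2$ block. For example, the minor built from rows $\{1,i,j\}$ and columns $\{1,\bar i',\bar j'\}$ yields $\alpha X_{ij}-(\nu^\sharp\mu^\flat-\mu\nu)_{ij}$ up to multiples of other minors. The Plücker relations follow because the middle block of a rank $\le2$ matrix has rank $\le2$, so all of its $3\times3$ minors vanish. In the skew coordinates $X_{i\bar j}$ these are the quadrics \eqref{plcukerrelat}, which express $X\wedge X=0$ for a skew form of rank $\le2$.

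The main obstacle is this last sign and index bookkeeping, namely matching $\bar i$ indexing with the minors. I would first verify the identity on the generic point $\overline{\mu}(\nu,[v])$ from \eqref{invariantfunccot}, where $X=v\wedge\nu^\sharp$ and $\mu=\alpha v-Q(v)\nu^\sharp$. Since these points are Zariski dense and the relations are claimed to be consequences holding on the reduced variety, that check suffices for their membership in $\sqrt{\mathfrak J}$.
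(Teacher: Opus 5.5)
Your argument follows the paper's route: you use the classification of nilpotent orbits to identify $\overline{\mathbb O}_{\min}$ with the square-zero, rank-$\le 2$ locus, pass to $S/\sqrt{\mathfrak J}$ by the Nullstellensatz, and expand $M^2$ blockwise. Your parity argument excluding rank $1$, and your base-change remark, fill in points the paper's proof passes over. There is one small slip in the expansion. The $(1,2)$ block is $-\alpha\nu-\nu X$, which by $B$-skewness of $X$ is equivalent to $X\nu^\sharp=\alpha\nu^\sharp$, duplicating the $(2,3)$ block. It is the $(3,2)$ block $\alpha\mu^\flat-\mu^\flat X$ that duplicates $X\mu=-\alpha\mu$. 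The middle block is exactly $X^2-\nu^\sharp\mu^\flat-\mu\nu$, with no signs to fix.

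The real problem is your derivation of the rank consequences from minors. Every $3\times3$ minor of $M$ is a homogeneous cubic in $(\alpha,\mu,\nu,X)$, while \eqref{outerrankrelat} and \eqref{plcukerrelat} are quadrics. The degree-$2$ part of $\mathfrak J$ is just the span of the entries of $M^2$. So no minor yields these relations ``up to multiples of other minors''; they are not in $\mathfrak J$ itself, only in $\sqrt{\mathfrak J}$, which is the phenomenon of Remark~\ref{ThreeMinorIdealNonreduced}.

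Your fallback is valid: checking the identities at the dense set of points $\overline\mu(\nu,[v])$ gives membership in $\sqrt{\mathfrak J}$. But it only shows the relations hold where $M^2=0$ and $\rk M\le 2$ hold together, not that they follow from the rank condition alone, as stated. Either of two arguments gives the stronger claim.
\begin{itemize}
\item \emph{Pfaffians.} All these quadrics are $4\times4$ Pfaffians $p$ of the alternating matrix $A=J_+M$. The index sets $\{+,a,b,-\}$ give the entries of $J_V\bigl(\nu^\sharp\mu^\flat-\mu\nu-\alpha X\bigr)$, and sets of four middle indices give \eqref{plcukerrelat} up to sign and relabeling. Since $p^2$ is a principal $4\times4$ minor of $A$, Laplace expansion places $p^2$ in the ideal of $3\times3$ minors of $A$. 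That ideal equals the one for $M$, because $J_+$ is a permutation matrix.
\item \emph{Decomposable bivectors.} Write a rank-$\le2$ element of $\g$ as $x\wedge y$, with $x=x_+e_++x_V+x_-e_-$ and likewise for $y$. Then $\alpha=x_-y_+-x_+y_-$, $\mu=x_-y_V-y_-x_V$, $\nu^\sharp=x_+y_V-y_+x_V$ and $X=x_V\wedge y_V$. By bilinearity, $\nu^\sharp\mu^\flat-\mu\nu=\mu\wedge\nu^\sharp=\alpha X$.
\end{itemize}
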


\begin{proof}
By the classification of nilpotent orbits in the orthogonal Lie algebra (see \cite{Jantzen:Nilpotent}), the minimal nonzero nilpotent orbit of $G$ corresponds to the partition $(2,2,1^{n-2})$. Equivalently, its nonzero elements are precisely the nilpotent matrices whose Jordan blocks all have size at most $2$ and whose rank is $2$. Thus
\begin{equation}
\overline{\mathbb{O}}_{\min}
=
\{M\in\mathfrak{so}(V^+,Q^+):M^2=0,\ \rk(M)\leq2\}.
\end{equation}

\noindent Indeed, if $M^2=0$, then every Jordan block has size at most $2$; if, in addition, $\rk(M)=2$, then there are exactly two Jordan blocks of size $2$, so the Jordan type is $(2,2,1^{n-2})$. Conversely, an element of Jordan type $(2,2,1^{n-2})$ is square-zero and has rank $2$.

It remains to expand the conditions $M^2=0$ and $\rk(M)\leq2$ in block coordinates. Direct multiplication gives
\begin{equation}
M^2
=
\left(\begin{smallmatrix}
\alpha^2-\langle\nu,\mu\rangle & -\alpha\nu-\nu X & -\langle\nu,\nu^\sharp\rangle\\
\alpha\mu+X\mu & X^2-\nu^\sharp\mu^\flat-\mu\nu & X\nu^\sharp-\alpha\nu^\sharp\\
-B(\mu,\mu) & \alpha\mu^\flat-\mu^\flat X & \alpha^2-\langle\nu,\mu\rangle
\end{smallmatrix}\right).
\end{equation}

\noindent Since $\langle\nu,\nu^\sharp\rangle=B(\nu^\sharp,\nu^\sharp)=2Q^*(\nu)$ and $B(\mu,\mu)=2Q(\mu)$, the diagonal and corner entries give \eqref{wmurelat}; the $(2,3)$ and $(2,1)$ entries give \eqref{XwandXmurelat}; and the $(2,2)$ entry gives \eqref{X^2Relat}. Conversely, these relations force every block of $M^2$ to vanish, using the $B$-skewness of $X$ for the $(1,2)$ and $(3,2)$ blocks.

The rank condition is equivalent to the vanishing of all $3\times3$ minors of $M$. Among these, the minors involving the first and last columns yield \eqref{outerrankrelat}, while the minors supported on the middle block $X$ yield the Pl\"ucker relations \eqref{plcukerrelat}. In general, however, these two displayed families do not replace the remaining mixed $3\times3$ minors. Retaining the full collection of minors, the reduced square-zero, rank-$\leq2$ locus is exactly $\overline{\mathbb{O}}_{\min}$. Its coordinate ring is therefore $S/\sqrt{\mathfrak{J}}\cong\kappa[T^*C^o]$.
\end{proof}

The preceding proposition gives a set-theoretically transparent presentation, but leaves the reduced defining ideal hidden inside $\sqrt{\mathfrak{J}}$. Replacing the cubic determinantal equations by quadratic Pfaffian equations gives the prime defining ideal directly.

\begin{prop}\label{CotBundlePrimeIdeal}
Let $S$ and $M$ be as in Proposition~\ref{CotBundlePresent}, and set $A:=J_+M$. Let $\mathfrak{K}\subset S$ be the ideal generated by the entries of $M^2$ and the $4\times4$ Pfaffians of the alternating matrix $A$. Then $\mathfrak{K}$ is prime, and
\begin{equation}
\kappa[T^*C^o]
\cong
\kappa[\overline{\mathbb{O}}_{\min}]
\cong
S/\mathfrak{K}.
\end{equation}
\noindent Moreover,
\begin{equation}
\sqrt{\mathfrak{J}}=\mathfrak{K}.
\end{equation}
\end{prop}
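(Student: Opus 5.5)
The plan is to identify $S/\mathfrak{K}$ with the coordinate ring of the isotropic cone over the Grassmannian of isotropic $2$-planes, and to get primality from the known normality of $\overline{\mathbb O}_{\min}$. The basic observation is that $M\in\mathfrak{so}(V^+,Q^+)$ if and only if $A=J_+M$ is alternating. The $4\times4$ Pfaffians of $A$ generate the ideal of alternating matrices of rank $\le 2$; this is the classical fact that $4\times4$ Pfaffians cut out the affine cone over $\mathrm{Gr}(2,V^+)$ in its Plücker embedding. Hence that ideal is prime (the cone over a Grassmannian is normal and Cohen--Macaulay), with coordinate ring the Plücker algebra $\bigoplus_d \Gamma(\mathrm{Gr}(2,V^+),\mathcal O(d))$. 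Since $J_+$ is invertible, $\rk A=\rk M$, so $\mathfrak K$ vanishes set-theoretically exactly on the locus of Proposition~\ref{CotBundlePresent}. That locus is $\overline{\mathbb O}_{\min}$, and so $\sqrt{\mathfrak K}=\sqrt{\mathfrak J}$. It remains to prove that $\mathfrak K$ is already prime, i.e.\ radical.

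For primality I work in the Plücker ring $R:=S/\mathrm{Pf}_4(A)$. A rank-$\le2$ alternating $A$ is $a\wedge b$ for a $2$-plane $\langle a,b\rangle\subset V^+$. Unwinding definitions, $M^2=J_+^{-1}AJ_+^{-1}A$, and this vanishes iff the restriction of $B^+$ to $\langle a,b\rangle$ is zero, i.e.\ the plane is isotropic. So $S/\mathfrak K=R/I$, where $I$ is generated by the three quadratic forms $B^+(a,a)$, $B^+(a,b)$, $B^+(b,b)$, expressed as degree-$2$ elements of $R$. The claim is then that $I$ is the full homogeneous ideal of the isotropic Grassmannian $\mathrm{OGr}(2,V^+)\subset\mathrm{Gr}(2,V^+)$ in the Plücker embedding. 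Granting this, $R/I$ is the homogeneous coordinate ring of $\mathrm{OGr}(2,V^+)$, which is a domain. It coincides with $\kappa[\overline{\mathbb O}_{\min}]$ because $\P(\mathbb O_{\min})=\mathrm{OGr}(2,V^+)$ (as noted in the proof of Proposition~\ref{CotBundDescr}) and $\overline{\mathbb O}_{\min}$ is normal. Combining with Proposition~\ref{CotBundDescr} gives the displayed isomorphisms, and $\sqrt{\mathfrak J}=\sqrt{\mathfrak K}=\mathfrak K$.

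The main obstacle is showing that $I$ is the entire ideal of $\mathrm{OGr}(2,V^+)$. I would argue representation-theoretically, with $G$ acting. The degree-$d$ part of $R$ is the Schur module $\mathbb S_{(d,d)}(V^+)^*$. The degree-$2$ generators of $I$ span the $G$-submodule $\Sym^2$ of $B^+$-traces inside $\mathbb S_{(2,2)}$. The coordinate ring of $\overline{\mathbb O}_{\min}$ has degree-$d$ component the irreducible $G$-module $V(d\omega_2)$, the Cartan component, by Borel--Weil on $\mathrm{OGr}(2,V^+)$ together with projective normality. So I need the surjection $R_d/I_d\to V(d\omega_2)$ to be injective. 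Equivalently, every trace (non-harmonic) component of $\mathbb S_{(d,d)}$ must lie in $R_{d-2}\cdot I_2$. I would try to prove this by the standard harmonic decomposition of $GL$-Schur modules restricted to $O$ (Littlewood's branching), where the trace parts are generated by contraction with $B^+$. The explicit check is that multiplication by the three quadrics already produces all of them, comparing dimensions via the Weyl dimension formula.

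An alternative route for the same step is to use that $\overline{\mathbb O}_{\min}$ is Gorenstein of codimension matching a complete-intersection count in the cone over $\mathrm{Gr}(2,V^+)$. That cone has dimension $2(n+2)-3=4k+1$, while $\dim\overline{\mathbb O}_{\min}=4k-2$, so there is a drop of exactly $3$ for three equations. Thus $I$ is generated by a regular sequence in the Cohen--Macaulay ring $R$, so $R/I$ is Cohen--Macaulay. It is generically reduced, since the three quadrics have independent differentials at a smooth point of the isotropic locus, as the isotropy map $\Lambda^2\to\Sym^2$ of the tautological bundle is submersive. By Serre's criterion $R/I$ is then reduced, and being reduced and irreducible it is a domain. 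I would attempt this second route first since it avoids the character computation.
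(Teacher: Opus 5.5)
Your reduction $S/\mathfrak K=R/I$, with $R:=S/\mathrm{Pf}_4(A)$ the Pl\"ucker ring of $\Gr(2,V^+)$, is fine, but the complete-intersection argument you plan to run first does not work. The quantities $B^+(a,a)$, $B^+(a,b)$, $B^+(b,b)$ are not elements of $R$. They are not $\mathrm{SL}_2$-invariant functions of the pair $(a,b)$, so they are not functions of $a\wedge b$.

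The elements of $R$ that express isotropy are the entries of $AJ_+^{-1}A$. For $A=ab^{\mathsf T}-ba^{\mathsf T}$ one gets $AJ_+^{-1}A=B^+(a,b)(ab^{\mathsf T}+ba^{\mathsf T})-B^+(b,b)\,aa^{\mathsf T}-B^+(a,a)\,bb^{\mathsf T}$. As you note yourself in the first route, these entries span a copy of $\Sym^2V^+$ inside $R_2$, of dimension $\binom{2k+3}{2}$. Since $I$ is generated in degree $2$ and $I_1=0$, graded Nakayama shows that $I$, even after localizing at the vertex, needs $\binom{2k+3}{2}>3$ generators. So $I$ is not generated by a regular sequence of length $3$, and ``$R/I$ is Cohen--Macaulay'' does not follow.

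What your local argument does give is control away from the vertex. Equivalently, on $\Gr(2,V^+)$ the subvariety $\operatorname{OGr}(2,V^+)$ is the zero scheme of a regular section of the rank-$3$ bundle $\Sym^2\mathcal U^\vee$, where $\mathcal U$ is the tautological subbundle. Hence $\Spec R/I$ minus the vertex is a reduced, irreducible local complete intersection. It gives no control at the vertex: $R/I$ could still have an embedded component supported there, i.e.\ $I$ might not be saturated. Excluding that in every degree is the whole content of the proposition.

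The missing ingredient is Kostant's theorem, which is exactly what the paper uses via El Maazouz--Mandelshtam. The variety $\operatorname{OGr}(2,V^+)=\P(\mathbb O_{\min})$ is the projective highest-weight orbit in $\P(\bigwedge^2V^+)=\P(\g)$. Its homogeneous ideal is therefore generated by its degree-$2$ part, which is the complement of $V(2\omega_2)$ in $\Sym^2\bigl((\bigwedge^2V^+)^*\bigr)$. That complement is precisely the span of the $4\times4$ Pfaffians and the entries of $AJ_+^{-1}A$.

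Your first route isolates the right statement: every non-Cartan isotypic component of $R_d$ lies in $R_{d-2}I_2$, for all $d$. A Weyl-dimension count cannot settle this, because you do not know the dimension of the image of $R_{d-2}\otimes I_2\to R_d$. You would have to exhibit highest-weight vectors of every non-Cartan component inside that $G$-stable image, which amounts to reproving Kostant's theorem in this case.

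A geometric repair is also possible: use the Koszul complex of $\Sym^2\mathcal U$ on $\Gr(2,V^+)$, Bott vanishing for its twisted terms, and surjectivity of $\Sym^2V^{+*}\otimes R_{d-2}\to H^0(\Gr(2,V^+),\Sym^2\mathcal U^\vee(d-2))$. None of this is in the proposal. You should also explain how the conclusion passes to an arbitrary field $\kappa$ of characteristic $0$, as the paper does by descent from $\mathbb Q$.
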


\begin{proof}
Since $M\in\mathfrak{o}(Q^+)$, we have $M^{\mathsf T}J_++J_+M=0$, and hence $A^{\mathsf T}=-A$. Moreover, multiplication by $J_+$ does not change rank, so $\rk(A)=\rk(M)$. The $4\times4$ Pfaffians of $A$ are therefore the Pl\"ucker quadrics cutting out the rank-$\leq2$ locus of alternating matrices, namely the affine cone over $\Gr(2,V^+)$. We also have $AJ_+^{-1}A=J_+M^2$, so the entries of $M^2$ are equivalent to the quadratic equations asserting that the corresponding $2$-plane is isotropic.

It follows that the projectivization of the zero locus of $\mathfrak{K}$ is $\operatorname{OGr}(2,V^+)$. By \cite[Theorems~2.1 and~2.9]{ElMaazouzMandelshtamPositiveOGr}, the Pl\"ucker quadrics together with the quadratic isotropy equations generate its prime ideal.\footnote{The proof of \cite[Theorem~2.9]{ElMaazouzMandelshtamPositiveOGr} invokes Kostant's theorem that the homogeneous ideal of a projective highest-weight orbit is generated by its quadratic part; see \cite[Theorem~3.1]{BurstallKostantPlucker}. As explained in \cite[Remark~3.2]{BurstallKostantPlucker}, this theorem is due to Kostant and appeared, with attribution, in \cite{LancasterTowberFlagAlgebrasI}; its set-theoretic part was independently proved in \cite{LichtensteinHighestWeightQuadrics}. In the present case, $\operatorname{OGr}(2,V^+)\subset\P(\bigwedge^2V^+)$ is the projective highest-weight orbit, so it suffices to identify its full degree-$2$ ideal.} Since $AJ_+^{-1}A=J_+M^2$ and $\dim V^+=2k+2>4$, their result applies to $\mathfrak{K}$. See also \cite[Corollary~2.11 and Remark~2.12]{FriedmanRosanaSturmfelsDistance} for the equivalent square-zero matrix formulation.

The cited result is stated over $\C$, but the split orthogonal Grassmannian and all of the displayed equations are defined over $\mathbb Q$. Equality of the homogeneous ideals therefore descends by faithful flatness and then base-changes to $\kappa$. Since $\operatorname{OGr}(2,V^+)$ is geometrically integral for $\dim V^+>4$, the ideal $\mathfrak{K}$ is prime over $\kappa$.

The affine cone over $\operatorname{OGr}(2,V^+)$ is $\overline{\mathbb{O}}_{\min}$, so Proposition~\ref{CotBundDescr} gives $\kappa[T^*C^o]\cong S/\mathfrak{K}$. Finally, the $3\times3$ minors of $M$ and the $4\times4$ Pfaffians of $A$ both impose the condition $\rk(M)\leq2$ set-theoretically. Thus $\mathfrak{J}$ and $\mathfrak{K}$ have the same zero locus. Since $\mathfrak{K}$ is prime, it follows that $\sqrt{\mathfrak{J}}=\mathfrak{K}$.
\end{proof}

\begin{rem}\label{ThreeMinorIdealNonreduced}
The radical in Proposition~\ref{CotBundlePresent} cannot in general be omitted. Indeed, suppose that $k=3$, so that $\dim V^+=8$, and choose a hyperbolic basis in which
\begin{equation}
J_+
=
\left(\begin{smallmatrix}
0&I_4\\
I_4&0
\end{smallmatrix}\right).
\end{equation}
\noindent Set
\begin{equation}
\Theta
:=
\left(\begin{smallmatrix}
0&1&0&0\\
-1&0&0&0\\
0&0&0&1\\
0&0&-1&0
\end{smallmatrix}\right),
\qquad
M_0
:=
\left(\begin{smallmatrix}
0&\Theta\\
0&0
\end{smallmatrix}\right).
\end{equation}
\noindent Since $\Theta^{\mathsf T}=-\Theta$, one has $M_0\in\mathfrak{o}(Q^+)$. Moreover, $M_0^2=0$ and $\rk(M_0)=4$. If $A_0:=J_+M_0$, then the lower-right $4\times4$ block of $A_0$ is $\Theta$, whose Pfaffian is $1$.

Let $p$ be the corresponding $4\times4$ Pfaffian of $J_+M$. Since $\mathfrak{J}$ is homogeneous and its determinantal generators have degree $3$, the degree-$2$ part of $\mathfrak{J}$ is spanned by the entries of $M^2$. If $p\in\mathfrak{J}$, it would therefore vanish at every square-zero matrix. This is impossible, since $M_0^2=0$ while $p(M_0)=1$. Thus $p\notin\mathfrak{J}$.

On the other hand, every point of the zero locus of $\mathfrak{J}$ has rank at most $2$, so all $4\times4$ Pfaffians of $J_+M$ vanish there. It follows, after base change to an algebraic closure and faithful-flat descent, that $p\in\sqrt{\mathfrak{J}}$. Hence $\mathfrak{J}$ is not radical.
\end{rem}

\begin{rem}
We note that the action of $G$ on the ring of global functions on $T^*C^o$ may be thought of as the quasiclassical analogue of the minimal representation. Likewise, the minimal representation of $G(F)$ on $L^2(C(F))$ may be thought of as a geometric quantization of the minimal nilpotent orbit, whose affinization is $\overline{\mathbb{O}}_{\min}=\Spec\kappa[T^*C^o]$.

It is also interesting to note that two common sources of symplectic manifolds are cotangent bundles and coadjoint orbits; Proposition~\ref{CotBundDescr} shows that $\mathbb{O}_{\min}$ is, up to affinization, an example of both simultaneously.
\end{rem}

\subsection{\texorpdfstring{Quantization of $F$-moment Descent: Overview}{Quantization of F-moment Descent: Overview}}\label{DiffOpSect}

We remind the reader that we assume $\textrm{char}(\kappa)=0$; 
not much is lost by assuming $\kappa=\C$. We have just seen that $G$ acts on the ring of global regular functions $\kappa[T^*C^o]$. 
This picture admits a quantization in terms of differential operators on the cone, yielding an action of $G$ on $D_C$, the ring of Grothendieck differential operators on $C$. 
This construction originates in work of Goncharov \cite{Goncharov1976Weil, LevasseurSmithStafford1989Joseph, Kobayashi:Mano}; our goal in this section is to motivate it from the perspective of quantizing $F$-moment descent (cf.\ Definition~\ref{momentmapdesc}).

More precisely, the quantization of $F$-moment descent proceeds in three steps. First, we consider a twisted rational action of $G$ on the big Bruhat cell $V\cong U^{\op}$. This twisting will later be given a geometric interpretation in terms of the action of a line bundle on $G/P$. Differentiating this action yields a Lie algebra homomorphism
\begin{equation}\label{phideftwist}
    \phi:\mathfrak g\to D_V.
\end{equation}

\noindent These operators preserve the principal left ideal $D_V\Delta$ in $D_V$ under the adjoint action, where $\Delta$ is the Laplace-Beltrami operator associated with $Q$.

Second, we apply the linear Fourier transform to obtain an auxiliary ambient action,
\begin{equation}
    \rho_{\mathrm{amb}}:=\tau\circ\phi:\mathfrak g\to D_{V^*},
\end{equation}

\noindent where
\begin{equation}\label{linFourtrans}
    \tau:D_V\to D_{V^*},
\end{equation}

\noindent is the canonical linear Fourier transform, given by sending $\lambda\in V^*\subset \kappa[V]$ to the constant vector field $\partial_\lambda\in D_{V^*}$,\footnote{That is, for $f\in \kappa[V^*]$ and $\xi\in V^*$, one has
\begin{equation}
    (\partial_\lambda f)(\xi)=\lim_{t\to 0}\frac{f(\xi+t\lambda)-f(\xi)}{t}.
\end{equation}} and sending $\partial_v\in D_V$ to $-v\in V\cong (V^*)^*\subset \kappa[V^*]$.\footnote{This is slightly different from the Fourier transform appearing in \cite{Goncharov1976Weil}, which involves $i=\sqrt{-1}$.} The resulting ambient operators preserve the principal left ideal $D_{V^*}Q^*$ in $D_{V^*}$ under the adjoint action. However, they do not preserve the ideal $(Q^*)$ of the quadric cone $C\subset V^*$ in $\kappa[V^*]$.

Our third step is to correct this. We pass from the ambient Fourier-side action to the local cohomology module
\begin{equation}
    H^1_{(Q^*)}(\kappa[V^*])=\kappa[V^*][(Q^*)^{-1}]/\kappa[V^*].
\end{equation}

\noindent Writing
\begin{equation}
    \delta_C:=\bigl[(Q^*)^{-1}\bigr]\in H^1_{(Q^*)}(\kappa[V^*]),
\end{equation}

\noindent multiplication by $\delta_C$ defines an isomorphism
\begin{equation}
    T:\kappa[C]=\kappa[V^*]/(Q^*)\xrightarrow{\sim}\kappa[V^*]\delta_C,
    \qquad
    \overline{f}\longmapsto f\delta_C.
\end{equation}

\noindent The ambient operators preserve the subspace $\kappa[V^*]\delta_C$, and we define the actual cone operators by transport of structure across $T$. In this way, the desired action on $\kappa[C]$ is obtained not by restricting the ambient operators directly to the hypersurface $Q^*=0$, but rather by passing through the distribution $\delta_C=[(Q^*)^{-1}]$ and then transporting back to the cone.\footnote{This is akin to the $L^2$-setting of the $F$-method, where the linear Fourier transform of a distribution annihilated by the Laplace operator yields a distribution, rather than a function, supported on the cone.}

This action of $\mathfrak g$ on $\kappa[V^*]\delta_C$ yields a Lie algebra homomorphism
\begin{equation}
    \rho:\mathfrak g\to D_C,
\end{equation}

\noindent and hence, by universality, an algebra homomorphism
\begin{equation}
    \rho:\mathcal{U}(\mathfrak g)\to D_C.
\end{equation}

\noindent It will turn out that this map is surjective; this is a special case of the main result of \cite{LevasseurSmithStafford1989Joseph}.\footnote{The kernel of this map is the Joseph ideal.} We will offer an independent proof later in the exposition (Corollary~\ref{rhoSurjective}). Moreover, the adjoint action of $G$ on $\mathcal{U}(\mathfrak g)$ descends to an action on $D_C$; this is the quantization of the action of $G$ on $\kappa[T^*C^o]$.

We note that the action of $G$ on $D_C$ does not arise from a geometric action of $G$ on $C$, but rather from a Fourier-transformed ambient action together with a natural passage to the singular hypersurface.

\subsection{\texorpdfstring{The Map $\phi$ and the Twisted $G$-action on $\kappa(V)$}{The Map phi and the Twisted G-action on kappa(V)}}\label{TwistedGActSection}

The first step in quantizing $F$-moment descent is to compute the map $\phi:\mathfrak{g}\to D_V$ of \eqref{phideftwist}. Already here one encounters an important subtlety not present in the quasiclassical case. In particular, $\phi$ is \textit{not} obtained simply by mapping $\xi\in \mathfrak g$ to the corresponding infinitesimal vector field on $G/P$ and restricting to the big Bruhat cell. Rather, it is twisted so as to account for the conformal factors.

We define $\phi$ as follows. Let $y\in V\cong U^{\op}\subset G$; we stress that we are thinking of $y$ as a point of $G$. For generic $(g,y)$, the element $gy$ lies in the big cell $U^{\op}P\subset G$. Thus we may uniquely write
\begin{equation}\label{ptwistdef}
    gy = y' p(g,y),
\end{equation}
\noindent with $y'\in U^{\op}$ and $p(g,y)\in P$. In this way we obtain rational maps
\begin{equation}\label{rationalGacttype}
    G\times V\dashrightarrow V,
    \qquad
    (g,y)\mapsto y',
\end{equation}
\begin{equation}\label{pcocycledeftype}
    p:G\times V\dashrightarrow P, 
    \qquad
    (g,y)\mapsto p(g,y).
\end{equation}

\noindent The first of these corresponds to the rational action of $G$ on the big cell $V$: when the factorization \eqref{ptwistdef} is defined, we write $gy:=y'\in U^{\op}\cong V\subset G/P$.

In the notation of \eqref{unipotents}, we have
\begin{equation}
    g u_v^{\op}=u_{gv}^{\op}p(g,v).
\end{equation}
which, again, provides a definition for $gv \in V$ \eqref{rationalGacttype} and $p(g,v)\in P$ \eqref{pcocycledeftype}.\footnote{When we write $gv$ below, we shall be referring to the rational action \eqref{rationalGacttype}; if we write $gu$ or $gu_v^{\op}$, we will mean the product in $G$.} Moreover:  
\begin{equation}
    g'(g u_v^{\op})
    =
    g'(u_{gv}^{\op}p(g,v))
    =
    u_{g'gv}^{\op}p(g',gv)\,p(g,v),
\end{equation}
while
\begin{equation}
    (g'g)u_v^{\op}=u_{g'gv}^{\op}p(g'g,v).
\end{equation}
By uniqueness of the factorization in $U^{\op}P$, we deduce the cocycle relation
\begin{equation}\label{basiccocycle}
    p(g'g,v)=p(g',gv)\,p(g,v).
\end{equation}
Equivalently,
\begin{equation}\label{altcocycle}
    p(g'g,v)\,p(g,v)^{-1}=p(g',gv).
\end{equation}

Now let $\chi_0:P\to L\to \G_m$ be the character of $P$ obtained by pulling back the basic character $\chi$ of $L$:
\begin{equation}
    \chi:\G_m\times \O(Q)\cong L\to \G_m,
    \qquad
    (\alpha,h)\mapsto \alpha.
\end{equation}
Let
\begin{equation}
    \eta:=\chi_0^{-(k-1)}.
\end{equation}
We then define a rational action of $G$ on the function field $\kappa(V)$ by
\begin{equation}\label{twistedaction}
    (g f)(v)=\eta(p(g^{-1},v))\,f(g^{-1}v).
\end{equation}

\noindent With the convention \eqref{ptwistdef}, \eqref{twistedaction} defines a left action of $G$. Indeed,
\begin{align}
    (h(gf))(v)
    &=
    \eta(p(h^{-1},v))\,(gf)(h^{-1}v)
    \nonumber\\
    &=
    \eta(p(h^{-1},v))\,\eta(p(g^{-1},h^{-1}v))\,f(g^{-1}h^{-1}v)
    \nonumber\\
    &=
    \eta\bigl(p(h^{-1},v)\,p(g^{-1},h^{-1}v)\bigr)\,f(g^{-1}h^{-1}v)
    \nonumber\\
    &=
    \eta(p(g^{-1}h^{-1},v))\,f(g^{-1}h^{-1}v)
    \qquad\text{(by \eqref{basiccocycle})}
    \nonumber\\
    &=
    ((hg)f)(v).
\end{align}

This twisting by $\eta=\chi_0^{-(k-1)}$ has a natural geometric interpretation. Namely, let
\begin{equation}\label{twistbundledef}
    \mathcal{L}:=G\times^{P,-\eta}\A^1\cong \mathcal{O}(1-k)\cong K^{1/2}\otimes \mathcal{O}(1),
\end{equation}
\noindent where $\mathcal{O}(1)$ is the hyperplane bundle for the embedding $G/P\subset \P^{n+1}$, where $K^{1/2}\cong \mathcal{O}(-k)$ is the square root of the canonical bundle $K=\det(T^*G/P)\cong \mathcal{O}(-n)$, and where $G\times^{P,-\eta}\A^1$ denotes the quotient of $G\times \A^1$ by the relation
\begin{equation}
    (gp,x)\sim (g,\eta(p)^{-1}x).
\end{equation}

We now explain precisely how the action of $G$ on the equivariant bundle \eqref{twistbundledef} gives rise to \eqref{twistedaction}. Over the big Bruhat cell $V\cong U^{\op}\cong U^{\op}P/P$, we may choose a section of the $P$-bundle $G\to G/P$, namely $v\mapsto u_v^{\op}\in G$ in the notation of \eqref{unipotents}. Over $V$ we then obtain a trivializing section $\sigma_0:V\to \mathcal{L}$ given by
\begin{equation}\label{sigm0def}
    \sigma_0(v)=[u_v^{\op},1]\in G\times^{P,\eta^{-1}}\A^1.
\end{equation}
More generally, a local section of $\mathcal{L}$ over $V$ may be written as
\begin{equation}
    \sigma(v)=[u_v^{\op},f(v)]\in G\times^{P,\eta^{-1}}\A^1,
\end{equation}
\noindent for $v\in V$ and $f$ a rational function on $V$.

\begin{prop}\label{linetwistingProp}
    The action of $G$ on $\mathcal{L}$, expressed in the trivialization over the big Bruhat cell $V\cong U^{\op}P/P$ determined by the section $\sigma_0$, induces the action
    \[
        (g f)(v)=\eta(p(g^{-1},v))\,f(g^{-1}v).
    \]
\end{prop}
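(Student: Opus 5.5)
The plan is to unwind the definitions directly. The group $G$ acts on $\mathcal{L}=G\times^{P,\eta^{-1}}\A^1$ by left multiplication on the first factor, $g\cdot[h,x]=[gh,x]$, and therefore on sections by $(g\sigma)(v)=g\cdot\sigma(g^{-1}v)$. We write a section as $\sigma(v)=[u_v^{\op},f(v)]$ and identify it with the function $f$ via the trivialization $\sigma_0$. The task is then to compute the function attached to $g\sigma$.

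First, set $w:=g^{-1}v$, defined for generic $v$ by the factorization \eqref{ptwistdef}. Applying that factorization to $g^{-1}$ and $v$ gives
\[
g^{-1}u_v^{\op}=u_w^{\op}\,p(g^{-1},v).
\]
Second, we have $(g\sigma)(v)=[g\,u_w^{\op},f(w)]$. The key step is to rewrite $g\,u_w^{\op}$ as $u_v^{\op}$ times an element of $P$. Inverting the displayed identity gives
\[
g\,u_w^{\op}=u_v^{\op}\,p(g^{-1},v)^{-1}.
\]
Equivalently, one can use the cocycle relation \eqref{basiccocycle} with $g'=g$, which gives $p(g,g^{-1}v)=p(g^{-1},v)^{-1}$, since $p(e,v)=e$.

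Third, we apply the defining relation $(hp,x)\sim(h,\eta(p)^{-1}x)$ with $p=p(g^{-1},v)^{-1}$. This gives
\[
[u_v^{\op}\,p(g^{-1},v)^{-1},f(w)]=[u_v^{\op},\eta(p(g^{-1},v))\,f(g^{-1}v)],
\]
because $\eta(p^{-1})^{-1}=\eta(p)$. Reading off the second coordinate in the trivialization by $\sigma_0$ yields exactly $(gf)(v)=\eta(p(g^{-1},v))f(g^{-1}v)$, which agrees with \eqref{twistedaction}. The left-action property has already been verified, so nothing more is needed there.

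The main obstacle is bookkeeping of conventions rather than anything conceptual. The paper writes the bundle both as $G\times^{P,-\eta}$ and as $G\times^{P,\eta^{-1}}$, and the sign of the exponent in the equivalence relation determines whether one obtains $\eta(p(g^{-1},v))$ or its inverse. I would fix the relation $(hp,x)\sim(h,\eta(p)^{-1}x)$ as stated and check it on a single element before finishing. The natural test is the torus $t\in\G_m\subset L$. On $V$ it acts by the scaling $v\mapsto t^{-1}v$ or $t^{\pm2}v$, depending on the matrix form of $u_v^{\op}$. The check is that the resulting factor $\eta(t)^{\pm1}=t^{\mp(k-1)}$ matches the homogeneity expected of $\mathcal{O}(1-k)$. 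A final remark is that all equalities hold on the dense open set where $g^{-1}u_v^{\op}\in U^{\op}P$, so they are identities of rational functions in $\kappa(V)$.
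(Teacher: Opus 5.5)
Your proposal is correct and follows the paper's proof essentially verbatim. You rewrite $g\,u^{\op}_{g^{-1}v}$ as $u^{\op}_v\,p(g^{-1},v)^{-1}$ (the paper does this via \eqref{altcocycle} and $p(e,v)=e$, as you also note) and then apply $(hp,x)\sim(h,\eta(p)^{-1}x)$ to move $\eta(p(g^{-1},v))$ into the fiber coordinate. The torus check is not needed, but it is consistent with the paper's conventions: $\mathrm{diag}(t,I,t^{-1})$ acts on $V$ by $v\mapsto t^{-1}v$, and the formula gives $(gf)(v)=t^{k-1}f(tv)$.
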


\begin{proof}
Recall that the action of $g_0\in G(\kappa)$ on a section $v\mapsto [g_v,f(v)]\in G\times^{P,\eta^{-1}}\A^1$ is given by
\begin{equation}
    v\mapsto [g_0g_{g_0^{-1}v},f(g_0^{-1}v)].
\end{equation}
Applying this to the section $f\sigma_0$, we compute
\begin{align}
    g\cdot (f\sigma_0) 
    &=
    \left\{
    v\mapsto [g u_{g^{-1}v}^{\op},f(g^{-1}v)]
    \right\}
    \\
    &=
    \left\{
    v\mapsto [u_v^{\op}p(g,g^{-1}v),f(g^{-1}v)]
    \right\}
    \nonumber\\
    &=
    \left\{
    v\mapsto [u_v^{\op}p(g^{-1},v)^{-1},f(g^{-1}v)]
    \right\}
    \qquad\text{(by \eqref{altcocycle}, and the fact that $p(e,v) = e$)}
    \nonumber\\
    &=
    \left\{
    v\mapsto [u_v^{\op},\eta(p(g^{-1},v))\,f(g^{-1}v)]
    \right\}
    \nonumber\\
    &=
    \left\{
    v\mapsto \eta(p(g^{-1},v))\,f(g^{-1}v)\cdot \sigma_0(v)
    \right\}.
    \nonumber
\end{align}
This is precisely \eqref{twistedaction}.
\end{proof}

We now return to the question of infinitesimalizing the twisted action \eqref{twistedaction}.

\begin{prop}\label{ConfActionProp}
Let
\begin{equation}
    \xi=
    \left(\begin{smallmatrix}
        \alpha & -\lambda^{\mathsf T}J_V & 0\\
        \mu    & X                       & \lambda\\
        0      & -\mu^{\mathsf T}J_V     & -\alpha
    \end{smallmatrix}\right)
\end{equation}

\noindent be as in \eqref{genericg}. Then
\begin{equation}
    \phi(\xi)
    =
    -\partial_{\mu}
    \;-\;
    \langle Xv,\nabla\rangle
    \;+\;
    \alpha\bigl(E+(k-1)\bigr)
    \;-\;
    (\lambda^{\mathsf T}J_Vv)\,\bigl(E+(k-1)\bigr)
    \;+\;
    Q(v)\,\partial_{\lambda}.
\end{equation}

\noindent Here
\begin{equation}
    \nabla
    =
    \bigl(
        \partial_{x_1},
        \dots,
        \partial_{x_k},
        \partial_{y_1},
        \dots,
        \partial_{y_k}
    \bigr)^{\mathsf T},
\end{equation}
\begin{equation}\label{EulerDef}
    E
    =
    \sum_{i=1}^k x_i\partial_{x_i}
    +
    \sum_{i=1}^k y_i\partial_{y_i},
\end{equation}

\noindent where $E$ is the Euler operator and $\langle -,-\rangle$ denotes the usual evaluation map.
\end{prop}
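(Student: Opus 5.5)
The plan is to differentiate the twisted action \eqref{twistedaction} directly along the one-parameter subgroup $g=\exp(t\xi)$. Thus I set
\[
\phi(\xi)f:=\left.\frac{d}{dt}\right|_{t=0}\eta\bigl(p(\exp(-t\xi),v)\bigr)\,f(\exp(-t\xi)v).
\]
By the product rule, and since $p(e,v)=e$ and $\eta(e)=1$, this splits into two terms. The first is a vector-field term $\left.\frac{d}{dt}\right|_{t=0}f(\exp(-t\xi)v)$. The second is a multiplication-operator term $d\eta\bigl(\left.\frac{d}{dt}\right|_{t=0}p(\exp(-t\xi),v)\bigr)\,f$. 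I will compute each term separately and then add them.

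\textbf{The vector-field term.} This is just $-X_\xi(v)\cdot\nabla f$, with $X_\xi$ given by \eqref{infactionformula}. Reading off
\[
-X_\xi=-\mu-Xv+\alpha v-B(\lambda,v)v+Q(v)\lambda
\]
gives, term by term:
\begin{itemize}
\item $-\partial_\mu$ from $-\mu$;
\item $-\langle Xv,\nabla\rangle$ from $-Xv$;
\item $\alpha E$ from $\alpha v$, since $\langle v,\nabla\rangle=E$;
\item $-B(\lambda,v)E$ from $-B(\lambda,v)v$;
\item $Q(v)\partial_\lambda$ from $Q(v)\lambda$.
\end{itemize}
Finally, $B(\lambda,v)=\lambda^{\mathsf T}J_Vv$. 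I should double-check this identity against the coordinate formula for $B$; this is a routine check.

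\textbf{The cocycle term.} I differentiate the defining factorization $\exp(-t\xi)u_v^{\op}=u^{\op}_{\exp(-t\xi)v}\,p(\exp(-t\xi),v)$ at $t=0$. Writing $\dot y\in\mathfrak u^{\op}$ and $\dot p\in\mathfrak p$ for the two derivatives, this gives
\[
-\xi u_v^{\op}=u_v^{\op}(\dot y+\dot p).
\]
Hence $\dot p$ is the $\mathfrak p$-component of $-\Ad(u_v^{\op})^{-1}\xi$ in the decomposition $\mathfrak g=\mathfrak u^{\op}\oplus\mathfrak p$. The differential of $\chi_0$ is the $(1,1)$ entry. The $\mathfrak u^{\op}$-part has zero diagonal, so the projection to $\mathfrak p$ does not change this entry. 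It therefore suffices to compute the $(1,1)$ entry of $u_{-v}^{\op}\,\xi\,u_v^{\op}$. Since the first row of $u_{-v}^{\op}$ is $(1,0,0)$, this entry equals the first row of $\xi$ times the first column $(1,v,-Q(v))^{\mathsf T}$ of $u_v^{\op}$, namely $\alpha-\lambda^{\mathsf T}J_Vv$. Consequently
\[
d\eta(\dot p)=-(k-1)\,d\chi_0(\dot p)=(k-1)\bigl(\alpha-\lambda^{\mathsf T}J_Vv\bigr).
\]
This supplies exactly the constant shifts $+(k-1)$ attached to $\alpha E$ and to $-(\lambda^{\mathsf T}J_Vv)E$.

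Adding the two terms yields the stated formula. The main obstacle is sign bookkeeping: the inverse $g^{-1}$ in \eqref{twistedaction}, the sign of $\eta=\chi_0^{-(k-1)}$, and the ordering in the factorization \eqref{ptwistdef}. I will confirm these signs in the simplest case $\xi=\alpha$, the scaling element. There the action is explicit, since $p$ is diagonal, and one checks directly that $t$-scaling multiplies by $t^{k-1}$ while rescaling $v$ by $t^{-1}$.
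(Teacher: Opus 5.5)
Your proposal is correct, but it computes the twist by a more uniform route than the paper does. The paper also starts by splitting $\phi(\xi)$ into a transport term and a twisting term. It then handles the $\mu$-, $X$-, $\alpha$- and $\lambda$-components separately. For each one-parameter subgroup it writes out the group-level factorization $g(t)^{-1}u_v^{\op}=u^{\op}_{g(t)^{-1}v}\,p(g(t)^{-1},v)$, exactly for the first three and to first order for $\exp(t\xi_\lambda)=u_{t\lambda}$, and reads off both $g(t)^{-1}v$ and $\chi_0(p)$ from it.

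You instead linearize the factorization once, for arbitrary $\xi$. The twist is then $d\chi_0$ applied to the $\mathfrak p$-component of $-\Ad(u^{\op}_{-v})\xi$ in $\mathfrak g=\mathfrak u^{\op}\oplus\mathfrak p$. Because $\mathfrak u^{\op}$ has zero diagonal, this reduces to the single product $(\alpha,-\lambda^{\mathsf T}J_V,0)\cdot(1,v,-Q(v))^{\mathsf T}$. That gives $\frac{d}{dt}\big|_{t=0}\log\chi_0\bigl(p(e^{-t\xi},v)\bigr)=B(\lambda,v)-\alpha$ in one line. This is exactly the quantity the paper later extracts from its component calculations, in the infinitesimal part of Proposition~\ref{prop:group-action-on-Delta}.

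The cost is that your transport term rests on \eqref{infactionformula}, which the paper states without derivation. The paper's componentwise factorizations, by contrast, re-derive the vector field along the way. That formula is in fact correct, as is $B(\lambda,v)=\lambda^{\mathsf T}J_Vv$.

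You can make your argument self-contained at no extra cost. The $\mathfrak u^{\op}$-component of the same element $-\Ad(u^{\op}_{-v})\xi$ is determined by its $(2,1)$ block. That block equals $-\bigl(\mu+Xv-Q(v)\lambda-(\alpha-B(\lambda,v))v\bigr)=-X_\xi(v)$, so one conjugation yields both terms.

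In your scaling check, be careful with the direction of the rescaling. The element $g=\mathrm{diag}(t,I,t^{-1})$ moves points of $V$ by $v\mapsto t^{-1}v$, so $(gf)(v)=t^{k-1}f(tv)$. Differentiating gives $\alpha(E+k-1)$, as required.
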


\begin{proof}
We compute the infinitesimal action associated to the twisted $G$-action \eqref{twistedaction}
\begin{equation}
    (g f)(v)=\chi_0(p(g^{-1},v))^{-(k-1)}\,f(g^{-1}v),
\end{equation}

\noindent where
\begin{equation}
    g^{-1}u_v^{\op}=u_{g^{-1}v}^{\op}p(g^{-1},v)
\end{equation}

\noindent with $g^{-1}v\in V$ and $p(g^{-1},v)\in P$.

For ease of exposition, we use exponentials. Let $\xi\in\mathfrak g$ and set $g(t):=\exp(t\xi)$. We define
\begin{equation}
    \phi(\xi)f
    :=
    \left.\frac{d}{dt}\right|_{t=0}(g(t)f).
\end{equation}

\noindent Since $p(1,v)=1$ and $g(t)^{-1}v=v$ at $t=0$, differentiating gives
\begin{equation}\label{eq:phi-general}
    \phi(\xi)f(v)
    =
    \left.\frac{d}{dt}\right|_{t=0} f(g(t)^{-1}v)
    \;-\;
    (k-1)
    \left.\frac{d}{dt}\right|_{t=0}
    \log\chi_0\bigl(p(g(t)^{-1},v)\bigr)\cdot f(v).
\end{equation}

\noindent We now treat the $\mu$-, $X$-, $\alpha$-, and $\lambda$-components separately. Throughout we use the explicit matrix model for $G$ and the embedding $V\simeq U^{\op}\subset G$ via $v\mapsto u_v^{\op}$ as in \eqref{unipotents}. For each component we compute $g(t)^{-1}u_v^{\op}$, refactor it as $u_{g(t)^{-1}v}^{\op}p(g(t)^{-1},v)$, and then apply \eqref{eq:phi-general}.

\medskip

\noindent\textbf{The $\mu$-component.}
Take
\begin{equation}
    \xi_\mu:=
    \left(\begin{smallmatrix}
        0 & 0 & 0\\
        \mu & 0 & 0\\
        0 & -\mu^{\mathsf T}J_V & 0
    \end{smallmatrix}\right).
\end{equation}

\noindent Then $g(t)=\exp(t\xi_\mu)=u_{t\mu}^{\op}$. Since $U^{\op}$ is abelian, we have $g(t)^{-1}u_v^{\op}=u_{-t\mu}^{\op}u_v^{\op}=u_{v-t\mu}^{\op}$. Thus $p(g(t)^{-1},v)=1$, so $\chi_0(p(g(t)^{-1},v))\equiv 1$, and hence the twisting term in \eqref{eq:phi-general} vanishes. Therefore
\begin{equation}
    \phi(\xi_\mu)f(v)
    =
    \left.\frac{d}{dt}\right|_{t=0} f(v-t\mu)
    =
    -\partial_\mu f(v).
\end{equation}

\medskip

\noindent\textbf{The $X$-component.}
Take
\begin{equation}
    \xi_X:=
    \left(\begin{smallmatrix}
        0 & 0 & 0\\
        0 & X & 0\\
        0 & 0 & 0
    \end{smallmatrix}\right),
    \qquad
    X\in\mathfrak{so}(V,Q).
\end{equation}

\noindent Then $g(t)=\exp(t\xi_X)=
    \left(\begin{smallmatrix}
        1 & 0 & 0\\
        0 & e^{tX} & 0\\
        0 & 0 & 1
    \end{smallmatrix}\right)$. A direct multiplication gives $g(t)^{-1}u_v^{\op}
    =
    u_{e^{-tX}v}^{\op}
    \left(\begin{smallmatrix}
        1 & 0 & 0\\
        0 & e^{-tX} & 0\\
        0 & 0 & 1
    \end{smallmatrix}\right)$, so $g(t)^{-1}v=e^{-tX}v$ and $\chi_0(p(g(t)^{-1},v))=1$. Thus
\begin{equation}
    \phi(\xi_X)f(v)
    =
    \left.\frac{d}{dt}\right|_{t=0} f(e^{-tX}v)
    =
    -\langle Xv,\nabla\rangle f(v).
\end{equation}

\medskip

\noindent\textbf{The $\alpha$-component.}
Take
\begin{equation}
    \xi_\alpha:=
    \left(\begin{smallmatrix}
        \alpha & 0 & 0\\
        0 & 0 & 0\\
        0 & 0 & -\alpha
    \end{smallmatrix}\right).
\end{equation}

\noindent Then $g(t)=\exp(t\xi_\alpha)=
    \left(\begin{smallmatrix}
        e^{t\alpha} & 0 & 0\\
        0 & I & 0\\
        0 & 0 & e^{-t\alpha}
    \end{smallmatrix}\right)$. A direct computation gives $g(t)^{-1}u_v^{\op}
    =
    u_{e^{t\alpha}v}^{\op}
    \left(\begin{smallmatrix}
        e^{-t\alpha} & 0 & 0\\
        0 & I & 0\\
        0 & 0 & e^{t\alpha}
    \end{smallmatrix}\right)$, so $g(t)^{-1}v=e^{t\alpha}v$ and $\chi_0(p(g(t)^{-1},v))=e^{-t\alpha}$. Substituting into \eqref{eq:phi-general} yields
\begin{equation}
    \phi(\xi_\alpha)f(v)
    =
    \left.\frac{d}{dt}\right|_{t=0} f(e^{t\alpha}v)
    \;-\;
    (k-1)\left.\frac{d}{dt}\right|_{t=0}\log(e^{-t\alpha})\cdot f(v)
    =
    \alpha\,Ef(v)+\alpha(k-1)f(v).
\end{equation}

\noindent That is,
\begin{equation}
    \phi(\xi_\alpha)=\alpha\bigl(E+(k-1)\bigr).
\end{equation}

\medskip

\noindent\textbf{The $\lambda$-component.}
Take
\begin{equation}
    \xi_\lambda:=
    \left(\begin{smallmatrix}
        0 & -\lambda^{\mathsf T}J_V & 0\\
        0 & 0 & \lambda\\
        0 & 0 & 0
    \end{smallmatrix}\right).
\end{equation}

\noindent Then $g(t)=\exp(t\xi_\lambda)=u_{t\lambda}$, so $g(t)^{-1}=u_{-t\lambda}$. Multiplying out $g(t)^{-1}u_v^{\op}$ and refactoring to first order gives
\begin{equation}\label{eq:lambda-factor}
    g(t)^{-1}u_v^{\op}
    =
    u_{v'(t)}^{\op}
    \cdot
    \left(\begin{smallmatrix}
        1+t\,\lambda^{\mathsf T}J_V v & * & *\\
        0 & I & *\\
        0 & 0 & 1-t\,\lambda^{\mathsf T}J_V v
    \end{smallmatrix}\right)
    +O(t^2),
\end{equation}

\noindent where $v'(t)=v+t\,Q(v)\lambda-t\,(\lambda^{\mathsf T}J_V v)\,v$, and where the starred entries are irrelevant for $\chi_0$. In particular, $\chi_0(p(g(t)^{-1},v))
    =
    1+t\,\lambda^{\mathsf T}J_V v+O(t^2)$. Thus
\begin{equation}
    -(k-1)\left.\frac{d}{dt}\right|_{t=0}\log\chi_0(p(g(t)^{-1},v))
    =
    -(k-1)\,(\lambda^{\mathsf T}J_V v).
\end{equation}

Using \eqref{eq:phi-general} and \eqref{eq:lambda-factor}, we conclude
\begin{align}
    \phi(\xi_\lambda)f(v)
    &=
    \left.\frac{d}{dt}\right|_{t=0} f(v'(t))
    \;-\;
    (k-1)\,(\lambda^{\mathsf T}J_V v)\,f(v)
    \nonumber\\
    &=
    Q(v)\,\partial_\lambda f(v)
    \;-\;
    (\lambda^{\mathsf T}J_V v)\,Ef(v)
    \;-\;
    (k-1)\,(\lambda^{\mathsf T}J_V v)\,f(v);
\end{align}

\noindent that is,
\begin{equation}
    \phi(\xi_\lambda)
    =
    Q(v)\,\partial_\lambda
    \;-\;
    (\lambda^{\mathsf T}J_V v)\,\bigl(E+(k-1)\bigr).
\end{equation}

\medskip

\noindent Combining these terms and using linearity of $\phi$ in $\xi$ gives the result.
\end{proof}

\subsection{Interaction With the Laplacian}\label{InteractionLaplaceSection}

Next we examine how the operators $\phi(\g)$ in $D_V$ interact with the Laplace-Beltrami operator.

\begin{prop}\label{DeltaPres}
Let $\Delta$ be the Laplace-Beltrami operator associated with $Q$,
\begin{equation}\label{LapDef}
    \Delta
    =
    \frac{\partial^2}{\partial x_1\partial y_k}
    +
    \cdots
    +
    \frac{\partial^2}{\partial x_k\partial y_1}.
\end{equation}

\noindent For all $\xi=(\alpha,\mu,X,\lambda)\in\g$ written as in \eqref{genericg}, we have
\begin{equation}\label{eq:comm-phi-Delta}
    \bigl[\phi(\alpha,\mu,X,\lambda),\Delta\bigr]
    =
    2\bigl(B(\lambda,v)-\alpha\bigr)\Delta,
\end{equation}

\noindent where $B(\lambda,v)=\lambda^{\mathsf T}J_Vv$. In particular,
\begin{equation}
    \ad_{\phi(\xi)}(D_V\Delta)\subset D_V\Delta.
\end{equation}
\end{prop}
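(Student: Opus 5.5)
The plan is to compute the commutator term by term, using the explicit formula for $\phi(\xi)$ from Proposition~\ref{ConfActionProp} and linearity of $\phi$. The second claim is then immediate: if $[\phi(\xi),\Delta]=c\,\Delta$ with $c\in\kappa[V]$, then for any $D\in D_V$,
\[
\ad_{\phi(\xi)}(D\Delta)=[\phi(\xi),D]\Delta+D\,c\,\Delta\in D_V\Delta .
\]
So the whole content is the identity \eqref{eq:comm-phi-Delta}. I will treat the $\mu$-, $X$-, $\alpha$- and $\lambda$-components separately, exactly as in that proof.

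\textbf{The $\mu$-, $X$- and $\alpha$-components.}
\begin{itemize}
\item For the $\mu$-component, $-\partial_\mu$ has constant coefficients, so it commutes with $\Delta$.
\item For the $X$-component, $\langle Xv,\nabla\rangle$ is the infinitesimal linear action of $X\in\mathfrak{o}(Q)$. Since $\Delta$ is the constant-coefficient operator whose symbol is $Q^*$, it is $\O(Q)$-invariant. Differentiating that invariance gives $[\langle Xv,\nabla\rangle,\Delta]=0$.
\item For the $\alpha$-component, the constant $k-1$ drops out of the commutator. Because $\Delta$ is homogeneous of degree $-2$, we have $[E,\Delta]=-2\Delta$. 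This gives the contribution $-2\alpha\Delta$.
\end{itemize}

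\textbf{The $\lambda$-component.} This is the only substantive part. Write $\ell:=B(\lambda,v)=\lambda^{\mathsf T}J_Vv$, a linear function. I need two basic commutators, both checked directly in the coordinates \eqref{QDef} and \eqref{LapDef}:
\begin{enumerate}
\item[(a)] $[\Delta,Q]=E+k$. Each summand $\partial_{x_i}\partial_{y_{\bar i}}$ applied to $x_iy_{\bar i}f$ contributes $1+x_i\partial_{x_i}+y_{\bar i}\partial_{y_{\bar i}}$, and the mixed summands pair off to give the rest of $E$.
\item[(b)] $[\Delta,\ell]=\partial_\lambda$. The gradient of $\ell$ taken with respect to the dual form is the vector $\lambda$.
\end{enumerate}
With these in hand:
\begin{itemize}
\item Since $\partial_\lambda$ commutes with $\Delta$, $[Q\partial_\lambda,\Delta]=[Q,\Delta]\partial_\lambda=-(E+k)\partial_\lambda$.
\item Using (b) and $[E,\Delta]=-2\Delta$,
\[
[\ell(E+k-1),\Delta]=\ell[E,\Delta]+[\ell,\Delta](E+k-1)=-2\ell\Delta-\partial_\lambda(E+k-1).
\]
\item The relation $\partial_\lambda E=(E+1)\partial_\lambda$ rewrites $\partial_\lambda(E+k-1)$ as $(E+k)\partial_\lambda$.
\end{itemize}
The $\lambda$-component of $\phi(\xi)$ is $Q\partial_\lambda-\ell(E+k-1)$. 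Its commutator with $\Delta$ is therefore
\[
-(E+k)\partial_\lambda+2\ell\Delta+(E+k)\partial_\lambda=2\ell\Delta .
\]
The first-order terms cancel precisely because the twist constant is $k-1$. This is where the choice $\eta=\chi_0^{-(k-1)}$, i.e.\ $\mathcal L\cong\mathcal O(1-k)$, enters.

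Summing the four components gives $2(B(\lambda,v)-\alpha)\Delta$, which is \eqref{eq:comm-phi-Delta}.

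The main obstacle is bookkeeping of normalizations. I must confirm that $[\Delta,Q]$ is exactly $E+k$, with $k=n/2$, given $Q=\sum x_iy_{\bar i}$ with no factor $\tfrac12$. I must also confirm that $[\Delta,\ell]$ is $\partial_\lambda$ itself, not $2\partial_\lambda$ or $\partial_{J_V\lambda}$, under the paper's convention for $\partial_\lambda$ and $B$. I would settle both by checking on the basis vector $\lambda=e_1$, where $\ell=y_k$ and $\partial_\lambda=\partial_{x_1}$. If a constant came out differently, the first-order terms would fail to cancel, which would signal a sign or normalization error rather than a flaw in the strategy.
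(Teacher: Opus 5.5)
Your proposal is correct and follows the paper's proof essentially verbatim. It uses the same component-by-component computation, the same identities $[\Delta,Q]=E+k$ and $[\Delta,B_\lambda]=\partial_\lambda$, and the same cancellation of first-order terms via $\partial_\lambda(E+k-1)=(E+k)\partial_\lambda$. Your normalizations check out: for $\lambda=e_1$ one gets $B_\lambda=y_k$, $\partial_\lambda=\partial_{x_1}$ and $[\partial_{x_1}\partial_{y_k},y_k]=\partial_{x_1}$. One small correction to (a): the cross commutators $[\partial_{x_j}\partial_{y_{k+1-j}},x_iy_{k+1-i}]$ for $j\neq i$ vanish outright, and the diagonal terms alone sum to $E+k$.
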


\begin{proof}
By Proposition~\ref{ConfActionProp},
\begin{equation}
    \phi(\alpha,\mu,X,\lambda)
    =
    -\partial_{\mu}
    -
    \langle Xv,\nabla\rangle
    +
    \alpha\bigl(E+(k-1)\bigr)
    -
    B(\lambda,v)\bigl(E+(k-1)\bigr)
    +
    Q(v)\partial_{\lambda}.
\end{equation}

\noindent Since the commutator is linear in $\xi$, we treat the four summands separately.

The $\mu$-term commutes with $\Delta$, since $\partial_\mu$ is a constant-coefficient vector field. Likewise,
\begin{equation}
    [E,\Delta]=-2\Delta,
\end{equation}

\noindent so the $\alpha$-term contributes $\bigl[\alpha(E+(k-1)),\Delta\bigr]
    =
    -2\alpha\,\Delta$. For the $X$-term, the operator $\langle Xv,\nabla\rangle$ is the infinitesimal generator of the linear action of $\O(V,Q)$ on $V$, while $\Delta$ is the Laplace operator attached to the $\O(V,Q)$-invariant quadratic form $Q$. Hence $[\langle Xv,\nabla\rangle,\Delta]=0$.

It remains to compute the $\lambda$-term. Set
\begin{equation}
    B_\lambda(v):=B(\lambda,v).
\end{equation}

\noindent A direct product-rule calculation gives
\begin{equation}\label{eq:Delta-B-short}
    [\Delta,B_\lambda]=\partial_\lambda,
    \qquad
    [B_\lambda,\Delta]=-\partial_\lambda,
\end{equation}

\noindent and similarly
\begin{equation}\label{eq:Delta-Q-short}
    [\Delta,Q]=E+k,
    \qquad
    [Q,\Delta]=-(E+k).
\end{equation}

\noindent Therefore $\bigl[-B_\lambda(E+(k-1)),\Delta\bigr]
    =
    -B_\lambda[E,\Delta]-[B_\lambda,\Delta](E+(k-1))
    \nonumber
    =
    2B_\lambda\Delta+\partial_\lambda(E+(k-1))$, and $\bigl[Q\partial_\lambda,\Delta\bigr]
    =
    [Q,\Delta]\partial_\lambda
    =
    -(E+k)\partial_\lambda$. Since $[\partial_\lambda,E]=\partial_\lambda$, we have $\partial_\lambda(E+(k-1))=(E+k)\partial_\lambda$, and thus the $\partial_\lambda$-terms cancel. Hence the $\lambda$-term contributes $\bigl[-B(\lambda,v)(E+(k-1))+Q(v)\partial_\lambda,\Delta\bigr]
    =
    2B(\lambda,v)\Delta$. Summing the four contributions gives
\begin{equation}
    [\phi(\alpha,\mu,X,\lambda),\Delta]
    =
    -2\alpha\Delta+2B(\lambda,v)\Delta,
\end{equation}

\noindent which is \eqref{eq:comm-phi-Delta}.

For the final statement, if $A\in D_V$, then $[\phi(\xi),A\Delta]
    =
    [\phi(\xi),A]\Delta+A[\phi(\xi),\Delta]$. Since $[\phi(\xi),A]\in D_V$ and $[\phi(\xi),\Delta]$ is a left multiple of $\Delta$ by \eqref{eq:comm-phi-Delta}, both terms lie in $D_V\Delta$. Thus
\begin{equation}
    \ad_{\phi(\xi)}(D_V\Delta)\subset D_V\Delta.
\end{equation}
\end{proof}

Let $g\in G(\kappa)$. Define the induced action of $G$ on differential operators on $\kappa(V)$ by transport of structure:
\begin{equation}\label{eq:action-on-operators}
    (g\cdot \xi)(f):=g\Bigl(\xi(g^{-1}f)\Bigr),
    \qquad
    f\in \kappa(V).
\end{equation}
We now record the very useful calculation of the action of $w_0$ on both $\Delta$ and on functions.

\begin{prop}\label{w0onDelta}
We write $K(f)$ for the twisted $w_0$-action on a function $f \in \kappa(V)$. That is:
\begin{equation}\label{Kelvindef}
    (Kf)(v):=(w_0f)(v)=(-Q(v))^{-(k-1)}\,f\left(-\frac{v}{Q(v)}\right).
\end{equation}
\noindent This is the \textbf{Kelvin Transform}.

The action of $w_0$ on $\Delta$, as defined in \eqref{eq:action-on-operators}, is given by
\begin{equation}\label{w_0actsonDelta}
    w_0\cdot \Delta=Q^2\Delta.
\end{equation}
 Since $w_0^2=1$, we have $K^2=\Id$, and therefore $K\Delta K^{-1}=K\Delta K=w_0\cdot \Delta=Q^2\Delta$.
\end{prop}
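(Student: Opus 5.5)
The statement has two parts: the explicit formula \eqref{Kelvindef} for the twisted $w_0$-action, and the identity $w_0\cdot\Delta=Q^2\Delta$. The plan is to compute the first directly from \eqref{twistedaction}. For the second, I would use the infinitesimal computation of Proposition~\ref{DeltaPres} as a guide, but run the actual argument as a direct conjugation computation on harmonic-type test functions.

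\textbf{The Kelvin formula.} Since $w_0^{-1}=w_0$, I need the factorization $w_0u_v^{\op}=u_{v'}^{\op}p(w_0,v)$ for $Q(v)\neq0$. Multiplying out, $w_0u_v^{\op}$ has first column $(-Q(v),v,1)^{\mathsf T}$. Its image in $G/P$ is therefore $[-Q(v):v:1]=[1:-v/Q(v):-1/Q(v)]$. This gives $v'=-v/Q(v)$, consistent with \eqref{inversion}.

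The $(1,1)$-entry of $u_{v'}^{\op}p$ equals $t$, the value of $\chi_0(p)$. Comparing first entries of the first column gives $\chi_0(p(w_0,v))=-Q(v)$. Then $\eta(p)=(-Q(v))^{-(k-1)}$, and substituting into \eqref{twistedaction} yields \eqref{Kelvindef}. The relation $K^2=\Id$ follows from $w_0^2=1$ and the fact that \eqref{twistedaction} is an action; alternatively, one can check directly that $Q(-v/Q(v))=1/Q(v)$.

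\textbf{The Laplacian identity.} I must show $K\Delta K=Q^2\Delta$ as operators on $\kappa(V)$. Both sides are differential operators on $V_Q$, so it suffices to compare them on enough functions. Write $Kf=(-Q)^{1-k}\,(f\circ\iota)$ with $\iota(v)=-v/Q(v)$.

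\emph{Step 1: the pullback.} The map $\iota$ is conformal: a direct computation gives $\iota^*Q=Q^{-2}\cdot Q$ up to the expected sign convention on the metric. From this I would obtain
\[
\Delta(f\circ\iota)=Q^{-2}(\Delta f)\circ\iota+\text{(first-order terms)}.
\]
\emph{Step 2: the conformal weight.} The standard fact is that the weight $(1-k)=(2-n)/2$ is exactly the one for which the first-order terms cancel after multiplying by $(-Q)^{1-k}$. This yields
\[
\Delta\bigl((-Q)^{1-k}f\circ\iota\bigr)=(-Q)^{1-k}Q^{-2}(\Delta f)\circ\iota .
\]
\emph{Step 3: assembling.} This reads $\Delta K=Q^{-2}K\Delta$. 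Conjugating and using $K(Q^{-2}g)=Q^{2}Kg$ (since $Q\circ\iota=1/Q$) gives $K\Delta K=Q^2\Delta$.

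To verify Steps 1--2 cleanly, I would compute $\Delta$ of a product, $\Delta(hg)=h\Delta g+g\Delta h+\sum\text{(gradient pairing)}$. I would use $[\Delta,Q]=E+k$ from \eqref{eq:Delta-Q-short}, together with the chain rule for $\iota$. An efficient route is to check the identity on the generating functions $e^{B(\lambda,\cdot)}$, or on polynomials. Since both sides are differential operators and polynomials separate them, this suffices.

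\textbf{Alternative route through Proposition~\ref{DeltaPres}.} Instead of the direct computation, one can use \eqref{eq:comm-phi-Delta}. That result says $\Delta$ transforms under $\g$ by the cocycle $2(B(\lambda,v)-\alpha)$, and by integration $g\cdot\Delta=c_g\Delta$ for a rational function $c_g$ on the identity component. However, $w_0$ lies in the non-identity component, so this does not apply to it directly.

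Instead, I would write $w_0=u_{\lambda}\,u^{\op}_{\mu}\,u_{\lambda}\cdot h$ with $h$ in $H\times\G_m$ of determinant $-1$, using the $\SL_2$-relation in the rank-one subgroup. Each factor scales $\Delta$: $h$ is orthogonal, and the unipotent factors are covered by Proposition~\ref{DeltaPres}. The scalars then multiply.

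\textbf{Main obstacle.} The hardest part is pinning down the exact cocycle factor $Q^2$ and its sign; the cancellation of first-order terms at weight $1-k$ is the classical Kelvin computation. I would settle the scalar by testing on the constant function $1$. Here $K1=(-Q)^{1-k}$ and $\Delta(Q^{m})=m(2m+n-2)Q^{m-1}$, which vanishes at $m=1-k$. So both sides agree on $1$, and the first-order cancellation is forced. Testing on a linear function $\ell$ then fixes the factor as exactly $Q^2$.
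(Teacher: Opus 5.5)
Your main route is the paper's: you read off $\chi_0(p(w_0,v))=-Q(v)$ from the first column of $w_0u_v^{\op}$, obtain $\Delta K=Q^{-2}K\Delta$ by the chain-rule/conformal-weight computation, and conjugate using $Q\circ\iota=Q^{-1}$ and $K^2=\Id$. One correction to your last paragraph: agreement on $1$ only kills the zeroth-order part of $D:=K\Delta K-Q^2\Delta$, so it does not force the first-order cancellation. The factor $Q^2$ is already fixed by the principal symbol (conformality of $\iota$), which makes $D$ first order, and it is the check $D(x_j)=0$ for every linear coordinate (via $K\ell=\pm Q^{-k}\ell$ and $\Delta(Q^{-k}\ell)=0$) that forces $D=0$.
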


\begin{proof}
On the big cell $V=\{[1:v:-Q(v)]\}$, the Weyl element $w_0$ sends $[1:v:-Q(v)]$ to $[-Q(v):v:1]\sim [1:-v/Q(v):-1/Q(v)]$. Thus the induced rational map on $V$ is $v\mapsto -v/Q(v)$. Comparison with the factorization $w_0^{-1}u_v^{\op}=u_{v'}^{\op}p(w_0^{-1},v)$ shows that $\chi_0(p(w_0^{-1},v))=-Q(v)$. Since $w_0^{-1}=w_0$, this is equivalently $\chi_0(p(w_0,v))=-Q(v)$, and therefore $(w_0f)(v)=(-Q(v))^{-(k-1)}f(-v/Q(v))$, which is \eqref{Kelvindef}. A direct chain-rule computation gives $\Delta(w_0f)=(-Q(v))^{-2}w_0(\Delta f)$. Applying $w_0$ once more, and using $Q(-v/Q(v))=Q(v)^{-1}$, we obtain $(w_0\cdot \Delta)(f)=w_0(\Delta(w_0f))=w_0((-Q(v))^{-2}w_0(\Delta f))=Q(v)^2\Delta f$. Thus $w_0\cdot \Delta=Q^2\Delta$.
\end{proof}

We now generalize Proposition~\ref{w0onDelta} to general $g \in G(\kappa)$.

\begin{prop}\label{prop:group-action-on-Delta}
Recall the definition of $p(g,v)$ in \eqref{ptwistdef}
and the action of $G$ on functions given by \eqref{twistedaction}. We have
\begin{equation}\label{eq:Delta-semi-invariant}
    g\cdot \Delta=\chi_0\bigl(p(g^{-1},v)\bigr)^2\,\Delta.
\end{equation}
\noindent Equivalently, for all $f\in \kappa(V)$,
\begin{equation}\label{eq:Delta-intertwining}
    \Delta(gf)=\chi_0\bigl(p(g^{-1},v)\bigr)^{-2}\,g(\Delta f).
\end{equation}
\noindent Infinitesimally, \eqref{eq:Delta-semi-invariant} recovers the commutator formula \eqref{eq:comm-phi-Delta}.
\end{prop}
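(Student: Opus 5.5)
The plan is to reduce \eqref{eq:Delta-semi-invariant} to a cocycle identity and then verify it on a generating set of $G$. Write $c_g(v):=\chi_0\bigl(p(g^{-1},v)\bigr)$, a rational function on $V$. All identities are read in the ring of differential operators with coefficients in $\kappa(V)$, on which $G$ acts by \eqref{eq:action-on-operators}. The argument has four steps: check that the set of good elements is closed under products, check the generators, argue generation, and differentiate.

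\textbf{Multiplicativity.} Let $\mathcal{G}\subset G(\kappa)$ be the set of $g$ for which $g\cdot\Delta=c_g^2\Delta$.

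First, for a multiplication operator $\varphi\in\kappa(V)$, the twisting factors $\eta(p(\cdot,\cdot))$ in \eqref{twistedaction} cancel. Hence $g\cdot\varphi$ is multiplication by $\varphi(g^{-1}v)$.

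Second, apply $\chi_0$ to the cocycle relation \eqref{basiccocycle}, in the form $p(h^{-1}g^{-1},v)=p(h^{-1},g^{-1}v)\,p(g^{-1},v)$. This gives
\[
c_{gh}(v)=c_h(g^{-1}v)\,c_g(v).
\]

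Therefore, if $g,h\in\mathcal{G}$, then
\[
(gh)\cdot\Delta=g\cdot\bigl(c_h^2\Delta\bigr)=c_h(g^{-1}v)^2\,c_g(v)^2\,\Delta=c_{gh}^2\,\Delta,
\]
so $\mathcal{G}$ is closed under products. Since $c_e=1$, the cocycle identity also shows that $\mathcal{G}$ is closed under inverses. Thus $\mathcal{G}$ is a subgroup.

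\textbf{Generators.} I will check the following elements.
\begin{itemize}
\item For $u^{\op}_\mu\in U^{\op}$: $p=1$, so $c=1$, and translation commutes with $\Delta$.
\item For $h\in H\subset L$: $c=1$, and $\Delta$ is $H$-invariant.
\item For the torus element $\mathrm{diag}(t,I,t^{-1})$: the computation in the $\alpha$-component of Proposition~\ref{ConfActionProp} gives $g^{-1}v=t\,v$ and $c_g=t^{-1}$. Conjugating $\Delta$ by the dilation then gives $t^{-2}\Delta$, and the constant twist factor cancels.
\item For $w_0$: Proposition~\ref{w0onDelta} gives $c_{w_0}=-Q(v)$ and $w_0\cdot\Delta=Q^2\Delta=c_{w_0}^2\Delta$.
\end{itemize}

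\textbf{Generation.} Since $w_0U^{\op}w_0=U$, the subgroup generated by $U^{\op}$, $L$ and $w_0$ contains $P$ and $P^{\op}$. It therefore contains $G(\kappa)$: $P^{\op}$ and $U$ generate the identity component of the split group, and $w_0$, of determinant $-1$, reaches the other component. This uses the Bruhat decomposition $G=P^{\op}\sqcup P^{\op}w_0P$ of the split group. If one prefers to avoid questions about generation by $\kappa$-points, one can instead base-change to $\bar\kappa$. Both sides of \eqref{eq:Delta-semi-invariant} are rational in $(g,v)$, so agreement on a Zariski-dense set suffices.

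The second form \eqref{eq:Delta-intertwining} is just \eqref{eq:Delta-semi-invariant} applied to $gf$, after rewriting $g^{-1}\cdot\Delta$ using the cocycle identity with $h=g^{-1}$.

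\textbf{Infinitesimal statement.} Set $g(t)=\exp(t\xi)$. Then $\frac{d}{dt}\big|_{t=0}\bigl(g(t)\cdot\Delta\bigr)=[\phi(\xi),\Delta]$, while
\[
\frac{d}{dt}\Big|_{t=0}c_{g(t)}^2=2\,\frac{d}{dt}\Big|_{t=0}\log\chi_0\bigl(p(g(t)^{-1},v)\bigr).
\]
The proof of Proposition~\ref{ConfActionProp} already computed this logarithmic derivative as $-\alpha+B(\lambda,v)$: the $\alpha$- and $\lambda$-components contribute these terms, and the $\mu$- and $X$-components contribute zero. This recovers \eqref{eq:comm-phi-Delta}.

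The main obstacle is bookkeeping rather than any deep difficulty. One must get the direction of the cocycle and the signs of $c_g$ right, in particular for the torus, and justify the generation statement over a general $\kappa$. The density argument is the fallback for the latter.
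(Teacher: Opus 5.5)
This is essentially the paper's own proof. Both arguments show that the set of good $g$ is a subgroup via the cocycle relation \eqref{basiccocycle}, check the identity on $U^{\op}$, the Levi, and $w_0$ (via Proposition~\ref{w0onDelta}), invoke generation, and differentiate using the logarithmic derivatives already computed in Proposition~\ref{ConfActionProp}. You are slightly more careful than the paper, which never checks closure under inverses.

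One claim in your generation step is false and should be removed. You cite the decomposition $G=P^{\op}\sqcup P^{\op}w_0P$, but it does not hold. $P^{\op}$ is the stabilizer of the isotropic line $\kappa e_-$, so by Witt's theorem it has three orbits on $G/P$:
\begin{itemize}
\item the big cell $P^{\op}P/P=V$;
\item the point $P^{\op}w_0P/P=[0:0:1]$;
\item the middle orbit $\{a=0\}\setminus\{[0:0:1]\}$, which your decomposition omits.
\end{itemize}

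The generation statement itself is true; the paper simply asserts it. You can justify it in either of two ways:
\begin{itemize}
\item Put $\Omega:=U^{\op}LU$, and note that $\Omega(\kappa)$ is Zariski dense. For $g\in G(\kappa)$, the dense open set $\Omega\cap g\Omega^{-1}$ then contains a $\kappa$-point $x$, so $g=x\cdot(x^{-1}g)\in\Omega(\kappa)\Omega(\kappa)$.
\item Use your density fallback, which works without any generation statement. Your subgroup already contains $U^{\op}L\,w_0U^{\op}w_0=\Omega(\kappa)$, which is Zariski dense, and both sides of \eqref{eq:Delta-semi-invariant} are rational in $(g,v)$.
\end{itemize}
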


\begin{proof}
Let $S:=\{\,g\in G(\kappa):g\cdot \Delta=\chi_0(p(g^{-1},v))^2\,\Delta\,\}$. We first note that $S$ is a subgroup. For any rational function $c(v)$ and any differential operator $D$, one has
\begin{equation}\label{eq:scalar-transport}
    g\cdot (c(v)D)=c(g^{-1}\cdot v)\,(g\cdot D).
\end{equation}
\noindent Indeed, applying both sides to $f$ gives $\bigl(g\cdot (c(v)D)\bigr)(f)=g(c(v)D(g^{-1}f))=c(g^{-1}\cdot v)g(D(g^{-1}f))=c(g^{-1}\cdot v)(g\cdot D)(f)$.

Now let $g_1,g_2\in S$. Using \eqref{eq:scalar-transport}, we compute $((g_1g_2)\cdot \Delta)
=
\chi_0(p(g_2^{-1},g_1^{-1}\cdot v))^2
\chi_0(p(g_1^{-1},v))^2\Delta.$ By the cocycle relation, $p((g_1g_2)^{-1},v)=p(g_2^{-1}g_1^{-1},v)=p(g_2^{-1},g_1^{-1}\cdot v)p(g_1^{-1},v)$, hence 
$((g_1g_2)\cdot \Delta) =\chi_0(p((g_1g_2)^{-1},v))^2 \Delta.$
Thus $g_1g_2\in S$.

Now we check that $S$ contains the standard generators of $G$. If $g=u_\mu^{\op}\in U^{\op}$, then $g^{-1}u_v^{\op}=u_{v-\mu}^{\op}$, so $p(g^{-1},v)=1$ and $(gf)(v)=f(v-\mu)$. Since $\Delta$ is translation-invariant, we get $g\cdot \Delta=\Delta=\chi_0(p(g^{-1},v))^2\Delta$. Hence $U^{\op}\subset S$.

Next, let $g=\left(\begin{smallmatrix} a & 0 & 0\\ 0 & h & 0\\ 0 & 0 & a^{-1} \end{smallmatrix}\right)\in L^\circ$, where $a\in\G_m$ and $h\in SO(V,Q)$. Then $g^{-1}u_v^{\op}=u_{ah^{-1}v}^{\op}g^{-1}$, so $p(g^{-1},v)=g^{-1}$ and $\chi_0(p(g^{-1},v))=a^{-1}$. Therefore $(gf)(v)=a^{k-1}f(ah^{-1}v)$. Since $h$ preserves $\Delta$ and scaling by $a$ multiplies $\Delta$ by $a^{-2}$, it follows that $g\cdot \Delta=a^{-2}\Delta=\chi_0(p(g^{-1},v))^2\Delta$. Thus $L^\circ\subset S$.

Finally, Proposition~\ref{w0onDelta} shows that $w_0\cdot \Delta=Q^2\Delta=\chi_0(p(w_0^{-1},v))^2\Delta$, since $w_0^{-1}=w_0$. Since $G$ is generated by $U^{\op}$, $L^\circ$, and $w_0$, we conclude that $S=G(\kappa)$, proving \eqref{eq:Delta-semi-invariant}. The equivalence with \eqref{eq:Delta-intertwining} is immediate from \eqref{eq:action-on-operators}, since $(g\cdot \Delta)(gf)=g(\Delta f)$.

For the infinitesimal statement, let $g(t)=\exp(t\xi)$. Differentiating \eqref{eq:Delta-semi-invariant} at $t=0$ gives $[\phi(\xi),\Delta]=2\left.\frac{d}{dt}\right|_{t=0}\log \chi_0(p(g(t)^{-1},v))\,\Delta$. By the component calculations in the proof of Proposition~\ref{ConfActionProp}, the derivative on the right is $B(\lambda,v)-\alpha$. Hence $[\phi(\xi),\Delta]=2(B(\lambda,v)-\alpha)\Delta$, recovering \eqref{eq:comm-phi-Delta}.
\end{proof}

\begin{rem}
    We can now see that $\chi_0(p(g^{-1},v))^{-1}$ is precisely the algebraic avatar of the conformal factor $\Omega_g$ discussed in \eqref{OGConfFactr}.
\end{rem}

\subsection{\texorpdfstring{The Map $\rho_{\mathrm{amb}}$}{The Map rho-amb}}\label{rhoambSect}

We now turn to calculating $\rho_{\mathrm{amb}}(\g)$. By definition,
\begin{equation}
    \rho_{\mathrm{amb}}=\tau\circ\phi,
\end{equation}

\noindent so it remains to apply the linear Fourier transform $\tau$ to the explicit formula for $\phi$ obtained above.

\begin{prop}\label{rhofromula}
We have
\begin{equation}\label{Goncharovops}
    \rho_{\mathrm{amb}}
    \left(\begin{smallmatrix}
        \alpha & -\lambda^{\mathsf T}J_V & 0\\
        \mu    & X                       & \lambda\\
        0      & -\mu^{\mathsf T}J_V     & -\alpha
    \end{smallmatrix}\right)
    =
    \mu
    +
    \bigl\langle X^{*}\nu,\nabla_\nu\bigr\rangle
    -
    \alpha(E+k+1)
    +
    (E+k+1)\partial_{\lambda^\flat}
    -
    \lambda\Delta.
\end{equation}

\noindent Here $\nu\in V^*$, we regard $\mu\in V$ as a linear function on $V^*$, $\lambda^\flat=B(\lambda,*)\in V^*$, $E$ is the Euler operator on $V^*$, and $\Delta$ is the Laplace-Beltrami operator of $Q^*$ on $V^*$.
\end{prop}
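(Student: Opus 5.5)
The plan is to apply $\tau$ term by term to the formula for $\phi(\xi)$ in Proposition~\ref{ConfActionProp}, using linearity in $\xi$ as in the proofs of Propositions~\ref{ConfActionProp} and~\ref{DeltaPres}. Since $\tau$ is an algebra anti-automorphism-free map, in fact an algebra isomorphism $D_V\to D_{V^*}$ determined on generators by $\lambda\mapsto\partial_\lambda$ and $\partial_v\mapsto -v$, it suffices to compute the images of the building blocks $\partial_\mu$, $\langle Xv,\nabla\rangle$, $E$, $B_\lambda=B(\lambda,v)$, and $Q(v)$, and then multiply in the order dictated by $\phi(\xi)$. First, $\tau(\partial_\mu)=-\mu$, so $\tau(-\partial_\mu)=\mu$, regarded as a linear function on $V^*$. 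Second, $\tau(B_\lambda)=\partial_{\lambda^\flat}$, since $B_\lambda$ is the linear form $\lambda^\flat$ on $V$.

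The next step is to transform the Euler operator. Writing $E=\sum_i x_i\partial_{x_i}$ in dual coordinates, each factor becomes $\tau(x_i\partial_{x_i})=-\partial_{\nu_i}\nu_i=-(\nu_i\partial_{\nu_i}+1)$. Summing over the $n=2k$ coordinates gives
\[
\tau(E)=-(E+2k),
\qquad
\tau(E+k-1)=-(E+k+1).
\]
This immediately yields the $\alpha$-term $-\alpha(E+k+1)$.

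The remaining terms are handled in the same way.

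\textbf{The $X$-term.} We have $\tau(\langle Xv,\nabla\rangle)=\sum_{ij}X_{ij}\,\partial_{\nu_i}(-\nu_j)$. Reordering produces the vector field $-\langle X^{*}\nu,\nabla_\nu\rangle$ (after identifying the transpose action) plus the constant $-\tr X$. This constant vanishes because $X\in\mathfrak{o}(Q)$ is traceless. Keeping track of the sign convention for $X^*$ then gives $+\langle X^*\nu,\nabla_\nu\rangle$.

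\textbf{The $\lambda$-terms.} Since $\tau(\partial_\lambda)=-\lambda$ and $\tau(Q(v))=\Delta$, where the quadratic form on $V$ becomes the Laplacian of $Q^*$ under the duality given by $B$, we get $\tau(Q\partial_\lambda)=-\Delta\lambda$. Likewise,
\[
\tau\bigl(-B_\lambda(E+k-1)\bigr)=\partial_{\lambda^\flat}(E+k+1).
\]
The displayed formula is written with $\lambda\Delta$ and $(E+k+1)\partial_{\lambda^\flat}$, so I would reorder using the commutators. From \eqref{eq:Delta-B-short} transported to $V^*$, we have $[\Delta,\lambda]=\partial_{\lambda^\flat}$, so $-\Delta\lambda=-\lambda\Delta-\partial_{\lambda^\flat}$. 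Also $[\partial_{\lambda^\flat},E]=\partial_{\lambda^\flat}$, so $\partial_{\lambda^\flat}(E+k+1)=(E+k+1)\partial_{\lambda^\flat}+\partial_{\lambda^\flat}$. The two stray $\partial_{\lambda^\flat}$ terms cancel, leaving $(E+k+1)\partial_{\lambda^\flat}-\lambda\Delta$.

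I expect the main obstacle to be normalization bookkeeping rather than any conceptual difficulty. Three points need care:
\begin{itemize}
\item checking that $\tau(Q)$ is exactly $\Delta_{Q^*}$, with no factor of $2$, given the convention $B(v,v)=2Q(v)$ and the explicit form \eqref{QDef};
\item checking that $\tau(B_\lambda)$ is $\partial_{\lambda^\flat}$ rather than a multiple of it;
\item pinning down the sign and transpose convention in $X^*$.
\end{itemize}
To settle these I would verify each identity in the explicit coordinates of \eqref{QDef}, where $Q=\sum x_iy_{\bar i}$ and $\Delta=\sum\partial_{x_i}\partial_{y_{\bar i}}$.
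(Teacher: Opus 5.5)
Your proposal is correct and follows the paper's proof essentially step for step. You apply $\tau$ termwise to $\phi(\xi)$, use $\tau(E)=-E-2k$ and the tracelessness of $X$, and cancel the stray first-order terms via $[\Delta,\lambda]=\partial_{\lambda^\flat}$ and $[\partial_{\lambda^\flat},E]=\partial_{\lambda^\flat}$.

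There is one small slip in the $X$-term. The correct expression is $\tau(\langle Xv,\nabla\rangle)=-\sum_{i,j}X_{ij}\partial_{\nu_j}\nu_i$; you swapped the indices. Also, the final plus sign comes simply from the minus sign in front of $\langle Xv,\nabla\rangle$ in $\phi(\xi)$, with $X^*$ the dual map satisfying $\langle X^*\nu,v\rangle=\langle\nu,Xv\rangle$, and not from any extra sign convention for $X^*$.
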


\begin{proof}
Apply $\tau$ to the formula for $\phi(\xi)$ in Proposition~\ref{ConfActionProp}. Recall that $\tau$ is the algebra automorphism determined by
\begin{equation}
    \tau(v_i)=\partial_{\nu_i},
    \qquad
    \tau(\partial_{v_i})=-\nu_i.
\end{equation}

\noindent Here the operators on the right-hand side are viewed as elements of $D_{V^*}$ (cf.\ \eqref{linFourtrans}).

The $\mu$-term is immediate: $\tau(-\partial_\mu)=\mu$. 

For the $X$-term, writing $-\langle Xv,\nabla\rangle
    =
    -\sum_{i,j}X_{ij}v_j\partial_{v_i}$, we obtain
$\tau\bigl(-\langle Xv,\nabla\rangle\bigr)
    =
    -\sum_{i,j}X_{ij}\,\tau(v_j)\tau(\partial_{v_i})
    =
    \sum_{i,j}X_{ij}\,\partial_{\nu_j}\nu_i
    =\sum_{i,j}X_{ij}\bigl(\nu_i\partial_{\nu_j}+\delta_{ij}\bigr)
    =
    \langle X^*\nu,\nabla_\nu\rangle+\operatorname{tr}(X)$. Since $X\in\mathfrak{so}(V,Q)$, we have $\operatorname{tr}(X)=0$, and therefore $\tau\bigl(-\langle Xv,\nabla\rangle\bigr)
    =
    \langle X^*\nu,\nabla_\nu\rangle$.

For the $\alpha$-term, $\tau(E)=-E-2k$, so $\tau\bigl(\alpha(E+(k-1))\bigr)
    =
    \alpha(-E-k-1)
    =
    -\alpha(E+k+1)$.

It remains to transform the $\lambda$-term $-B(\lambda,v)(E+(k-1))+Q(v)\partial_\lambda$.
Since $\tau(B(\lambda,v))=\partial_{\lambda^\flat}$, we have $\tau\bigl(-B(\lambda,v)(E+(k-1))\bigr)
    =
    -\partial_{\lambda^\flat}(-E-k-1)
    =
    \partial_{\lambda^\flat}(E+k+1)$. Using $[\partial_{\lambda^\flat},E]=\partial_{\lambda^\flat}$, this becomes $\partial_{\lambda^\flat}(E+k+1)
    =
    (E+k+2)\partial_{\lambda^\flat}$. Next, $\tau\bigl(Q(v)\partial_\lambda\bigr)
    =
    \tau(Q(v))\,\tau(\partial_\lambda)
    =
    \Delta(-\lambda)
    =
    -\Delta\lambda$. Since $[\Delta,\lambda]=\partial_{\lambda^\flat}$, equivalently
$\Delta\lambda=\lambda\Delta+\partial_{\lambda^\flat}$, this becomes $\tau\bigl(Q(v)\partial_\lambda\bigr)
    =
    -\lambda\Delta-\partial_{\lambda^\flat}$. Adding the two contributions, the total $\lambda$-term is $(E+k+2)\partial_{\lambda^\flat}-\lambda\Delta-\partial_{\lambda^\flat}
    =
    (E+k+1)\partial_{\lambda^\flat}-\lambda\Delta$.

Combining all terms gives \eqref{Goncharovops}.
\end{proof}

\begin{prop}\label{goncharovresult}
    For every $\xi\in \mathfrak g$, one has
\begin{equation}
    \ad_{\rho_{\mathrm{amb}}(\xi)}(D_{V^*}Q^*)\subset D_{V^*}Q^*.
\end{equation}

\noindent Equivalently, $\rho_{\mathrm{amb}}(\mathfrak g)$ lies in the normalizer of the principal left ideal $D_{V^*}Q^*\subset D_{V^*}$.
\end{prop}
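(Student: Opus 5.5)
The plan is to transport Proposition~\ref{DeltaPres} across the linear Fourier transform rather than recompute commutators on $V^*$. The map $\tau:D_V\to D_{V^*}$ is an algebra isomorphism. By definition $\rho_{\mathrm{amb}}=\tau\circ\phi$, and $\tau(\Delta)=\tau(Q(v))$ up to the sign and normalization conventions fixed in Proposition~\ref{rhofromula}. That proof already used the identity $\tau(Q(v))=\Delta$; the analogous identity $\tau(\Delta_V)=Q^*$ (up to a nonzero constant) follows in the same way, since $\tau(\partial_{v_i})=-\nu_i$ and $\Delta_V$ is the quadratic differential operator dual to $Q$. So the first step is to record
\begin{equation*}
\tau(\Delta_V)=c\,Q^*,\qquad c\in\kappa^\times .
\end{equation*}
Here $c=1$ for the form \eqref{QDef}, because $\tau(\partial_{x_i}\partial_{y_{\bar i}})=\nu_{x_i}\nu_{y_{\bar i}}$ and the two factors commute. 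Applying $\tau$ then gives $\tau(D_V\Delta_V)=D_{V^*}Q^*$.

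Next, apply $\tau$ to \eqref{eq:comm-phi-Delta}:
\begin{equation*}
[\rho_{\mathrm{amb}}(\xi),Q^*]=2\,\tau\bigl(B(\lambda,v)-\alpha\bigr)Q^*=2\bigl(\partial_{\lambda^\flat}-\alpha\bigr)Q^*.
\end{equation*}
The right-hand side is a left multiple of $Q^*$ in $D_{V^*}$. The ideal statement then follows by the same Leibniz argument as at the end of Proposition~\ref{DeltaPres}: for $A\in D_{V^*}$ we have $[\rho_{\mathrm{amb}}(\xi),AQ^*]=[\rho_{\mathrm{amb}}(\xi),A]Q^*+A[\rho_{\mathrm{amb}}(\xi),Q^*]\in D_{V^*}Q^*$. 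Alternatively, and more abstractly, $\tau$ is an algebra isomorphism carrying $D_V\Delta$ to $D_{V^*}Q^*$, so it carries the normalizer of $D_V\Delta$ onto the normalizer of $D_{V^*}Q^*$. Since $\phi(\g)$ lies in the former by Proposition~\ref{DeltaPres}, $\rho_{\mathrm{amb}}(\g)$ lies in the latter. The equivalence of the two phrasings in the statement is just the definition of the normalizer of a left ideal under $\ad$.

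The only step needing care is the sign and normalization bookkeeping in $\tau(\Delta_V)=cQ^*$. One must also be careful that, in Proposition~\ref{rhofromula}, the symbol $\Delta$ denotes the Laplacian on $V^*$, a different operator from $\Delta_V$. If the abstract argument raises any doubt, I would cross-check directly from \eqref{Goncharovops}. There, $\mu$ and $\langle X^*\nu,\nabla_\nu\rangle$ commute with $Q^*$; the Euler terms give $[E,Q^*]=2Q^*$; and $\lambda\Delta$ together with $(E+k+1)\partial_{\lambda^\flat}$ produce compensating terms proportional to $\partial_{\lambda^\flat}$, using $[\Delta,Q^*]=E+k$. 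This mirrors the cancellation seen in Proposition~\ref{DeltaPres}.
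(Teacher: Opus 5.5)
Your proposal is correct and follows the paper's proof: both apply the algebra isomorphism $\tau$ to the inclusion $\ad_{\phi(\xi)}(D_V\Delta)\subset D_V\Delta$ of Proposition~\ref{DeltaPres}, using $\rho_{\mathrm{amb}}=\tau\circ\phi$ and $\tau(\Delta)=Q^*$. Your check that $c=1$ is right, and the only slip is the stray phrase ``$\tau(\Delta)=\tau(Q(v))$'' in your first paragraph, which should read $\tau(\Delta)=Q^*$, as you then correctly state.
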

\begin{proof}
Since $\rho_{\mathrm{amb}}=\tau\circ \phi$ and $\tau(\Delta)=Q^*$, this is immediate from Proposition~\ref{DeltaPres}. Indeed, applying the algebra isomorphism $\tau:D_V\to D_{V^*}$ to the inclusion
\begin{equation}
    \ad_{\phi(\xi)}(D_V\Delta)\subset D_V\Delta
\end{equation}
\noindent yields
\begin{equation}
    \ad_{\tau(\phi(\xi))}\bigl(\tau(D_V\Delta)\bigr)\subset \tau(D_V\Delta).
\end{equation}
\noindent Since $\tau(\phi(\xi))=\rho_{\mathrm{amb}}(\xi)$ and $\tau(D_V\Delta)=D_{V^*}\tau(\Delta)=D_{V^*}Q^*$, we obtain the claim.
\end{proof}

\begin{rem}\label{ambnottangentrem}
    Proposition~\ref{goncharovresult} does not imply that the operators $\rho_{\mathrm{amb}}(\mathfrak g)$ are tangent to the cone $C\subset V^*$ in the usual sense. Indeed, although they normalize the principal left ideal $D_{V^*}Q^*$, they do not in general preserve the function ideal $(Q^*)\subset \kappa[V^*]$.

    Concretely, if in \eqref{Goncharovops} we specialize $\lambda$ to the coordinate function $x_i$, then $\lambda^\flat=y_{k+1-i}$ and the $\lambda$-term becomes $(E+k+1)\partial_{y_{k+1-i}}-x_i\Delta$. Similarly, if we specialize $\lambda$ to the coordinate function $y_i$, then $\lambda^\flat=x_{k+1-i}$ and the $\lambda$-term becomes $(E+k+1)\partial_{x_{k+1-i}}-y_i\Delta$. For these operators one finds $\Bigl((E+k+1)\partial_{y_{k+1-i}}-x_i\Delta\Bigr)(Q^*)=2x_i$, and similarly $\Bigl((E+k+1)\partial_{x_{k+1-i}}-y_i\Delta\Bigr)(Q^*)=2y_i$. Thus the ambient Fourier-side operators are not yet bona fide differential operators on the cone. The correction will be carried out in the next subsection.
\end{rem}

\subsection{\texorpdfstring{Constructing $\rho$: the $\delta_C$-Distribution and Passage to the Cone}{Constructing rho: the delta-C Distribution and Passage to the Cone}}\label{AmbientCorrectionSection}

Let
\begin{equation}
R:=\kappa[V^*],
\qquad
I:=(Q^*)\subset R.
\end{equation}
\noindent We write
\begin{equation}
\mathbb I(I):=\{P\in D_{V^*}:P(I)\subset I\},
\qquad
\mathbb A(I):=\{P\in D_{V^*}:P(R)\subset I\}.
\end{equation}
\noindent Then $\mathbb I(I)$ is the idealizer of $I$ in $D_{V^*}$, and $\mathbb A(I)$ is a two-sided ideal of $\mathbb I(I)$. Since $C\subset V^*$ is the hypersurface cut out by $Q^*$, we have
\begin{equation}\label{DCNormalizerQuotient}
D_C
=
D_{R/I\mid \kappa}
\cong
\mathbb I(I)/\mathbb A(I).
\end{equation}
\noindent \noindent This standard idealizer description is recorded, for example, in \cite[Theorem~15.5.13]{McConnellRobson2001}; here we apply it to the hypersurface $C=\Spec(R/I)$. Since $R$ is a polynomial algebra, $\mathbb A(I)=I D_{V^*}=(Q^*)D_{V^*}$.

We now carry out the construction via the local cohomology class $\delta_C$. Set
\begin{equation}
M:=H^1_{(Q^*)}(R)=R[(Q^*)^{-1}]/R,
\qquad
\delta_C:=\bigl[(Q^*)^{-1}\bigr]\in M.
\end{equation}

\begin{prop}\label{deltaQtransport}
    Multiplication by $\delta_C$ induces an $R$-module isomorphism
\begin{equation}
    T:R/(Q^*)\xrightarrow{\sim}R\delta_C,
    \qquad
    \overline{f}\longmapsto f\delta_C.
\end{equation}
\noindent Let $\xi\in\g$, and write $D_\xi:=\rho_{\mathrm{amb}}(\xi)$. Then $D_\xi$ preserves the subspace $R\delta_C\subset M$. More precisely, if
\begin{equation}
    A_\xi:=2(\partial_{\lambda^\flat}-\alpha),
\end{equation}
\noindent then for every $f\in R$ one has
\begin{equation}\label{deltaTransportFormula}
    D_\xi(f\delta_C)=\bigl((D_\xi-A_\xi)f\bigr)\delta_C.
\end{equation}
\noindent Equivalently, in the localization $R[(Q^*)^{-1}]$ one has
\begin{equation}\label{localizedConjFormula}
    Q^*D_\xi(Q^*)^{-1}=D_\xi-A_\xi.
\end{equation}
\end{prop}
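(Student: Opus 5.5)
The plan is to prove the localized identity \eqref{localizedConjFormula} first and deduce everything else from it.

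\textbf{The map $T$.} $T$ is $R$-linear by construction. It is surjective onto $R\delta_C$ by definition. Its kernel is the set of $f$ with $f/Q^*\in R$, which is exactly $(Q^*)$, since $R$ is a domain. So $T$ is an isomorphism.

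\textbf{Reduction to \eqref{localizedConjFormula}.} Suppose the identity $Q^*D_\xi(Q^*)^{-1}=D_\xi-A_\xi$ holds in $D_{V^*}$ acting on $R[(Q^*)^{-1}]$. Then
\[
D_\xi\bigl(f(Q^*)^{-1}\bigr)=(Q^*)^{-1}\bigl((D_\xi-A_\xi)f\bigr)
\]
holds on the nose in the localization. Reducing modulo $R$ gives \eqref{deltaTransportFormula}. In particular $D_\xi(R\delta_C)\subset R\delta_C$, because $D_\xi$ preserves $R$ and hence acts on $M$.

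\textbf{Strategy for \eqref{localizedConjFormula}.} I write $Q^*P(Q^*)^{-1}=P+Q^*[P,(Q^*)^{-1}]$. I then treat the summands of \eqref{Goncharovops} one at a time, using that $\ad$ is a derivation and that $Q^*$ is homogeneous of degree $2$.

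The conjugation rules for the individual pieces are as follows.
\begin{itemize}
\item The multiplication operator $\mu$ commutes with $(Q^*)^{-1}$.
\item The vector field $\langle X^*\nu,\nabla_\nu\rangle$ is the infinitesimal $\O(Q^*)$-action, so it kills $Q^*$ and commutes with $(Q^*)^{-1}$.
\item From $[E,(Q^*)^{-1}]=-2(Q^*)^{-1}$ we get $Q^*E(Q^*)^{-1}=E-2$. Hence $-\alpha(E+k+1)$ becomes $-\alpha(E+k+1)+2\alpha$, which produces the $-(-2\alpha)$ part of $-A_\xi$.
\item $Q^*\partial_{\lambda^\flat}(Q^*)^{-1}=\partial_{\lambda^\flat}-\lambda/Q^*$, using $\partial_{\lambda^\flat}Q^*=\lambda$ (I will check this normalization against Remark~\ref{ambnottangentrem}).
\end{itemize}
Combining the last two rules,
\[
Q^*(E+k+1)\partial_{\lambda^\flat}(Q^*)^{-1}=(E+k-1)\bigl(\partial_{\lambda^\flat}-\lambda(Q^*)^{-1}\bigr).
\]
This leaves a polar term $-(E+k-1)\lambda(Q^*)^{-1}$. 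Commuting $E$ past $\lambda(Q^*)^{-1}$, which has degree $-1$, turns it into $-\lambda(Q^*)^{-1}(E+k-2)$.

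\textbf{The $\lambda\Delta$ term.} For the remaining term I compute
\[
Q^*\Delta(Q^*)^{-1}=\Delta+Q^*[\Delta,(Q^*)^{-1}].
\]
Here $[\Delta,(Q^*)^{-1}]$ is a first-order operator plus a function. Its first-order part is built from $\nabla Q^*\cdot\nabla$, which in the dual normalization is essentially $2E$ divided by $(Q^*)^2$. Its zeroth-order part is $\Delta((Q^*)^{-1})$. I evaluate it via $[\Delta,Q^*]=E+k$ (the dual of \eqref{eq:Delta-Q-short}), together with the chain rule and the fact that $Q^*$ vanishes to first order along $C$. This should give
\[
Q^*\Delta(Q^*)^{-1}=\Delta-(Q^*)^{-1}(2E+c)
\]
for an explicit constant $c$ (I expect $c$ to be $2k-4$, up to the chosen normalization).

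\textbf{Main obstacle.} The hardest step is checking that the two polar contributions cancel exactly. These are $-\lambda(Q^*)^{-1}(E+k-2)$ from the $\partial_{\lambda^\flat}$ term and $+\lambda(Q^*)^{-1}(2E+c)$ from $-\lambda\Delta$, each divided by $Q^*$ with the normalization factors tracked. This cancellation is precisely the content of the statement. It is a bookkeeping check of the constants $k\pm1$ and the factor $2$ coming from $B(\nu^\sharp,\nu^\sharp)=2Q^*(\nu)$.

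What survives is $D_\xi+2\alpha-2\partial_{\lambda^\flat}$. Indeed, the term $(E+k-1)\partial_{\lambda^\flat}$ equals $(E+k+1)\partial_{\lambda^\flat}-2\partial_{\lambda^\flat}$, which supplies the $-2\partial_{\lambda^\flat}$, and the $E$-conjugation supplied the $+2\alpha$. This is exactly $D_\xi-A_\xi$.

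As a fallback, if the constants resist, I would verify the identity on the generators $\lambda=x_i,y_i$ of Remark~\ref{ambnottangentrem}. By linearity in $\lambda$ this suffices, and the test computation $D_\xi(Q^*)=2x_i$ already fixes the normalization.
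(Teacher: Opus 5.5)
Your strategy is valid but differs from the paper's route, and the one intermediate value you wrote down for the crucial step is off by a factor of $2$.

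\textbf{The paper's route.} The paper never conjugates by $(Q^*)^{-1}$ term by term. It applies the algebra isomorphism $\tau$ to the identity $[\phi(\xi),\Delta]=2(B(\lambda,v)-\alpha)\Delta$ of Proposition~\ref{DeltaPres}, using $\tau(\Delta)=Q^*$ and $\tau(B(\lambda,v))=\partial_{\lambda^\flat}$. This gives $[D_\xi,Q^*]=A_\xi Q^*$, that is, $(D_\xi-A_\xi)Q^*=Q^*D_\xi$. Inverting $Q^*$ then yields \eqref{localizedConjFormula} with no polar bookkeeping. Your direct computation from \eqref{Goncharovops} is self-contained and also serves as an independent check of that explicit formula and of the shift $k+1\rightsquigarrow k-1$.

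\textbf{The error in your computation.} In the paper's normalization ($[\Delta,Q^*]=E+k$), the first-order part of $[\Delta,(Q^*)^{-1}]$ is $-(Q^*)^{-2}E$, not $2E/(Q^*)^2$. Its zeroth-order part is $\Delta\bigl((Q^*)^{-1}\bigr)=(2-k)(Q^*)^{-2}$. Hence
\[
Q^*\Delta(Q^*)^{-1}=\Delta-(Q^*)^{-1}(E+k-2).
\]
With your guess $2E+2k-4$, the two polar terms would not cancel; they would leave the residue $\lambda(Q^*)^{-1}(E+k-2)$. With the correct value, $-\lambda Q^*\Delta(Q^*)^{-1}$ contributes $+\lambda(Q^*)^{-1}(E+k-2)$. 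This cancels exactly against the term $-\lambda(Q^*)^{-1}(E+k-2)$ coming from $(E+k+1)\partial_{\lambda^\flat}$. The rest of your bookkeeping then gives $D_\xi-A_\xi$ as claimed.

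Since you left this cancellation unverified and named it the crux, you need to redo it with the correct constant.

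\textbf{A shorter direct route.} You can avoid negative powers entirely by computing $[D_\xi,Q^*]$, which is equivalent to \eqref{localizedConjFormula}. This uses only $[\partial_{\lambda^\flat},Q^*]=\lambda$, $[E,Q^*]=2Q^*$, and $[\Delta,Q^*]=E+k$. It gives $-2\alpha Q^*+2Q^*\partial_{\lambda^\flat}+2\lambda$, which equals $A_\xi Q^*$.
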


\begin{proof}
The map $T$ is clearly $R$-linear and surjective. Its kernel consists of those $f\in R$ such that $f\delta_C=0$, equivalently $f/Q^*\in R$, i.e.\ $f\in(Q^*)$. Thus $T$ is an isomorphism.

By Proposition~\ref{DeltaPres}, after applying the Fourier transform $\tau$ and using $\tau(\Delta)=Q^*$, we obtain
\begin{equation}
    [D_\xi,Q^*]=A_\xi Q^*.
\end{equation}
\noindent Since $Q^*$ is invertible in $R[(Q^*)^{-1}]$, this implies
\begin{equation}
    D_\xi(Q^*)^{-1}=(Q^*)^{-1}(D_\xi-A_\xi),
\end{equation}
\noindent which is equivalent to \eqref{localizedConjFormula}. Applying this identity to $f\in R$ gives
\begin{equation}
    D_\xi\left(\frac{f}{Q^*}\right)
    =
    \frac{(D_\xi-A_\xi)f}{Q^*}.
\end{equation}
\noindent Passing to $M=R[(Q^*)^{-1}]/R$, we obtain \eqref{deltaTransportFormula}. In particular, $R\delta_C$ is $D_\xi$-stable.
\end{proof}

\begin{prop}\label{rhoConstructionProp}
    Define
\begin{equation}
    \rho(\xi):=T^{-1}\rho_{\mathrm{amb}}(\xi)T\in \End_\kappa(\kappa[C]).
\end{equation}
\noindent Then the image lies in $D_C$, and $\rho:\g\to D_C$ is a Lie algebra homomorphism.\footnote{Later, we will also use $\rho$ to refer to the associated map $\mathcal{U}(\g) \to D_C$.} Moreover, $\rho(\xi)$ is represented by the ambient differential operator
\begin{equation}\label{CorrectedAmbientFormula}
    \widetilde{\rho}(\xi):=Q^*\,\rho_{\mathrm{amb}}(\xi)\,(Q^*)^{-1}
    =
    \rho_{\mathrm{amb}}(\xi)-2(\partial_{\lambda^\flat}-\alpha),
\end{equation}
\noindent which preserves the ideal $(Q^*)\subset R$. Explicitly,
\begin{equation}\label{rhoConeFormula}
    \widetilde{\rho}
    \left(\begin{smallmatrix}
        \alpha & -\lambda^{\mathsf T}J_V & 0\\
        \mu    & X                       & \lambda\\
        0      & -\mu^{\mathsf T}J_V     & -\alpha
    \end{smallmatrix}\right)
    =
    \mu
    +
    \bigl\langle X^*\nu,\nabla_\nu\bigr\rangle
    -
    \alpha(E+k-1)
    +
    (E+k-1)\partial_{\lambda^\flat}
    -
    \lambda\Delta.
\end{equation}
\noindent Thus $\widetilde{\rho}(\xi)\in \mathbb I(I)$, and $\rho(\xi)\in D_C$ is the image of $\widetilde{\rho}(\xi)$ under the identification \eqref{DCNormalizerQuotient}. Equivalently, the image of $\g$ in $D_C$ is represented by the operators \eqref{rhoConeFormula}, taken modulo $(Q^*)D_{V^*}$.
\end{prop}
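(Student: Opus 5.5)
The proof rests on Proposition~\ref{deltaQtransport}. I would first establish the explicit formula \eqref{rhoConeFormula}, then show that $\widetilde\rho(\xi)$ lies in the idealizer $\mathbb I(I)$, and finally check that $T^{-1}\rho_{\mathrm{amb}}(\xi)T$ coincides with the class of $\widetilde\rho(\xi)$ in $\mathbb I(I)/\mathbb A(I)\cong D_C$. The homomorphism property will then follow because conjugation by $T$ respects commutators.

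\emph{Formula.} By \eqref{localizedConjFormula}, $Q^*\rho_{\mathrm{amb}}(\xi)(Q^*)^{-1}=\rho_{\mathrm{amb}}(\xi)-A_\xi$ with $A_\xi=2(\partial_{\lambda^\flat}-\alpha)$. This is an identity of operators on $R[(Q^*)^{-1}]$. The right-hand side is a genuine element of $D_{V^*}$, so $\widetilde\rho(\xi)$ is a polynomial differential operator. Substituting the expression \eqref{Goncharovops}, we get $-\alpha(E+k+1)+2\alpha=-\alpha(E+k-1)$ and $(E+k+1)\partial_{\lambda^\flat}-2\partial_{\lambda^\flat}=(E+k-1)\partial_{\lambda^\flat}$, which is exactly \eqref{rhoConeFormula}.

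\emph{Idealizer.} Take $f\in R$. Then $\widetilde\rho(\xi)(Q^*f)=Q^*\rho_{\mathrm{amb}}(\xi)(f)\in(Q^*)$, computed in the localization, where $(Q^*)^{-1}\cdot Q^*f=f$. Hence $\widetilde\rho(\xi)\in\mathbb I(I)$, and it defines an element of $D_C$ via \eqref{DCNormalizerQuotient}. Next, compare with $\rho(\xi)$. For $\bar f\in\kappa[C]$, \eqref{deltaTransportFormula} gives $\rho_{\mathrm{amb}}(\xi)(f\delta_C)=(\widetilde\rho(\xi)f)\delta_C$. Applying $T^{-1}$ yields $\rho(\xi)\bar f=\overline{\widetilde\rho(\xi)f}$. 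This is precisely the action of the class of $\widetilde\rho(\xi)$ on $R/I$, so $\rho(\xi)$ lies in $D_C$ and is represented by $\widetilde\rho(\xi)$. It is well defined, since $\widetilde\rho(\xi)$ preserves $I$.

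\emph{Lie homomorphism.} There are two points to check. First, $\phi$ is a Lie algebra homomorphism, being the differential of the left action \eqref{twistedaction}. Since $\tau$ is an algebra isomorphism, $\rho_{\mathrm{amb}}=\tau\circ\phi$ is also a Lie homomorphism. Second, $\rho_{\mathrm{amb}}(\mathfrak g)$ preserves $R\delta_C$ by Proposition~\ref{deltaQtransport}. Restricting to this common invariant subspace therefore gives a Lie homomorphism $\mathfrak g\to\End_\kappa(R\delta_C)$, and conjugating by the linear isomorphism $T$ preserves brackets. Equivalently, $\xi\mapsto Q^*\rho_{\mathrm{amb}}(\xi)(Q^*)^{-1}$ is conjugation by a unit in $D_{V^*}[(Q^*)^{-1}]$, so it is an algebra automorphism, and the quotient map $\mathbb I(I)\to D_C$ is an algebra map.

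The main obstacle is the claim that $\phi$ is a Lie homomorphism rather than an anti-homomorphism. It depends on the sign conventions: \eqref{twistedaction} uses $g^{-1}$, which makes it a left action, and the differential of a left action on functions is a homomorphism. As an independent check, I would verify a single bracket directly from the explicit formulas, for example $[\phi(\xi_\mu),\phi(\xi_\lambda)]$ against $\phi([\xi_\mu,\xi_\lambda])$.
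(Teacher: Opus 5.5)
Your proposal is correct and follows the paper's proof essentially step for step. You obtain \eqref{rhoConeFormula} from \eqref{localizedConjFormula}, prove the idealizer property from $(D_\xi-A_\xi)\circ Q^*=Q^*\circ D_\xi$, and get the homomorphism property by transport of structure across $T$; your remark that $\rho_{\mathrm{amb}}=\tau\circ\phi$ is a Lie homomorphism because \eqref{twistedaction} is a left action just makes explicit a point the paper leaves implicit.
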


\begin{proof}
By Proposition~\ref{deltaQtransport}, the transported operator $T^{-1}D_\xi T$ is induced by $D_\xi-A_\xi$, which is \eqref{CorrectedAmbientFormula}. Substituting the explicit formula of Proposition~\ref{rhofromula} for $\rho_{\mathrm{amb}}(\xi)$ gives \eqref{rhoConeFormula}.

To see that $\widetilde{\rho}(\xi)$ preserves $(Q^*)$, rewrite the commutator identity as
\begin{equation}
    (D_\xi-A_\xi)\circ Q^*=Q^*\circ D_\xi.
\end{equation}
\noindent Therefore, for every $f\in R$,
\begin{equation}
    \widetilde{\rho}(\xi)(Q^*f)
    =
    (D_\xi-A_\xi)(Q^*f)
    =
    Q^*\,D_\xi(f)\in(Q^*).
\end{equation}
\noindent Thus $\widetilde{\rho}(\xi)\in \mathbb I(I)$, and so defines an element of $D_C$ via \eqref{DCNormalizerQuotient}. Finally, since $\rho$ is obtained from $\rho_{\mathrm{amb}}$ by transport of structure across the isomorphism $T$, it preserves commutators. Hence $\rho:\g\to D_C$ is a Lie algebra homomorphism.
\end{proof}

\begin{rem}
    The passage through $\delta_C$ is exactly what produces the shift
\begin{equation}
    k+1 \rightsquigarrow k-1
\end{equation}
\noindent between the ambient formula \eqref{Goncharovops} and the cone formula \eqref{rhoConeFormula}.
\end{rem}

\begin{rem}
    Proposition~\ref{rhoConstructionProp} is essentially the main result of \cite{Goncharov1976Weil} in the case of an even orthogonal group (see, in particular, the example at the end of loc.\ cit.). Here we have  provided a very direct, independent (if somewhat calculation-heavy) proof, which makes all the actions explicit and geometrically motivated. This explicitness will be rewarded later.
\end{rem}

\begin{defn}\label{hattaudef}
For later use we define the corrected Fourier-to-cone map
\begin{equation}\label{defwidehattau}
\widehat{\tau}:N(D_V\Delta)/D_V\Delta \longrightarrow D_C
\end{equation}
\noindent as follows. Here $N(D_V\Delta)$ denotes the normalizer of the principal left ideal $D_V\Delta$. For $[\xi]\in N(D_V\Delta)/D_V\Delta$, let $\tau(\xi)\in D_{V^*}$ denote the Fourier transform of any representative $\xi\in D_V$. Then set
\begin{equation}
\widehat{\tau}([\xi])
:=
\bigl[\,Q^*\,\tau(\xi)\,(Q^*)^{-1}\,\bigr]\in D_C,
\end{equation}
\noindent where the bracket on the right denotes the image in the quotient \eqref{DCNormalizerQuotient}. Equivalently, $\widehat{\tau}$ is the composition of the linear Fourier transform $\tau$ with the correction of Proposition~\ref{rhoConstructionProp}.
\end{defn}

\begin{rem}\label{tauisom}
The map $\widehat{\tau}$ is well-defined. Indeed, if $\xi$ is replaced by $\xi+a\Delta$, then $\tau(\xi)$ is replaced by $\tau(\xi)+\tau(a)Q^*$, and hence
\begin{equation}
Q^*\,\tau(\xi+a\Delta)\,(Q^*)^{-1}
\equiv
Q^*\,\tau(\xi)\,(Q^*)^{-1}
\pmod{(Q^*)D_{V^*}}.
\end{equation}
\noindent Moreover, $\widehat{\tau}$ is an isomorphism. The Fourier transform identifies $N(D_V\Delta)/D_V\Delta$ with the normalizer quotient $N(D_{V^*}Q^*)/D_{V^*}Q^*$ for the principal left ideal $D_{V^*}Q^*$. The correction $A\mapsto Q^*A(Q^*)^{-1}$ identifies this quotient with the idealizer quotient \eqref{DCNormalizerQuotient}; explicitly, the condition $A\in N(D_{V^*}Q^*)$ says that $Q^*A\in D_{V^*}Q^*$, so $Q^*A(Q^*)^{-1}$ is again an ambient differential operator, and it preserves the function ideal $(Q^*)$. Changing $A$ by an element of $D_{V^*}Q^*$ changes $Q^*A(Q^*)^{-1}$ by an element of $(Q^*)D_{V^*}$. Thus $\widehat{\tau}$ is the desired isomorphism.
\end{rem}

To connect the construction of $\rho$ with the quasiclassical picture, we now compute the principal symbols of the operators $\rho(\xi)$ on $C^o$. As in \eqref{invariantfunccot}, write a point of $T^*C^o$ as $(\nu,[v])$, where $\nu\in C^o\subset V^*$ and $v\in V$ represents a class in $V/\kappa\nu^\sharp$. On the ambient cotangent bundle $T^*V^*=V^*\times V$, the principal symbol is determined by $\sigma(f)(\nu,v)=f(\nu)$ for $f\in\kappa[V^*]$ and $\sigma(\partial_\eta)(\nu,v)=\langle\eta,v\rangle$ for $\eta\in V^*$. Although these ambient expressions may depend on the representative $v$, the principal symbols of the operators \eqref{rhoConeFormula} are invariant under $v\mapsto v+t\nu^\sharp$ and therefore descend to functions on $T^*C^o$. In particular,
\begin{equation}
\sigma(E)=\langle\nu,v\rangle,
\qquad
\sigma(\partial_{\lambda^\flat})=B(\lambda,v),
\qquad
\sigma(\Delta)=Q(v),
\qquad
\sigma(\lambda)=\langle\nu,\lambda\rangle.
\end{equation}

\noindent We next record the normalization for the middle block. For $X\in\mathfrak{o}(Q)$, the $X$-term in \eqref{rhoConeFormula} has principal symbol
$\langle X^*\nu,v\rangle=\langle\nu,Xv\rangle=B(\nu^\sharp,Xv)$. On the other hand, by \eqref{wedges},
\begin{equation*}
\begin{aligned}
\tr\bigl((v\wedge\nu^\sharp)X\bigr)
&=
\tr\bigl((\nu^\sharp v^\flat-v\nu)X\bigr)\\
&=
B(v,X\nu^\sharp)-B(\nu^\sharp,Xv)\\
&=
-2B(\nu^\sharp,Xv),
\end{aligned}
\end{equation*}
\noindent since $X$ is $B$-skew. Hence
$B(\nu^\sharp,Xv)=-\frac{1}{2}\tr\bigl((v\wedge\nu^\sharp)X\bigr)$. Equivalently, under the identification of the contragredient action with $-X$ induced by $B$, this equals $\frac{1}{2}\tr\bigl((v\wedge\nu^\sharp)X^*\bigr)$.

We temporarily write the scalar parameter in $\g$ as $a$, and the parameter in its $\mu$-slot as $\mu_0$, to distinguish them from the invariants in \eqref{invariantfunccot}. The four basic cases of Proposition~\ref{rhoConstructionProp} then have principal symbols
\begin{equation}
\begin{aligned}
\sigma\bigl(\rho(a,0,0,0)\bigr)
&=
-a\langle\nu,v\rangle,
&
\sigma\bigl(\rho(0,\mu_0,0,0)\bigr)
&=
\langle\nu,\mu_0\rangle,
\\
\sigma\bigl(\rho(0,0,X,0)\bigr)
&=
-\frac{1}{2}\tr\bigl((v\wedge\nu^\sharp)X\bigr),
&
\sigma\bigl(\rho(0,0,0,\lambda)\bigr)
&=
\langle\nu,v\rangle B(\lambda,v)-Q(v)\langle\nu,\lambda\rangle.
\end{aligned}
\end{equation}

\noindent Set $\alpha:=\langle\nu,v\rangle$ and $\mu_{\nu,v}:=\alpha v-Q(v)\nu^\sharp$, so that $\mu_{\nu,v}$ is the vector denoted by $\mu$ in \eqref{dualconeinv}. The final symbol may then be rewritten as
\begin{equation}
\sigma\bigl(\rho(0,0,0,\lambda)\bigr)
=
B(\mu_{\nu,v},\lambda).
\end{equation}

\noindent Thus the four families of operators recover the four components of the invariant matrix \eqref{invariantfunccot}: the scalar $\alpha=\langle\nu,v\rangle$, the vector $\nu^\sharp$ (equivalently, the covector $\nu$), the middle block $v\wedge\nu^\sharp$, and the vector $\mu_{\nu,v}=\alpha v-Q(v)\nu^\sharp$, with the scalar sign and trace-pairing normalization displayed above. These are precisely the matrix coefficients which generate $\kappa[T^*C^o]$ by Proposition~\ref{CotBundDescr}. Equivalently, $\rho$ quantizes the $F$-moment map: over $C^o$, the principal symbol of $\rho(\xi)$ is the descended invariant function corresponding to $\xi$.

See \cite{getz2025modulationgroups} for a conjectural generalization of this phenomenon to so-called ``modulation groups.''

\begin{prop}\label{Envelopmap}
    The Lie algebra homomorphism
\begin{equation}
    \rho:\mathfrak{g}\to D_C
\end{equation}

\noindent extends uniquely to an algebra homomorphism
\begin{equation}\label{rhodef}
    \rho:\mathcal{U}(\mathfrak{g})\to D_C.
\end{equation}

\noindent This map is surjective, and its kernel is the Joseph ideal $J\subset \mathcal{U}(\mathfrak{g})$.
\end{prop}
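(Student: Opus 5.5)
Extension to $\mathcal U(\g)$ is formal: $\rho:\g\to D_C$ is a Lie algebra homomorphism by Proposition~\ref{rhoConstructionProp}, so the universal property of $\mathcal U(\g)$ gives a unique algebra map. The content lies in surjectivity and in identifying the kernel. We treat these separately. The second step depends on the first.

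For surjectivity we plan a filtered/associated-graded argument, checked against the quasiclassical picture. Filter $\mathcal U(\g)$ by the PBW filtration and $D_C$ by the order filtration inherited from $\mathbb I(I)/\mathbb A(I)$. The principal symbols computed just before the statement show that $\gr\rho$ sends $\xi\in\g$ to the descended $F$-moment function $\overline\mu^*\xi\in\kappa[T^*C^o]$. By Proposition~\ref{CotBundDescr}, these generate $\kappa[T^*C^o]\cong\kappa[\overline{\mathbb O}_{\min}]$. So $\gr\rho:\Sym(\g)\to\kappa[\overline{\mathbb O}_{\min}]$ is surjective.

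To lift this to $\rho$ itself, we need $\gr D_C\hookrightarrow\kappa[T^*C^o]$: symbols of operators on $C$ restrict injectively to $C^o$. We also need equality of the two sides. This equality is the main obstacle, because $C$ is singular. A priori $\gr D_C$ could be a proper subring, or $D_C$ could contain operators whose symbols are not realized, as in the cubic cone. Our first attempt is to use the $\G_m\times H$ bigrading of Section~\ref{TwoGradings} together with the Shapovalov identities of Proposition~\ref{Shapovalovdet}. These would show that every graded piece of $D_C$ is generated over $\rho(\mathcal U(\g))$ by $\kappa[C]$ and the Euler-shifted operators, by induction on order and degree. Failing that, we would follow the paper's later route. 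Identify $D_C\cong\Gamma(G/P,\mathcal H)$ via Theorem~\ref{GlobalSectionsTheorem} and $\widehat\tau$. Then use $H^1(G/P,\mathcal J_\Delta)=0$ to lift global sections of $\mathcal H$ to $\Gamma(\mathcal D_{\mathcal L})$, and finally use surjectivity of $\mathcal U(\g)\to\Gamma(\mathcal D_{\mathcal L})$. In this second route, the surjectivity statement here is only a forward reference to Theorem~\ref{rhoSurjective}.

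For the kernel, first note that $\ker\rho$ is a two-sided, $G$-stable (primitive) ideal, since $\kappa[C]$ is a faithful simple-enough $D_C$-module. Its associated graded contains the ideal of $\overline{\mathbb O}_{\min}$, because $\gr\rho$ factors through $\kappa[\overline{\mathbb O}_{\min}]$. Surjectivity and the symbol computation then give $\gr(\mathcal U(\g)/\ker\rho)\cong\kappa[\overline{\mathbb O}_{\min}]$, which is reduced and prime by Proposition~\ref{CotBundlePrimeIdeal}. Hence $\ker\rho$ is a completely prime primitive ideal whose associated variety is $\overline{\mathbb O}_{\min}$ and whose associated graded ideal is prime. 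We would compute its infinitesimal character by evaluating $\rho$ of the Casimir on $1\in\kappa[C]$ using \eqref{rhoConeFormula}, obtaining the Joseph value. The Joseph ideal is characterized (Joseph, Garfinkle) as the unique completely prime ideal with associated variety $\overline{\mathbb O}_{\min}$ for $\g$ not of type $A$, and by $\gr J$ equal to the prime ideal of $\overline{\mathbb O}_{\min}$. Either characterization gives $\ker\rho=J$.
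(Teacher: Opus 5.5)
\paragraph{Review.} The step you call the main obstacle is not an obstacle. Leaving it open is what creates the actual gap. In the kernel argument you use $\gr(\mathcal U(\g)/\ker\rho)\cong\kappa[\overline{\mathbb O}_{\min}]$. Since $\mathcal U(\g)/\ker\rho\cong D_C$, this is exactly the equality $\gr D_C=\kappa[T^*C^o]$ together with strictness of $\rho$, which is what you declared unresolved. Bare surjectivity, e.g.\ imported from Theorem~\ref{rhoSurjective}, does not give strictness.

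The equality follows directly from what you already have. The composite $\Sym(\g)\to\gr D_C\hookrightarrow\kappa[T^*C^o]$ sends $\g$ to (multiples of) the matrix coefficients of \eqref{invariantfunccot}, so it is onto by Proposition~\ref{CotBundDescr}. Hence the injection is a bijection, $\gr\rho$ is onto, and induction on order shows that $\rho$ is surjective and strict. You need neither the Shapovalov induction nor the harmonic-sheaf fallback. The Shapovalov route would in any case be circular in the paper's ordering, since $\mathcal F$ on $D_C$, and hence $\mathfrak B_d$, is defined via Proposition~\ref{Envelopmap}.

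Once repaired, this is a genuinely different and shorter route than the paper's. The paper proves the proposition only by citing Levasseur--Smith--Stafford. It later reproves surjectivity via $\mathcal U(\g)\twoheadrightarrow\Gamma(\mathcal D_{\mathcal L})$, $H^1(G/P,\mathcal J_\Delta)=0$ and Theorem~\ref{GlobalSectionsTheorem}, and it never identifies the kernel independently. Your argument needs only Proposition~\ref{CotBundDescr}; the paper's Lemma~\ref{BernsteinSymbolLemma} runs the same mechanism for the Bernstein filtration. The paper's route buys instead the harmonic-sheaf interpretation used for Theorem~\ref{SecondEquiv}.

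\paragraph{Repairs needed.} Two points must be fixed for your argument to be correct as written.

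First, $\rho$ is not filtered from the PBW filtration to the order filtration. On the $\mu$-, $(\alpha,X)$- and $\lambda$-parts $\g_{-1},\g_0,\g_1$ of \eqref{genericg} it takes values of order $0,1,2$ respectively. Use instead the weighted filtration $W_\bullet$ that gives $\g_i$ weight $i+1$. Brackets then lower weight by $1$, so $\gr_W\mathcal U(\g)=\Sym(\g)$. On $D_C$, use the filtration induced from $\mathbb I(I)$. Restricting ambient symbols to $C\times V$ is then:
\begin{itemize}
\item well defined modulo $\mathbb A(I)=Q^*D_{V^*}$;
\item injective on $\gr$;
\item multiplicative, because $\kappa[C\times V]$ is a domain;
\item valued in $\kappa[C\times V]^\delta=\kappa[T^*C^o]$, because $P\in\mathbb I(I)$ forces $[P,Q^*]\in Q^*D_{V^*}$.
\end{itemize}
To apply Joseph/Garfinkle, let $h$ be the $\alpha$-direction and note that $\ker\rho$ is $\ad(h)$-graded. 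With $F_\bullet$ the standard PBW filtration, $W_m\cap\mathcal U(\g)_j=F_{m-j}\cap\mathcal U(\g)_j$ on each $\ad(h)$-eigenspace. So the weighted and standard associated gradeds of $\ker\rho$ coincide in $\Sym(\g)$, and both are the prime ideal of $\overline{\mathbb O}_{\min}$.

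Second, the uniqueness theorems you quote exclude type $A$. For $k=2$, however, $\g=\so(6)\cong\sl_4$ is of type $A_3$. There, completely prime primitive ideals with associated variety $\overline{\mathbb O}_{\min}$ come in a one-parameter family, e.g.\ from twisted differential operators on $\P^3$. So your kernel identification does not cover $k=2$. That case needs extra input, such as your infinitesimal-character computation together with the type-$A$ classification, and ``the Joseph ideal'' itself must first be given a meaning there.
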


\begin{proof}
This is the even orthogonal case of the main result of Levasseur, Smith, and Stafford \cite{LevasseurSmithStafford1989Joseph}. We will later give an independent, geometric proof after we develop some theory of the Harmonic Sheaf; see Theorem \ref{rhoSurjective}.
\end{proof}

As an immediate consequence, we obtain an induced action of $G$ on $D_C$.

\begin{prop}\label{G-actsdef}
    The adjoint action of $G$ on $\mathfrak g$ extends to algebra automorphisms
\begin{equation}
    \Ad_g:\mathcal{U}(\mathfrak g)\xrightarrow{\sim}\mathcal{U}(\mathfrak g).
\end{equation}

\noindent Since the Joseph ideal $J\subset \mathcal{U}(\mathfrak g)$ is $G$-stable, this action descends to the quotient $\mathcal{U}(\mathfrak g)/J$. Via Proposition~\ref{Envelopmap}, we therefore obtain algebra automorphisms
\begin{equation}
    \alpha_g:D_C\cong \mathcal{U}(\mathfrak g)/J \xrightarrow{\sim} \mathcal{U}(\mathfrak g)/J \cong D_C,
    \qquad
    \alpha_g(\overline{x})=\overline{\Ad_g(x)}.
\end{equation}
\end{prop}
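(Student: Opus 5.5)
The plan is to check two things: that $\Ad_g$ extends to algebra automorphisms of $\mathcal U(\g)$, and that the Joseph ideal $J$ is $G$-stable, where $G$ may be disconnected. Granting these, the existence of $\alpha_g$ is formal from Proposition~\ref{Envelopmap}.

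\textbf{Step 1: extension of $\Ad_g$.} For each $g\in G(\kappa)$, $\Ad_g:\g\to\g$ is a Lie algebra automorphism. The composite $\g\to\g\hookrightarrow\mathcal U(\g)$ satisfies the defining relations $xy-yx=[x,y]$. By the universal property of $\mathcal U(\g)$ it therefore extends uniquely to an algebra endomorphism of $\mathcal U(\g)$. Uniqueness gives $\Ad_{g}\Ad_{h}=\Ad_{gh}$ and $\Ad_e=\Id$, so each extension is an automorphism with inverse $\Ad_{g^{-1}}$.

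\textbf{Step 2: stability of $J$.} I would use the characterization supplied by Proposition~\ref{Envelopmap}, namely $J=\ker\rho$, and avoid the abstract characterization of the Joseph ideal. It then suffices to produce, for each $g$, an automorphism $\beta_g$ of $D_C$ with
\[
\rho\circ\Ad_g=\beta_g\circ\rho .
\]
Since $G$ is generated by $U^{\op}$, $L$ and $w_0$, it is enough to treat these.

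For $g\in U^{\op}$ and $g\in L^\circ$, the element $\Ad_g$ is an exponential of $\ad_\xi$ with $\xi$ nilpotent, or a torus element acting semisimply with integer weights. The operators $\rho(\xi)$ are locally ad-nilpotent (respectively, act by integer weights) on $D_C$. This follows from the explicit formulas \eqref{rhoConeFormula}. For $\xi$ in the $\mu$-slot, $\rho(\xi)$ is multiplication by a linear function $\mu$, and $\ad_\mu$ lowers order, so it is locally nilpotent on $D_C$. The torus acts through $\ad E$ and $\ad\langle X^*\nu,\nabla_\nu\rangle$, which are diagonalizable on $D_C$ with the expected weights. One then takes $\beta_g=\exp(\ad\rho(\xi))$, or the corresponding weight action, and the required identity holds because both sides are algebra maps that agree on $\g$.

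Disconnected components of $L$ are handled directly by the geometric action of $\O(Q)$ on $C$, which is compatible with $\rho$ by inspection of \eqref{rhoConeFormula}. For $w_0$, conjugation by the Kelvin transform $K$ normalizes $D_V\Delta$ by Proposition~\ref{w0onDelta}. Transporting through $\widehat\tau$ (Definition~\ref{hattaudef}) therefore gives an automorphism $\mathcal F$ of $D_C$. Its compatibility $\rho\circ\Ad_{w_0}=\mathcal F\circ\rho$ follows from $K\phi(\xi)K=\phi(\Ad_{w_0}\xi)$, which is the derivative of the group action law \eqref{twistedaction}.

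\textbf{Step 3: conclusion.} Given Step 2, $\ker\rho$ is $\Ad_g$-stable, so $\Ad_g$ descends to $\mathcal U(\g)/J$. Composing with the isomorphism $\mathcal U(\g)/J\cong D_C$ yields $\alpha_g$. The assignment $g\mapsto\alpha_g$ is a group action by Step 1.

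\textbf{Main obstacle.} The delicate point is the $w_0$ (Fourier) compatibility: I need $K\phi(\xi)K\in N(D_V\Delta)$ and the matching of representatives under $\widehat\tau$. An alternative route avoids this. $J$ is the unique completely prime primitive ideal with associated variety $\overline{\mathbb O}_{\min}$ (Joseph), and this characterization is intrinsic and $\Aut(\g)$-invariant. That makes $G$-stability immediate, including for the component of $w_0$, at the cost of citing Joseph's uniqueness theorem. I would present the intrinsic argument as the main proof and the $K$-conjugation check as the geometric explanation.
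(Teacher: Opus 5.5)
Your main route is the one the paper's one-line proof appeals to: $J=\ker\rho$ by Proposition~\ref{Envelopmap}, plus an intrinsic, $\Aut(\g)$-invariant characterization of $J$. For $k\geq 3$ your version is fine. The gap is at $k=2$, which the paper allows. Joseph's uniqueness theorem (a unique completely prime primitive ideal with associated variety $\overline{\mathbb O}_{\min}$) holds only for $\g$ not of type $A$, and $\so(6)\cong\sl_4$. For $\sl_4$ such ideals form a one-parameter family, namely the kernels of the maps from $\mathcal U(\sl_4)$ to global $\lambda$-twisted differential operators on $\P^3$. The outer automorphism of $\sl_4$ is what $\Ad_{w_0}$ induces, since $\det w_0=-1$, and it moves a generic member of this family because it changes the infinitesimal character. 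So for $k=2$ nothing intrinsic forces $\Ad_{w_0}(\ker\rho)=\ker\rho$. The stability is a property of the particular ideal $\ker\rho$ and has to be checked on it.

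Your Step~2 is the way to do this. The $w_0$ case you flagged as the main obstacle is short if you aim only at stability of $\ker\rho$, not at an automorphism $\beta_{w_0}$ of all of $D_C$. Note that ``$K$ normalizes $D_V\Delta$'' is false as written. $Kx_iK$ is multiplication by $-x_i/Q$, so $KD_VK\not\subset D_V$, and $K$ only normalizes $D_{V_Q}\Delta$. Building $\beta_{w_0}$ on all of $D_C$ directly from $K$ would need Kelvin-regularity of every symmetry (Proposition~\ref{KernelSurjectionLemma}, or surjectivity of $\rho$).

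Instead, argue as follows:
\begin{itemize}
\item Since $\rho=\widehat\tau\circ[\phi]$ with $\widehat\tau$ injective, $\ker\rho=\phi^{-1}(D_V\Delta)$.
\item The maps $u\mapsto\phi(\Ad_{w_0}u)$ and $u\mapsto K\phi(u)K$ are algebra homomorphisms $\mathcal U(\g)\to D_{V_Q}$ that agree on $\g$, hence they are equal.
\item If $\phi(u)=A\Delta$ with $A\in D_V$, then $\phi(\Ad_{w_0}u)$ lies in $D_V$ and equals $(KAK)Q^2\Delta\in D_{V_Q}\Delta$. Hence it lies in $D_V\cap D_{V_Q}\Delta=D_V\Delta$ by \eqref{DVDeltaSaturation}.
\item For $g\in P^{\op}$ it is even simpler. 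The twisted action preserves $\kappa[V]$ and rescales $\Delta$ by a constant, so conjugation preserves $D_V\Delta$ outright. Your $\exp(\ad)$ and geometric-action arguments also work here.
\end{itemize}
Since $P^{\op}$ and $w_0$ generate $G$, this proves $G$-stability of $\ker\rho$ for every $k\geq 2$. It uses Proposition~\ref{Envelopmap} only for $D_C\cong\mathcal U(\g)/\ker\rho$.

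This is a big-cell version of the paper's independent proof in Proposition~\ref{IndActionDef}. That proof derives the same conclusion from the $G$-equivariance of $\mathcal J_\Delta$ and $\mathcal H$ and the equivariance of $\mathcal U(\g)\to\Gamma(\mathcal H)$, handling all of $G$ at once rather than generator by generator.
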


\begin{proof}
This follows immediately from Proposition~\ref{Envelopmap} and the $\Ad(G)$-stability of the Joseph ideal. We will give an independent proof in Proposition~\ref{IndActionDef} below, where we will also give a geometric interpretation of this action.
\end{proof}

\begin{rem}\label{twoactsremk}
    It is important to distinguish between two different $G$-actions on differential operators that have appeared in the discussion.

    First, the twisted action of $G$ on rational functions $\kappa(V)$ in \eqref{twistedaction} induces, by conjugation, the corresponding action of $G$ on differential operators on $\kappa(V)$ as in \eqref{eq:action-on-operators}. This action has been constructed here directly and explicitly, independently of \cite{LevasseurSmithStafford1989Joseph}. Moreover, it ultimately comes from the regular action of $G$ on the projective variety $G/P$: by Proposition~\ref{linetwistingProp}, the twisted action on $\kappa(V)$ is the translation action on local sections of the line bundle of conformal densities $\mathcal{L}$, and hence the induced action on differential operators is naturally interpreted as an action on the sheaf $\mathcal{D}_{\mathcal L}$ of twisted differential operators on $G/P$.

    Second, the action of $G$ on $D_C$ obtained in Proposition~\ref{G-actsdef} is of a different nature. Its very definition requires the use of Proposition~\ref{Envelopmap}; in particular, the surjectivity of the map $\rho:\mathcal U(\mathfrak g)\to D_C$. For clarity, one might call the first action the sheaf-theoretic action and the second the Joseph action. The former acts rationally on differential operators on the big cell $V$, or more precisely on the sheaf $\mathcal{D}_{\mathcal L}$ on $G/P$, whereas the latter acts on the global ring $D_C$ of differential operators on the quadric cone.

    The relationship between these two actions will be made explicit in Proposition~\ref{IndActionDef}. In fact, we will later use this connection to offer a geometric proof of Proposition~\ref{Envelopmap} independent of \cite{LevasseurSmithStafford1989Joseph}. 

    Until Section~\ref{DModHarmSect}, we will freely use Propositions~\ref{Envelopmap} and \ref{G-actsdef}. In particular, we will use them in defining the Fourier transform on $D_C$ below: since $\mathcal F$ will first be specified on generators, we need Proposition~\ref{Envelopmap} to know that these operators do indeed generate $D_C$. 
\end{rem}

\subsection{The Quadric Fourier Transform on Differential Operators}\label{QuadricExplicitSect}

We now specialize the Joseph action to the Weyl element $w_0$. We denote the resulting automorphism $\alpha_{w_0}$ of $D_C$ by $\mathcal F$, and call it the quadric Fourier transform.

For ease of reference, we record its action on the standard generators of $D_C$. Let $\{x_i,y_i\}_{i=1,\dots,k}$ be the coordinate functions on $V^*$, with $Q^*$ as in \eqref{QDef}. We define
\begin{equation}
    \mathfrak X_i:=\mathcal F(x_i),
    \qquad
    \mathfrak Y_i:=\mathcal F(y_i).
\end{equation}

\begin{prop}\label{QuadFourierProp}
The quadric Fourier transform acts by
\begin{equation}
\begin{aligned}
    \mathcal F(x_i)=\mathfrak X_i
    &:=
    (E+k-1)\frac{\partial}{\partial y_{k+1-i}}-x_i\Delta,
    &
    \mathcal F(y_i)=\mathfrak Y_i
    &:=
    (E+k-1)\frac{\partial}{\partial x_{k+1-i}}-y_i\Delta.
\end{aligned}
\end{equation}
\begin{equation}\label{Etrans}
    \mathcal F(E+k-1)=-(E+k-1),
    \qquad
    \text{equivalently}\qquad
    \mathcal F(E)=-E-2k+2.
\end{equation}
Moreover,
\begin{equation}
    \mathcal F(D_{ij})=D_{ij},
    \qquad
    \mathcal F(B_{ij})=B_{ij},
    \qquad
    \mathcal F(C_{ij})=C_{ij},
\end{equation}
where
\begin{equation}
\begin{aligned}
    D_{ij}
    &:=
    x_j\frac{\partial}{\partial x_i}
    -
    y_{k+1-i}\frac{\partial}{\partial y_{k+1-j}},
    &&1\le i,j\le k,
    \\
    B_{ij}
    &:=
    y_{k+1-j}\frac{\partial}{\partial x_i}
    -
    y_{k+1-i}\frac{\partial}{\partial x_j},
    &&1\le i<j\le k,
    \\
    C_{ij}
    &:=
    x_j\frac{\partial}{\partial y_{k+1-i}}
    -
    x_i\frac{\partial}{\partial y_{k+1-j}},
    &&1\le i<j\le k.
\end{aligned}
\end{equation}
Finally, $\mathcal F(\mathfrak X_i)=x_i$ and $\mathcal F(\mathfrak Y_i)=y_i$. In particular, the operators $\mathfrak X_i$ and $\mathfrak Y_j$ commute, and
\begin{equation}\label{fundidentop}
    \sum_{i=1}^k \mathfrak X_i\mathfrak Y_{k+1-i}=0
\end{equation}
in $D_C$.
\end{prop}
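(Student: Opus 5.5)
The approach is to compute $\mathcal F=\alpha_{w_0}$ on the images $\rho(\xi)$, using that $\alpha_{w_0}(\rho(\xi))=\rho(\Ad_{w_0}\xi)$. Conjugating the block matrix \eqref{genericg} by $w_0$ swaps the first and last rows and columns. Hence
\[
\Ad_{w_0}(\alpha,\mu,X,\lambda)=(-\alpha,\lambda,X,\mu).
\]
One checks this directly: the $(2,1)$ entry $\mu$ moves to the $(2,3)$ slot and vice versa, the $(1,1)$ and $(3,3)$ entries swap, and the $(1,2)$ entry $-\lambda^{\mathsf T}J_V$ becomes the $(3,2)$ entry, consistent with $\lambda$ now sitting in the $\mu$-slot.

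With this in hand, each identity comes from reading off \eqref{rhoConeFormula}.
\begin{itemize}
\item The $\mu$-slot generator $\rho(0,\mu,0,0)=\mu$ is the linear function $\mu$ on $V^*$. Taking $\mu$ to be the basis vector dual to the coordinate $x_i$ (respectively $y_i$), $\mathcal F$ sends it to $\rho(0,0,0,\lambda)$ with $\lambda=\mu$. By \eqref{rhoConeFormula} this equals $(E+k-1)\partial_{\lambda^\flat}-\lambda\Delta$, and the computation of $\lambda^\flat$ in Remark~\ref{ambnottangentrem} gives exactly $\mathfrak X_i$ and $\mathfrak Y_i$.
\item The $\alpha$-generator is $\rho(1,0,0,0)=-(E+k-1)$, and it is sent to $\rho(-1,0,0,0)=E+k-1$. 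This gives \eqref{Etrans}.
\item For $X\in\mathfrak o(Q)$, the element $\xi_X$ commutes with $w_0$. The operators $D_{ij},B_{ij},C_{ij}$ are, up to sign, the vector fields $\langle X^*\nu,\nabla_\nu\rangle$ for an elementary basis of $\mathfrak o(Q)$: the $\mathfrak{gl}_k$ block and the two off-diagonal skew blocks relative to $J_V$. So they are fixed.
\item Since $w_0^2=1$, $\mathcal F$ is an involution, and therefore $\mathcal F(\mathfrak X_i)=x_i$ and $\mathcal F(\mathfrak Y_i)=y_i$.
\end{itemize}

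The remaining claims are pulled back along the automorphism $\mathcal F$. The coordinates $x_i,y_j$ commute in $D_C$, so their images $\mathfrak X_i,\mathfrak Y_j$ commute. Likewise $Q^*=\sum_i x_iy_{k+1-i}=0$ in $\kappa[C]\subset D_C$, and applying $\mathcal F$ yields \eqref{fundidentop}.

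I expect the main obstacle to be bookkeeping of signs and normalizations rather than any conceptual point. There are three places to be careful.
\begin{itemize}
\item One must check that the identification $\mu\in V$ as a linear function on $V^*$ matches the coordinate $x_i$ under the pairing: which basis vector of $V$ gives the functional $x_i$.
\item One must check that the $\lambda^\flat$ produced from that same vector gives $\partial/\partial y_{k+1-i}$ and not a differently indexed derivative. This indexing is fixed by $J_V$, and I would verify it against Remark~\ref{ambnottangentrem}.
\item For the $X$-block, I would write out $\langle X^*\nu,\nabla_\nu\rangle$ for $X=E_{ij}-E_{\bar j\bar i}$ and its analogues, to confirm the stated forms of $D_{ij},B_{ij},C_{ij}$.
\end{itemize}
If a stray sign appears in the $\mu\leftrightarrow\lambda$ swap (from $w_0$ versus the $-\lambda^{\mathsf T}J_V$ entries), it would be absorbed by replacing $\mu$ with $-\mu$. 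I would check this consistency by confirming that $\mathcal F^2=\Id$ on $x_i$.
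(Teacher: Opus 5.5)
Your proposal is correct and is essentially the paper's own argument. You compute $\Ad_{w_0}(\alpha,\mu,X,\lambda)=(-\alpha,\lambda,X,\mu)$ and read off $\mathcal F(\rho(\xi))=\rho(\Ad_{w_0}\xi)$ from \eqref{rhoConeFormula} on each of the four blocks. You then use $\mathcal F^2=\Id$ and the fact that $\mathcal F$ is an algebra automorphism to get the commutativity and the relation \eqref{fundidentop}.

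Your direct block computation already shows that the $\mu\leftrightarrow\lambda$ swap carries no sign, so the contingency in your last paragraph is unnecessary. Note also that checking $\mathcal F^2=\Id$ could not have detected such a sign: a swap $(\mu,\lambda)\mapsto(-\lambda,-\mu)$ is also an involution.
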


\begin{proof}
By Proposition~\ref{G-actsdef}, we have $\mathcal F=\alpha_{w_0}$, so
\begin{equation}
    \mathcal F(\rho(\xi))=\rho(\Ad_{w_0}\xi)
\end{equation}

\noindent for all $\xi\in\g$. Since $w_0$ exchanges the $\mu$- and $\lambda$-coordinates and sends $\alpha$ to $-\alpha$, while fixing the Levi factor $H$, the formulas follow immediately from \eqref{rhoConeFormula}. From the $\mu$ and $\lambda$ coordinates we find that
\begin{equation}
    \mathcal F(x_i)
    =
    (E+k-1)\frac{\partial}{\partial y_{k+1-i}}-x_i\Delta,
    \qquad
    \mathcal F(y_i)
    =
    (E+k-1)\frac{\partial}{\partial x_{k+1-i}}-y_i\Delta.
\end{equation}

\noindent Likewise, since $\rho(1,0,0,0)=-(E+k-1)$, we obtain \eqref{Etrans}. The operators $D_{ij},B_{ij},C_{ij}$ arise from the component of the Levi subalgebra $\textrm{Lie}(H)=\mathfrak{o}(Q)\subset \g$, which is fixed by $\Ad_{w_0}$, so they are fixed by $\mathcal F$.

Since $w_0^2=1$, we have $\mathcal F^2=\Id$, and hence
\begin{equation}
    \mathcal F(\mathfrak X_i)=x_i,
    \qquad
    \mathcal F(\mathfrak Y_i)=y_i.
\end{equation}

\noindent Because $\mathcal F$ is an algebra automorphism, the commutativity of the $x_i$ and $y_j$ implies that the $\mathfrak X_i$ and $\mathfrak Y_j$ commute. Applying $\mathcal F$ to the relation
\begin{equation}
    \sum_{i=1}^k x_i y_{k+1-i}=Q^*=0
\end{equation}

\noindent in $D_C$ yields \eqref{fundidentop}.
\end{proof}

\begin{rem}
    Kobayashi and Mano prove the commutativity of the operators $\mathfrak X_i,\mathfrak Y_j$ and the identity \eqref{fundidentop} directly. In our setting, both follow immediately from the fact that $\mathcal F$ is an algebra automorphism of $D_C$.

    It is also worth emphasizing that the Euler operator is not fixed by $\mathcal F$. Rather, by \eqref{Etrans} one has
\begin{equation}
    \mathcal F(E+k-1)=-(E+k-1),
\end{equation}

\noindent and hence
\begin{equation}
    \mathcal F(E)=-E-2k+2=-E-n+2.
\end{equation}

\noindent This shift is another manifestation of the twisting already encountered in Proposition~\ref{linetwistingProp}. Equivalently, it reflects the conformal weighting built into the construction of $\rho$.
\end{rem}

\begin{rem}\label{FundamentalOperatorsLocalVectorFieldsRem}
The operators $\mathfrak X_i$ and $\mathfrak Y_i$ are of order $2$ when written as ambient operators on $V^*$. This reflects the singularity of the cone. Indeed, Nakai's conjecture predicts that, for varieties in characteristic $0$, if the ring of differential operators is generated by functions and vector fields (i.e. differential operators of order $\le 1$) then the variety is smooth. Since $C$ is singular we should not expect vector fields and functions to generate the algebra of all differential operators; and, indeed, the operators $\mathfrak{X}_i$ and $\mathfrak{Y}_i$  cannot be so decomposed (\cite{Kobayashi:Mano}, Remark 2.4.9). 

However, the converse of Nakai's conjecture -- that a smooth variety has Grothendieck differential operators generated by functions and derivations -- is classical (see, for example, \cite[Corollary~15.5.6]{McConnellRobson2001}). Since the only singular point of $C$ is the origin, we discover the intriguing fact that, after restricting to any open subset of $C$ which avoids the origin, the fundamental operators $\mathfrak X_i$ and $\mathfrak Y_i$ must become decomposable into regular differential operators tangent to $C$. We will now make this explicit.

Recall that $C\subset V^*$ is cut out by
\begin{equation}
Q^*=\sum_{a=1}^k x_a y_{k+1-a},
\qquad
\Delta=\sum_{a=1}^k
\frac{\partial^2}{\partial x_a\partial y_{k+1-a}},
\end{equation}
\noindent where $x_i,y_i$ are the coordinate functions on $V^*$. Fix $i$, and work on the chart $C_{x_i}:=C\cap D(x_i)$. On this chart the equation $Q^*=0$ eliminates the coordinate $y_{k+1-i}$. For $a\neq i$, set
\begin{equation}
\xi_a^{(i)}
:=
\frac{\partial}{\partial x_a}
-
\frac{y_{k+1-a}}{x_i}
\frac{\partial}{\partial y_{k+1-i}},
\qquad
\eta_a^{(i)}
:=
\frac{\partial}{\partial y_{k+1-a}}
-
\frac{x_a}{x_i}
\frac{\partial}{\partial y_{k+1-i}}.
\end{equation}
\noindent These ambient vector fields preserve the ideal $(Q^*)$, since $\xi_a^{(i)}(Q^*)=\eta_a^{(i)}(Q^*)=0$, and therefore descend to vector fields on $C_{x_i}$. Then, as operators on $C_{x_i}$, one has
\begin{equation}\label{LocalVectorFieldDecompXi}
\mathfrak X_i
=
-x_i\sum_{a\neq i}\xi_a^{(i)}\eta_a^{(i)}.
\end{equation}
\noindent To see this, represent a function on $C_{x_i}$ by an expression independent of the eliminated coordinate $y_{k+1-i}$. On such representatives, the term $(E+k-1)\partial/\partial y_{k+1-i}$ vanishes, and the $a=i$ summand of $\Delta$ also vanishes. Hence
\begin{equation}
\mathfrak X_i
=
-x_i\sum_{a\neq i}
\frac{\partial^2}{\partial x_a\partial y_{k+1-a}}.
\end{equation}
\noindent But on these eliminated-coordinate representatives, $\xi_a^{(i)}$ acts as $\partial/\partial x_a$ and $\eta_a^{(i)}$ acts as $\partial/\partial y_{k+1-a}$, giving \eqref{LocalVectorFieldDecompXi}.

Similarly, on the chart $C_{y_i}:=C\cap D(y_i)$, the equation $Q^*=0$ eliminates the coordinate $x_{k+1-i}$. For $a\neq k+1-i$, set
\begin{equation}
\widetilde{\xi}_a^{(i)}
:=
\frac{\partial}{\partial x_a}
-
\frac{y_{k+1-a}}{y_i}
\frac{\partial}{\partial x_{k+1-i}},
\qquad
\widetilde{\eta}_a^{(i)}
:=
\frac{\partial}{\partial y_{k+1-a}}
-
\frac{x_a}{y_i}
\frac{\partial}{\partial x_{k+1-i}}.
\end{equation}
\noindent Again these vector fields preserve $(Q^*)$, and hence descend to vector fields on $C_{y_i}$. As operators on $C_{y_i}$, one has
\begin{equation}\label{LocalVectorFieldDecompYi}
\mathfrak Y_i
=
-y_i\sum_{a\neq k+1-i}
\widetilde{\xi}_a^{(i)}\widetilde{\eta}_a^{(i)}.
\end{equation}

More generally, if $f\in\kappa[C]$ vanishes at the origin, then the principal open $C_f$ is smooth. Since $f$ becomes invertible on $C_f$, the ideal $(x_1,\ldots,x_k,y_1,\ldots,y_k)$ becomes the unit ideal in $\mathcal O(C_f)$. Thus the opens $C_f\cap D(x_i)$ and $C_f\cap D(y_i)$ cover $C_f$, and the decompositions above show explicitly, on this cover, how the fundamental operators are generated by tangent vector fields.
\end{rem}

\section{\texorpdfstring{Kazhdan-Laumon Gluing of $\mathcal{D}$-Modules on the Quadric Cone}{Kazhdan-Laumon Gluing of D-Modules on the Quadric Cone}}\label{KazhLaumGlueSect}

In \cite{KazhdanLaumon1988Gluing}, Kazhdan and Laumon construct certain ``glued" categories of perverse $\ell$-adic sheaves on basic affine space $G/U$, where the gluing data is indexed by the Weyl group, and simple reflections are realized by Fourier--Deligne transforms. These glued categories have a rich structure and relate to, for example, Braverman--Kazhdan theory and Deligne--Lusztig theory \cite{Polishchuk2001GluingBasicAffine, MortonFerguson2025KazhdanLaumonCategoryO}. In general, the Kazhdan--Laumon gluing construction on basic affine space $G/U$ (for $G$ reductive and $U$ a maximal unipotent) produces categories which are \emph{not} equivalent to the category of perverse sheaves on any space.\footnote{The basic exception is $\SL_2/U\cong \A^2\setminus\{0\}$, in which case the Kazhdan--Laumon glued category is equivalent to $\textrm{Perv}(\A^2)$.} By contrast, in the $\mathcal{D}$-module setting, \cite{BezrukavnikovBravermanPositselskii2002Gluing} show that the corresponding glued category is equivalent to the category of modules over the Grothendieck ring of differential operators on the affinization $\overline{G/U}$.\footnote{That is, one considers the Grothendieck ring of differential operators on the algebra of regular functions $\kappa[G/U]$. It is a theorem of Grosshans \cite{Grosshans1983InvariantsUnipotentRadicals} that $G/U$ is strongly quasi-affine: $\kappa[G/U]$ is finitely generated, and the natural map $G/U\to \overline{G/U}:=\Spec \kappa[G/U]$ is an open embedding. In particular, $\kappa[G/U]\cong \kappa[\overline{G/U}]$.}

As discussed above, there is a natural analogue of Fourier transform on both $L^2$-spaces and on differential operators on the quadric cone, namely the action of the Weyl element $w_0\in G:=\O(Q^+)$ (see \eqref{WeylFourier} and Proposition \ref{QuadFourierProp}). Of course, this transformation does not act on the cone $C$ itself. Rather, in the larger orthogonal group $G$, it conjugates $P$ to $P^{\op}$ \cite{Kobayashi:Mano}, while commuting with the smaller group $H:=\O(Q)$, which \emph{does} act on $C$.

In this section we will show, following \cite{BezrukavnikovBravermanPositselskii2002Gluing}, that the category obtained by gluing $\mathcal{D}$-modules on two copies of $C^o$ along this non-linear Fourier transform is equivalent to the category of modules over the algebra $D_C$.

A note on terminology: throughout what follows, when we speak of $\mathcal{D}$-modules on a variety, we will always mean \textit{coherent} $\mathcal{D}$-modules, that is, $\mathcal{D}$-modules which are locally finitely generated over $\mathcal{D}$ \cite{HTT2008}.

\subsection{\texorpdfstring{Two Gradings}{Two Gradings}}\label{TwoGradings}

We recall that the ``smaller" orthogonal group $H:=\O(Q)$ acts on $C^o$, and hence on
\begin{equation}
    \kappa[C^o]=\kappa[C]=\Sym(V)/(Q^*).
\end{equation}

\noindent Accordingly, $\kappa[C]$ decomposes into irreducible representations of $H$. Choosing a Borel subgroup $B\subset G$ with $B\subset P$ determines a Borel subgroup of $H$. If $\mu$ is a dominant weight of $H$, we write $\mathcal O(\mu)$ (resp.\ $D_C(\mu)$) for the $V_\mu$-isotypic component of $\mathcal O_C$ (resp.\ $D_C$), where $V_\mu$ denotes the irreducible $H$-representation of highest weight $\mu$. We call this the \textit{weight grading}.

We also have the scaling action of $\G_m$ on $C$, induced from the natural scaling action on $V^*$. Its infinitesimal generator is the Euler operator $E$, and this gives rise to a second grading on $\mathcal O_C$ and $D_C$. Namely, we write $\mathcal O^d$ for the space of functions $f$ satisfying
\begin{equation}
    f(tx)=t^d f(x)
\end{equation}

\noindent for $t\in \G_m$, and $D_C^d$ for the space of operators $\xi\in D_C$ such that
\begin{equation}
    [E,\xi]=d\,\xi.
\end{equation}

\noindent We call this the \textit{degree grading}. Finally, we write $\mathcal O(\mu)^d$ and $D_C(\mu)^d$ for the corresponding intersections of the weight and degree gradings.

\begin{prop}
    Let $\omega_1$ denote the first fundamental weight of $\O(Q)$, i.e.\ the weight of the standard representation. Then
\begin{equation}\label{weightisdeg}
    \mathcal{O}_C(d\omega_1)=\mathcal{O}_C^d\cong \Sym^d(V)\big/ Q^*\Sym^{d-2}(V),
\end{equation}

\noindent and

\begin{equation}\label{weightalgdec}
    \kappa[C]=\mathcal{O}_C=\bigoplus_{d\ge 0}\mathcal{O}_C(d\omega_1)^d,
\end{equation}

\noindent with each $\mathcal{O}_C(d\omega_1)$ appearing with multiplicity $1$. Moreover, the Fourier transform $\mathcal F$ on $D_C$ preserves the weight grading and negates the degree grading:
\begin{equation}\label{Fourdeg}
    \xi\in D_C^d
    \iff
    \mathcal F(\xi)\in D_C^{-d}.
\end{equation}
\end{prop}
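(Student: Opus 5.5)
The plan has three parts. First, the decomposition of $\kappa[C]$ comes from classical harmonic theory. Second, the multiplicity and isotypic statements are obtained by checking that no highest weight repeats. Third, the behaviour of $\mathcal F$ on the gradings follows from the relation $\mathcal F=\alpha_{w_0}$ together with the action of $\Ad_{w_0}$ on $\g$.

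\emph{Decomposition of $\kappa[C]$.} We use the Fischer decomposition on the polynomial ring $\Sym(V)=\kappa[V^*]$:
\begin{equation*}
\Sym^d(V)=\mathcal{H}^d\oplus Q^*\,\Sym^{d-2}(V),
\end{equation*}
where $\mathcal H^d=\ker(\Delta|_{\Sym^d})$ is the space of degree-$d$ harmonic polynomials for the Laplacian of $Q^*$. The Fischer decomposition is proved via the $\mathfrak{sl}_2$-triple $(Q^*,E+k,\Delta)$ of \eqref{eq:Delta-Q-short}. Since the scaling action commutes with $H$ and $(Q^*)$ is homogeneous, $\mathcal O_C^d\cong\Sym^d(V)/Q^*\Sym^{d-2}(V)$ as $H$-representations, and this space is isomorphic to $\mathcal H^d$. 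Next we invoke the classical fact that for $n=2k\ge4$ and $\operatorname{char}\kappa=0$, $\mathcal H^d$ is the irreducible $\O(Q)$-representation of highest weight $d\omega_1$. The highest weight vector is $x_1^d$, which is harmonic because $x_1$ is null. Irreducibility can be checked by the usual dimension count against Weyl's formula, or by citing the standard theory of spherical harmonics over the split form, which is legitimate after base change to $\bar\kappa$ since $Q$ is split. Summing over $d$ gives \eqref{weightalgdec}.

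\emph{Multiplicity one and the isotypic identification.} The weights $d\omega_1$ are pairwise distinct, so each $V_{d\omega_1}$ occurs exactly once, namely in degree $d$. Hence the $V_{d\omega_1}$-isotypic component $\mathcal O_C(d\omega_1)$ equals $\mathcal O_C^d$, which is \eqref{weightisdeg}. One subtlety is that we work with $\O(Q)$ rather than $\SO(Q)$. We will remark that $\mathcal H^d$ remains irreducible for the connected group when $k\ge2$, so the weight labelling is unambiguous.

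\emph{Behaviour of $\mathcal F$.} By Proposition~\ref{G-actsdef}, $\mathcal F(\rho(x))=\rho(\Ad_{w_0}x)$. The element $w_0$ commutes with $\iota(H)$ in $G$, so $\mathcal F$ commutes with the $H$-action on $D_C$ induced by $\Ad$. We need to check that this $\Ad$-action agrees with the geometric conjugation action of $H$ on $D_C$; this is done on the generators $\rho(\g)$ via \eqref{rhoConeFormula}. Given this, $\mathcal F$ preserves isotypic components. For the degree grading, $D_C^d$ is the $d$-eigenspace of $\ad_E$. Proposition~\ref{QuadFourierProp} gives $\mathcal F(E)=-E-2k+2$. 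Since constants are central, for $\xi\in D_C^d$ we get
\begin{equation*}
[E,\mathcal F(\xi)]=-\mathcal F([E,\xi])=-d\,\mathcal F(\xi).
\end{equation*}
Using $\mathcal F^2=\Id$, the reverse implication follows, which proves \eqref{Fourdeg}.

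The main obstacle is the irreducibility of $\mathcal H^d$ under $\O(Q)$ over a general characteristic-zero field. We plan to reduce to $\bar\kappa$ and cite the classical branching and harmonic theory. The remaining steps are formal.
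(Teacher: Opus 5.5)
Your proposal is correct and takes essentially the same route as the paper. The paper likewise cites the classical spherical-harmonic (Fischer) decomposition of $\kappa[C]$, deduces preservation of the weight grading from the fact that $w_0$ commutes with $H$, and obtains \eqref{Fourdeg} from $\mathcal F(E)=-E-2k+2$ by the same commutator computation. Your extra checks spell out points the paper leaves implicit: irreducibility of the harmonic spaces after base change to $\bar\kappa$, the $\O(Q)$ versus $\SO(Q)$ labelling, and agreement of the $\Ad$-induced $H$-action with geometric conjugation on the generators $\rho(\g)$.
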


\begin{proof}
The identities \eqref{weightisdeg} and \eqref{weightalgdec} are the standard decomposition of the coordinate ring of the quadric cone into spherical harmonics (cf. Lemma \ref{FischerDecompositionLemma} below).

Since $w_0$ commutes with $H$, the Fourier transform $\mathcal F=\alpha_{w_0}$ commutes with the $H$-action on $D_C$, and hence preserves the weight grading. For the degree grading, observe that $[E,\xi]=d\,\xi$ if and only if $\mathcal{F}([E,\xi]) = [-E-n+2,\mathcal{F}(\xi)]=-d\,\mathcal{F}(\xi)$. Hence $\xi\in D_C^d\iff \mathcal F(\xi)\in D_C^{-d}$.
\end{proof}

Next, we have the following (cf.\ \cite{BezrukavnikovBravermanPositselskii2002Gluing}, Lemma 3.14):

\begin{prop}\label{degwt0}
    There is an isomorphism of algebras
\begin{equation}
    D_C(0)^0 \cong \kappa[E].
\end{equation}

\noindent In other words, the $H$-invariant degree $0$ differential operators on $C$ are precisely the polynomials in the Euler operator $E$.
\end{prop}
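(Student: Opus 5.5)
The idea is to let a degree-$0$, $H$-invariant operator act on $\kappa[C]$ and read it off on the graded pieces. Let $\xi\in D_C(0)^0$. Since $[E,\xi]=0$, the operator $\xi$ preserves each homogeneous component $\mathcal O_C^d$. Since $\xi$ is $H$-invariant, it acts on $\mathcal O_C^d\cong\mathcal O_C(d\omega_1)$ by an $H$-equivariant endomorphism. By \eqref{weightisdeg} this is an irreducible representation, and it is absolutely irreducible because $H$ is split. Schur's lemma therefore says that $\xi$ acts on $\mathcal O_C^d$ by a scalar $c_d\in\kappa$. Because $D_C$ acts faithfully on $\kappa[C]$ (it is defined as a subring of $\End_\kappa(\kappa[C])$), the operator $\xi$ is determined by the sequence $(c_d)_{d\ge0}$. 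So $D_C(0)^0$ embeds as a commutative algebra into $\prod_d\kappa$. Under this embedding $E$ maps to $(d)_d$, and a polynomial $p(E)$ maps to $(p(d))_d$.

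It remains to show that $(c_d)$ is the sequence of values of a polynomial. I would argue by order of the operator. By definition of Grothendieck operators, $\xi$ has some finite order $m$. Consider the restriction of $\xi$ to the smooth locus $C^o$, where $\xi$ is a genuine differential operator of order $m$. Write it in local coordinates adapted to the $\G_m$-action: on $C^o$ use the radial Euler field $E$ together with vector fields tangent to the link, and expand $\xi=\sum_{j\le m} a_j E^j$ with coefficients $a_j$ that are degree-$0$ operators in the angular directions. Averaging over $H$, or using that $\xi$ commutes with $H$, the angular parts act on each $\mathcal O^d$ by scalars independent of the $E$-part. This should give $c_d=\sum_j a_j(d)\,d^j$, but the $a_j$ could still depend on $d$.

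A cleaner route is the following. Fix a nonzero null linear form $\ell\in V$; the power $\ell^d$ then spans the highest weight line of $\mathcal O_C^d$. The coefficient $c_d$ is determined by $\xi(\ell^d)=c_d\ell^d$. Now restrict to the open set $\{\ell\neq0\}\cap C^o$ and choose local coordinates in which $\ell$ is one of the coordinates; this is the chart $C_{x_i}$ of Remark~\ref{FundamentalOperatorsLocalVectorFieldsRem}, taking $\ell=x_1$. In these coordinates the functions on the chart are polynomials in the remaining free coordinates with $x_1^{\pm1}$ adjoined, and $\xi$ is a polynomial-coefficient differential operator of order $m$. Applying an order-$m$ operator to $x_1^d$ produces $x_1^{d}$ times a polynomial in $d$ of degree at most $m$ (from $\partial_{x_1}^j x_1^d=d(d-1)\cdots(d-j+1)x_1^{d-j}$), times coefficient functions that do not depend on $d$. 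Comparing with $c_d x_1^d$ forces $c_d=p(d)$ for a fixed polynomial $p$ of degree at most $m$. Hence $\xi$ and $p(E)$ agree on every $\mathcal O_C^d$, so $\xi=p(E)$. The converse inclusion $\kappa[E]\subset D_C(0)^0$ is clear, since $E$ is $H$-invariant and $[E,E]=0$.

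Finally, $\kappa[E]\to D_C$ is injective, since $p(E)$ acts on $\mathcal O^d$ by $p(d)$ and a polynomial vanishing at every $d\in\N$ is zero. This gives the isomorphism. The main obstacle is the polynomiality step. I expect the chart computation on $C_{x_1}$ to handle it; the one point needing care is that the coefficients produced there are honestly independent of $d$.
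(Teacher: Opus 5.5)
Your argument is correct. It differs from the paper's only in how you prove that $d\mapsto c_d$ is polynomial. The Schur's-lemma reduction to scalars $c_d$ is the same in both; your remark that $\mathcal O_C^d$ is absolutely irreducible because $H$ is split is a precision the paper leaves implicit.

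\textbf{The paper's route.} It stays on the singular cone and uses the Grothendieck definition directly. For $f\in\mathcal O_C^1$, the commutator $[f,\xi]$ acts on $\mathcal O_C^e$ as $-(Tc)(e)$ times multiplication by $f$, where $T$ is the forward difference operator. Hence $r$-fold commutators with degree-one functions act by $\pm(T^rc)(e)$. These vanish for $r>\ord(\xi)$, so $c$ is a polynomial sequence.

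\textbf{Your route.} You localize to the smooth chart $C_{x_1}\cong\Spec\kappa[x_1^{\pm1},x_2,\dots,x_k,y_1,\dots,y_{k-1}]$, write $\xi$ there in normal form $\sum_\beta f_\beta\partial^\beta$, and test on $x_1^d$. This needs two standard inputs that the paper's route avoids:
\begin{itemize}
\item Grothendieck operators extend to localizations without raising order.
\item Operators on a localized polynomial ring admit a normal form.
\end{itemize}

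\textbf{A step to make explicit.} The point you flagged, that the coefficients are independent of $d$, is automatic: they are just the chart coefficients of $\xi$. The real point is that they are functions, not scalars. Comparing gives $c_d=\sum_{j\le m}d(d-1)\cdots(d-j+1)\,g_j$ with $g_j:=f_jx_1^{-j}\in\mathcal O(C_{x_1})$. You can pass to scalars in either of two ways:
\begin{itemize}
\item Evaluate at a $\kappa$-point, such as $x_1=1$ with all other coordinates $0$.
\item Invert the lower-triangular falling-factorial system for $d=0,\dots,m$; its diagonal entries are $d!\neq0$, so each $g_j$ is a $\kappa$-linear combination of $c_0,\dots,c_m$ and hence constant.
\end{itemize}

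Your first, radial--angular sketch is unnecessary and, as you note, does not close by itself.

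\textbf{Comparison.} The paper's finite-difference argument is shorter and intrinsic: it never leaves $\kappa[C]$ and uses nothing about the smooth locus. Yours is more concrete and identifies $p$ explicitly through the coefficients of $\partial_{x_1}^j$. Both give $\deg p\le\ord(\xi)$.
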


\begin{proof}
The inclusion $\kappa[E]\subset D_C(0)^0$ is clear. Conversely, let $\xi\in D_C(0)^0$. Since $\xi$ has degree $0$, it preserves each graded piece $\mathcal O_C^d$, and since $\xi$ is $H$-invariant while $\mathcal O_C^d=\mathcal O_C(d\omega_1)$ is an irreducible $H$-module, Schur's lemma implies that $\xi$ acts on $\mathcal O_C^d$ by a scalar $c_d\in \kappa$.

We claim that $d\mapsto c_d$ is a polynomial function. Let $T$ denote the forward difference operator, defined by
\begin{equation}
    (Tc)(n):=c_{n+1}-c_n.
\end{equation}

\noindent If $f\in \mathcal O_C^1$ and $g\in \mathcal O_C^e$, then
\begin{equation}
    [f,\xi](g)=f(c_eg)-c_{e+1}fg=-(Tc)(e)\,fg.
\end{equation}

\noindent Iterating, if $f_1,\dots,f_r\in \mathcal O_C^1$, then
\begin{equation}
    [f_1,[f_2,\dots,[f_r,\xi]\dots]](g)=(-1)^r(T^rc)(e)\,f_1\cdots f_rg.
\end{equation}

\noindent But $\xi$ has finite order, so for $r>\ord(\xi)$ the left-hand side vanishes identically. Hence $T^rc=0$ for all sufficiently large $r$. Since $\mathrm{char}(\kappa)=0$, it follows that $c_d$ is given by a polynomial $p(d)\in \kappa[d]$.

Now $E$ acts on $\mathcal O_C^d$ by multiplication by $d$, so $p(E)$ acts on $\mathcal O_C^d$ by the scalar $p(d)=c_d$. Thus $\xi$ and $p(E)$ agree on every graded piece $\mathcal O_C^d$, hence on all of $\kappa[C]=\bigoplus_{d\ge 0}\mathcal O_C^d$. Therefore $\xi=p(E)$.
\end{proof}

\subsection{The Quadric Shapovalov Determinant}\label{ShapovalovDetSect}

The central calculation of \cite{BezrukavnikovBravermanPositselskii2002Gluing} is a product formula for a Shapovalov determinant \cite{Shapovalov1974Structure}. We now establish the analogous formula in the differential operator algebra of the quadric cone.

We first observe that the irreducible $H$-representation
\begin{equation}
    P^d:=\mathcal{O}_C(d\omega_1)^d
\end{equation}

\noindent of degree-$d$ polynomial functions on the cone $C$ is self-dual, via the identification $V\cong V^*$ induced by the quadratic form $Q$. Using the Fourier transform $\mathcal{F}$, we therefore obtain a bilinear map
\begin{equation}
    m:P^d\otimes P^d\to D_C,
    \qquad
    f\otimes g\mapsto f\cdot \mathcal{F}(g).
\end{equation}

\noindent Since $P^d$ is irreducible and self-dual, the space $(P^d\otimes P^d)^H$ is one-dimensional. Let $C_d\in P^d\otimes P^d$ be a nonzero $H$-invariant vector, unique up to scalar. Concretely, $C_d$ is spanned by $B(x,y)^d$, where $B$ is the $H$-invariant bilinear form on $V$. We define
\begin{equation}\label{Bddef}
    \mathfrak{B}_d:=m(C_d).
\end{equation}

\noindent Since $\mathcal{F}$ preserves weights and negates degrees, it follows that $\mathfrak{B}_d\in D_C(0)^0$. Hence, by Proposition~\ref{degwt0}, the operator $\mathfrak{B}_d$ is a polynomial in the Euler operator $E$.

\begin{prop}\label{Shapovalovdet}
    \textbf{The Quadric Shapovalov Determinant}. We have
\begin{equation}\label{ShapForm}
    \mathfrak{B}_d
    =
    \prod_{j=1}^{d} (E-j+1)\;\cdot\;\prod_{j=1}^{d} (E+k-j-1).
\end{equation}
\end{prop}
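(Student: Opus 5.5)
Since $\mathfrak B_d\in D_C(0)^0$, Proposition~\ref{degwt0} gives $\mathfrak B_d=p_d(E)$ for some polynomial $p_d$, and $p_d(e)$ is the scalar by which $\mathfrak B_d$ acts on $P^e=\mathcal O_C^e$. So I will determine $p_d$ by computing these scalars. We must fix the normalization of $C_d$ so that the leading coefficient comes out to $1$. Here $C_d=\sum_i f_i\otimes g_i$ for dual bases $\{f_i\},\{g_i\}$ of $P^d$ under $\langle\ ,\ \rangle_d$, as in the introduction.

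\textbf{Step 1: roots.} The operator $\mathcal F(g_i)$ has degree $-d$, so for $e<d$ it sends $P^e$ into $P^{e-d}=0$. Hence $p_d(e)=0$ for $e=0,\dots,d-1$, which accounts for the factor $\prod_{j=1}^d(E-j+1)$.

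For the second factor I would use the Fourier-symmetry trick. Write $\mathfrak B'_d:=\sum_i\mathcal F(g_i)f_i$. Applying $\mathcal F$ to $\mathfrak B_d$ and using $\mathcal F^2=\Id$ together with the symmetry of $C_d$ gives
\begin{equation*}
\mathfrak B'_d=p_d(-E-2k+2),
\end{equation*}
by \eqref{Etrans}. It is enough to compute one of the two operators $\mathfrak B_d$, $\mathfrak B'_d$ directly.

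\textbf{Step 2: the order-$d$ term.} The operator $\mathcal F(g_i)$ is an honest degree-$(-d)$ differential operator. On $P^e$, $\mathfrak B_d$ acts by first lowering degree by $d$ and then multiplying by $f_i$. I would compute $\mathcal F(g)$ explicitly for the extremal vector $g=\ell^d$, where $\ell$ is a null linear form, say $\ell=x_1$. Then
\begin{equation*}
\mathcal F(x_1^d)=\mathfrak X_1^d,\qquad \mathfrak X_1=(E+k-1)\partial_{y_k}-x_1\Delta .
\end{equation*}
I would then evaluate $\mathfrak B_d$ on a highest-weight vector $h=x_1^e\in P^e$. The weight-$0$ invariant $C_d$ pairs $x_1^d$ with the lowest vector $y_k^d$, and by $H$-invariance only this term contributes to the highest-weight coefficient. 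The computation therefore reduces to
\begin{equation*}
p_d(e)\,x_1^e=c\cdot y_k^d\cdot\mathfrak X_1^d(x_1^e)\quad\text{(modulo lower terms),}
\end{equation*}
and I would check directly which other terms in $C_d$ can contribute.

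\textbf{Step 3: the recursion.} It is cleaner to act on $y_k^e$ (or a suitable null power) and compute iteratively: $\mathfrak X_1$ acting on $x_1^ay_k^b$-type monomials restricted to $C$. I expect a formula of the shape
\begin{equation*}
\mathfrak X_1\bigl(y_k^{m}\bigr)=m\,(m+k-2)\,y_k^{m-1}\quad\text{(up to terms vanishing on }C\text{ or sign)}.
\end{equation*}
This follows from $(E+k-1)\partial_{y_k}y_k^m=m(m-1+k-1)y_k^{m-1}$, since $\Delta y_k^m=0$. Iterating gives
\begin{equation*}
\prod_{j=1}^d (e-j+1)(e+k-j-1),
\end{equation*}
with pairing constants absorbed into the normalization of $C_d$. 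Because $p_d$ has degree at most $2d$ (the order of $\mathfrak B_d$ is at most $2d$), agreement at infinitely many $e$ identifies it completely, so the result is \eqref{ShapForm}.

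\textbf{Main obstacle.} The delicate step is justifying that evaluating on the single extremal vector, with only the one extremal term of $C_d$, correctly computes the Schur scalar. This amounts to showing that the other basis pairs $f_i\mathcal F(g_i)$ contribute nothing to the coefficient of the chosen vector. I would handle this by a weight argument: take $h$ to be a lowest-weight vector $y_k^e$. Then $\mathcal F(g_i)h$ is a vector of weight $\mathrm{wt}(g_i)+\mathrm{wt}(h)$ inside $P^{e-d}$, and this vanishes unless $g_i$ has weight $\ge d\cdot\mathrm{wt}(x_1)$, i.e.\ $g_i\propto x_1^d$.

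The remaining work is fixing the normalization of the dual pair $x_1^d\leftrightarrow y_k^d$ under $\langle\ ,\ \rangle_d$ so that the leading coefficient is $1$; this is the point where the paper's normalization of $\langle\ ,\ \rangle_d$ enters.
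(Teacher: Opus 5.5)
Your argument is essentially the paper's. Proposition~\ref{degwt0} reduces everything to the Schur scalar on $P^e$. You then evaluate on an extremal weight vector where a single summand of $C_d=B(x,y)^d$ survives, and iterate one fundamental operator. You use the lowest weight vector $y_k^e$ and the summand $y_k^d\mathfrak X_1^d$, which is the mirror image of the paper's choice of $x_1^r$ and $x_1^d\mathfrak Y_k^d$. Your weight argument replaces the paper's direct check that the other $\mathfrak X_i,\mathfrak Y_i$ annihilate the test vector.

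Two small corrections are needed. First, the display in Step~2 is wrong as written: $\mathfrak X_1(x_1^e)=0$, so $y_k^d\mathfrak X_1^d$ kills $x_1^e$, and on $x_1^e$ the surviving term is $x_1^d\mathfrak Y_k^d$; your Step~3 choice of $y_k^e$ is the consistent one. Second, the constant $c$ should be computed rather than ``absorbed''. It is the coefficient of $y_k^d\otimes x_1^d$ in the multinomial expansion of $B(x,y)^d$, namely $d!/d!=1$, and this is exactly what makes the formula monic. The evaluation is also exact, not merely modulo lower terms.
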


\begin{proof}
We know from Proposition \ref{degwt0} and its proof that $\mathfrak{B}_d$ is a polynomial in $E$, and acts on the degree $r$ space $P^r$ by a scalar $c_r$; moreover, $\mathfrak{B}_d=p(E)$ if and only if $p(r)=c_r$ for all $r\in \N$. Thus it suffices to determine the scalar $c_r$, and for this it is enough to evaluate $\mathfrak{B}_d$ on a single nonzero vector in $P^r$, say $x_1^r$.

Expanding $B((x,y),(u,v))^d$ gives
\begin{equation}
    B((x,y),(u,v))^d
    =
    \sum_{\substack{\alpha_1,\ldots,\alpha_k\ge 0\\ \beta_1,\ldots,\beta_k\ge 0\\
    \alpha_1+\cdots+\alpha_k+\beta_1+\cdots+\beta_k=d}}
    \frac{d!}{\alpha_1!\cdots \alpha_k!\,\beta_1!\cdots \beta_k!}
    \prod_{i=1}^k (x_i\,v_{k+1-i})^{\alpha_i}(y_i\,u_{k+1-i})^{\beta_i}.
\end{equation}

\noindent Therefore
\begin{equation}\label{Shapexp}
    \mathfrak{B}_d
    =
    \sum_{\substack{\alpha_1,\ldots,\alpha_k\ge 0\\ \beta_1,\ldots,\beta_k\ge 0\\
    \alpha_1+\cdots+\alpha_k+\beta_1+\cdots+\beta_k=d}}
    \frac{d!}{\alpha_1!\cdots \alpha_k!\,\beta_1!\cdots \beta_k!}
    \prod_{i=1}^k x_i^{\alpha_i}y_i^{\beta_i}\,
    \mathfrak{Y}_{k+1-i}^{\alpha_i}\mathfrak{X}_{k+1-i}^{\beta_i}.
\end{equation}

\noindent By Proposition \ref{QuadFourierProp}, we have
\begin{equation}
    \mathfrak{X}_i=\mathcal{F}(x_i)=(E+k-1)\frac{\partial}{\partial y_{k+1-i}}-x_i\Delta,
    \qquad
    \mathfrak{Y}_i=\mathcal{F}(y_i)=(E+k-1)\frac{\partial}{\partial x_{k+1-i}}-y_i\Delta.
\end{equation}

Now $\mathfrak{X}_i(x_1^r)=0$ for every $i$, since $\frac{\partial}{\partial y_{k+1-i}}(x_1^r)=0$ and $\Delta(x_1^r)=0$. Likewise, $\mathfrak{Y}_i(x_1^r)=0$ unless $i=k$, since only $\frac{\partial}{\partial x_1}$ acts nontrivially on $x_1^r$. It follows that the only summand of \eqref{Shapexp} which does not annihilate $x_1^r$ is the one with $\alpha_1=d$ and all other $\alpha_i,\beta_i$ equal to $0$. Hence
\begin{equation}
    \mathfrak{B}_d(x_1^r)=x_1^d\,\mathfrak{Y}_k^d(x_1^r).
\end{equation}

\noindent It therefore remains to compute $\mathfrak{Y}_k^d(x_1^r)$. We have
\begin{equation}
    \mathfrak{Y}_k(x_1^r)
    =
    \left((E+k-1)\frac{\partial}{\partial x_1}-y_k\Delta\right)(x_1^r)
    =
    (E+k-1)(r x_1^{r-1}),
\end{equation}

\noindent since $\Delta(x_1^r)=0$. Thus
\begin{equation}
    \mathfrak{Y}_k(x_1^r)
    =
    r\bigl((r-1)+(k-1)\bigr)x_1^{r-1}
    =
    r(r+k-2)x_1^{r-1}.
\end{equation}

\noindent Iterating gives $\mathfrak{Y}_k^d(x_1^r)
    =
    \prod_{j=1}^d (r-j+1)\prod_{j=1}^d (r+k-j-1)\,x_1^{r-d}$ and hence
\begin{equation}
    \mathfrak{B}_d(x_1^r)
    =
    \prod_{j=1}^d (r-j+1)\prod_{j=1}^d (r+k-j-1)\,x_1^r.
\end{equation}

\noindent Replacing $r$ by $E$, we obtain \eqref{ShapForm}.
\end{proof}

\begin{rem}
    Our proof is quite a bit more direct than the analogous argument in \cite{BezrukavnikovBravermanPositselskii2002Gluing}, Proposition 3.17. This is because the degree grading here is substantially simpler.
\end{rem}

\subsection{Kazhdan-Laumon Gluing}\label{GluingProofSect}

We are now in a position to prove that the glued category of coherent $\mathcal{D}$-modules on $C^o$ is equivalent to the category of finitely generated $D_C$-modules. We begin by reviewing the Kazhdan-Laumon gluing construction \cite{KazhdanLaumon1988Gluing, Polishchuk2001GluingBasicAffine}. We write $D_C$-\textbf{mod} for the category of finitely generated $D_C$-modules, and from this point onward we assume that all $\mathcal{D}$-modules, and all categories of $\mathcal{D}$-modules, are coherent.

Observe that for any left $D_C$-module $M$, we may define a new left $D_C$-module $\mathcal{F}(M)$ by keeping the same underlying $\kappa$-module and twisting the action by the algebra involution $\mathcal{F}:D_C\to D_C$; explicitly,
\begin{equation}\label{quadricFourdef}
    \xi\cdot_{\mathcal{F}(M)} m:=\mathcal{F}(\xi)\cdot_M m,
    \qquad
    \xi\in D_C,\ m\in M.
\end{equation}

\noindent Equivalently, one may write
\begin{equation}
    \mathcal{F}(M)\cong {}_{\mathcal{F}}D_C\otimes_{D_C} M,
\end{equation}

\noindent where ${}_{\mathcal{F}}D_C$ denotes the $(D_C,D_C)$-bimodule whose underlying right $D_C$-module is the regular one, and whose left action is twisted by $\mathcal{F}$.

Since $\mathcal{F}^2=\Id_{D_C}$, this defines an involutive autoequivalence of $D_C\textbf{-mod}$. More precisely, there is a natural isomorphism
\begin{equation}
    \nu:\mathcal{F}^2\xrightarrow{\sim}\Id,
\end{equation}

\noindent whose component at $M$ is simply the identity on the underlying $\kappa$-module of $M$.

We see that $\mathcal{F}$ is the analogue of the Laplace transform for $\mathcal{D}$-modules on affine space \cite{Sabbah2010FourierDModules}.

For a left $D_C$-module $N$, let $\widetilde{N}$ denote the corresponding coherent left $\mathcal{D}_C$-module on the affine cone $C=\Spec(\kappa[C])$; thus $\widetilde{N}\cong \mathcal{D}_C\otimes_{D_C}N$. Since $C$ is affine, this is the usual equivalence between finitely generated left $D_C$-modules and coherent left $\mathcal{D}_C$-modules on $C$. We use Roman letters such as $D_C$ and $N$ for algebras and modules, and calligraphic letters such as $\mathcal{D}_C$ and $\mathcal{N}$ for sheaves.

Let $j:C^o\hookrightarrow C$ be the open immersion. Following the formalism of global sections and localization in \cite{BezrukavnikovBravermanPositselskii2002Gluing}, define functors
\begin{align}
    \Ind:\mathcal{D}_{C^o}\textbf{-mod}\to D_C\textbf{-mod},&
    \qquad
    \Ind(\mathcal{M}):=\Gamma\bigl(C,j_*\mathcal{M}\bigr),\label{IndDef}\\
    \Res:D_C\textbf{-mod}\to \mathcal{D}_{C^o}\textbf{-mod},&
    \qquad
    \Res(N):=j^*(\widetilde N).\label{ResDef}
\end{align}

\noindent Here $\Ind$ is the quadric analogue of the global-sections functor $\Gamma$, while $\Res$ is the corresponding analogue of the localization functor $L$.

Set
\begin{equation}\label{Tdef}
    T:=\Res\circ \mathcal{F}\circ \Ind:
    \mathcal{D}_{C^o}\textbf{-mod}\to \mathcal{D}_{C^o}\textbf{-mod}.
\end{equation}

\noindent Recall that $j^*\dashv j_*$, with unit $\eta:\Id \to j_*j^*$ and counit $\varepsilon:j^*j_* \to \Id$; for an open immersion, $\varepsilon$ is an isomorphism. For each $N\in D_C\textbf{-mod}$, the unit yields a canonical $D_C$-linear map
\begin{equation}\label{eq:unit}
    \eta_N:\;N \;\longrightarrow\; \Gamma\bigl(C,\,j_*j^*(\widetilde N)\bigr)=\Ind(\Res(N)),
\end{equation}

\noindent obtained by sheafifying $N$, applying $\eta_{\widetilde N}:\widetilde N\to j_*j^*\widetilde N$, and taking global sections.

For each $\mathcal{M}\in \mathcal{D}_{C^o}\textbf{-mod}$, there is also a canonical morphism
\begin{equation}\label{eq:counit-on-open}
    \epsilon_{\mathcal{M}}:\;\Res(\Ind(\mathcal{M}))
    =
    j^*\widetilde{\Gamma(C,j_*\mathcal{M})}
    \longrightarrow
    j^*j_*\mathcal{M}
    \xrightarrow{\varepsilon_{\mathcal{M}}}
    \mathcal{M},
\end{equation}

\noindent where the first arrow is the natural map from the localization of global sections to $j_*\mathcal{M}$, and the second is the counit of the adjunction.

Using \eqref{eq:unit}, \eqref{eq:counit-on-open}, and the involutivity isomorphism
$\nu:\mathcal{F}^2\xrightarrow{\sim}\Id$, we obtain a natural transformation
\begin{equation}\label{eq:T2}
    \widetilde{\nu}:\;T^2 \longrightarrow \Id_{\mathcal{D}_{C^o}\textbf{-mod}}
\end{equation}

\noindent given on $\mathcal{M}$ by the composition
\begin{align}
T^2(\mathcal{M})
&=\Res\circ\mathcal{F}\circ\Ind\circ\Res\circ\mathcal{F}\circ\Ind(\mathcal{M}) \\
&\xrightarrow{\ \Res\circ\mathcal{F}\bigl(\eta_{\mathcal{F}\circ\Ind(\mathcal{M})}\bigr)\ }
\Res\circ\mathcal{F}^2\circ\Ind(\mathcal{M}) \nonumber\\
&\xrightarrow{\ \Res\bigl(\nu_{\Ind(\mathcal{M})}\bigr)\ }
\Res\circ\Ind(\mathcal{M}) \nonumber\\
&\xrightarrow{\ \epsilon_{\mathcal{M}}\ }
\mathcal{M}. \nonumber
\end{align}

\begin{definition}
The glued category of quadric $\mathcal{D}$-modules, denoted $\mathscr{C}$, is defined as follows.

Its objects are quadruples $(\mathcal{M}_1,\mathcal{M}_2,\phi,\psi)$, where
$\mathcal{M}_1,\mathcal{M}_2\in \mathcal{D}_{C^o}\textbf{-mod}$ and
\begin{equation}
    \phi:\mathcal{M}_1\to T(\mathcal{M}_2),
    \qquad
    \psi:\mathcal{M}_2\to T(\mathcal{M}_1)
\end{equation}

\noindent are morphisms in $\mathcal{D}_{C^o}\textbf{-mod}$ such that the compositions
\begin{align}
    &\mathcal{M}_1 \xrightarrow{\ \phi\ } T(\mathcal{M}_2) \xrightarrow{\ T(\psi)\ }
    T^2(\mathcal{M}_1) \xrightarrow{\ \widetilde{\nu}_{\mathcal{M}_1}\ } \mathcal{M}_1,\label{GlueCond1}\\
    &\mathcal{M}_2 \xrightarrow{\ \psi\ } T(\mathcal{M}_1) \xrightarrow{\ T(\phi)\ }
    T^2(\mathcal{M}_2) \xrightarrow{\ \widetilde{\nu}_{\mathcal{M}_2}\ } \mathcal{M}_2\label{GlueCond2}
\end{align}

\noindent are the identity maps.

A morphism
\begin{equation}
    (f_1,f_2):(\mathcal{M}_1,\mathcal{M}_2,\phi,\psi)\to (\mathcal{N}_1,\mathcal{N}_2,\phi',\psi')
\end{equation}

\noindent consists of morphisms $f_i:\mathcal{M}_i\to \mathcal{N}_i$ in $\mathcal{D}_{C^o}\textbf{-mod}$ such that the diagrams
\begin{equation}
\begin{tikzcd}
\mathcal{M}_1 \arrow[r,"\phi"] \arrow[d,"f_1"'] & T(\mathcal{M}_2) \arrow[d,"T(f_2)"] \\
\mathcal{N}_1 \arrow[r,"\phi'"'] & T(\mathcal{N}_2)
\end{tikzcd}
\qquad
\begin{tikzcd}
\mathcal{M}_2 \arrow[r,"\psi"] \arrow[d,"f_2"'] & T(\mathcal{M}_1) \arrow[d,"T(f_1)"] \\
\mathcal{N}_2 \arrow[r,"\psi'"'] & T(\mathcal{N}_1)
\end{tikzcd}
\end{equation}

\noindent commute.
\end{definition}

\noindent There are evident projection functors
\begin{equation}
    \pi_1,\pi_2:\mathscr{C}\to \mathcal{D}_{C^o}\textbf{-mod},
\end{equation}

\noindent given on objects by
\begin{equation}
    \pi_i(\mathcal{M}_1,\mathcal{M}_2,\phi,\psi)=\mathcal{M}_i.
\end{equation}

\begin{rem}
There are other ways to package the same structure, for example in terms of a comonad or as a $W$-gluing for the group $W=\mathbb{Z}/2\mathbb{Z}$; see \cite{Polishchuk2001GluingBasicAffine, BezrukavnikovBravermanPositselskii2002Gluing}. In the present setting, however, we may afford to be completely explicit, since we are gluing only two copies of $\mathcal{D}_{C^o}\textbf{-mod}$. The original construction of this kind is due to Kazhdan and Laumon \cite{KazhdanLaumon1988Gluing}.
\end{rem}

\begin{thm}\label{thm:glued-equiv}
    The functor
\begin{equation}
    \Theta:D_C\textbf{-mod}\longrightarrow \mathscr{C},
    \qquad
    M\longmapsto \bigl(\Res(M),\,\Res(\mathcal{F}(M)),\,\phi_M,\,\psi_M\bigr),
\end{equation}

\noindent is an equivalence of categories.
\end{thm}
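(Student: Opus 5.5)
The plan follows the strategy of \cite{BezrukavnikovBravermanPositselskii2002Gluing}: reduce the equivalence to two statements about the pair of functors $(\Res,\Res\circ\mathcal F)$, and deduce both from the quadric Shapovalov determinant (Proposition~\ref{Shapovalovdet}). First I would make $\Theta$ explicit. For $M\in D_C\textbf{-mod}$, the map $\phi_M$ is
\[
\Res(M)\longrightarrow \Res\,\mathcal F\,\Ind\,\Res\,\mathcal F(M)=T(\Res\mathcal F(M)).
\]
It is obtained by applying $\Res\circ\mathcal F$ to the unit $\eta_{\mathcal F(M)}$ and using $\nu:\mathcal F^2\cong\Id$. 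The map $\psi_M$ is defined symmetrically. The cocycle conditions \eqref{GlueCond1}, \eqref{GlueCond2} then reduce to the triangle identities for $j^*\dashv j_*$. In the other direction, I would construct a quasi-inverse. Given a glued object $(\mathcal M_1,\mathcal M_2,\phi,\psi)$, send it to the equalizer (kernel) of the two maps
\[
\Ind(\mathcal M_1)\times\mathcal F\Ind(\mathcal M_2)\rightrightarrows \mathcal F\Ind T(\mathcal M_2)\times \Ind T(\mathcal M_1),
\]
built from $\phi,\psi$ and the units $\eta$. Equivalently, this is the submodule of pairs $(s_1,s_2)$ with $\phi(s_1)=\eta(s_2)$ and $\psi(s_2)=\eta(s_1)$.

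The key algebraic input is a \emph{joint conservativity/exactness} statement. I would show that a finitely generated $D_C$-module $N$ with $\Res(N)=0$ and $\Res(\mathcal F N)=0$ vanishes, and more generally that the map
\[
N\to \Ind\Res N\times\mathcal F\Ind\Res\mathcal F N
\]
is injective, with the cokernel data controlled. The argument goes as follows. If $\Res N=0$, then $N$ is supported at the apex, so every element is killed by a power of the maximal ideal $\mathfrak m=(P^1)$. Decompose by the (locally finite, after the support condition) $E$-action, and take $m$ to be a generalized $E$-eigenvector with eigenvalue $r$. Then $P^d m=0$ for $d\gg0$, and the Shapovalov identity gives
\[
\mathfrak B_d\, m=\sum_i f_i\,\mathcal F(g_i)\,m .
\]
Now suppose we also have $\Res\mathcal F N=0$. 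Then $\mathcal F(P^d)$ kills $m$ for large $d$, so $\mathfrak B_d m=0$. Hence $r$ is a root of $\prod_{j}(E-j+1)(E+k-j-1)$ for every large $d$. Using the opposite ordering $\sum_i\mathcal F(g_i)f_i$, which by the same computation is the product $\prod_j(E+2k+j-3)(E+k+j-1)$, we also get that $r$ is a root of that polynomial. These two polynomials have no common roots, so $m=0$. This is the ``no common roots'' principle. The same identity, applied as a partition of unity, should show that the two subcategories killed by $\Res$ and by $\Res\circ\mathcal F$ have trivial intersection. It should also show that every $N$ is recovered as the fibre product of $\Ind\Res N$ and $\mathcal F\Ind\Res\mathcal F N$ over the $T$-terms, which gives full faithfulness of $\Theta$.

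For essential surjectivity, I would check that the equalizer module built from a glued datum is finitely generated over $D_C$ and that $\Theta$ of it recovers the datum. Finite generation should follow from Noetherianity of $D_C$ (Proposition~\ref{Envelopmap}) together with the fact that $\Ind$ of a coherent module is only ``finitely non-generated'' near the apex, where the Shapovalov argument again bounds the $E$-spectrum. I expect the main obstacle to be exactly this step. Local cohomology at the apex can make $\Ind(\mathcal M)$ fail to be finitely generated, and one must show that the gluing isomorphisms cut the fibre product down to a coherent object. To handle it, I would follow \cite[\S3]{BezrukavnikovBravermanPositselskii2002Gluing} closely: decompose the apex-supported parts into $E$-generalized eigenspaces, and use that $\Ind\Res$ and $\mathcal F\Ind\Res\mathcal F$ can add new $E$-eigenvalues only within the root sets of the two Shapovalov polynomials respectively. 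Since these root sets are disjoint, the equalizer retains finitely many eigenvalues from each side.
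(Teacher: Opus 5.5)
Your route is the paper's: the BBP gluing formalism, with the quadric Shapovalov identity as the only non-formal input. The paper packages that input as Proposition~\ref{Generation1Prop}, namely $1\in D_CP^d+D_C\mathcal F(P^d)$, which follows from the coprimality of $\mathfrak B_d$ and $\mathcal F(\mathfrak B_d)$ in $\kappa[E]$. Your conservativity argument is the same thing.

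You can drop the generalized-eigenvector decomposition. If $P^dm=0=\mathcal F(P^d)m$, then $\mathfrak B_dm=\mathcal F(\mathfrak B_d)m=0$, and a B\'ezout identity $a(E)\mathfrak B_d+b(E)\mathcal F(\mathfrak B_d)=1$ gives $m=0$ directly.

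This argument is elementwise, so joint conservativity holds for all $D_C$-modules, not just finitely generated ones. Work in the category of all modules, where $\Ind$ is an honest right adjoint and $\Res$, $\Res\circ\mathcal F$ are exact. There, comonadic Barr--Beck gives full faithfulness and the equalizer description of the inverse formally, with no separate partition-of-unity step. (In your equalizer the targets should be $\Ind T(\mathcal M_2)\times\mathcal F\Ind T(\mathcal M_1)$; the twists are on the wrong factors.)

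The step you flag as the main obstacle, finite generation of the equalizer $N$, is not closed by your sketch. The root sets of $\mathfrak B_d$ and $\mathcal F(\mathfrak B_d)$ grow with $d$: their unions over $d$ are all integers $\ge 2-k$ and all integers $\le -k$, respectively. So their disjointness gives no finiteness of eigenvalues.

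The missing idea is to use conservativity once more.
\begin{itemize}
\item $\Res$ commutes with filtered unions, and $\mathcal M_1\cong\Res N$, $\mathcal M_2\cong\Res\mathcal FN$ are coherent, hence Noetherian objects.
\item So there are finitely generated submodules $N_1,N_2\subset N$ with $\Res N_1=\Res N$ and $\Res\mathcal FN_2=\Res\mathcal FN$.
\item Then $N/(N_1+N_2)$ is killed by both $\Res$ and $\Res\circ\mathcal F$, so it vanishes.
\end{itemize}
Hence $N=N_1+N_2$ is finitely generated, and no Noetherianity of $D_C$ is needed. This is the point the paper leaves to ``the standard argument'' of BBP. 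It is worth making explicit precisely because, as you observe, $\Ind$ of a coherent module need not be finitely generated.
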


\begin{proof}
We follow \cite{BezrukavnikovBravermanPositselskii2002Gluing}. Since $C$ is affine, sheafification and global sections identify coherent left $\mathcal{D}_C$-modules with finitely generated left $D_C$-modules. The only non-formal input needed for the gluing argument is the following generation statement.

\begin{prop}\label{Generation1Prop}
    For every $d>0$, the left ideal in $D_C$ generated by
\begin{equation}
    P^d+\mathcal{F}(P^d)
\end{equation}

\noindent contains $1$.
\end{prop}

\textit{Proof of Proposition \ref{Generation1Prop}.}
By construction, $\mathfrak{B}_d$ lies in the left ideal generated by $P^d+\mathcal{F}(P^d)$; the same is therefore true of $\mathcal{F}(\mathfrak{B}_d)$. By Proposition \ref{Shapovalovdet}, we have
\begin{equation}\label{ShapFormulaExplicit}
    \mathfrak{B}_d
    =
    \prod_{j=1}^{d} (E-j+1)\prod_{j=1}^{d} (E+k-j-1).
\end{equation}

\noindent Applying $\mathcal{F}$ and using Proposition \ref{QuadFourierProp}, in particular the identity
\begin{equation}
    \mathcal{F}(E)=-E-2k+2,
\end{equation}

\noindent we obtain
\begin{equation}\label{FourierShapFormulaExplicit}
    \mathcal{F}(\mathfrak{B}_d)
    =
    \prod_{j=1}^{d} (E+2k+j-3)\prod_{j=1}^{d} (E+k+j-1).
\end{equation}

\noindent Thus the roots of $\mathfrak{B}_d$ are
\begin{equation}
    \{0,1,\ldots,d-1\}\cup \{2-k,3-k,\ldots,d-k+1\},
\end{equation}

\noindent while the roots of $\mathcal{F}(\mathfrak{B}_d)$ are
\begin{equation}
    \{2-2k,1-2k,\ldots,3-2k-d\}\cup \{-k,-k-1,\ldots,1-k-d\}.
\end{equation}

\noindent For $k\ge 2$ these two sets are disjoint: the largest root of $\mathcal{F}(\mathfrak{B}_d)$ is $-k$, while the smallest root of $\mathfrak{B}_d$ is $2-k$. Hence $\mathfrak{B}_d$ and $\mathcal{F}(\mathfrak{B}_d)$ are coprime in $\kappa[E]$. Therefore they generate the unit ideal in $\kappa[E]$, and hence also the unit ideal in $D_C$. This proves Proposition \ref{Generation1Prop}. \qed

\medskip

Given an object $(\mathcal{M}_1,\mathcal{M}_2,\phi,\psi)\in \mathscr{C}$, the standard argument of \cite{BezrukavnikovBravermanPositselskii2002Gluing}, together with Proposition \ref{Generation1Prop}, shows that the modules
\begin{equation}
    \Ind(\mathcal{M}_1)=\Gamma(C,j_*\mathcal{M}_1),
    \qquad
    \Ind(\mathcal{M}_2)=\Gamma(C,j_*\mathcal{M}_2)
\end{equation}

\noindent have no $I_0$-torsion, where $I_0\subset \mathcal{O}(C)$ is the maximal ideal of the origin. Equivalently, they are recovered from their restrictions to $C^o$. The remainder of the proof is then identical to the argument in \cite{BezrukavnikovBravermanPositselskii2002Gluing}: the functor $\Theta$ is fully faithful, and every glued object is obtained from a unique finitely generated $D_C$-module. Thus $\Theta$ is an equivalence.
\end{proof}

\begin{rem}\label{DeltaMassRem}
Let $\mathfrak m_0\subset \kappa[C]$ denote the maximal ideal of the vertex $0\in C$. The algebraic model for the Dirac mass at the vertex is the left $D_C$-module
\begin{equation}
    \delta_0:=D_C/D_C\mathfrak m_0,
\end{equation}

\noindent which has support only at the closed point $\{0\}$. In the present setting, this module is realized by the quadric Fourier transform of the structure module $\kappa[C]$. Indeed, in the twisted module $\mathcal F(\kappa[C])$, the vector $1\in \kappa[C]$ satisfies
\begin{equation}
    x_i\cdot 1=\mathcal F(x_i)(1)=\mathfrak X_i(1)=0,
    \qquad
    y_i\cdot 1=\mathcal F(y_i)(1)=\mathfrak Y_i(1)=0,
\end{equation}

\noindent so the maximal ideal $\mathfrak m_0$ annihilates $1$. On the other hand, in $\mathcal F(\kappa[C])$ the operators $\mathfrak X_i$ and $\mathfrak Y_i$ act as multiplication by $x_i$ and $y_i$, so $1$ generates the whole module. Thus $\delta_0$ is naturally realized by $\mathcal F(\kappa[C])$.

Applying $\mathcal F$ once more gives
\begin{equation}
    \mathcal F(\delta_0)\cong \kappa[C].
\end{equation}

\noindent Hence a module supported at the single point $0$ is transformed into a module supported on all of $C$. In particular, $\delta_0$ is invisible on the open cone $C^o$, whereas its Fourier transform restricts to the structure sheaf on $C^o$. This is precisely why the gluing theorem is plausible in the present context: a single chart misses the apex Dirac mass, but the Fourier-transformed chart detects it completely. In this sense, the quadric Fourier transform exhibits a Heisenberg-type uncertainty principle: highly concentrated support is exchanged with maximal spread.

We finally note that on the Braverman-Kazhdan glued category, the quadric Fourier transform is quite evident: $\mathcal{F}: (\mathcal{M}_1,\mathcal{M}_2,\phi,\psi)\mapsto (\mathcal{M}_2,\mathcal{M}_1,\psi,\phi)$. The Fourier transform on $D_C$\textbf{-mod} is substantially more mysterious.
\end{rem}

\section{\texorpdfstring{$\mathcal{D}$-Modules on the Conformal Compactification and the Harmonic Sheaf}{D-Modules on the Conformal Compactification and the Harmonic Sheaf}}\label{DModHarmSect}

\subsection{\texorpdfstring{Another Categorical Equivalence}{Another Categorical Equivalence}}\label{IntroHarmSheavesSection}

In the previous section, we described the category $D_C$-\textbf{mod} in terms of Kazhdan--Laumon gluing data over $C^o$, with gluing governed by the quadric Fourier transform. There is also a third way of understanding this category, which provides a rather direct $\mathcal{D}$-module categorification of the notion of harmonic functions. 

As always, let $G = \O(Q^+)$, and let $C \subset V^*$ denote the isotropic locus of $Q^*$. Proposition \ref{Envelopmap} says that $D_C$ is a homomorphic image of $\mathcal{U}(\mathfrak{g})$.

On the other hand, by the parabolic Beilinson--Bernstein localization theorem \cite{BeilinsonBernstein1981Localisation, BeilinsonBernstein1993Jantzen, HollandPolo1996KTheoryTDO}, one expects, for twists in the appropriate Beilinson--Bernstein range, a close relationship between twisted $\mathcal{D}$-modules on $G/P$ and modules over global twisted differential operators. Since $D_C$-modules are naturally $\mathcal{U}(\mathfrak{g})$-modules, namely those annihilated by the Joseph ideal \cite{LevasseurSmithStafford1989Joseph}, it is natural to ask whether they can be realized as a category of twisted $\mathcal{D}$-modules on $G/P$.

However, the natural line bundle used to construct the map $\mathcal{U}(\g) \to D_C$ is $\mathcal{L} \cong \mathcal{O}(1-k)$; see \eqref{twistbundledef}. This twist is not in the Beilinson--Bernstein range. Indeed, for $k \geq 2$, the line bundle $\mathcal{L}$ has no nonzero global sections, even though $\mathcal{L} \neq 0$. Thus the relationship we seek cannot be obtained by a direct appeal to Beilinson--Bernstein localization. Still, one may naturally ask whether $D_C$-modules correspond to some natural subcategory of $\mathcal{D}_\mathcal{L}$-modules on $G/P$.

As we will find, this is indeed the case. We note that this gives an example of a category of $\mathcal{D}$-modules on a singular affine variety agreeing with a category of $\mathcal{D}$-modules on a smooth projective homogeneous space; moreover, this agreement is not a simple consequence of Beilinson--Bernstein localization.

\begin{thm}\label{SecondEquiv}
Let $D_C\textbf{-mod}^{\,\mathrm{fg}}$ denote the category of finitely generated left $D_C$-modules. Let $\mathcal{L}\cong \mathcal{O}(1-k)$, and let $\mathcal{D}_{\mathcal L}$ be the sheaf of $\mathcal{L}$-twisted differential operators on $G/P$. There exists a unique $G$-equivariant subsheaf $\mathcal{J}_\Delta\subset \mathcal{D}_{\mathcal L}$ of left ideals such that, over the big Bruhat cell $V\cong U^{\op}P/P$, one has
\begin{equation}
\mathcal{J}_\Delta(V)\cong D_V\cdot \Delta,
\end{equation}
\noindent where $\Delta$ is the $Q$-Laplacian on $V$; as usual, we identify $D_V$ and $\mathcal{D}_{\mathcal L}(V)$ via the section $\sigma_0$ of $\mathcal{L}$; cf.\ Definition~\ref{sigm0def}. Define the \textbf{harmonic sheaf} by
\begin{equation}
\mathcal{H}:=\mathcal{D}_{\mathcal L}/\mathcal{J}_\Delta.
\end{equation}
Then $\mathcal{H}$ admits a natural structure of a $\mathcal{D}_{\mathcal L}$-$D_C$ bimodule. On the big Bruhat cell, the right $D_C$-action is induced by the normalizer quotient
\begin{equation}\label{widehattaucompatiso}
\widehat{\tau}:N(D_V\Delta)/D_V\Delta \xrightarrow{\sim} D_C.
\end{equation}

For any coherent $\mathcal{D}_{\mathcal L}$-module $\mathcal{M}$, define the subsheaf of \textbf{harmonic vectors} by
\begin{equation}
\mathcal{M}^{\Delta}(U)
=
\mathrm{Ann}_{\mathcal{J}_\Delta(U)}\bigl(\mathcal{M}(U)\bigr)
=
\{\, m\in \mathcal{M}(U) : xm=0 \text{ for all } x\in \mathcal{J}_\Delta(U) \,\},
\end{equation}
\noindent and set
\begin{equation}
\Gamma_\Delta(\mathcal{M})
:=
\Gamma(G/P,\mathcal{M}^{\Delta}).
\end{equation}
\noindent This is a left $D_C$-module via \eqref{widehattaucompatiso}.

Define $\mathscr{H}$, the category of coherent \textbf{harmonic} $\mathcal{D}_{\mathcal L}$-modules, to be the full subcategory of coherent $\mathcal{D}_{\mathcal L}$-modules admitting a finite $(\mathcal{H},D_C)$-presentation; that is, an exact sequence
\begin{equation}
\mathcal{H}^a \longrightarrow \mathcal{H}^b \longrightarrow \mathcal{M} \longrightarrow 0
\end{equation}
\noindent in which the first map is induced, via the right $D_C$-action on $\mathcal{H}$, by a homomorphism $D_C^a\to D_C^b$ of finite free left $D_C$-modules.

Then $\mathscr{H}$ is an abelian category. Moreover, for every $\mathcal{M}\in\mathscr{H}$ one has
\begin{equation}
\Gamma(G/P,\mathcal{M})=\Gamma_\Delta(\mathcal{M}),
\end{equation}
\noindent and the functors
\begin{equation}
D_C\textbf{-mod}^{\,\mathrm{fg}}
\longrightarrow
\mathscr{H},
\qquad
N\longmapsto \mathcal{H}\otimes_{D_C}N,
\end{equation}
\noindent and
\begin{equation}
\mathscr{H}
\longrightarrow
D_C\textbf{-mod}^{\,\mathrm{fg}},
\qquad
\mathcal{M}\longmapsto \Gamma(G/P,\mathcal{M})=\Gamma_\Delta(\mathcal{M}),
\end{equation}
\noindent are quasi-inverse equivalences. We call the first of these functors the \textbf{harmonic transform}. Consequently, together with the Kazhdan--Laumon gluing equivalence of Theorem~\ref{thm:glued-equiv}, the categories $D_C\textbf{-mod}^{\,\mathrm{fg}}$, $\mathscr{C}$, and $\mathscr{H}$ are equivalent.
\end{thm}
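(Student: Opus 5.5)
The proof will run through the following key inputs, assembled in order.

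First, construct $\mathcal{J}_\Delta$. Set $\mathcal{J}_\Delta(V)=D_V\Delta$. For each $g\in G(\kappa)$, transport this to the translate $gV$ using the sheaf-theoretic action \eqref{eq:action-on-operators} on $\mathcal{D}_{\mathcal L}$. On the overlaps, Proposition~\ref{prop:group-action-on-Delta} gives $g\cdot\Delta=\chi_0(p(g^{-1},v))^2\Delta$. The cocycle $\chi_0(p(g^{-1},v))$ is invertible exactly on $V\cap gV$, so the translated ideals agree on overlaps and glue to a $G$-equivariant subsheaf. Uniqueness is immediate, since the translates of $V$ cover $G/P$. In particular, on $V_Q=V\cap w_0V$ one has $K\Delta K=Q^2\Delta$ (Proposition~\ref{w0onDelta}).

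Second, construct the bimodule structure on $\mathcal{H}$.
\begin{enumerate}
\item Right multiplication by $\xi\in N(D_V\Delta)$ preserves $D_V/D_V\Delta$ locally.
\item Theorem~\ref{GlobalSectionsTheorem} identifies $\Gamma(\mathcal H)$ with $N(D_V\Delta)/D_V\Delta\cong D_C$ via $\widehat\tau$ (Remark~\ref{tauisom}).
\item Each such $\xi$ therefore extends to a global section. On any chart $gV$, right multiplication by the $g$-translate of $\xi$ normalizes $\mathcal J_\Delta(gV)$, and these right actions glue to a right $D_C$-action commuting with the left $\mathcal D_{\mathcal L}$-action.
\end{enumerate}
This also gives $\Gamma(G/P,\mathcal H)=D_C$, and $\Gamma(\mathcal H)=\Gamma_\Delta(\mathcal H)$: a global section of $\mathcal H$ is killed by $\mathcal J_\Delta$ precisely because it is a symmetry, which is the content of the normalizer description.

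Third, prove exactness of $\Gamma$ on $\mathscr H$; this is the heart of the proof. Apply $\Gamma$ to a presentation $\mathcal H^a\to\mathcal H^b\to\mathcal M\to0$. Right exactness requires $H^1$-vanishing for the kernel-type objects. From $H^1(G/P,\mathcal J_\Delta)=0$ (Theorem~\ref{HOneJDeltaVanishingTheorem}) and the surjectivity of $\mathcal U(\g)\to\Gamma(\mathcal D_{\mathcal L})$, one gets that $\Gamma(\mathcal D_{\mathcal L})\to\Gamma(\mathcal H)$ is surjective. I would then follow Bezrukavnikov's $G/U$ strategy:
\begin{enumerate}
\item Show that $H^{>0}(G/P,\mathcal H)=0$ by a \v{C}ech computation on $V\cup w_0V$, extended by the codimension-$2$ depth argument.
\item Show that $\Gamma$ is exact on short exact sequences of objects generated by $\mathcal H$. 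Given a subobject of $\mathcal H\otimes_{D_C}N$ with zero global sections, use the grading by $E$ and the Shapovalov formula (Proposition~\ref{Shapovalovdet}) to produce, from any local section, a global one. Concretely, the degree-$d$ operators $f\mathcal F(g)$ and $\mathcal F(g)f$ realize multiplication by the coprime polynomials $\mathfrak B_d$ and $\mathcal F(\mathfrak B_d)$ in $E$. Together with Kelvin regularity, this shows that $\mathcal H\otimes N$ is generated by global sections and that $\Gamma$ kills no nonzero object of $\mathscr H$.
\end{enumerate}
This step is the main obstacle, since $\mathcal L$ lies outside the Beilinson--Bernstein range and exactness must come from the specific root separation of the two Shapovalov operators rather than from positivity. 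I would try first to show $\Gamma(\mathcal H\otimes_{D_C}N)=N$ for finitely presented $N$ by tensoring a free presentation of $N$ and invoking $H^1$-vanishing of $\mathcal H$ and of $\mathcal J_\Delta$.

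Finally, assemble the equivalence.
\begin{enumerate}
\item Right exactness of $\mathcal H\otimes_{D_C}-$ and the exactness of $\Gamma$ from the third step give $\Gamma(\mathcal H\otimes_{D_C}N)\cong D_C\otimes_{D_C}N=N$ by the five lemma applied to a presentation. Here $D_C$ is Noetherian by Theorem~\ref{rhoSurjective}, so finitely generated modules are finitely presented.
\item For $\mathcal M\in\mathscr H$, the natural map $\mathcal H\otimes_{D_C}\Gamma(\mathcal M)\to\mathcal M$ is an isomorphism, again by comparing presentations.
\item The identity $\Gamma=\Gamma_\Delta$ on $\mathscr H$ follows from the case $\mathcal M=\mathcal H$ by right exactness.
\item $\mathscr H$ is abelian because it is the essential image of an exact, fully faithful functor from the abelian category $D_C\textbf{-mod}^{\mathrm{fg}}$; it is closed under kernels and cokernels since $\Gamma$ is exact.
\item Combining with Theorem~\ref{thm:glued-equiv} gives the final assertion.
\end{enumerate}
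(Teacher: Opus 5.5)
Your construction of $\mathcal J_\Delta$, the bimodule structure, the identification $\Gamma(\mathcal H)=\Gamma_\Delta(\mathcal H)\cong D_C$, and the formal assembly at the end all match the paper. The step you flag as the heart, exactness of $\Gamma$, has a genuine gap, and the mechanism you propose would not close it.

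\textbf{Cohomology of $\mathcal H$ and $\mathcal J_\Delta$ is not enough.} Let $\mathcal I$ and $\mathcal K$ be the image and kernel of $\mathcal H^a\to\mathcal H^b$. Even granting $H^{>0}(\mathcal H)=0$, surjectivity of $\Gamma(\mathcal H^b)\to\Gamma(\mathcal M)$ amounts to $H^2(\mathcal K)=0$, and exactness in the middle amounts to $H^1(\mathcal K)=0$. Here $\mathcal K$ is the kernel of an arbitrary $D_C$-induced map. Without flatness of $\mathcal H$ over $D_C$, nothing in your sketch gives these vanishings. Nor is $\mathcal K$ known to lie in $\mathscr H$, since abelianness of $\mathscr H$ is an output of the theorem, not an input. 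This is why the paper proves exactness of $\Gamma$ on \emph{all} coherent $\mathcal D_{\mathcal L}$-modules. Two further problems:
\begin{itemize}
\item The two-chart \v{C}ech computation plus the codimension-$2$ depth argument only gives $H^i_Z=0$ for $i\le 1$. So it cannot give $H^{\ge 2}(\mathcal H)=0$.
\item $H^1(\mathcal J_\Delta)=0$ only yields surjectivity of $\Gamma(\mathcal D_{\mathcal L})\to\Gamma(\mathcal H)$. It plays no role in presentations by copies of $\mathcal H$.
\end{itemize}

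\textbf{The Shapovalov input is on the wrong cone.} The operators $f\mathcal F(g)$ and $\mathcal F(g)f$ live in $D_C$ for $C\subset V^*$. Their $E$-grading is unrelated to twisting by $\mathcal O(d)$ on $G/P$. Nothing in your argument lifts a section of $\mathcal M$ through $\mathcal N\twoheadrightarrow\mathcal M$. The coprimality of $\mathfrak B_d$ and $\mathcal F(\mathfrak B_d)$ is the input to the gluing theorem, not to exactness. Showing that no nonzero object of $\mathscr H$ has zero global sections is a conservativity statement, and it does not give right exactness.

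\textbf{The missing idea} is Bezrukavnikov's raise--lower trick on the $\G_m$-torsor $\pi_+:C^{+o}\to G/P$, where $C^+\subset V^+$ is the cone of $Q^+$.
\begin{itemize}
\item $\Gamma(G/P,\mathcal M)$ is the $E_+$-weight-$\lambda_0$ part of $\Gamma(C^{+o},\pi_+^*\mathcal M)$, with $\lambda_0=1-k$.
\item Take $m$ in this weight space and a basis $\psi_i$ of $\mathcal O(C^+)_d$. For $d\gg0$, the sections $\psi_i m\in\Gamma(\mathcal M(d))$ lift to $n_i\in\Gamma(\mathcal N(d))$, by a good filtration and Serre vanishing.
\item Let $\mathcal F_+$ be the quadric Fourier transform of $C^+$ and $\phi_i$ the dual basis. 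Then $\sum_i\mathcal F_+(\phi_i)n_i$ maps to $\mathfrak B^{+,\op}_d\, m=\prod_{j=1}^d(k+j)(j+1)\,m$.
\item This is a nonzero multiple of $m$, so dividing by the scalar gives the lift.
\end{itemize}
With exactness on all coherent modules in hand, your final assembly goes through. The one exception is that $\mathscr H$ being abelian should be read off from the equivalence, as in the paper. You have not justified closure of $\mathscr H$ under ambient kernels, and $\Gamma$ is not conservative on coherent modules (for example, $\Gamma(\mathcal L)=0$).
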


The overarching idea of the proof of Theorem ~\ref{SecondEquiv} is to unwind the $F$-method. Our first order of business is to carefully construct the ideal $\mathcal{J}_{\Delta}$ and $\mathcal{D}_{\mathcal{L}}$-module $\mathcal{H}$, which will serve as the kernel object for the harmonic transform and as the basic object controlling the category $\mathscr{H}$. The next few subsections examine the basic important properties of $\mathcal{H}$.

\subsection{The Harmonic Ideal and the Harmonic Sheaf; Statement of the Global Sections Theorem}\label{IntroHarmsheaf}

For each $g \in G(\kappa)$, we let $V_g := gV = gU^{\op}P/P$ denote the left $g$-translate of the big Bruhat cell in $G/P$. These define an open cover of $G/P$. For each such $g$, we define a trivializing section of $\mathcal{L}\cong\mathcal{O}(1-k)$ by
\begin{equation}\label{sigmagdef}
    \sigma_g : x \mapsto \left[g \, u_{g^{-1}x}^{\op}, \;1\right] \in G \times^{P,\; -\eta}\A^1 \cong \mathcal{L}(V_g).
\end{equation}
\noindent Observe that $\sigma_e = \sigma_0$, as in Definition~\ref{sigm0def}. This permits us to identify $\kappa[V]$ with $\Gamma(V_g, \mathcal{L})$ via $\{v \mapsto f(v): v \in V\} \mapsto \{x \mapsto f(g^{-1}x)\sigma_g(x) : x \in V_g\}$, and hence to identify $D_V$ with $\mathcal{D}_{\mathcal{L}}(V_g)$ via $\xi(f\sigma_g) = \xi(f\circ g^{-1})\sigma_g$ for $f \in \kappa[V]$ and $\xi \in D_V$.

We let $\Delta_g$ denote the Laplacian operator on $V_g$ in the coordinates $V \to V_g$, $v \mapsto gu_v^{\op}P/P$; explicitly, for $f \in \mathcal{O}(V_g)$, we have $f \circ g^{-1} \in \kappa[V]$, and so we may define
\begin{equation}\label{Deltagdef}
    \Delta_g (f \cdot \sigma_g) := \Delta (f\circ g^{-1})\cdot \sigma_g.
\end{equation}
\noindent With this identification, we define the local left ideal
\begin{equation}
    \mathcal{J}_\Delta(V_g) := \mathcal{D}_{\mathcal L}(V_g) \cdot \Delta_g.
\end{equation}

\begin{definition}
     We shall call $\mathcal{J}_\Delta$ the \textbf{harmonic ideal}.  
\end{definition}

Of course we must verify the following:

\begin{prop}\label{HarmonicIdealProp}
   $\mathcal{J}_\Delta$ defines a sheaf of left $\mathcal{D}_\mathcal{L}$-ideals on $G/P$.
\end{prop}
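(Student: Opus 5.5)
The plan is to check that the local left ideals $\mathcal{D}_{\mathcal L}(V_g)\cdot\Delta_g$ agree on overlaps. Once they do, they glue to a subsheaf of $\mathcal{D}_{\mathcal L}$, and it is automatically a sheaf of left ideals, since being a left ideal is a local condition. The cells $V_g$ cover $G/P$. It therefore suffices to prove the following: for all $g,h\in G(\kappa)$, the restrictions of $\mathcal{D}_{\mathcal L}(V_g)\Delta_g$ and $\mathcal{D}_{\mathcal L}(V_h)\Delta_h$ to any open $W\subset V_g\cap V_h$ generate the same left ideal $\mathcal{D}_{\mathcal L}(W)\Delta_g=\mathcal{D}_{\mathcal L}(W)\Delta_h$. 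Because $\mathcal{D}_{\mathcal L}(W)$ contains $\mathcal{O}(W)$, this reduces to one claim: on $V_g\cap V_h$, the operators $\Delta_g$ and $\Delta_h$ differ by left multiplication by a nowhere-vanishing regular function.

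To prove this claim, I would first put both operators in the trivialization $\sigma_0$ and translate by $g^{-1}$. This reduces to comparing $\Delta$ with $\Delta_{g'}$ for $g'=g^{-1}h$ on $V\cap V_{g'}$. The key step is to relate the two trivializations. Unwinding \eqref{sigmagdef} with the factorization $g'u^{\op}_{g'^{-1}x}=u^{\op}_x\,p(g',g'^{-1}x)$, I expect
\[
\sigma_{g'}(x)=\eta\bigl(p(g',g'^{-1}x)\bigr)^{\pm1}\sigma_0(x).
\]
Under the identifications of $\kappa[V]$ with $\Gamma(V,\mathcal{L})$ and with $\Gamma(V_{g'},\mathcal L)$, the operator $\Delta_{g'}$, written in the $\sigma_0$-trivialization, is then exactly the transported operator $g'\cdot\Delta$ of \eqref{eq:action-on-operators}. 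This is essentially the content of Proposition~\ref{linetwistingProp}, where the twisted action \eqref{twistedaction} was identified with translation of sections of $\mathcal L$.

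With that identification in hand, Proposition~\ref{prop:group-action-on-Delta} gives
\[
g'\cdot\Delta=\chi_0\bigl(p(g'^{-1},v)\bigr)^2\Delta .
\]
The factor $c(v)=\chi_0(p(g'^{-1},v))$ is regular and invertible exactly where the factorization $g'^{-1}u^{\op}_v\in U^{\op}P$ is defined. That is, for $v\in V$, it is so exactly when $g'^{-1}v\in V$, i.e.\ on $V\cap V_{g'}$. In the basic example $g'=w_0$, the factor is $-Q(v)$, which is a unit on $V_Q=V\cap V_{w_0}$. Hence $\mathcal{D}_{\mathcal L}(W)\Delta_{g'}=\mathcal{D}_{\mathcal L}(W)c^2\Delta=\mathcal{D}_{\mathcal L}(W)\Delta$ on every open $W\subseteq V\cap V_{g'}$. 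This compatibility of restrictions yields the sheaf, and its $G$-equivariance follows from the same formula.

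I expect the main obstacle to be the bookkeeping in this middle step. One has to confirm that the trivialization change and the function pullback $f\mapsto f\circ g^{-1}$ in \eqref{Deltagdef} combine to give exactly $g\cdot\Delta$, rather than $g^{-1}\cdot\Delta$ or $g\cdot\Delta$ up to an extra factor. One also has to confirm that the conformal factor is invertible on the whole overlap and not merely on a smaller dense open set. If the sign conventions are hard to pin down, there is a fallback: either convention gives a unit multiple of $\Delta$, by Proposition~\ref{prop:group-action-on-Delta} applied to $g$ or to $g^{-1}$. For invertibility on the full overlap, I would use the fact that $p$ is regular on the locus where the Bruhat factorization exists, and that $\chi_0\circ p$ is a unit there because $\chi_0$ takes values in $\G_m$.
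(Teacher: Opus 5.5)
Your proposal is correct and follows essentially the same route as the paper. Both arguments reduce to showing that $\Delta_g$ and $\Delta_h$ differ by a unit on the overlap, relate the two trivializations through the twisted action of Proposition~\ref{linetwistingProp}, and invoke the semi-invariance formula of Proposition~\ref{prop:group-action-on-Delta}, with the unit given by $\chi_0$ of the Bruhat factor. The one cosmetic difference is that you first translate to the pair $(e,g^{-1}h)$, whereas the paper works directly with $\varphi=h^{-1}g$. Your bookkeeping concern resolves as you hoped: $\sigma_{g'}=g'\cdot\sigma_0$, so $\Delta_{g'}=g'\cdot\Delta$ in the $\sigma_0$-trivialization.
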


\begin{proof}
Let $g,h\in G(\kappa)$ and consider the overlap $V_g\cap V_h$. It suffices to show that the left ideals
$\mathcal{D}_{\mathcal L}(V_g\cap V_h)\Delta_g$ and $\mathcal{D}_{\mathcal L}(V_g\cap V_h)\Delta_h$ coincide, equivalently that
$\Delta_g$ and $\Delta_h$ differ by multiplication by a unit in $\mathcal{O}(V_g\cap V_h)$.

Fix $x\in V_g\cap V_h$. Since $x\in V_g$, we may write $x=g u_v^{\op}P/P$ for a unique $v\in V$.
Since also $x\in V_h$, we may write $x=h u_w^{\op}P/P$ for a unique $w\in V$. Equivalently,
\begin{equation}
h^{-1}g\,u_v^{\op} \in U^{\op}P,
\end{equation}
\noindent so there is a unique factorization
\begin{equation}\label{eq:factor-overlap}
h^{-1}g\,u_v^{\op} \;=\; u_w^{\op}\,p
\qquad\text{for some }p\in P,
\end{equation}
\noindent rational in $v$ and regular on $V_g\cap V_h$. Applying the character
$\chi_0:P\to\G_m$, we obtain a regular invertible function on the overlap,
\begin{equation}\label{eq:overlap-unit}
c_{h,g}(x)\;:=\;\chi_0(p)\in \mathcal{O}(V_g\cap V_h)^\times.
\end{equation}
We now compare the trivializations $\sigma_g$ and $\sigma_h$ on $V_g\cap V_h$. Using \eqref{eq:factor-overlap}, and using the convention from \eqref{twistbundledef} that
$[ap,z]=[a,\eta(p)^{-1}z]$, we compute in the associated bundle $\mathcal{L}=G\times^{P,\eta^{-1}}\A^1$:
\begin{equation}\label{eq:sigma-compare}
\sigma_g(x)
=
\bigl[g\,u_v^{\op},\,1\bigr]
=
\bigl[h\,u_w^{\op}p,\,1\bigr]
=
\bigl[h\,u_w^{\op},\,\eta(p)^{-1}\bigr]
=
\chi_0(p)^{k-1}\,\sigma_h(x)
=
c_{h,g}(x)^{k-1}\,\sigma_h(x).
\end{equation}
Now let $s$ be a local section of $\mathcal{L}$ on $V_g\cap V_h$, and write
$s=F\sigma_g$, where $F$ is the coefficient of $s$ in the $g$-trivialization. Set
\begin{equation}
\varphi:=h^{-1}g.
\end{equation}
\noindent Then \eqref{eq:factor-overlap} says precisely that on the overlap we have a Bruhat factorization $\varphi\,u_v^{\op}=u_w^{\op}p$,
with $c_{h,g}=\chi_0(p)$. Equivalently, $w=\varphi v$ for the rational action on the big cell, and $p=p(\varphi,v)$. Hence $p(\varphi^{-1},w)=p^{-1}$.

By the twisted action formula \eqref{twistedaction}, the same section $s$ is written in the $h$-trivialization as
\begin{equation}\label{eq:s-in-two-trivs}
s
=
(\varphi F)\sigma_h.
\end{equation}
Indeed, at the point $w$, the coefficient $(\varphi F)(w)$ is $(\varphi F)(w)
= \eta(p(\varphi^{-1},w))F(\varphi^{-1}w) = \eta(p^{-1})F(v) = \eta(p)^{-1}F(v)$, which agrees with \eqref{eq:sigma-compare}.

Applying $\Delta_h$ and using the definition of $\Delta_h$, we obtain
\begin{equation}\label{eq:Deltah-first}
\Delta_h(s)
=
\Delta(\varphi F)\sigma_h.
\end{equation}
\noindent Now Proposition~\ref{prop:group-action-on-Delta} gives the semi-invariance of $\Delta$ under the twisted $G$-action. Since $p(\varphi^{-1},w)=p^{-1}$, we obtain
\begin{equation}\label{eq:Delta-intertwining-overlap}
\Delta(\varphi F)
=
c_{h,g}^{2}\,\varphi(\Delta F).
\end{equation}
\noindent Substituting \eqref{eq:Delta-intertwining-overlap} into \eqref{eq:Deltah-first} yields
\begin{equation}\label{eq:Deltah-second}
\Delta_h(s)
=
c_{h,g}^{2}\,(\varphi(\Delta F))\sigma_h.
\end{equation}
\noindent But applying \eqref{eq:s-in-two-trivs} with $F$ replaced by $\Delta F$, we get $(\varphi(\Delta F))\sigma_h = (\Delta F)\sigma_g$. Therefore, $\Delta_h(s) = c_{h,g}^{2}\,(\Delta F)\sigma_g = c_{h,g}^{2}\,\Delta_g(s)$. Since $c_{h,g}\in\mathcal{O}(V_g\cap V_h)^\times$ is a unit, this shows that $\Delta_h$ and $\Delta_g$ differ by multiplication by a unit on $V_g\cap V_h$. Consequently, $\mathcal{D}_{\mathcal L}(V_g\cap V_h)\Delta_g = \mathcal{D}_{\mathcal L}(V_g\cap V_h)\Delta_h$, and therefore the ideals $\mathcal{D}_{\mathcal L}(V_g)\cdot \Delta_g$ glue on overlaps to define a sheaf of left ideals $\mathcal{J}_\Delta$ in $\mathcal{D}_{\mathcal{L}}$.
\end{proof}

\begin{definition}
    Let us define the left $\mathcal{D}_{\mathcal{L}}$-module
    \begin{equation}
        \mathcal{H} := \mathcal{D}_{\mathcal{L}}/\mathcal{J}_{\Delta}.
    \end{equation}
    \noindent We shall call $\mathcal{H}$ the \textbf{harmonic sheaf} on $G/P$.
\end{definition}

\begin{rem}
    Observe that, after identifying $\mathcal{H}|_{V}$ with $D_V/D_V\Delta$, the solution functor $\operatorname{Hom}_{\mathcal{D}_{V}}(\mathcal{H}|_{V}, \mathcal{S})$ applied to a test $\mathcal{D}_{V}$-module $\mathcal{S}$ yields precisely the harmonic vectors in $\mathcal{S}$, i.e.\ the solutions to $\Delta(s)=0$. The derived functor $\operatorname{RHom}$ may likewise be viewed as the derived space of harmonic vectors.
\end{rem}

The following theorem shows that the global sections of $\mathcal{H}$ identifies canonically with $D_C$.

\begin{thm}\label{GlobalSectionsTheorem}
Restriction to the standard Bruhat cell $V\cong U^{\op}P/P$ identifies the global sections of $\mathcal{H}$ with those elements of $D_V / D_V \Delta$ left-annihilated by $\Delta$. In particular,
\begin{equation}
\Gamma(\mathcal{H})
=
\Gamma_\Delta(\mathcal{H})
\cong
N(D_V\Delta)/D_V\Delta
\cong
D_C,
\end{equation}
\noindent where $N(D_V\Delta)$ denotes the normalizer of the left ideal $D_V\Delta$; that is, the set of $\xi \in D_V$ such that
\begin{equation}\label{highersymmetrieseq}
\Delta \xi= \delta_\xi \Delta
\end{equation}
\noindent for some $\delta_\xi \in D_V$. The isomorphism with $D_C$ is induced by applying $\widehat{\tau}$; see \eqref{defwidehattau}.
\end{thm}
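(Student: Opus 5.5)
We prove two inclusions inside $D_V/D_V\Delta=\mathcal H(V)$, following the outline in the introduction. Restriction $\Gamma(\mathcal H)\to\mathcal H(V)$ is injective: $\mathcal H$ is a quotient of a TDO by a locally principal left ideal, hence torsion-free over $\mathcal O$ on each chart $V_g$, and $V$ is dense. So it suffices to identify the image with $N(D_V\Delta)/D_V\Delta$. Two remarks make the statement coherent. First, $\xi\in N(D_V\Delta)$ exactly when $\Delta\xi\in D_V\Delta$, i.e.\ when $\Delta$ left-annihilates the class of $\xi$; this gives the equality of the two descriptions. Second, Remark~\ref{tauisom} already gives $\widehat\tau: N(D_V\Delta)/D_V\Delta\cong D_C$. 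The chart $X_{\mathrm{big}}=V\cup w_0V$ has complement $\P(C_V)$ of codimension $2$. The overlap is $V_Q$, and transition on it is Kelvin conjugation $\xi\mapsto K\xi K$. By Proposition~\ref{w0onDelta} this carries $D_{V_Q}\Delta$ to itself, since $K\Delta K=Q^2\Delta$ and $Q$ is a unit on $V_Q$.

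\emph{Symmetries extend.} Let $\xi\in N(D_V\Delta)$. We will show $K\xi K$ agrees modulo $D_{V_Q}\Delta$ with a regular operator on $V$. Since $\ad_\Delta$ preserves the relevant subspaces, consider the pole quotient
\[
\mathcal P:=D_{V_Q}/(D_V+D_{V_Q}\Delta).
\]
Taking principal symbols, $\ad_\Delta$ becomes the Hamiltonian derivation $\{Q^*,-\}$ on $(\kappa[V\times V^*]/(Q^*))[1/Q]$ modulo regular classes. We expect this map to be injective: it is a locally nilpotent derivation, and poles in $Q$ cannot be cancelled. A filtration argument then lifts this to injectivity of $\ad_\Delta$ on $\mathcal P$. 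Next, $K\xi K$ is again a symmetry, by conjugating $\Delta\xi=\delta_\xi\Delta$ with $K$. A snake lemma argument on
\[
0\to D_V/D_V\Delta\to D_{V_Q}/D_{V_Q}\Delta\to\mathcal P\to 0
\]
then forces the class of $K\xi K$ in $\mathcal P$ to vanish. So $\xi$ has a compatible representative on $w_0V$. The two representatives glue to a section over $X_{\mathrm{big}}$. We extend this section across codimension $2$ by a filtered Hartogs argument: work with order-filtration pieces, which are coherent, and show $\gr\mathcal H$ is reflexive there, being a quotient of $\Sym T$ by a principal ideal with Cohen--Macaulay depth.

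\emph{Global sections are symmetries.} Let $s\in\Gamma(\mathcal H)$ have representative $\xi$ on $V$. We must show $\Delta\xi\in D_V\Delta$. Using the right Fischer decomposition (harmonic polynomials times $\kappa[\partial]$ plus a multiple of $\Delta$), write
\[
\Delta\xi=\delta_\xi\Delta+R,\qquad R\in\kappa[V]\otimes Z_\partial .
\]
Regularity of $K\xi K$ on $V$, together with $K\Delta K=Q^2\Delta$, gives $R(Kh)\in K(Q^2\kappa[V])$ for every harmonic polynomial $h$. Suppose $R\neq0$. Pick a null linear form $\ell$ adapted to the leading part of $R$, and test on $h=\ell^N$, which is harmonic. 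The Kelvin transform of $\ell^N$ carries a pole of order roughly $N+k-1$ in $Q$. For $N$ large, applying $R$ produces a pole exceeding what $K(Q^2\kappa[V])$ allows. This contradiction gives $R=0$.

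\emph{Main obstacle.} We expect the hardest step to be this converse pole-order estimate. The difficulty is choosing $\ell$ generically enough that no cancellation occurs in the leading pole of $R(K\ell^N)$, and tracking the Fischer components precisely. The Hartogs extension is the secondary technical point.
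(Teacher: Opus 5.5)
Your outline follows the paper's architecture step for step: symbol-level injectivity of $\{Q^*,-\}$, its lift to $\ad_\Delta$ on $D_{V_Q}/(D_V+D_{V_Q}\Delta)$, Kelvin conjugation, gluing over $X_{\mathrm{big}}$ and Hartogs; then conversely the right Fischer decomposition, the test $R(Kh)\in K(Q^2\kappa[V])$, and null powers $\ell^N$. However, the two steps that carry the content are not established, and the reason you give for the first one is false.

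\textbf{The symbol-level injectivity.} The derivation $\{Q^*,-\}$ is not locally nilpotent on $(\kappa[V\times V^*]/(Q^*))[1/Q]$, nor on its quotient by regular classes. Put $\epsilon:=\{Q^*,Q\}$. Then $\{Q^*,\epsilon\}=2Q^*\in(Q^*)$, so the $n$-th iterate on $Q^{-1}$ is $\pm n!\,Q^{-n-1}\epsilon^n$, which is never regular. In any case, a locally nilpotent injective endomorphism of a nonzero space cannot exist. What actually makes the step work is a minimal-pole argument:
\begin{enumerate}
\item[(a)] represent the class as $Q^{-N}a$ with $N$ minimal and $a\notin(Q,Q^*)$;
\item[(b)] compute $\{Q^*,Q^{-N}a\}=Q^{-N-1}(Q\{Q^*,a\}-N\epsilon a)$;
\item[(c)] use $S\cap S[1/Q]Q^*=(Q^*)$ to deduce $\epsilon a\in(Q,Q^*)$;
\item[(d)] conclude because $(Q,Q^*)$ is prime and $\epsilon\notin(Q,Q^*)$, by bidegree.
\end{enumerate}
Primality requires $Q$ irreducible, i.e.\ $n\geq 4$. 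For $n=2$ the operator statement is false: with $Q=xy$ and $\xi=x^{-1}\partial_x$ one has $[\Delta,\xi]=-x^{-2}\Delta$, yet $\xi\notin D_V+D_{V_Q}\Delta$. Your sketch never uses $n\geq 4$, so as written it would prove a false statement.

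\textbf{The converse.} Here you stop exactly where an idea is needed, and the mechanism you describe is not the operative one. A small slip first: the Fischer complement is $\kappa[V]\otimes Z_\partial$ (harmonic in the symbol variables), as in your display, not harmonic polynomials times $\kappa[\partial]$ as in your parenthesis. Harmonicity in $\zeta$ is used below.

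With $f_N=Q^{-m-N}\ell^N=\pm K(\ell^N)$ and $m=k-1$, the top homogeneous part of $R(f_N)$ is $Q^{-m-N-J}\ell^{N-J}S_N$. The Kelvin pole profile says $Q^{-T}h_s\in K(Q^2\kappa[V])$ if and only if $T\geq m+s+2$. This turns membership in $K(Q^2\kappa[V])$ into the divisibility $Q^{D+2}\mid S_N$, and the exponent $D+2$ is independent of $N$. So no pole outgrows an allowed bound as $N$ grows; large $N$ only serves to make the $N^J$-coefficient of $S_N$ dominate modulo $Q^{D+2}$. To run this you need four ingredients:
\begin{enumerate}
\item[(i)] Order the blocks $R_{d,j}$ by weight $d-j$ and then by $j$, so that a single block $R_{D,J}$ produces the $N^J$-term in top degree.
\item[(ii)] Identify that coefficient as $F_\ell(v)=P(v,A^\ell(v))$, where $A^\ell(v)=Q(v)\ell-\ell(v)\,dQ_v$ and $P$ is the normal symbol of $R_{D,J}$.
\item[(iii)] Find a null $\ell$ with $F_\ell\neq 0$. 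Since $P(v_0,\cdot)$ is a nonzero $Q^*$-harmonic polynomial, it is not divisible by $Q^*$, hence does not vanish on the null cone (irreducibility again). Moreover $\ell\mapsto A^\ell(v_0)$ is $Q(v_0)$ times a reflection preserving that cone.
\item[(iv)] Show $Q^{D+1}\nmid F_\ell$. For $u=\ell^\sharp$ one has $A^\ell(v+su)=A^\ell(v)$, so $F_\ell(v+su)$ has degree $\leq D$ in $s$, while $Q(v+su)=Q(v)+s\ell(v)$ is a nonconstant linear polynomial in $s$.
\end{enumerate}
Without (i)--(iv), the non-cancellation you flag as the main obstacle is unproved. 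Hence the inclusion $\Gamma(\mathcal H)\subset N(D_V\Delta)/D_V\Delta$ is not yet established.
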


We note that operators $\xi$ satisfying \eqref{highersymmetrieseq} are known in the literature as \textbf{symmetries} of the Laplacian, and, in particular, as \textbf{higher symmetries} when $\ord(\xi)>1$ \cite{Eastwood2005HigherSymmetries, Michel2014HigherSymmetries}. The relationship between such symmetry algebras and rings of differential operators on quadratic cones, including powers of the Laplacian, was further developed by Levasseur and Stafford \cite{LevasseurStafford2017HigherSymmetries}.

\vspace{2mm}

We prove this theorem in two parts. We first show that normalizers extend globally, and then prove that a global section of the harmonic sheaf must be a symmetry of the Laplacian. Both steps require several preparatory lemmata.

\subsection{\texorpdfstring{Symmetry of $\Delta \implies$ Global section of $\mathcal{H}$}{Symmetry of Delta Implies Global}}\label{SymmetryImpliesGlobalSect}

The first lemma is a key principal-symbol statement used in the extension argument.

\begin{lem}\label{SymbolLemma}
Let $V$ be a split quadratic space of even dimension $n=2k\geq 4$, with quadratic form $Q\in \kappa[V]$, and let $Q^*\in \kappa[V^*]$ denote the dual quadratic form. Put
\begin{equation}
S:=\kappa[T^*V]\cong \kappa[V\times V^*],
\qquad
S[1/Q]:=\kappa[T^*V][1/Q].
\end{equation}
\noindent We regard $S+S[1/Q]Q^*$ as a subspace of $S[1/Q]$. Let $\{-,-\}$ denote the Poisson bracket on $T^*V$ induced by the usual symplectic structure. If $\nu\in S[1/Q]$ satisfies
\begin{equation}
\{Q^*,\nu\}\in S+S[1/Q]Q^*,
\end{equation}
\noindent then in fact
\begin{equation}
\nu\in S+S[1/Q]Q^*.
\end{equation}
\end{lem}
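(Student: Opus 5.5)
The plan is to pass to the quotient by $Q^*$ and reduce the statement to a valuation argument along the prime divisor $\{Q=0\}$. Put $R:=S/(Q^*)\cong\kappa[V\times C]$ and $A:=S[1/Q]/S[1/Q]Q^*\cong R[1/Q]$. The subspace $S+S[1/Q]Q^*\subset S[1/Q]$ is exactly the preimage of $R\subset A$. Since $\{Q^*,Q^*\}=0$, the Hamiltonian derivation $\delta:=\{Q^*,-\}$ preserves $S[1/Q]Q^*$, and hence descends to a derivation of $A$ preserving $R$. The lemma is then equivalent to the following claim: if $\bar\nu\in A$ satisfies $\delta(\bar\nu)\in R$, then $\bar\nu\in R$. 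In other words, $\delta$ acts injectively on the pole quotient $A/R$.

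First I would compute $\delta$ on generators. For $\nu\in V^*$ coordinates, $\delta$ kills functions on $V^*$. On the $V$-coordinates it acts as the constant vector field in the direction $\nu^\sharp$, up to a nonzero normalizing constant. This is the infinitesimal form of the $\G_a$-action \eqref{Gasymm}. Consequently
\begin{equation*}
\delta(Q)=c\,\alpha,\qquad \alpha:=\langle\nu,v\rangle,\qquad c\in\kappa^\times .
\end{equation*}

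Second, I would establish two facts about $R=\kappa[V]\otimes_\kappa\kappa[C]$.
\begin{enumerate}
\item[(a)] $Q$ is prime in $R$. Indeed, $R/QR\cong\kappa[C_V]\otimes_\kappa\kappa[C]$. For $n\ge4$ both cones are geometrically integral, so this ring is a domain.
\item[(b)] $\alpha\notin QR$. Grade $R$ by degree in $v$. Then $\alpha$ is homogeneous of $v$-degree $1$ and is nonzero in $R$, since $C$ spans $V^*$. Every nonzero element of $QR$ has all its $v$-homogeneous components in $v$-degree at least $2$.
\end{enumerate}

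Third, the valuation argument. Suppose $\bar\nu\notin R$, and write $\bar\nu=f/Q^m$ with $f\in R$, $m\ge1$, and $f\notin QR$; this is possible by (a). Then
\begin{equation*}
\delta(\bar\nu)=\frac{Q\,\delta(f)-mc\,\alpha f}{Q^{m+1}}.
\end{equation*}
If this lies in $R$, then $Q^{m+1}$ divides the numerator in $R$, so in particular $Q\mid mc\,\alpha f$. Since $\operatorname{char}\kappa=0$, we have $mc\neq0$. By primality of $Q$ from (a), $Q$ must then divide $\alpha$ or $f$. The first is excluded by (b) and the second by the choice of $f$, a contradiction. Hence $m=0$, i.e.\ $\bar\nu\in R$, which lifts to the stated result.

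The main obstacle is step (a), the primality of $Q$ in $\kappa[V\times C]$; everything else is a formal computation. For (a) I would use irreducibility of the smooth-away-from-the-origin quadric cones for $n\ge4$ (both are normal complete intersections, irreducible over $\bar\kappa$). I would then invoke that a tensor product of domains over $\kappa$ is a domain when one factor is geometrically integral.
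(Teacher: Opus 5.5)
Your proof is correct and is essentially the paper's argument, reorganized by first passing to $R=S/(Q^*)$: your minimal pole order with $f\notin QR$, the primality of $Q$ in $R$, and $\alpha\notin QR$ correspond exactly to the paper's minimal $N$ with $a\notin(Q,Q^*)$, the integrality of $S/(Q,Q^*)$, and the bidegree argument showing $\{Q^*,Q\}\notin(Q,Q^*)$. The one thing you leave implicit is that $R$ is itself a domain, so that $R\to R[1/Q]$ is injective and ``$Q^{m+1}$ divides the numerator'' is justified; in the paper this role is played by the identity $S\cap S[1/Q]Q^*=(Q^*)$.
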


\begin{proof}
Suppose not. Passing to the quotient $S[1/Q]/(S+S[1/Q]Q^*)$, choose a representative of the class of $\nu$ of the form $\nu=Q^{-N}a$ with $N\geq 1$ minimal and $a\in S$. By minimality of $N$, we may and do assume that $a\notin (Q,Q^*)$: indeed, if $a=Qb+Q^*c$, then $Q^{-N}a=Q^{-(N-1)}b+Q^*(Q^{-N}c)$, so the class of $\nu$ modulo $S+S[1/Q]Q^*$ could be represented with strictly smaller pole order.

By the Leibniz rule for the Poisson bracket, and since applying $\{Q^*,-\}$ to $Q^NQ^{-N}=1$ gives
$\{Q^*,Q^{-N}\}=-NQ^{-N-1}\{Q^*,Q\}$, we have
\begin{align}
\{Q^*,\nu\}
&=
\{Q^*,Q^{-N}a\}\nonumber\\
&=
\{Q^*,Q^{-N}\}a+Q^{-N}\{Q^*,a\}\nonumber\\
&=
Q^{-N-1}\Bigl(Q\{Q^*,a\}-N\{Q^*,Q\}a\Bigr).
\end{align}
By hypothesis, $\{Q^*,\nu\}\in S+S[1/Q]Q^*$. Multiplying through by $Q^{N+1}$, we obtain $B:=Q\{Q^*,a\}-N\{Q^*,Q\}a \in Q^{N+1}S+S[1/Q]Q^*$. Since $B\in S$, we now intersect the right-hand side with $S$. 

We claim that $S\cap S[1/Q]Q^*=(Q^*)$. Indeed, if $f\in S\cap S[1/Q]Q^*$, then $Q^r f\in (Q^*)$ for some $r\geq 0$. Reducing modulo $(Q^*)$, we get  $Q^r\bar f=0$ in $S/(Q^*)$.
But $Q$ is not a zero-divisor in $S/(Q^*)$, since $Q$ and $Q^*$ involve disjoint sets of variables and $S/(Q^*)$ is a domain. Hence $\bar f=0$, so $f\in(Q^*)$.

It follows that $B\in (Q^{N+1},Q^*)\subseteq (Q,Q^*)$. Reducing modulo $(Q,Q^*)$, and using that $N\in \kappa^\times$, this gives
\begin{equation}\label{divisibilityeq}
\{Q^*,Q\}\,a\in (Q,Q^*).
\end{equation}

We now claim that $\{Q^*,Q\}$ is not a zero-divisor in $S/(Q,Q^*)$. Since $n\geq 4$, the quadratic forms $Q$ and $Q^*$ are geometrically irreducible. Moreover, they involve disjoint sets of variables on $V\times V^*$, so the closed subvariety cut out by $(Q,Q^*)$ is $\{Q=0\}\times \{Q^*=0\}\subset V\times V^*$, and is therefore integral. Hence $S/(Q,Q^*)$ is a domain. On the other hand, $\{Q^*,Q\}\notin (Q,Q^*)$: with the natural bihomogeneous grading on $\kappa[V\times V^*]$, the element $Q$ has bidegree $(2,0)$, the element $Q^*$ has bidegree $(0,2)$, and $\{Q^*,Q\}$ has bidegree $(1,1)$, so it cannot lie in the ideal generated by $Q$ and $Q^*$.\footnote{In split coordinates $x_i,y_i$ on $V$ and $\lambda_i,\mu_i$ on $V^*$, one computes $\{Q^*,Q\}$, up to the sign convention for the Poisson bracket, as the phase-space Euler function $\sum_{i=1}^k (x_i\lambda_i+y_i\mu_i)$.}

Therefore multiplication by $\{Q^*,Q\}$ is injective on $S/(Q,Q^*)$. Then \eqref{divisibilityeq} forces $a\in (Q,Q^*)$, contrary to our choice of $a$. This contradiction proves that $\nu\in S+S[1/Q]Q^*$, as claimed.
\end{proof}

\begin{lem}\label{OperatorLemma}
Let $V$ be a nondegenerate quadratic space of even dimension $n=2k\geq 4$, with quadratic form $Q\in \kappa[V]$, and let $\Delta\in D_V$ be the Laplacian attached to the dual quadratic form $Q^*\in \kappa[V^*]$.\footnote{The operators $Q$, $\Delta$, and $E+k$ form an $\mathfrak{sl}_2$-triple acting on $\kappa[V]$. More precisely, $[E+k,Q]=2Q$, $[E+k,\Delta]=-2\Delta$, and $[\Delta,Q]=E+k$. Thus $Q$ plays the role of the raising operator, $\Delta$ the lowering operator, and $E+k$ the Cartan element. In fact, the algebra generated by these three operators is isomorphic to $\mathcal{U}(\sl_2)$ and is precisely the subalgebra of $H$-invariant differential operators on $V$. This is a special instance of Howe duality for the standard representation of the orthogonal group $H$ \cite{GoodmanWallach2009Symmetry}.} Let
\begin{equation}
V_Q := \Spec \,\kappa[V][1/Q],
\qquad
D_{V_Q}:=D\bigl(\kappa[V][1/Q]\bigr).
\end{equation}
\noindent Suppose that $\xi\in D_{V_Q}$ satisfies
\begin{equation}
[\Delta,\xi]\in D_V+D_{V_Q}\Delta.
\end{equation}
\noindent Then
\begin{equation}
\xi\in D_V+D_{V_Q}\Delta.
\end{equation}
\end{lem}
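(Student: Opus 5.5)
We reduce the statement to Lemma~\ref{SymbolLemma} by induction on the order filtration. Write $\mathcal{Q}:=D_{V_Q}/(D_V+D_{V_Q}\Delta)$. The hypothesis says that the class $[\xi]\in\mathcal{Q}$ is killed by $\ad_\Delta$. First we check that $\ad_\Delta$ preserves both $D_V$ and $D_{V_Q}\Delta$, since $[\Delta,A\Delta]=[\Delta,A]\Delta$. So $\ad_\Delta$ is a well-defined endomorphism of $\mathcal{Q}$, and it suffices to show that it is injective there. We argue by contradiction. Suppose $\xi\notin D_V+D_{V_Q}\Delta$, and choose a representative $\xi$ of its class of minimal order $m$; among those, choose one whose leading term has minimal pole order.

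Let $\sigma_m(\xi)\in S[1/Q]$ be the principal symbol, homogeneous of degree $m$ in the fibre variables. The principal symbol of $\Delta$ is $Q^*$, up to the sign/normalization convention, and $\sigma_{m+1}([\Delta,\xi])=\{Q^*,\sigma_m(\xi)\}$. Here is the key step. We write $[\Delta,\xi]=a+b\Delta$ with $a\in D_V$ and $b\in D_{V_Q}$. We would like to conclude that
\[
\{Q^*,\sigma_m(\xi)\}\in S+S[1/Q]Q^*
\]
in degree $m+1$. The difficulty is cancellation: the top-order parts of $a$ and $b\Delta$ may exceed order $m+1$ and cancel. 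To handle this, we pass to the associated graded, which is legitimate because the filtration on $D_V+D_{V_Q}\Delta$ is compatible with the one on $D_{V_Q}$. Concretely, we claim
\[
\gr\bigl(D_V+D_{V_Q}\Delta\bigr)=S+S[1/Q]Q^*.
\]
That is, any element of $D_V+D_{V_Q}\Delta$ of order $\le r$ can be rewritten as $a'+b'\Delta$ with $\ord a'\le r$ and $\ord b'\le r-2$. To prove this, suppose the leading terms cancel: $\sigma(a)=-\sigma(b)Q^*$. Then $\sigma(b)Q^*\in S$, and the intersection argument $S\cap S[1/Q]Q^*=(Q^*)$ from the proof of Lemma~\ref{SymbolLemma} gives $\sigma(b)=c+s$ with $c\in S$ and $sQ^*=0$, so $\sigma(b)\in S$. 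We then lift $\sigma(b)$ to some $b_0\in D_V$ and replace $(a,b)$ by $(a+b_0\Delta,\,b-b_0)$. This strictly lowers the order, so the process terminates by induction.

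With the filtration claim in hand, Lemma~\ref{SymbolLemma} gives
\[
\sigma_m(\xi)=s+tQ^*,\qquad s\in S,\ t\in S[1/Q].
\]
Both terms may be taken homogeneous of the right fibre degrees, namely $m$ and $m-2$, because the grading by fibre degree is respected. We lift $s$ to $\tilde s\in D_V$ of order $m$ and $t$ to $\tilde t\in D_{V_Q}$ of order $m-2$. Then $\xi':=\xi-\tilde s-\tilde t\Delta$ has order $<m$, represents the same class in $\mathcal{Q}$, and still satisfies the hypothesis. This contradicts the minimality of $m$; equivalently, induction on order reduces to order $-\infty$, i.e.\ $\xi'=0$. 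Hence $\xi\in D_V+D_{V_Q}\Delta$. The pole-order minimization in the setup is in fact unnecessary, since Lemma~\ref{SymbolLemma} already handles poles.

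The main obstacle is the filtration compatibility claim above. The point is that the strict exactness of the sum $D_V+D_{V_Q}\Delta$ under the order filtration rests on the symbol-level fact $S\cap S[1/Q]Q^*=(Q^*)$, which uses that $Q$ is a nonzerodivisor modulo $Q^*$. We would verify this carefully, including the homogeneity bookkeeping. Everything else is the standard principal-symbol lifting argument.
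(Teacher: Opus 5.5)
Your proposal is correct and follows essentially the same route as the paper. Both take a minimal-order counterexample, use the Poisson-bracket identity $\sigma_{m+1}([\Delta,\xi])=\{Q^*,\sigma_m(\xi)\}$ together with Lemma~\ref{SymbolLemma}, and then subtract a lift of $\sigma_m(\xi)$ from $D_V+D_{V_Q}\Delta$ to lower the order. You minimize the order within a fixed class rather than over all counterexamples, which makes no difference. Your cancellation argument for $\gr(D_V+D_{V_Q}\Delta)=S+S[1/Q]Q^*$, based on $S\cap S[1/Q]Q^*=(Q^*)$, is a real improvement in rigor: the paper asserts this equality with only the remark that $\sigma_2(\Delta)=Q^*$.
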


\begin{proof}
Put $M:=D_V+D_{V_Q}\Delta\subset D_{V_Q}$. We must show that if $[\Delta,\xi]\in M$, then $\xi\in M$.

Suppose not. Choose $\xi\in D_{V_Q}\setminus M$ such that $[\Delta,\xi]\in M$ and $\ord(\xi)$ is minimal among all such counterexamples; write $m:=\ord(\xi)$.

We first note that $[\Delta,M]\subset M$. Indeed, if $a\in D_V$, then $[\Delta,a]\in D_V$, while if $b\in D_{V_Q}$, then $[\Delta,b\Delta]=[\Delta,b]\Delta\in D_{V_Q}\Delta$. Hence $[\Delta,M]\subset M$.

Let $\sigma_m(\xi)\in \gr_m D_{V_Q}$ denote the principal symbol of $\xi$. Since $V_Q$ is smooth affine, the order filtration on $D_{V_Q}$ has associated graded ring $\gr D_{V_Q}\cong \kappa[T^*V][1/Q]$. Write
\begin{equation}
S:=\kappa[T^*V].
\end{equation}
\noindent Then $\gr D_{V_Q}=S[1/Q]$, and since $\sigma_2(\Delta)=Q^*$, we have
\begin{equation}
\gr M=S+S[1/Q]\,Q^*.
\end{equation}
We claim that $\sigma_m(\xi)\notin \gr M$. Indeed, if $\sigma_m(\xi)\in \gr M$, then there exists $\mu\in M$ of order $m$ with $\sigma_m(\mu)=\sigma_m(\xi)$. Thus $\ord(\xi-\mu)<m$, while $\xi-\mu\notin M$ and $[\Delta,\xi-\mu]=[\Delta,\xi]-[\Delta,\mu]\in M$, since $[\Delta,\xi]\in M$ and $[\Delta,M]\subset M$. This contradicts the minimality of $m$.

On the other hand, the operator $[\Delta,\xi]$ has order at most $m+1$, and its principal symbol in degree $m+1$ is given by the Poisson bracket of principal symbols. Hence
\begin{equation}
\sigma_{m+1}\bigl([\Delta,\xi]\bigr)=\{Q^*,\sigma_m(\xi)\}.
\end{equation}
\noindent Since $[\Delta,\xi]\in M$, its principal symbol lies in $\gr M$, so
\begin{equation}
\{Q^*,\sigma_m(\xi)\}\in S+S[1/Q]\,Q^*.
\end{equation}
\noindent Applying Lemma~\ref{SymbolLemma} to $\nu:=\sigma_m(\xi)\in S[1/Q]$, we conclude that $\sigma_m(\xi)\in S+S[1/Q]\,Q^*$, contradicting the previous claim. Therefore no such $\xi$ exists, and every $\xi\in D_{V_Q}$ satisfying $[\Delta,\xi]\in D_V+D_{V_Q}\Delta$ already lies in $D_V+D_{V_Q}\Delta$.
\end{proof}

\begin{rem}
    If $n=2$, Lemma~\ref{OperatorLemma} fails, and so does its principal-symbol counterpart, Lemma~\ref{SymbolLemma}. Indeed, take $V=\A^2$ and $Q(x,y)=xy$, so that $\Delta=\partial_x\partial_y$. Consider the operator
    \begin{equation}
        \xi:=x^{-1}\partial_x \in D_{V_Q}=D\bigl(\kappa[x,y,1/(xy)]\bigr).
    \end{equation}
    \noindent Since $\partial_y$ commutes with $x^{-1}\partial_x$, we have
    \begin{equation}
        [\Delta,\xi]
        =
        [\partial_x\partial_y,\,x^{-1}\partial_x]
        =
        [\partial_x,\,x^{-1}\partial_x]\partial_y
        =
        -x^{-2}\partial_x\partial_y
        =
        -x^{-2}\Delta.
    \end{equation}
    \noindent Thus $[\Delta,\xi]\in D_{V_Q}\Delta \subset D_V+D_{V_Q}\Delta$.

    However, $\xi\notin D_V+D_{V_Q}\Delta$. For otherwise we could write $\xi=P+A\Delta$ with $P\in D_V$ and $A\in D_{V_Q}$. Applying both sides to $x\in \kappa[x,y]$, and using $\Delta(x)=0$, we would obtain
    \begin{equation}
        x^{-1}=\xi(x)=P(x).
    \end{equation}
    \noindent But $P\in D_V$ preserves $\kappa[x,y]$, so $P(x)\in \kappa[x,y]$, a contradiction.

    Thus Lemma~\ref{OperatorLemma} fails when $n=2$. The same example also shows that Lemma~\ref{SymbolLemma} fails in this case; the obstruction is that $Q$ is reducible when $n=2$.
\end{rem}

\begin{prop}\label{KernelSurjectionLemma}
Let $\ad_\Delta$ denote the map induced by commutation with $\Delta$. Restriction induces a surjective map
\begin{equation}\label{kernelmap}
\ker\left(\ad_\Delta:D_V/D_V\Delta\to D_V/D_V\Delta\right)
\twoheadrightarrow
\ker\left(\ad_\Delta:D_{V_Q}/D_{V_Q}\Delta\to D_{V_Q}/D_{V_Q}\Delta\right).
\end{equation}
\noindent Consequently, every section of $\mathcal H^\Delta$ over the standard Bruhat cell $V$ extends uniquely to a global section of $\mathcal H^\Delta$ on $G/P$.
\end{prop}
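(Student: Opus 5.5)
The plan has two parts. First we prove surjectivity of \eqref{kernelmap} with a snake-lemma argument built on Lemma~\ref{OperatorLemma}. Then we use it to glue representatives over $X_{\mathrm{big}}=V\cup w_0V$ and extend across $\P(C_V)$.

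\textbf{Surjectivity.} Consider the short exact sequence of $\kappa$-vector spaces
\begin{equation*}
0\to D_V/D_V\Delta\to D_{V_Q}/D_{V_Q}\Delta\to D_{V_Q}/(D_V+D_{V_Q}\Delta)\to 0.
\end{equation*}
The left map is injective because $D_V\cap D_{V_Q}\Delta=D_V\Delta$. This follows since $\Delta$ has principal symbol $Q^*$, and $Q$ is a nonzerodivisor modulo $Q^*$ in $S$; this is the same argument as the claim $S\cap S[1/Q]Q^*=(Q^*)$ in Lemma~\ref{SymbolLemma}, lifted through the order filtration. All three terms carry the endomorphism $\ad_\Delta$, which is well defined since $[\Delta,b\Delta]=[\Delta,b]\Delta$. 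The maps commute with $\ad_\Delta$. Applying the snake lemma gives
\begin{equation*}
0\to\ker_V\to\ker_{V_Q}\to\ker\bigl(\ad_\Delta\ \text{on}\ D_{V_Q}/(D_V+D_{V_Q}\Delta)\bigr)\to\cdots.
\end{equation*}
Lemma~\ref{OperatorLemma} says precisely that the third kernel vanishes. Hence $\ker_V\to\ker_{V_Q}$ is surjective, and it is injective as well. So \eqref{kernelmap} is in fact an isomorphism.

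\textbf{Extension to $X_{\mathrm{big}}$.} Let $m\in\mathcal H^\Delta(V)=\ker_V$. By $G$-equivariance, the Kelvin transform identifies $\mathcal H(w_0V)$ with $D_V/D_V\Delta$, and it preserves the harmonic-vector condition. The latter holds because $K\Delta K=Q^2\Delta$ (Proposition~\ref{w0onDelta}), so being left-annihilated by $\mathcal J_\Delta$ is chart-independent. Its restriction to $V_Q$ gives an automorphism of $\ker_{V_Q}$. We take $m|_{V_Q}$, conjugate by $K$, and obtain an element of $\ker_{V_Q}$. By surjectivity this element has a regular preimage in $\ker_V$, which we transport back to give a section $m'\in\mathcal H^\Delta(w_0V)$ with $m'|_{V_Q}=m|_{V_Q}$. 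Since $V\cap w_0V=V_Q$, the sections $m$ and $m'$ glue to a section over $X_{\mathrm{big}}$. Uniqueness follows from injectivity of restriction $\mathcal H(V)\to\mathcal H(V_Q)$, which is again $D_V\cap D_{V_Q}\Delta=D_V\Delta$.

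\textbf{Extension across $\P(C_V)$.} The complement $G/P\setminus X_{\mathrm{big}}=\P(C_V)$ has codimension $2$. The sheaf $\mathcal H$ is $G$-equivariant, and its translates $V_g$ cover $G/P$; on each $V_g$ it is $D_V/D_V\Delta$. This module has a good order filtration whose associated graded is $\kappa[V\times V^*]/(Q^*)\cong\kappa[V]\otimes\kappa[C]$, a Cohen--Macaulay, hence $S_2$, module over $\kappa[V]$. So each filtered piece $F_j\mathcal H$ is a coherent $\mathcal O$-module. We claim it has depth $\ge2$ along codimension-$2$ subsets, by induction on $j$ through the sequences $0\to F_{j-1}\to F_j\to\gr_j\to0$, using vanishing of $\mathcal H^1_Z$ of the graded pieces. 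Hartogs extension then applies to each $F_j\mathcal H$.

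The main obstacle is that a section over $X_{\mathrm{big}}$ has bounded order on $V$, but we must show it lies in a single $F_j$ on every chart $V_g$. To handle this, we use that the order filtration is preserved by coordinate change (the transition maps are twisted-differential-operator automorphisms). The order on $V\cap V_g$ is therefore bounded by the order on $V$, giving a section of $F_j\mathcal H$ over $V_g\cap X_{\mathrm{big}}$, which extends by depth. The annihilation by $\mathcal J_\Delta$ extends by continuity, since it holds on a dense open set of a torsion-free sheaf. This gives the unique global section of $\mathcal H^\Delta$.
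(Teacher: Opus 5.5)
Your proposal is correct and follows the same route as the paper. It applies the snake lemma to $0\to D_V/D_V\Delta\to D_{V_Q}/D_{V_Q}\Delta\to D_{V_Q}/(D_V+D_{V_Q}\Delta)\to 0$ together with Lemma~\ref{OperatorLemma}, glues via Kelvin conjugation over $X_{\mathrm{big}}=V\cup w_0V$, extends a single order-filtered piece $F_j\mathcal H$ across the codimension-$2$ locus $\P(C_V)$ by Hartogs, and gets harmonicity by density. The differences are minor: you get depth $\geq 2$ by a local-cohomology induction where the paper simply notes $F_m\mathcal H$ is locally free, and the order bound on $X_{\mathrm{big}}$ should be the maximum of the orders on the two charts $V$ and $w_0V$, not the order on $V$ alone.
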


\begin{proof}
Since $\xi\Delta$ vanishes in the quotient $D_V/D_V\Delta$, the map $\ad_\Delta$ agrees on $D_V/D_V\Delta$ with left multiplication by $\Delta$.

We first prove the surjectivity in \eqref{kernelmap}. We shall use the following elementary observation:
\begin{equation}\label{DVDeltaSaturation}
D_V\cap D_{V_Q}\Delta=D_V\Delta
\end{equation}
\noindent inside $D_{V_Q}$. Indeed, suppose that $A\in D_V$ and $A=B\Delta$ with $B\in D_{V_Q}$. If $B$ has order $m$, then the top symbol of $A$ is $\sigma_m(B)Q^*$. Since this lies in $\kappa[T^*V]$, and since $Q$ and $Q^*$ are relatively prime in $\kappa[T^*V]$, the top symbol $\sigma_m(B)$ has no pole along $Q=0$. Choose $B_0\in D_V$ with the same top symbol as $B$. Then $A-B_0\Delta=(B-B_0)\Delta$ lies in $D_V$ and has lower order. Induction on the order gives $B\in D_V$, proving \eqref{DVDeltaSaturation}.

By \eqref{DVDeltaSaturation}, the following sequence is short exact:
\begin{equation}
0\longrightarrow
D_V/D_V\Delta
\longrightarrow
D_{V_Q}/D_{V_Q}\Delta
\longrightarrow
D_{V_Q}/(D_V+D_{V_Q}\Delta)
\longrightarrow 0.
\end{equation}
\noindent The endomorphism $\ad_\Delta$ preserves this sequence. Lemma~\ref{OperatorLemma} states that $\ad_\Delta$ is injective on the rightmost quotient. Applying the snake lemma to the resulting commutative diagram therefore gives the surjection \eqref{kernelmap}.

Now let $\xi\in\mathcal H^\Delta(V)$. Since Kelvin conjugation preserves $\Delta$-harmonic classes on $V_Q$ by Proposition~\ref{w0onDelta}, the class $K\xi K$ lies in
\begin{equation}
\ker\left(\ad_\Delta:D_{V_Q}/D_{V_Q}\Delta\to D_{V_Q}/D_{V_Q}\Delta\right).
\end{equation}
\noindent By \eqref{kernelmap}, choose $\xi'\in\mathcal H^\Delta(V)$ whose restriction to $V_Q$ is $K\xi K$. Then $K\xi'K$ is regular on $w_0V$, and on $V_Q$ one has
\begin{equation}
K\xi'K=K(K\xi K)K=\xi,
\end{equation}
\noindent with equality in $D_{V_Q}/D_{V_Q}\Delta$. Thus $\xi$ on $V$ and $K\xi'K$ on $w_0V$ glue to a section
\begin{equation}
s\in\Gamma\left(X_{\textrm{big}},\mathcal H^\Delta\right),
\qquad
X_{\textrm{big}}:=V\cup w_0V.
\end{equation}

We now extend to all of $G/P$. Let $F_m\mathcal D_{\mathcal L}$ be the order filtration on $\mathcal D_{\mathcal L}$, and endow $\mathcal J_\Delta$ and $\mathcal H=\mathcal D_{\mathcal L}/\mathcal J_\Delta$ with the induced filtrations:
\begin{equation}
F_m\mathcal J_\Delta:=F_m\mathcal D_{\mathcal L}\cap\mathcal J_\Delta,
\qquad
F_m\mathcal H:=F_m\mathcal D_{\mathcal L}/F_m\mathcal J_\Delta.
\end{equation}
\noindent We use the convention that $F_j\mathcal D_{\mathcal L}=0$ for $j<0$. On a Bruhat chart $V_g$, the ideal $\mathcal J_\Delta$ is generated by the local Laplacian $\Delta_g$, and
\begin{equation}
F_m\mathcal J_\Delta|_{V_g}
=
F_{m-2}\mathcal D_{\mathcal L}|_{V_g}\cdot\Delta_g.
\end{equation}
\noindent Indeed, if $A=B\Delta_g$ has order at most $m$, then the principal symbol of $A$ is $\sigma(B)\sigma_2(\Delta_g)$; since $\sigma_2(\Delta_g)$ is nonzero and the associated graded algebra is a domain on each chart, $B$ has order at most $m-2$. Hence multiplication by $\Delta_g$ identifies $F_{m-2}\mathcal D_{\mathcal L}|_{V_g}$ with $F_m\mathcal J_\Delta|_{V_g}$. Since $G/P$ is smooth, each $F_m\mathcal D_{\mathcal L}$ is locally free; therefore each $F_m\mathcal J_\Delta$ is locally free as well.

It remains to check that each $F_m\mathcal H$ is locally free. Locally on $V_g$, pass to the associated graded for the order filtration. The successive graded pieces of $F_m\mathcal H|_{V_g}$ are, for $0\leq r\leq m$,
\begin{equation}
\Sym^r T_{G/P}|_{V_g}
\big/
\sigma_2(\Delta_g)\Sym^{r-2}T_{G/P}|_{V_g},
\end{equation}
\noindent where $\Sym^{r-2}T_{G/P}=0$ for $r<2$. Multiplication by the nonzero quadratic symbol $\sigma_2(\Delta_g)$ is fiberwise injective, hence has constant rank, so these quotients are locally free. Thus the filtration on $F_m\mathcal H|_{V_g}$ has locally free successive quotients, and hence $F_m\mathcal H$ is locally free.

Since $\mathcal H=\bigcup_m F_m\mathcal H$ and $X_{\textrm{big}}$ is quasi-compact, the section $s$ lies in $\Gamma(X_{\textrm{big}},F_m\mathcal H)$ for some $m$. By \eqref{deepboundary}, the complement
\begin{equation}
G/P\setminus X_{\textrm{big}}=\P(C_V)
\end{equation}
\noindent has codimension $2$ in $G/P$. Hartogs extension for vector bundles on the smooth variety $G/P$ therefore gives a unique section
\begin{equation}
\widetilde{s}\in\Gamma(G/P,F_m\mathcal H)
\end{equation}
\noindent restricting to $s$ on $X_{\textrm{big}}$. This extension is unique since $X_{\textrm{big}}$ is Zariski dense in $G/P$.

We claim that $\widetilde{s}$ is still $\Delta$-harmonic. This is local on Bruhat charts. On any Bruhat chart $V_g$, the section $\ad_{\Delta_g}(\widetilde{s}|_{V_g})$ is a regular section of $\mathcal H|_{V_g}$, and it vanishes on the dense open subset $V_g\cap V$. Indeed, $\widetilde{s}|_V=s|_V\in\Gamma(V,\mathcal H^\Delta)$, and $\Delta_g/\Delta_e$ is an invertible function on $V_g\cap V$ by Proposition~\ref{prop:group-action-on-Delta}. By Zariski density, $\ad_{\Delta_g}(\widetilde{s}|_{V_g})=0$. Thus $\widetilde{s}\in\Gamma(G/P,\mathcal H^\Delta)$.
\end{proof}

\subsection{\texorpdfstring{Global section of $\mathcal{H} \implies$ Symmetry of $\Delta$; Proof of the Global Sections Theorem}{Global Implies Symmetry of Delta; Proof of Global Sections Theorem}}\label{GlobalImpliesSymmetrySect} Now we will turn to demonstrating the converse of Lemma \ref{KernelSurjectionLemma}; namely, that a global section of $\mathcal{H}$ must be a symmetry of $\Delta$. The argument follows from examining the poles of $K(\kappa[V])$ where $K$ is the Kelvin transform \eqref{Kelvindef}, after we decompose $\kappa[V]$ via the Fischer decomposition. 

We begin with a basic statement of the Fischer decomposition, which will be used throughout. We observe that this lemma holds for any nondegenerate quadratic form $Q$.

\begin{lem}\label{FischerDecompositionLemma}\textbf{The Fischer Decomposition}.
Let $V$ be a nondegenerate quadratic space of dimension $2k$ over $\kappa$, with $k\geq 1$, and let $Q\in\kappa[V]$ be its quadratic form. Let $\Delta\in D_V$ be the Laplacian attached to the dual quadratic form $Q^*\in\kappa[V^*]$, normalized so that $[\Delta,Q]=E+k$, where $E$ is the Euler operator. Put $P_r:=\kappa[V]_r$ and $Z_{\Delta,r}:=\ker(\Delta)\cap P_r$. Then, for every $r\geq 0$, there is a direct-sum decomposition
\begin{equation}
P_r=Z_{\Delta,r}\oplus QP_{r-2},
\end{equation}
\noindent where $P_{r-2}=0$ if $r<2$. Consequently, if $Z_\Delta:=\ker(\Delta)\subset\kappa[V]$, then
\begin{equation}
\kappa[V]=Z_\Delta\oplus Q\kappa[V],
\end{equation}
\noindent and, by iteration,
\begin{equation}
P_r=\bigoplus_{a=0}^{\lfloor r/2\rfloor} Q^a Z_{\Delta,r-2a}.
\end{equation}
\end{lem}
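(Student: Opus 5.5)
The proof uses the $\mathfrak{sl}_2$-triple $(Q,\Delta,E+k)$ from the footnote to Lemma~\ref{OperatorLemma}, together with the commutator identity $[\Delta,Q]=E+k$ of \eqref{eq:Delta-Q-short}. Fix $r\ge 0$ and consider the map $\Delta: P_r\to P_{r-2}$ and the multiplication map $Q:P_{r-2}\to P_r$. The claim $P_r=Z_{\Delta,r}\oplus QP_{r-2}$ follows once we show that the composite $\Delta Q: P_{r-2}\to P_{r-2}$ is a bijection. Indeed, granting this, for $f\in P_r$ choose $g\in P_{r-2}$ with $\Delta Qg=\Delta f$. Then $f-Qg\in Z_{\Delta,r}$, which gives the sum. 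If $Qg\in Z_{\Delta,r}$, then $\Delta Qg=0$, so $g=0$, which gives directness.

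The key step is to show by induction on $s$ that $\Delta Q$ is invertible on $P_s$. We do this by showing it is injective on each finite-dimensional $P_s$. First, for $h\in P_s$ and $a\ge 1$ we have
\[
[\Delta,Q^a]h=a\,(s+2a-2+k)\,Q^{a-1}h,
\]
which follows by induction on $a$ from $[\Delta,Q]=E+k$. By induction on $s$ we may already assume the decomposition $P_s=\bigoplus_a Q^aZ_{\Delta,s-2a}$ holds. For $h\in Z_{\Delta,s-2a}$, the formula gives
\[
\Delta Q\,(Q^a h)=(a+1)\,(s-2a+2a+k)\,Q^a h=(a+1)(s+k)\,Q^a h,
\]
where $h$ has degree $s-2a$ and we apply the formula with exponent $a+1$. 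So $\Delta Q$ acts on the summand $Q^aZ_{\Delta,s-2a}$ by the scalar $(a+1)(s+k)$. This scalar is nonzero because $k\ge1$, $s\ge0$, and $\mathrm{char}\,\kappa=0$. Hence $\Delta Q$ is invertible on $P_s$. The base cases $s=0,1$ are immediate, since there $Z_{\Delta,s}=P_s$.

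To organize the induction cleanly, we prove the two statements $(\ast)_r$: $P_r=Z_{\Delta,r}\oplus QP_{r-2}$, and $(\ast\ast)_s$: $\Delta Q$ is invertible on $P_s$, simultaneously in increasing degree. Invertibility on $P_{r-2}$ gives $(\ast)_r$. Iterating $(\ast)$ in all degrees $\le s$ gives the full decomposition of $P_s$, and the scalar computation above then gives $(\ast\ast)_s$.

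The global statements follow formally. The iterated decomposition $P_r=\bigoplus_a Q^aZ_{\Delta,r-2a}$ comes from applying $(\ast)$ repeatedly to $P_{r-2}, P_{r-4},\dots$, using that $Q$ is a nonzerodivisor so that the sums stay direct. Summing over $r$ gives $\kappa[V]=Z_\Delta\oplus Q\kappa[V]$, since $\Delta$ is homogeneous of degree $-2$ and therefore $Z_\Delta$ is graded.

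The only real obstacle is bookkeeping: keeping the simultaneous induction honest, and checking that the scalar $s+k$ never vanishes. This check is the one place where characteristic $0$ and $k\ge1$ are used. A nondegenerate $Q$ of arbitrary isomorphism class is fine, because only $[\Delta,Q]=E+k$ enters the argument.
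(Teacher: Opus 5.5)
Your strategy is sound, but the scalar computation at its heart is wrong for $a\ge 2$. Write $[\Delta,Q^a]=\sum_{i=0}^{a-1}Q^i(E+k)Q^{a-1-i}$ and apply it to $h\in P_s$. The Euler factor then acts in degrees $s,s+2,\dots,s+2a-2$, so the scalars form an arithmetic progression and
\[
[\Delta,Q^a]h=a\,(s+k+a-1)\,Q^{a-1}h .
\]
This is not $a(s+2a-2+k)\,Q^{a-1}h$; you effectively counted the largest term $a$ times. Already for $a=2$ the correct coefficient is $2(s+k+1)$, not $2(s+k+2)$.

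Consequently $\Delta Q$ acts on $Q^aZ_{\Delta,s-2a}$ by $(a+1)(s-a+k)$, not by $(a+1)(s+k)$. For example, $\Delta(Q^2)=kQ+(k+2)Q=2(k+1)Q$, so the eigenvalue on $Q\cdot Z_{\Delta,0}\subset P_2$ is $2(k+1)$ rather than your predicted $2(k+2)$.

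The argument survives this correction, but your nonvanishing check must change. The factor $s-a+k$ is positive because $0\le a\le\lfloor s/2\rfloor$, which gives $s-a+k\ge k\ge 1$. The conditions $s\ge 0$ and $k\ge 1$ alone do not control this expression. So the bookkeeping you flagged as the only obstacle is exactly where the slip occurred.

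With that fix, your proof is a valid alternative to the paper's, and the two routes differ. The paper proves injectivity of $\Delta Q$ on each $P_d$ directly, without any induction on lower-degree decompositions. It writes a putative kernel element as $G=Q^aH$ with $a$ maximal and applies the same (correct) identity
\[
\Delta(Q^{a+1}H)=Q^{a+1}\Delta H+(a+1)(s+k+a)\,Q^aH, \qquad s=\deg H.
\]
Cancelling $Q^a$ and reducing modulo $Q$ then forces $H\in Q\kappa[V]$, a contradiction. That argument is shorter and needs only that $Q$-adic valuation makes sense in the domain $\kappa[V]$.

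Your simultaneous induction costs more bookkeeping but yields more. It shows $\Delta Q$ is diagonalizable on $P_s$, with explicit eigenvalues $(a+1)(s-a+k)$ on the Fischer layers. This is the standard $\mathfrak{sl}_2$ weight picture for the triple $(Q,\Delta,E+k)$, and it gives an explicit inverse of $\Delta Q$, hence explicit Fischer projections.
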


\begin{proof}
We first prove that, for every $d\geq 0$, the map $\Delta Q:P_d\to P_d$ is injective. Let $G\in P_d$ and suppose that $\Delta(QG)=0$. If $G\neq 0$, write $G=Q^aH$ with $a\geq 0$ maximal, so that $H$ is homogeneous of degree $s=d-2a$, and $H\notin Q\kappa[V]$.

We claim that, for every homogeneous $H$ of degree $s$ and every $b\ge 1$, one has 
\[
\Delta(Q^bH)=Q^b\Delta H+b(s+k+b-1)Q^{b-1}H.
\]

\noindent Indeed, the case $b=1$ is just $[\Delta,Q]=E+k$. The general case follows by induction: assuming the formula for $b$, we have $\Delta(Q^{b+1}H)=Q\Delta(Q^{b}H)+(E+k)Q^{b}H=Q\bigl(Q^b\Delta H+b(s+k+b-1)Q^{b-1}H\bigr)+(2b+s+k)Q^bH$. Thus $\Delta(Q^{b+1}H)=Q^{b+1}\Delta H+\bigl(b(s+k+b-1)+2b+s+k\bigr)Q^bH = Q^{b+1}\Delta H+(b+1)(s+k+b)Q^bH$. 

Thus we have $0=\Delta(QG)=\Delta(Q^{a+1}H)=Q^{a+1}\Delta H+(a+1)(s+k+a)Q^aH$. Since $\kappa[V]$ is a domain, we obtain $Q\Delta H+(a+1)(s+k+a)H=0$. Reducing modulo the ideal $(Q)$ gives $(a+1)(s+k+a)H\equiv 0 \pmod Q$. The scalar $(a+1)(s+k+a)$ is nonzero because $\operatorname{char}\kappa=0$, $a\geq 0$, $s\geq 0$, and $k\geq 2$. Hence $H\in Q\kappa[V]$, contradicting the maximality of $a$. Therefore $G=0$, and $\Delta Q:P_d\to P_d$ is injective.

Since $P_d$ is finite-dimensional over $\kappa$, the injective map $\Delta Q:P_d\to P_d$ is an isomorphism. Now fix $r\geq 0$ and let $F\in P_r$. If $r<2$, then $P_{r-2}= 0$ and so $\Delta P_r = 0$ and $QP_{r-2}=0$. If $r\geq 2$, the isomorphism $\Delta Q:P_{r-2}\to P_{r-2}$ gives a unique $G\in P_{r-2}$ such that $\Delta(QG)=\Delta F$. Then $F-QG\in Z_{\Delta,r}$, so $P_r=Z_{\Delta,r}+QP_{r-2}$.

It remains to prove that this sum is direct. Suppose $QG\in Z_{\Delta,r}$ with $G\in P_{r-2}$. Then $\Delta(QG)=0$, and the injectivity of $\Delta Q:P_{r-2}\to P_{r-2}$ forces $G=0$. Thus $Z_{\Delta,r}\cap QP_{r-2}=0$, and hence $P_r=Z_{\Delta,r}\oplus QP_{r-2}$. Summing over $r$ gives $\kappa[V]=Z_\Delta\oplus Q\kappa[V]$, and iterating the homogeneous decomposition gives $P_r=\bigoplus_{a=0}^{\lfloor r/2\rfloor} Q^a Z_{\Delta,r-2a}$.
\end{proof}

\begin{lem}\label{KelvinPoleProfileLemma}
Put $m:=k-1$, and let $K$ denote the Kelvin transform \eqref{Kelvindef}. If $h_s\in Z_{\Delta,s}$ is homogeneous of degree $s$, then for every $a\geq 0$ one has
\begin{equation}
K(Q^a h_s)=\pm Q^{-(m+a+s)}h_s.
\end{equation}
\noindent Consequently, the Laurent--Fischer pieces appearing in $K(\kappa[V])$ are precisely the terms
\begin{equation}\label{FischerKelvinDeg}
Q^{-T}h_s
\qquad
\text{with}
\qquad
h_s\in Z_{\Delta,s},
\quad
T\geq m+s.
\end{equation}
\noindent Similarly, the Laurent--Fischer pieces appearing in $K(Q^2\kappa[V])$ are precisely the terms
\begin{equation}\label{FischerKelvinDegQ2}
Q^{-T}h_s
\qquad
\text{with}
\qquad
h_s\in Z_{\Delta,s},
\quad
T\geq m+s+2.
\end{equation}
\end{lem}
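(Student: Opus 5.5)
The proof is a direct computation from the formula \eqref{Kelvindef}, followed by a bookkeeping step using the Fischer decomposition of Lemma~\ref{FischerDecompositionLemma}.

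\textbf{Step 1: the transform of a single piece.} Fix $h_s\in Z_{\Delta,s}$ homogeneous of degree $s$ and $a\geq 0$, and put $f=Q^ah_s$. Since $Q$ is homogeneous of degree $2$, we have $Q(-v/Q(v))=Q(v)/Q(v)^2=Q(v)^{-1}$. Hence $Q^a$ evaluated at $-v/Q(v)$ gives $Q(v)^{-a}$. By homogeneity, $h_s(-v/Q(v))=(-1)^sQ(v)^{-s}h_s(v)$. The Kelvin prefactor contributes $(-Q)^{-m}=(-1)^mQ^{-m}$. Multiplying these three factors gives
\[
K(Q^ah_s)=(-1)^{m+s}\,Q^{-(m+a+s)}h_s,
\]
which is the first assertion, with the sign made explicit. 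Harmonicity of $h_s$ is not used here; it matters only for the uniqueness in Step 3.

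\textbf{Step 2: transforms of $\kappa[V]$ and $Q^2\kappa[V]$.} By Lemma~\ref{FischerDecompositionLemma}, $\kappa[V]=\bigoplus_{s,a}Q^aZ_{\Delta,s}$. Since $K$ is linear, Step 1 shows that $K(\kappa[V])$ is spanned by the terms $Q^{-T}h_s$ with $T=m+a+s$ and $a\geq 0$. This is exactly the condition $T\geq m+s$. Conversely, every such term arises, by taking $a=T-m-s$. This proves \eqref{FischerKelvinDeg}.

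Similarly, $Q^2\kappa[V]=\bigoplus Q^{a}Z_{\Delta,s}$ with $a\geq 2$. The same computation then gives exactly the terms with $T\geq m+s+2$, which proves \eqref{FischerKelvinDegQ2}.

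\textbf{Step 3: meaning of ``precisely.''} The only subtle point is to interpret the word ``precisely'' correctly. A rational function in $\kappa[V][1/Q]$ admits a \emph{unique} Laurent--Fischer expansion $\sum_{T,s}Q^{-T}h_{T,s}$, with $T\in\Z$ and $h_{T,s}\in Z_{\Delta,s}$. This follows from Lemma~\ref{FischerDecompositionLemma}: multiply by a high power of $Q$ and apply the decomposition there.

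With uniqueness in hand, the statement reads: $f\in K(\kappa[V])$ if and only if every nonzero component in its expansion satisfies $T\geq m+s$. The forward direction follows from Step 2. For the converse, note that $K$ is an involution and maps each piece $Q^{-T}h_s$ with $T\geq m+s$ to $\pm Q^{T-m-s}h_s$, which is a polynomial. The same argument, with the bound $T\geq m+s+2$, handles $Q^2\kappa[V]$.
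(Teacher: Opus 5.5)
Your proof is correct and follows the paper's argument. Like the paper, you evaluate \eqref{Kelvindef} directly on $Q^ah_s$ using $Q(-v/Q(v))=Q(v)^{-1}$ and the homogeneity of $h_s$, then read off the exponents $T=m+a+s$ (resp.\ $T=m+a+s+2$) from the Fischer decomposition of $\kappa[V]$ (resp.\ $Q^2\kappa[V]$). Your Step 3 proves uniqueness of the Laurent--Fischer expansion in $\kappa[V][1/Q]$ by clearing denominators, a point the paper leaves implicit and later relies on in Lemma~\ref{HomogeneousDivisibilityTestLemma}.
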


\begin{proof}
By Lemma~\ref{FischerDecompositionLemma}, it suffices to compute $K(Q^a h_s)$ for $h_s\in Z_{\Delta,s}$. Since $Q$ is quadratic, $Q(-v/Q(v))=Q(v)^{-1}$, while homogeneity gives $h_s(-v/Q(v))=(-1)^sQ(v)^{-s}h_s(v)$. Thus 
\begin{equation}
    K(Q^a h_s)=(-Q)^{-m}Q^{-a}(-1)^sQ^{-s}h_s = (-1)^{m+s}Q^{-(m+a+s)}h_s.
\end{equation}

We see that \eqref{FischerKelvinDeg} follows because the exponents obtained from $K(Q^a h_s)$ are exactly $T=m+s+a$ with $a\geq 0$; \eqref{FischerKelvinDegQ2} follows in the same way, since a Fischer summand of an element of $Q^2\kappa[V]$ has the form $Q^{a+2}h_s$, giving an exponent $T=m+s+2+a$ with $a\geq 0$.
\end{proof}

The next lemma provides the precise criterion on pole orders which we will ultimately use.

\begin{lem}\label{HomogeneousDivisibilityTestLemma}
Put $m:=k-1$. Let $F_r\in \kappa[V]_r$ be homogeneous, and write its Fischer decomposition as
\begin{equation}
F_r=\sum_{a\geq 0} Q^a h_{r-2a},
\qquad
h_{r-2a}\in Z_{\Delta,r-2a}.
\end{equation}
\noindent Let $T\in \mathbb{Z}$, and set
\begin{equation}
b:=m+r+2-T.
\end{equation}
\noindent If $b\leq 0$, then $Q^{-T}F_r\in K(Q^2\kappa[V])$. If $b>0$, then
\begin{equation}
Q^{-T}F_r\in K(Q^2\kappa[V])
\qquad\Longleftrightarrow\qquad
F_r\in Q^b\kappa[V].
\end{equation}
Equivalently, if $b>0$ and $F_r$ is not divisible by $Q^b$, then $Q^{-T}F_r$ is not contained in $K(Q^2\kappa[V])$.
\end{lem}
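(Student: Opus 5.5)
The plan is to reduce everything to the Laurent--Fischer bookkeeping of Lemma~\ref{KelvinPoleProfileLemma}. The Fischer decomposition (Lemma~\ref{FischerDecompositionLemma}) will be extended to the Laurent ring $\kappa[V][1/Q]$: every element can be written uniquely as a finite sum $\sum Q^{-T'}h_s$ with $T'\in\Z$ and $h_s\in Z_{\Delta,s}$ homogeneous. Uniqueness follows by clearing a large power of $Q$ and applying the direct sum decomposition in $\kappa[V]$. Since $K$ is injective and sends $Q^ah_s$ to $\pm Q^{-(m+a+s)}h_s$, the subspace $K(Q^2\kappa[V])$ is exactly the span of the pieces $Q^{-T'}h_s$ with $T'\geq m+s+2$. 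An element of $\kappa[V][1/Q]$ therefore lies in $K(Q^2\kappa[V])$ if and only if every one of its Laurent--Fischer components satisfies this inequality.

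Next, I will apply this criterion to $Q^{-T}F_r=\sum_a Q^{-(T-a)}h_{r-2a}$. This expression is already in Laurent--Fischer form, and by uniqueness the nonzero components are exactly those with $h_{r-2a}\neq 0$. For such a component, the required inequality is $T-a\geq m+(r-2a)+2$, which is equivalent to $a\geq m+r+2-T=b$.

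Two cases then follow. If $b\leq 0$, the inequality holds for every $a\geq 0$, so $Q^{-T}F_r\in K(Q^2\kappa[V])$. If $b>0$, membership holds if and only if $h_{r-2a}=0$ for all $a<b$. It remains to show that this vanishing is equivalent to $F_r\in Q^b\kappa[V]$. One direction is immediate: if $h_{r-2a}=0$ for all $a<b$, then $F_r=Q^b\sum_{a\geq b}Q^{a-b}h_{r-2a}$. For the converse, write $F_r=Q^bG$ with $G$ homogeneous of degree $r-2b$, and Fischer-decompose $G=\sum_c Q^c g_{r-2b-2c}$. Then $F_r=\sum_c Q^{b+c}g_{r-2b-2c}$, and uniqueness of the Fischer decomposition of $F_r$ forces $h_{r-2a}=0$ for $a<b$.

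The only point needing care is the uniqueness of the Laurent--Fischer expansion in $\kappa[V][1/Q]$, since it is what lets us read off membership componentwise. To prove it, suppose $\sum Q^{-T'}h_s=0$ with finitely many terms. Grouping by the homogeneous degree $s-2T'$ and multiplying by $Q^N$ for $N$ large produces a relation in $\kappa[V]$ among the terms $Q^{N-T'}h_s$. By Lemma~\ref{FischerDecompositionLemma} these lie in distinct direct summands $Q^aZ_{\Delta,s}$, one for each $s$ in a fixed degree, so each term vanishes.
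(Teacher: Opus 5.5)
Your proposal is correct and follows the paper's route. You expand $Q^{-T}F_r$ into Laurent--Fischer pieces $Q^{-(T-a)}h_{r-2a}$, use Lemma~\ref{KelvinPoleProfileLemma} to see that each piece lies in $K(Q^2\kappa[V])$ exactly when $a\geq b$, and identify the vanishing of the pieces with $a<b$ with divisibility by $Q^b$ through uniqueness of the Fischer decomposition; your explicit proof that the Laurent--Fischer expansion in $\kappa[V][1/Q]$ is unique, which justifies reading off membership componentwise, supplies a step the paper uses without comment.
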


\begin{proof}
Multiplying the Fischer decomposition by $Q^{-T}$ gives $Q^{-T}F_r=\sum_{a\geq 0}Q^{-(T-a)}h_{r-2a}$. By Lemma~\ref{KelvinPoleProfileLemma}, the summand $Q^{-(T-a)}h_{r-2a}$ belongs to $K(Q^2\kappa[V])$ precisely when $T-a\geq m+(r-2a)+2$, or equivalently when $a\geq m+r+2-T=b$. If $b\leq 0$, this condition holds for every $a\geq 0$, proving the first assertion. If $b>0$, membership in $K(Q^2\kappa[V])$ is equivalent to the vanishing of all Fischer summands with $a<b$. By the iterated Fischer decomposition, this is equivalent to $F_r\in Q^b\kappa[V]$. This proves the lemma.
\end{proof}

\begin{lem}\label{RightFischerDecompositionLemma}
Let $\kappa[\zeta]$ denote the polynomial algebra in the formal derivative-symbol variables, so that normal ordering identifies
\begin{equation}
D_V\cong \kappa[V]\otimes_\kappa \kappa[\zeta].
\end{equation}
\noindent Let $Q^*(\zeta)$ be the symbol of $\Delta$, and let $Z_{\partial}$ denote the space of polynomials in the $\zeta$-variables which are harmonic with respect to the quadratic form $Q^*$. Then one has a direct-sum decomposition
\begin{equation}
D_V=D_V\Delta\oplus \bigl(\kappa[V]\otimes_\kappa Z_{\partial}\bigr).
\end{equation}
\noindent Consequently, for every $\xi\in D_V$, there are unique $\delta_\xi\in D_V$ and $R_\xi\in \kappa[V]\otimes_\kappa Z_{\partial}$ such that
\begin{equation}
\Delta\xi=\delta_\xi\Delta+R_\xi.
\end{equation}
\end{lem}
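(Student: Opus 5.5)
The plan is to transport the Fischer decomposition, Lemma~\ref{FischerDecompositionLemma}, from $\kappa[V]$ to the symbol variables $\zeta$, and then lift it to $D_V$ via normal ordering. In normal ordering, with all functions on the left and all derivatives on the right, right multiplication by $\Delta$ is easy to describe. If $\xi=\sum_\beta f_\beta(x)\,\partial^\beta$, then $\xi\Delta=\sum_\beta f_\beta(x)\,\partial^\beta\Delta$. Since the $\partial$'s commute among themselves, under $D_V\cong\kappa[V]\otimes\kappa[\zeta]$ right multiplication by $\Delta$ becomes $\mathrm{id}\otimes(\text{multiplication by }Q^*(\zeta))$, with no lower-order correction terms. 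Hence
\begin{equation*}
D_V\Delta\;\cong\;\kappa[V]\otimes_\kappa Q^*(\zeta)\kappa[\zeta].
\end{equation*}

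The next step is the Fischer decomposition on the $\zeta$ side. Apply Lemma~\ref{FischerDecompositionLemma} to the quadratic space $(V^*,Q^*)$, whose polynomial ring is $\kappa[\zeta]$. There the Laplacian is the one attached to the form dual to $Q^*$, namely $Q$ again, and $Z_\partial$ is its kernel. The lemma is stated for any nondegenerate form and needs only $\operatorname{char}\kappa=0$ and $k\ge 2$, which we have. It gives
\begin{equation*}
\kappa[\zeta]=Z_\partial\oplus Q^*(\zeta)\kappa[\zeta].
\end{equation*}
Tensoring with $\kappa[V]$ over $\kappa$ preserves direct sums, so
\begin{equation*}
D_V=\bigl(\kappa[V]\otimes Q^*\kappa[\zeta]\bigr)\oplus\bigl(\kappa[V]\otimes Z_\partial\bigr)=D_V\Delta\oplus\bigl(\kappa[V]\otimes Z_\partial\bigr).
\end{equation*}

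For the consequence, apply this decomposition to the element $\Delta\xi\in D_V$. This writes it uniquely as $\delta_\xi\Delta+R_\xi$ with $R_\xi\in\kappa[V]\otimes Z_\partial$. The operator $\delta_\xi$ is unique because right multiplication by $\Delta$ is injective on $D_V$: it corresponds to $\mathrm{id}\otimes Q^*$ on a domain.

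The only point requiring care, and the place I would check most carefully, is the first step. There one must verify that the identification $D_V\cong\kappa[V]\otimes\kappa[\zeta]$ is precisely the normal-ordered one, so that right multiplication by a constant-coefficient operator acts purely on the $\zeta$ factor. This holds because $\Delta$ has constant coefficients and sits on the right. The corresponding statement for left multiplication would fail, which is exactly why the lemma is phrased with $D_V\Delta$ rather than $\Delta D_V$.
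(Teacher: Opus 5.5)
Your proposal is correct and is essentially the paper's proof: apply the Fischer decomposition in the $\zeta$-variables, tensor with $\kappa[V]$, and use normal ordering to identify $\kappa[V]\otimes_\kappa Q^*(\zeta)\kappa[\zeta]$ with $D_V\Delta$. You also make explicit that $\delta_\xi$ is unique because right multiplication by $\Delta$ is injective, a point the paper leaves implicit.

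One small quibble with your closing aside: the left-handed decomposition $D_V=\Delta D_V\oplus\bigl(\kappa[V]\otimes_\kappa Z_\partial\bigr)$ in fact also holds. To see this, pass to Bernstein symbols, where the graded pieces become the same splitting $Q^*\kappa[v,\zeta]\oplus\bigl(\kappa[v]\otimes_\kappa Z_\partial\bigr)$. So only the direct normal-ordering argument fails for $\Delta D_V$, not the statement itself. The lemma is phrased with $D_V\Delta$ because that is the form the application $\Delta\xi=\delta_\xi\Delta+R_\xi$ requires.
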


\begin{proof}
By Lemma~\ref{FischerDecompositionLemma}, applied in the formal derivative-symbol variables, we have
\begin{equation}
\kappa[\zeta]=Z_{\partial}\oplus Q^*(\zeta)\kappa[\zeta].
\end{equation}
\noindent Tensoring with $\kappa[V]$ gives $\kappa[V]\otimes_\kappa \kappa[\zeta]
=
\bigl(\kappa[V]\otimes_\kappa Z_{\partial}\bigr)
\oplus
\bigl(\kappa[V]\otimes_\kappa Q^*(\zeta)\kappa[\zeta]\bigr)$. Under the normal-ordering identification $D_V\cong \kappa[V]\otimes_\kappa \kappa[\zeta]$, the second summand is exactly the right ideal $D_V\Delta$.  Applying this direct-sum decomposition to the operator $\Delta\xi$ gives the asserted unique expression $\Delta\xi=\delta_\xi\Delta+R_\xi$.
\end{proof}

\begin{lem}\label{KelvinRegularityTestingLemma}
Let $\xi\in D_V$ and suppose that $K\xi K\in D_V$. Write
\begin{equation}
\Delta\xi=\delta_\xi\Delta+R
\end{equation}
\noindent according to the decomposition of Lemma~\ref{RightFischerDecompositionLemma}, with $R\in \kappa[V]\otimes_\kappa Z_{\partial}$. Then, for every $h\in Z_\Delta$, one has
\begin{equation}
R(Kh)\in K(Q^2\kappa[V]).
\end{equation}
\end{lem}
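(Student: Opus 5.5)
The plan is to apply the operator identity $\Delta\xi=\delta_\xi\Delta+R$ to the rational function $Kh$ and read off each side using the Kelvin identities. Since $h\in Z_\Delta$, Proposition~\ref{w0onDelta} gives $K\Delta K=Q^2\Delta$, hence $\Delta(Kh)=K(Q^{-2}\cdot Q^2\Delta K Kh)$. More precisely, $\Delta K=KQ^2\Delta$ as operators on $\kappa(V)$, so $\Delta(Kh)=K(Q^2\Delta h)=0$. Therefore $\delta_\xi\Delta(Kh)=0$. This yields
\begin{equation}
R(Kh)=\Delta\xi(Kh)=\Delta K\,(K\xi K)(h).
\end{equation}

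Next, set $\xi':=K\xi K$, which lies in $D_V$ by hypothesis. Then $g:=\xi'(h)\in\kappa[V]$ is a polynomial. Using $\Delta K=KQ^2\Delta$ again, we get
\begin{equation}
R(Kh)=\Delta K g=K\bigl(Q^2\Delta g\bigr)\in K(Q^2\kappa[V]),
\end{equation}
since $\Delta g\in\kappa[V]$. This is exactly the claimed membership.

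The only point needing care is the operator identity $\Delta K=KQ^2\Delta$ on all of $\kappa(V)$, or at least on $\kappa[V]$ and on $K\kappa[V]$. I would derive it from $K\Delta K=Q^2\Delta$ of Proposition~\ref{w0onDelta} together with $K^2=\Id$: multiplying $K\Delta K=Q^2\Delta$ on the left by $K$ gives $\Delta K=KQ^2\Delta$, where $Q^2$ denotes the multiplication operator. One must check that "$Q^2\Delta$" in that proposition means the operator $f\mapsto Q^2\Delta f$. This is confirmed by the proof there, which computes $(w_0\cdot\Delta)(f)=Q(v)^2\Delta f$.

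A secondary check is that the Fischer decomposition of Lemma~\ref{RightFischerDecompositionLemma} is an identity of operators, so it may legitimately be applied to rational functions such as $Kh$ on $V_Q$. All operators involved extend to $D_{V_Q}$ acting on $\kappa[V][1/Q]$, and $Kh$ lies there. I expect no real obstacle; the main risk is a bookkeeping error with the placement of $Q^2$ versus $Q^{-2}$, which the formula above resolves.
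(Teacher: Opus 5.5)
Your proof is correct and is essentially the paper's argument: set $\psi=K\xi K\in D_V$, use $\Delta K=KQ^2\Delta$ to get $\Delta(Kh)=0$ and hence $R(Kh)=\Delta\xi(Kh)$, then compute $R(Kh)=\Delta K(\psi h)=K(Q^2\Delta(\psi h))$. The only blemish is the stray expression $K(Q^{-2}\cdot Q^2\Delta KKh)$ in your first paragraph; delete it and keep the corrected identity $\Delta(Kh)=K(Q^2\Delta h)=0$ that you state immediately afterward.
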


\begin{proof}
Let $\psi:=K\xi K$. By assumption, $\psi\in D_V$. Since $K^2=\Id$, we have $\xi K=K\psi$. Let $h\in Z_\Delta$. Recall (Proposition~\ref{w0onDelta}) that the conformal transformation law for the Laplacian under the Kelvin transform is $K\Delta K=Q^2\Delta$; or, equivalently, $\Delta K=KQ^2\Delta$. Since $\Delta h=0$, this gives $\Delta(Kh)=K(Q^2\Delta h)=0$. Hence
\begin{equation}
R(Kh)=(\Delta\xi-\delta_\xi\Delta)(Kh)=\Delta\xi(Kh).
\end{equation}
\noindent Using $\xi K=K\psi$, we get
\begin{equation}
\Delta\xi(Kh)=\Delta K(\psi h)=K(Q^2\Delta(\psi h)).
\end{equation}
\noindent Since $\psi\in D_V$ and $h\in\kappa[V]$, we have $\Delta(\psi h)\in\kappa[V]$. Therefore $Q^2\Delta(\psi h)\in Q^2\kappa[V]$, and so $R(Kh)\in K(Q^2\kappa[V])$.
\end{proof}

Our objective is to show that if $\xi$ and $K\xi K$ are both in $D_V$, then $R=0$. We will prove this by contrapositive: if $R\neq 0$, we will construct an appropriate harmonic polynomial $h\in Z_\Delta$ such that $R(Kh)$ contains a Laurent--Fischer term forbidden by Lemma~\ref{HomogeneousDivisibilityTestLemma}. This will contradict Lemma~\ref{KelvinRegularityTestingLemma}. The next few lemmata are in service of constructing such an $h$.

\begin{lem}\label{NullKelvinTestFunctionsLemma}
Let $\ell\in V^*$ be a null linear form, i.e.\ $Q^*(\ell)=0$. For each integer $N\geq 0$, put
\begin{equation}
f_N:=Q^{-m-N}\ell^N.
\end{equation}
\noindent Then $\ell^N\in Z_{\Delta,N}$, and
\begin{equation}
f_N=\pm K(\ell^N).
\end{equation}
\noindent In particular,
\begin{equation}
\Delta f_N=0.
\end{equation}
\end{lem}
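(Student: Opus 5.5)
The plan is to prove the three assertions of Lemma~\ref{NullKelvinTestFunctionsLemma} in order: harmonicity of $\ell^N$, the Kelvin identity, and harmonicity of $f_N$.

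\textbf{Harmonicity of $\ell^N$.} Here $\ell$ is viewed as a linear function on $V$, and $\Delta$ is the constant-coefficient operator with symbol $Q^*$. By the chain rule,
\[
\Delta(\ell^N)=N(N-1)\,\ell^{N-2}\,\Delta(\ell^2)/2 .
\]
Equivalently, a second-order constant-coefficient operator applied to $\phi(\ell)$ gives $\phi''(\ell)$ times the symbol evaluated on the gradient covector $\ell$. Hence
\[
\Delta(\ell^N)=c\,N(N-1)\,Q^*(\ell)\,\ell^{N-2},
\]
where $c$ is a normalization constant ($c=2$ or $1$ depending on how $B$ is normalized; compare $\Delta(Q)$). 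Since $Q^*(\ell)=0$, this vanishes, so $\ell^N\in Z_{\Delta,N}$. If one wants an explicit check, in split coordinates it suffices to compute $\Delta(\ell^2)=2\sum_i a_i b_{k+1-i}$ for $\ell=\sum_i(a_ix_i+b_iy_i)$, which is a multiple of $Q^*(\ell)$.

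\textbf{The Kelvin identity.} Apply Lemma~\ref{KelvinPoleProfileLemma} with $a=0$, $s=N$, $h_s=\ell^N$. This gives
\[
K(\ell^N)=(-1)^{m+N}Q^{-(m+N)}\ell^N=\pm f_N .
\]
Alternatively, one can compute directly from \eqref{Kelvindef} using the homogeneity $\ell(-v/Q(v))^N=(-1)^NQ^{-N}\ell^N$.

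\textbf{Harmonicity of $f_N$.} Use $K\Delta K=Q^2\Delta$ from Proposition~\ref{w0onDelta}, in the equivalent form $\Delta K=KQ^2\Delta$ (valid on $\kappa(V)$, as used in the proof of Lemma~\ref{KelvinRegularityTestingLemma}). Then
\[
\Delta f_N=\pm\Delta K(\ell^N)=\pm K\bigl(Q^2\Delta(\ell^N)\bigr)=0 .
\]

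There is no real obstacle in this argument. The only care needed is to make sure the derivative convention makes $\Delta(\ell^2)$ proportional to $Q^*(\ell)$, rather than to some other form. This holds because $\Delta=\tau^{-1}(Q^*)$ has symbol exactly $Q^*$.
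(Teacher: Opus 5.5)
Your proposal is correct and matches the paper's proof step for step. The paper also shows $\Delta(\ell^N)=N(N-1)Q^*(\ell)\ell^{N-2}=0$ (so with its conventions your constant is $c=1$). It then computes $K(\ell^N)=(-1)^{N-m}Q^{-m-N}\ell^N$ directly from the homogeneity of $\ell$, which is the $a=0$ case of Lemma~\ref{KelvinPoleProfileLemma}, and concludes $\Delta f_N=0$ from $\Delta K=KQ^2\Delta$.
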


\begin{proof}
Since $\ell$ is linear, one has $\Delta(\ell^N)=N(N-1)Q^*(\ell)\ell^{N-2}=0$, where the assertion is clear also for $N=0,1$. Thus $\ell^N\in Z_{\Delta,N}$.

We now compute the Kelvin transform. Since $\ell(-v/Q(v))=-\ell(v)/Q(v)$, we have
\begin{equation}
K(\ell^N)
=
(-Q)^{-m}\left(-\frac{\ell}{Q}\right)^N
=
(-1)^{-m+N} Q^{-m-N}\ell^N
=
\pm f_N.
\end{equation}
Finally, the Kelvin transform preserves harmonicity in the sense that the Kelvin transform of a harmonic function is still harmonic. Indeed: $\Delta K=KQ^2\Delta$, so $\Delta(\ell^N)=0$, and it follows that $\Delta f_N = \Delta(K(\ell^N)) = K(Q^2\Delta(\ell^N))=0$.
\end{proof}

\begin{lem}\label{BihomogeneousBlockExpansionLemma}
Let $R_{d,j}\in D_V$ be bihomogeneous of coefficient degree $d$ and derivative degree $j$, written in normal form, and let
\begin{equation}
P_{d,j}(v,\zeta)
\end{equation}
\noindent denote its normal symbol. For $\ell\in V^*$ and $N\geq j$, put
\begin{equation}
f_N:=Q^{-m-N}\ell^N,
\qquad
A^\ell(v):=Q(v)\ell-\ell(v)dQ_v\in V^*.
\end{equation}
\noindent Then there are polynomials $F^\ell_{d,j,q}(v)\in \kappa[V]$, independent of $N$, such that
\begin{equation}\label{BihomogeneousBlockExpansion}
R_{d,j}(f_N)
=
Q^{-m-N-j}\ell^{N-j}
\sum_{q=0}^j N^q F^\ell_{d,j,q}(v).
\end{equation}
\noindent Each $F^\ell_{d,j,q}$ is homogeneous of polynomial degree $d+2j$, and the leading coefficient is
\begin{equation}
F^\ell_{d,j,j}(v)=P_{d,j}(v,A^\ell(v)).
\end{equation}
\end{lem}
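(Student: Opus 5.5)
We write $R_{d,j}=\sum_{|\beta|=j} c_\beta(v)\,\partial^\beta$ in normal form, with each $c_\beta$ homogeneous of degree $d$. Then $P_{d,j}(v,\zeta)=\sum_\beta c_\beta(v)\zeta^\beta$. The lemma is linear in $R_{d,j}$, so it suffices to understand $\partial^\beta(f_N)$ for a single multi-index $\beta$ with $|\beta|=j$. The plan is to prove, by induction on $j$, a structural formula of the shape
\begin{equation*}
\partial_{w_1}\cdots\partial_{w_j}\bigl(Q^{-m-N}\ell^N\bigr)
=Q^{-m-N-j}\ell^{N-j}\sum_{q=0}^{j}N^q\,G_{q}(v),
\end{equation*}
where the $G_q$ are polynomials, homogeneous of degree $2j$ and independent of $N$, and the top coefficient is $G_j(v)=\prod_{i=1}^j\langle A^\ell(v),w_i\rangle$. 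Multiplying by $c_\beta$ and summing then gives \eqref{BihomogeneousBlockExpansion}. The degree $d+2j$ follows immediately, and $F^\ell_{d,j,j}=\sum_\beta c_\beta(v)A^\ell(v)^\beta=P_{d,j}(v,A^\ell(v))$.

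\textbf{Base step.} For one derivative $\partial_w$ we compute
\[
\partial_w(Q^{-m-N}\ell^N)=Q^{-m-N-1}\ell^{N-1}\bigl(NQ\,\ell(w)-(m+N)\ell\,dQ_v(w)\bigr).
\]
We then regroup the bracket as
\[
N\langle Q\ell-\ell\,dQ_v,w\rangle-m\,\ell\,dQ_v(w)=N\langle A^\ell,w\rangle-m\,\ell\,dQ_v(w).
\]
Both terms are homogeneous of degree $2$ in $v$, which settles $j=1$.

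\textbf{Inductive step.} Suppose the formula holds for $j$, with $N$-independent coefficients $G_q$. We apply $\partial_w$ to $Q^{-m-N-j}\ell^{N-j}G$ with $G$ of degree $2j$. The Leibniz rule gives three kinds of terms. Differentiating $\ell^{N-j}$ produces the factor $(N-j)Q\,\ell(w)$. Differentiating $Q^{-m-N-j}$ produces $-(m+N+j)\ell\,dQ_v(w)$. Differentiating $G$ produces $Q\ell\,\partial_wG$, which has no factor of $N$. After factoring out $Q^{-m-N-j-1}\ell^{N-j-1}$, every term is a polynomial of degree $2j+2$, with coefficients polynomial in $N$ of degree at most $j+1$ and otherwise independent of $N$. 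The $N^{j+1}$ coefficient arises only from the $N$-linear parts acting on $N^jG_j$. It equals $(Q\ell(w)-\ell\,dQ_v(w))G_j=\langle A^\ell,w\rangle G_j$, which closes the induction.

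\textbf{Possible obstacle.} The hypothesis $N\ge j$ is needed so that $\ell^{N-j}$ is a genuine polynomial factor and the exponents are formal. When $N<j$ the term coming from $\ell^{N-j}$ carries a factor $(N-j+1)$ and the formula remains formally valid, but we do not need that case. Beyond this point the argument is bookkeeping, and it uses no earlier result of the paper.
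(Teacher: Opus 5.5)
Your proposal is correct and follows essentially the same argument as the paper. You reduce to normal-ordered monomials, compute the first derivative with the regrouping $N\langle A^\ell,w\rangle-m\,\ell\,dQ_v(w)$, and induct on the number of derivatives. In the induction you observe that the $N^{j+1}$ coefficient comes only from the $N$-linear parts of the factors $(N-j)Q\,\ell(w)$ and $-(m+N+j)\,\ell\,dQ_v(w)$.
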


\begin{proof}
It suffices to prove the claim for a normal-ordered monomial operator $g(v)\partial_{u_1}\cdots\partial_{u_j}$, with $g\in \kappa[V]_d$ and $u_1,\ldots,u_j\in V$, and then sum over such monomials. For a constant vector $u\in V$, one has
\begin{equation}\label{FirstDerivativeKelvinTest}
\partial_u f_N=Q^{-m-N-1}\ell^{N-1}\bigl(NA^\ell(v)(u)-m\ell(v)dQ_v(u)\bigr).
\end{equation}
We prove by induction on $r$ that $\partial_{u_1}\cdots\partial_{u_r}f_N=Q^{-m-N-r}\ell^{N-r}\sum_{q=0}^r N^qG^\ell_{r,q}(v)$, where the $G^\ell_{r,q}$ are independent of $N$, homogeneous of polynomial degree $2r$, and $G^\ell_{r,r}(v)=\prod_{i=1}^r A^\ell(v)(u_i)$. The case $r=1$ is precisely \eqref{FirstDerivativeKelvinTest}.

Assume the claim for $r$. Applying $\partial_{u_{r+1}}$ to a summand $Q^{-m-N-r}\ell^{N-r}G^\ell_{r,q}$ gives two types of terms. When the derivative hits the factor $Q^{-m-N-r}\ell^{N-r}$, we obtain $Q^{-m-N-r-1}\ell^{N-r-1}\bigl((N-r)A^\ell(v)(u_{r+1})-(m+r)\ell(v)dQ_v(u_{r+1})\bigr)G^\ell_{r,q}$. When the derivative hits $G^\ell_{r,q}$, we rewrite the result over the same common factor as $Q^{-m-N-r-1}\ell^{N-r-1}Q(v)\ell(v)\partial_{u_{r+1}}G^\ell_{r,q}$. In both cases the new coefficients are independent of $N$ apart from the indicated polynomial dependence on $N$, and are homogeneous of polynomial degree $2r+2$.

Moreover, the coefficient of $N^{r+1}$ can only come from the coefficient of $N^r$ in the previous stage multiplied by the term $NA^\ell(v)(u_{r+1})$. The terms involving $-rA^\ell(v)(u_{r+1})$, $-(m+r)\ell(v)dQ_v(u_{r+1})$, and $\partial_{u_{r+1}}G^\ell_{r,q}$ contribute only to lower powers of $N$. Hence the top coefficient after $j$ derivatives is $\prod_{i=1}^j A^\ell(v)(u_i)$.

Multiplying by $g(v)$ gives \eqref{BihomogeneousBlockExpansion}, with each $F^\ell_{d,j,q}$ independent of $N$ and homogeneous of polynomial degree $d+2j$. For the monomial operator, the coefficient of $N^j$ is $g(v)\prod_i A^\ell(v)(u_i)$, which is the normal symbol evaluated at $\zeta=A^\ell(v)$. Summing over the normal-ordered monomials in $R_{d,j}$ gives $F^\ell_{d,j,j}(v)=P_{d,j}(v,A^\ell(v))$.
\end{proof}

\begin{rem}\label{PrinicpalSymbolAlRemark}
    The covector $A^\ell(v)=Q(v)\ell-\ell(v)dQ_v$ is the same invariant already appearing in the quasiclassical description of $T^*C^o$. Indeed, let $u\in V$ be the vector corresponding to $\ell\in V^*$ under the bilinear form $B$, so that $\ell=dQ_u$. Since $\ell$ is null, $u\in C^o$ whenever $u\neq 0$. In the notation of \eqref{invariantfunccot} and \eqref{dualconeinv}, with $w=u$, one has $\alpha=B(v,u)=\ell(v)$ and
    \begin{equation}
        \mu_{v,u}=\alpha v-Q(v)u.
    \end{equation}
    \noindent After identifying $V$ and $V^*$ by $B$, this gives
    \begin{equation}
        A^\ell(v)=-\,\mu_{v,u}^{\flat}.
    \end{equation}
    \noindent Thus, up to this harmless sign convention, $A^\ell$ is precisely the $\mu$-coordinate function on the minimal nilpotent orbit closure described in Proposition~\ref{CotBundDescr}. Equivalently, it is the quasiclassical principal-symbol function which appears in $F$-moment descent.
\end{rem}

Now decompose $R$ into bihomogeneous normal-ordered pieces
\begin{equation}
R=\sum_{d,j}R_{d,j},
\end{equation}
\noindent where $R_{d,j}$ has coefficient degree $d$ and derivative degree $j$. We define the \textbf{weight} of such a block by $w(d,j):=d-j$, and set
\begin{equation}
W:=\max\{\,d-j:R_{d,j}\neq 0\,\}.
\end{equation}
\noindent Among the nonzero blocks with $d-j=W$, choose $J$ maximal, and put
\begin{equation}
D:=W+J.
\end{equation}
\noindent Thus $R_{D,J}$ is the top block: it has maximal weight $W$, and among all blocks of maximal weight it has maximal derivative degree $J$. We write
\begin{equation}
P(v,\zeta):=P_{D,J}(v,\zeta)
\end{equation}
\noindent for its normal symbol. Then $P$ has $v$-degree $D$, $\zeta$-degree $J$, and is harmonic in the $\zeta$-variables, since $R\in\kappa[V]\otimes_\kappa Z_{\partial}$.

Applying Lemma~\ref{BihomogeneousBlockExpansionLemma} to each block $R_{d,j}$, we see that every term in $R_{d,j}(f_N)$ has homogeneous degree $-2m-N+d-j$. Hence only the blocks with $d-j=W$ can contribute to the top homogeneous degree of $R(f_N)$. Putting these top-weight contributions over the common factor $Q^{-m-N-J}\ell^{N-J}$, we obtain
\begin{equation}\label{TopHomogeneousPart}
Q^{-m-N-J}\ell^{N-J}S_N,
\end{equation}
\noindent where
\begin{equation}\label{SNDefinition}
S_N
=
\sum_{d-j=W}
Q^{J-j}\ell^{J-j}
\sum_{q=0}^j N^qF^\ell_{d,j,q}(v).
\end{equation}
\noindent For every summand in \eqref{SNDefinition}, the ordinary polynomial degree is
\begin{equation}
2(J-j)+(J-j)+(d+2j)=D+2J,
\end{equation}
\noindent since $d-j=W$ and $D=W+J$. Thus $S_N$ is homogeneous of polynomial degree $D+2J$. Moreover, the highest possible power of $N$ in $S_N$ is $N^J$, and it can only arise from the block $R_{D,J}$. Therefore, by Lemma~\ref{BihomogeneousBlockExpansionLemma},
\begin{equation}\label{SNLeadingTerm}
S_N
=
N^J P(v,A^\ell(v))
+
N^{J-1}S_{J-1}(v)
+\cdots+
S_0(v),
\end{equation}
\noindent where each $S_i(v)$ is independent of $N$ and homogeneous of polynomial degree $D+2J$.

\begin{lem}\label{ChoiceOfEllLemma}
Let $P(v,\zeta)=P_{D,J}(v,\zeta)$ be the top normal symbol defined above. Then there exists a null linear form $\ell\in V^*$ such that
\begin{equation}
P(v,A^\ell(v))\neq 0
\end{equation}
\noindent as a polynomial in $v$.
\end{lem}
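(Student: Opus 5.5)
The plan is to show that, for fixed $v$ with $Q(v)\neq 0$, the covector $A^\ell(v)$ is, up to the scalar $Q(v)$, the image of $\ell$ under an orthogonal reflection. The null cone is stable under reflections, so substituting $\zeta=A^\ell(v)$ amounts to evaluating $P(v,-)$ on the whole null cone of $Q^*$. Vanishing there will force divisibility by $Q^*(\zeta)$, which is incompatible with $\zeta$-harmonicity.

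\emph{Step 1 (the reflection identity).} Identify $dQ_v$ with $v^\flat$; this holds because $dQ_v(u)=B(v,u)$. Then
\[
A^\ell(v)=Q(v)\ell-\ell(v)v^\flat .
\]
Let $B^*$ be the bilinear form on $V^*$ dual to $B$, so $B^*(\ell,v^\flat)=\ell(v)$. Let $s_v$ denote the $B^*$-orthogonal reflection of $V^*$ in the anisotropic vector $v^\flat$ (here $Q^*(v^\flat)=Q(v)\neq 0$). Since $B^*(v^\flat,v^\flat)=2Q(v)$,
\[
s_v(\ell)=\ell-\frac{2B^*(\ell,v^\flat)}{B^*(v^\flat,v^\flat)}\,v^\flat=\ell-\frac{\ell(v)}{Q(v)}\,v^\flat .
\]
Hence $A^\ell(v)=Q(v)\,s_v(\ell)$. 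Because $P$ is homogeneous of degree $J$ in $\zeta$, this gives
\[
P(v,A^\ell(v))=Q(v)^J\,P(v,s_v(\ell)).
\]

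\emph{Step 2 (reduction to divisibility).} Suppose, for contradiction, that $P(v,A^\ell(v))=0$ for every null $\ell\in V^*$. Fix $v$ with $Q(v)\neq0$. Since $s_v$ preserves $Q^*$ and is invertible, $s_v$ maps the $\kappa$-points of the cone $C=\{Q^*=0\}$ bijectively onto themselves. So $P(v,-)$ vanishes on $C(\kappa)$. The form is split and $\kappa$ is infinite, so $C$ is rational and $C(\kappa)$ is Zariski dense in $C$. Since $n\geq4$, the polynomial $Q^*$ is (geometrically) irreducible. Hence $Q^*(\zeta)$ divides $P(v,\zeta)$ for every $v$ in the dense open set $\{Q\neq0\}(\kappa)$.

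To pass to a polynomial statement, view $P\in\kappa[V][\zeta]$ and divide by $Q^*(\zeta)$ in $\kappa(V)[\zeta]$. Here $Q^*$ is monic in a suitable variable, for instance after the linear change $x_1y_k\mapsto x_1^2-y_k^2$ over $\kappa$, since char $0$. The remainder then has coefficients in $\kappa[V]$ and vanishes at a dense set of $v$, so it is zero. Thus $P\in Q^*(\zeta)\,\kappa[V]\otimes\kappa[\zeta]$.

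\emph{Step 3 (contradiction with harmonicity).} Expand $P=\sum_\beta v^\beta P_\beta(\zeta)$ in monomials in $v$. Step 2 gives $P_\beta\in Q^*\kappa[\zeta]$ for each $\beta$, because $Q^*$ involves only $\zeta$. On the other hand $R\in\kappa[V]\otimes Z_\partial$, so $P_\beta\in Z_\partial$. Lemma~\ref{FischerDecompositionLemma}, applied in the $\zeta$-variables, gives $Z_\partial\cap Q^*\kappa[\zeta]=0$. Hence every $P_\beta=0$, so $P=0$. This contradicts the choice of $R_{D,J}$ as a nonzero block. Therefore some null $\ell$ has $P(v,A^\ell(v))\neq0$, and moreover such $\ell$ form a dense open subset of $C$.

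The main obstacle is Step 1: recognizing that $\ell\mapsto A^\ell(v)$ is a rescaled reflection. This is what turns the condition on the specific test covectors $A^\ell(v)$ into a condition on the entire null cone. After that, the density of $C(\kappa)$ and the irreducibility of $Q^*$ (which is where $n\ge4$ is used) are routine.
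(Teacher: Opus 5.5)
Your proof is correct and follows essentially the same route as the paper. Both arguments rest on three facts: $\ell\mapsto A^\ell(v)$ is $Q(v)$ times the reflection in $v^\flat$, hence an automorphism of the null cone; $Q^*$ is irreducible; and the Fischer decomposition gives $Z_\partial\cap Q^*\kappa[\zeta]=0$.

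The only difference is that the paper fixes a single $v_0$ with $Q(v_0)\neq0$ and $P(v_0,\zeta)\neq0$, then chooses a null $\zeta_0$ with $P(v_0,\zeta_0)\neq0$ directly. This spares it your division-with-remainder step in Step 2, which globalizes the pointwise divisibility. Your explicit remark that $C(\kappa)$ is Zariski dense covers a point the paper leaves implicit.
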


\begin{proof}
Since $P(v,\zeta)$ is not the zero polynomial, we may choose $v_0\in V$ with $Q(v_0)\neq 0$ such that $P(v_0,\zeta)$ is not the zero polynomial in the $\zeta$-variables. Indeed, the locus $Q\neq 0$ is dense, and if $P(v,\zeta)$ vanished for every $v$ with $Q(v)\neq 0$, then all its coefficients as a polynomial in $\zeta$ would vanish on a dense open subset of $V$.

For this fixed $v_0$, consider the linear map
\begin{equation}
T_{v_0}:V^*\longrightarrow V^*,
\qquad
T_{v_0}(\ell):=A^\ell(v_0)=Q(v_0)\ell-\ell(v_0)dQ_{v_0}.
\end{equation}
\noindent After identifying $V^*$ with $V$ by the bilinear form, this is $Q(v_0)$ times the orthogonal reflection through $v_0^\perp$. Hence $T_{v_0}$ carries the dual null cone isomorphically onto itself.

Now $P(v_0,\zeta)$ is harmonic in the $\zeta$-variables, because $P(v,\zeta)$ is. If $P(v_0,\zeta)$ vanished on the entire dual null cone, then by irreducibility of $Q^*$ for $\dim V\geq 4$, it would be divisible by $Q^*$. This is impossible unless $P(v_0,\zeta)=0$, because the Fischer decomposition in the $\zeta$-variables gives $\kappa[\zeta]=Z_{\partial}\oplus Q^*\kappa[\zeta]$. Thus a nonzero harmonic polynomial cannot lie in $Q^*\kappa[\zeta]$.

Therefore there exists a null covector $\zeta_0\in V^*$ such that $P(v_0,\zeta_0)\neq 0$. Since $T_{v_0}$ is an automorphism of the dual null cone, we may choose a null linear form $\ell\in V^*$ with $A^\ell(v_0)=\zeta_0$. Then $P(v_0,A^\ell(v_0))\neq 0$, and hence $P(v,A^\ell(v))$ is not the zero polynomial in $v$.
\end{proof}

\begin{rem}
    Let us make explicit where the hypotheses on $R$ enter the preceding two lemmata. The fact that $R\neq 0$ ensures that the top block $R_{D,J}$ is nonzero, and hence that its normal symbol $P=P_{D,J}$ is not the zero polynomial. The fact that $R$ lies in the Fischer complement $\kappa[V]\otimes_\kappa Z_{\partial}$ to $\kappa[V]\otimes_\kappa \kappa[\partial]\Delta$ ensures that $P$ is harmonic in the $\zeta$-variables. These two facts are used together in Lemma~\ref{ChoiceOfEllLemma}: since $P$ is a nonzero harmonic polynomial in the $\zeta$-variables, it cannot vanish identically on the dual null cone. This is what allows us to choose a null linear form $\ell$ for which $P(v,A^\ell(v))$ is nonzero. It is precisely this choice of $\ell$ (and so $f_N$) that will permit us to obtain the desired contradiction, forcing $R =0$.
\end{rem}

\begin{lem}\label{DivisibilityBoundForFellLemma}
Say $R \ne 0$, let $\ell$ be chosen as in Lemma~\ref{ChoiceOfEllLemma}, and put
\begin{equation}
F_\ell(v):=P(v,A^\ell(v)).
\end{equation}
\noindent Then $F_\ell$ is homogeneous of polynomial degree $D+2J$, and $F_\ell$ is not divisible by $Q^{D+1}$. In particular, $F_\ell$ is not divisible by $Q^{D+2}$.
\end{lem}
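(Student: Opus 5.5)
The plan is to rewrite $F_\ell$ through the reflection structure already used in the proof of Lemma~\ref{ChoiceOfEllLemma}, and then to bound the $Q$-adic valuation of $F_\ell$ directly.

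\textbf{Homogeneity.} This is immediate. $P$ has $v$-degree $D$ and $\zeta$-degree $J$, and $A^\ell(v)=Q(v)\ell-\ell(v)dQ_v$ is homogeneous of degree $2$ in $v$. Hence $F_\ell(v)=P(v,A^\ell(v))$ is homogeneous of degree $D+2J$. It is nonzero by Lemma~\ref{ChoiceOfEllLemma}.

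\textbf{Reflection form.} Let $u=\ell^\sharp$. Since $dQ_v=v^\flat$ and $\ell(v)=B(u,v)$, the same computation as in Lemma~\ref{ChoiceOfEllLemma} gives $A^\ell(v)=Q(v)\,s_v(\ell)$. Here $s_v(\zeta)=\zeta-\zeta(v)v^\flat/Q(v)$ is the (dual) reflection through $v^\perp$. Since $P$ is homogeneous of degree $J$ in $\zeta$,
\begin{equation*}
F_\ell=Q^J\,\Phi,\qquad \Phi(v):=P\bigl(v,s_v(\ell)\bigr)\in\kappa[V][1/Q].
\end{equation*}
Thus $\Phi$ is homogeneous of degree $D$ with pole order at most $J$ along $\{Q=0\}$. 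The claim is equivalent to $\ord_Q(\Phi)\le D-J$, that is, to $\ord_Q(F_\ell)\le D$. Since $Q$ is irreducible for $n\ge4$, $\ord_Q$ is a genuine discrete valuation.

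\textbf{Valuation bound.} I would expand $F_\ell$ $Q$-adically using the binomial formula in the $\zeta$-slot:
\begin{equation*}
F_\ell=\sum_{i=0}^{J}\binom{J}{i}Q^{i}\,(-\ell(v))^{J-i}\,P_i(v),
\end{equation*}
where $P_i(v)$ is the polarization of $P(v,\cdot)$ with $i$ slots filled by $\ell$ and $J-i$ slots by $v^\flat$. The aim is to show that some coefficient of $Q^i$ with $i\le D$ is nonzero modulo $Q$.

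The tool is $\zeta$-harmonicity of $P$. Harmonic polynomials in $\zeta$ of degree $J$ are spanned by powers $\lambda(\zeta)^J$ of null vectors. Writing $P(v,\zeta)=\sum_r c_r(v)\lambda_r(\zeta)^J$ with $c_r$ of degree $D$, each term contributes $c_r(v)\bigl(Q\lambda_r(u)-\ell(v)B(v,\lambda_r^\sharp)\bigr)^J$. Restricting to $C_V$, the bracket reduces to $-\ell(v)B(v,\lambda_r^\sharp)$. The $Q$-adic jets of $F_\ell$ are then governed by how often the $v$-coefficients $c_r$ can absorb factors of $Q$.

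Since each $c_r$ has degree $D$, it can carry at most $\lfloor D/2\rfloor$ factors of $Q$. I expect that the mixed terms $(\lambda_r(u)Q)^i$ can only raise the valuation by absorbing the coefficient parts; this should cap $\ord_Q F_\ell$ at $D$. To make this precise, I would use the Kelvin symmetry: the relation $s_{cv}=s_v$ gives $\Phi(-v/Q)=\pm Q^{-D}\Phi(v)$. Together with the Fischer decomposition (Lemma~\ref{FischerDecompositionLemma}) in the $v$-variables, this should translate a high $Q$-order of $\Phi$ into a vanishing statement for $P(v_0,\cdot)$ on the dual null cone at a generic point. That vanishing contradicts the choice of $\ell$ in Lemma~\ref{ChoiceOfEllLemma}.

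\textbf{Main obstacle.} The hard step is exactly this valuation bound. A naive degree count only gives $\ord_Q F_\ell\le (D+2J)/2$, which is weaker than $D$ when $2J>D$. Also, the Kelvin symmetry of $\Phi$ alone is self-consistent with high $Q$-order, so it does not by itself produce a contradiction. I would first test the extreme cases. For $D=0$, $F_\ell\equiv(\ell(v)B(v,\lambda^\sharp))^J\not\equiv0 \pmod Q$, which is fine. Then I would try an induction on $D$ that peels off the $\ord_Q$ of the coefficient functions $c_r$. If the bound cannot be closed this way, I would fall back on re-choosing $\ell$ generically within the open set allowed by Lemma~\ref{ChoiceOfEllLemma}, to push the leading mod-$Q$ jet off zero.
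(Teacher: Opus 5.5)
Your homogeneity argument and the rewriting $A^\ell(v)=Q(v)\,s_v(\ell)$ are correct. However, the proposal has a genuine gap at the only nontrivial point: you never establish $\mathrm{ord}_Q(F_\ell)\le D$. The binomial and harmonic expansion, the Kelvin relation $\Phi(-v/Q)=\pm Q^{-D}\Phi(v)$, and the appeal to the Fischer decomposition are left at the level of ``should'' and ``I expect''. As you note yourself, the Kelvin symmetry is compatible with high $Q$-order, so nothing in this toolkit forces a nonzero jet modulo $Q$ in order $\le D$. Re-choosing $\ell$ generically does not help either, unless you have some mechanism linking the $Q$-adic order of $F_\ell$ to its nonvanishing.

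The missing idea is to count degree not in total, but along the null lines $v+\kappa u$, where $u=\ell^\sharp$ (so $dQ_u=\ell$ and $Q(u)=0$). One has
\[
Q(v+su)=Q(v)+s\ell(v),\qquad \ell(v+su)=\ell(v),\qquad dQ_{v+su}=dQ_v+s\ell,
\]
whence $A^\ell(v+su)=A^\ell(v)$. This is the same $\G_a$-invariance as in \eqref{cotangentivarianceproperty}. In your language, it is the product $Q(v)\,s_v(\ell)$, not $s_v(\ell)$ itself, that is constant along these lines, which is why splitting off $Q^J$ hides the structure.

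Consequently $s\mapsto F_\ell(v+su)=P\bigl(v+su,A^\ell(v)\bigr)$ has degree at most $D$ in $s$. On the other hand, on the dense open set $\ell(v)\neq0$ (note $\ell\neq0$), the restriction of $Q$ to the line is a nonconstant linear polynomial in $s$. Suppose $F_\ell=Q^{D+1}G$. Then $(Q(v)+s\ell(v))^{D+1}G(v+su)$ has degree $\le D$ in $s$, so it vanishes identically. Hence $F_\ell(v)=0$ whenever $\ell(v)\neq0$, so $F_\ell=0$, contradicting Lemma~\ref{ChoiceOfEllLemma}.

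Harmonicity of $P$, Kelvin symmetry, and the Fischer decomposition play no role in this step. Harmonicity is used only upstream, to guarantee $F_\ell\neq0$.
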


\begin{proof}
The homogeneity is immediate: $P(v,\zeta)$ has $v$-degree $D$ and $\zeta$-degree $J$, while $A^\ell(v)=Q(v)\ell-\ell(v)dQ_v$ is homogeneous of degree $2$ in $v$. Hence $F_\ell(v)=P(v,A^\ell(v))$ is homogeneous of degree $D+2J$.

Let $u\in V$ be the vector satisfying $dQ_u=\ell$. Since $\ell$ is null, $Q(u)=0$. For every $v\in V$ and $s\in\kappa$, one has $Q(v+su)=Q(v)+s\ell(v)$, $\ell(v+su)=\ell(v)$, and $dQ_{v+su}=dQ_v+s\ell$. Therefore\footnote{This is the crucial invariance property of the corresponding function on $T^*C^o$, cf. \eqref{cotangentivarianceproperty} and Remark~\ref{PrinicpalSymbolAlRemark}.}
\begin{equation}
A^\ell(v+su)=A^\ell(v).
\end{equation}
\noindent Consequently,
\begin{equation}
F_\ell(v+su)=P(v+su,A^\ell(v)).
\end{equation}
\noindent As a polynomial in $s$, the right-hand side has degree at most $D$, because $P$ has $v$-degree $D$.

Suppose, for contradiction, that $F_\ell=Q^{D+1}G$ for some $G\in\kappa[V]$. If $\ell(v)\neq 0$, then $Q(v+su)=Q(v)+s\ell(v)$ is a nonconstant linear polynomial in $s$, and hence
\begin{equation}
F_\ell(v+su)=(Q(v)+s\ell(v))^{D+1}G(v+su).
\end{equation}
\noindent The left-hand side has degree at most $D$ in $s$, while the right-hand side is divisible by the $(D+1)$-st power of a nonconstant linear polynomial. Therefore this polynomial in $s$ must be identically zero. In particular, $F_\ell(v)=0$ for every $v$ with $\ell(v)\neq 0$. Since $\ell\neq 0$, this is a dense open condition on $V$, so $F_\ell=0$, contradicting the choice of $\ell$ in Lemma~\ref{ChoiceOfEllLemma}.
\end{proof}

\begin{lem}\label{VandermondeNonvanishingLemma}
Let $\ell$ be chosen as in Lemma~\ref{ChoiceOfEllLemma}. Then, for all sufficiently large integers $N$, the polynomial $S_N$ is not divisible by $Q^{D+2}$.
\end{lem}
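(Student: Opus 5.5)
The plan is to regard $S_N$ as a polynomial in $N$ with coefficients in $\kappa[V]_{D+2J}$, as in \eqref{SNLeadingTerm}:
\[
S_N=N^JF_\ell+N^{J-1}S_{J-1}+\cdots+S_0 .
\]
We then pass to the quotient $\overline{A}:=\kappa[V]/Q^{D+2}\kappa[V]$. By Lemma~\ref{DivisibilityBoundForFellLemma}, $F_\ell\notin Q^{D+2}\kappa[V]$, so its image $\overline{F_\ell}\in\overline{A}$ is nonzero.

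The image of $S_N$ in $\overline{A}$ is $p(N):=\sum_{q=0}^J N^q\,\overline{S_q}$, with $S_J:=F_\ell$. This is a polynomial in $N$ with coefficients in the $\kappa$-vector space $\overline{A}_{D+2J}$, which is finite-dimensional since we work in a fixed homogeneous degree. Its leading coefficient $\overline{F_\ell}$ is nonzero.

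Choose a linear functional $\lambda$ on $\overline{A}_{D+2J}$ with $\lambda(\overline{F_\ell})\neq0$. Then $\lambda(p(N))$ is a scalar polynomial in $N$ of exact degree $J$. Because $\operatorname{char}\kappa=0$, the integers embed in $\kappa$, so this polynomial has at most $J$ integer roots. Hence for all $N$ larger than the largest such root, $\lambda(p(N))\neq0$, so $p(N)\neq0$, and $S_N$ is not divisible by $Q^{D+2}$.

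One bookkeeping point needs checking: the coefficients $S_q$ must be genuinely independent of $N$, including the cross-terms $Q^{J-j}\ell^{J-j}$ in \eqref{SNDefinition}. This holds because these factors are $N$-free and every $F^\ell_{d,j,q}$ is $N$-independent by Lemma~\ref{BihomogeneousBlockExpansionLemma}. The expansion is only valid for $N\ge J$, which is harmless since we only need large $N$.

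The one place where substance enters is the nonvanishing of the leading coefficient modulo $Q^{D+2}$, but that is exactly Lemma~\ref{DivisibilityBoundForFellLemma}. The remainder is the Vandermonde-type observation that a nonzero vector-valued polynomial vanishes at only finitely many integers.
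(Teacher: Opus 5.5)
Your proof is correct and is essentially the paper's argument. Like the paper, you reduce $S_N$ modulo $Q^{D+2}$ in the fixed degree $D+2J$, obtain the nonvanishing of the leading coefficient $\overline{F}_\ell$ from Lemma~\ref{DivisibilityBoundForFellLemma}, and conclude because a nonzero vector-valued polynomial in $N$ vanishes at only finitely many integers; your linear functional $\lambda$ with $\lambda(\overline{F_\ell})\neq0$, used in place of the paper's choice of basis, additionally gives the explicit bound of at most $J$ exceptional values of $N$.
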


\begin{proof}
By \eqref{SNLeadingTerm}, we have
\begin{equation}
S_N
=
N^J F_\ell
+
N^{J-1}S_{J-1}
+\cdots+
S_0,
\end{equation}
\noindent where all terms are homogeneous of polynomial degree $D+2J$ and are independent of $N$. Let
\begin{equation}
\overline{S}_N
\in
\kappa[V]_{D+2J}/\bigl(Q^{D+2}\kappa[V]\cap \kappa[V]_{D+2J}\bigr)
\end{equation}
\noindent denote the image of $S_N$. Then
\begin{equation}
\overline{S}_N
=
N^J\overline{F}_\ell
+
N^{J-1}\overline{S}_{J-1}
+\cdots+
\overline{S}_0.
\end{equation}
\noindent By Lemma~\ref{DivisibilityBoundForFellLemma}, $F_\ell$ is not divisible by $Q^{D+2}$, so $\overline{F}_\ell\neq 0$. Hence $\overline{S}_N$ is a nonzero polynomial in $N$ with values in the finite-dimensional $\kappa$-vector space $\kappa[V]_{D+2J}/(Q^{D+2}\kappa[V]\cap \kappa[V]_{D+2J})$.

Since $\operatorname{char}\kappa=0$, the integers define an infinite subset of $\kappa$. A nonzero vector-valued polynomial over $\kappa$ can vanish at only finitely many values of $N$: after choosing a basis of the quotient vector space, at least one coordinate polynomial is nonzero. Therefore $\overline{S}_N\neq 0$ for all sufficiently large integers $N$. Equivalently, $S_N$ is not divisible by $Q^{D+2}$ for all sufficiently large $N$.
\end{proof}

\begin{lem}\label{BadTopPieceLemma}
Let $\ell$ be chosen as in Lemma~\ref{ChoiceOfEllLemma}. Then, for all sufficiently large integers $N$, one has
\begin{equation}
R(f_N)\notin K(Q^2\kappa[V]).
\end{equation}
\end{lem}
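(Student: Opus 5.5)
The plan is to isolate the top homogeneous component of $R(f_N)$ and apply the criterion of Lemma~\ref{HomogeneousDivisibilityTestLemma} to that component alone. First we check that membership in $K(Q^2\kappa[V])$ can be tested one homogeneous degree at a time. By Lemma~\ref{KelvinPoleProfileLemma}, $K(Q^2\kappa[V])$ is spanned by the Laurent--Fischer terms $Q^{-T}h_s$ with $T\geq m+s+2$. Each such term is homogeneous of degree $s-2T$. The Kelvin transform also sends homogeneous polynomials to homogeneous rational functions of the form $Q^{-T}F$. Together these show that $K(Q^2\kappa[V])$ is a graded subspace of $\kappa[V][1/Q]$. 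So if $R(f_N)$ lay in $K(Q^2\kappa[V])$, each of its homogeneous components would also lie there. In particular the top component $Q^{-m-N-J}\ell^{N-J}S_N$ of \eqref{TopHomogeneousPart} would.

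Next we write this top component in the form $Q^{-T}F_r$ required by Lemma~\ref{HomogeneousDivisibilityTestLemma}. Take $T=m+N+J$ and $F_r=\ell^{N-J}S_N$, which is homogeneous of degree $r=N-J+D+2J=N+D+J$. Then
\[
b=m+r+2-T=D+2>0.
\]
By the lemma, the top component lies in $K(Q^2\kappa[V])$ exactly when $Q^{D+2}$ divides $\ell^{N-J}S_N$.

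The remaining step is to remove the factor $\ell^{N-J}$, and this is where care is needed. Since $n\geq 4$, $Q$ is irreducible, so $Q$ is prime in the UFD $\kappa[V]$. The linear form $\ell$ is not divisible by $Q$, because $Q$ has degree $2$. Hence $Q^{D+2}\mid \ell^{N-J}S_N$ implies $Q^{D+2}\mid S_N$. Lemma~\ref{VandermondeNonvanishingLemma} says this fails for all sufficiently large $N$. Therefore the top component, and so $R(f_N)$ itself, is not in $K(Q^2\kappa[V])$ for such $N$.

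Two loose ends remain. One is to confirm that the top component is exactly \eqref{TopHomogeneousPart}, with no cancellation from lower-weight blocks. This holds because blocks of weight $d-j<W$ contribute only in strictly lower degree $-2m-N+d-j$. It also needs $N\geq J$ so that Lemma~\ref{BihomogeneousBlockExpansionLemma} applies, which is harmless for large $N$. The other is to confirm that this component is nonzero, which follows because $S_N\neq 0$ by non-divisibility. The main obstacle is the gradedness claim: it relies on the Laurent--Fischer decomposition being unique and homogeneous on $\kappa[V][1/Q]$, which follows from Lemma~\ref{FischerDecompositionLemma} applied after clearing denominators.
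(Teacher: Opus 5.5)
Your proposal is correct and follows the paper's proof essentially step for step. You isolate the top homogeneous component $Q^{-m-N-J}\ell^{N-J}S_N$, apply Lemma~\ref{HomogeneousDivisibilityTestLemma} with $T=m+N+J$, $r=N+D+J$, $b=D+2$, remove $\ell^{N-J}$ because $Q$ is prime and coprime to $\ell$, invoke Lemma~\ref{VandermondeNonvanishingLemma}, and use gradedness of $K(Q^2\kappa[V])$ to rule out cancellation by lower-degree terms. The only difference is that you establish gradedness first, and this step is easier than you suggest: $K$ carries $\kappa[V]_r$ into degree $-2m-r$ and $Q^2\kappa[V]$ is a homogeneous ideal, so no uniqueness of the Laurent--Fischer decomposition is needed for it.
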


\begin{proof}
By the discussion preceding Lemma~\ref{ChoiceOfEllLemma}, the top homogeneous-degree part of $R(f_N)$ is
\begin{equation}
Q^{-m-N-J}\ell^{N-J}S_N.
\end{equation}
\noindent By Lemma~\ref{VandermondeNonvanishingLemma}, for all sufficiently large $N$, the polynomial $S_N$ is not divisible by $Q^{D+2}$.

We claim that, for such $N$, the top homogeneous-degree term $Q^{-m-N-J}\ell^{N-J}S_N$ is not contained in $K(Q^2\kappa[V])$. Indeed, the numerator $\ell^{N-J}S_N$ is homogeneous of polynomial degree
\begin{equation}
r=(N-J)+(D+2J)=N+D+J,
\end{equation}
\noindent while the denominator exponent is
\begin{equation}
T=m+N+J.
\end{equation}
\noindent Thus, in the notation of Lemma~\ref{HomogeneousDivisibilityTestLemma},
\begin{equation}
m+r+2-T
=
m+(N+D+J)+2-(m+N+J)
=
D+2.
\end{equation}
\noindent Therefore membership in $K(Q^2\kappa[V])$ would force $\ell^{N-J}S_N$ to be divisible by $Q^{D+2}$. Since $Q$ is nondegenerate of dimension at least $4$, the linear form $\ell$ is relatively prime to $Q$. Hence $S_N$ itself would be divisible by $Q^{D+2}$, contradicting Lemma~\ref{VandermondeNonvanishingLemma}.

Finally, the remaining terms in $R(f_N)$ have strictly smaller homogeneous degree, because they come from blocks $R_{d,j}$ with $d-j<W$. Since $K(Q^2\kappa[V])$ is graded by homogeneous degree, as follows from Lemma~\ref{KelvinPoleProfileLemma}, these lower-degree terms cannot cancel the failure of the top homogeneous-degree term to lie in $K(Q^2\kappa[V])$. Hence $R(f_N)\notin K(Q^2\kappa[V])$.
\end{proof}

\begin{definition}\label{KelvinRegularDefinition}
Let $[\xi]\in D_V/D_V\Delta$. We say that $[\xi]$ is \textbf{Kelvin-regular} if, after viewing $\xi$ as an operator on $V_Q$, one has
\begin{equation}
K\xi K\in D_V+D_{V_Q}\Delta.
\end{equation}
\noindent Equivalently, the Kelvin transform of $[\xi]$ has a representative which is regular on the opposite Bruhat cell. This condition is independent of the choice of representative $\xi$: if $\xi$ is replaced by $\xi+A\Delta$, then
\begin{equation}
K(\xi+A\Delta)K
=
K\xi K+(KAK)(K\Delta K),
\end{equation}
\noindent and $K\Delta K=Q^2\Delta$, so the second term lies in $D_{V_Q}\Delta$.
\end{definition}

\begin{prop}\label{KelvinRegularGlobalSymmetryEquivalence}
Let $[\xi]\in D_V/D_V\Delta$. The following are equivalent:
\begin{equation}
[\xi]\text{ is the restriction of a global section of }\mathcal H;
\end{equation}
\begin{equation}
[\xi]\text{ is Kelvin-regular};
\end{equation}
\begin{equation}
[\xi]\in \ker\left(\ad_\Delta:D_V/D_V\Delta\to D_V/D_V\Delta\right).
\end{equation}
\noindent Equivalently, the third condition says that $\xi$ is a symmetry of $\Delta$, i.e.\ that there exists $\delta_\xi\in D_V$ such that
\begin{equation}
\Delta\xi=\delta_\xi\Delta.
\end{equation}
\end{prop}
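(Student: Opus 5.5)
I would prove the cycle of implications (3) $\Rightarrow$ (1) $\Rightarrow$ (2) $\Rightarrow$ (3), using the lemmata assembled above.

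\emph{(3) $\Rightarrow$ (1).} This is the content of Proposition~\ref{KernelSurjectionLemma}. A class in $\ker(\ad_\Delta)$ is exactly a section of $\mathcal H^\Delta$ over $V$, since on $D_V/D_V\Delta$ the map $\ad_\Delta$ is left multiplication by $\Delta$. That proposition extends any such section to a global section of $\mathcal H^\Delta\subset\mathcal H$. The equivalence of (3) with the displayed identity $\Delta\xi=\delta_\xi\Delta$ is then a tautology: $\Delta\xi\in D_V\Delta$.

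\emph{(1) $\Rightarrow$ (2).} Let $s$ be a global section restricting to $[\xi]$ on $V$. Restrict $s$ to $w_0V$ and use the trivialization $\sigma_{w_0}$. Proposition~\ref{linetwistingProp} and the gluing computation in Proposition~\ref{HarmonicIdealProp} show that, in these coordinates, the same section is represented on $V_Q$ by $K\xi K$. Regularity on $w_0V$ means there is $\psi\in D_V$ with $K\xi K\equiv\psi \pmod{D_{V_Q}\Delta}$. Note that $\mathcal J_\Delta(V_Q)=D_{V_Q}\Delta$ is unchanged by the unit $Q^2$ appearing in $K\Delta K=Q^2\Delta$. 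Hence $K\xi K\in D_V+D_{V_Q}\Delta$, which is Kelvin-regularity.

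\emph{(2) $\Rightarrow$ (3).} This is the main step, and the preceding lemmata are designed for it. Suppose $K\xi K=\psi+A\Delta$ with $\psi\in D_V$ and $A\in D_{V_Q}$. Then
\[
\xi=K\psi K+(KAK)Q^2\Delta .
\]
Replacing $\xi$ by the representative $\xi':=K\psi K$ does not change the class of $\xi$ in $D_{V_Q}/D_{V_Q}\Delta$. The first obstacle is that $\xi'$ need not lie in $D_V$, whereas Lemma~\ref{KelvinRegularityTestingLemma} needs both $\xi$ and $K\xi K$ in $D_V$. I would bypass this by rerunning the proof of that lemma with $\xi\in D_V$ itself, applied to $Kh$ for $h\in Z_\Delta$. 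Since $\Delta(Kh)=0$, we have
\[
\xi(Kh)=\xi'(Kh)+(KAK)Q^2\Delta(Kh)=K(\psi h).
\]
Consequently $\Delta\xi(Kh)=K(Q^2\Delta\psi h)\in K(Q^2\kappa[V])$. So the conclusion $R(Kh)\in K(Q^2\kappa[V])$ still holds for the right Fischer remainder $R$ of $\Delta\xi$, taken from Lemma~\ref{RightFischerDecompositionLemma}.

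Now suppose $R\neq0$, and fix the top block $R_{D,J}$. Lemma~\ref{ChoiceOfEllLemma} supplies a null linear form $\ell$. By Lemma~\ref{NullKelvinTestFunctionsLemma}, $f_N=\pm K(\ell^N)$ with $\ell^N\in Z_\Delta$, so $R(f_N)\in K(Q^2\kappa[V])$ for every $N$. Lemma~\ref{BadTopPieceLemma} contradicts this for all large $N$. Therefore $R=0$, so $\Delta\xi=\delta_\xi\Delta$, i.e.\ $[\xi]\in\ker(\ad_\Delta)$.

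The delicate point is the representative-independence used in (2) $\Rightarrow$ (3). If that manipulation looks shaky, I would instead first lift via the snake-lemma surjection of Proposition~\ref{KernelSurjectionLemma} applied to a harmonic $K\xi K$. However, that route presupposes harmonicity, so the direct computation above is the one I would commit to.
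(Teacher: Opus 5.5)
Your proposal is correct and follows the paper's proof essentially step for step: (1)$\Rightarrow$(2) by the chart transition rule, (2)$\Rightarrow$(3) by showing that the right Fischer remainder $R$ of $\Delta\xi$ satisfies $R(Kh)\in K(Q^2\kappa[V])$ and then contradicting Lemma~\ref{BadTopPieceLemma} with the null test functions $f_N=\pm K(\ell^N)$, and (3)$\Rightarrow$(1) by Proposition~\ref{KernelSurjectionLemma}. Your explicit computation $\xi(Kh)=K(\psi h)$ under the weaker hypothesis $K\xi K=\psi+A\Delta$ is exactly the adaptation the paper compresses into ``the proof of Lemma~\ref{KelvinRegularityTestingLemma} gives, verbatim,'' so the fallback route you mention at the end is not needed.
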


\begin{proof}
The implication ``global $\implies$ Kelvin-regularity" is just the transition rule between the two Bruhat charts. Indeed, if $[\xi]$ is the restriction of a global section of $\mathcal H$ to $V$, then on $w_0V$ the same section is represented by some regular operator $\psi\in D_V$, and on $V_Q=V\cap w_0V$ one has $K\xi K-\psi\in D_{V_Q}\Delta$. Hence $[\xi]$ is Kelvin-regular.

Now suppose that $[\xi]$ is Kelvin-regular. Choose a representative $\xi\in D_V$ and write, as in Lemma~\ref{RightFischerDecompositionLemma},
\begin{equation}
\Delta\xi=\delta_\xi\Delta+R,
\qquad
R\in \kappa[V]\otimes_\kappa Z_{\partial}.
\end{equation}
\noindent We claim that $R=0$. By Kelvin-regularity, write $K\xi K=\psi+B\Delta$ with $\psi\in D_V$ and $B\in D_{V_Q}$. The proof of Lemma~\ref{KelvinRegularityTestingLemma} then gives, verbatim, that $R(Kh)\in K(Q^2\kappa[V])$ for every $h\in Z_\Delta$. If $R\neq 0$, Lemma~\ref{BadTopPieceLemma} gives a null linear form $\ell$ such that for all sufficiently large integers $N$, we have $R(f_N)\notin K(Q^2\kappa[V])$. But by Lemma~\ref{NullKelvinTestFunctionsLemma}, $f_N=\pm K(\ell^N)$ and $\ell^N\in Z_\Delta$, a contradiction. Thus $R=0$, so $\Delta\xi=\delta_\xi\Delta$, and $[\xi]\in\ker(\ad_\Delta)$.

Finally, if $[\xi]\in\ker(\ad_\Delta)$, then $\xi$ is a symmetry of $\Delta$. Lemma~\ref{KernelSurjectionLemma} says precisely that such a class extends uniquely to a global section of $\mathcal H$. This proves the equivalence of the three conditions.
\end{proof}

We may now prove Theorem~\ref{GlobalSectionsTheorem}; most of the work was done in Proposition~\ref{KelvinRegularGlobalSymmetryEquivalence}.

\begin{proof}[Proof of Theorem~\ref{GlobalSectionsTheorem}]
By Proposition~\ref{KelvinRegularGlobalSymmetryEquivalence}, restriction to the standard Bruhat cell identifies $\Gamma(G/P,\mathcal H)$ with the classes $[\xi]\in D_V/D_V\Delta$ satisfying $\ad_\Delta([\xi])=0$. This condition is equivalent to $\xi\in N(D_V\Delta)$, and hence gives
\begin{equation}
\Gamma(G/P,\mathcal H)
\cong
N(D_V\Delta)/D_V\Delta
\subset
D_V/D_V\Delta.
\end{equation}
\noindent Proposition~\ref{KelvinRegularGlobalSymmetryEquivalence} also shows that every global section of $\mathcal H$ is harmonic, so $\Gamma(G/P,\mathcal H)=\Gamma_\Delta(G/P,\mathcal H)$. Finally, applying $\widehat{\tau}$ gives $N(D_V\Delta)/D_V\Delta\cong D_C$, as in \eqref{defwidehattau}. Hence
\begin{equation}
\Gamma(\mathcal H)
=
\Gamma_\Delta(\mathcal H)
\cong
N(D_V\Delta)/D_V\Delta
\cong
D_C.
\end{equation}
\end{proof}

\subsection{\texorpdfstring{Singular Support of the Harmonic Sheaf, Moment Maps, and the Richardson Variety of $P$}{Singular Support of the Harmonic Sheaf, Moment Maps, and The Richardson Variety of P}}\label{SingSupptRichardsonSection}

We now consider the singular support of $\mathcal{H}$. The moment map calculations here will also be used below in the proof that the global twisted differential operators on $G/P$ are generated by the infinitesimal $G$-action. We briefly recall the geometry of $G/P$ and its cotangent bundle $T^*G/P$. Recall that
\begin{equation}
    T^*(G/P) \cong G \times^P (\mathfrak{g}/\mathfrak{p})^* \cong G \times^P \mathfrak{u},
\end{equation}

\noindent where $P$ acts via $\Ad$ on $\mathfrak{u}$. Recall that we identify $\u$, $U$, and $V^*$, and that the quadric cone may be viewed as a closed subset $C \subset \u$. We observe, in fact, the somewhat miraculous fact that the adjoint action of $P$ preserves $C$. This follows because the adjoint action of $U = R_u(P)$ on $\mathfrak{u}$ is trivial, since $U$ is Abelian, while the adjoint action of $L$ on $\mathfrak{u}$ is given by
\begin{equation}
\left(\begin{smallmatrix}
t & 0 & 0\\
0 & h & 0\\
0 & 0 & t^{-1}
\end{smallmatrix}\right)
\left(\begin{smallmatrix}
1 & -\,v^{\mathsf T}J_V & -\,Q(v)\\
0 & I & v\\
0 & 0 & 1
\end{smallmatrix}\right)
\left(\begin{smallmatrix}
t^{-1} & 0 & 0\\
0 & h^{-1} & 0\\
0 & 0 & t
\end{smallmatrix}\right)
=
\left(\begin{smallmatrix}
1 & -\,t\,v^{\mathsf T}J_Vh^{-1} & -\,t^{2}Q(v)\\
0 & I & t\,h v\\
0 & 0 & 1
\end{smallmatrix}\right).
\end{equation}

\noindent We see that this agrees with the commuting composition of the left action of $H = O(Q)$ on $V^*$ with the scaling $\G_m$-action, and hence preserves $C$.

We may therefore consider the conical subvariety
\begin{equation}
    G \times^P C \subset G \times^P \u \cong T^*(G/P).
\end{equation}

\begin{prop}\label{lem:SS-harmonic-contained}
The singular support of $\mathcal{H}$ is
\begin{equation}
\mathrm{SS}(\mathcal{H}) = G\times^{P} C \subset T^*(G/P)\cong G\times^{P}\mathfrak{u}.
\end{equation}

\noindent Moreover, if $\mathcal{M} \in \mathscr{H}$, then
\begin{equation}
    \mathrm{SS}(\mathcal{M}) \subset G\times^{P} C.
\end{equation}
\end{prop}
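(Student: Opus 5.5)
The plan is to compute $\mathrm{SS}(\mathcal H)$ chart by chart, using the good filtration already built in the proof of Proposition~\ref{KernelSurjectionLemma}, and then deduce the statement for $\mathscr H$ from the presentation by copies of $\mathcal H$.

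\textbf{Computing $\mathrm{SS}(\mathcal H)$.} Equip $\mathcal H=\mathcal D_{\mathcal L}/\mathcal J_\Delta$ with the filtration $F_m\mathcal H=F_m\mathcal D_{\mathcal L}/F_m\mathcal J_\Delta$. This filtration is good, since $\mathcal H$ is cyclic and the filtration is induced from the order filtration. Over a Bruhat chart $V_g$ we have $F_m\mathcal J_\Delta=F_{m-2}\mathcal D_{\mathcal L}\cdot\Delta_g$, which was established there. Hence
\[
\gr\mathcal H|_{V_g}\cong \mathcal O_{T^*V_g}/(\sigma_2(\Delta_g)).
\]
Since $\sigma_2(\Delta_g)$ is, in the $g$-translated coordinates, the dual quadratic form $Q^*$ on the cotangent fibers, the singular support over $V_g$ is $V_g\times C$. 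Here we use the identification $T^*V\cong V\times V^*$ and $\sigma(\Delta)=Q^*$.

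\textbf{Identifying this with $G\times^P C$.} We must check that, under $T^*(G/P)\cong G\times^P\mathfrak u$, the fiber over the point $gu^{\op}_vP$ corresponds to the cone $C\subset\mathfrak u\cong V^*$. On the base chart $V$, the fiber of $T^*V$ at $v$ is identified with $\mathfrak u$ via $\Ad(u^{\op}_v)$, as in \eqref{unpackmoment}. The translation differential $dv$ is identified with $\mathfrak u$ at the base point, and translation by $u_v^{\op}$ acts on $V$ by translation, so its cotangent lift fixes the fibers of $V\times V^*$. Thus the fiber $\{v\}\times C$ corresponds to $[u^{\op}_v,C]$.

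On other charts we argue by $G$-equivariance. The zero loci glue: $\mathcal J_\Delta$ is $G$-equivariant by construction, and on overlaps $\Delta_g$ and $\Delta_h$ differ by a unit (Proposition~\ref{HarmonicIdealProp}). Using $\Ad(P)C=C$, established just above, the well-defined subvariety $G\times^PC$ is exactly the union of these local loci. This gives the equality $\mathrm{SS}(\mathcal H)=G\times^P C$. The step I expect to need the most care is confirming the fiber identification with the correct duality and sign convention. Since $C$ is stable under $\pm$ and under $H$, such sign ambiguities should not matter.

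\textbf{The case $\mathcal M\in\mathscr H$.} Take the surjection $\mathcal H^b\twoheadrightarrow\mathcal M$ given by the finite presentation. Singular support is monotone under quotients, since it is additive in short exact sequences of coherent $\mathcal D$-modules. It is also additive under direct sums. Therefore
\[
\mathrm{SS}(\mathcal M)\subset\mathrm{SS}(\mathcal H^b)=G\times^PC.
\]
The singular support is conical and closed by general theory, which gives the second assertion.
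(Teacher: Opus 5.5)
Your proposal is correct and follows the paper's route. On each Bruhat chart $V_g$ you compute $\gr\mathcal H$ as $\gr\mathcal D_{\mathcal L}/(\sigma_2(\Delta_g))$, identify its zero locus with $V_g\times C$, glue $G$-equivariantly to get $G\times^P C$, and then bound $\mathrm{SS}(\mathcal M)$ using the surjection $\mathcal H^b\twoheadrightarrow\mathcal M$; your explicit check that the fiber $\{v\}\times C$ corresponds to $[u^{\op}_v,C]$ spells out a step the paper leaves implicit.
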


\begin{proof}
On each Bruhat chart $V_g$ one has $\mathcal{H}|_{V_g}\cong \left(\mathcal{D}_{\mathcal{L}}|_{V_g}\right)\,/\,\left(\mathcal{D}_{\mathcal{L}}|_{V_g}\Delta_g\right)$. Choose the order filtration on $\mathcal{D}_{\mathcal{L}}(V_g)$ and equip $\mathcal{H}(V_g)$ with the induced good filtration. Then
\begin{equation}
\gr\bigl(\mathcal{H}(V_g)\bigr)\cong \gr\bigl(\mathcal{D}_{\mathcal{L}}(V_g)\bigr)\big/\gr\bigl(\mathcal{D}_{\mathcal{L}}(V_g)\bigr)\sigma(\Delta_g),
\end{equation}

\noindent so its support, and hence the singular support of $\mathcal{H}|_{V_g}$, is exactly the hypersurface cut out by $\sigma(\Delta_g)$. Under the identification $T^*(G/P)|_{V_g}\cong V_g\times \mathfrak{u}$, this hypersurface is precisely $V_g\times C$, where $C\subset \mathfrak{u}$ is the isotropic cone of $Q^*$. Since these local descriptions glue $G$-equivariantly, it follows that $\mathrm{SS}(\mathcal{H})=G\times^P C$.

Now let $\mathcal M\in\mathscr H$. By definition, $\mathcal M$ admits a finite $(\mathcal H,D_C)$-presentation, and in particular there is a surjection
\begin{equation}
\mathcal H^b \longrightarrow \mathcal M \longrightarrow 0.
\end{equation}
\noindent Restricting to $V_g$, we obtain a surjection
\begin{equation}
\left(\mathcal H|_{V_g}\right)^b \longrightarrow \mathcal M|_{V_g} \longrightarrow 0.
\end{equation}
Thus the singular support of $\mathcal M|_{V_g}$ is contained in the singular support of $\left(\mathcal H|_{V_g}\right)^b$. Since finite direct sums do not change singular support, it is enough to bound the singular support of $\mathcal H|_{V_g}$. Therefore
\begin{equation}
\mathrm{SS}(\mathcal{M}|_{V_g})\subset \mathrm{SS}(\mathcal{H}|_{V_g})=V_g\times C.
\end{equation}

\noindent Since this holds on every Bruhat chart, we conclude that $\mathrm{SS}(\mathcal{M})\subset G\times^P C$.
\end{proof}

This geometric constraint on singular support may be viewed as the quasiclassical and microlocal shadow of harmonicity. It is also closely related to the Borho--Brylinski philosophy that characteristic varieties on homogeneous spaces should be studied via the moment map to $\g$ \cite{BorhoBrylinski1982DiffOpsI,BorhoBrylinski1989DiffOpsII,BorhoBrylinski1985DiffOpsIII}. In the present case, if
\begin{equation}
    \mu:T^*(G/P)\cong G\times^P \u \longrightarrow \g
\end{equation}

\noindent denotes the moment map, then $G\times^P C$ is naturally a $G$-stable conical subvariety of $T^*(G/P)$ lying over the minimal nilpotent orbit closure $\overline{\mathbb{O}}_{\min}$. In fact we have:

\begin{prop}\label{MomentMapPreimageProp}
One has
\begin{equation}
    \mu^{-1}(\overline{\mathbb{O}}_{\min})=G\times^P C.
\end{equation}

\noindent Moreover,
\begin{equation}
    \mu^{-1}(\mathbb{O}_{\min})=G\times^P (C\setminus \{0\}).
\end{equation}
\end{prop}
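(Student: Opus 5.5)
Because $\mu$ is $G$-equivariant, $\mu([g,\xi])=\Ad(g)\xi$, and both $\overline{\mathbb O}_{\min}$ and $\mathbb O_{\min}$ are $\Ad(G)$-stable, the statement reduces to the fiber over the base point $eP$. Concretely, $[g,\xi]\in\mu^{-1}(\overline{\mathbb O}_{\min})$ if and only if $\xi\in\overline{\mathbb O}_{\min}$. So it suffices to prove
\begin{equation*}
\mathfrak u\cap\overline{\mathbb O}_{\min}=C,\qquad \mathfrak u\cap\mathbb O_{\min}=C\setminus\{0\},
\end{equation*}
where $\mathfrak u\cong V^*$ is identified with matrices $n^+_\nu$ as in \eqref{unpackmoment}. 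Since $C$ is $P$-stable (shown just above), $G\times^P C$ is well defined and these identities give the claim immediately.

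For the first identity, I would compute $M=n^+_\nu$ for an arbitrary $\nu\in V^*$, writing $w=\nu^\sharp$, and test it against the presentation $\overline{\mathbb O}_{\min}=\{M: M^2=0,\ \rk M\le 2\}$ from Proposition~\ref{CotBundlePresent}. In block form $M$ has top row $(0,-w^{\mathsf T}J_V,0)$, middle column $(0,0,w)^{\mathsf T}$, and zeros elsewhere. It therefore has rank $\le 2$ automatically, and rank exactly $2$ when $w\ne 0$, because its image is spanned by $e_+$ and $w$.

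Squaring, the only possibly nonzero entry is the $(1,3)$ corner, equal to $-w^{\mathsf T}J_Vw=-B(w,w)=-2Q^*(\nu)$. Hence $M^2=0$ if and only if $Q^*(\nu)=0$, that is, $\nu\in C$. This proves $\mathfrak u\cap\overline{\mathbb O}_{\min}=C$.

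For the second identity, the rank computation shows that every nonzero $\nu\in C$ gives a square-zero rank-$2$ element. Such an element has Jordan type $(2,2,1^{n-2})$, so it lies in $\mathbb O_{\min}$, while $\nu=0$ gives $0\notin\mathbb O_{\min}$. Equivalently, $n^+_\nu$ for $\nu\in C^o$ is exactly the representative used in the proof of Proposition~\ref{CotBundDescr}.

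The only real subtlety is the reduction from $G$-translates to the base point. This requires that the identification $(\mathfrak g/\mathfrak p)^*\cong\mathfrak p^\perp\cong\mathfrak u$ via the Killing form be exactly the one used to define $C\subset\mathfrak u$, and that orbit membership depends only on $\xi$ and not on the representative $g$. Both hold by construction, so I expect no serious obstacle. The remaining care is bookkeeping: the trace form versus the Killing form differ by a scalar, which does not affect conical $\Ad$-stable sets.
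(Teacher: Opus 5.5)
Your proposal is correct and follows the paper's proof essentially step for step. You reduce by $G$-equivariance to $\mathfrak u\cap\overline{\mathbb O}_{\min}=C$, observe that the only possibly nonzero entry of $(n^+_\nu)^2$ is the corner $-2Q^*(\nu)$, use that nonzero square-zero rank-$2$ elements lie in $\mathbb O_{\min}$ (Proposition~\ref{CotBundlePresent}), and get the second identity from $0\notin\mathbb O_{\min}$. One minor slip: the block $w$ sits in the middle block row of the last block column, not in a ``middle column''.
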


\begin{proof}
Since $\mu([g,X])=\Ad(g)X$ and $\overline{\mathbb{O}}_{\min}$ is $G$-stable, we have
\begin{equation}
    \mu^{-1}(\overline{\mathbb{O}}_{\min})
    =
    G\times^P\bigl(\u\cap \overline{\mathbb{O}}_{\min}\bigr).
\end{equation}

\noindent Thus it suffices to show that $\u\cap \overline{\mathbb{O}}_{\min}=C$. Let $v\in \u\cong V^*$, and write
\begin{equation}
    X_v=
    \left(\begin{smallmatrix}
        0 & -\,v^{\mathsf T}J_V & 0\\
        0 & 0 & v\\
        0 & 0 & 0
    \end{smallmatrix}\right)
    \in \u\subset \g.
\end{equation}

\noindent A direct computation gives
\begin{equation}
    X_v^2=
    \left(\begin{smallmatrix}
        0 & 0 & -\,v^{\mathsf T}J_Vv\\
        0 & 0 & 0\\
        0 & 0 & 0
    \end{smallmatrix}\right)
    =
    \left(\begin{smallmatrix}
        0 & 0 & -\,2Q(v)\\
        0 & 0 & 0\\
        0 & 0 & 0
    \end{smallmatrix}\right),
\end{equation}

\noindent so $v\in C$ if and only if $X_v^2=0$. Now if $X_v\in \overline{\mathbb{O}}_{\min}$, then $X_v^2=0$, since the condition $X^2=0$ is Zariski closed and every element of $\mathbb{O}_{\min}$ is square-zero by Proposition~\ref{CotBundlePresent}. Thus $v\in C$, so $\u\cap \overline{\mathbb{O}}_{\min}\subset C$.

Conversely, let $0\neq v\in C$. Then $X_v^2=0$, and $X_v$ has rank $2$: indeed, $X_v(e_-)=v$, while if $w\in V$ satisfies $B(v,w)=1$, then $X_v(w)=-e_+$. Hence $\mathrm{im}(X_v)$ contains $v$ and $e_+$, and is contained in $\kappa v\oplus \kappa e_+$, so $\rk(X_v)=2$. By Proposition~\ref{CotBundlePresent}, it follows that $X_v\in \mathbb{O}_{\min}$. Thus $C\setminus\{0\}\subset \u\cap \mathbb{O}_{\min}$, and therefore $C\subset \u\cap \overline{\mathbb{O}}_{\min}$.

This proves $\u\cap \overline{\mathbb{O}}_{\min}=C$, and hence
\begin{equation}
    \mu^{-1}(\overline{\mathbb{O}}_{\min})=G\times^P C.
\end{equation}

\noindent The second statement follows immediately, since $\mu([g,0])=0$ and $0\notin \mathbb{O}_{\min}$.
\end{proof}

We now prove that the global twisted differential operators on $G/P$ are generated by the infinitesimal $G$-action. This argument concerns the full cotangent bundle $T^*(G/P)$, not only the conical subvariety $G\times^P C$ appearing as the singular support of $\mathcal H$. We note that the next few lemmata are general theory about arbitrary flag varieties. They allow us to reduce the question of surjectivity to the associated graded.

Put $X:=G/P$.

\begin{lem}\label{FilteredSurjectivityCriterionLemma}
Let
\begin{equation}
\Phi:\mathcal U(\g)\longrightarrow \Gamma(X,\mathcal D_{\mathcal L})
\end{equation}
\noindent be the map obtained by differentiating the $G$-action on $\mathcal L$. Then the associated graded of $\Phi$ gives a map 
\begin{equation}
\gr\,\Phi:\Sym(\g)\longrightarrow \Gamma(X,\Sym \, T_X).
\end{equation}
\noindent If $\gr\, \Phi$ is surjective, then $\Phi$ is surjective.
\end{lem}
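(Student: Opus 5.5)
The plan is the standard filtered-lifting argument. First we check that $\Phi$ is a map of filtered algebras. Give $\mathcal U(\g)$ its PBW filtration $\mathcal U_{\le m}(\g)$ and $\Gamma(X,\mathcal D_{\mathcal L})$ the filtration induced by the order filtration $F_m\mathcal D_{\mathcal L}$. Each $\xi\in\g$ maps to a first-order twisted operator; locally this is visible from Proposition~\ref{ConfActionProp}, whose formula for $\phi(\xi)$ has order $\le 1$. Hence $\Phi(\mathcal U_{\le m})\subset\Gamma(X,F_m\mathcal D_{\mathcal L})$. Passing to graded pieces gives maps
\[
\Sym^m(\g)\cong \mathcal U_{\le m}/\mathcal U_{\le m-1}\longrightarrow \Gamma(X,F_m\mathcal D_{\mathcal L})/\Gamma(X,F_{m-1}\mathcal D_{\mathcal L})\hookrightarrow \Gamma(X,\gr_m\mathcal D_{\mathcal L})=\Gamma(X,\Sym^m T_X).
\]
The injection on the right comes from left exactness of $\Gamma$ applied to $0\to F_{m-1}\to F_m\to\Sym^mT_X\to 0$; we use $\gr\mathcal D_{\mathcal L}\cong\Sym T_X$ for a TDO on a smooth variety. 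The composite is $\gr\Phi$, and it sends $\xi$ to the vector field $X_\xi$, so on $\Sym\g$ it is the comorphism of the moment map.

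Next, the key observation is that surjectivity of $\gr\Phi$ forces the right-hand injection to be an isomorphism in every degree. It then also shows that $\Sym^m(\g)$ surjects onto $\Gamma(F_m)/\Gamma(F_{m-1})$.

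Finally, we lift by induction on $m$. Suppose $\Gamma(X,F_{m-1}\mathcal D_{\mathcal L})\subset\operatorname{im}\Phi$; the base case $m=-1$ is trivial since $F_{-1}=0$. Take $D\in\Gamma(X,F_m\mathcal D_{\mathcal L})$. Its symbol $\sigma_m(D)$ lies in the image of $\Sym^m\g$, so we can choose $u\in\mathcal U_{\le m}$ with $\sigma_m(\Phi(u))=\sigma_m(D)$. Then $D-\Phi(u)\in\Gamma(X,F_{m-1})$, which lies in $\operatorname{im}\Phi$ by induction. The order filtration is exhaustive on global sections, because $X$ is quasi-compact and the $F_m$ are coherent, so every global section lies in some $\Gamma(F_m)$. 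This gives surjectivity of $\Phi$.

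The only point requiring care is the identification of the graded pieces, specifically the injectivity $\Gamma(F_m)/\Gamma(F_{m-1})\hookrightarrow\Gamma(\Sym^mT_X)$. This injectivity is all we need, and no vanishing of $H^1(X,F_{m-1})$ is required, since we assume surjectivity onto the larger space $\Gamma(\Sym^mT_X)$.
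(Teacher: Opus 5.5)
Your proposal is correct and follows the same route as the paper: $\Phi$ is filtered, you use surjectivity of $\gr\Phi$ to match the principal symbol of $D$ with that of some $\Phi(u)$, $u\in\mathcal U_{\le m}(\g)$, and you induct on the order. Your extra remarks make explicit two details the paper leaves implicit, namely the injection $\Gamma(X,F_m\mathcal D_{\mathcal L})/\Gamma(X,F_{m-1}\mathcal D_{\mathcal L})\hookrightarrow\Gamma(X,\Sym^m T_X)$ (which is why no $H^1$-vanishing is needed) and the exhaustiveness of the filtration on global sections.
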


\begin{proof}
The map $\Phi$ is filtered for the PBW filtration on $\mathcal U(\g)$ and the order filtration on $\Gamma(X,\mathcal D_{\mathcal L})$.

The fact that $\gr\,\Phi$ has codomain $\Gamma(X,\Sym\,T_X)$ can be seen as follows. The sheaf $\mathcal D_{\mathcal L}$ is filtered by order, and its associated graded sheaf is canonically identified with $\Sym\,T_X$. Indeed, locally trivializing the line bundle $\mathcal L$ identifies $\mathcal D_{\mathcal L}$ with the usual sheaf of differential operators $\mathcal D_X$, and changes of trivialization conjugate operators by multiplication by an invertible function. Such conjugations only affect lower-order terms, and hence act trivially on principal symbols. Therefore the associated graded sheaf is independent of the twist and is simply $\Sym\,T_X$.

Let $D\in \Gamma(X,\mathcal D_{\mathcal L})$ have order $r$. By surjectivity of $\gr\,\Phi$, there is some $u\in F_r\,\mathcal U(\g)$ such that the principal symbol of $\Phi(u)$ agrees with the principal symbol of $D$. Hence $D-\Phi(u)$ has order $<r$. Induction on $r$ proves the claim.
\end{proof}

\begin{lem}\label{MomentMapAssociatedGradedLemma}
Under the identification
\begin{equation}
\Gamma(X,\Sym \, T_X)=\Gamma(T^*X,\mathcal O_{T^*X}),
\end{equation}
\noindent the associated graded map
\begin{equation}
\Sym(\g)\longrightarrow \Gamma(X,\Sym \, T_X)
\end{equation}
\noindent is the pullback on functions induced by the moment map
\begin{equation}
\mu:T^*X\longrightarrow \g^*.
\end{equation}
\end{lem}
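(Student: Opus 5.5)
The claim is local on $X$ and compatible with $G$-equivariance, so I would reduce it to a symbol computation for a single Lie algebra element. By construction, $\Phi(\xi)$ for $\xi\in\g$ is the first-order twisted differential operator obtained by differentiating the $G$-action on $\mathcal L$. Its principal symbol is the vector field $a(\xi)$ on $X$, the infinitesimal action. The twisting by $\eta$ only adds the zeroth-order term, exactly as in Proposition~\ref{ConfActionProp}, and Lemma~\ref{FilteredSurjectivityCriterionLemma} shows that such terms do not affect symbols. Since $\gr\,\Phi$ is a homomorphism of graded commutative algebras, and $\Sym(\g)$ is generated by $\g$ in degree one, it suffices to check the identification on generators $\xi\in\g\subset\Sym^1(\g)$.

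First I would check that $\gr\,\Phi$ is multiplicative. The symbol of $\Phi(\xi_1\cdots\xi_r)=\Phi(\xi_1)\cdots\Phi(\xi_r)$ in degree $r$ is the product of the symbols $\sigma_1(\Phi(\xi_i))$. Reordering the $\xi_i$ in $\mathcal U(\g)$ changes the element only by terms of lower PBW degree, so the map $\Sym(\g)=\gr\,\mathcal U(\g)\to\Gamma(X,\Sym T_X)$ is well defined and factors through symbols of the $\xi_i$.

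Second, under $\Gamma(X,\Sym^1T_X)=\{\text{fiberwise linear functions on }T^*X\}$, a vector field $a(\xi)$ corresponds to the function $(x,p)\mapsto\langle p,a(\xi)_x\rangle$. On the other hand, the pullback $\mu^*\xi$, with $\xi$ viewed as a linear function on $\g^*$, is $(x,p)\mapsto\langle\mu(x,p),\xi\rangle$. The defining property of the moment map for a cotangent lift is $\langle\mu(x,p),\xi\rangle=\langle p,a(\xi)_x\rangle$. To be concrete, I would verify this in the model $T^*X\cong G\times^P\u$ with $\mu([g,Y])=\Ad^*(g)Y$: at $x=gP$ one has $a(\xi)_x=d\pi\bigl(\Ad(g^{-1})\xi\bigr)$ modulo $\mathfrak p$, and pairing with $Y\in(\g/\mathfrak p)^*$ gives $\langle Y,\Ad(g^{-1})\xi\rangle=\langle\Ad^*(g)Y,\xi\rangle$. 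On the big cell this matches \eqref{unpackmoment} and the dual formula \eqref{momentmapcoord}.

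I expect the main obstacle to be keeping the signs consistent. The left action differentiates to $-\xi$ on functions, as the proof of Proposition~\ref{ConfActionProp} shows with $\phi(\xi_\mu)=-\partial_\mu$, so $a(\xi)$ may be minus the naive vector field. The conventions for the symbol map and for $\Ad^*$ must then be fixed so that the identification holds exactly rather than up to the automorphism $\xi\mapsto-\xi$ of $\Sym(\g)$. In any case, that sign does not affect the image of $\gr\,\Phi$, which is all that the surjectivity application requires.
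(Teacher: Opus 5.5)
Your proposal is correct and follows the paper's own argument: identify the principal symbol of $\Phi(\xi)$ with the pairing of a covector against the vector field generated by $\xi$, recognize this as $\langle\mu,\xi\rangle$, and extend to all of $\Sym(\g)$ by multiplicativity of symbols. Your sign caveat is also accurate. With the convention $(gf)(x)=f(g^{-1}x)$ one gets $\phi(\xi_\mu)=-\partial_\mu$, so the symbol is minus the pairing in \eqref{momentmapcoord}; the paper's one-line proof absorbs this into its conventions, and it is harmless for Proposition~\ref{UGSurjectsOntoGlobalTDOLemma}.
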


\begin{proof}
For $\xi\in\g$, the first-order operator $\Phi(\xi)$ has principal symbol equal to the function on $T^*X$ obtained by pairing a covector with the vector field on $X$ generated by $\xi$. This is precisely the Hamiltonian function $\langle \mu,\xi\rangle$. Since principal symbols multiply, the assertion follows for all of $\Sym(\g)$.
\end{proof}

We now specialize the associated-graded argument to the smooth quadric hypersurface $X:=G/P$. Put $W:=V^+$ and $N:=\dim W=2k+2$, so that $X$ is the smooth quadric parametrizing isotropic lines in $W$.

\begin{lem}\label{QuadricCotangentMomentMapModelLemma}
A point of $T^*X$ is represented by a pair $(x,y)\in W\times W$ such that $Q^+(x)=0$, $B^+(x,y)=0$, and $x\neq 0$, modulo $(x,y)\sim (a x,a^{-1}y)$ for $a\in\mathbb G_m$ and $(x,y)\sim (x,y+tx)$ for $t\in\mathbb G_a$. Under this presentation, the moment map $\mu:T^*X\to\g^*$ is given by $\mu([x,y])=x\wedge y\in\mathfrak{so}(W)\cong\g^*$, where $(x\wedge y)(z)=B^+(x,z)y-B^+(y,z)x$.
\end{lem}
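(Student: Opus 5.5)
The plan is to build $T^*X$ from the standard description of the tangent space of a projective quadric, and then read off the moment map from the explicit formula for the infinitesimal action.

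First, tangent and cotangent spaces. A point of $X$ is a line $\kappa x$ with $x\neq 0$ and $Q^+(x)=0$. For a point of $\P(W)$ we have $T_{[x]}\P(W)\cong\Hom(\kappa x,W/\kappa x)$. The quadric is the zero locus of $Q^+$, whose differential at $x$ is $B^+(x,-)$, so $T_{[x]}X=\Hom(\kappa x,x^\perp/\kappa x)$. Here $x^\perp\supset\kappa x$ because $x$ is isotropic.

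Dualizing, and using $B^+$ to identify $(x^\perp/\kappa x)^*\cong x^\perp/\kappa x$, we get $T^*_{[x]}X\cong \kappa x\otimes(x^\perp/\kappa x)$. The right side is nondegenerate, since $x^\perp/\kappa x$ carries the induced nondegenerate form. Concretely, a cotangent vector is a pair $(x,y)$ with $y\in x^\perp$. The $\mathbb G_m$-relation $(x,y)\sim(ax,a^{-1}y)$ accounts for the tensor with the line $\kappa x$, and the $\mathbb G_a$-relation $(x,y)\sim(x,y+tx)$ accounts for the quotient by $\kappa x$.

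The pairing with a tangent vector $\varphi\in\Hom(\kappa x,x^\perp/\kappa x)$ is $\langle(x,y),\varphi\rangle=B^+(y,\varphi(x))$. This is well defined for two reasons:
\begin{itemize}
\item adding $tx$ to $y$ changes it by $tB^+(x,\varphi(x))=0$, because $\varphi(x)\in x^\perp$;
\item it is invariant under the scaling $(x,y)\mapsto(ax,a^{-1}y)$, because $\varphi$ is linear.
\end{itemize}
The quotient is free, so this gives a vector bundle of rank $\dim X$ that is canonically $T^*X$. A dimension count confirms it: $(2N-2)-2=2\dim X$.

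Second, the moment map. For $\xi\in\g=\mathfrak{so}(W)$, the generated vector field at $[x]$ is $\varphi_\xi(x)=\xi x \bmod \kappa x$. Note that $\xi x\in x^\perp$, since $B^+(x,\xi x)=0$ by skewness. Hence $\langle\mu([x,y]),\xi\rangle=B^+(y,\xi x)$.

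We then identify $\g^*$ with $\g$ via the trace pairing (a fixed multiple of the Killing form), as is done elsewhere in the paper. A short computation shows that $\tr\bigl((x\wedge y)\xi\bigr)$ is a nonzero constant multiple of $B^+(y,\xi x)$. Expanding $(x\wedge y)\xi z=B^+(x,\xi z)y-B^+(y,\xi z)x$, taking the trace, and using skewness gives $-2B^+(y,\xi x)$. With the normalization of the identification $\mathfrak{so}(W)\cong\g^*$ chosen to absorb this constant and sign, as was done for the middle block in the symbol computation of Section~\ref{AmbientCorrectionSection}, we get $\mu([x,y])=x\wedge y$.

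As a consistency check, $x\wedge y$ is invariant under both relations:
\begin{itemize}
\item $x\wedge(y+tx)=x\wedge y$, because $x\wedge x=0$;
\item $(ax)\wedge(a^{-1}y)=x\wedge y$, by bilinearity.
\end{itemize}
The main (mild) obstacle is bookkeeping: the sign and scalar in the identification $\g\cong\g^*$, and checking that the vector field generated by $\xi$ is $+\xi x$ rather than $-\xi x$ under the left action convention. Both only affect the result up to a universal nonzero scalar, which I will fix by testing on $x=e_+$ against the earlier formula \eqref{unpackmoment}.
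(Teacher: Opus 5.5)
Your proposal is correct and is essentially the paper's own argument: identify $T_{[x]}X$ with $\Hom(\kappa x,x^\perp/\kappa x)$, represent covectors by $y\in x^\perp$ modulo $\kappa x$ together with the $a$-scaling, and pair with the generated vector field $\xi x$ to obtain $B^+(y,\xi x)$. Your trace computation $\tr\bigl((x\wedge y)\xi\bigr)=-2B^+(y,\xi x)$ makes explicit the normalization that the paper leaves inside ``our chosen invariant identification.'' That normalization is the pairing $-\tfrac12\tr$, which is also the one implicit in \eqref{unpackmoment}, and your proposed base-point test works: $e_+\wedge(0,\nu^\sharp,0)=n^+_\nu$.
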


\begin{proof}
If $[x]\in X$, then $T_{[x]}X\cong \Hom(\kappa x,x^\perp/\kappa x)$. Using $B^+$ to identify $W$ with $W^*$, a cotangent vector is represented by $y\in x^\perp$, modulo $\kappa x$. Rescaling $x$ by $a$ rescales $y$ by $a^{-1}$, and replacing $y$ by $y+tx$ gives the same covector. This gives the stated presentation.

For $\xi\in\g=\mathfrak{so}(W)$, the induced tangent vector at $[x]$ is the class of $\xi x$ in $x^\perp/\kappa x$. Pairing it with the covector represented by $y$ gives $B^+(y,\xi x)$, which is the Hamiltonian function corresponding to $x\wedge y$ under our chosen invariant identification $\mathfrak{so}(W)\cong\mathfrak{so}(W)^*$. Hence $\mu([x,y])=x\wedge y$.
\end{proof}

\begin{lem}\label{QuadricMomentRichardsonBirationalLemma}
The image of $\mu:T^*X\to\g^*$ is the Richardson orbit closure $\overline{\mathbb O}_{(3,1^{N-3})}\subset\mathfrak{so}(W)\cong\g^*$. Moreover, $\mu:T^*X\to\overline{\mathbb O}_{(3,1^{N-3})}$ is birational.
\end{lem}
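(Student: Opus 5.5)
We work entirely with the model of Lemma~\ref{QuadricCotangentMomentMapModelLemma}: $\mu([x,y])=x\wedge y$, where $x\neq0$ is isotropic and $y\in x^\perp$. The plan has three steps: determine the image pointwise, match dimensions, and compute a generic fiber.

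\textbf{Image.} Fix such a pair and write $M:=x\wedge y$. If $y\in\kappa x$, then $M=0$. Otherwise $M$ has rank $2$ and image $\Span(x,y)$, and a direct computation gives
\begin{equation}
M^2z=B^+(x,z)\bigl(B^+(x,y)y-B^+(y,y)x\bigr)-B^+(y,z)\bigl(B^+(x,x)y-B^+(x,y)x\bigr)=-2Q^+(y)B^+(x,z)\,x,
\end{equation}
using $B^+(x,x)=0=B^+(x,y)$. Likewise $M^3=0$.

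The analysis then splits into two cases.
\begin{itemize}
\item If $Q^+(y)=0$, then $\Span(x,y)$ is an isotropic $2$-plane, $M^2=0$, and $M$ lies in $\mathbb O_{\min}$ by Proposition~\ref{CotBundlePresent}.
\item If $Q^+(y)\neq 0$, then $M^2\neq0$ has rank $1$ and $M^3=0$. Since $M$ has rank $2$, its Jordan type has exactly $N-2$ blocks, so it is $(3,1^{N-3})$.
\end{itemize}
Hence the image is contained in $\overline{\mathbb O}_{(3,1^{N-3})}=\mathbb O_{(3,1^{N-3})}\cup\overline{\mathbb O}_{\min}$. For the reverse inclusion, every element of $\mathbb O_{(3,1^{N-3})}$ is $G$-conjugate to an explicit $x\wedge y$ with $Q^+(y)\neq0$; for instance take $x=e_+$ and $y$ anisotropic in $V$. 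Every element of $\mathbb O_{\min}$ is of the form $x\wedge y$ with $\Span(x,y)$ an isotropic plane. So the image is exactly the closure. We can also cite the standard fact that $\mu$ is proper, since $X$ is projective and $\mu$ factors through $X\times\g^*$, so its image is closed and $G$-stable.

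\textbf{Dimension.} By the standard formula for orthogonal nilpotent orbits, $\dim\mathbb O_{(3,1^{N-3})}=2(N-2)=2\dim X$. This also confirms that the orbit is Richardson for $P$, since $\mu$ is generically finite onto a dense orbit of dimension $\dim T^*X$.

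\textbf{Birationality.} We show the fiber over a point of the open orbit is a single point; this is the main step. Given $M=x\wedge y$ with $Q^+(y)\neq0$, the image of $M^2$ is the line $\kappa x$, so $[x]\in X$ is recovered uniquely from $M$. Now fix a representative $x$. Suppose $x\wedge y=x\wedge y'$. Then $x\wedge(y-y')=0$, which forces $y-y'\in\kappa x$ by the rank argument of Proposition~\ref{CotBundDescr}. Therefore $[x,y]=[x,y']$ in $T^*X$, and the fiber is one point.

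Since $\kappa$ has characteristic $0$, a dominant morphism of irreducible varieties of equal dimension that is injective on a dense open set is birational. Here $T^*X$ is irreducible, so this applies. Alternatively, the differential can be checked to be injective at a point of the open orbit, exactly as in Proposition~\ref{CotBundDescr}.

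The only delicate point is care with rationality when $\kappa$ is not algebraically closed. We would pass to $\bar\kappa$ for the orbit statements, which are defined over $\mathbb Q$, and descend the conclusion.
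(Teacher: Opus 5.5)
Your proposal is correct and follows essentially the same route as the paper: you read off the Jordan type of $M=x\wedge y$ from $M^3=0$, $\operatorname{rk}(M)\le 2$, and $M^2\neq 0$ when $Q^+(y)\neq 0$ to place the image in $\overline{\mathbb O}_{(3,1^{N-3})}$; you get equality from $G$-stability and properness, with your explicit preimages an equally valid substitute; and you prove birationality by recovering $[x]$ from $\operatorname{im}(M^2)=\kappa x$ and then $y$ modulo $\kappa x$. The dimension count and the characteristic-$0$ degree argument you add just make explicit what the paper leaves implicit in ``the generic fiber is a single point.''
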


\begin{proof}
Let $A=x\wedge y$, with $Q^+(x)=0$ and $B^+(x,y)=0$. Then $A(x)=0$, $A(y)=-B^+(y,y)x$, and $\operatorname{im}(A)\subset \Span\{x,y\}$. Hence $A^3=0$ and $\rk(A)\leq 2$. If $Q^+(y)\neq 0$, then $x,y$ are linearly independent, $A^2\neq 0$, and $\rk(A)=2$, so $A$ has Jordan type $(3,1^{N-3})$. Thus the image meets $\mathbb O_{(3,1^{N-3})}$.

Conversely, every point in the image has $A^3=0$ and $\rk(A)\leq 2$, so its Jordan type is $(3,1^{N-3})$, $(2,2,1^{N-4})$, or $0$, hence lies in $\overline{\mathbb O}_{(3,1^{N-3})}$. Since $T^*X\cong G\times^P\mathfrak u$ embeds as a closed subvariety of $X\times\g^*$ and $\mu$ is the restriction of the projection to $\g^*$, the map $\mu$ is proper onto its image. As $X$ is projective, the image is closed. Therefore the image is exactly $\overline{\mathbb O}_{(3,1^{N-3})}$.

It remains to prove birationality. On the dense orbit $\mathbb O_{(3,1^{N-3})}$, one has $A^2\neq 0$, and $\operatorname{im}(A^2)=\kappa x$. Thus $A$ determines $[x]\in X$. Once $[x]$ is known, the equality $A=x\wedge y$ determines $y$ modulo $\kappa x$: indeed, if $x\wedge y=x\wedge y'$, then $x\wedge(y-y')=0$, so $B^+(y-y',z)=0$ for all $z\in x^\perp$, and hence $y-y'\in (x^\perp)^\perp=\kappa x$. Thus the generic fiber is a single point.
\end{proof}

\begin{lem}\label{QuadricCotangentMomentFunctionsGenerateLemma}
The pullback map $\kappa[\g^*]=\Sym(\g)\to \Gamma(T^*X,\mathcal O_{T^*X})$ induced by the moment map is surjective.
\end{lem}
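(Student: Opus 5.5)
The plan is to factor the pullback $\mu^*$ through the Richardson orbit closure and use normality. By Lemma~\ref{QuadricMomentRichardsonBirationalLemma}, $\mu$ maps $T^*X$ properly and birationally onto $Y:=\overline{\mathbb O}_{(3,1^{N-3})}$. So $\mu^*$ factors as
\[
\kappa[\g^*]\twoheadrightarrow \kappa[Y]\longrightarrow \Gamma(T^*X,\mathcal O_{T^*X}),
\]
where the first map is restriction of functions to the closed subvariety $Y$. That restriction is surjective, so it suffices to show that the second map is an isomorphism.

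For this, I would Stein-factorize the proper birational morphism $\mu:T^*X\to Y$. Since $T^*X$ is smooth, hence normal and integral, we have $\mu_*\mathcal O_{T^*X}=\mathcal O_{\widetilde Y}$, where $\widetilde Y\to Y$ is finite and birational. Hence $\widetilde Y$ is the normalization of $Y$, and
\[
\Gamma(T^*X,\mathcal O)=\kappa[\widetilde Y].
\]
The statement therefore reduces to showing that $Y$ is normal. I would cite the Kraft--Procesi results on normality of nilpotent orbit closures in orthogonal Lie algebras. For $\mathfrak{so}_N$, the closure of the orbit with partition $(3,1^{N-3})$ has no non-normal branching in their combinatorial criterion, since the only degenerations are to $(2,2,1^{N-4})$ and then to $0$. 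As a consistency check, $\overline{\mathbb O}_{\min}$ is known to be normal, as used in Proposition~\ref{CotBundDescr}.

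Alternatively, one can argue directly. The variety $Y=\{A\in\mathfrak{so}(W):A^3=0,\ \rk A\le 2\}$ has the boundary $\overline{\mathbb O}_{\min}$, which has codimension $2$: $\dim Y=\dim T^*X=2N-4$, while $\dim\mathbb O_{\min}=2N-6$. So normality follows from Serre's criterion once one shows $Y$ is Cohen--Macaulay, or at least $S_2$, and that $Y$ is smooth in codimension one. The latter holds because the singular locus is contained in $\overline{\mathbb O}_{\min}$, of codimension $2$. For the $S_2$ property, I would try to exhibit $Y$ as a quotient: $Y$ is the image of $\{(x,y):Q^+(x)=0,\ B^+(x,y)=0\}$ under $\wedge$, and the ring $\Gamma(T^*X,\mathcal O)$ is the invariant ring of the $\G_m\ltimes\G_a$ action on this complete-intersection affine variety. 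Classical invariant theory for $\O(W)$ (the first fundamental theorem for pairs of vectors) then suggests that these invariants are generated by the entries of $x\wedge y$. Concretely, this means computing the invariants of $\G_a$ first, in the spirit of Proposition~\ref{invfunctioniso}, and then taking degree-zero parts under $\G_m$.

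I expect the main obstacle to be exactly this normality or generation step, since birationality alone gives only an inclusion into the normalization. My first attempt would be the Kraft--Procesi citation. Failing that, I would use the Hesselink/Kempf route: the vanishing $H^i(T^*X,\mathcal O)=0$ for $i>0$ (a Kempf/Broer-type collapsing result for the cotangent bundle of the quadric) shows $Y$ has rational singularities, hence is normal. One caveat is needed: if $\mu$ were only generically finite of degree larger than $1$, the conclusion would fail. Lemma~\ref{QuadricMomentRichardsonBirationalLemma} excludes this, so that the Stein factorization is genuinely the normalization.
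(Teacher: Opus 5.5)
Your proposal is correct and follows essentially the paper's route. You factor $\mu^*$ through $\kappa[\overline{\mathbb O}_{(3,1^{N-3})}]$, then combine properness and birationality from Lemma~\ref{QuadricMomentRichardsonBirationalLemma} with normality of the Richardson orbit closure to get $\mu_*\mathcal O_{T^*X}=\mathcal O_{\overline{\mathbb O}_{(3,1^{N-3})}}$ by Zariski's Main Theorem/Stein factorization; the paper cites Jantzen, Section~8.6 for the normality where you cite Kraft--Procesi.

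One caution about your fallback: the vanishing $H^i(T^*X,\mathcal O)=0$ for $i>0$ only shows that the \emph{normalization} of $Y$ has rational singularities, so it cannot by itself establish that $Y$ is normal. Your main argument does not rely on it.
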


\begin{proof}
By Lemma~\ref{QuadricMomentRichardsonBirationalLemma}, $\mu$ is a proper birational morphism from $T^*X$ onto $\overline{\mathbb O}_{(3,1^{N-3})}$. This Richardson orbit closure is normal; see \cite[Section~8.6]{Jantzen:Nilpotent}. Hence Zariski's Main Theorem gives $\mu_*\mathcal O_{T^*X}=\mathcal O_{\overline{\mathbb O}_{(3,1^{N-3})}}$. Taking global sections, $\Gamma(T^*X,\mathcal O_{T^*X})\cong \kappa[\overline{\mathbb O}_{(3,1^{N-3})}]$. Since $\overline{\mathbb O}_{(3,1^{N-3})}$ is a closed subvariety of $\g^*$, its coordinate ring is a quotient of $\kappa[\g^*]=\Sym(\g)$. Therefore the moment-map pullback is surjective.
\end{proof}

\begin{prop}\label{UGSurjectsOntoGlobalTDOLemma}
The infinitesimal action map
\begin{equation}
\mathcal U(\g)\longrightarrow \Gamma(G/P,\mathcal D_{\mathcal L})
\end{equation}
\noindent is surjective.
\end{prop}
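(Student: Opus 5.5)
The plan is to assemble the lemmata just proved. Write $\Phi:\mathcal U(\g)\to\Gamma(X,\mathcal D_{\mathcal L})$, $X=G/P$, for the infinitesimal action map. By Lemma~\ref{FilteredSurjectivityCriterionLemma}, it suffices to show that the associated graded map $\gr\,\Phi:\Sym(\g)\to\Gamma(X,\Sym\,T_X)$ is surjective.

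The first step is to check that the filtered setup is legitimate. The PBW filtration maps into the order filtration, since $\Phi(\xi)$ is a first-order operator for $\xi\in\g$. One also needs $\gr\,\Gamma(X,\mathcal D_{\mathcal L})\subset\Gamma(X,\gr\mathcal D_{\mathcal L})=\Gamma(X,\Sym\,T_X)$. This inclusion holds because the principal symbol of a global section of order $r$ is a global section of $\Sym^r T_X$. The induction in Lemma~\ref{FilteredSurjectivityCriterionLemma} uses exactly this: any global operator $D$ of order $r$ has a global symbol, which we must hit by $\gr\,\Phi$.

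Next, by Lemma~\ref{MomentMapAssociatedGradedLemma}, after identifying $\Gamma(X,\Sym\,T_X)$ with $\Gamma(T^*X,\mathcal O_{T^*X})$, the map $\gr\,\Phi$ is the pullback $\mu^*:\kappa[\g^*]\to\Gamma(T^*X,\mathcal O)$ along the moment map. The twist by $\mathcal L$ only changes lower-order terms, as noted in the proof of Lemma~\ref{FilteredSurjectivityCriterionLemma}, so the symbols are independent of the twist. Lemma~\ref{QuadricCotangentMomentFunctionsGenerateLemma} then states that $\mu^*$ is surjective. That lemma rests on three inputs: the explicit model of Lemma~\ref{QuadricCotangentMomentMapModelLemma}, the birationality and properness of $\mu$ onto $\overline{\mathbb O}_{(3,1^{N-3})}$ from Lemma~\ref{QuadricMomentRichardsonBirationalLemma}, and normality of this orbit closure combined with Zariski's Main Theorem. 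Combining these steps, the induction on order in Lemma~\ref{FilteredSurjectivityCriterionLemma} gives surjectivity of $\Phi$.

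The main obstacle is already packaged in Lemma~\ref{QuadricCotangentMomentFunctionsGenerateLemma}, namely the normality of the Richardson orbit closure. This fact is quoted from the literature and is what makes $\mu_*\mathcal O_{T^*X}=\mathcal O_{\overline{\mathbb O}}$. One delicate point in the present proof deserves attention. The grading on $\Gamma(T^*X,\mathcal O)$ by fibre degree must match the order grading, and $\mu^*$ must be graded, so that surjectivity holds in each degree $r$. This is the case because $\mu$ is $\G_m$-equivariant for the fibre scaling on $T^*X$ and the scaling on $\g^*$. Hence $\mu^*$ maps $\Sym^r(\g)$ onto $\Gamma(X,\Sym^rT_X)$, which is what the inductive step requires.
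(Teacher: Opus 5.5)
Your proposal is correct and follows the paper's proof exactly. Like the paper, you reduce via Lemma~\ref{FilteredSurjectivityCriterionLemma} to the associated graded map, identify that map with the moment-map pullback by Lemma~\ref{MomentMapAssociatedGradedLemma}, and conclude with Lemma~\ref{QuadricCotangentMomentFunctionsGenerateLemma}; your extra remark is also correct and is a useful precision the paper leaves implicit: $\mu^*$ is graded because $\mu$ is $\G_m$-equivariant for fibre scaling, so surjectivity holds in each degree $r$, as the order induction requires.
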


\begin{proof}
By Lemma~\ref{MomentMapAssociatedGradedLemma}, the associated graded map for $\mathcal U(\g)\to\Gamma(G/P,\mathcal D_{\mathcal L})$ is the moment-map pullback $\Sym(\g)\to\Gamma(T^*(G/P),\mathcal O_{T^*(G/P)})$. This map is surjective by Lemma~\ref{QuadricCotangentMomentFunctionsGenerateLemma}. Lemma~\ref{FilteredSurjectivityCriterionLemma} therefore implies that $\mathcal U(\g)\to \Gamma(G/P,\mathcal D_{\mathcal L})$ is surjective.
\end{proof}

\begin{rem}
The orbit closure appearing here is the Richardson variety $\overline{\mathbb O}_{(3,1^{N-3})}$, which is the image of the full cotangent bundle $T^*(G/P)$. This is larger than the minimal nilpotent orbit closure $\overline{\mathbb O}_{\min}=\overline{\mathbb O}_{(2,2,1^{N-4})}$, which appears in the quasiclassical calculation for $T^*C^o$ and in the singular-support condition for the harmonic sheaf. Thus $\Gamma(G/P,\mathcal D_{\mathcal L})$ sees the full cotangent bundle, while the harmonic quotient sees the smaller conical subvariety lying over the minimal nilpotent orbit closure.
\end{rem}

\subsection{\texorpdfstring{Vanishing of $H^1(G/P,\mathcal{J}_\Delta)$ and Noetherianness of $D_C$}{Vanishing of H1(G/P, JDelta) and Noetherianness of DC}}\label{H1VanishingNoetherianSection}

We now turn our attention towards proving that $H^1(G/P,\mathcal{J}_\Delta)=0$; this will complete the proof that the map $\rho$ \eqref{rhodef} is surjective. The short exact sequence
\begin{equation}
0 \longrightarrow \mathcal{J}_\Delta \longrightarrow \mathcal{D}_{\mathcal{L}} \longrightarrow \mathcal{H} \longrightarrow 0
\end{equation}
of sheaves on $G/P$ gives rise to the long exact sequence in cohomology:
\begin{equation}
0 \longrightarrow \Gamma(G/P, \mathcal{J}_\Delta) \longrightarrow \Gamma(G/P, \mathcal{D}_{\mathcal{L}}) \longrightarrow \Gamma(G/P, \mathcal{H}) \longrightarrow H^1(G/P, \mathcal{J}_{\Delta}) \longrightarrow \cdots .
\end{equation}

The previous subsection proves that the map $\mathcal{U}(\g)\to \Gamma(G/P,\mathcal{D}_{\mathcal L})$ induced by differentiating the action of $G$ on $G/P$ is surjective. Thus it remains to prove that $H^1(G/P,\mathcal{J}_\Delta)=0$, which implies that the natural map $\Gamma(G/P,\mathcal{D}_{\mathcal L})\to \Gamma(G/P,\mathcal H)$ is surjective.

We begin with the vanishing of $H^1(G/P,\mathcal{J}_\Delta)$. Let
\begin{equation}
X_{\mathrm{big}}:=V\cup w_0V,
\qquad
V_Q:=V\cap w_0V.
\end{equation}

\begin{lem}\label{CechReductionJDeltaLemma}
After transporting the chart $w_0V$ to the standard chart $V$ by the Kelvin transform, the \v{C}ech cohomology of $\mathcal{J}_\Delta$ on $X_{\mathrm{big}}=V\cup w_0V$ is computed by
\begin{equation}
H^1(X_{\mathrm{big}},\mathcal{J}_\Delta)
\cong
D_{V_Q}\Delta\Big/\Bigl(D_V\Delta+K(D_V\Delta)K\Bigr).
\end{equation}
\noindent In particular, if
\begin{equation}\label{LocalJDeltaSplitting}
D_{V_Q}\Delta
=
D_V\Delta+K(D_V\Delta)K,
\end{equation}
\noindent then $H^1(X_{\mathrm{big}},\mathcal{J}_\Delta)=0$.
\end{lem}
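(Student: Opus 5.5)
The plan is to compute the \v{C}ech complex of $\mathcal{J}_\Delta$ for the two-set affine cover $\{V, w_0V\}$ of $X_{\mathrm{big}}$, and then transport everything to the standard chart. Since $V$, $w_0V$ and $V_Q=V\cap w_0V$ are affine and $\mathcal{J}_\Delta$ is quasi-coherent as an $\mathcal{O}$-module, \v{C}ech cohomology for this cover computes sheaf cohomology. Moreover, $\mathcal{J}_\Delta$ is locally free of finite rank in each filtration degree, by the filtration argument in the proof of Proposition~\ref{KernelSurjectionLemma}. Being a filtered colimit of quasi-coherent sheaves, it is quasi-coherent. The complex is
\[
\mathcal{J}_\Delta(V)\oplus\mathcal{J}_\Delta(w_0V)\longrightarrow \mathcal{J}_\Delta(V_Q)\longrightarrow 0,
\]
so $H^1(X_{\mathrm{big}},\mathcal{J}_\Delta)$ is the cokernel of the difference-of-restrictions map.

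The key steps, in order, are as follows.
\begin{enumerate}
\item Identify $\mathcal{J}_\Delta(V)=D_V\Delta$ via $\sigma_0$, and $\mathcal{J}_\Delta(V_Q)=D_{V_Q}\Delta$. The latter follows by localizing $\mathcal{D}_{\mathcal L}(V)\Delta$ to the principal open set $\{Q\neq 0\}$; the left ideal generated by $\Delta$ is compatible with Ore localization at $Q$.
\item Identify $\mathcal{J}_\Delta(w_0V)$ inside $\mathcal{D}_{\mathcal L}(V_Q)\cong D_{V_Q}$.
\end{enumerate}

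For the second step, on $w_0V$ the ideal is $\mathcal{D}_{\mathcal L}(w_0V)\Delta_{w_0}$, written in the trivialization $\sigma_{w_0}$. By the proof of Proposition~\ref{HarmonicIdealProp} (with $g=w_0$, $h=e$), changing from the $\sigma_{w_0}$-trivialization to the $\sigma_0$-trivialization on the overlap is exactly the twisted action of $\varphi=w_0$ on coefficients, namely the Kelvin transform $K$ of Proposition~\ref{w0onDelta}. Consequently an operator $\xi\in D_V\cong\mathcal{D}_{\mathcal L}(w_0V)$ becomes $K\xi K$ when viewed in $D_{V_Q}$. This uses $K^2=\Id$ and the rule \eqref{eq:action-on-operators}. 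Under the same transport, $\Delta_{w_0}$ becomes $K\Delta K=Q^2\Delta$, and the whole ideal $\mathcal{J}_\Delta(w_0V)$ becomes $K(D_V\Delta)K=(KD_VK)\,Q^2\Delta$. This is a subspace of $D_{V_Q}\Delta$, since $Q$ is a unit on $V_Q$.

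The restriction maps are then the inclusions into $D_{V_Q}\Delta$, and the cokernel is
\[
D_{V_Q}\Delta\big/\bigl(D_V\Delta+K(D_V\Delta)K\bigr).
\]
The final assertion is then immediate: the splitting \eqref{LocalJDeltaSplitting} makes the cokernel zero.

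The main obstacle is bookkeeping rather than depth: one must make sure the identification $\mathcal{D}_{\mathcal L}(w_0V)\cong D_V$ via $\sigma_{w_0}$ matches conjugation by $K$, including the conformal factor $(-Q)^{-(k-1)}$ and not just pullback along the inversion. I would verify this directly from \eqref{eq:sigma-compare} and \eqref{eq:s-in-two-trivs}, using $p(w_0,v)$ with $\chi_0(p(w_0,v))=-Q(v)$.
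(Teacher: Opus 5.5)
Your proposal is correct and follows essentially the same route as the paper: the two-set affine \v{C}ech cover $\{V, w_0V\}$, quasi-coherence of $\mathcal{J}_\Delta$ as the filtered union of its locally free order-filtered pieces, and the identification of the image of the $w_0V$-sections with $K(D_V\Delta)K = (KD_VK)\,Q^2\Delta$ via Kelvin transport. Your explicit check that the change of trivialization is conjugation by $K$ (conformal factor included), using \eqref{eq:sigma-compare} and \eqref{eq:s-in-two-trivs}, fills in a step the paper only asserts.
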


\begin{proof}
On the standard Bruhat chart $V$, the definition of the harmonic ideal gives $\Gamma(V,\mathcal{J}_\Delta)=D_V\Delta$. Since $V_Q=V\cap w_0V$ is the principal open subset obtained by inverting $Q$, restriction to the overlap gives $\Gamma(V_Q,\mathcal{J}_\Delta)=D_{V_Q}\Delta$. On the opposite chart $w_0V$, the corresponding generator of the harmonic ideal is transported to the standard chart by Kelvin conjugation. Thus, after identifying the overlap with $V_Q$, the image of $\Gamma(w_0V,\mathcal{J}_\Delta)$ inside $\Gamma(V_Q,\mathcal{J}_\Delta)$ is $K(D_V\Delta)K$.

The open sets $V$, $w_0V$, and $V_Q$ are affine. Moreover, $\mathcal{J}_\Delta$ is quasi-coherent as an $\mathcal O$-module: on each Bruhat chart it is the filtered union of its coherent order-filtered pieces $F_m\mathcal{J}_\Delta$. Therefore the two-open cover $X_{\mathrm{big}}=V\cup w_0V$ is acyclic for $\mathcal{J}_\Delta$, and the \v{C}ech complex computes $H^1(X_{\mathrm{big}},\mathcal{J}_\Delta)$. Its degree-one cokernel is
\begin{equation}
\Gamma(V_Q,\mathcal{J}_\Delta)
\Big/
\Bigl(
\Gamma(V,\mathcal{J}_\Delta)|_{V_Q}
+
\Gamma(w_0V,\mathcal{J}_\Delta)|_{V_Q}
\Bigr),
\end{equation}
\noindent which, by the identifications above, is precisely
\begin{equation}
D_{V_Q}\Delta\Big/\Bigl(D_V\Delta+K(D_V\Delta)K\Bigr).
\end{equation}
\noindent This proves the claimed identification, and \eqref{LocalJDeltaSplitting} immediately implies $H^1(X_{\mathrm{big}},\mathcal{J}_\Delta)=0$.
\end{proof}

\begin{lem}\label{NullPowerSpanHarmonicsLemma}
Let $P_d:=\kappa[V]_d$, and let
\begin{equation}
Z_{\Delta,d}:=\ker(\Delta)\cap P_d.
\end{equation}
\noindent Then $Z_{\Delta,d}$ is spanned by the powers $\ell^d$, where $\ell\in V^*$ ranges over the null linear forms:
\begin{equation}
Q^*(\ell)=0.
\end{equation}
\noindent Equivalently,
\begin{equation}
Z_{\Delta,d}
=
\Span_\kappa\{\ell^d: \ell\in V^*,\ Q^*(\ell)=0\}.
\end{equation}
\end{lem}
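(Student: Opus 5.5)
The plan is a duality argument with the Fischer pairing, using the Fischer decomposition (Lemma~\ref{FischerDecompositionLemma}). The containment $\supseteq$ is already in Lemma~\ref{NullKelvinTestFunctionsLemma}: for null $\ell$ one has $\Delta(\ell^d)=d(d-1)Q^*(\ell)\ell^{d-2}=0$. The work is the reverse inclusion. Let $W_d:=\Span_\kappa\{\ell^d:Q^*(\ell)=0\}\subset Z_{\Delta,d}$. I will show that anything in $Z_{\Delta,d}$ orthogonal to $W_d$ is zero, for a pairing that is nondegenerate on $Z_{\Delta,d}$.

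\textbf{The pairing.} Use the apolar (Fischer) pairing on $P_d$, defined by $\langle f,g\rangle:=f(\partial)g$. Here $f(\partial)$ is the constant-coefficient operator obtained by substituting $\partial_{u}$ for the linear form $B(u,-)$ via the identification $V\cong V^*$ given by $B$, normalized so that $Q(\partial)$ is a nonzero multiple of $\Delta$. This pairing is symmetric and nondegenerate on $P_d$; one checks this on monomials in a basis adapted to $B$.

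Two facts drive the argument.
\begin{itemize}
\item[(i)] For a linear form $\ell=B(u,-)$, one has $\langle \ell^d,g\rangle=d!\,g(u)$, up to a harmless normalization.
\item[(ii)] Multiplication by $Q$ is adjoint to $\Delta$, up to a nonzero scalar: $\langle Qf,g\rangle=c\langle f,\Delta g\rangle$.
\end{itemize}
From (ii), $QP_{d-2}$ is orthogonal to $Z_{\Delta,d}$. Combined with $P_d=Z_{\Delta,d}\oplus QP_{d-2}$ from Lemma~\ref{FischerDecompositionLemma}, nondegeneracy on $P_d$ forces the pairing to be nondegenerate on $Z_{\Delta,d}$.

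\textbf{The argument.} Suppose $h\in Z_{\Delta,d}$ is orthogonal to $W_d$. By (i), $h(u)=0$ for every $u$ with $\ell=B(u,-)$ null, that is, for every $u$ with $Q(u)=0$. So $h$ vanishes on the cone $C_V$.

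Since $n\geq4$, $Q$ is irreducible and $\kappa[V]/(Q)$ is a domain. After base change to $\bar\kappa$, so that the $\kappa$-points of the cone are dense, the Nullstellensatz gives $h\in Q\kappa[V]$. The uniqueness part of the Fischer decomposition then forces $h=0$, because $h\in Z_{\Delta,d}\cap QP_{d-2}=0$.

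Hence $W_d^\perp\cap Z_{\Delta,d}=0$. Nondegeneracy on $Z_{\Delta,d}$ then gives $W_d=Z_{\Delta,d}$.

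\textbf{The main obstacle} is rationality: $\kappa$ need not be algebraically closed. There are two ways to handle it.
\begin{itemize}
\item Because $Q$ is split (it has a hyperbolic basis), the $\kappa$-points of $C_V$ are Zariski dense in $C_V$. This holds since the cone is rational, for example via the parametrization on the chart $x_1\neq0$ used in Remark~\ref{FundamentalOperatorsLocalVectorFieldsRem}. Then "$h$ vanishes on $C_V(\kappa)$" already implies $h\in(Q)$ over $\kappa$.
\item Alternatively, prove the statement over $\bar\kappa$. Spans of $\kappa$-points agree with $\bar\kappa$-spans intersected with $P_d$ once the $\kappa$-points are dense, so this density is still the key input.
\end{itemize}

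The remaining checks are routine: verifying the adjointness (ii) with the paper's normalization $[\Delta,Q]=E+k$, and verifying the nondegeneracy of the apolar pairing on $P_d$.
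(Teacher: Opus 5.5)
Your proposal is correct and follows essentially the same route as the paper. Both arguments use the apolar pairing $\langle f,g\rangle=f(\partial)g$, the adjointness of multiplication by $Q$ with $\Delta$, and the fact that a polynomial vanishing on the null cone is divisible by $Q$. The only difference is cosmetic: you prove $W_d^\perp\cap Z_{\Delta,d}=0$ using nondegeneracy on $Z_{\Delta,d}$, whereas the paper computes $W_d^\perp=QP_{d-2}$ in all of $P_d$ and takes orthogonal complements.

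Your explicit use of the Zariski density of the $\kappa$-points of the split cone is a step the paper leaves implicit. It is genuinely needed: the statement fails for an anisotropic form over a non-closed field.
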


\begin{proof}
Let
\begin{equation}
W_d:=\Span_\kappa\{\ell^d: \ell\in V^*,\ Q^*(\ell)=0\}\subset P_d.
\end{equation}
\noindent We prove that $W_d=Z_{\Delta,d}$.

Consider the pairing on $P_d$:
\begin{equation}
\langle f,g\rangle:=(f(\partial))(g)\big|_{0}.
\end{equation}
\noindent This pairing is nondegenerate. Moreover, multiplication by $Q$ is adjoint to $\Delta$ with respect to this pairing. Hence
\begin{equation}
Z_{\Delta,d}
=
(QP_{d-2})^\perp.
\end{equation}
\noindent Indeed, for $h\in P_d$ and $a\in P_{d-2}$, one has $\langle h,Qa\rangle=\langle \Delta h,a\rangle$, so $h$ is orthogonal to $QP_{d-2}$ if and only if $\Delta h=0$.

We now compute $W_d^\perp$. For $f\in P_d$ and a linear form $\ell\in V^*$, one has $\langle f,\ell^d\rangle=d!\,f(\ell^\sharp)$, where $\ell^\sharp\in V$ is the vector corresponding to $\ell$ under the bilinear form. Therefore $f\in W_d^\perp$ if and only if $f$ vanishes on the null cone $Q=0$. Since $Q$ is irreducible under our standing hypotheses, this is equivalent to $f\in QP_{d-2}$. Thus $W_d^\perp=QP_{d-2}$.

Taking orthogonal complements with respect to the pairing, and applying the Fischer Decomposition Lemma (\ref{FischerDecompositionLemma}) gives
\begin{equation}
W_d=(QP_{d-2})^\perp=Z_{\Delta,d}.
\end{equation}
\noindent This proves the claim.
\end{proof}

Let
\begin{equation}
\mathcal S:=D_V\Delta+K(D_V\Delta)K\subset D_{V_Q}\Delta.
\end{equation}
\noindent Our goal is to prove that $\mathcal S=D_{V_Q}\Delta$.

\begin{lem}\label{CoefficientNullPowerNormalFormLemma}
Every element of $D_{V_Q}\Delta$ is a finite sum of terms of the form
\begin{equation}
Q^{-q}\ell^s R\Delta,
\end{equation}
\noindent where $q\in\mathbb Z$, $s\geq 0$, $\ell\in V^*$ is null, and $R\in\kappa[\partial]$ is a constant-coefficient differential operator.
\end{lem}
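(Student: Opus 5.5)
The plan is to reduce the statement to two facts: a normal-ordered description of $D_{V_Q}$, and the spanning result Lemma~\ref{NullPowerSpanHarmonicsLemma} applied to polynomial coefficients. First, since $V_Q=\Spec\kappa[V][1/Q]$ is a principal open subset of affine space, every element of $D_{V_Q}$ can be written in normal order as a finite sum $\sum_\beta f_\beta\,\partial^\beta$, with $f_\beta\in\kappa[V][1/Q]$ and $\partial^\beta\in\kappa[\partial]$. Indeed, $D_{V_Q}=\kappa[V][1/Q]\otimes_{\kappa[V]}D_V$, and $D_V$ has the normal-ordered basis $\kappa[V]\otimes\kappa[\partial]$. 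Multiplying on the right by $\Delta$ shows that every element of $D_{V_Q}\Delta$ is a finite sum of terms $f\,R\,\Delta$ with $f\in\kappa[V][1/Q]$ and $R\in\kappa[\partial]$ constant-coefficient. Thus it suffices to write each coefficient $f$ as a finite linear combination of expressions $Q^{-q}\ell^s$ with $\ell$ null.

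Next I will decompose $f$. Write $f=Q^{-N}g$ with $g\in\kappa[V]$, and split $g$ into homogeneous pieces $g_r\in P_r$. Apply the iterated Fischer decomposition of Lemma~\ref{FischerDecompositionLemma} to each piece: $g_r=\sum_a Q^a h_{r-2a}$ with $h_{r-2a}\in Z_{\Delta,r-2a}$. By Lemma~\ref{NullPowerSpanHarmonicsLemma}, each harmonic $h_{r-2a}$ is a finite linear combination of powers $\ell^{r-2a}$ with $\ell\in V^*$ null. Therefore
\[
f=\sum_{r,a}\sum_i c_{r,a,i}\,Q^{a-N}\,\ell_{r,a,i}^{\,r-2a},
\]
with each $\ell_{r,a,i}$ null. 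Absorbing the scalar $c_{r,a,i}$ into $R$, and putting $q=N-a\in\mathbb Z$ and $s=r-2a\ge 0$, gives terms of the required form $Q^{-q}\ell^sR\Delta$. Since multiplication on the left by a function does not disturb the right factor $R\Delta$, these terms sum back to the original element.

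I expect essentially no obstacle here. The only point needing care is that Lemma~\ref{NullPowerSpanHarmonicsLemma} uses the irreducibility of $Q$, which holds by the standing assumption $n\ge4$. I will also note that the spanning is over $\kappa$, so the null forms $\ell$ are taken with $\kappa$-coefficients. If $\kappa$ is not algebraically closed, $\kappa$-points of the null cone are Zariski dense in the split case, because the cone is rational with a smooth $\kappa$-point. This density is exactly what the proof of Lemma~\ref{NullPowerSpanHarmonicsLemma} requires, so that lemma applies over $\kappa$ itself.
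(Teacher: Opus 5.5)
Your proposal is correct and follows the paper's own proof: normal-order the operator over $\kappa[V][1/Q]$, clear the $Q$-denominator, Fischer-decompose the homogeneous polynomial pieces, and expand each harmonic component in null powers via Lemma~\ref{NullPowerSpanHarmonicsLemma}. Your remark that the split null cone has Zariski-dense $\kappa$-points is the correct justification for applying that lemma over a non-closed $\kappa$, a step the paper leaves implicit.
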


\begin{proof}
Apply the Fischer Decomposition (Lemma \ref{FischerDecompositionLemma}) to the polynomial-parts of a differential operator under normal ordering; then apply Lemma \ref{NullPowerSpanHarmonicsLemma} to write each harmonic term as a sum of powers $\ell^s$.
\end{proof}

We will also need the following lemma regarding the Kelvin transform of constant-coefficient differential operators $\kappa[\partial]$:

\begin{lem}\label{KelvinConjugatesConstantOperatorsLemma}
We have
\begin{equation}
K\kappa[\partial]K\subset D_V.
\end{equation}
\noindent In particular, if $u\in V$ and $\partial_u$ denotes the corresponding constant vector field, then
\begin{equation}
K\partial_uK
=
-Q\partial_u+dQ_v(u)E+m\,dQ_v(u),
\end{equation}
\noindent where $m=k-1$.
\end{lem}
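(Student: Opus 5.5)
The plan is to compute $K\partial_uK$ directly, then deduce the general inclusion from the fact that conjugation by $K$ is an algebra automorphism. Since $K^2=\Id$, conjugation $A\mapsto KAK$ is multiplicative on operators on $\kappa(V)$. Every element of $\kappa[\partial]$ is a polynomial in the $\partial_u$, so the inclusion $K\kappa[\partial]K\subset D_V$ follows once each $K\partial_uK$ is shown to lie in $D_V$: products and sums of elements of $D_V$ stay in $D_V$. The explicit formula exhibits this, since $-Q\partial_u+dQ_v(u)E+m\,dQ_v(u)$ has polynomial coefficients.

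For the formula, write $\iota(v)=-v/Q(v)$, so that $Kf=(-Q)^{-m}\,f\circ\iota$ and $Q\circ\iota=Q^{-1}$. I will compute $\partial_u(Kf)$ by the product and chain rules:
\[
\partial_u(Kf)=\partial_u\bigl((-Q)^{-m}\bigr)\,(f\circ\iota)+(-Q)^{-m}\,(df)_{\iota(v)}\bigl(d\iota_v(u)\bigr).
\]
The first factor is $\partial_u\bigl((-Q)^{-m}\bigr)=-m\,dQ_v(u)\,Q^{-1}(-Q)^{-m}$. For the second, differentiating $\iota$ gives
\[
d\iota_v(u)=-\frac{u}{Q}+\frac{dQ_v(u)}{Q^2}\,v .
\]

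Next I apply $K$ once more, evaluating the resulting expression at $\iota(v)$ and multiplying by $(-Q)^{-m}$. The substitutions I will use are $Q(\iota(v))=Q^{-1}$, $\iota(\iota(v))=v$, and $dQ_{\iota(v)}(u)=-dQ_v(u)/Q$, the last because $dQ$ is linear in the base point. The conformal prefactors then cancel, $(-Q^{-1})^{-m}(-Q)^{-m}=1$.

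The directional-derivative term becomes $(df)_v$ applied to $d\iota_{\iota(v)}(u)=-Qu+dQ_v(u)\,v$, using $v/Q=-\iota(v)$. This gives $-Q\,\partial_uf+dQ_v(u)\,Ef$. The scalar term gives $-m\,dQ_{\iota(v)}(u)\,Q(\iota(v))^{-1}f=m\,dQ_v(u)\,f$. Together these give $K\partial_uK=-Q\partial_u+dQ_v(u)(E+m)$.

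The only real hazard is sign bookkeeping: the $(-1)^m$ factors in the prefactor and the sign of $dQ_{\iota(v)}$. I would check the result against the $\mathfrak{sl}_2$ relations, or against Proposition~\ref{ConfActionProp}, where $K\partial_uK$ should be minus the $\lambda$-component $\phi(\xi_\lambda)$ with $\lambda=u$, up to the $\Ad_{w_0}$ exchange of $\mu$ and $\lambda$. This is consistent, since $\Ad_{w_0}$ conjugates $\phi(\xi_\mu)=-\partial_\mu$ to $\phi(\xi_\lambda)=Q\partial_\lambda-B(\lambda,v)(E+m)$. That gives an independent derivation: $K\partial_uK=-K\phi(\xi_u)K=-\phi(\Ad_{w_0}\xi_u)$, which lies in $\phi(\g)\subset D_V$. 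I would present this as a cross-check.
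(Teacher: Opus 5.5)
Your proposal is correct and follows essentially the same route as the paper: a direct chain-rule computation of $K\partial_uK$ using $\iota^2=\Id$, $Q\circ\iota=Q^{-1}$ and $dQ_{\iota(v)}(u)=-dQ_v(u)/Q$, followed by the observation that $\kappa[\partial]$ is generated by the $\partial_u$ and that conjugation by the involution $K$ is multiplicative. Your cross-check via $K\phi(\xi)K=\phi(\Ad_{w_0}\xi)$ is consistent, but the proof does not need it.
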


\begin{proof}
Let $I(v):=-v/Q(v)$, so that $K(f)=(-Q)^{-m}f\circ I$. A direct differentiation gives $dI_v(u)
=
-\frac{u}{Q(v)}
+
\frac{dQ_v(u)}{Q(v)^2}v$.
Applying the definition of $K$ twice, and using $I^2=\Id$, $Q(I(v))=Q(v)^{-1}$, and $dQ_{I(v)}(u)=-Q(v)^{-1}dQ_v(u)$,
we obtain
$(K\partial_uK)(f)
=
df_v\bigl(-Q(v)u+dQ_v(u)v\bigr)+m\,dQ_v(u)f$. This is precisely
\begin{equation}
K\partial_uK
=
-Q\partial_u+dQ_v(u)E+m\,dQ_v(u),
\end{equation}
\noindent so $K\partial_uK$ is regular on $V$. Since $\kappa[\partial]$ is generated by the constant vector fields $\partial_u$, the inclusion $K\kappa[\partial]K\subset D_V$ follows.
\end{proof}

\begin{lem}\label{SStableConstantOperatorsLemma}
Recall that
\begin{equation}
\mathcal S:=D_V\Delta+K(D_V\Delta)K\subset D_{V_Q}\Delta.
\end{equation}
\noindent Then $\mathcal S$ is stable under left multiplication by $\kappa[\partial]$.
\end{lem}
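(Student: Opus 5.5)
The plan is to check stability separately on the two summands of $\mathcal S$, and to reduce everything to the generators $\partial_u$, $u\in V$. Since $\kappa[\partial]$ is generated as an algebra by the constant vector fields $\partial_u$, it suffices to show $\partial_u\mathcal S\subset\mathcal S$ for each $u$. For the first summand there is nothing to prove: $\partial_u\in D_V$, so $\partial_u D_V\Delta\subset D_V\Delta$.

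For the second summand, take an element $KA\Delta K$ with $A\in D_V$. Since $K^2=\Id$, we can write
\begin{equation}
\partial_u\,K A\Delta K \;=\; K\bigl(K\partial_u K\bigr)A\Delta K .
\end{equation}
Lemma~\ref{KelvinConjugatesConstantOperatorsLemma} gives $K\partial_uK=-Q\partial_u+dQ_v(u)E+m\,dQ_v(u)\in D_V$. Hence $(K\partial_uK)A\in D_V$, and the displayed element lies in $K(D_V\Delta)K$. Here I read $K(D_V\Delta)K$ as the set $\{KB\Delta K: B\in D_V\}$, which is the meaning used in Lemma~\ref{CechReductionJDeltaLemma}. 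This shows that each summand of $\mathcal S$ is separately stable under left multiplication by $\partial_u$, and therefore so is their sum.

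The only step that needs care is the bookkeeping of the conjugation. One must use that $\partial_uK=K(K\partial_uK)$ as operators on $\kappa(V)$, and this follows from $K^2=\Id$ (Proposition~\ref{w0onDelta}). One must also check that products of regular operators stay in $D_V$, which is clear. A further point is that elements of $D_{V_Q}$ composed with $K$ are regarded as operators on $\kappa(V)$ (or on $\kappa[V][1/Q]$, which $K$ preserves). The identity is then an identity of operators, and it lands back inside $D_{V_Q}\Delta$ because $K\Delta K=Q^2\Delta$. I expect no real obstacle: the lemma is essentially a formal consequence of Lemma~\ref{KelvinConjugatesConstantOperatorsLemma}, and that lemma is where the actual computation already lives.
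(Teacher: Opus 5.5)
Your proposal is correct and follows the paper's own proof: the first summand is handled by $D_V\Delta$ being a left ideal, and the second by writing $R\,K(A\Delta)K=K(KRK)A\Delta K$ and invoking Lemma~\ref{KelvinConjugatesConstantOperatorsLemma}. The only cosmetic difference is that you first reduce to the generators $\partial_u$, whereas the paper applies the inclusion $K\kappa[\partial]K\subset D_V$ to a general $R\in\kappa[\partial]$ directly.
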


\begin{proof}
Let $R\in\kappa[\partial]$. Since $D_V\Delta$ is a left ideal in $D_V$, one has
\begin{equation}
R(D_V\Delta)\subset D_V\Delta.
\end{equation}
\noindent It remains to check the opposite-chart summand. Let $A\in D_V$. Since $K^2=\Id$, we have
\begin{equation}
R\,K(A\Delta)K
=
K(KRK)A\Delta K.
\end{equation}
\noindent By Lemma~\ref{KelvinConjugatesConstantOperatorsLemma}, $KRK\in D_V$. Hence $(KRK)A\in D_V$, and therefore
\begin{equation}
R\,K(A\Delta)K\in K(D_V\Delta)K.
\end{equation}
\noindent Thus both summands of $\mathcal S$ are stable under left multiplication by $R$, and the claim follows.
\end{proof}

\begin{definition}
For a term of the form
\begin{equation}
Q^{-q}\ell^sR\Delta,
\end{equation}
\noindent with $q\geq 0$, $\ell$ a null linear form, $s\geq 0$, and $R\in\kappa[\partial]$, we define its \textbf{excess} to be
\begin{equation}
\operatorname{exc}(q,s):=s-q-2.
\end{equation}
\noindent We also define its \textbf{positive excess} by
\begin{equation}
\operatorname{exc}_+(q,s):=\max\{\operatorname{exc}(q,s),0\}.
\end{equation}
\end{definition}

We first treat coefficient terms, beginning with the range in which the term is captured directly by the opposite chart.

\begin{lem}\label{DirectNullPowerCaptureLemma}
Let $\ell\in V^*$ be null. If $q\geq 0$ and $s\leq q+2$, then
\begin{equation}
Q^{-q}\ell^s\Delta\in K(D_V\Delta)K.
\end{equation}
\noindent More precisely,
\begin{equation}
Q^{-q}\ell^s\Delta
=
K\left((-1)^sQ^{q+2-s}\ell^s\Delta\right)K.
\end{equation}
\end{lem}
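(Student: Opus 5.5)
The plan is to verify the displayed identity directly and then read off the membership statement. Set $A:=(-1)^sQ^{q+2-s}\ell^s\in\kappa[V]$. The hypothesis $s\le q+2$ makes the exponent $q+2-s$ nonnegative, so $A\in D_V$ and $A\Delta\in D_V\Delta$. The identity therefore gives $Q^{-q}\ell^s\Delta\in K(D_V\Delta)K$ at once. Everything reduces to computing $K(A\Delta)K$.

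To compute this, insert $K^2=\Id$ and write
\[
K(A\Delta)K=(KAK)(K\Delta K).
\]
Proposition~\ref{w0onDelta} gives $K\Delta K=Q^2\Delta$. For the multiplication operator $A$, note that $K$ is $(-Q)^{-m}$ times pullback by the involution $I(v)=-v/Q(v)$. The scalar prefactors cancel in the conjugate, so $KAK$ is multiplication by $A\circ I$; it is enough to check this on an arbitrary $f\in\kappa(V)$.

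Now evaluate $A\circ I$ using $Q(I(v))=Q(v)^{-1}$ and $\ell(I(v))=-\ell(v)/Q(v)$; both facts were already used in Lemma~\ref{NullKelvinTestFunctionsLemma}. This gives
\[
A\circ I=(-1)^s\,Q^{-(q+2-s)}(-1)^s\ell^sQ^{-s}=Q^{-q-2}\ell^s.
\]
Multiplying by $K\Delta K=Q^2\Delta$ yields $Q^{-q-2}\ell^s\cdot Q^2\Delta=Q^{-q}\ell^s\Delta$, which is the claimed identity.

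There is no real obstacle; the argument is pure bookkeeping. The only care needed is with signs and with the direction of conjugation. The nullity of $\ell$ is not actually used here; it matters only in the surrounding excess induction.
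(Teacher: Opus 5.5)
Your proposal is correct and follows the paper's proof. You factor $K(A\Delta)K=(KAK)(K\Delta K)$, show that Kelvin conjugation turns multiplication by $(-1)^sQ^{q+2-s}\ell^s$ into multiplication by $Q^{-q-2}\ell^s$, and then apply $K\Delta K=Q^2\Delta$; your check that the $(-Q)^{-m}$ prefactors cancel in $KAK$ and your remark that nullity of $\ell$ is not used here are both accurate.
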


\begin{proof}
Since $s\leq q+2$, the coefficient $Q^{q+2-s}\ell^s$ is polynomial. Under Kelvin conjugation, multiplication by $\ell^s$ transforms into multiplication by $(-1)^sQ^{-s}\ell^s$, while multiplication by $Q^{q+2-s}$ transforms into multiplication by $Q^{-q-2+s}$. Therefore multiplication by $(-1)^sQ^{q+2-s}\ell^s$ transforms into multiplication by $Q^{-q-2}\ell^s$. Using $K\Delta K=Q^2\Delta$, we obtain
\begin{equation}
K\left((-1)^sQ^{q+2-s}\ell^s\Delta\right)K
=
Q^{-q-2}\ell^sQ^2\Delta
=
Q^{-q}\ell^s\Delta.
\end{equation}
\noindent This proves the claim.
\end{proof}

We now show how to reduce positive excess.

\begin{lem}\label{KelvinVectorFieldCommutatorLemma}
Let $\ell\in V^*$ be null, and let $u\in V$ be the corresponding vector, so that $dQ_v(u)=\ell(v)$. Put
\begin{equation}
T_\ell:=K\partial_uK.
\end{equation}
\noindent Then
\begin{equation}
[\Delta,T_\ell]=2\ell\Delta.
\end{equation}
\noindent Moreover, $\mathcal S$ is stable under both left and right multiplication by $T_\ell$.
\end{lem}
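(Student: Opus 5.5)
The plan has two parts: first compute $[\Delta,T_\ell]$ directly from the explicit formula, then deduce the stability of $\mathcal S$ from Lemma~\ref{KelvinConjugatesConstantOperatorsLemma} and the Kelvin relation $K\Delta K=Q^2\Delta$.

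\textbf{Step 1: the commutator.} By Lemma~\ref{KelvinConjugatesConstantOperatorsLemma}, with $dQ_v(u)=\ell(v)$, we have
\begin{equation*}
T_\ell=-Q\partial_u+\ell(E+m).
\end{equation*}
We commute each piece with $\Delta$ using the relations already recorded.
\begin{itemize}
\item Since $[\Delta,Q]=E+k$ and $\partial_u$ commutes with $\Delta$,
\begin{equation*}
[\Delta,-Q\partial_u]=-(E+k)\partial_u .
\end{equation*}
\item For a linear form $\ell$, $[\Delta,\ell]$ is the constant vector field dual to $\ell$ under $B$; this is $\partial_u$, by the same computation as \eqref{eq:Delta-B-short}. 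Together with $[\Delta,E]=2\Delta$, this gives
\begin{equation*}
[\Delta,\ell(E+m)]=\partial_u(E+m)+2\ell\Delta .
\end{equation*}
\item Using $\partial_u E=(E+1)\partial_u$, the first term is $\partial_u(E+m)=(E+m+1)\partial_u=(E+k)\partial_u$, because $m+1=k$.
\end{itemize}
The $\partial_u$ terms therefore cancel, leaving $[\Delta,T_\ell]=2\ell\Delta$. The hypothesis that $\ell$ is null should not be needed here. The only care required is matching the normalization of $\partial_u$ with $[\Delta,\ell]$; I would check this against the $\lambda$-term cancellation in Proposition~\ref{DeltaPres}, which has the same shape.

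\textbf{Step 2: stability of $\mathcal S$.} Left multiplication by $T_\ell$ preserves $D_V\Delta$ because $T_\ell\in D_V$. It preserves $K(D_V\Delta)K$ because
\begin{equation*}
T_\ell K(A\Delta)K=K(\partial_uA\Delta)K .
\end{equation*}
This is the argument of Lemma~\ref{SStableConstantOperatorsLemma}.

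For right multiplication, the two summands are handled in mirror fashion.
\begin{itemize}
\item On the opposite chart, $K(A\Delta)K\,T_\ell=K(A\Delta\partial_u)K=K(A\partial_u\Delta)K$, since constant fields commute with $\Delta$.
\item On the standard chart, $A\Delta T_\ell=A\bigl(T_\ell\Delta+[\Delta,T_\ell]\bigr)=A(T_\ell+2\ell)\Delta\in D_V\Delta$, using Step 1.
\end{itemize}
Step 1 is thus exactly the ingredient needed for right stability on the standard chart, while on the opposite chart the corresponding fact is the trivial $[\Delta,\partial_u]=0$.

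The main obstacle I expect is purely bookkeeping: keeping the signs and constants in $K\partial_uK$ consistent so that the $\partial_u$ terms genuinely cancel. Structurally, the cancellation reflects the identity $K\Delta K=Q^2\Delta$, which says $\ad_\Delta$ maps $T_\ell$ into $D_V\Delta$. As a fallback, one can derive $[\Delta,T_\ell]\in D_V\Delta$ abstractly by conjugating $[Q^2\Delta,\partial_u]=-2Q\ell\Delta$ by $K$, noting that $K\ell K=-Q^{-1}\ell$.
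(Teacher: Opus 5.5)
Your proposal is correct and follows the paper's proof essentially line for line. It uses the same expansion of $T_\ell=-Q\partial_u+\ell E+m\ell$, with the $\partial_u$ terms cancelling via $\partial_uE=(E+1)\partial_u$ and $m+1=k$, and the same four one-line stability checks based on $K^2=\Id$, $[\Delta,\partial_u]=0$ and $\Delta T_\ell=(T_\ell+2\ell)\Delta$. Your side remarks are also right: nullness of $\ell$ is not used in the commutator, and conjugating $[Q^2\Delta,\partial_u]=-2Q\ell\Delta$ by $K$ (using $KQK=Q^{-1}$ and $K\ell K=-Q^{-1}\ell$) recovers exactly $2\ell\Delta$.
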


\begin{proof}
By Lemma~\ref{KelvinConjugatesConstantOperatorsLemma},
\begin{equation}
T_\ell=-Q\partial_u+\ell E+m\ell,
\end{equation}
\noindent where $m=k-1$. We compute the commutator with $\Delta$. Using $[\Delta,Q]=E+k$, $[\Delta,\ell]=\partial_u$, $[\Delta,E]=2\Delta$, and $[\Delta,\partial_u]=0$, we find
\begin{align}
[\Delta,T_\ell]
&=
-[\Delta,Q]\partial_u+[\Delta,\ell E]+m[\Delta,\ell] \nonumber\\
&=
-(E+k)\partial_u+\partial_uE+2\ell\Delta+m\partial_u.
\end{align}
\noindent Since $\partial_uE=(E+1)\partial_u$ and $m=k-1$, the first-order terms cancel. Hence
\begin{equation}
[\Delta,T_\ell]=2\ell\Delta.
\end{equation}

We next prove the stability assertions. Since $T_\ell\in D_V$, left multiplication by $T_\ell$ preserves the left ideal $D_V\Delta$. Also, for $C\in D_V$,
\begin{equation}
T_\ell K(C\Delta)K
=
K(\partial_u C\Delta)K\in K(D_V\Delta)K.
\end{equation}
\noindent Thus $\mathcal S$ is stable under left multiplication by $T_\ell$.

For right multiplication, the relation $[\Delta,T_\ell]=2\ell\Delta$ gives
\begin{equation}
\Delta T_\ell=(T_\ell+2\ell)\Delta.
\end{equation}
\noindent Hence $D_V\Delta\cdot T_\ell\subset D_V\Delta$. On the opposite-chart summand, using $T_\ell=K\partial_uK$ and $K^2=\Id$, we have
\begin{equation}
K(C\Delta)K\,T_\ell
=
K(C\Delta\partial_u)K
=
K(C\partial_u\Delta)K\in K(D_V\Delta)K.
\end{equation}
\noindent Thus $\mathcal S$ is also stable under right multiplication by $T_\ell$.
\end{proof}

\begin{lem}\label{AllNullPowerCoefficientTermsLemma}
Let $\ell\in V^*$ be null. For every $q\geq 0$ and every $s\geq 0$, one has
\begin{equation}
Q^{-q}\ell^s\Delta\in \mathcal S.
\end{equation}
\end{lem}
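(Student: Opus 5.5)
The plan is to induct on the positive excess $e:=\operatorname{exc}_+(q,s)$. I will prove the slightly stronger statement that $Q^{-q}\ell^sR\Delta\in\mathcal S$ for every $R\in\kappa[\partial]$. By Lemma~\ref{SStableConstantOperatorsLemma} this costs nothing extra in the base case. It is also the form needed later, together with Lemma~\ref{CoefficientNullPowerNormalFormLemma}.

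\emph{Base case $e=0$.} Here $s\le q+2$, so Lemma~\ref{DirectNullPowerCaptureLemma} gives $Q^{-q}\ell^s\Delta\in K(D_V\Delta)K$ directly.

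\emph{Inductive step via $T_\ell$.} The main tool is the operator $T_\ell=K\partial_uK=-Q\partial_u+\ell E+m\ell$ of Lemma~\ref{KelvinVectorFieldCommutatorLemma}, which preserves $\mathcal S$ under both left and right multiplication. Since $\ell$ is null, $\partial_u\ell=\ell(u)=0$ and $\partial_uQ=\ell$. A direct computation then gives $[T_\ell,Q]=\ell Q$ and $[T_\ell,\ell]=\ell^2$, hence
\[
[T_\ell,Q^{-q}\ell^{s'}]=(s'-q)\,Q^{-q}\ell^{s'+1}.
\]
Combining this with $\Delta T_\ell=(T_\ell+2\ell)\Delta$, for $s'=s-1$ I get
\[
Q^{-q}\ell^{s-1}\Delta\,T_\ell
=T_\ell\,Q^{-q}\ell^{s-1}\Delta+(q+3-s)\,Q^{-q}\ell^{s}\Delta .
\]
The term $Q^{-q}\ell^{s-1}\Delta$ has excess one less, so it lies in $\mathcal S$ by induction. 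Both $T_\ell\,Q^{-q}\ell^{s-1}\Delta$ and $Q^{-q}\ell^{s-1}\Delta\,T_\ell$ then lie in $\mathcal S$ by the two-sided stability. The coefficient $q+3-s=1-\operatorname{exc}(q,s)$ is nonzero whenever the excess is at least $2$, so in that range I can divide and conclude $Q^{-q}\ell^s\Delta\in\mathcal S$. Carrying a factor $R$ through this step is harmless: $T_\ell$ is placed on the far left or far right, and Lemma~\ref{SStableConstantOperatorsLemma} absorbs $R$.

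\emph{Excess one.} The main obstacle is $\operatorname{exc}(q,s)=1$, i.e.\ $s=q+3$, where the coefficient $q+3-s$ vanishes. For this case I will use left multiplication by $\partial_u$, which preserves $\mathcal S$ by Lemma~\ref{SStableConstantOperatorsLemma}. Start from the excess-zero element $Q^{-(q+1)}\ell^{q+2}\Delta\in\mathcal S$ (by Lemma~\ref{DirectNullPowerCaptureLemma}) and set $q'=q+1$. Then
\[
\partial_u\,Q^{-q'}\ell^{q+2}\Delta
=Q^{-q'}\ell^{q+2}\partial_u\Delta-q'\,Q^{-q'-1}\ell^{q+3}\Delta .
\]
The last term has excess $0$, and the first term is an excess-zero term with $R=\partial_u$. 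So this relation does not by itself reach $s=q+3$ at pole order $q$. I therefore expect to combine it with a second relation, for instance the $T_\ell$-identity applied at a different pole order, say $q+1$ with $s=q+3$, where its coefficient is nonzero. The aim is a linear system in the excess-one monomials $Q^{-q}\ell^{q+3}R\Delta$ whose determinant is nonzero. Alternatively, I would write $Q^{-q}\ell^{q+3}\Delta=Q\cdot Q^{-q-1}\ell^{q+3}\Delta$ and commute $Q$ through using $Q=-K Q^{-1}K$, reducing to the Kelvin-side summand. I will first try the linear-system combination, checking the resulting $2\times 2$ coefficient determinant explicitly. If that succeeds, the excess-one case feeds back into the induction above and completes the proof for all $q\ge0$, $s\ge0$.
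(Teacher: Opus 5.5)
\paragraph{Review.} Your commutator computation is correct, and it shows exactly where the argument breaks: $[T_\ell,Q^{-q}\ell^{s-1}\Delta]=(s-q-3)\,Q^{-q}\ell^{s}\Delta$, and this coefficient vanishes at excess one. The paper's proof gets the nonzero coefficient $s-q+m-3$ only because it writes $T_\ell(Q^{-q}\ell^{s-1})\Delta$ where the Leibniz rule requires $[T_\ell,Q^{-q}\ell^{s-1}]\Delta$. The zeroth-order term $m\ell$ of $T_\ell$ enters the former but not the latter. You can confirm the vanishing by Kelvin conjugation: $Q^{-q}\ell^{q+2}\Delta=(-1)^qK(\ell^{q+2}\Delta)K$, and $[\partial_u,\ell^{q+2}]=0$, so $T_\ell=K\partial_uK$ commutes with it.

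Your step from excess $e\ge1$ to $e+1$ is a valid implication, but it has nothing to start from, since the case $e=1$ is left open. None of the repairs you sketch can close it:
\begin{itemize}
\item the $\partial_u$-relation only involves pole orders $q+1$ and $q+2$;
\item the $T_\ell$-identity at pole order $q+1$ with $s=q+3$ is an excess-zero instance, so it gives nothing new;
\item $\mathcal S$ is not stable under left multiplication by $Q$: since $KQK=Q^{-1}$, one has $Q\cdot K(A\Delta)K=K(Q^{-1}A\Delta)K$.
\end{itemize}

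\paragraph{The statement is false at excess one.} So no repair exists. Take $q=1$, $s=4$, and $\ell\neq0$.
\begin{enumerate}
\item The order-two piece $F_2\mathcal J_\Delta$ has sections $f\Delta$ on $V$ and $K(g\Delta)K=(g\circ I)Q^2\Delta$ on $w_0V$. Hence $F_2\mathcal J_\Delta\cong\mathcal O(2)$ and $H^1(X_{\mathrm{big}},F_2\mathcal J_\Delta)\cong\kappa[V][1/Q]/\bigl(\kappa[V]+Q^2\,(\kappa[V]\circ I)\bigr)$.
\item By the Laurent--Fischer decomposition, the two summands contain only pieces $Q^aZ_{\Delta,t}$ with $a\ge0$, respectively $a\le2-t$. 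So the class of $Q^{-1}\ell^4$ is nonzero.
\item The maps $H^1(X_{\mathrm{big}},F_{r-1}\mathcal J_\Delta)\to H^1(X_{\mathrm{big}},F_r\mathcal J_\Delta)$ are injective. By Hartogs, $H^0(X_{\mathrm{big}},\gr_r\mathcal J_\Delta)=H^0(G/P,\gr_r\mathcal J_\Delta)$, and such sections lift to $H^0(G/P,F_r\mathcal J_\Delta)$ because $H^1(G/P,F_{r-1}\mathcal J_\Delta)=0$.
\item That vanishing follows from $\gr_r\mathcal J_\Delta\cong\Sym^{r-2}T_{G/P}\otimes\mathcal O(2)$ together with $H^{>0}(T^*(G/P),\pi^*\mathcal O(2))=0$, where $\pi:T^*(G/P)\to G/P$ is the projection. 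The latter is relative Kodaira vanishing for the proper moment map $\mu$ onto the affine orbit closure: $\omega_{T^*(G/P)}$ is trivial and $\pi^*\mathcal O(2)$ is $\mu$-ample.
\item Passing to the colimit, $Q^{-1}\ell^4\Delta$ is nonzero in $D_{V_Q}\Delta/\mathcal S\cong H^1(X_{\mathrm{big}},\mathcal J_\Delta)$.
\end{enumerate}

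The same argument gives $Q^{-q}\ell^s\Delta\notin\mathcal S$ for every $q\ge1$ and $s\ge q+3$ with $\ell\ne0$. So the statement holds only for $s\le q+2$ or $q=0$. Proposition~\ref{LocalJDeltaSplittingProposition} and Lemma~\ref{HOneXbigJDeltaVanishingLemma} fail along with it.

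The intended conclusion $H^1(G/P,\mathcal J_\Delta)=0$ is still true. It should be proved directly from the vanishing of $H^1(G/P,\Sym^jT_{G/P}\otimes\mathcal O(2))$ via the order filtration, not through $X_{\mathrm{big}}$.
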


\begin{proof}
We argue by induction on $\operatorname{exc}_+(q,s)$. If $\operatorname{exc}_+(q,s)=0$, then $s\leq q+2$, and the claim follows from Lemma~\ref{DirectNullPowerCaptureLemma}.

Now suppose $s>q+2$, and assume the claim is known with $s$ replaced by $s-1$. Put
\begin{equation}
A:=Q^{-q}\ell^{s-1}\Delta.
\end{equation}
\noindent By the induction hypothesis, $A\in\mathcal S$. Since $\mathcal S$ is stable under both left and right multiplication by $T_\ell$ by Lemma~\ref{KelvinVectorFieldCommutatorLemma}, it follows that $[T_\ell,A]\in\mathcal S$.

We compute this commutator. First,
\begin{equation}
T_\ell(Q^{-q}\ell^{s-1})
=
(s-1-q+m)Q^{-q}\ell^s.
\end{equation}
\noindent Indeed, $T_\ell=-Q\partial_u+\ell E+m\ell$, while $\partial_u(\ell)=0$, $\partial_u(Q)=\ell$, and $E(Q^{-q}\ell^{s-1})=(-2q+s-1)Q^{-q}\ell^{s-1}$. Therefore
\begin{align}
[T_\ell,A]
&=
T_\ell(Q^{-q}\ell^{s-1})\Delta
+
Q^{-q}\ell^{s-1}[T_\ell,\Delta] \nonumber\\
&=
(s-1-q+m)Q^{-q}\ell^s\Delta
-
2Q^{-q}\ell^s\Delta \nonumber\\
&=
(s-q+m-3)Q^{-q}\ell^s\Delta.
\end{align}
\noindent Since $s>q+2$, we have $s\geq q+3$, and hence $s-q+m-3\geq m$. Since $m=k-1\geq 1$, the scalar $s-q+m-3$ is nonzero. Thus $Q^{-q}\ell^s\Delta\in\mathcal S$.
\end{proof}

\begin{lem}\label{LocalizedCoefficientWithConstantOperatorsLemma}
Let $f\in\kappa[V][1/Q]$ and let $R\in\kappa[\partial]$. Then
\begin{equation}
fR\Delta\in\mathcal S.
\end{equation}
\end{lem}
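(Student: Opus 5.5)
By linearity in $f$, and using the decomposition of $\kappa[V][1/Q]$ obtained from the Fischer decomposition together with Lemma~\ref{NullPowerSpanHarmonicsLemma}, I will reduce to the case
\[
f=Q^{-q}\ell^s,
\]
with $q\in\mathbb Z$, $s\ge 0$ and $\ell$ null. If $q<0$, then $f$ is a polynomial, so $fR\Delta\in D_V\Delta\subset\mathcal S$. I may therefore assume $q\ge 0$.

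Lemma~\ref{AllNullPowerCoefficientTermsLemma} already gives $Q^{-q}\ell^s\Delta\in\mathcal S$, which is the case $R=1$. The remaining task is to insert the constant-coefficient operator $R$ between the coefficient and $\Delta$. Stability of $\mathcal S$ under left multiplication by $\kappa[\partial]$ (Lemma~\ref{SStableConstantOperatorsLemma}) puts $R$ on the wrong side of the coefficient. So I will argue by induction on $\ord R$, reducing to a single vector field $R=\partial_u R'$.

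The inductive step is the commutator identity
\[
Q^{-q}\ell^s\,\partial_u R'\Delta=\partial_u\bigl(Q^{-q}\ell^s R'\Delta\bigr)-\bigl(\partial_u(Q^{-q}\ell^s)\bigr)R'\Delta .
\]
The first term lies in $\mathcal S$ by the induction hypothesis on $\ord R$ together with left $\kappa[\partial]$-stability. For the second term, I will compute
\[
\partial_u(Q^{-q}\ell^s)=-q\,Q^{-q-1}dQ_v(u)\,\ell^s+s\,Q^{-q}\ell^{s-1}\ell(u).
\]
Its coefficients are again in $\kappa[V][1/Q]$, and the operator part $R'$ has strictly smaller order. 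Re-expanding these coefficients in the null-power normal form of Lemma~\ref{CoefficientNullPowerNormalFormLemma} keeps the operator order equal to $\ord R'$, so the induction hypothesis applies to every resulting term.

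This means the induction must be set up as a statement about all $f\in\kappa[V][1/Q]$ at once for a fixed order of $R$, rather than for a single monomial $Q^{-q}\ell^s$. That formulation is what lets the argument absorb the change of coefficient produced by differentiating.

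The main thing to check is that this induction is well-founded: it runs purely on $\ord R$, with the base case $\ord R=0$ for arbitrary $f$ supplied by Lemma~\ref{AllNullPowerCoefficientTermsLemma} after the normal-form reduction. If that reduction turns out to fail for some $f$, the fallback is to induct on the pair $(\ord R,\operatorname{exc}_+)$, using right multiplication by $T_\ell$ (Lemma~\ref{KelvinVectorFieldCommutatorLemma}) as in the proof of Lemma~\ref{AllNullPowerCoefficientTermsLemma}.
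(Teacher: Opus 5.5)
Your proof is correct and is essentially the paper's argument: the case $R=1$ comes from the null-power normal form together with Lemma~\ref{AllNullPowerCoefficientTermsLemma}, and the general case is an induction on $\ord R$, run over all $f\in\kappa[V][1/Q]$ at once, using the left $\kappa[\partial]$-stability of $\mathcal S$ from Lemma~\ref{SStableConstantOperatorsLemma}. The only difference is cosmetic: you peel off one vector field at a time via $f\partial_uR'\Delta=\partial_u(fR'\Delta)-(\partial_uf)R'\Delta$, while the paper moves all of $R$ at once via $fR\Delta=R(f\Delta)-[R,f]\Delta$; your fallback via $T_\ell$ is not needed.
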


\begin{proof}
We first treat the case $R=1$. By Lemma~\ref{CoefficientNullPowerNormalFormLemma}, the element $f\Delta\in D_{V_Q}\Delta$ is a finite sum of terms $Q^{-q}\ell^s\Delta$, with $q\in\mathbb Z$ and $\ell$ null. If $q<0$, such a term lies in $D_V\Delta\subset\mathcal S$; if $q\geq 0$, it lies in $\mathcal S$ by Lemma~\ref{AllNullPowerCoefficientTermsLemma}. Hence $f\Delta\in\mathcal S$.

We now argue by induction on $\ord\, R$. The case $\ord\, R=0$ was just proved. Suppose $\ord\, R>0$ and that the claim is known for smaller order. We write
\begin{equation}
fR\Delta
=
R(f\Delta)-[R,f]\Delta.
\end{equation}
\noindent The first term lies in $\mathcal S$ by the case $R=1$ and Lemma~\ref{SStableConstantOperatorsLemma}. Since $R$ has constant coefficients, the commutator $[R,f]$ is a finite sum of terms $f_jR_j$, where $f_j\in\kappa[V][1/Q]$ and $\ord\, R_j<\ord\, R$. By the induction hypothesis, each $f_jR_j\Delta$ lies in $\mathcal S$. Thus $[R,f]\Delta\in\mathcal S$, and therefore $fR\Delta\in\mathcal S$.
\end{proof}

\begin{prop}\label{LocalJDeltaSplittingProposition}
We have
\begin{equation}
D_{V_Q}\Delta
=
D_V\Delta+K(D_V\Delta)K.
\end{equation}
\end{prop}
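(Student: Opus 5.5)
The plan is to show that every element of $D_{V_Q}\Delta$ lies in $\mathcal S:=D_V\Delta+K(D_V\Delta)K$; the inclusion $\mathcal S\subset D_{V_Q}\Delta$ is already built into the definition of $\mathcal S$. The preceding lemmata do almost all of the work, so the remaining task is to reduce an arbitrary element of $D_{V_Q}\Delta$ to sums of the terms treated in Lemma~\ref{LocalizedCoefficientWithConstantOperatorsLemma}.

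First, I will write a general element of $D_{V_Q}\Delta$ as $A\Delta$ with $A\in D_{V_Q}$. Since $V_Q$ is a principal open subset of affine space, $D_{V_Q}=\kappa[V][1/Q]\otimes_{\kappa[V]}D_V$. Using normal ordering, with coefficients placed to the left, every $A\in D_{V_Q}$ is a finite sum $\sum_i f_iR_i$ with $f_i\in\kappa[V][1/Q]$ and $R_i\in\kappa[\partial]$ constant-coefficient operators. This is the only structural input needed: the vector fields $\partial_{x_j},\partial_{y_j}$ together with functions generate $D_{V_Q}$, and the commutators $[\partial,f]$ stay in $\kappa[V][1/Q]$, so normal ordering works exactly as it does on $V$.

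Then $A\Delta=\sum_i f_iR_i\Delta$, and each summand lies in $\mathcal S$ by Lemma~\ref{LocalizedCoefficientWithConstantOperatorsLemma}. Since $\mathcal S$ is closed under finite sums, $A\Delta\in\mathcal S$, which gives $D_{V_Q}\Delta\subset\mathcal S$ and hence equality.

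I expect no real obstacle at this stage. The substantive content sits in the earlier lemmata: the excess induction of Lemma~\ref{AllNullPowerCoefficientTermsLemma}, which uses the nonvanishing scalar $s-q+m-3$, and the stability of $\mathcal S$ under $\kappa[\partial]$ and $T_\ell$. The one point to check carefully is that the normal form $\sum f_iR_i$ over $\kappa[V][1/Q]$ indeed covers all of $D_{V_Q}$, with no exotic operators. This holds because $\kappa[V][1/Q]$ is a localization of a polynomial ring, whose Grothendieck differential operators are the localized Weyl algebra (cf.\ \cite[Corollary~15.5.6]{McConnellRobson2001}).
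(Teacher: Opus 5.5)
Your reduction is the paper's proof essentially verbatim: normal-order $A\Delta=\sum_i f_iR_i\Delta$, then apply Lemma~\ref{LocalizedCoefficientWithConstantOperatorsLemma}. The normal-ordering step itself is fine. The gap is in the input you single out as the substantive content, the scalar $s-q+m-3$ in Lemma~\ref{AllNullPowerCoefficientTermsLemma}, which is miscomputed. The zeroth-order term $m\ell$ of $T_\ell=-Q\partial_u+\ell E+m\ell$ commutes with functions. So $[T_\ell,Q^{-q}\ell^{s-1}]$ is multiplication by $(-Q\partial_u+\ell E)(Q^{-q}\ell^{s-1})=(s-1-q)Q^{-q}\ell^{s}$, using $\partial_uQ=\ell$ and $\partial_u\ell=2Q(u)=0$. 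It is not multiplication by $T_\ell(Q^{-q}\ell^{s-1})$. Combined with $[T_\ell,\Delta]=-2\ell\Delta$, this gives
\[
[T_\ell,\,Q^{-q}\ell^{s-1}\Delta]=(s-q-3)\,Q^{-q}\ell^{s}\Delta .
\]
This coefficient vanishes exactly at $s=q+3$, the first step beyond the base case of Lemma~\ref{DirectNullPowerCaptureLemma}, so the excess induction never starts.

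This cannot be patched, because the proposition is false. Take a null $\ell\neq0$ and set $c:=Q^{-1}\ell^{4}\Delta$.

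First, $c$ gives a nonzero class in $H^1(X_{\mathrm{big}},F_2\mathcal J_\Delta)$. The order-zero piece $F_2\mathcal J_\Delta=\mathcal O\cdot\Delta_g$ is a line bundle with generators $\Delta$ on $V$ and $K\Delta K=Q^2\Delta$ on $w_0V$, so $F_2\mathcal J_\Delta\cong\mathcal O(2)$. Since $K(Q^{a}h_s\Delta)K=\pm Q^{2-a-s}h_s\Delta$, the direct decomposition $\kappa[V][1/Q]=\bigoplus_{T\in\Z,\,s\ge0}Q^{-T}Z_{\Delta,s}$ (Lemma~\ref{FischerDecompositionLemma} with $Q$ inverted) shows the following. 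The space $\kappa[V]\Delta+K(\kappa[V]\Delta)K$ is exactly the sum of the summands $Q^{-T}Z_{\Delta,s}\Delta$ with $T\le0$ or $T\ge s-2$. But $c$ lies in the summand $(T,s)=(1,4)$, which is not among these.

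Second, the map $H^1(X_{\mathrm{big}},F_2\mathcal J_\Delta)\to H^1(X_{\mathrm{big}},\mathcal J_\Delta)$ is injective. A section of $\mathcal J_\Delta/F_2\mathcal J_\Delta$ over $X_{\mathrm{big}}$ extends to $G/P$ by Hartogs, since that sheaf is a filtered union of the locally free sheaves $F_r\mathcal J_\Delta/F_2\mathcal J_\Delta$. It then lifts to $\Gamma(G/P,\mathcal J_\Delta)$ because $H^1(G/P,\mathcal O(2))=0$. Hence the connecting map is zero.

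By Lemma~\ref{CechReductionJDeltaLemma}, it follows that $c\notin D_V\Delta+K(D_V\Delta)K$. In fact $H^1(X_{\mathrm{big}},\mathcal J_\Delta)$ contains the infinite-dimensional space $H^1(X_{\mathrm{big}},\mathcal O(2))$.

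This also explains why the corrected coefficient must vanish. Since $KT_\ell K=\partial_u$, $\ad_{T_\ell}$ preserves both \v{C}ech summands $\kappa[V]\Delta$ and $K(\kappa[V]\Delta)K$. It sends the coboundary $Q^{-q}\ell^{q+2}\Delta$ to a multiple of $Q^{-q}\ell^{q+3}\Delta$, which for $q\ge1$ is not a coboundary, so that multiple must be zero.

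The global statement $H^1(G/P,\mathcal J_\Delta)=0$ is still true, but it needs a different argument. For example, use $\gr\mathcal J_\Delta\cong\Sym^\bullet T_{G/P}\otimes\mathcal O(2)$ together with relative Kodaira vanishing for the projective moment map $T^*(G/P)\to\g^*$. It cannot be deduced through this proposition.
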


\begin{proof}
The inclusion $D_V\Delta+K(D_V\Delta)K\subset D_{V_Q}\Delta$ is clear. Conversely, by normal ordering, every element of $D_{V_Q}\Delta$ is a finite sum of terms
\begin{equation}
fR\Delta,
\end{equation}
\noindent with $f\in\kappa[V][1/Q]$ and $R\in\kappa[\partial]$. By Lemma~\ref{LocalizedCoefficientWithConstantOperatorsLemma}, every such term lies in $\mathcal S$. Therefore $D_{V_Q}\Delta\subset\mathcal S$, proving the equality.
\end{proof}

\begin{lem}\label{HOneXbigJDeltaVanishingLemma}
We have
\begin{equation}
H^1(X_{\mathrm{big}},\mathcal J_\Delta)=0.
\end{equation}
\end{lem}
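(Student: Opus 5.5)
This follows by combining the \v{C}ech reduction of Lemma~\ref{CechReductionJDeltaLemma} with the local splitting of Proposition~\ref{LocalJDeltaSplittingProposition}. The plan is to make sure the reduction is valid and then read off the vanishing.

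First, I would justify computing $H^1(X_{\mathrm{big}},\mathcal J_\Delta)$ by \v{C}ech cohomology for the two-element cover $\{V,w_0V\}$. Both opens and their intersection $V_Q$ are affine. $\mathcal J_\Delta$ is a quasi-coherent $\mathcal O$-module: by the proof of Proposition~\ref{KernelSurjectionLemma}, each $F_m\mathcal J_\Delta\cong F_{m-2}\mathcal D_{\mathcal L}\cdot\Delta_g$ is locally free of finite rank, and $\mathcal J_\Delta=\varinjlim_m F_m\mathcal J_\Delta$. Hence the higher cohomology of $\mathcal J_\Delta$ on each affine piece vanishes, since cohomology commutes with filtered colimits on a Noetherian space. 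Leray's theorem then identifies sheaf cohomology with \v{C}ech cohomology.

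Second, I would identify the \v{C}ech $H^1$ with
\[
D_{V_Q}\Delta\big/\bigl(D_V\Delta+K(D_V\Delta)K\bigr),
\]
as in Lemma~\ref{CechReductionJDeltaLemma}. The one point to check is that the transition between the trivializations $\sigma_e$ and $\sigma_{w_0}$ is Kelvin conjugation. This follows from Proposition~\ref{linetwistingProp} together with the definition of $K$ in Proposition~\ref{w0onDelta}. Under that transition, $\Delta_{w_0}$ corresponds to $K\Delta K=Q^2\Delta$, a unit multiple of $\Delta$ on $V_Q$, as in Proposition~\ref{HarmonicIdealProp}. So the image of $\Gamma(w_0V,\mathcal J_\Delta)$ in $\Gamma(V_Q,\mathcal J_\Delta)=D_{V_Q}\Delta$ is exactly $K(D_V\Delta)K$.

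Finally, Proposition~\ref{LocalJDeltaSplittingProposition} says this quotient is zero, and the lemma follows. The only delicate step is the quasi-coherence and acyclicity justification, since $\mathcal J_\Delta$ is not coherent over $\mathcal O$. The filtered-colimit argument above should settle it.
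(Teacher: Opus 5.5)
Your proposal is correct and matches the paper's proof, which deduces the lemma immediately from the \v{C}ech reduction of Lemma~\ref{CechReductionJDeltaLemma} and the splitting of Proposition~\ref{LocalJDeltaSplittingProposition}. Your extra checks (quasi-coherence via the filtered colimit of the $F_m\mathcal J_\Delta$, acyclicity of the affine cover, and the Kelvin-conjugation transition between $\sigma_e$ and $\sigma_{w_0}$) just re-derive what Lemma~\ref{CechReductionJDeltaLemma} already establishes.
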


\begin{proof}
This follows immediately from Lemma~\ref{CechReductionJDeltaLemma} and Proposition~\ref{LocalJDeltaSplittingProposition}.
\end{proof}

\begin{thm}\label{HOneJDeltaVanishingTheorem}
We have
\begin{equation}
H^1(G/P,\mathcal J_\Delta)=0.
\end{equation}
\end{thm}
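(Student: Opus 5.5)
The plan is to extend Lemma~\ref{HOneXbigJDeltaVanishingLemma} from $X_{\mathrm{big}}$ to all of $G/P$ by a depth (codimension-two) argument across $Z:=G/P\setminus X_{\mathrm{big}}=\P(C_V)$, which has codimension $2$ by \eqref{deepboundary}. Write $j:X_{\mathrm{big}}\hookrightarrow G/P$ for the inclusion. I will use the local cohomology sequence
\begin{equation*}
H^1_Z(G/P,\mathcal J_\Delta)\longrightarrow H^1(G/P,\mathcal J_\Delta)\longrightarrow H^1(X_{\mathrm{big}},\mathcal J_\Delta)\longrightarrow H^2_Z(G/P,\mathcal J_\Delta).
\end{equation*}
The third term vanishes by Lemma~\ref{HOneXbigJDeltaVanishingLemma}. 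It therefore suffices to prove $H^1_Z(G/P,\mathcal J_\Delta)=0$.

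For this I will use the filtration $F_m\mathcal J_\Delta$ from the proof of Proposition~\ref{KernelSurjectionLemma}. There it is shown that each $F_m\mathcal J_\Delta\cong F_{m-2}\mathcal D_{\mathcal L}$ is locally free of finite rank on the smooth variety $G/P$. Since $\operatorname{codim}Z=2$, the sheaves $\mathcal H^i_Z(F_m\mathcal J_\Delta)$ vanish for $i<2$; this is the statement that locally free sheaves have depth at least $2$ along $Z$. The local-to-global spectral sequence then gives $H^0_Z=H^1_Z=0$ for each $F_m\mathcal J_\Delta$. Local cohomology on a Noetherian space commutes with filtered colimits, and $\mathcal J_\Delta=\varinjlim_m F_m\mathcal J_\Delta$ is quasi-coherent. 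Hence $H^1_Z(G/P,\mathcal J_\Delta)=\varinjlim_m H^1_Z(G/P,F_m\mathcal J_\Delta)=0$, and the theorem follows.

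The main obstacle is justifying that $\mathcal J_\Delta$ is genuinely the filtered union of the locally free pieces $F_m\mathcal J_\Delta$ as $\mathcal O$-modules, globally and not only chartwise. The key point is that the order filtration is intrinsic, because conjugation by a change of trivialization of $\mathcal L$ preserves order. Since $\Delta_g$ and $\Delta_h$ differ by a unit on overlaps by Proposition~\ref{HarmonicIdealProp}, the chartwise description $F_m\mathcal J_\Delta=F_{m-2}\mathcal D_{\mathcal L}\cdot\Delta_g$ glues. Once this is in place, the rest is formal.

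An alternative route avoids local cohomology. Given a class in $H^1(G/P,\mathcal J_\Delta)$, restrict it to $X_{\mathrm{big}}$, where it is trivial. Then use Hartogs extension of sections of $F_m\mathcal J_\Delta$ (and of the quotient sheaves $F_m\mathcal D_{\mathcal L}/F_m\mathcal J_\Delta$, which are locally free by the proof of Proposition~\ref{KernelSurjectionLemma}) across $Z$. This shows that the corresponding extension of $\mathcal O$-modules splits. I would fall back on this version if the colimit interchange needs more care.
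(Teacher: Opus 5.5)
Your proposal is correct and follows the paper's own proof essentially step for step. You use the local cohomology sequence to reduce to $H^1_Z(G/P,\mathcal J_\Delta)=0$, then get that vanishing from depth for the locally free pieces $F_m\mathcal J_\Delta$ together with the commutation of local cohomology with filtered colimits. The gluing concern you flag does not arise in the paper: $F_m\mathcal J_\Delta$ is defined globally as $F_m\mathcal D_{\mathcal L}\cap\mathcal J_\Delta$, and the chartwise identification $F_m\mathcal J_\Delta|_{V_g}=F_{m-2}\mathcal D_{\mathcal L}|_{V_g}\cdot\Delta_g$ is used only to show local freeness.
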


\begin{proof}
Put $X:=G/P$ and let
\begin{equation}
Z:=X\setminus X_{\mathrm{big}}.
\end{equation}
\noindent Since $X_{\mathrm{big}}=V\cup w_0V$, the complement is the closed subvariety
\begin{equation}
Z=\{[0:v:0]:Q(v)=0\}\cong \mathbb P(C_V).
\end{equation}
\noindent In particular, $Z$ has codimension $2$ in $X$.

We use the long exact sequence for cohomology with supports in $Z$:
\begin{equation}
\cdots
\longrightarrow
H^1_Z(X,\mathcal J_\Delta)
\longrightarrow
H^1(X,\mathcal J_\Delta)
\longrightarrow
H^1(X_{\mathrm{big}},\mathcal J_\Delta)
\longrightarrow
\cdots .
\end{equation}
\noindent By Lemma~\ref{HOneXbigJDeltaVanishingLemma}, the last term is zero. It therefore remains to show that
\begin{equation}
H^1_Z(X,\mathcal J_\Delta)=0.
\end{equation}

We prove this using the order filtration. The sheaf $\mathcal J_\Delta$ is the filtered union of its order-filtered pieces:
\begin{equation}
\mathcal J_\Delta=\varinjlim_m F_m\mathcal J_\Delta.
\end{equation}
\noindent Each $F_m\mathcal J_\Delta$ is a coherent locally free $\mathcal O_X$-module. Indeed, locally on a Bruhat chart one has $F_m\mathcal J_\Delta\simeq F_{m-2}\mathcal D_{\mathcal L}$ by multiplication by the local generator of the harmonic ideal.

Since $X$ is smooth and $Z$ has codimension $2$, the standard depth-vanishing theorem for local cohomology gives
\begin{equation}
H^i_Z(X,F_m\mathcal J_\Delta)=0
\qquad
\text{for } i=0,1
\end{equation}
\noindent for every $m$. Local cohomology commutes with filtered colimits of quasi-coherent sheaves on the noetherian scheme $X$, so
\begin{equation}
H^1_Z(X,\mathcal J_\Delta)
=
\varinjlim_m H^1_Z(X,F_m\mathcal J_\Delta)
=
0.
\end{equation}
\noindent Therefore the map
\begin{equation}
H^1(X,\mathcal J_\Delta)\longrightarrow H^1(X_{\mathrm{big}},\mathcal J_\Delta)
\end{equation}
\noindent is injective. Since the target is zero, we conclude that $H^1(X,\mathcal J_\Delta)=0$.
\end{proof}

\begin{cor}\label{DLSurjectsOntoHCorollary}
The natural map
\begin{equation}
\Gamma(G/P,\mathcal D_{\mathcal L})
\longrightarrow
\Gamma(G/P,\mathcal H)
\end{equation}
\noindent is surjective.
\end{cor}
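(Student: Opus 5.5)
The plan is to read the corollary off the long exact sequence attached to the defining short exact sequence of $\mathcal H$. Theorem~\ref{HOneJDeltaVanishingTheorem} supplies the only nontrivial input, so no new ingredients should be needed.

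Concretely, the short exact sequence of sheaves
\begin{equation*}
0\longrightarrow\mathcal J_\Delta\longrightarrow\mathcal D_{\mathcal L}\longrightarrow\mathcal H\longrightarrow 0
\end{equation*}
on $G/P$ gives, on applying $\Gamma(G/P,-)$, the exact segment
\begin{equation*}
\Gamma(G/P,\mathcal D_{\mathcal L})\longrightarrow\Gamma(G/P,\mathcal H)\longrightarrow H^1(G/P,\mathcal J_\Delta).
\end{equation*}
This segment was displayed just before Lemma~\ref{CechReductionJDeltaLemma}. Theorem~\ref{HOneJDeltaVanishingTheorem} says the right-hand term vanishes, so exactness at $\Gamma(G/P,\mathcal H)$ forces the first map to be surjective.

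The one point I would double-check is that the long exact sequence is being used in the correct category. Here $\mathcal J_\Delta$, $\mathcal D_{\mathcal L}$ and $\mathcal H$ are quasi-coherent $\mathcal O_{G/P}$-modules, being filtered unions of coherent order-filtered pieces, and $H^1$ means sheaf cohomology of abelian sheaves. That is exactly the cohomology shown to vanish in Theorem~\ref{HOneJDeltaVanishingTheorem}, where it was computed through \v{C}ech and local cohomology. I therefore expect no real obstacle; the substantive work lies in Proposition~\ref{LocalJDeltaSplittingProposition} and the codimension-$2$ depth argument, both already established.

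Finally, I would note the consequence that motivates the corollary. Composing with Proposition~\ref{UGSurjectsOntoGlobalTDOLemma} and the identification $\Gamma(G/P,\mathcal H)\cong D_C$ of Theorem~\ref{GlobalSectionsTheorem} gives surjectivity of $\mathcal U(\g)\to D_C$. This is to be matched with $\rho$ in the subsequent Theorem~\ref{rhoSurjective}.
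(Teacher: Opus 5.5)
Your proposal is correct and is exactly the paper's argument: the long exact cohomology sequence of $0\to\mathcal J_\Delta\to\mathcal D_{\mathcal L}\to\mathcal H\to 0$ contains the segment $\Gamma(G/P,\mathcal D_{\mathcal L})\to\Gamma(G/P,\mathcal H)\to H^1(G/P,\mathcal J_\Delta)$, and Theorem~\ref{HOneJDeltaVanishingTheorem} makes the last term zero. Your remark about which cohomology is meant is harmless, since the long exact sequence exists for any short exact sequence of abelian sheaves.
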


\begin{proof}
This follows immediately from the short exact sequence
\begin{equation}
0 \longrightarrow \mathcal J_\Delta \longrightarrow \mathcal D_{\mathcal L} \longrightarrow \mathcal H \longrightarrow 0
\end{equation}
\noindent and Theorem~\ref{HOneJDeltaVanishingTheorem}. Indeed, the associated long exact sequence in cohomology contains
\begin{equation}
\Gamma(G/P,\mathcal D_{\mathcal L})
\longrightarrow
\Gamma(G/P,\mathcal H)
\longrightarrow
H^1(G/P,\mathcal J_\Delta).
\end{equation}
\noindent Since $H^1(G/P,\mathcal J_\Delta)=0$, the first map is surjective.
\end{proof}

\begin{rem}
Corollary \ref{DLSurjectsOntoHCorollary} is the statement needed below: every global section of $\mathcal H$ lifts to a global section of $\mathcal D_{\mathcal L}$. All that Corollary \ref{DLSurjectsOntoHCorollary} requires is that the connecting map $\Gamma(G/P,\mathcal H)\to H^1(G/P,\mathcal J_\Delta)$ is 0; Theorem \ref{HOneJDeltaVanishingTheorem} proves the stronger statement that the entire cohomology group $H^1(G/P,\mathcal J_\Delta)$ vanishes.
\end{rem}

\begin{thm}\label{rhoSurjective}
The map
\begin{equation}
\rho:\mathcal U(\g)\longrightarrow D_C
\end{equation}
\noindent is surjective. In particular, $D_C$ is finitely generated and Noetherian.
\end{thm}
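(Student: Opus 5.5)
The plan is to factor $\rho$ as a composite of three maps already shown to be surjective, and then to check that this composite really is $\rho$. Write $\Phi:\mathcal U(\g)\to\Gamma(G/P,\mathcal D_{\mathcal L})$ for the infinitesimal action map, and $r:\Gamma(G/P,\mathcal D_{\mathcal L})\to\Gamma(G/P,\mathcal H)$ for the quotient map. Let $\Psi$ denote the identification of Theorem~\ref{GlobalSectionsTheorem}: restriction to the big cell $V$ gives
\[
\Gamma(G/P,\mathcal H)\xrightarrow{\sim}N(D_V\Delta)/D_V\Delta,
\]
and $\widehat\tau$ then gives $N(D_V\Delta)/D_V\Delta\xrightarrow{\sim}D_C$. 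I would prove that
\[
\rho=\Psi\circ r\circ\Phi
\]
as algebra maps $\mathcal U(\g)\to D_C$. Surjectivity then follows term by term:
\begin{itemize}
\item $\Phi$ is surjective by Proposition~\ref{UGSurjectsOntoGlobalTDOLemma};
\item $r$ is surjective by Corollary~\ref{DLSurjectsOntoHCorollary}, which comes from Theorem~\ref{HOneJDeltaVanishingTheorem};
\item $\Psi$ is an isomorphism by Theorem~\ref{GlobalSectionsTheorem}.
\end{itemize}

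For the identity, all three maps are algebra homomorphisms, so it suffices to check agreement on generators $\xi\in\g$. Here one point needs care: $r$ is a priori only a map of left $\mathcal D_{\mathcal L}$-modules. I would first observe that the image of $\Gamma(\mathcal D_{\mathcal L})$ in $\Gamma(\mathcal H)$ consists of classes normalizing $\mathcal J_\Delta$. This holds on the generators $\Phi(\xi)$ by Proposition~\ref{DeltaPres}. Alternatively, every global section of $\mathcal H$ is harmonic by Theorem~\ref{GlobalSectionsTheorem}, which forces any lift to lie in $N(D_V\Delta)$ over $V$. Granting this, the composite $\Gamma(\mathcal D_{\mathcal L})\to N(D_V\Delta)/D_V\Delta$ given by restricting to $V$ is multiplicative, because restriction is a ring map and $N(D_V\Delta)/D_V\Delta$ carries the quotient algebra structure.

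On a generator $\xi$, restricting $\Phi(\xi)$ to $V$ in the trivialization $\sigma_0$ gives $\phi(\xi)$, by Proposition~\ref{linetwistingProp} and the definition of $\phi$. Next, $\widehat\tau([\phi(\xi)])$ equals $[Q^*\tau(\phi(\xi))(Q^*)^{-1}]=[Q^*\rho_{\mathrm{amb}}(\xi)(Q^*)^{-1}]$. By Proposition~\ref{rhoConstructionProp}, this is exactly $\rho(\xi)$. I also need $\widehat\tau$ to be an algebra isomorphism. It is the composite of the algebra isomorphism $\tau$ with conjugation by $Q^*$, and Remark~\ref{tauisom} shows these descend compatibly to the quotients.

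I expect the main obstacle to be the sign and twist conventions in identifying $\Phi(\xi)|_V$ with $\phi(\xi)$. The action on $\mathcal L$ is a left action, and differentiating a left action yields an anti-homomorphism unless the $g^{-1}$ in \eqref{twistedaction} is accounted for. Here it is, since $\phi$ was defined as the derivative of $t\mapsto\exp(t\xi)f$, and $\Phi$ is defined the same way. A second point to check is that $\widehat\tau$ is multiplicative rather than anti-multiplicative on normalizer quotients. I would verify this directly: if $\xi_1,\xi_2\in N(D_V\Delta)$, then
\[
Q^*\tau(\xi_1\xi_2)(Q^*)^{-1}=\bigl(Q^*\tau(\xi_1)(Q^*)^{-1}\bigr)\bigl(Q^*\tau(\xi_2)(Q^*)^{-1}\bigr).
\]
This completes the proof of surjectivity. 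Finite generation follows since $D_C$ is a quotient of the finitely generated algebra $\mathcal U(\g)$, and Noetherianness follows since $\mathcal U(\g)$ is Noetherian by PBW and quotients of Noetherian rings are Noetherian.
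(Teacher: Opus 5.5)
Your proposal is correct and follows the paper's proof: you factor $\rho$ as $\mathcal U(\g)\to\Gamma(G/P,\mathcal D_{\mathcal L})\to\Gamma(G/P,\mathcal H)\cong D_C$, get surjectivity from Proposition~\ref{UGSurjectsOntoGlobalTDOLemma}, Corollary~\ref{DLSurjectsOntoHCorollary} and Theorem~\ref{GlobalSectionsTheorem}, and deduce finite generation and Noetherianity from PBW. Your extra check that this composite really is $\rho$ spells out what the paper simply asserts: multiplicativity comes through the normalizer quotient and $\widehat\tau$, and agreement on $\g$ follows from Proposition~\ref{rhoConstructionProp}.
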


\begin{proof}
By Proposition~\ref{UGSurjectsOntoGlobalTDOLemma}, the map
\begin{equation}
\mathcal U(\g)\longrightarrow \Gamma(G/P,\mathcal D_{\mathcal L})
\end{equation}
\noindent induced by the infinitesimal $G$-action is surjective. By Corollary~\ref{DLSurjectsOntoHCorollary}, the natural map
\begin{equation}
\Gamma(G/P,\mathcal D_{\mathcal L})
\longrightarrow
\Gamma(G/P,\mathcal H)
\end{equation}
\noindent is also surjective. Hence the composite
\begin{equation}
\mathcal U(\g)
\longrightarrow
\Gamma(G/P,\mathcal D_{\mathcal L})
\longrightarrow
\Gamma(G/P,\mathcal H)
\end{equation}
\noindent is surjective.

By Theorem~\ref{GlobalSectionsTheorem}, we have a canonical identification
\begin{equation}
\Gamma(G/P,\mathcal H)\cong D_C.
\end{equation}
\noindent Under this identification, the composite above is precisely the map $\rho:\mathcal U(\g)\to D_C$. Therefore $\rho$ is surjective.

Finally, $\mathcal U(\g)$ is a finitely generated Noetherian $\kappa$-algebra by the PBW theorem. Since $D_C$ is a quotient of $\mathcal U(\g)$, it is also finitely generated and Noetherian.
\end{proof}

\begin{rem}
    We note that this proof is independent of the argument in \cite{LevasseurSmithStafford1989Joseph}. Thus, if we let $\mathcal{J} := \ker(\rho)$, so that $D_C \cong \mathcal{U}(\g)/\mathcal{J}$, one may take this as an independent construction of the Joseph ideal.
\end{rem}

\subsection{\texorpdfstring{Some Compatibilities; the Bimodule Structure of $\mathcal{H}$}{Some Compatibilities and the Bimodule Structure of H}}\label{CompatibilitiesSect}

Let us return for the moment to the action of $G$ on $D_C$ (Proposition \ref{G-actsdef}). Recall that this is given by the descent of the Adjoint action of $G$ on $\mathcal{U}(\g)$ to $D_C$. On the other hand, we also have an action of $G$ on $\Gamma(\mathcal{H}) \cong D_C$ given by the action of $G$ on $G/P$ (cf. Remark \ref{twoactsremk}). We must verify that these are in fact the same action.

\begin{prop}\label{IndActionDef}
$\mathcal{H}$ is a $G$-equivariant $\mathcal{D}_{\mathcal L}$-module. Therefore $G$ acts on $\Gamma(\mathcal H)$. Under the identification of $\Gamma(\mathcal{H})$ with $D_C$ from Theorem \ref{GlobalSectionsTheorem}, this action agrees with the action $\alpha$ (cf.\ Proposition~\ref{G-actsdef}) coming from the adjoint action of $G$ on $\mathcal U(\g)$ and then applying $\rho$.
\end{prop}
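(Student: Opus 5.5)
The proof has two parts. First I show that $\mathcal H$ is $G$-equivariant, which gives a $G$-action on $\Gamma(\mathcal H)$. Then I compare this action with $\alpha$, differentiating and reducing to a check on generators.

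\emph{Step 1: equivariance.} The line bundle $\mathcal L=G\times^{P,-\eta}\A^1$ is $G$-equivariant, so $\mathcal D_{\mathcal L}$ carries the conjugation action of \eqref{eq:action-on-operators}. It then suffices to show that $\mathcal J_\Delta$ is a $G$-stable subsheaf. Translation by $g$ carries the chart $V_h$ to $V_{gh}$ and carries the local generator $\Delta_h$ to $\Delta_{gh}$, up to the unit $c^2$ produced by the cocycle; this is exactly Proposition~\ref{prop:group-action-on-Delta}, combined with the computation in the proof of Proposition~\ref{HarmonicIdealProp}. Hence $g\cdot\mathcal J_\Delta=\mathcal J_\Delta$, the quotient $\mathcal H$ inherits a $G$-equivariant $\mathcal D_{\mathcal L}$-module structure, and $G$ acts on $\Gamma(\mathcal H)$.

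\emph{Step 2: equivariance of the surjection.} By Theorem~\ref{rhoSurjective}, $\rho$ factors as
\[
\mathcal U(\g)\xrightarrow{\Phi}\Gamma(G/P,\mathcal D_{\mathcal L})\to\Gamma(G/P,\mathcal H)\cong D_C .
\]
I claim each map intertwines $\Ad$ on $\mathcal U(\g)$ with the geometric action.
\begin{itemize}
\item For $\Phi$ this is the standard identity $g\,\Phi(\xi)\,g^{-1}=\Phi(\Ad_g\xi)$. It holds because $\Phi(\xi)$ is the derivative of the $G$-action along $\exp(t\xi)$, and $g\exp(t\xi)g^{-1}=\exp(t\Ad_g\xi)$. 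It extends to $\mathcal U(\g)$ multiplicatively.
\item The quotient map $\mathcal D_{\mathcal L}\to\mathcal H$ is $G$-equivariant by Step 1.
\end{itemize}
Therefore, for $u\in\mathcal U(\g)$,
\[
g\cdot\rho(u)=\rho(\Ad_g u)=\alpha_g(\rho(u)).
\]
Since $\rho$ is surjective, the two actions agree on all of $D_C$.

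\emph{Step 3: the main obstacle.} The real content is the claim that the composite $\mathcal U(\g)\to\Gamma(\mathcal H)\cong D_C$ equals $\rho$ as constructed in Proposition~\ref{rhoConstructionProp}, where the identification with $D_C$ is made via $\widehat\tau$. Both maps are algebra homomorphisms, so it suffices to check agreement on $\xi\in\g$. Restricting to the big cell, $\Phi(\xi)$ becomes the operator $\phi(\xi)$ of Proposition~\ref{ConfActionProp}, taken in the trivialization $\sigma_0$. By Proposition~\ref{DeltaPres}, $\phi(\xi)$ lies in $N(D_V\Delta)$. Its image is then
\[
\widehat\tau([\phi(\xi)])=\bigl[Q^*\,\tau\phi(\xi)\,(Q^*)^{-1}\bigr]=\bigl[\widetilde\rho(\xi)\bigr]=\rho(\xi),
\]
using Definition~\ref{hattaudef} and \eqref{CorrectedAmbientFormula}.

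The care needed is in two places:
\begin{itemize}
\item the sign conventions of the differentiated twisted action must match those of $\Phi$;
\item restriction to $V$ must be injective on $\Gamma(\mathcal H)$, which holds by Theorem~\ref{GlobalSectionsTheorem}.
\end{itemize}
I expect the sign matching to be the only delicate bookkeeping.
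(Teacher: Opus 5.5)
Your proposal is correct and follows essentially the same route as the paper. Both arguments use the same ingredients: $G$-stability of $\mathcal J_\Delta$ from the semi-invariance of $\Delta$; identification of the composite $\mathcal U(\g)\to\Gamma(\mathcal D_{\mathcal L})\to\Gamma(\mathcal H)\xrightarrow{\widehat\tau\circ\res|_V}D_C$ with $\rho$, which you verify explicitly on $\g$ via $\phi$ and $\widetilde\rho$ where the paper says it follows from the definitions; the intertwining $g\,\Phi(u)\,g^{-1}=\Phi(\Ad_g u)$ together with $g\cdot[1]=[1]$; and finally surjectivity of $\rho$ from Theorem~\ref{rhoSurjective}.
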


\begin{proof}
The action of $G$ on $G/P$ lifts to an action on the equivariant line bundle $\mathcal L$, and hence on the sheaf $\D_{\mathcal L}$ of twisted differential operators. Thus $G$ acts on $\D_{\mathcal L}$ by algebra automorphisms. Now, $\Delta$ is not $G$-invariant as a local operator on the standard Bruhat chart. However, the sheaf of left ideals $\mathcal J_\Delta$ is $G$-stable: equivalently, on Bruhat charts the local generator $\Delta_g$ is carried to an invertible multiple of the corresponding local generator on the translated chart (cf.\ \ref{eq:Delta-semi-invariant}). Therefore $G$ acts on the quotient sheaf $\mathcal H=\D_{\mathcal L}/\mathcal{J}_\Delta$; i.e., $\mathcal H$ is a $G$-equivariant $\D_{\mathcal L}$-module. In particular, $G$ acts on global sections $\Gamma(\mathcal H)$.

Consider the diagram:
\begin{equation}\label{firstcompatdiagram}
\begin{tikzcd}
\mathcal{U}(\mathfrak{g}) \arrow[rd, "\mu"'] \arrow[r, "d(\textrm{act}_G)"'] \arrow[dd, dashed] \arrow[rr, "\phi", bend left] & \Gamma(\mathcal{D}_\mathcal{L}) \arrow[d] \arrow[r, "\textrm{res}|_V"'] & D_V \arrow[d] \\
                                                                                                                              & \Gamma(\mathcal{H}) \arrow[r, "\textrm{res}|_V"] \arrow[ld, "\sim"']    & D_V/D_V\Delta \\
N(D_V\Delta)/D_V\Delta \arrow[rru, hook, bend right]                                                                          &                                                                         &              
\end{tikzcd}
\end{equation}

\noindent Here $d(\textrm{act}_G)$ means the map induced by the infinitesimalized action of $G$, and $\mu$ is defined to be the composition of $d(\textrm{act}_G)$ with the natural map $\mathcal{D}_\mathcal{L}(G/P) \to \mathcal{H}(G/P)$ from global sections of the sheaf $\mathcal{D}_\mathcal{L}$ to global sections of the quotient sheaf $\mathcal{H} = \mathcal{D}_\mathcal{L}/\mathcal{J}_\Delta$.\footnote{The second map is surjective by Corollary~\ref{DLSurjectsOntoHCorollary}; hence $\mu$ is surjective after also applying Proposition~\ref{UGSurjectsOntoGlobalTDOLemma}.} Equivalently, it is given by $\xi \mapsto \xi\cdot[1]$, where $\xi \in \Gamma(\mathcal{D}_\mathcal{L})$ and $[1]$ is the global section of $\mathcal{H}$ coming from the identity in $\mathcal{D}_\mathcal{L}$. The identification of sections of $\mathcal{D}_\mathcal{L}$ with $D_V$ comes from the local trivialization $\sigma_0$ of $\mathcal{L}$ (cf. \eqref{sigm0def}, Proposition \ref{linetwistingProp}). By Proposition \ref{linetwistingProp}, the map $\phi$ is precisely as in \eqref{phideftwist} (and calculated explicitly in Proposition \ref{ConfActionProp}). The fact that $\res|_V \circ\mu$ factors through $N(D_V\Delta)/D_V\Delta$ is a restatement of Theorem \ref{goncharovresult}. The isomorphism of $\Gamma(\mathcal{H})$ with $N(D_V\Delta)/D_V\Delta$ is Theorem \ref{GlobalSectionsTheorem}. Commutation of the rest of the diagram then follows from definitions. 

Now we consider 
\begin{equation}\label{Adjcompatibilitydiagram}
\begin{tikzcd}
\mathcal{U}(\mathfrak{g}) \arrow[d, "\mathrm{Ad}(g)"] \arrow[r, "\mu"'] \arrow[rrr, "\rho", bend left] & \Gamma(\mathcal{H}) \arrow[d, "\gamma(g)"] \arrow[r, "\textrm{res}|_V"'] & N(D_V \Delta)/D_V\Delta \arrow[d, "\beta(g)"] \arrow[r, "\widehat{\tau}"'] & D_C \arrow[d, "\alpha(g)"] \\
\mathcal{U}(\mathfrak{g}) \arrow[r, "\mu"] \arrow[rrr, "\rho", bend right]                             & \Gamma(\mathcal{H}) \arrow[r, "\textrm{res}|_V"]                         & N(D_V \Delta)/D_V\Delta \arrow[r, "\widehat{\tau}"]                        & D_C                       
\end{tikzcd}
\end{equation}

\noindent where $\widehat{\tau}$ is the corrected Fourier-to-cone isomorphism of Definition \ref{hattaudef}.

The fact that $\rho$ is equal to the horizontal compositions follows from definitions, \eqref{firstcompatdiagram}, Definition \ref{hattaudef}, and Remark \ref{tauisom}. We define $\gamma(g)$ to be the action of $g \in G(\kappa)$ on a global section of $\mathcal{H}$, and $\beta(g)$ to be the corresponding isomorphism transferred to $N(D_V\Delta)/D_V\Delta$ (since by Theorem \ref{GlobalSectionsTheorem} the central horizontal $\res|_V$ in \eqref{Adjcompatibilitydiagram} is an isomorphism).

All we need to do is to verify that the left square commutes. Letting $\xi \in \mathcal{U}(\g)$:
\begin{align}
\gamma(g)\bigl(\mu(\xi)\bigr)
&=
\gamma(g)\bigl(d(\mathrm{act}_G)(\xi)\cdot [1]\bigr) \\
&=
\bigl(g\cdot d(\mathrm{act}_G)(\xi)\cdot g^{-1}\bigr)\cdot \gamma(g)([1]) \nonumber\\
&=
d(\mathrm{act}_G)(\Ad(g)\xi)\cdot [1] \nonumber\\
&=
\mu(\Ad(g)\xi). \nonumber
\end{align}

\noindent Here $\gamma(g)([1])=[1]$, since $[1]$ is the class of the identity operator. This proves that the left square commutes. Since $\rho$ is surjective by Theorem \ref{rhoSurjective}, it follows that the induced action on $D_C$ agrees with $\alpha$.
\end{proof}

\begin{prop}\label{prop:harmonic-bimodule}
The harmonic sheaf $\mathcal{H}:=\mathcal{D}_{\mathcal{L}}/\mathcal{J}_\Delta$ carries a canonical right action of $D_C$, commuting with the left $\mathcal{D}_{\mathcal{L}}$-action, and hence is a $\mathcal{D}_{\mathcal{L}}$-$D_C$ bimodule.

Concretely, on a Bruhat chart $V_g=gV$ we identify
\begin{equation}
\mathcal{H}(V_g)\;=\;D_{V_g}/D_{V_g}\Delta_g
\end{equation}
\noindent via the trivialization $\sigma_g$ (cf.\ \ref{sigmagdef}). Given $\xi\in D_C$ and $m\in\mathcal{H}(V_g)$, let $\theta_{g,\xi}\in N(D_{V_g}\Delta_g)/D_{V_g}\Delta_g$ denote the transport, via $g:V\xrightarrow{\sim}V_g$ and $\sigma_g$, of the class
\begin{equation}
\widehat{\tau}^{-1}\bigl(\alpha(g^{-1})(\xi)\bigr)\in N(D_V\Delta)/D_V\Delta,
\end{equation}
\noindent where $\widehat{\tau}$ is as in Definition \ref{hattaudef} and Remark \ref{tauisom}. Equivalently, if $\widetilde{\alpha(g^{-1})(\xi)}\in N(D_{V^*}Q^*)$ is any representative of $\alpha(g^{-1})(\xi)$ under the identification
\begin{equation}
D_C \cong N(D_{V^*}Q^*)/D_{V^*}Q^*,
\end{equation}
\noindent then $\theta_{g,\xi}$ is represented by the transport to $D_{V_g}$ of
\begin{equation}
\tau^{-1}\bigl((Q^*)^{-1}\widetilde{\alpha(g^{-1})(\xi)}Q^*\bigr)\in N(D_V\Delta)/D_V\Delta.
\end{equation}
\noindent We then set
\begin{equation}\label{eq:right-action-formula}
m\cdot \xi \;:=\; m\cdot \theta_{g,\xi},
\end{equation}
\noindent where the right-hand side is right multiplication by any representative of $\theta_{g,\xi}$, followed by passage to the quotient $D_{V_g}/D_{V_g}\Delta_g$.
\end{prop}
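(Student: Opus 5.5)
The plan is to verify three things in turn: the formula \eqref{eq:right-action-formula} gives a well-defined right action on each chart; it commutes with the left $\mathcal D_{\mathcal L}$-action; and the chartwise actions agree on overlaps, so that they glue to a sheaf-level action. The main tool is the ring $N(D_V\Delta)/D_V\Delta$. For any left ideal $I$ in a ring $A$, the quotient $N(I)/I$ acts on $A/I$ from the right by $[a]\cdot[n]=[an]$, and this action commutes with left multiplication by $A$. The quotient $N(I)/I$ is isomorphic to $\End_A(A/I)^{\op}$.

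On the standard chart $V$, I will take $\theta_{e,\xi}=\widehat{\tau}^{-1}(\xi)$. By Remark~\ref{tauisom}, $\widehat\tau$ is a ring isomorphism, so this defines a right $D_C$-module structure on $D_V/D_V\Delta$, commuting with the left $D_V$-action.

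For a general chart $V_g$, transport of structure along $g:V\to V_g$ and $\sigma_g$ identifies $D_V/D_V\Delta$ with $D_{V_g}/D_{V_g}\Delta_g$. By Proposition~\ref{prop:group-action-on-Delta}, this identification carries $N(D_V\Delta)$ onto $N(D_{V_g}\Delta_g)$. The formula, twisted by the ring automorphism $\alpha(g^{-1})$, is therefore again a right action commuting with the left action. Independence of the chosen representative of $\theta_{g,\xi}$ follows because $D_{V_g}\Delta_g$ is a left ideal. The second description of $\theta_{g,\xi}$, through $N(D_{V^*}Q^*)$, simply unwinds $\widehat\tau^{-1}$ as in Remark~\ref{tauisom}.

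The main step, and the expected obstacle, is compatibility on overlaps $V_g\cap V_h$. This is where the twist by $\alpha(g^{-1})$ enters. I will avoid comparing local operators directly and instead use global sections. By Theorem~\ref{GlobalSectionsTheorem}, $\Gamma(\mathcal H)\cong N(D_V\Delta)/D_V\Delta$, and every normalizer class extends to a global section of $\mathcal H$. Right multiplication by $\theta\in N(D_V\Delta)/D_V\Delta$ on the chart $V$ is the unique $\mathcal D_{\mathcal L}(V)$-linear endomorphism of $\mathcal H(V)$ sending $[1]$ to $\theta$. This suggests the following global definition: for $\xi\in D_C$, let $s_\xi\in\Gamma(\mathcal H)$ be the global section corresponding to $\xi$, and let $\cdot\xi$ be the $\mathcal D_{\mathcal L}$-linear endomorphism of $\mathcal H$ determined by $[1]\mapsto s_\xi$.

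This endomorphism is well defined on each chart $V_g$. There $[1]$ generates $\mathcal H(V_g)$ with annihilator $\mathcal J_\Delta(V_g)=D_{V_g}\Delta_g$, and $\mathcal J_\Delta\cdot s_\xi=0$ by $\Gamma=\Gamma_\Delta$ in Theorem~\ref{GlobalSectionsTheorem}. The local maps are determined by the single global section $s_\xi$, so they glue automatically. It then remains to identify $s_\xi|_{V_g}$, transported back to $V$, with $\widehat\tau^{-1}(\alpha(g^{-1})\xi)$. This is exactly the statement that pulling back by $g$ intertwines $\gamma(g^{-1})$ on $\Gamma(\mathcal H)$ with $\alpha(g^{-1})$ on $D_C$, which is Proposition~\ref{IndActionDef}. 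The class $[1]$ is $G$-fixed, so the two local descriptions coincide.

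Finally, the rule $\xi\mapsto(\cdot\xi)$ is multiplicative with the correct order. On the chart $V$, the composite of $\cdot\xi$ followed by $\cdot\eta$ is right multiplication by $\widehat\tau^{-1}(\xi)\widehat\tau^{-1}(\eta)=\widehat\tau^{-1}(\xi\eta)$, and equality on the dense open $V$ suffices because $\mathcal H$ has no sections supported on proper closed subsets. The latter holds since each $F_m\mathcal H$ is locally free, as shown in the proof of Proposition~\ref{KernelSurjectionLemma}. Commutation with the left action holds by construction, since each $\cdot\xi$ is $\mathcal D_{\mathcal L}$-linear. This gives the canonical bimodule structure.
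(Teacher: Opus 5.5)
Your proposal is correct and follows essentially the paper's argument: the paper also glues the chartwise right multiplications by $\theta_{g,\xi}$, using Theorem~\ref{GlobalSectionsTheorem} and Proposition~\ref{IndActionDef} to see that each one sends $[1]$ to the restriction of the single global section attached to $\xi$, and that $[1]$ generates $\mathcal H$ as a left $\mathcal D_{\mathcal L}$-module on each overlap. You run this in the opposite direction, defining the global endomorphism $[1]\mapsto s_\xi$ first and then identifying it on charts, and you check the module axioms on $V$ plus density rather than through the paper's appeal to multiplicativity in $N(D_{V_g}\Delta_g)/D_{V_g}\Delta_g$ chart by chart.
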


\begin{proof}
Fix $\xi\in D_C$. For each Bruhat chart $V_g$, set
\begin{equation}
I_g:=D_{V_g}\Delta_g.
\end{equation}
Then $\mathcal H(V_g)=D_{V_g}/I_g$ carries a natural right action of $N(I_g)/I_g$.

By Definition \ref{hattaudef} and Remark \ref{tauisom}, the class $\widehat{\tau}^{-1}\bigl(\alpha(g^{-1})(\xi)\bigr)\in N(D_V\Delta)/D_V\Delta$ transports, via $g:V\xrightarrow{\sim}V_g$ and $\sigma_g$, to a class
\begin{equation}
\theta_{g,\xi}\in N(I_g)/I_g.
\end{equation}
\noindent Right multiplication by any representative of $\theta_{g,\xi}$ therefore defines a left $D_{V_g}$-linear endomorphism
\begin{equation}
R_{g,\xi}:\mathcal H(V_g)\to \mathcal H(V_g),
\qquad
m\mapsto m\cdot \theta_{g,\xi}.
\end{equation}
\noindent This is independent of the chosen representative, since changing the representative changes the right-hand side by an element of $I_g$, and right multiplication by an element of $I_g$ acts trivially on $D_{V_g}/I_g$.

By Theorem \ref{GlobalSectionsTheorem} and Proposition \ref{IndActionDef}, the section
\begin{equation}
R_{g,\xi}([1])=[1]\cdot \theta_{g,\xi}
\end{equation}
\noindent is precisely the restriction to $V_g$ of the global section of $\mathcal H$ corresponding to $\xi\in D_C$.

Now let $U=V_g\cap V_h$. Then
\begin{equation}
R_{g,\xi}([1])|_U=R_{h,\xi}([1])|_U,
\end{equation}
\noindent since both are equal to the restriction to $U$ of the same global section. On the other hand, both $R_{g,\xi}|_U$ and $R_{h,\xi}|_U$ are left $\mathcal D_{\mathcal L}(U)$-linear endomorphisms of $\mathcal H(U)$, and $\mathcal H(U)$ is generated by $[1]$ as a left $\mathcal D_{\mathcal L}(U)$-module. Therefore
\begin{equation}
R_{g,\xi}|_U=R_{h,\xi}|_U.
\end{equation}
\noindent Thus the local endomorphisms $R_{g,\xi}$ glue to a global endomorphism of $\mathcal H$. We denote the resulting right action by
\begin{equation}
m\cdot \xi:=R_\xi(m).
\end{equation}
\noindent On each chart this is right multiplication in the ring $N(I_g)/I_g$, so the module axioms follow immediately. Moreover, left multiplication by $\mathcal D_{\mathcal L}$ and right multiplication by $N(I_g)/I_g$ commute on each chart. Hence the resulting right $D_C$-action commutes with the left $\mathcal D_{\mathcal L}$-action, and we conclude that $\mathcal H$ is a $\mathcal D_{\mathcal L}$-$D_C$ bimodule.
\end{proof}

\begin{rem}\label{rem:bimodule-sheaf}
We recall that $\mathcal{D}_{\mathcal{L}}$ is a sheaf of algebras on $G/P$, while $D_C$ is a $\kappa$-algebra. By a ``$\mathcal{D}_{\mathcal{L}}$-$D_C$ \textbf{bimodule}", we mean a sheaf $\mathcal{M}$ of $\kappa$-vector spaces on $G/P$ equipped with a left $\mathcal{D}_{\mathcal{L}}$-module structure and a right $D_C$-module structure, such that for every open set $U\subset G/P$ the space $\mathcal{M}(U)$ is a $(\mathcal{D}_{\mathcal{L}}(U),D_C)$-bimodule and the restriction maps are bimodule homomorphisms. Equivalently, $\mathcal{M}$ is a left module over the sheaf of rings $\mathcal{D}_{\mathcal{L}}\otimes_{\kappa} D_C^{\mathrm{op}}$.

In particular, if $\mathcal{M}$ is a $\mathcal{D}_{\mathcal{L}}$-$D_C$ bimodule, then for any left $D_C$-module $N$ the tensor product $\mathcal{M}\otimes_{D_C}N$ is a well-defined left $\mathcal{D}_{\mathcal{L}}$-module, namely the sheafification of the presheaf
\begin{equation}
U\longmapsto \mathcal{M}(U)\otimes_{D_C}N.
\end{equation}
\end{rem}

We presently review the relevant properties of $\mathcal{H}$ and summarize them below.

\begin{prop}\label{HarmonicSummary}
    The harmonic sheaf $\mathcal{H}$ is a harmonic $G$-equivariant $\mathcal{D}_{\mathcal{L}}$-module on $G/P$ with singular support $\mathrm{SS}(\mathcal{H})=G\times^P C \subset G \times^P \mathfrak{u}$; the image of this singular support under the moment map $T^*G/P \to \g$ is precisely the closure of the minimal nilpotent orbit of $\g$. Moreover, $\mathcal{H}$ carries a $\mathcal{D}_{\mathcal{L}}$-$D_C$-bimodule structure, and its global sections are canonically isomorphic to $D_C$ via the map obtained by applying the right action of $D_C$ to the global section $[1] \in \Gamma(\mathcal{H})$ coming from $\Id_{\mathcal{L}} \in \mathcal{D}_{\mathcal{L}}$. 
\end{prop}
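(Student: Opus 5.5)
This summary proposition collects facts that are mostly already proved, so the plan is to assemble them clause by clause and to verify one new compatibility.

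First, $G$-equivariance of $\mathcal H$ is the first part of Proposition~\ref{IndActionDef}: $G$ acts on $\mathcal D_{\mathcal L}$, and $\mathcal J_\Delta$ is $G$-stable by \eqref{eq:Delta-semi-invariant}. Harmonicity in the sense of Theorem~\ref{SecondEquiv} is trivial: $\mathcal H$ has the one-term $(\mathcal H,D_C)$-presentation $0\to\mathcal H\to\mathcal H\to 0$, coming from the zero map $D_C^0\to D_C^1$. The singular support statement is Proposition~\ref{lem:SS-harmonic-contained}.

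For the moment map image, use $\mu([g,X])=\Ad(g)X$. This gives $\mu(G\times^P C)=\Ad(G)\cdot C$. By Proposition~\ref{MomentMapPreimageProp}, $C\setminus\{0\}\subset\u\cap\mathbb O_{\min}$, so the image lies in $\overline{\mathbb O}_{\min}$ and contains $0$. Conversely, every element of $\mathbb O_{\min}$ is $\Ad(G)$-conjugate to some $X_v$ with $0\neq v\in C$: indeed $X_v\in\mathbb O_{\min}$ and $\mathbb O_{\min}$ is a single orbit. Hence the image is exactly $\mathbb O_{\min}\cup\{0\}=\overline{\mathbb O}_{\min}$. This last equality holds because $\overline{\mathbb O}_{\min}$ consists of the square-zero, rank-$\le 2$ matrices, by Proposition~\ref{CotBundlePresent}. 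If one prefers to avoid the orbit-closure bookkeeping, properness of $G\times^P C\to\g$ gives a closed image, and that suffices.

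The bimodule structure is Proposition~\ref{prop:harmonic-bimodule}. What remains, and this is the only new point, is that the map $D_C\to\Gamma(\mathcal H)$ given by $\xi\mapsto[1]\cdot\xi$ is the inverse of the isomorphism in Theorem~\ref{GlobalSectionsTheorem}. On the standard chart $V$ (taking $g=e$), $\theta_{e,\xi}=\widehat\tau^{-1}(\xi)$. Therefore $[1]\cdot\xi$ restricts on $V$ to the class $\widehat\tau^{-1}(\xi)\in N(D_V\Delta)/D_V\Delta\subset D_V/D_V\Delta$.

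By Theorem~\ref{GlobalSectionsTheorem}, restriction to $V$ is injective on $\Gamma(\mathcal H)$ with image $N(D_V\Delta)/D_V\Delta$, and $\widehat\tau$ is an isomorphism by Remark~\ref{tauisom}. So the composite
\[
D_C\to\Gamma(\mathcal H)\xrightarrow{\res|_V}N(D_V\Delta)/D_V\Delta\xrightarrow{\widehat\tau}D_C
\]
is the identity, and $\xi\mapsto[1]\cdot\xi$ is an isomorphism.

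I expect the main (mild) obstacle to be checking that the glued right action on other charts, which Proposition~\ref{prop:harmonic-bimodule} defines via the twist by $\alpha(g^{-1})$, really produces a section agreeing with the $V$-chart description. This is settled because the gluing argument there identifies $[1]\cdot\theta_{g,\xi}$ with the restriction of one global section, and injectivity of restriction to $V$ then pins it down.
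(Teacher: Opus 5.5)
Your proposal is correct and follows the paper's own approach. The paper gives no separate proof of this proposition; it assembles Proposition~\ref{IndActionDef}, Proposition~\ref{lem:SS-harmonic-contained}, Proposition~\ref{MomentMapPreimageProp}, Proposition~\ref{prop:harmonic-bimodule}, and Theorem~\ref{GlobalSectionsTheorem} exactly as you do. Your two extra checks make explicit what the paper leaves implicit in those earlier proofs: first, that $\Ad(G)\cdot C$ fills all of $\overline{\mathbb O}_{\min}$ rather than merely lying in it; second, that $\xi\mapsto[1]\cdot\xi$ is inverse to $\widehat\tau\circ\res|_V$ because $\theta_{e,\xi}=\widehat\tau^{-1}(\xi)$.
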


\subsection{\texorpdfstring{Filtrations on $D_V/D_V\Delta\cong\mathcal H(V)$}{Filtrations on the Local Harmonic Quotient}}\label{FiltrationSubSect}

We have seen that $\mathcal H$ has a natural $\mathcal D_{\mathcal L}$-$D_C$ bimodule structure. In particular, the local harmonic quotient
\begin{equation}
\mathcal H(V)\cong D_V/D_V\Delta
\end{equation}
\noindent is naturally a right $D_C$-module. We will now analyze this module structure in greater depth.

For $m=[\xi]\in D_V/D_V\Delta$, left multiplication by $\Delta$ agrees with the induced adjoint action, since
\begin{equation}
\Delta m=[\Delta\xi]=[\Delta,\xi]
\end{equation}
\noindent in $D_V/D_V\Delta$. Moreover, because the left $D_V$-action and the right $D_C$-action commute, left multiplication by $\Delta$ is a right $D_C$-linear endomorphism.

\begin{prop}\label{DeltaLocalNilp}
The endomorphism of the right $D_C$-module $D_V/D_V\Delta$ given by left multiplication by $\Delta$, or equivalently by $\ad_\Delta$, is locally nilpotent. Thus, for every $m\in D_V/D_V\Delta$, there exists an integer $N\geq 1$, depending on $m$, such that
\begin{equation}
\Delta^N m=0,
\end{equation}
\noindent equivalently, $\ad_\Delta^N(m)=0$.
\end{prop}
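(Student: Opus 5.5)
The plan is to prove local nilpotence already at the level of the full Weyl algebra $D_V$, where it reduces to a degree count, and then pass to the quotient. First I will record the identification already noted just before the statement: for $m=[\xi]\in D_V/D_V\Delta$ one has $\Delta m=[\Delta\xi]=[[\Delta,\xi]]$, since $\xi\Delta\in D_V\Delta$. The map $\ad_\Delta:D_V\to D_V$ preserves the left ideal $D_V\Delta$, because $[\Delta,A\Delta]=[\Delta,A]\Delta$. Hence $\ad_\Delta$ descends to $D_V/D_V\Delta$, and there it coincides with left multiplication by $\Delta$. So it suffices to show that $\ad_\Delta$ is locally nilpotent on $D_V$ itself: if $\ad_\Delta^N(\xi)=0$ in $D_V$, then $\Delta^N[\xi]=[\ad_\Delta^N\xi]=0$.

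For the nilpotence on $D_V$ I will use the coefficient-degree grading. Write $D_V\cong\kappa[V]\otimes\kappa[\partial]$ in normal order, and let $D_V^{(d)}$ denote the span of the normal-ordered monomials $v^{a}\partial^{b}$ with $|a|=d$. The key observation is that $\ad_\Delta$ maps $D_V^{(d)}$ into $D_V^{(d-1)}$.

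To see this, note that $\ad_\Delta$ is a derivation of $D_V$ and kills the constant-coefficient part $\kappa[\partial]$. Moreover, for a linear coordinate $x$ one has $[\Delta,x]=\partial_{x^\sharp}$, a constant vector field; this is the identity $[\Delta,B_\lambda]=\partial_\lambda$ from the proof of Proposition~\ref{DeltaPres}. Applying the Leibniz rule to a monomial $v^{a}\partial^{b}$ therefore replaces exactly one coordinate factor by a constant vector field. Moving that $\partial$ to the right past the remaining coefficient factors introduces only commutator terms $[\partial,v]\in\kappa$. Each such term lowers the coefficient degree by one more and never raises it. Thus $\ad_\Delta\bigl(D_V^{(d)}\bigr)\subset\bigoplus_{e\le d-1}D_V^{(e)}$.

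Consequently, if every normal-ordered monomial of $\xi$ has coefficient degree at most $d$, then $\ad_\Delta^{d+1}(\xi)=0$. The quotient statement then follows with $N=d+1$ for any representative $\xi$ of $m$.

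Alternatively, and more cleanly, I would use the Leibniz formula
\[
\ad_\Delta^N(ab)=\sum_{i}\binom{N}{i}\ad_\Delta^i(a)\,\ad_\Delta^{N-i}(b).
\]
Since $\ad_\Delta$ is nilpotent on the generators ($\ad_\Delta^2(x)=0$ and $\ad_\Delta(\partial)=0$), this formula shows that a derivation nilpotent on generators is locally nilpotent on the whole algebra.

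I expect no real obstacle here. The only point requiring care is the bookkeeping that the reordering of $\partial$ past coefficients does not increase the coefficient degree. This is automatic, since $[\partial_u,f]=\partial_u(f)$ strictly lowers polynomial degree.
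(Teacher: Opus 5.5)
Your proposal is correct and follows essentially the same route as the paper. You write $\xi$ in normal order, note that commuting with $\Delta$ (which commutes with every $\partial^\beta$) strictly lowers the maximal coefficient degree, conclude $\ad_\Delta^{d+1}(\xi)=0$ in $D_V$, and pass to the quotient using that left multiplication by $\Delta$ agrees with $\ad_\Delta$ there; your alternative Leibniz-formula argument (a derivation nilpotent on generators is locally nilpotent) is also valid.
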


\begin{proof}
Write an operator $\xi\in D_V$ in normal order as
\begin{equation}
\xi=\sum_{\beta}f_\beta(v)\partial^\beta,
\qquad
f_\beta\in\kappa[V],
\end{equation}
\noindent and let $d$ be the largest polynomial degree of a nonzero coefficient $f_\beta$. Since $\Delta$ has constant coefficients, it commutes with every $\partial^\beta$, and hence
\begin{equation}
[\Delta,f_\beta\partial^\beta]=[\Delta,f_\beta]\partial^\beta.
\end{equation}
\noindent Thus taking the commutator with $\Delta$ lowers the maximal polynomial degree of the coefficients by at least one. It follows that
\begin{equation}
\ad_\Delta^{d+1}(\xi)=0
\end{equation}
\noindent in $D_V$. Passing to the quotient and using that left multiplication by $\Delta$ agrees there with $\ad_\Delta$, we obtain $\Delta^{d+1}[\xi]=0$ in $D_V/D_V\Delta$.
\end{proof}

This proposition leads to the following definition.

\begin{definition}\label{Delta-Filtration}
For $i\geq 0$, define
\begin{equation}
F_i\left(D_V/D_V\Delta\right)
:=
\ker\left(
\Delta^{i+1}:D_V/D_V\Delta\longrightarrow D_V/D_V\Delta
\right)
=
\{m:\Delta^{i+1}m=0\}.
\end{equation}
\noindent We call this the \textbf{$\Delta$-filtration}, and set $F_{-1}=0$ by convention. Since left multiplication by $\Delta$ is right $D_C$-linear, every $F_i$ is a right $D_C$-submodule. The filtration is increasing, and Proposition~\ref{DeltaLocalNilp} shows that it is exhaustive. Moreover, Theorem~\ref{GlobalSectionsTheorem} identifies
\begin{equation}
F_0\cong\Gamma(G/P,\mathcal H)\cong D_C.
\end{equation}
\end{definition}

There is another natural filtration on $D_V/D_V\Delta$, obtained by viewing an element of $\mathcal H(V)$ as a rational section of $\mathcal H$ and measuring its pole along the boundary
\begin{equation}
\overline{C_\infty}=G/P\setminus V.
\end{equation}
\noindent We now make this precise.

Recall that the Kelvin transform acts rationally by conjugation, $\xi\mapsto K\xi K^{-1}=K\xi K$. Put $V_Q:=V\cap V_{w_0}=V\setminus\{Q=0\}$. By Lemma~\ref{RightFischerDecompositionLemma}, the natural localization map
\begin{equation}
D_V/D_V\Delta
\longrightarrow
D_{V_Q}/D_{V_Q}\Delta
\end{equation}
\noindent is injective, and we regard the former quotient as a subspace of the latter. Since $K\Delta K=Q^2\Delta$ and $Q$ is invertible on $V_Q$, conjugation by $K$ induces an involutive automorphism
\begin{equation}
\alpha_K:
D_{V_Q}/D_{V_Q}\Delta
\xrightarrow{\sim}
D_{V_Q}/D_{V_Q}\Delta.
\end{equation}
\noindent For $m\in D_V/D_V\Delta$, this agrees with the transition map induced by the $G$-equivariant structure on $\mathcal H$:
\begin{equation}\label{Kvsw0action}
\alpha_K(m)
=
\left.w_0^*(m)\right|_{V_Q}.
\end{equation}

\begin{definition}\label{Kelvin-Filtration}
For $i\geq 0$, define the \textbf{Kelvin filtration} on $D_V/D_V\Delta$ by
\begin{equation}
P_i\left(D_V/D_V\Delta\right)
:=
\left\{
m\in D_V/D_V\Delta:
Q^i\alpha_K(m)\in D_V/D_V\Delta
\right\},
\end{equation}
\noindent where the condition is interpreted inside $D_{V_Q}/D_{V_Q}\Delta$. We set $P_{-1}=0$ by convention. This filtration is increasing and exhaustive: multiplication by $Q$ preserves $D_V/D_V\Delta$, while every element of the localized quotient has finite pole order in $Q$. By Proposition~\ref{KelvinRegularGlobalSymmetryEquivalence} and Theorem~\ref{GlobalSectionsTheorem},
\begin{equation}
P_0=F_0\cong\Gamma(G/P,\mathcal H)\cong D_C.
\end{equation}
\noindent It is not a priori clear that the right $D_C$-action on $D_V/D_V\Delta$ preserves the Kelvin filtration.
\end{definition}

We now have the following theorem, which generalizes Proposition \ref{KelvinRegularGlobalSymmetryEquivalence}.

\begin{thm}\label{FiltrationTheorem}
For every $i\geq -1$, the $\Delta$-filtration and the Kelvin filtration agree:
\begin{equation}
F_i\left(D_V/D_V\Delta\right)
=
P_i\left(D_V/D_V\Delta\right).
\end{equation}
\noindent In particular, the Kelvin filtration is a filtration of $D_V/D_V\Delta$ by right $D_C$-submodules.
\end{thm}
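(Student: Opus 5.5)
We will prove $F_i=P_i$ by induction on $i$. The base cases are already in hand: $i=-1$ holds by convention, and $i=0$ is Proposition~\ref{KelvinRegularGlobalSymmetryEquivalence}. The engine of the induction is a single commutation rule that shows how the Kelvin conjugation $\alpha_K$ intertwines left multiplication by $\Delta$ with left multiplication by $Q^2\Delta$. From $K\Delta K=Q^2\Delta$ (Proposition~\ref{w0onDelta}) we get, for $m\in D_{V_Q}/D_{V_Q}\Delta$,
\begin{equation}
\alpha_K(\Delta m)=Q^2\Delta\,\alpha_K(m).
\end{equation}
The key auxiliary statement we aim for is
\begin{equation}
m\in P_i \iff \Delta m\in P_{i-1},
\end{equation}
stated for $m\in D_V/D_V\Delta$. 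Granting it, induction gives $m\in P_i\iff \Delta m\in P_{i-1}=F_{i-1}\iff \Delta^{i+1}m=0\iff m\in F_i$. The final assertion of the theorem then follows, since each $F_i$ is a right $D_C$-submodule.

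For the direction $m\in P_i\Rightarrow \Delta m\in P_{i-1}$, we use the commutation rule $[\Delta,Q]=E+k$. Write $\alpha_K(\Delta m)=Q^2\Delta\,\alpha_K(m)$, and put $n:=Q^i\alpha_K(m)$, which is regular. Then
\begin{equation}
Q^{i-1}\alpha_K(\Delta m)=Q^{i+1}\Delta Q^{-i}n,
\end{equation}
and commuting $\Delta$ past $Q^{-i}$ produces $Q\Delta n$ plus terms of the form $c\,(E+\text{const})n$ with no negative powers of $Q$. This requires a pole-order bookkeeping identity of the shape
\begin{equation}
\Delta Q^{-i}=Q^{-i}\Delta-iQ^{-i-1}(E+k-i-1),
\end{equation}
and we expect regularity to follow directly.

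The converse, $\Delta m\in P_{i-1}\Rightarrow m\in P_i$, is the main obstacle, because it is the genuine generalization of Proposition~\ref{KelvinRegularGlobalSymmetryEquivalence}. We would first reformulate it as a pole-order statement on the localized quotient. Suppose $m':=\alpha_K(m)$ has exact pole order $p>i$ along $Q=0$, meaning $Q^p m'$ is regular but $Q^{p-1}m'$ is not. We want to show that $Q^{i-1}\cdot Q^2\Delta m'$ is then still singular. For this we would imitate the proof of Lemma~\ref{OperatorLemma}. The plan is to pass to principal symbols in $\kappa[T^*V][1/Q]$ modulo $S+S[1/Q]Q^*$ and to use the leading-pole analysis of Lemma~\ref{SymbolLemma}. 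If the leading polar part is $Q^{-p}a$ with $a\notin(Q,Q^*)$, then applying $\Delta$ produces leading polar term $-p\{Q^*,Q\}aQ^{-p-1}$. This term is nonzero modulo $(Q,Q^*)$ because $S/(Q,Q^*)$ is a domain and $\{Q^*,Q\}\notin(Q,Q^*)$.

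The delicate point is that the factor $Q^2$ and the symbol-level Poisson bracket must be combined with the order filtration. A symbol-level cancellation could leave lower-order polar terms. We would handle this by a double induction, first on pole order and then on operator order, exactly as in the minimal-counterexample argument of Lemma~\ref{OperatorLemma}, replacing $\ker\ad_\Delta$ there by the statement that $\ad_\Delta$ raises exact pole order by exactly one. If this symbol argument runs into trouble, we would fall back on the Fischer/Kelvin pole-profile method of Lemmas~\ref{HomogeneousDivisibilityTestLemma}--\ref{BadTopPieceLemma}. There we would test against $K(\ell^N)$ and track a defect of $Q^{D+2+i}$ in place of $Q^{D+2}$, using the $N$-polynomial nonvanishing of Lemma~\ref{VandermondeNonvanishingLemma}.
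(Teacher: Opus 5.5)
Your proposal is correct and is essentially the paper's proof. The paper likewise uses $\alpha_K(\Delta m)=Q^2\Delta\,\alpha_K(m)$ and the regular operator $\Theta_j=Q^{j+1}\Delta Q^{-j}=Q\Delta-j(E+k-j-1)$ to reduce to $m\in P_j\iff \Delta m\in P_{j-1}$ for $j\geq 1$; its nontrivial direction is equivalent to your claim that left multiplication by $\Delta$ raises exact pole order by exactly one, and the paper proves it with the same key fact, namely that the Euler symbol $\langle v,\zeta\rangle$ (up to sign, your $\{Q^*,Q\}$) is a nonzerodivisor in the domain $\kappa[v,\zeta]/(Q,Q^*)$.

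The only difference is the order of operations, and the paper's order makes both your double induction and your Kelvin pole-profile fallback unnecessary. Write $\alpha_K(m)=Q^{-p}R$ with $R\in M:=D_V/D_V\Delta$ and $R\notin QM$, i.e. take the leading pole coefficient first. The identity $\Delta(Q^{-p}R)=Q^{-p}\Delta R-pQ^{-p-1}(E+k-p-1)R$ then reduces everything to injectivity of $E+c$ on $M/QM$. That injectivity is checked on the highest symbol in $\gr(M/QM)\cong\kappa[v,\zeta]/(Q,Q^*)$, so the lower-order polar terms you worried about never arise.
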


To prove Theorem~\ref{FiltrationTheorem}, we first establish a preparatory lemma controlling the effect of a certain conjugate of $\Delta$ on the order of a pole along the divisor ${Q=0}$.

\begin{lem}\label{DeltaKelvinPoleLemma}
Put
\begin{equation}
M:=D_V/D_V\Delta,
\qquad
M_Q:=D_{V_Q}/D_{V_Q}\Delta.
\end{equation}
\noindent For every integer $j\geq 0$, define
\begin{equation}
\Theta_j:=Q^{j+1}\Delta Q^{-j}\in D_{V_Q}.
\end{equation}
\noindent Then $\Theta_j\in D_V$, and, for every $\eta\in M_Q$,
\begin{equation}\label{DeltaKelvinPoleCriterion}
\eta\in M
\qquad\Longleftrightarrow\qquad
\Theta_j\eta\in M.
\end{equation}
\end{lem}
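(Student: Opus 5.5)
The plan is to first compute $\Theta_j$ explicitly, which gives the forward implication at once, and then to prove the converse on the pole quotient
\[
\overline{M}:=M_Q/M\cong D_{V_Q}/(D_V+D_{V_Q}\Delta),
\]
imitating the minimal-counterexample arguments of Lemma~\ref{SymbolLemma} and Lemma~\ref{OperatorLemma}.

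\textbf{Step 1: $\Theta_j\in D_V$.} We have $\Delta Q^{-j}=Q^{-j}\Delta+[\Delta,Q^{-j}]$. By $[\Delta,Q]=E+k$ and the Leibniz rule, $[\Delta,Q^{-j}]=-jQ^{-j-1}E+Q^{-j-2}g_j$, where $g_j$ is a polynomial function. It comes from $\Delta(Q^{-j})$ and is a combination of $Q$ and $B(v,v)=2Q$. I therefore expect
\[
\Theta_j=Q\Delta-jE-c_j,
\]
with $c_j\in\kappa$ an explicit constant (I expect $c_j=j(k-j-1)$). In particular $\Theta_j\in D_V$, so the forward implication of \eqref{DeltaKelvinPoleCriterion} is immediate: $M$ is a left $D_V$-module. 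The same identity gives the shift rule
\[
Q^{j+1}\Delta Q^{-j}\,Q^{-N}=Q^{-N}\,\Theta_{j+N},
\]
which I will use repeatedly.

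\textbf{Step 2: $\Theta_j$ acts on $\overline{M}$.} Since $\Theta_j\in D_V$ and $D_V\cdot D_{V_Q}\Delta\subset D_{V_Q}\Delta$, left multiplication by $\Theta_j$ preserves $D_V+D_{V_Q}\Delta$. The converse implication is then exactly the injectivity of $\Theta_j$ on $\overline{M}$.

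\textbf{Step 3: injectivity by a minimal counterexample.} Suppose $\eta\notin M$ but $\Theta_j\eta\in M$. Among all such counterexamples I take one of minimal differential order $r$. As in the proof of Lemma~\ref{OperatorLemma}, its principal symbol $\sigma_r(\eta)$ does not lie in $S+S[1/Q]Q^*$; otherwise I could subtract an element of $D_V+D_{V_Q}\Delta$ and lower the order. As in Lemma~\ref{SymbolLemma}, I write this symbol modulo $S+S[1/Q]Q^*$ as $Q^{-N}a$ with $N\ge1$ minimal and $a\notin(Q,Q^*)$.

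By the shift rule, the order-$(r+1)$ symbol of $\Theta_j\eta$ equals $Q^{-N}$ times the symbol of $\Theta_{j+N}a$. This is
\[
Q^{-N}\bigl(QQ^*a-(j+N)\,\epsilon\,a\bigr),
\qquad \epsilon:=\sigma_1(E).
\]
Here $\epsilon$ is the phase-space Euler function, which equals $\{Q^*,Q\}$ up to a nonzero constant. The constant $c_{j+N}$ only affects lower order. The hypothesis $\Theta_j\eta\in M$ forces this symbol into $S+S[1/Q]Q^*$. Multiplying by $Q^N$ and using $S\cap S[1/Q]Q^*=(Q^*)$ as in Lemma~\ref{SymbolLemma}, I get $(j+N)\,\epsilon\,a\in(Q,Q^*)$. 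Since $j+N\neq0$, $S/(Q,Q^*)$ is a domain, and $\epsilon\notin(Q,Q^*)$ by bidegree, this gives $a\in(Q,Q^*)$. That contradicts the choice of $a$.

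\textbf{Main obstacle.} The delicate point is keeping the two filtrations compatible: the order filtration on $D_{V_Q}$ and the pole order in $Q$. I will check that the top-order symbol of $\Theta_j\eta$ really is $\sigma_1(\Theta_j)\sigma_r(\eta)$ plus terms that either have lower order or lie in $\gr(D_V+D_{V_Q}\Delta)$. The subtlety is that $\Theta_j$ itself has order $2$, with top symbol $QQ^*$, which lies in $\gr(D_{V_Q}\Delta)$. So the order-$(r+2)$ part of $\Theta_j\eta$ is automatically harmless, and the genuine information sits one order lower, in the Euler term. To make this rigorous I would pass to a representative of $\eta$ modulo $D_V+D_{V_Q}\Delta$ using the right Fischer decomposition (Lemma~\ref{RightFischerDecompositionLemma}, localized in $Q$), so that $\eta\in\kappa[V][1/Q]\otimes Z_\partial$. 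Then $Q\Delta\eta$ can be rewritten modulo $D_{V_Q}\Delta$ as lower-order commutator terms, and the induction is run on order within that normal form.
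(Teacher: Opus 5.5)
Your argument is correct, but the displayed order-$(r+1)$ symbol needs repair. The expression $Q^{-N}\bigl(QQ^*a-(j+N)\epsilon a\bigr)$ mixes degrees: $QQ^*a$ is the harmless order-$(r+2)$ symbol, not an order-$(r+1)$ term. The fix you sketch works, and it does not even need the Fischer normal form. Write $\Theta_j\eta=Q\eta\Delta+Q[\Delta,\eta]-jE\eta-c_j\eta$ and discard $Q\eta\Delta\in D_{V_Q}\Delta$. The remainder has order $\le r+1$, with symbol $Q\{Q^*,\sigma_r(\eta)\}-j\epsilon\,\sigma_r(\eta)$. This map preserves $S+S[1/Q]Q^*$, so you may take $\sigma_r(\eta)=Q^{-N}a$. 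Using $\{Q^*,Q\}=\epsilon$, which is forced by $[\Delta,Q]=E+k$, you get $Q^{-N}\bigl(Q\{Q^*,a\}-(j+N)\epsilon a\bigr)\in S+S[1/Q]Q^*$. The correct first term $Q\{Q^*,a\}$, like your $QQ^*a$, dies modulo $(Q)$. Hence $(j+N)\epsilon a\in(Q,Q^*)$, and the contradiction follows as you say.

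The paper proceeds in the opposite order.
\begin{itemize}
\item It uses the localized right Fischer decomposition $M_Q\cong\kappa[V][Q^{-1}]\otimes_\kappa Z_\partial$ to give $\eta\notin M$ an exact pole order $p\ge 1$, and writes $\eta=Q^{-p}R$ with $R\in M\setminus QM$.
\item It applies the exact identity $\Theta_j(Q^{-p}R)=Q^{1-p}\Delta R-(p+j)Q^{-p}(H-p-j-1)R$, where $H=E+k$.
\item It shows $(H-p-j-1)R\notin QM$ by looking at top-order symbols in $\gr(M/QM)\cong\kappa[v,\zeta]/(Q,Q^*)$, where multiplication by $\langle v,\zeta\rangle$ is injective.
\end{itemize}
So the paper filters by pole order first and by differential order second. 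This yields the sharper statement that $\Theta_j$ preserves exact pole order.

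Your route filters by differential order first, via a minimal counterexample exactly as in Lemma~\ref{OperatorLemma}. It sees the pole only at the symbol level, as in Lemma~\ref{SymbolLemma}. In effect you show that $\Theta_j$, like $\ad_\Delta$, acts injectively on the pole quotient $D_{V_Q}/(D_V+D_{V_Q}\Delta)$. You reuse $\gr(D_V+D_{V_Q}\Delta)=S+S[1/Q]Q^*$ and $S\cap S[1/Q]Q^*=(Q^*)$ from those lemmas.

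Both proofs rest on the same input: $S/(Q,Q^*)$ is a domain in which the Euler symbol $\langle v,\zeta\rangle$ is nonzero, and $p+j\neq 0$ (respectively $N+j\neq 0$) in characteristic $0$.
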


\begin{proof}
Set $H:=E+k$. The commutator relations $[\Delta,Q]=H$ and $[H,Q]=2Q$ give
\begin{equation}\label{DeltaIntegerPowerQ}
\Delta Q^s
=
Q^s\Delta+sQ^{s-1}(H+s-1)
\end{equation}
\noindent for every $s\in\mathbb Z$, where negative powers are interpreted in $D_{V_Q}$. For positive $s$, this follows by induction from the two commutator relations; since $Q$ is invertible in $D_{V_Q}$, the same induction applied after multiplying by suitable powers of $Q^{-1}$ gives the formula for negative $s$. Taking $s=-j$ in \eqref{DeltaIntegerPowerQ}, we obtain
\begin{equation}\label{ThetaJFormula}
\Theta_j
=
Q\Delta-j(H-j-1)
=
Q\Delta-j(E+k-j-1).
\end{equation}
\noindent In particular, $\Theta_j\in D_V$, so the forward implication in \eqref{DeltaKelvinPoleCriterion} is immediate.

For the converse, suppose that $\eta\notin M$. Localizing the right Fischer decomposition of Lemma~\ref{RightFischerDecompositionLemma} gives
\begin{equation}
M_Q
\cong
\kappa[V][Q^{-1}]\otimes_\kappa Z_\partial,
\qquad
M
\cong
\kappa[V]\otimes_\kappa Z_\partial.
\end{equation}
\noindent Hence $\eta$ has a well-defined exact pole order $p\geq 1$: we may write
\begin{equation}
\eta=Q^{-p}R,
\qquad
R\in M,
\qquad
R\notin QM.
\end{equation}
\noindent Applying \eqref{DeltaIntegerPowerQ} with $s=-(p+j)$ gives
\begin{equation}\label{ThetaJPoleExpansion}
\Theta_j(Q^{-p}R)
=
Q^{1-p}\Delta R
-
(p+j)Q^{-p}(H-p-j-1)R.
\end{equation}
\noindent The first term on the right has pole order at most $p-1$. It therefore remains to show that
\begin{equation}\label{EulerInjectivityModuloQ}
(H-p-j-1)R\notin QM.
\end{equation}

Let $\overline{R}$ be the nonzero image of $R$ in $M/QM$, and equip this quotient with the filtration induced by the order filtration on $D_V$. Since the principal symbol of $\Delta$ is $Q^*(\zeta)$, the right Fischer decomposition gives
\begin{equation}
\gr(M/QM)
\cong
\kappa[v,\zeta]\big/\bigl(Q(v),Q^*(\zeta)\bigr).
\end{equation}
\noindent Let $P(v,\zeta)$ be the highest nonzero symbol of $\overline{R}$. The principal symbol of left multiplication by $E$ is $\langle v,\zeta\rangle$, while the scalar term in $H-p-j-1$ does not affect the highest symbol. Thus the highest symbol of $(H-p-j-1)\overline{R}$ is $\langle v,\zeta\rangle P(v,\zeta)$. As discussed in the proof of Lemma \ref{SymbolLemma}, the ring $\kappa[v,\zeta]\big/\bigl(Q(v),Q^*(\zeta)\bigr)$ is a domain, and $\langle v,\zeta\rangle$ is nonzero in this quotient.\footnote{For example, its bidegree is $(1,1)$, whereas the two generators of the ideal have bidegrees $(2,0)$ and $(0,2)$.} Consequently, $\langle v,\zeta\rangle P(v,\zeta)\neq 0$, which proves \eqref{EulerInjectivityModuloQ}.

Since $\kappa$ has characteristic $0$ and $p+j>0$, the coefficient of $Q^{-p}$ in the second term of \eqref{ThetaJPoleExpansion} is nonzero modulo $QM$. Hence $\Theta_j\eta$ also has exact pole order $p$, and in particular $\Theta_j\eta\notin M$. This proves the reverse implication in \eqref{DeltaKelvinPoleCriterion}.
\end{proof}

\begin{proof}[Proof of Theorem~\ref{FiltrationTheorem}]
Retain the notation
\begin{equation}
M:=D_V/D_V\Delta,
\qquad
d:=\ad_\Delta=\Delta:M\longrightarrow M.
\end{equation}
\noindent The equality $F_{-1}=P_{-1}=0$ holds by definition. Moreover, Proposition~\ref{KelvinRegularGlobalSymmetryEquivalence} gives
\begin{equation}\label{DeltaKelvinBaseCase}
P_0
=
\ker(d)
=
F_0.
\end{equation}

We prove the remaining equalities by induction. Let $j\geq 1$ and $m\in M$. Since $K\Delta K=Q^2\Delta$, conjugation by $K$ gives
\begin{equation}\label{KelvinConjugatesDeltaAction}
\alpha_K(dm)
=
Q^2\Delta\,\alpha_K(m)
\end{equation}
\noindent in $M_Q$. Indeed, if $m=[\xi]$, then $dm=[\Delta\xi]$, and hence
\begin{equation}
\alpha_K(dm)
=
[K\Delta\xi K]
=
[(K\Delta K)(K\xi K)]
=
[Q^2\Delta\,\alpha_K(\xi)].
\end{equation}

Using \eqref{KelvinConjugatesDeltaAction}, we compute
\begin{align}
\Theta_j\bigl(Q^j\alpha_K(m)\bigr)
&=
Q^{j+1}\Delta Q^{-j}\bigl(Q^j\alpha_K(m)\bigr)
\nonumber\\
&=
Q^{j+1}\Delta\alpha_K(m)
\nonumber\\
&=
Q^{j-1}\alpha_K(dm).
\label{DeltaKelvinRecursionIdentity}
\end{align}
\noindent Applying Lemma~\ref{DeltaKelvinPoleLemma} to $Q^j\alpha_K(m)$ therefore gives
\begin{align}
m\in P_j
&\Longleftrightarrow
Q^j\alpha_K(m)\in M
\nonumber\\
&\Longleftrightarrow
\Theta_j\bigl(Q^j\alpha_K(m)\bigr)\in M
\nonumber\\
&\Longleftrightarrow
Q^{j-1}\alpha_K(dm)\in M
\nonumber\\
&\Longleftrightarrow
dm\in P_{j-1}.
\label{DeltaKelvinFiltrationRecursion}
\end{align}

Suppose inductively that $P_{j-1}=F_{j-1}$. Then \eqref{DeltaKelvinFiltrationRecursion} yields
\begin{align}
m\in P_j
&\Longleftrightarrow
dm\in P_{j-1} = F_{j-1}
\nonumber\\
&\Longleftrightarrow
d^j(dm)=0
\nonumber\\
&\Longleftrightarrow
d^{j+1}m=0
\nonumber\\
&\Longleftrightarrow
m\in F_j.
\end{align}
\noindent Together with the base case \eqref{DeltaKelvinBaseCase}, this proves
\begin{equation}
P_j=F_j
\end{equation}
\noindent for every $j\geq -1$.

Finally, each $F_j$ is a right $D_C$-submodule because $d$ is right $D_C$-linear. The equality $P_j=F_j$ therefore shows that the Kelvin filtration is also a filtration by right $D_C$-submodules.
\end{proof}

The next object we consider is the associated graded of $D_V/D_V\Delta$ with respect to the $\Delta$-filtration. For every $i\geq0$, there is a natural map
\begin{equation}\label{DeltaGradedInjection}
\Delta^i:F_i/F_{i-1}\longrightarrow F_0\cong D_C.
\end{equation}
\noindent This map is injective by the definition of $F_i$. Moreover, left multiplication by $\Delta$ commutes with the right $D_C$-action, so its image is a right ideal of $D_C$. Let us call this right ideal
\begin{equation}
I^{[i]} := \im\left(\Delta^i: F_i/F_{i-1} \to F_0 \cong D_C\right).   
\end{equation}
Our objective, for the rest of this subsection, is to describe $I^{[i]}$ explicitly.

Our strategy is as follows. We will first pass from the filtered objects $D_V/D_V\Delta$ and $D_C$ to an associated graded; both of the resulting graded objects will have the structure of a commutative algebra. These commutative algebras can in turn be analyzed geometrically: we will be able to describe the Rees algebra of the commutative ideals associated with $I^{[i]}$ in terms of the geometry of $\overline{\mathbb{O}}_{\min}$.

However, for this to succeed, we will need a filtration on $D_V/D_V\Delta$ which is preserved under left multiplication by $\Delta$ (equivalently, $\ad_\Delta$). Unfortunately,  under the usual order filtration $\ad_\Delta$ raises degree by 1. So we shall instead use the Bernstein filtration $\mathsf B_\bullet$. Recall that $\mathsf B_\bullet D_V$ is the unique filtration such that both polynomial coordinates and constant vector fields in $D_V$ have degree $1$. We 
equip\footnote{We shall call the Bernstein filtration on the quotient $\mathsf{B}_\bullet$ as well: $\mathsf{B}_i(D_V/D_V\Delta) := p_i(D_V)$ under the quotient map $p: D_V \to D_V/D_V\Delta$.} the quotient $D_V/D_V\Delta$ and its subspaces $F_i$ with the induced filtrations. In particular, observe that restriction to $D_C \cong N(D_V\Delta)/D_V\Delta \subset D_V/D_V\Delta$ induces a non-standard filtration on $D_C$.

\begin{lem}\label{BernsteinSymbolLemma}
Put
\begin{equation}
M:=D_V/D_V\Delta,
\qquad
A:=F_0=\ker(\Delta:M\to M).
\end{equation}
\noindent There is a natural identification
\begin{equation}
\gr_{\mathsf B}M\cong R:=\kappa[V\times C],
\end{equation}
\noindent where $C\subset V^*$ is the isotropic cone of $Q^*$ and points of $V\times C$ are written as $(v,\nu)$. Under this identification, left multiplication by $\Delta$ induces the locally nilpotent derivation
\begin{equation}
\delta(f)(v,\nu)
:=
\left.\frac{d}{dt}\right|_{t=0}
f(v+t\nu^\sharp,\nu)
\end{equation}
\noindent which preserves the Bernstein-filtration degree. Hence, if $\mathscr F_iR:=\ker(\delta^{i+1})$, then
\begin{equation}
\gr_{\mathsf B}F_i\subseteq\mathscr F_iR.
\end{equation}
\noindent Moreover,
\begin{equation}
R^\delta
\cong
\kappa[T^*C^o]
\cong
\kappa[\overline{\mathbb O}_{\min}],
\qquad
\gr_{\mathsf B}A=R^\delta.
\end{equation}
\noindent In the notation of \eqref{invariantfunccot}, the invariant ring $R^\delta$ is generated by the coordinate functions and matrix coefficients of
\begin{equation}
\nu^\sharp,
\qquad
\alpha:=\langle\nu,v\rangle,
\qquad
v\wedge\nu^\sharp,
\qquad
\mu_{\nu,v}:=\alpha v-Q(v)\nu^\sharp.
\end{equation}
\end{lem}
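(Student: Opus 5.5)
We will first identify $\gr_{\mathsf B}M$. The Bernstein filtration on $D_V$ has associated graded $\gr_{\mathsf B}D_V\cong\kappa[V\times V^*]$, with $v$ and $\zeta$ both in degree $1$. We will use the right Fischer decomposition (Lemma~\ref{RightFischerDecompositionLemma}), $D_V=D_V\Delta\oplus(\kappa[V]\otimes Z_\partial)$, which is compatible with the Bernstein grading: normal ordering preserves Bernstein degree, and $\Delta$ has constant coefficients. It then follows that $\gr_{\mathsf B}(D_V\Delta)=\kappa[V\times V^*]\cdot Q^*(\zeta)$. Concretely, the Bernstein symbol of $A\Delta$ is $\sigma(A)Q^*$, because $\kappa[V\times V^*]$ is a domain. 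Hence $\gr_{\mathsf B}M\cong\kappa[V\times V^*]/(Q^*)=\kappa[V\times C]=R$.

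Next we compute the induced map. Left multiplication by $\Delta$ on $M$ agrees with $\ad_\Delta$. For a Bernstein-homogeneous normal-ordered monomial $f(v)\partial^\beta$, we have $[\Delta,f]\partial^\beta$ with $[\Delta,f]=\sum_i\partial_{\zeta_i}Q^*\cdot\partial_{v_i}f+(\text{lower terms})$. Here $[\Delta,f]$ is a sum of a first-order term of the same Bernstein degree (one $v$ traded for one $\partial$) and a zeroth-order term of degree two lower. So $\ad_\Delta$ preserves $\mathsf B_j$, and on symbols it is the Poisson-type derivation $f\mapsto\{Q^*,f\}=\sum_i(\partial Q^*/\partial\zeta_i)\,\partial f/\partial v_i$. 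In invariant terms this is $f\mapsto\frac{d}{dt}f(v+t\nu^\sharp,\nu)$, since $dQ^*_\nu$ corresponds to $\nu^\sharp$. It is locally nilpotent because it lowers $v$-degree. Since $\Delta^{i+1}m=0$ forces $\delta^{i+1}\sigma(m)=0$ whenever the symbol is taken in the correct degree (possibly after the top symbol drops), we get $\gr_{\mathsf B}F_i\subseteq\mathscr F_iR$. The invariants $R^\delta$ are the $\G_a$-invariants of Proposition~\ref{invfunctioniso}, extended from $C^o\times V$ to $C\times V$. Since $\{0\}\times V$ has codimension $2k-1\geq 2$ in the normal variety $V\times C$, functions on $V\times C^o$ extend, so $R^\delta=\kappa[C^o\times V]^{\G_a}=\kappa[T^*C^o]\cong\kappa[\overline{\mathbb O}_{\min}]$. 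The statement about generators then follows from Proposition~\ref{CotBundDescr}.

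The main obstacle is the equality $\gr_{\mathsf B}A=R^\delta$; the inclusion $\subseteq$ is the $i=0$ case above. For the reverse inclusion, we will show that every generator of $R^\delta$ is the symbol of an element of $A$. The elements $\phi(\xi)$ for $\xi\in\g$ (Proposition~\ref{ConfActionProp}) lie in $N(D_V\Delta)$ by Proposition~\ref{DeltaPres}, so their classes lie in $A$. Their Bernstein symbols are, up to sign, the quasiclassical moment functions $\Phi(v,\nu)(\xi)$ from \eqref{momentmapcoord}; one checks that lower terms like $(k-1)B(\lambda,v)$ drop degree. By the computation following Proposition~\ref{rhoConstructionProp}, these symbols are exactly the matrix coefficients $\nu^\sharp,\alpha,v\wedge\nu^\sharp,\mu_{\nu,v}$ of \eqref{invariantfunccot}. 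Since $A\cong D_C$ is a subalgebra of the algebra $N(D_V\Delta)/D_V\Delta$, and symbols multiply, $\gr_{\mathsf B}A$ is a subalgebra containing these generators. Hence $\gr_{\mathsf B}A\supseteq R^\delta$. The delicate point is making sure the $\mathsf B$-symbols of the $\phi(\xi)$ are the ones claimed and do not vanish modulo $Q^*$. We will handle this by noting that the $\mu$-, $\lambda$-, $X$-, and $\alpha$-components have homogeneous symbols of Bernstein degrees $1,3,2,2$, which are nonzero on $V\times C^o$.
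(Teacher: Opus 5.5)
Your proposal is correct and follows the paper's proof essentially step for step. The steps match: $\gr_{\mathsf B}(D_V\Delta)=(Q^*)$ via the domain argument; the Bernstein symbol of $\ad_\Delta$ is $\{Q^*,-\}$; Hartogs together with Propositions~\ref{invfunctioniso} and~\ref{CotBundDescr} gives $R^\delta$; and the componentwise Bernstein symbols of the classes $[\phi(\xi)]$ give $R^\delta\subseteq\gr_{\mathsf B}A$. Your explicit observation that $\gr_{\mathsf B}A$ is a subalgebra of $R$, so that realizing the generators as symbols suffices, fills in a step the paper leaves implicit. One small correction: $A$ is all of $N(D_V\Delta)/D_V\Delta$, not merely a subalgebra of it.
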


\begin{proof}
The Bernstein associated graded of the Weyl algebra is $\gr_{\mathsf B}D_V\cong\kappa[V\times V^*]$, and the principal symbol of $\Delta$ is $Q^*$ in the derivative-symbol variables. Since $\gr_{\mathsf B}D_V$ is a domain, $\gr_{\mathsf B}(D_V\Delta)=(Q^*)$. Hence $\gr_{\mathsf B}M\cong\kappa[V\times V^*]/(Q^*)=\kappa[V\times C]$.

On $M$, left multiplication by $\Delta$ preserves the Bernstein filtration. Indeed, if $\xi\in D_V$, then $\Delta\xi=[\Delta,\xi]+\xi\Delta$, and the class of $\xi\Delta$ vanishes in $M$. Thus left multiplication by $\Delta$ is represented on $M$ by $\ad_\Delta$. Since $\Delta$ has Bernstein degree $2$ and $[\mathsf B_pD_V,\mathsf B_qD_V]\subseteq\mathsf B_{p+q-2}D_V$, we have $[\Delta,\xi]\in\mathsf B_{\deg_{\mathsf B}\xi}D_V$. Moreover, the principal symbol of $[\Delta,\xi]$ is the canonical Poisson bracket $\{Q^*,\sigma_{\mathsf B}(\xi)\}$, where $Q^*=\sigma_{\mathsf B}(\Delta)$ and the convention is $\{\sigma_{\mathsf B}(\partial_i),x_j\}=\delta_{ij}$. Hence the induced action of left multiplication by $\Delta$ on $\gr_{\mathsf B}M$ is $\{Q^*,-\}$. With our flat--sharp conventions, this derivation is precisely
\begin{equation*}
\{Q^*,f\}(v,\nu)
=
\left.\frac{d}{dt}\right|_{t=0}
f(v+t\nu^\sharp,\nu).
\end{equation*}

After interchanging the factors $V\times C\cong C\times V$, the derivation $\delta$ is the infinitesimal generator of the $\G_a$-action of \eqref{Gasymm}. Since $C$ is normal and $C\setminus C^o=\{0\}$ has codimension at least $2$, one has $\kappa[C\times V]=\kappa[C^o\times V]$. Proposition~\ref{invfunctioniso} therefore identifies $R^\delta$ with $\kappa[T^*C^o]$, and Proposition~\ref{CotBundDescr} identifies this ring with $\kappa[\overline{\mathbb O}_{\min}]$ and gives the stated generators.

It remains to identify $\gr_{\mathsf B}A$. Since every element of $A$ is killed by $\Delta$, the symbol calculation above gives $\gr_{\mathsf B}A\subseteq R^\delta$. Conversely, Proposition~\ref{DeltaPres} shows that the classes of the operators $\phi(\xi)$ lie in $A$. Using Proposition~\ref{ConfActionProp}, and writing $\mu_0,X_0,a,\lambda_0$ for the four parameters, their Bernstein symbols are
\begin{align*}
\sigma_{\mathsf B}\bigl(\phi(0,\mu_0,0,0)\bigr)
&=
-\langle\nu,\mu_0\rangle,
&
\sigma_{\mathsf B}\bigl(\phi(a,0,0,0)\bigr)
&=
a\alpha,
\\
\sigma_{\mathsf B}\bigl(\phi(0,0,X_0,0)\bigr)
&=
-\langle\nu,X_0v\rangle
=
\frac{1}{2}\tr\bigl((v\wedge\nu^\sharp)X_0\bigr),
&
\sigma_{\mathsf B}\bigl(\phi(0,0,0,\lambda_0)\bigr)
&=
-B(\lambda_0,\mu_{\nu,v}).
\end{align*}
\noindent These symbols span the coordinate functions of the generators of $\kappa[T^*C^o] \cong R^\delta$ in Proposition \ref{ConfActionProp}. Hence $R^\delta\subseteq\gr_{\mathsf B}A$, and so $\gr_{\mathsf B}A=R^\delta$. Finally, Theorem~\ref{GlobalSectionsTheorem} identifies $A$ with $D_C$ through $\widehat{\tau}$.
\end{proof}

For the remainder of this discussion, put
\begin{equation}\label{SRingDef}
S:=R^\delta.
\end{equation}
\noindent The coordinate functions of $\nu^\sharp$ belong to $S$, and so does $\alpha=\langle\nu,v\rangle$, since
\begin{equation*}
\delta(\alpha)
=
B(\nu^\sharp,\nu^\sharp)
=
2Q(\nu^\sharp)
=
0
\end{equation*}
\noindent on $C$. We define
\begin{equation}\label{ClassicalBoundaryIdealDef}
\mathfrak a
:=
\bigl((\nu^\sharp)_1,\ldots,(\nu^\sharp)_{2k},\alpha\bigr)
\subset S.
\end{equation}
\noindent This is the classical ideal whose powers will occur in the associated graded of \eqref{DeltaGradedInjection}. To interpret it geometrically, recall from Propositions~\ref{invfunctioniso} and \ref{CotBundDescr} that $S\cong\kappa[T^*C^o]\cong\kappa[\overline{\mathbb O}_{\min}]$, and that the affinized projection $T^*C^o\to C^o$ is induced on coordinate rings by the functions $(\nu^\sharp)_i$. Thus the ideal $((\nu^\sharp)_1,\ldots,(\nu^\sharp)_{2k})\subset S$ is the scheme-theoretic ideal of the fiber over the vertex $0\in C$ in the affinized model. The ideal $\mathfrak a$ is its radical. Indeed, using the notation $\mu_{\nu,v}:=\alpha v-Q(v)\nu^\sharp$ from \eqref{dualconeinv}, one has $B(\nu^\sharp,\mu_{\nu,v})=\alpha^2$ on $C$, since $Q(\nu^\sharp)=0$. Hence $\alpha^2$ lies in the ideal generated by the coordinate functions of $\nu^\sharp$, and so $\alpha$ lies in its radical. Equivalently, in the matrix presentation \eqref{invariantfunccot}, $\mathfrak a$ is the reduced ideal obtained by forcing the rightmost column, i.e. the base-cone coordinate, to vanish. Geometrically speaking, the variety $V(\mathfrak{a})$ is the reduced subscheme of $\mathbb{O}_{\min}$ given by the image of the quasiclassical quadric Fourier transform (i.e., $\Ad(w_0)$) applied to the closure of the 0-section of $C^o$ in $T^*C^o$.

Lemma~\ref{BernsteinSymbolLemma} shows that passing to associated graded objects from the Bernstein filtration on $M = D_V/D_V\Delta$ and $F_0 \cong D_C$ gives the commutative algebra $R=\kappa[C \times V]$ and $S =\kappa[\mathbb{O}_{\min}]$. Moreover, the $\Delta$-filtration $F_i$ on $D_V/D_V\Delta$ induces the $\delta$-filtration $\mathscr{F}_i$ on $\kappa[C \times V]$ and the degree-0 component $\mathscr{F}_0$ is precisely $S = R^{\delta}$.

Our next goal is to describe the associated graded algebra of $\mathscr{F}_i$ in the commutative algebra $\kappa[C \times V]$. Because we are working with a commutative algebra, we have all the tools of algebraic geometry at our disposal. In particular, a filtered commutative algebra $A$ may be studied through its Rees algebra. This gives a scheme $X \to \A^1$ such that the fiber over $1$ is $\Spec(A)$, while the fiber over $0$ is $\Spec(\textrm{AssGr}(A))$. 

In the present setting, this Rees family has a very elegant and concrete geometric realization as a one-parameter family inside an affine flag-multicone. We shall find that the general fiber of this family is $V\times C$, while its special fiber is the affine Rees space of the ideal $\mathfrak a\subset S$ defined in \eqref{ClassicalBoundaryIdealDef}. Thus we find that $V \times C$ degenerates to a cone over the blowup of $V(\mathfrak{a})$ in $\Spec\, S=\overline{\mathbb O}_{\min}$. 

Put $W:=V^+$, and let
\begin{equation*}
\operatorname{OFl}(1,2;W)
:=
\left\{
(\ell,\Pi):
\ell\subset\Pi\subset W,\ 
\dim\,\ell=1,\ 
\dim\,\Pi=2,\ 
Q^+|_{\Pi}=0
\right\}
\end{equation*}
\noindent be the variety of isotropic flags of dimensions $1$ and $2$ in $W$. Under the natural embedding
\begin{equation*}
\operatorname{OFl}(1,2;W)
\hookrightarrow
\P(W)\times\P\left(\bigwedge\nolimits^2W\right),
\qquad
(\ell,\Pi)
\longmapsto
\left(\ell,\bigwedge\nolimits^2\Pi\right),
\end{equation*}
\noindent let
\begin{equation*}
\widehat{\operatorname{OFl}}(1,2;W)
\subset
W\times\bigwedge\nolimits^2W
\end{equation*}
\noindent denote the corresponding affine multicone. A point in its dense flag locus may be written as $(u,u\wedge z)$, where $u$ and $z$ span a totally isotropic $2$-plane. Let $u_a$ and $p_{ab}=-p_{ba}$ denote the coordinate functions on the two factors, where the indices range over
\begin{equation*}
\mathcal I:=\{+,1,\ldots,2k,-\}.
\end{equation*}
The coordinate $u_+$ gives a map $\widehat{\operatorname{OFl}}(1,2;W) \to \A^1$; as we shall see, this provides the flat Rees deformation from $C \times V$ to the cone $\widehat{\textrm{Bl}_{V(\mathfrak{a})} \overline{\mathbb{O}}_{\min}}$.

\begin{lem}\label{IsotropicFlagChartLemma}
Let
\begin{equation*}
\widehat{\operatorname{OFl}}(1,2;W)_+
:=
\widehat{\operatorname{OFl}}(1,2;W)\cap\{u_+=1\}.
\end{equation*}
\noindent For $(v,\nu)\in V\times C$, put
\begin{equation*}
\alpha:=\langle\nu,v\rangle,
\qquad
u(v):=(1,v,-Q(v)),
\qquad
z(v,\nu):=(0,\nu^\sharp,-\alpha),
\end{equation*}
\noindent and set $p(v,\nu):=u(v)\wedge z(v,\nu)$. Then
\begin{equation*}
\Psi:V\times C\longrightarrow
\widehat{\operatorname{OFl}}(1,2;W)_+,
\qquad
(v,\nu)\longmapsto\bigl(u(v),p(v,\nu)\bigr),
\end{equation*}
\noindent is an isomorphism. Consequently,
\begin{equation*}
R
\cong
\kappa\left[\widehat{\operatorname{OFl}}(1,2;W)_+\right].
\end{equation*}
\noindent For every $(u,p)\in\widehat{\operatorname{OFl}}(1,2;W)_+$, there is a unique $z\in W$ such that $z_+=0$ and $p=u\wedge z$. This canonical choice is given by
\begin{equation*}
z_a=p_{+a}.
\end{equation*}
\noindent Under $\Psi$, it is the vector $z(v,\nu)$. In particular,
\begin{equation*}
p_{+i}=(\nu^\sharp)_i,
\qquad
p_{+-}=-\alpha.
\end{equation*}
\noindent The derivation $\delta$ satisfies
\begin{equation*}
\delta(u_a)=p_{+a},
\qquad
\delta(p_{ab})=0.
\end{equation*}
\noindent Moreover, the functions $p_{ab}$ generate $S=R^\delta$, and
\begin{equation}\label{FirstRowPluckerIdeal}
\mathfrak a
=
(p_{+a}:a\in\mathcal I\setminus\{+\})
\subset S.
\end{equation}
\end{lem}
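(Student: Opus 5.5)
The plan is to verify directly that $\Psi$ lands in the chart and then construct an explicit inverse. Along the way we read off the coordinate identities, compute $\delta$ on coordinates, and finally identify $S$ and $\mathfrak a$ through the Plücker coordinates.

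\textbf{Well-definedness of $\Psi$.} The vector $u(v)=(1,v,-Q(v))$ is isotropic, since $Q^+(u)=Q(v)+1\cdot(-Q(v))=0$. Next, $z(v,\nu)=(0,\nu^\sharp,-\alpha)$ satisfies $Q^+(z)=Q(\nu^\sharp)=0$ because $\nu\in C$. Finally, using the pairing $B^+$ that pairs $e_+$ with $e_-$, we get $B^+(u,z)=B(v,\nu^\sharp)-\alpha=0$. Hence, whenever $z\neq 0$, the span of $u,z$ is a totally isotropic plane, and $(u,u\wedge z)$ lies in the flag multicone. When $\nu=0$ we have $z=0$, and the point $(u,0)$ still lies in the closure $\widehat{\operatorname{OFl}}(1,2;W)$. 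I will argue this by closedness, since $\nu$ can be degenerated to $0$ within $C$. Expanding $u\wedge z$ gives $p_{+i}=(\nu^\sharp)_i$ and $p_{+-}=-\alpha$, because $u_+=1$ and $z_+=0$.

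\textbf{The inverse.} Take a point $(u,p)$ with $u_+=1$. I will use the defining equations of the multicone, namely the incidence relations $u\wedge p=0$, the Plücker relations, and isotropy, all of which hold on the closure. From these I will show that $p=u\wedge z$ with $z_a:=p_{+a}$. The main check is the incidence relation $(u\wedge p)_{+ab}=0$, which reads $u_+p_{ab}-u_ap_{+b}+u_bp_{+a}=0$; with $u_+=1$ this gives exactly $p_{ab}=u_az_b-u_bz_a$. Uniqueness of $z$ subject to $z_+=0$ is immediate. From $u_+=1$ and $Q^+(u)=0$ we recover $u=u(v)$ with $v:=(u_i)$. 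Setting $\nu:=((z_i))^\flat$, isotropy of $z$ gives $\nu\in C$, and $B^+(u,z)=0$ forces $z_-=-\alpha$. This produces the inverse morphism, so $\Psi$ is an isomorphism and $R\cong\kappa[\widehat{\operatorname{OFl}}_+]$.

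The step I expect to be the main obstacle is justifying that these incidence and isotropy equations actually hold on the whole multicone closure (including the boundary where $p=0$), rather than only on the flag locus. I would handle this by noting that each equation is polynomial and vanishes on the dense flag locus, so it vanishes on the closure.

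\textbf{The derivation $\delta$.} By Lemma~\ref{BernsteinSymbolLemma}, $\delta$ is translation of $v$ along $\nu^\sharp$. Hence $\delta(u_i)=(\nu^\sharp)_i=p_{+i}$, and $\delta(u_-)=-B(v,\nu^\sharp)=-\alpha=p_{+-}$. Also $\delta(u_+)=0=p_{++}$, since $p_{++}=0$ by antisymmetry. For the Plücker coordinates, $\delta(p)=\delta(u)\wedge z=z\wedge z=0$, because $\delta(z)=0$ (we have $\delta\alpha=0$ on $C$, and $\nu$ is fixed by the translation).

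\textbf{Generation of $S$ and the ideal $\mathfrak a$.} To see that the $p_{ab}$ generate $S$, I compare them with the generators listed in Lemma~\ref{BernsteinSymbolLemma}:
\begin{itemize}
\item $p_{+i}$ and $p_{+-}$ give $\nu^\sharp$ and $\alpha$;
\item $p_{ij}$ gives $v\wedge\nu^\sharp$, since $p_{ij}=u_iz_j-u_jz_i$;
\item $p_{i-}=-v_i\alpha+Q(v)(\nu^\sharp)_i=-(\mu_{\nu,v})_i$.
\end{itemize}
Thus the $p_{ab}$ recover all the listed generators, and they generate $S$. The description \eqref{FirstRowPluckerIdeal} of $\mathfrak a$ then follows immediately from the identities $p_{+i}=(\nu^\sharp)_i$ and $p_{+-}=-\alpha$, together with $p_{++}=0$.
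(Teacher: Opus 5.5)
Your proposal is correct and follows essentially the same route as the paper. You check that $u(v)$ and $z(v,\nu)$ are isotropic and orthogonal, invert $\Psi$ by setting $z_a=p_{+a}$ and using the $(+,a,b)$ incidence coordinates together with the isotropy relations, compute $\delta$ from the translation $v\mapsto v+t\nu^\sharp$, and deduce that the $p_{ab}$ generate $S$ from the known generators of $\kappa[T^*C^o]$; the differences are cosmetic: you verify generation by explicit coordinate comparison (e.g.\ $p_{i-}=-(\mu_{\nu,v})_i$) rather than identifying $u\wedge z$ with the matrix \eqref{invariantfunccot} under $\bigwedge^2W\cong\mathfrak{o}(Q^+)$, and you use the Leibniz rule for $\delta(p)$ instead of finite translation invariance.
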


\begin{proof}
The vectors $u(v)$ and $z(v,\nu)$ are isotropic and orthogonal: one has $Q^+(u(v))=0$, $Q^+(z(v,\nu))=Q(\nu^\sharp)=0$, and $B^+(u(v),z(v,\nu))=-\alpha+B(v,\nu^\sharp)=0$. Hence $\Psi$ is well-defined; if $z(v,\nu)=0$, the corresponding point is obtained by passing to the closure defining the affine multicone.

Let $(u,p)\in\widehat{\operatorname{OFl}}(1,2;W)_+$. The incidence relation gives $u\wedge p=0$. Define $z_+:=0$ and $z_a:=p_{+a}$ for $a\neq+$. Since $u_+=1$, the $(+,a,b)$-coordinate of $u\wedge p=0$ gives $p_{ab}=u_az_b-u_bz_a$, and hence $p=u\wedge z$. This $z$ is unique: if also $p=u\wedge z'$ with $z'_+=0$, then $u\wedge(z-z')=0$, so $z-z'$ is a scalar multiple of $u$; comparison of the $+$-coordinates forces this scalar to vanish.

Write $u=(1,v,b)$ and $z=(0,s,c)$. The isotropy relations for the flag give $Q^+(u)=Q^+(z)=B^+(u,z)=0$. Thus $b=-Q(v)$, $Q(s)=0$, and $c=-B(v,s)$. Setting $\nu:=s^\flat$ gives $\nu\in C$, $s=\nu^\sharp$, and $c=-\langle\nu,v\rangle$. Therefore $(u,p)=\Psi(v,\nu)$, and the resulting inverse is polynomial.

For the action of $\delta$, put $s:=\nu^\sharp$. Since $Q(s)=0$ and $B(v,s)=\alpha$, one has $u(v+ts)=u(v)+tz(v,\nu)$ and $z(v+ts,\nu)=z(v,\nu)$. Hence $p(v+ts,\nu)=p(v,\nu)$. Differentiating at $t=0$ gives $\delta(u_a)=z_a=p_{+a}$ and $\delta(p_{ab})=0$.

Finally, under the identification $\bigwedge^2W\cong\mathfrak{o}(Q^+)$ induced by $B^+$, the bivector $p=u\wedge z$ acts by $x\mapsto B^+(u,x)z-B^+(z,x)u$. In block coordinates, this is the matrix \eqref{invariantfunccot}. Proposition~\ref{CotBundDescr} therefore shows that the functions $p_{ab}$ generate $S$. Since $p_{+i}=(\nu^\sharp)_i$ and $p_{+-}=-\alpha$, equation \eqref{FirstRowPluckerIdeal} agrees with \eqref{ClassicalBoundaryIdealDef}.
\end{proof}

Let
\begin{equation*}
\mathcal R_{1,2}
:=
\kappa\left[\widehat{\operatorname{OFl}}(1,2;W)\right]
\end{equation*}
\noindent be the multicone coordinate ring. It is naturally bigraded: the generators $u_a$ have degree $(1,0)$ and the Pl\"ucker coordinates $p_{ab}$ have degree $(0,1)$. We use the first component of this bigrading and write
\begin{equation*}
\mathcal R_{1,2}
=
\bigoplus_{d\geq0}\mathcal R_{1,2}^{(d)},
\end{equation*}
\noindent where the second degree is unrestricted. The coordinate function $u_+$ defines a morphism
\begin{equation*}
\varpi:
\widehat{\operatorname{OFl}}(1,2;W)
\longrightarrow
\A^1.
\end{equation*}
\noindent We write
\begin{equation*}
R_1
:=
\mathcal R_{1,2}/(u_+-1),
\qquad
R_0
:=
\mathcal R_{1,2}/(u_+).
\end{equation*}
\noindent Thus Lemma~\ref{IsotropicFlagChartLemma} gives $R_1\cong R$.

\begin{thm}\label{MulticoneReesDeformationTheorem}
The morphism
\begin{equation*}
\varpi:
\widehat{\operatorname{OFl}}(1,2;W)
\longrightarrow
\A^1
\end{equation*}
\noindent is flat. Its fiber over $1$ is $V\times C$, and every nonzero fiber is isomorphic to this one. Its fiber over $0$ is
\begin{equation}\label{SpecialFiberReesIsomorphism}
R_0
\cong
\mathcal R_S(\mathfrak a)
:=
\bigoplus_{i\geq0}\mathfrak a^it^i
\subset S[t].
\end{equation}
\noindent In particular, the special fiber is the affine Rees space of the ideal $\mathfrak a\subset S$. After applying $\operatorname{Proj}$ over $\Spec \, S=\overline{\mathbb O}_{\min}$, it gives the blowup of $\overline{\mathbb O}_{\min}$ along the reduced fiber over $0\in C$.
\end{thm}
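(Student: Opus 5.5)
The plan is to exploit the $\G_m$-action on $W=V^+$ scaling the $e_+$-coordinate, and to identify $\mathcal R_{1,2}$ with a Rees-type algebra over $S$. Let $h_t\in G$ be the Levi element $\mathrm{diag}(t,\Id,t^{-1})$. It acts on $\widehat{\operatorname{OFl}}(1,2;W)$, and the rescaling $(u,p)\mapsto(t^{-1}u,p)$ of the $(1,0)$-grading also acts. Combining the two scales $u_+$ by $t$ while preserving the variety. This identifies every fiber over $c\neq0$ with the fiber over $1$, which is $V\times C$ by Lemma~\ref{IsotropicFlagChartLemma}.

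For flatness over $\kappa[u_+]$ it suffices to show that $u_+$ is a nonzerodivisor on $\mathcal R_{1,2}$. This holds because the multicone is integral: it is the affine cone over the irreducible flag variety $\operatorname{OFl}(1,2;W)$ for the ample multi-line bundle. Moreover $u_+$ does not vanish identically on it, since the chart $\{u_+=1\}$ is nonempty. So the remaining work is the description of the special fiber.

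To identify $R_0$, I would construct a map $\mathcal R_{1,2}\to S[t]$ and show that its image is $\bigoplus_i\mathfrak a^it^i$ in $u$-degree $i$. On the chart $u_+=1$ we have $u=u(v)$ and $p=u\wedge z$ with $z_a=p_{+a}$. The key structural fact is the incidence relation $u\wedge p=0$. Its $(+,a,b)$-components say that $u_+p_{ab}=u_ap_{+b}-u_bp_{+a}$. Multiplying by $p_{+c}$ and using the Plücker relations should give a homogeneous formula for each $u_a$ in terms of $u_+$, as follows. Since $\mathfrak a$ contains $p_{+-}=-\alpha$ and $B(\nu^\sharp,\mu_{\nu,v})=\alpha^2$, the relation $u_aB^+(\cdot)$-type contractions, namely $B^+(u,z)=0$ and $p\cdot u=0$, should express $u_a\cdot p_{+b}$ as $u_+$ times an element of $S$ plus lower terms. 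Hence degree-$d$ elements, after dehomogenizing via $u_+=1$, lie in the $\delta$-filtered piece $\mathscr F_dR$ (using $\delta(u_a)=p_{+a}$ and $\delta(p)=0$). Then $\mathcal R_{1,2}^{(d)}\cong\mathscr F_dR$ via $f\mapsto f|_{u_+=1}$, with multiplication by $u_+$ corresponding to the inclusion $\mathscr F_{d-1}\subset\mathscr F_d$. This presents $\mathcal R_{1,2}$ as the Rees algebra of the $\delta$-filtration, and therefore $R_0=\mathcal R_{1,2}/(u_+)\cong\gr_{\mathscr F}R$.

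The remaining identification is $\gr_{\mathscr F}R\cong\mathcal R_S(\mathfrak a)$, given by $\delta^d:\mathscr F_d/\mathscr F_{d-1}\hookrightarrow S$, whose image should be $\mathfrak a^d$. Containment in $\mathfrak a^d$ comes from $\delta(u_a)=p_{+a}\in\mathfrak a$ together with the Leibniz rule, for monomials of $u$-degree $d$ with coefficients in $S$. Surjectivity onto $\mathfrak a^d$ follows because $\delta^d(u_{a_1}\cdots u_{a_d})=d!\,p_{+a_1}\cdots p_{+a_d}$, using characteristic $0$.

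The main obstacle is the identification $\mathcal R_{1,2}^{(d)}\cong\mathscr F_dR$. This requires showing that every element of $\mathscr F_dR$ is an $S$-combination of $u$-monomials of degree $\le d$, which is a standard-monomial or generation statement for $\ker\delta^{d+1}$. I would first attempt it via the $G$-module decomposition of multicone sections, using Borel--Weil to compare the characters of $\mathcal R^{(d)}$ modulo $u_+$ with those of $\mathfrak a^d$. Failing that, I would use the normality of the multicone together with Hartogs, since the complement of the flag locus has codimension at least $2$. The blowup statement then follows by taking $\operatorname{Proj}$ of the Rees algebra, with $\mathfrak a$ the reduced fiber ideal as noted before the statement.
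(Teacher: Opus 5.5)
The gap is exactly the step you flag as the main obstacle, and nothing else in the proposal supplies it. Write $\mathsf G_dR$ for the image of $\bigoplus_{r\le d}\mathcal R_{1,2}^{(r)}$ under $u_+\mapsto 1$. Dehomogenization identifies $\mathcal R_{1,2}^{(d)}$ with $\mathsf G_dR$ and $R_0$ with $\gr_{\mathsf G}R$. The inclusion $\mathsf G_dR\subseteq\mathscr F_dR$ is immediate from $\delta(u_a)=p_{+a}$ and $\delta(p_{ab})=0$. Your paragraph about expressing $u_ap_{+b}$ through $u_+$ is not needed for this, and read literally it is false: for suitable $a$, the image of $u_ap_{+a}$ under $R_0\to S[t]$, $u_a\mapsto p_{+a}t$, is $p_{+a}^2t\neq0$. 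Everything therefore hinges on the reverse inclusion $\ker(\delta^{d+1})\subseteq\mathsf G_dR$. Equivalently, you need injectivity of the natural surjection $R_0\to\mathcal R_S(\mathfrak a)$.

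Your last step does not bypass this. You check $\delta^d(\mathscr F_dR)\subseteq\mathfrak a^d$ only for $S$-combinations of $u$-monomials of degree at most $d$. An element of $\ker(\delta^{d+1})$ is a priori only an $S$-combination of $u$-monomials of larger degree, and their individual $\delta^d$-images do not even lie in $S$. Neither suggested remedy is yet an argument. A character comparison would need the graded characters of the powers $\mathfrak a^d$, and only Levi characters are available since $u_+$ is not $G$-invariant; computing those is essentially what is being proved. The codimension-$2$ complement of the flag locus is beside the point, because what must be controlled is the pole of $u_+^d f(u/u_+,p)$ along the divisor $\{u_+=0\}$.

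The paper closes the gap on the special fiber itself. The morphism $\Spec\,S\times\A^1\to\widehat{\operatorname{OFl}}(1,2;W)\cap\{u_+=0\}$, $(p,t)\mapsto(t\,q(p),p)$ with $q(p)_a=p_{+a}$, gives the surjection $\Phi:R_0\to\mathcal R_S(\mathfrak a)$. Injectivity then comes from two facts:
\begin{itemize}
\item $\mathcal R_{1,2}$ is a UFD, being the total coordinate ring of $\operatorname{OFl}(1,2;W)$. So $u_+$, of bidegree $(1,0)$, is prime and $R_0$ is a domain.
\item $\dim\,R_0=2n-1=\dim\,\mathcal R_S(\mathfrak a)$, so $\ker\Phi$ is a height-zero prime of a domain, hence zero.
\end{itemize}
The equality $\ker(\delta^{i+1})=\mathsf G_iR$ is deduced only afterwards (Proposition~\ref{ClassicalReesTheorem}), not used as input.

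If you want to keep your order of argument, the Hartogs idea can be made to work along the divisor. Near a point of $\{u_+=0\}$ with $u\neq0$ and $q(p)\neq0$, write $u=\epsilon\,w(p)+\tau\,q(p)$ with a local regular choice of $w(p)\in\Pi$ satisfying $w_+=1$. Then $u/u_+=w+(\tau/\epsilon)q$. Since $f\in\ker(\delta^{d+1})$ is polynomial of degree at most $d$ along $q$, one gets $u_+^df(u/u_+,p)=\sum_{j\le d}c_j(p)\tau^j\epsilon^{d-j}$, which is regular. Add normality of $\mathcal R_{1,2}$, a nontrivial input though weaker than the UFD property. Add also that $\{u=0\}$ and $\{q(p)=0\}$ have dimension at most $2n-2$ inside the $(2n-1)$-dimensional special fiber. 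Together these give $\ker(\delta^{d+1})\subseteq\mathsf G_dR$, after which the rest of your argument goes through.
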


\begin{proof}
We first prove flatness. The ring $\mathcal R_{1,2}$ is a domain, since the affine multicone is irreducible. Moreover, the subalgebra $\kappa[u_+]\subset\mathcal R_{1,2}$ is polynomial: the powers $u_+^m$ have distinct first degrees, so no nonzero polynomial in $u_+$ vanishes. Hence $\mathcal R_{1,2}$ is torsion-free over the PID $\kappa[u_+]$. It follows that $\mathcal R_{1,2}$ is flat over $\kappa[u_+]$, which is the desired flatness of $\varpi$.

The fiber over $1$ is $R_1\cong R\cong\kappa[V\times C]$ by Lemma~\ref{IsotropicFlagChartLemma}. If $c\neq0$, the fiber $u_+=c$ is identified with the fiber $u_+=1$ by scaling the first multicone coordinate $u$ by $c^{-1}$ while leaving the Pl\"ucker coordinate $p$ fixed. Hence all nonzero fibers are isomorphic.

It remains to identify the fiber over $0$. Put
\begin{equation*}
q_a:=p_{+a}\in S
\qquad
(a\in\mathcal I),
\end{equation*}
\noindent so that $q_+=p_{++}=0$ and $\mathfrak a=(q_a:a\neq+)$. For $p=(p_{ab})\in\bigwedge^2W$, let $q(p)\in W$ be the vector whose $a$-coordinate is $p_{+a}$.

We claim that $(p,t)\mapsto(tq(p),p)$ defines a morphism from $\Spec \, S\times\A^1$ to the variety $\widehat{\operatorname{OFl}}(1,2;W)\cap\{u_+=0\}$. It suffices to check the defining equations on the dense locus where $p=x\wedge y$ and $x,y$ span a totally isotropic $2$-plane. On this locus,
\begin{equation*}
q(p)=x_+y-y_+x.
\end{equation*}
\noindent Thus $q(p)$ belongs to the plane spanned by $x$ and $y$, so $q(p)\wedge p=0$ and $Q^+(q(p))=0$; moreover, its $+$-coordinate is $p_{++}=0$. These are polynomial identities in the coordinates of $p$, so they hold on all of $\Spec \, S$. Hence $(tq(p),p)$ lies in the required special fiber.

Passing to coordinate rings gives a graded homomorphism
\begin{equation}\label{MulticoneToReesMap}
\Phi:
\mathcal R_{1,2}/(u_+)
\longrightarrow
S[t],
\qquad
\Phi(p_{ab})=p_{ab},
\qquad
\Phi(u_a)=q_at.
\end{equation}
\noindent Its image is the subalgebra generated by $S$ and the elements $q_at$, namely
\begin{equation*}
S[q_at:a\neq+]
=
\bigoplus_{i\geq0}\mathfrak a^it^i
=
\mathcal R_S(\mathfrak a).
\end{equation*}
\noindent Therefore $\Phi$ is a surjective graded homomorphism from $R_0$ onto $\mathcal R_S(\mathfrak a)$.

We now prove injectivity. In the language of \cite{ElizondoKuranoWatanabeTotalCoordinateRing}, the multicone ring $\mathcal R_{1,2}$ is the total coordinate ring of the smooth projective flag variety $\operatorname{OFl}(1,2;W)$, whose Picard group is generated by the two line bundles defining the embeddings into $\P(W)$ and $\P(\bigwedge^2W)$. It is therefore a unique factorization domain by \cite[Corollary~1.2]{ElizondoKuranoWatanabeTotalCoordinateRing}. Since $u_+$ has bidegree $(1,0)$, it is irreducible, hence prime. Thus $R_0=\mathcal R_{1,2}/(u_+)$ is a domain. The target $\mathcal R_S(\mathfrak a)$ is also a domain, since it is a subalgebra of $S[t]$.

The variety $\operatorname{OFl}(1,2;W)$ has dimension $2n-2$, and its Picard group has rank $2$. Hence $\dim\,\mathcal R_{1,2}=2n$ and $\dim R_0=2n-1$. On the other hand, $\dim S=2n-2$, and, since $\mathfrak a\neq0$, the fraction field of $\mathcal R_S(\mathfrak a)$ is $\operatorname{Frac}(S)(t)$. Thus $\dim\,\mathcal R_S(\mathfrak a)=2n-1$. The kernel of $\Phi$ is therefore a prime ideal of height $0$. Since $R_0$ is a domain, its only prime ideal of height $0$ is the zero ideal. Hence $\Phi$ is injective, proving \eqref{SpecialFiberReesIsomorphism}.
\end{proof}

We now extract the associated graded algebra of the $\delta$-filtration from this deformation. By Lemma~\ref{IsotropicFlagChartLemma},
\begin{equation*}
R
\cong
\mathcal R_{1,2}/(u_+-1).
\end{equation*}
\noindent Let $\mathsf G_iR$ be the image in $R$ of
\begin{equation*}
\bigoplus_{0\leq d\leq i}\mathcal R_{1,2}^{(d)}.
\end{equation*}
\noindent Thus $\mathsf G_\bullet R$ is the filtration obtained by dehomogenizing the first grading at $u_+=1$.

\begin{lem}\label{DehomogenizedAssociatedGradedLemma}
There is a natural graded isomorphism
\begin{equation}\label{DehomogenizationAssociatedGraded}
\gr_{\mathsf G}R
\cong
R_0
=
\mathcal R_{1,2}/(u_+).
\end{equation}
\end{lem}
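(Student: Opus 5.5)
This is the standard dehomogenization statement for a graded ring and a degree-one nonzerodivisor. Write $\mathcal R:=\mathcal R_{1,2}=\bigoplus_{d\ge0}\mathcal R^{(d)}$, graded by the first degree, and write $\pi:\mathcal R\to R=\mathcal R/(u_+-1)$ for the quotient map. Then $\mathsf G_iR=\pi(\mathcal R^{(\le i)})$. The plan is to build a surjection $\mathcal R\to\gr_{\mathsf G}R$ and show that its kernel is exactly $(u_+)$. The only inputs are that $\mathcal R$ is graded in nonnegative first degree, and that $u_+$ is a nonzerodivisor of degree $1$. The latter holds because $\mathcal R$ is a domain, as noted in the proof of Theorem~\ref{MulticoneReesDeformationTheorem}.

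First I would describe the filtration concretely. Since $u_+\equiv1$ in $R$, multiplication by $u_+$ sends $\mathcal R^{(d)}$ into $\mathcal R^{(d+1)}$ without changing the image under $\pi$. Hence $\mathsf G_iR=\pi(\mathcal R^{(i)})$, the filtration is increasing and exhaustive, and it is multiplicative. Define $\theta_i:\mathcal R^{(i)}\to\mathsf G_iR/\mathsf G_{i-1}R$ by $f\mapsto[\pi(f)]$. Each $\theta_i$ is surjective. Taken together, the $\theta_i$ form a graded ring homomorphism $\theta:\mathcal R\to\gr_{\mathsf G}R$. It kills $u_+$, because $\theta_1(u_+)=[1]$ with $1\in\mathsf G_0R$. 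So $\theta$ factors through $R_0=\mathcal R/(u_+)$.

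The main step is to show that the kernel of $\theta$ is $(u_+)$. Suppose $f\in\mathcal R^{(i)}$ and $\pi(f)\in\mathsf G_{i-1}R=\pi(\mathcal R^{(i-1)})$. Then $\pi(f)=\pi(g)$ for some $g\in\mathcal R^{(i-1)}$, and so $f-u_+g\in\ker\pi=(u_+-1)$. Write $f-u_+g=(u_+-1)h$ and decompose $h=\sum_d h_d$ into homogeneous pieces. Compare homogeneous components on both sides. In degrees $d\neq i$ the left side vanishes, which gives the recursion $h_d=u_+h_{d-1}$ for $d\ne i$. Starting from $h_{-1}=0$, this forces $h_d=0$ for all $d<i$. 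The degree-$i$ component then gives $f-u_+g=u_+h_{i-1}-h_i=-h_i$. Going upward, the recursion gives $h_{d}=u_+^{d-i}h_i$ for $d>i$, and since $h$ has only finitely many nonzero components, $h_i=0$. Therefore $f=u_+g\in(u_+)$. This shows $\ker\theta=(u_+)$.

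The component bookkeeping is the only step that needs care, so I expect it to be the main (mild) obstacle. Specifically, one must check that the recursion terminates: this uses that the first grading is bounded below, and that $u_+$ has degree exactly $1$, so it does not mix components. Combining the surjectivity of $\theta$ with this kernel computation gives the isomorphism $\gr_{\mathsf G}R\cong\mathcal R_{1,2}/(u_+)=R_0$. The isomorphism is graded and natural, because $\theta$ is defined canonically from the grading.
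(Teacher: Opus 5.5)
Your proposal is correct and follows the same route as the paper. You build the surjective graded map $\mathcal R_{1,2}\to\gr_{\mathsf G}R$, note that it kills $u_+$, and prove injectivity by comparing homogeneous components of $(u_+-1)c$. The only difference is bookkeeping: the paper compares top-degree components, using that $u_+$ is a nonzerodivisor to identify the leading term $u_+c_m$, whereas you run the componentwise recursion from both ends, which relies on the same two inputs.
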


\begin{proof}
Let $h\in\mathcal R_{1,2}^{(d)}$. Its image in $R=\mathcal R_{1,2}/(u_+-1)$ determines a class in $\mathsf G_dR/\mathsf G_{d-1}R$. This gives a surjective graded homomorphism
\begin{equation*}
\mathcal R_{1,2}\longrightarrow \gr_{\mathsf G}R.
\end{equation*}
\noindent Since $u_+$ maps to $1$ in $R$, its class in $\mathsf G_1R/\mathsf G_0R$ is zero. Therefore the homomorphism factors through $\mathcal R_{1,2}/(u_+)$.

We prove injectivity. Suppose $h\in\mathcal R_{1,2}^{(d)}$ maps to zero in $\mathsf G_dR/\mathsf G_{d-1}R$. Then there exists $b\in\bigoplus_{r<d}\mathcal R_{1,2}^{(r)}$ such that $h-b\in(u_+-1)$. Write
\begin{equation*}
h-b=(u_+-1)c,
\qquad
c=c_0+\cdots+c_m,
\end{equation*}
\noindent with $c_r\in\mathcal R_{1,2}^{(r)}$ and $c_m\neq0$. Since $u_+$ is a non-zero-divisor, the highest-degree homogeneous component of $(u_+-1)c$ is $u_+c_m$, of degree $m+1$. Comparing highest degrees gives $m+1=d$, and comparison of the degree-$d$ homogeneous components gives $h=u_+c_{d-1}$. Hence $h\in(u_+)$, proving injectivity.
\end{proof}

\begin{prop}\label{ClassicalReesTheorem}
For every $i\geq0$, one has
\begin{equation}\label{ClassicalFiltrationEquality}
\mathsf G_iR
=
\mathscr F_iR
=
\ker(\delta^{i+1}).
\end{equation}
\noindent Moreover, the maps
\begin{equation}\label{ClassicalTopDeltaMap}
\beta_i:
\mathscr F_iR/\mathscr F_{i-1}R
\longrightarrow
S,
\qquad
[f]\longmapsto\frac{1}{i!}\delta^i(f),
\end{equation}
\noindent are injective and satisfy
\begin{equation}\label{ClassicalTopDeltaImage}
\im(\beta_i)=\mathfrak a^i.
\end{equation}
\noindent Consequently, the maps $\beta_i$ assemble to an isomorphism of graded $S$-algebras
\begin{equation}\label{ClassicalReesIsomorphism}
\beta:
\gr_{\mathscr F}R
\xrightarrow{\sim}
\bigoplus_{i\geq0}\mathfrak a^i t^i,
\qquad
[f]\longmapsto\frac{1}{i!}\delta^i(f)t^i.
\end{equation}
\end{prop}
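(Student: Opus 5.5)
We would prove \eqref{ClassicalFiltrationEquality} by comparing both filtrations through the multicone ring $\mathcal R_{1,2}$, and then read off \eqref{ClassicalTopDeltaImage} from the Rees description of Theorem~\ref{MulticoneReesDeformationTheorem} together with Lemma~\ref{DehomogenizedAssociatedGradedLemma}.

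\emph{Step 1: $\mathsf G_iR\subseteq\mathscr F_iR$.} By Lemma~\ref{IsotropicFlagChartLemma}, $\delta$ kills every $p_{ab}$ and sends $u_a\mapsto p_{+a}$. In particular $\delta(u_+)=p_{++}=0$, which is consistent with setting $u_+=1$. Hence $\delta$ lifts to a derivation $\widetilde\delta$ of $\mathcal R_{1,2}$ that lowers the first degree by exactly one. To check that $\widetilde\delta$ preserves the ideal of the multicone, we would note that it is the infinitesimal version of the substitution $u\mapsto u+tq(p)$. This substitution preserves the flag relations because $q(p)$ lies in the plane $\Pi$, which is the same computation as in the proof of Theorem~\ref{MulticoneReesDeformationTheorem}. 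An element of $\mathcal R_{1,2}^{(d)}$ is therefore killed by $\widetilde\delta^{d+1}$, and dehomogenizing gives $\mathsf G_iR\subseteq\ker\delta^{i+1}$.

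\emph{Step 2: the symbol map.} On $h\in\mathcal R_{1,2}^{(d)}$, $\tfrac1{d!}\widetilde\delta^{d}(h)$ lies in $\mathcal R_{1,2}^{(0)}=S$. Concretely, if $h$ is a polynomial in the $u_a$ homogeneous of degree $d$ with coefficients in $S$, then $\tfrac1{d!}\widetilde\delta^d h=h(q(p),p)$. This is precisely the map $\Phi$ of \eqref{MulticoneToReesMap} followed by taking the $t^d$-coefficient. Theorem~\ref{MulticoneReesDeformationTheorem} says $\Phi$ is an isomorphism $R_0\cong\bigoplus\mathfrak a^it^i$. Combined with Lemma~\ref{DehomogenizedAssociatedGradedLemma}, this shows that
\[
\mathsf G_iR/\mathsf G_{i-1}R\xrightarrow{\ \frac1{i!}\delta^i\ }\mathfrak a^i
\]
is an isomorphism. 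In particular it is injective, so for $f\in\mathsf G_iR\setminus\mathsf G_{i-1}R$ we get $\delta^i f\neq0$.

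\emph{Step 3: the reverse inclusion, which we expect to be the main obstacle.} We need $\mathscr F_iR\subseteq\mathsf G_iR$. Since $\mathsf G$ is exhaustive, any $f$ lies in some $\mathsf G_dR$ with $d$ minimal. If $d>i$, Step 2 gives $\delta^d f\neq0$, contradicting $\delta^{i+1}f=0$. The delicate point is that Step 2 must really use the minimal $d$, that is, the graded piece of $f$ must be nonzero in $R_0$. This is exactly the content of Lemma~\ref{DehomogenizedAssociatedGradedLemma}, which is where the primality of $u_+$, coming from the UFD property, enters. Once this is in place, $\mathscr F_i=\mathsf G_i$, so $\beta_i$ coincides with the map of Step 2. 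This gives injectivity of $\beta_i$ and $\im\beta_i=\mathfrak a^i$.

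\emph{Step 4: compatibility with products.} Multiplicativity of $\beta$ follows from the Leibniz rule: $\tfrac1{(i+j)!}\delta^{i+j}(fg)=\tfrac1{i!}\delta^if\cdot\tfrac1{j!}\delta^jg$ whenever $\delta^{i+1}f=\delta^{j+1}g=0$. This yields the graded $S$-algebra isomorphism \eqref{ClassicalReesIsomorphism}.
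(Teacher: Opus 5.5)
Your proposal is correct and follows the paper's proof essentially step for step. The paper likewise shows that $\delta$ lowers the first degree, identifies $\tfrac{1}{i!}\delta^i$ on $\mathsf G_iR/\mathsf G_{i-1}R$ with the substitution $u_a\mapsto q_at$ (that is, with $\Phi$), and uses the minimal-$d$ argument for the reverse inclusion. One small correction: Lemma~\ref{DehomogenizedAssociatedGradedLemma} only needs $u_+$ to be a non-zero-divisor. The primality of $u_+$, coming from the UFD property, is used instead for the injectivity of $\Phi$ in Theorem~\ref{MulticoneReesDeformationTheorem}.
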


\begin{proof}
By Lemma~\ref{IsotropicFlagChartLemma}, one has $\delta(u_a)=p_{+a}$ and $\delta(p_{ab})=0$. Hence $\delta$ lowers the first dehomogenized degree by $1$, and therefore $\mathsf G_iR\subseteq\ker(\delta^{i+1})$ for every $i$.

We now identify the top derivative on the associated graded of $\mathsf G_\bullet R$. Let $h\in\mathcal R_{1,2}^{(i)}$. Under the isomorphisms of Lemma~\ref{DehomogenizedAssociatedGradedLemma} and Theorem~\ref{MulticoneReesDeformationTheorem}, the class of $h$ in $\mathsf G_iR/\mathsf G_{i-1}R$ corresponds to the element obtained from $h$ by substituting
\begin{equation*}
u_a\longmapsto q_at,
\qquad
p_{ab}\longmapsto p_{ab},
\end{equation*}
\noindent where $q_a=p_{+a}$. Since $\delta$ acts on the $u$-coordinates by the constant vector $q=(q_a)$ and fixes the $p$-coordinates, the same element is
\begin{equation*}
\frac{1}{i!}\delta^i(h)t^i.
\end{equation*}
\noindent Therefore the map $\frac{1}{i!}\delta^i:\mathsf G_iR/\mathsf G_{i-1}R\to S$ is injective, and its image is precisely the degree-$i$ piece $\mathfrak a^i$ of the Rees algebra $\mathcal R_S(\mathfrak a)$.

It remains to show that $\mathscr F_iR\subseteq\mathsf G_iR$. Let $f\in\mathscr F_iR$, and let $d$ be the smallest integer such that $f\in\mathsf G_dR$. If $d>i$, then the class of $f$ in $\mathsf G_dR/\mathsf G_{d-1}R$ is nonzero. By the injectivity just proved, $\delta^d(f)\neq0$. But $d>i$ and $f\in\ker(\delta^{i+1})$, so $\delta^d(f)=0$, a contradiction. Hence $d\leq i$, and therefore $f\in\mathsf G_iR$.

Thus $\mathsf G_iR=\mathscr F_iR$ for all $i$. The image statement \eqref{ClassicalTopDeltaImage} and the graded isomorphism \eqref{ClassicalReesIsomorphism} follow from the top-derivative calculation above.
\end{proof}

We now return to the $\Delta$-filtration on $M:=D_V/D_V\Delta$. Put $A:=F_0$, and let $d:=\ad_\Delta$. On $M$, the operator $d$ agrees with left multiplication by $\Delta$. For $j\geq0$, set
\begin{equation}\label{ShiftedEulerPolynomialDef}
c_j(E):=\prod_{r=1}^{j}(E+k-r-1),
\qquad
c_0(E):=1.
\end{equation}

Following the conventions of Section \ref{KazhLaumGlueSect}, for  $d\geq0$, let $P^d:=\kappa[C]_d$ denote the degree-$d$ homogeneous piece of the coordinate ring of the cone $C$, viewed inside $D_C$ by multiplication operators. Thus $P^dD_C$ denotes the right ideal generated by this finite-dimensional space.

\begin{lem}\label{QuantumDeltaLayerLemma}
Let $i\geq0$ and $0\leq j\leq i$, and put $m:=i-j$. Let $f\in\kappa[V]_m$ be homogeneous of degree $m$, and let $f_C\in P^m$ be its restriction to $C$, using the identification $V\cong V^*$ induced by $B$. Write $f(\partial^\flat)$ for the corresponding constant-coefficient differential operator. Then $[fQ^j]\in F_i$, and
\begin{equation}\label{QuantumTopDeltaFormula}
d^i[fQ^j]
=
i!\left[
f(\partial^\flat)
\prod_{r=0}^{j-1}(H+r)
\right]
\in A,
\qquad
H:=E+k.
\end{equation}
\noindent Under the isomorphism $\widehat{\tau}:A\xrightarrow{\sim}D_C$, one has
\begin{equation}\label{QuantumTopDeltaConeFormula}
\widehat{\tau}\bigl(d^i[fQ^j]\bigr)
=
(-1)^ii!\,f_Cc_j(E).
\end{equation}
\end{lem}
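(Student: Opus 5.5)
The plan is to compute $d^i[fQ^j]$ directly in $M=D_V/D_V\Delta$. I will use that $d$ is left multiplication by $\Delta$, and that every class has a representative with all factors of $\Delta$ pushed to the right, where they die. The basic recursion comes from \eqref{DeltaIntegerPowerQ} with $s=j$:
\[
\Delta\, X Q^j = [\Delta,X]\,Q^j + X Q^j\Delta + jX Q^{j-1}(H+j-1).
\]
Hence, for any $X\in D_V$ and any polynomial $p$,
\[
d[XQ^jp(H)] = [\ad_\Delta(X)Q^jp(H)] + j[XQ^{j-1}(H+j-1)p(H)] + [XQ^j\,[\Delta,p(H)]].
\]
The last term is $[XQ^j(p(H+2)-p(H))\Delta]=0$ in $M$, because $\Delta H=(H+2)\Delta$. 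So $d$ acts on the span of classes $[XQ^ap(H)]$ as the sum of two commuting-in-effect operations. The first is $X\mapsto\ad_\Delta X$ with $a,p$ fixed. The second lowers $a$ by one and multiplies $p$ on the left by $(H+a-1)$, after commuting past the remaining $Q^{a-1}$.

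Next, for $X=f$ homogeneous of degree $m$, I use that $\ad_\Delta$ lowers coefficient degree by at least one. Moreover, $\ad_\Delta^m f=m!\,f(\partial^\flat)$ is constant coefficient, so $\ad_\Delta^{m+1}f=0$. This uses $[\Delta,\ell]=\partial_{\ell^\sharp}$ iterated; the lower-order terms involve $\Delta$ itself, and they are pushed right and killed, or else vanish by the coefficient-degree count. Expanding $d^i$ as a sum over shuffles of $m$ steps of the first kind and $j$ steps of the second kind gives the following. Any word with more than $m$ first-kind steps vanishes, and any word with more than $j$ second-kind steps vanishes, since the $Q$-power is exhausted. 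So only the $\binom{i}{j}$ shuffles with exactly $m$ and $j$ steps survive. Each contributes $m!\,f(\partial^\flat)\cdot j!\prod_{r=0}^{j-1}(H+r)$, after the $H$-shifts are normalized: lowering $Q^a$ produces the factor $(H+a-1)$, giving $\prod_{a=1}^{j}(H+a-1)$. Summing gives $i!\,[f(\partial^\flat)\prod_{r=0}^{j-1}(H+r)]$, and $d^{i+1}[fQ^j]=0$ by the same count, so $[fQ^j]\in F_i$. The main obstacle is the bookkeeping that the shuffle terms really are independent of the order. Steps of the first kind change $X$ but commute with $Q^a$-lowering, and the shifts in $p(H)$ do not depend on when $X$ is differentiated, because $f(\partial^\flat)$ sits to the left of all $Q$'s and $H$'s. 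If ordering issues arise, I will instead prove the formula by induction on $j$ using the displayed recursion.

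Finally, for \eqref{QuantumTopDeltaConeFormula} I apply $\widehat\tau$. First, $\tau(f(\partial^\flat))=(-1)^m f_C$ as a multiplication operator, using the identification $V\cong V^*$. Second, $\tau(H)=-E-2k+k=-(E+k)$. The correction conjugation by $Q^*$ sends $E\mapsto E-2$, so $H+r\mapsto -(E+k-2)+r=-(E+k-r-2)$. Thus $\prod_{r=0}^{j-1}(H+r)\mapsto(-1)^j\prod_{r=1}^{j}(E+k-r-1)=(-1)^jc_j(E)$, and the total sign is $(-1)^{m+j}=(-1)^i$.
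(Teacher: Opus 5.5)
Your proposal is correct and is essentially the paper's argument. The paper also expands $d^i[fQ^j]$ by the Leibniz rule, using $\ad_\Delta^{m+1}f=0$, $\ad_\Delta^m f=m!\,f(\partial^\flat)$, and (via \eqref{DeltaIntegerPowerQ} modulo $D_V\Delta$) $d^j[Q^j]=j!\,[\prod_{r=0}^{j-1}(H+r)]$ and $d^{j+1}[Q^j]=0$, so that only the term with coefficient $\binom{i}{m}$ survives; it then applies $\widehat\tau$ exactly as you do. Your commuting two-step recursion on representatives $XQ^ap(H)$ is just a more explicit bookkeeping of the fact that higher $\ad_\Delta$-powers of $Q^j$ lie in the left ideal $D_V\Delta$, and the coefficient-degree count alone already gives $\ad_\Delta^m f=m!\,f(\partial^\flat)$ exactly in $D_V$, so no pushing of lower-order terms to the right is needed.
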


\begin{proof}
Since $f$ has degree $m$, repeated commutation with $\Delta$ gives $d^{m+1}(f)=0$ and $d^m(f)=m!f(\partial^\flat)$. On the other hand, \eqref{DeltaIntegerPowerQ} gives, modulo $D_V\Delta$, $d^j(Q^j)=j!\prod_{r=0}^{j-1}(H+r)$ and $d^{j+1}(Q^j)=0$. Since $m+j=i$, the only nonzero term in the Leibniz expansion of $d^i(fQ^j)$ is the term in which $d$ is applied $m$ times to $f$ and $j$ times to $Q^j$; its coefficient is $\binom{i}{m}$, giving \eqref{QuantumTopDeltaFormula}. The same nilpotence bounds give $d^{i+1}[fQ^j]=0$, so $[fQ^j]\in F_i$.

It remains to apply $\widehat{\tau}$. The linear Fourier transform sends $f(\partial^\flat)$ to $(-1)^mf_C$. Moreover, $\tau(E)=-E-2k$, and the correction in $\widehat{\tau}$ conjugates by $Q^*$, sending $E$ to $E-2$. Hence $\widehat{\tau}(H+r)=-(E+k-r-2)$, and therefore
\begin{equation*}
\widehat{\tau}\left(
f(\partial^\flat)
\prod_{r=0}^{j-1}(H+r)
\right)
=
(-1)^{m+j}f_C
\prod_{r=0}^{j-1}(E+k-r-2)
=
(-1)^if_Cc_j(E).
\end{equation*}
\noindent This proves \eqref{QuantumTopDeltaConeFormula}.
\end{proof}

Via $\widehat{\tau}$, we transport the Bernstein filtration on $A$ to $D_C$. All Bernstein associated gradeds of subspaces of $D_C$ below refer to this transported filtration.

\begin{lem}\label{BernsteinStrictnessDeltaFiltrationLemma}
For every $i\geq0$, one has
\begin{equation}\label{FullBernsteinDeltaFiltrationEquality}
\gr_{\mathsf B}F_i=\mathscr F_iR.
\end{equation}
\noindent Moreover, the injective map $\Delta^i:F_i/F_{i-1}\to A$ is strict for the induced Bernstein filtrations, and its associated-graded image is $\mathfrak a^i$.
\end{lem}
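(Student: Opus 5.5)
We already know $\gr_{\mathsf B}F_i\subseteq\mathscr F_iR$ (Lemma~\ref{BernsteinSymbolLemma}). The plan is to prove the reverse inclusion by a symbol-lifting induction on $i$, and then deduce the statements about $\Delta^i$ from Proposition~\ref{ClassicalReesTheorem}. Two compatibilities hold on symbols. First, left multiplication by $\Delta$ preserves Bernstein degree and induces $\delta$ on $\gr_{\mathsf B}M=R$. Second, for $x\in F_i\cap\mathsf B_pM$ with nonzero symbol $\sigma_p(x)$, either $\sigma_p(\Delta^ix)=\delta^i\sigma_p(x)$, or $\delta^i\sigma_p(x)=0$ and $\Delta^ix$ drops to lower Bernstein degree.

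\textbf{Surjectivity onto $\mathscr F_iR$.} Fix $i$ and a homogeneous $g\in\mathscr F_iR$ of Bernstein degree $p$. By Proposition~\ref{ClassicalReesTheorem}, $\frac1{i!}\delta^i(g)\in\mathfrak a^i$. The ideal $\mathfrak a^i$ is spanned over $S$ by monomials in the generators $q_a=p_{+a}$, namely the coordinates of $\nu^\sharp$ and $\alpha$. I then build explicit quantum lifts.

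Lemma~\ref{QuantumDeltaLayerLemma} gives $[fQ^j]\in F_i$ for $f\in\kappa[V]_{i-j}$. Its Bernstein symbol is $fQ^j$, and $\Delta^i[fQ^j]$ has symbol $i!\,f(\nu^\sharp)\alpha^j$. These elements span $\mathfrak a^i$ modulo $S$-multiples. To reach general elements $s\cdot(\text{monomial})$ with $s\in S=\gr_{\mathsf B}A$, I multiply on the right by a lift $a\in A$ of $s$. Right multiplication by $A$ preserves each $F_i$ and is multiplicative on Bernstein symbols, since $\gr_{\mathsf B}$ of the bimodule structure is the commutative product in $R$. It also commutes with $\Delta^i$. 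This yields $y\in F_i$ with $\sigma(\Delta^iy)=\delta^i(g)$ and with $\sigma(y)$ of degree $p$.

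By the injectivity in Proposition~\ref{ClassicalReesTheorem}, $g-\sigma(y)\in\mathscr F_{i-1}R$, which lies in $\gr_{\mathsf B}F_{i-1}$ by induction. Lifting that difference and adding it to $y$ gives $x\in F_i$ with $\sigma(x)=g$. A descending induction on Bernstein degree handles any lower-order corrections. This proves \eqref{FullBernsteinDeltaFiltrationEquality}.

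\textbf{Strictness and image.} With the equality $\gr_{\mathsf B}F_i=\mathscr F_iR$ in hand, $\gr_{\mathsf B}(F_i/F_{i-1})=\mathscr F_iR/\mathscr F_{i-1}R$. Under this identification, $\gr(\Delta^i)=\delta^i=i!\,\beta_i$, which is injective by Proposition~\ref{ClassicalReesTheorem}. An injective associated graded map means that $\Delta^i$ never drops Bernstein degree, so it is strict. The associated-graded image is then $\im\beta_i=\mathfrak a^i$.

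\textbf{Main obstacle.} I expect the hard step to be the surjectivity, and specifically the claim that the quotient filtration on $F_i/F_{i-1}$ has $\gr$ equal to $\mathscr F_i/\mathscr F_{i-1}$. One must make sure that $\mathsf B$-degree is not lost when passing through $D_V\Delta$. To secure this I would use the Fischer splitting of Lemma~\ref{RightFischerDecompositionLemma}, which is compatible with the Bernstein filtration, to pick canonical representatives in $\kappa[V]\otimes Z_\partial$. I also need to check that the bimodule action of $A$ is filtered of degree $0$ for $\mathsf B$. I plan to verify this on the generators $\phi(\g)$, whose Bernstein degree is $\le 2$ matching the degree of their symbols in $S$.
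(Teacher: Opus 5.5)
Your proposal is correct and is essentially the paper's proof. The paper likewise inducts on $i$, lifts the spanning elements $f_C\alpha^j s$ of $\mathfrak a^i=\sum_j P^{i-j}\alpha^j S$ by the elements $[fQ^j]a$ with $a\in A$, absorbs the remaining discrepancy in $\mathscr F_{i-1}R$ using injectivity of $\beta_i$ together with the induction hypothesis, and deduces strictness and the image $\mathfrak a^i$ from injectivity of $\gr(\Delta^i)=\delta^i$.

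The points you flag as the main obstacle are formal. With induced and quotient filtrations one has $\gr(F_i/F_{i-1})=\gr F_i/\gr F_{i-1}$. Moreover, any $a\in\mathsf B_qA$ has a representative in $\mathsf B_qD_V$, which automatically lies in $N(D_V\Delta)$; hence $\mathsf B_pM\cdot\mathsf B_qA\subseteq\mathsf B_{p+q}M$, and symbols multiply because $R$ is a domain. So you need neither the Fischer splitting, nor the descending induction on Bernstein degree, nor a check on the generators $\phi(\g)$. In any case, those generators have Bernstein degree up to $3$, not $\le 2$.
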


\begin{proof}
We prove \eqref{FullBernsteinDeltaFiltrationEquality} by induction on $i$. The case $i=0$ is Lemma~\ref{BernsteinSymbolLemma}. Assume the result for $i-1$. Lemma~\ref{BernsteinSymbolLemma} gives $\gr_{\mathsf B}F_i\subseteq\mathscr F_iR$, so it remains to show that every class in $\mathscr F_iR/\mathscr F_{i-1}R$ is represented by the principal symbol of an element of $F_i$.

By Proposition~\ref{ClassicalReesTheorem}, the map $\beta_i:\mathscr F_iR/\mathscr F_{i-1}R\to S$ is injective with image $\mathfrak a^i$. Since $\mathfrak a$ is generated by the coordinate functions of $\nu^\sharp$ and by $\alpha$, one has $\mathfrak a^i=\sum_{j=0}^{i}P^{i-j}\alpha^jS$. Let $f_C\in P^{i-j}$, choose a homogeneous lift $f\in\kappa[V]_{i-j}$, and let $s\in S$ be homogeneous. Since $\gr_{\mathsf B}A=S$, choose $a\in A$ with principal symbol $s$. The element $[fQ^j]a$ belongs to $F_i$, because $F_i$ is a right $A$-submodule. Its principal symbol is $f(v)Q(v)^js$, and since $\delta(s)=0$, one has $\frac{1}{i!}\delta^i(f(v)Q(v)^js)=f_C\alpha^js$. These elements span $\mathfrak a^i$, so the principal symbols of elements of $F_i$ surject onto $\mathscr F_iR/\mathscr F_{i-1}R$. Together with the induction hypothesis, this proves \eqref{FullBernsteinDeltaFiltrationEquality}.

It follows that $\gr_{\mathsf B}(F_i/F_{i-1})\cong\mathscr F_iR/\mathscr F_{i-1}R$. The associated-graded map induced by $\Delta^i$ is $\delta^i$, whose kernel on $\mathscr F_iR$ is $\mathscr F_{i-1}R$. Hence the induced map on the quotient is injective. Therefore the filtered map $\Delta^i$ is strict, and its associated-graded image is $\im(\beta_i)=\mathfrak a^i$.
\end{proof}

\begin{thm}\label{DeltaGradedIdealTheorem}
For $i\geq0$, recall that
\begin{equation}\label{DeltaGradedIdealDef}
I^{[i]}
:=
\im\left(
\Delta^i:F_i/F_{i-1}\longrightarrow F_0\cong D_C
\right).
\end{equation}
\noindent Then
\begin{equation}\label{DeltaGradedIdealAllLayers}
I^{[i]}
=
\sum_{j=0}^{i}P^{i-j}c_j(E)D_C.
\end{equation}
\noindent Its Bernstein associated graded is
\begin{equation}\label{DeltaGradedIdealSymbol}
\gr_{\mathsf B}I^{[i]}=\mathfrak a^i.
\end{equation}
\noindent Moreover, the intermediate summands in \eqref{DeltaGradedIdealAllLayers} are redundant:
\begin{equation}\label{DeltaGradedIdealGenerators}
I^{[i]}
=
P^iD_C+c_i(E)D_C
=
P^iD_C+
\left(
\prod_{r=1}^{i}(E+k-r-1)
\right)D_C.
\end{equation}
\end{thm}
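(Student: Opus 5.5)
The plan has three steps: show that the sum $J^{[i]}:=\sum_{j=0}^{i}P^{i-j}c_j(E)D_C$ lies in $I^{[i]}$, compare Bernstein associated gradeds, and then prove the redundancy statement separately.

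\emph{Step 1: $J^{[i]}\subseteq I^{[i]}$.} Lemma~\ref{QuantumDeltaLayerLemma} gives, for homogeneous $f\in\kappa[V]_{i-j}$ with restriction $f_C\in P^{i-j}$, that $[fQ^j]\in F_i$ and $\widehat{\tau}(\Delta^i[fQ^j])=(-1)^i i!\,f_Cc_j(E)$. Since $I^{[i]}$ is a right ideal, it therefore contains $P^{i-j}c_j(E)D_C$ for each $j$. Hence $J^{[i]}\subseteq I^{[i]}$.

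\emph{Step 2: comparison of associated gradeds.} By Lemma~\ref{BernsteinStrictnessDeltaFiltrationLemma}, $\Delta^i$ is strict and $\gr_{\mathsf B}I^{[i]}=\mathfrak a^i$, which is \eqref{DeltaGradedIdealSymbol}. For the reverse inclusion, I compute the Bernstein symbol of $f_Cc_j(E)\cdot a$ for $a\in D_C$ with symbol $s\in S$. Under the transported filtration this symbol is, up to sign, $f_C\alpha^j s$, since $E$ has symbol $\pm\alpha$ and $f_C$ has symbol $(\nu^\sharp)$-monomials. These products span $\mathfrak a^i=\sum_j P^{i-j}\alpha^jS$, so $\gr_{\mathsf B}J^{[i]}\supseteq\mathfrak a^i=\gr_{\mathsf B}I^{[i]}$. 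Because the Bernstein filtration is exhaustive and bounded below, a subspace inclusion with equal associated gradeds is an equality. This gives \eqref{DeltaGradedIdealAllLayers}.

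\emph{Step 3: redundancy.} This is the main obstacle. I must show $P^{i-j}c_j(E)\subseteq P^iD_C+c_i(E)D_C$ for $0<j<i$. The plan is an induction on $j$ using the Shapovalov-type relations of Section~\ref{ShapovalovDetSect}. Multiplying the identity $\mathfrak B_j=\sum_r f_r\mathcal F(g_r)$ on the left by $P^{i-j}$ gives
\[
P^{i-j}\mathfrak B_j\subseteq P^{i}D_C,
\]
and $\mathfrak B_j=\prod_{r=1}^{j}(E-r+1)\cdot c_j(E)$. On $P^{i-j}$-multiples, commuting $E$ past $f_C$ shifts it by $i-j$. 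So $f_C\mathfrak B_j=f_C\,p_j(E)c_j(E)$ with $p_j(E)=\prod_{r=1}^{j}(E-r+1)$. Writing $c_i(E)=c_j(E)\prod_{r=j+1}^{i}(E+k-r-1)$, it suffices to show that $p_j(E)$ and $\prod_{r=j+1}^{i}(E+k-r-1)$ generate the unit ideal in $\kappa[E]$. Then a B\'ezout combination gives $f_Cc_j(E)\in P^iD_C+f_Cc_i(E)D_C$.

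Comparing roots, $p_j$ has roots $\{0,\dots,j-1\}$ and the second factor has roots $\{j+2-k,\dots,i+1-k\}$. These sets can overlap when $k$ is small relative to $i$, so pure coprimality may fail. In that case I fall back on a degree argument. Both sides are graded by Euler degree, and an element $f_Cq(E)$ need only be tested on each $\kappa[C]_r$ via Proposition~\ref{degwt0}. There, $P^iD_C+c_i(E)D_C$ contains operators acting invertibly on low-degree pieces precisely where $c_i$ is nonzero. Alternatively, I can use $\mathcal F$-dual relations $P^{i-j}\mathcal F(P^{j})\subseteq D_C$ together with $\mathcal F(\mathfrak B_j)$. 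If even that fails, I will check the containment modulo $P^iD_C+c_i(E)D_C$ by comparing associated gradeds, as in Step 2. Both sides of \eqref{DeltaGradedIdealGenerators} have associated graded $\mathfrak a^i$ once one shows $\gr(P^iD_C+c_i(E)D_C)\supseteq P^{i-j}\alpha^jS$. That in turn reduces to the classical identity $\alpha^2\in(\nu^\sharp)S$, noted after \eqref{ClassicalBoundaryIdealDef}, which gives $P^{i-j}\alpha^j\subseteq P^i S+\alpha^iS$. I would try this graded route first, since it avoids root bookkeeping entirely.
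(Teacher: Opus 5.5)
Your Steps 1 and 2 are correct and coincide with the paper's argument for \eqref{DeltaGradedIdealAllLayers} and \eqref{DeltaGradedIdealSymbol}. The gap is in Step 3. You split $c_i(E)=c_j(E)\prod_{r=j+1}^{i}(E+k-r-1)$ without any shift. Even when these two factors happen to be coprime, your B\'ezout combination only gives $f_Cc_j(E)\in P^iD_C+f_Cc_i(E)D_C$, which is not the ideal you need. With $d:=i-j$ one has $f_Cc_i(E)=c_i(E-d)f_C$, and $c_i(E-d)f_CD_C$ is not visibly contained in $c_i(E)D_C$.

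The root overlap you found is a symptom of putting $c_i$ on the wrong side of $f_C$. Since the target is the right ideal $c_i(E)D_C$, you should use $[E,f_C]=df_C$ to write $c_i(E)f_C=f_C\,c_i(E+d)$, and then factor the shifted polynomial as $c_i(E+d)=c_j(E)g_d(E)$ with $g_d(E)=\prod_{r=0}^{d-1}(E+k+r-1)$. The roots of $g_d$ are $1-k,\dots,2-k-d$, all $\le -1$. They are therefore disjoint from the roots $0,\dots,j-1$ of $p_j$ for every $k\ge 2$. Choosing $A(E)p_j(E)+B(E)g_d(E)=1$ and multiplying on the left by $f_Cc_j(E)$ gives
\[
f_Cc_j(E)=f_C\,\mathfrak B_j\,A(E)+c_i(E)\,f_C\,B(E)\in P^iD_C+c_i(E)D_C .
\]
This is the paper's proof, and no case analysis is needed.

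Your fallbacks would not close the gap. In particular, the graded route you planned to try first rests on a false classical identity. Write $\mathfrak n:=((\nu^\sharp)_1,\ldots,(\nu^\sharp)_{2k})\subset S$ for the symbol ideal of $P^1$. The relation $\alpha^2\in\mathfrak nS$ trades $\alpha^2$ for a \emph{single} factor of $\nu^\sharp$. It therefore lowers $\mathfrak a$-adic order and cannot give $\mathfrak n^{i-j}\alpha^j\subseteq\mathfrak n^i+\alpha^iS$.

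In fact this inclusion already fails for $i=2$, $j=1$. The ring $S=R^\delta$ is bigraded by $(v,\nu)$-degree, since $\delta$ has bidegree $(-1,1)$. Its generators all have $\nu$-degree $1$, so $S_{(a,0)}=0$ for $a>0$. Now $(\nu^\sharp)_a\alpha$ has bidegree $(1,2)$. Reaching that bidegree from $\mathfrak n^2$ would require a nonzero element of $S_{(1,0)}$, and $\alpha^2S$ lives in $v$-degree $\ge 2$. Hence $(\nu^\sharp)_a\alpha\notin\mathfrak n^2+\alpha^2S$.

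So the ideal generated by the symbols of $P^i$ and $c_i(E)$ is strictly smaller than $\mathfrak a^i$. The equality $\gr_{\mathsf B}(P^iD_C+c_i(E)D_C)=\mathfrak a^i$ holds only because the leading symbols of $f_C\mathfrak B_jA(E)$ and $c_i(E)f_CB(E)$ cancel. It comes from the quantum Shapovalov relation, not from anything in $S$.

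The Proposition~\ref{degwt0} idea does not apply either. The operator $f_Cq(E)$ is neither $H$-invariant nor of degree $0$, and membership in a right ideal is not detected by the action on the pieces $\kappa[C]_r$.
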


\begin{proof}
Set $J_i:=\sum_{j=0}^{i}P^{i-j}c_j(E)D_C$. Lemma~\ref{QuantumDeltaLayerLemma} shows that every element of $P^{i-j}c_j(E)$ belongs to $I^{[i]}$, up to the nonzero scalar $(-1)^ii!$. Since $I^{[i]}$ is a right ideal, $J_i\subseteq I^{[i]}$.

By Lemma~\ref{BernsteinStrictnessDeltaFiltrationLemma}, $\gr_{\mathsf B}I^{[i]}=\mathfrak a^i$. Under the transported Bernstein filtration, the principal symbols of the elements of $P^1$ are, up to nonzero signs, the coordinate functions of $\nu^\sharp$, while the principal symbol of $E$ is, again up to sign, $\alpha$. Hence the principal symbol of $c_j(E)$ is a nonzero scalar multiple of $\alpha^j$, and therefore $\mathfrak a^i=\sum_{j=0}^{i}P^{i-j}\alpha^jS\subseteq\gr_{\mathsf B}J_i$. Since $J_i\subseteq I^{[i]}$, we get $\gr_{\mathsf B}J_i=\gr_{\mathsf B}I^{[i]}$. The filtrations are exhaustive and separated, so $J_i=I^{[i]}$. This proves \eqref{DeltaGradedIdealAllLayers} and \eqref{DeltaGradedIdealSymbol}.

It remains to eliminate the intermediate summands. Fix $0<j<i$, put $d:=i-j$, and define $a_j(E):=\prod_{r=0}^{j-1}(E-r)$ and $g_d(E):=\prod_{r=0}^{d-1}(E+k+r-1)$. Proposition~\ref{Shapovalovdet} gives
\begin{equation}\label{ShapovalovFactorForDeltaIdeal}
\mathfrak B_j=a_j(E)c_j(E)\in P^jD_C.
\end{equation}
\noindent The roots of $a_j(E)$ are $0,\ldots,j-1$, while the roots of $g_d(E)$ are $-(k-1),\ldots,-(k+d-2)$. Since $k\geq2$, these polynomials are coprime. Choose $A(E),B(E)\in\kappa[E]$ such that $A(E)a_j(E)+B(E)g_d(E)=1$. Since $c_i(E+d)=c_j(E)g_d(E)$ and $[E,f]=df$ for $f\in P^d$, one has $c_i(E)f=f\,c_i(E+d)$. Multiplying the Bezout identity by $fc_j(E)$ gives
\begin{equation*}
fc_j(E)
=
f\mathfrak B_jA(E)+c_i(E)fB(E).
\end{equation*}
\noindent The first term belongs to $P^dP^jD_C=P^iD_C$, while the second belongs to $c_i(E)D_C$. Hence $P^{i-j}c_j(E)D_C\subseteq P^iD_C+c_i(E)D_C$ for every $0<j<i$. The terms with $j=0$ and $j=i$ are already $P^iD_C$ and $c_i(E)D_C$, respectively, proving \eqref{DeltaGradedIdealGenerators}.
\end{proof}

\subsection{\texorpdfstring{The Equivalence between $D_C$-mod and $\mathscr{H}$}{The Equivalence between DC-Mod and H}}\label{ProofofEquivSubSect}

We now prove Theorem~\ref{SecondEquiv}; before proceeding, we summarize the strategy of proof. Recall from Theorem~\ref{HarmonicSummary} that $\mathcal H$ carries a natural structure of a $\mathcal D_{\mathcal L}$-$D_C$ bimodule and that
\begin{equation}
\Gamma(G/P,\mathcal H)=\Gamma_\Delta(\mathcal H)\cong D_C.
\end{equation}
\noindent By definition, the category $\mathscr H$ consists of coherent $\mathcal D_{\mathcal L}$-modules admitting a finite $(\mathcal H,D_C)$-presentation. We will show that the global sections functor $\Gamma$ is exact on the full category of coherent $\mathcal D_{\mathcal L}$-modules.\footnote{The functor $\Gamma$ is not, however, conservative; as noted above, $\Gamma(\mathcal{L}) = 0$.} Once this is known, the equivalence of $\mathscr{H}$ and $D_C\textbf{-mod}^{\,\mathrm{fg}}$ follows formally from the finite-presentation definition of $\mathscr H$.

The proof of exactness is modeled on Bezrukavnikov's proof of the exactness part of localization \cite{BezrukavnikovLocalizationNotes}. Let $C^{+}$ denote the quadric cone $\{Q^+ = 0\}$ in the affine space $V^+$, and let $C^{+o}:=C^+\setminus\{0\}$. Then $C^{+o}\to G/P$ is the principal $\G_m$-bundle obtained by quotienting by scalar dilations.

We pull a twisted $\mathcal D_{\mathcal L}$-module $\mathcal M$ back to the punctured cone $C^{+o}$. This results in a monodromic $\mathcal D_{C^{+o}}$-module; we recover the global sections of $\mathcal{M}$ as the $\lambda_0$-weight space, where
\begin{equation}
\lambda_0=1-k.
\end{equation}
\noindent Given a surjection $\mathcal N\twoheadrightarrow \mathcal M$ and a section
$m\in \Gamma(C^{+o},\widetilde{\mathcal M})_{\lambda_0}$, we choose a basis $\{\psi_i\}$ of sufficiently high-degree functions on $C^+$, with dual basis $\{\phi_i\}$ with respect to the pairing induced by $B^+$. If $\deg(\psi_i)=d$, then
\begin{equation}
\psi_i m\in \Gamma(C^{+o},\widetilde{\mathcal M})_{\lambda_0+d},
\end{equation}
\noindent or equivalently $\psi_i m$ is a global section of the corresponding $d$-fold twist $\mathcal M(d)$ on $G/P$:
\begin{equation}
\psi_i m\in \Gamma(G/P,\mathcal M(d))
\cong
\Gamma(C^{+o},\widetilde{\mathcal M})_{\lambda_0+d}.
\end{equation}
\noindent For $d\gg 0$, these raised sections lift to sections
\begin{equation}
n_i\in \Gamma(G/P,\mathcal N(d))
\cong
\Gamma(C^{+o},\widetilde{\mathcal N})_{\lambda_0+d}
\end{equation}
\noindent by a good-filtration and Serre-vanishing argument. The operators $\mathcal F_+(\phi_i)$ then lower weight by $d$, so each $\mathcal F_+(\phi_i)n_i$ lies in $\Gamma(C^{+o},\widetilde{\mathcal N})_{\lambda_0}$. The opposite Shapovalov operator
\begin{equation}
\sum_i \mathcal F_+(\phi_i)\psi_i
\end{equation}
\noindent is exactly the raise-then-lower operator used in Bezrukavnikov's exactness argument; the Shapovalov determinant calculation of Section~\ref{KazhLaumGlueSect}, applied to the larger cone $C^+$, shows that this operator acts on the $\lambda_0$-weight space by a nonzero scalar. Dividing $\sum_i \mathcal F_+(\phi_i)n_i\in \Gamma(G/P, \mathcal{N})$ by this scalar gives a lift of $m$, and hence proves exactness of $\Gamma$. The remaining statements, including $\Gamma=\Gamma_\Delta$ on $\mathscr H$ and the fact that the harmonic transform is inverse to global sections, are then formal consequences of the definition of $\mathscr H$.

We begin with two preliminary observations. Let
\begin{equation}
\pi_+:C^{+o}\longrightarrow G/P
\end{equation}
\noindent denote the principal $\G_m$-bundle obtained by quotienting $C^{+o}$ by scalar dilations. We write $E_+$ for the Euler vector field on $C^{+o}$.

\begin{lem}\label{WeightSpaceGlobalSectionsLemma}
Let $\mathcal M$ be a coherent $\mathcal D_{\mathcal L}$-module on $G/P$, and let $\widetilde{\mathcal M}:=\pi_+^*\mathcal M$ denote its pullback to $C^{+o}$. Then
\begin{equation}
\Gamma(G/P,\mathcal M)
=
\Gamma(C^{+o},\widetilde{\mathcal M})_{\lambda_0},
\qquad
\lambda_0=1-k,
\end{equation}
\noindent where the subscript denotes the $E_+$-weight space.
\end{lem}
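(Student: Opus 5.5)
The plan is to interpret the pullback $\pi_+^*$ correctly and then reduce to the standard dictionary between twisted $\mathcal D$-modules on a quotient by a torus and monodromic $\mathcal D$-modules upstairs. The precise meaning of $\widetilde{\mathcal M}$ is that it is the $\mathcal O$-module pullback, made into a $\mathcal D_{C^{+o}}$-module using the identification of $\pi_+^*\mathcal L$ with a trivial bundle. Geometrically, $C^{+o}\to G/P$ is the $\G_m$-torsor of nonzero vectors in the tautological line, i.e. the frame bundle of $\mathcal O(-1)$.

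The argument would proceed in three steps.

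\textbf{Step 1 (functions).} First I would check the statement for $\mathcal M=\mathcal O$-type pieces. The key identification is
\[
\pi_{+*}\mathcal O_{C^{+o}}=\bigoplus_{d\in\Z}\mathcal O(d),
\]
where $\mathcal O(d)$ is the $E_+$-eigensheaf of weight $d$. This is just the graded structure of homogeneous functions on the punctured cone. Since $\mathcal L\cong\mathcal O(1-k)$, the sections of $\mathcal L$ are precisely the functions upstairs of homogeneity $\lambda_0=1-k$.

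\textbf{Step 2 (the pullback and its weight pieces).} Next I would define
\[
\widetilde{\mathcal M}:=\pi_+^{*}\bigl(\mathcal M\otimes\mathcal L^{-1}\bigr)\otimes(\text{weight }\lambda_0\text{ character}),
\]
or equivalently use $\pi_+^{-1}\mathcal D_{\mathcal L}$, realized as the $E_+=\lambda_0$ part of $\pi_{+*}\mathcal D_{C^{+o}}^{\G_m}$. This makes $\widetilde{\mathcal M}$ a weakly $\G_m$-equivariant $\mathcal D$-module on which $E_+$ acts semisimply with weights in $\lambda_0+\Z$. Projection formula and affineness of $\pi_+$ then give
\[
\pi_{+*}\widetilde{\mathcal M}=\bigoplus_{d}\mathcal M(d),
\]
with $\mathcal M(d)$ the $E_+$-eigensheaf of weight $\lambda_0+d$. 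Here $\mathcal M(d)=\mathcal M\otimes\mathcal O(d)$, as a twisted module for $\mathcal D_{\mathcal L(d)}$.

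\textbf{Step 3 (global sections).} Because $\pi_+$ is affine,
\[
\Gamma(C^{+o},\widetilde{\mathcal M})=\Gamma(G/P,\pi_{+*}\widetilde{\mathcal M}).
\]
Moreover $\Gamma$ commutes with the direct sum decomposition, since $G/P$ is quasi-compact and Noetherian. Taking the $d=0$ summand yields the claim.

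The main obstacle is one I expect in Step 2: verifying that the eigenspace decomposition for $E_+$ is honestly a direct sum of sheaves. In other words, one must check that $E_+$ acts locally finitely and semisimply on $\widetilde{\mathcal M}$ with exactly the weights $\lambda_0+\Z$, rather than with nontrivial Jordan blocks. My approach would be to check this on a Bruhat chart $V_g$, where $\pi_+^{-1}(V_g)\cong V_g\times\G_m$. There $\widetilde{\mathcal M}|=\mathcal M|_{V_g}\boxtimes t^{\lambda_0}\kappa[t,t^{-1}]$, with $E_+=t\partial_t$ acting semisimply with weights $\lambda_0+\Z$. The remaining point is compatibility with the trivializations $\sigma_g$, which is exactly the transition computation in Proposition~\ref{HarmonicIdealProp}, with cocycle $\chi_0^{k-1}$.
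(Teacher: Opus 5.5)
Your proposal is correct and is the same argument as the paper's, which simply invokes the standard dictionary between $\mathcal D_{\mathcal L}$-modules on $G/P$ and weight-graded modules on the $\G_m$-torsor $C^{+o}$. Your Steps 1--3 (the grading $\pi_{+*}\mathcal O_{C^{+o}}=\bigoplus_d\mathcal O(d)$, the projection formula, and extraction of the weight-$\lambda_0$ summand) spell that dictionary out.

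One bookkeeping fix: with $\widetilde{\mathcal M}=\pi_+^*(\mathcal L^{-1}\otimes\mathcal M)$ you should not tensor on an additional ``weight $\lambda_0$ character.'' Since $\mathcal L^{-1}\cong\mathcal O(k-1)$, the projection formula already places $\mathcal M(d)$ in $E_+$-weight $\lambda_0+d$. A further twist of the $E_+$-action would shift every weight and break the identification.

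Also, the semisimplicity you flag as the main obstacle is automatic. Because $E_+$ is $\pi_+$-vertical, it acts on $\mathcal O_{C^{+o}}\otimes\pi_+^{-1}(\mathcal L^{-1}\otimes\mathcal M)$ only through the grading of the first factor.
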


\begin{proof}
This is the standard description of sections of a twisted $\mathcal D$-module on a projective quotient in terms of equivariant sections on the corresponding principal $\G_m$-bundle. Under our convention $\mathcal L\cong\mathcal O(1-k)$, the relevant equivariance character is the character of weight $\lambda_0=1-k$. Thus global sections of $\mathcal M$ are precisely the homogeneous sections of $\widetilde{\mathcal M}$ of $E_+$-weight $\lambda_0$.
\end{proof}

\begin{lem}\label{NoetherianFinitePresentationLemma}
Every finitely generated left $D_C$-module is finitely presented.
\end{lem}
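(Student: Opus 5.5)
The plan is to obtain Lemma~\ref{NoetherianFinitePresentationLemma} as a formal consequence of Theorem~\ref{rhoSurjective}, which shows that $D_C$ is a quotient of $\mathcal U(\g)$ and is therefore left Noetherian. The argument never uses the geometry of the cone directly. Everything reduces to the standard fact that, over a left Noetherian ring, finitely generated modules are finitely presented.

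The steps, in order, are as follows.
\begin{enumerate}
\item Recall why $\mathcal U(\g)$ is left Noetherian. Its PBW filtration has associated graded $\Sym(\g)$, a polynomial ring in finitely many variables, which is Noetherian by Hilbert's basis theorem. A filtered ring whose associated graded is Noetherian is itself left and right Noetherian: lift generators of $\gr I$ for a left ideal $I$ to elements of $I$, and induct on filtration degree.
\item Invoke Theorem~\ref{rhoSurjective}: the map $\rho:\mathcal U(\g)\to D_C$ is surjective, so $D_C\cong\mathcal U(\g)/\ker\rho$. Left ideals of $D_C$ correspond to left ideals of $\mathcal U(\g)$ containing $\ker\rho$, and ascending chains of them correspond as well. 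Hence $D_C$ is left Noetherian.
\item Let $N$ be a finitely generated left $D_C$-module, generated by $n_1,\dots,n_b$. This gives a surjection $D_C^b\to N$ with kernel $K$.
\item Observe that $K$ is a submodule of $D_C^b$, which is a Noetherian module: it is a finite direct sum of the Noetherian module $D_C$, and the class of Noetherian modules is closed under extensions. Hence $K$ is finitely generated, say by $a$ elements.
\item Choosing those $a$ generators gives $D_C^a\to D_C^b\to N\to 0$, which is a finite presentation of $N$.
\end{enumerate}

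I expect no real obstacle. The only point needing care is that Theorem~\ref{rhoSurjective} was proved independently of the present equivalence. It relies on Theorem~\ref{GlobalSectionsTheorem}, Proposition~\ref{UGSurjectsOntoGlobalTDOLemma} and Theorem~\ref{HOneJDeltaVanishingTheorem}, none of which use Theorem~\ref{SecondEquiv}, so the argument is not circular. Alternatively, one could cite the Levasseur--Smith--Stafford surjectivity recorded in Proposition~\ref{Envelopmap}.
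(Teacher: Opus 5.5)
Your proposal is correct and follows the paper's proof: the paper also deduces that $D_C$ is left Noetherian from the surjection $\rho:\mathcal U(\g)\twoheadrightarrow D_C$ of Theorem~\ref{rhoSurjective}, and then uses the standard fact that submodules of finite free modules over a left Noetherian ring are finitely generated. Your added details on the PBW argument and on non-circularity are accurate, but the paper does not spell them out.
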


\begin{proof}
By Theorem~\ref{rhoSurjective}, the algebra $D_C$ is left Noetherian. Hence every submodule of a finitely generated free left $D_C$-module is finitely generated, and every finitely generated left $D_C$-module is finitely presented.
\end{proof}

\begin{lem}\label{PullbackDeltaConePlusLemma}
Let $x_0$ denote the linear coordinate on $V^+$ corresponding to $e_+$, so that the big cell $V\subset G/P$ is identified with the quotient of the open subset $\{x_0\neq 0\}\subset C^{+o}$ by $\G_m$. Let $\mathcal F_+$ denote the quadric Fourier transform on the larger cone $C^+$, with the same normalization as in Proposition~\ref{QuadFourierProp}, applied to the quadratic space $(V^+,Q^+)$. Then, on the big cell $V\cong U^{\op}P/P$, the pullback of the Laplacian $\Delta$ satisfies
\begin{equation}\label{DeltaPullbackConePlusFormula}
\Delta=-x_0\,\mathcal F_+(x_0).
\end{equation}
\end{lem}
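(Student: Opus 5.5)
The plan is to compute both sides in explicit coordinates on the chart $\{x_0\neq 0\}\subset C^{+o}$, in the same way as in Remark~\ref{FundamentalOperatorsLocalVectorFieldsRem}. First I apply Proposition~\ref{QuadFourierProp} to the quadratic space $(V^+,Q^+)$, with $k$ replaced by $k+1$. In the split coordinates $(x_0,x_1,\ldots,x_k;y_1,\ldots,y_{k+1})$ the coordinate $x_0$ is paired with $y_{k+1}$. This gives
\begin{equation*}
\mathcal F_+(x_0)=(E_++k)\frac{\partial}{\partial y_{k+1}}-x_0\Delta_+,
\qquad
\Delta_+=\frac{\partial^2}{\partial x_0\partial y_{k+1}}+\Delta_v,
\end{equation*}
where $\Delta_v$ is the Laplacian \eqref{LapDef} in the middle variables $v=(x_1,\ldots,x_k;y_1,\ldots,y_k)$. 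I must check that the identification of $V^+$ with its dual, implicit in treating $C^+$ as the cone for $\mathcal F_+$, sends $x_0$ to the coordinate that pairs with $y_{k+1}$. This is a bookkeeping point, but the overall sign of \eqref{DeltaPullbackConePlusFormula} depends on it.

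Next, on $C^+\cap\{x_0\neq0\}$ the equation $Q^+=x_0y_{k+1}+Q(v)=0$ eliminates $y_{k+1}=-Q(v)/x_0$. Every regular function on this chart has a unique representative in $\kappa[x_0^{\pm1},v]$ that is independent of $y_{k+1}$. Since $\mathcal F_+(x_0)$ preserves the ideal $(Q^+)$, I may evaluate it on such representatives. On them both $\partial/\partial y_{k+1}$ and the mixed term of $\Delta_+$ vanish, so
\begin{equation*}
\mathcal F_+(x_0)=-x_0\Delta_v,
\qquad\text{hence}\qquad
-x_0\mathcal F_+(x_0)=x_0^2\Delta_v .
\end{equation*}

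It remains to identify $x_0^2\Delta_v$ with the pullback of $\Delta$. The quotient map to the big cell is $(x_0,v,y_{k+1})\mapsto v/x_0$, since $[x_0:v:y_{k+1}]=[1:v/x_0:\cdots]$. A homogeneous section of $E_+$-weight $w$ on the chart is $f=x_0^{w}g(v/x_0)$ with $g\in\kappa[V]$; for $w=\lambda_0$ this is exactly the trivialization by $\sigma_0$ used in Lemma~\ref{WeightSpaceGlobalSectionsLemma}. The chain rule gives
\begin{equation*}
x_0^2\Delta_v\bigl(x_0^wg(v/x_0)\bigr)=x_0^w(\Delta g)(v/x_0).
\end{equation*}
So the weight-preserving operator $x_0^2\Delta_v$ acts on every weight space as $\Delta$ acts on the coefficient $g$. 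This is the meaning of \eqref{DeltaPullbackConePlusFormula}.

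The computation itself is routine. I expect the main obstacle to be the normalizations: the sign conventions in $\mathcal F_+$, the pairing of $x_0$ with $y_{k+1}$, and which coordinate is used to dehomogenize. I would fix these by checking the identity on a single test function, for example $g=x_1y_k$. For this $g$ one has $\Delta g=1$ and $g(v/x_0)=x_1y_k/x_0^2$.
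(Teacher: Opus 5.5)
Your proposal is correct and follows essentially the same route as the paper. Like the paper, you apply Proposition~\ref{QuadFourierProp} to $(V^+,Q^+)$ with $k$ replaced by $k+1$, eliminate $y_{k+1}=-Q(v)/x_0$ on the chart $x_0\neq 0$ so that the $\partial/\partial y_{k+1}$ terms and the mixed term of $\Delta_+$ vanish, and identify $x_0^2\Delta_+$ with $\Delta$ on homogeneous representatives $x_0^{w}g(v/x_0)$. Your explicit chain-rule computation and the test-function check with $g=x_1y_k$ spell out the step that the paper states in one line.
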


\begin{proof}
For the larger cone $C^+\subset V^+$, Proposition~\ref{QuadFourierProp}, with $k$ replaced by $k+1$, gives
\begin{equation}
\mathcal F_+(x_0)
=
(E_+ + k)\frac{\partial}{\partial y_{k+1}}
-
x_0\Delta_+,
\end{equation}
\noindent where
\begin{equation}
\Delta_+
=
\frac{\partial^2}{\partial x_0\partial y_{k+1}}
+
\Delta.
\end{equation}
\noindent On the chart $x_0\neq 0$, the equation of $C^+$ gives
\begin{equation}
y_{k+1}=-\frac{Q(x_1,\ldots,x_k;y_1,\ldots,y_k)}{x_0}.
\end{equation}
\noindent Hence $O(C^+_{x_0})
\cong
\kappa[x_0,x_0^{-1},x_1,\ldots,x_k,y_1,\ldots,y_k]$, and every section on this chart may be represented uniquely after eliminating $y_{k+1}$. In particular a homogeneous section of degree $\lambda$ may be written in the form
\begin{equation}
x_0^\lambda
f\left(\frac{x_1}{x_0},\ldots,\frac{x_k}{x_0};
\frac{y_1}{x_0},\ldots,\frac{y_k}{x_0}\right).
\end{equation}
\noindent We compute the action of the ambient representative of $\mathcal F_+(x_0)$ on this eliminated-coordinate representative. Since this representative has no independent $y_{k+1}$ variable, the term $(E_+ + k)\partial/\partial y_{k+1}$ vanishes. Moreover,
\begin{equation}
x_0^2\Delta_+
\end{equation}
\noindent acts on the above expression as the original $Q$-Laplacian $\Delta$ acting on $f$. Therefore
\begin{equation}
x_0\,\mathcal F_+(x_0)=-\Delta
\end{equation}
\noindent after passing to the big-cell coordinates.
\end{proof}

\begin{lem}\label{HighTwistLiftingLemma}
Let
\begin{equation}
q:\mathcal N\twoheadrightarrow \mathcal M
\end{equation}
\noindent be a surjection of coherent $\mathcal D_{\mathcal L}$-modules, and let
\begin{equation}
m\in \Gamma(G/P,\mathcal M)
=
\Gamma(C^{+o},\widetilde{\mathcal M})_{\lambda_0}.
\end{equation}
\noindent Then, for all sufficiently large $d$, and for every $\psi\in \mathcal O(C^+)_d\cong \Gamma(G/P,\mathcal O(d))$, the section
\begin{equation}
\psi m\in \Gamma(G/P,\mathcal M(d))
\cong
\Gamma(C^{+o},\widetilde{\mathcal M})_{\lambda_0+d}
\end{equation}
\noindent lifts to a section of $\Gamma(G/P,\mathcal N(d))$. Equivalently, for all sufficiently large $d$, the section $\psi m$ lifts to an element of
\begin{equation}
\Gamma(C^{+o},\widetilde{\mathcal N})_{\lambda_0+d}.
\end{equation}
\end{lem}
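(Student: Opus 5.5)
The plan is to reduce the lifting problem to a Serre-vanishing statement for coherent $\mathcal O$-modules, using good filtrations. The key point is that although $\mathcal N$ and $\mathcal M$ are only quasi-coherent over $\mathcal O_{G/P}$, the section $m$ lives in a single coherent piece of a good filtration.

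First, choose a good filtration $F_\bullet\mathcal N$ of $\mathcal N$, compatible with the order filtration on $\mathcal D_{\mathcal L}$, and give $\mathcal M$ the image filtration $F_p\mathcal M:=q(F_p\mathcal N)$. This is again good, and each $F_p\mathcal N\to F_p\mathcal M$ is a surjection of coherent $\mathcal O_{G/P}$-modules. Since $G/P$ is quasi-compact and $\mathcal M=\varinjlim F_p\mathcal M$, the global section $m$ lies in $\Gamma(G/P,F_p\mathcal M)$ for some fixed $p$. Multiplication by $\psi\in\Gamma(G/P,\mathcal O(d))$ is $\mathcal O$-linear, so $\psi m\in\Gamma(G/P,F_p\mathcal M(d))$. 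Here the twist $\mathcal M(d):=\mathcal M\otimes_{\mathcal O}\mathcal O(d)$ is compatible with the identification of Lemma~\ref{WeightSpaceGlobalSectionsLemma}: homogeneous functions of degree $d$ on $C^+$ shift the $E_+$-weight by $d$.

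Second, let $\mathcal K_p:=\ker(F_p\mathcal N\to F_p\mathcal M)$, a coherent $\mathcal O_{G/P}$-module. Twisting the exact sequence $0\to\mathcal K_p\to F_p\mathcal N\to F_p\mathcal M\to 0$ by $\mathcal O(d)$ and taking cohomology, Serre vanishing on the projective quadric $G/P\subset\P^{n+1}$ gives $H^1(G/P,\mathcal K_p(d))=0$ for all $d\ge d_0(p)$. Hence $\Gamma(F_p\mathcal N(d))\to\Gamma(F_p\mathcal M(d))$ is surjective for such $d$. Lift $\psi m$ to some $n\in\Gamma(G/P,F_p\mathcal N(d))\subset\Gamma(G/P,\mathcal N(d))$. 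Since $p$ depends only on $m$, the bound $d_0$ is uniform in $\psi$. The weight-space reformulation then follows by applying Lemma~\ref{WeightSpaceGlobalSectionsLemma} to the twisted modules $\mathcal N(d)$ and $\mathcal M(d)$, together with the observation that $\pi_+^*\mathcal O(d)$ is trivialized by the weight-$d$ coordinate functions.

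The main obstacle I expect is bookkeeping rather than depth. One has to make sure that $\mathcal M(d)$, a $\mathcal D_{\mathcal L\otimes\mathcal O(d)}$-module, has global sections equal to the weight $\lambda_0+d$ sections on $C^{+o}$, with the sign and normalization conventions of $\mathcal L\cong\mathcal O(1-k)$. I would handle this by repeating the argument of Lemma~\ref{WeightSpaceGlobalSectionsLemma} with $\lambda_0$ replaced by $\lambda_0+d$. The filtration step itself is routine, since good filtrations exist locally and can be chosen globally on a quasi-projective variety, for example via a coherent $\mathcal O$-submodule generating $\mathcal N$.
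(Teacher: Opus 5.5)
Your proposal is correct and matches the paper's proof essentially step for step. Like the paper, you take a good filtration on $\mathcal N$ with the image filtration on $\mathcal M$, use quasi-compactness to place $m$ in a single coherent piece $F_p\mathcal M$, apply Serre vanishing of $H^1(G/P,\mathcal K_p(d))$ to lift $\psi m$ with a bound uniform in $\psi$, and then translate back through Lemma~\ref{WeightSpaceGlobalSectionsLemma}.
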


\begin{proof}
Choose a good filtration $F_\bullet\mathcal N$ by coherent $\mathcal O_{G/P}$-modules, and give $\mathcal M$ the quotient filtration
\begin{equation}
F_r\mathcal M:=q(F_r\mathcal N).
\end{equation}
\noindent Since $\mathcal M$ is the union of its filtration pieces and $G/P$ is quasi-compact, the section $m$ lies in $\Gamma(G/P,F_r\mathcal M)$ for some $r$. By construction, we have a surjection of coherent $\mathcal O_{G/P}$-modules
\begin{equation}
F_r\mathcal N\longrightarrow F_r\mathcal M\longrightarrow 0.
\end{equation}
\noindent Let $\mathcal K_r$ denote its kernel. After twisting by $\mathcal O(d)$, we get an exact sequence
\begin{equation}
0\longrightarrow \mathcal K_r(d)\longrightarrow F_r\mathcal N(d)\longrightarrow F_r\mathcal M(d)\longrightarrow 0.
\end{equation}
\noindent By Serre vanishing, $H^1(G/P,\mathcal K_r(d))=0$ for all sufficiently large $d$. Hence
\begin{equation}
\Gamma(G/P,F_r\mathcal N(d))
\longrightarrow
\Gamma(G/P,F_r\mathcal M(d))
\end{equation}
\noindent is surjective for all sufficiently large $d$.

Now let $\psi\in \mathcal O(C^+)_d\cong \Gamma(G/P,\mathcal O(d))$. Since multiplication by $\psi$ is an order-zero differential operator, the section $\psi m$ lies in $\Gamma(G/P,F_r\mathcal M(d))$. By the preceding surjectivity, it lifts to a section of $\Gamma(G/P,F_r\mathcal N(d))$, hence to a section of $\Gamma(G/P,\mathcal N(d))$. Translating back through Lemma~\ref{WeightSpaceGlobalSectionsLemma}, this is equivalently a lift in $\Gamma(C^{+o},\widetilde{\mathcal N})_{\lambda_0+d}$.
\end{proof}

\begin{lem}\label{OppositeShapovalovScalarConePlusLemma}
Let $d\geq 0$, and let $\{\psi_i\}$ be a basis of $\mathcal O(C^+)_d$. Let $\{\phi_i\}$ denote the dual basis with respect to the pairing on $\mathcal O(C^+)_d$ induced by $B^+$. Define the opposite (``raising-then-lowering") Shapovalov operator on $C^+$ by
\begin{equation}\label{OppositeShapovalovConePlusDef}
\mathfrak B_d^{+, \op}
:=
\sum_i \mathcal F_+(\phi_i)\psi_i.
\end{equation}
\noindent Then $\mathfrak B_d^{+,\op}$ is a polynomial in the Euler operator $E_+$, given by
\begin{equation}\label{OppositeShapovalovConePlusFormula}
\mathfrak B_d^{+,\op}
=
\prod_{j=1}^{d}(E_+ + 2k+j-1)
\prod_{j=1}^{d}(E_+ + k+j).
\end{equation}
\noindent In particular, on the $\lambda_0$-weight space, where $\lambda_0=1-k$, the operator $\mathfrak B_d^{+,\op}$ acts by the nonzero scalar
\begin{equation}\label{OppositeShapovalovConePlusScalar}
\mathfrak B_d^{+,\op}(\lambda_0)
=
\prod_{j=1}^{d}(k+j)
\prod_{j=1}^{d}(j+1).
\end{equation}
\end{lem}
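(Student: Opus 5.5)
The plan is to reduce Lemma~\ref{OppositeShapovalovScalarConePlusLemma} to the Shapovalov computation already carried out for $C$, applied now to the quadratic space $(V^+,Q^+)$. This space has dimension $2(k+1)$, so the parameter $k$ is replaced by $k+1$ throughout. All the structure used in Section~\ref{KazhLaumGlueSect} carries over verbatim to $C^+$: the map $\rho_+:\mathcal U(\mathfrak o(V^{++}))\to D_{C^+}$, the involution $\mathcal F_+$, the two gradings, Proposition~\ref{degwt0}, and Proposition~\ref{Shapovalovdet}.

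First, note that the defining element $\sum_i \phi_i\otimes\psi_i$ is the (up to scalar unique) $\O(Q^+)$-invariant vector of $P^d_+\otimes P^d_+$. Since $\mathcal F_+$ preserves weights and negates degree, $\mathfrak B_d^{+,\op}$ lies in $D_{C^+}(0)^0$, and hence is a polynomial in $E_+$ by Proposition~\ref{degwt0}. Next, I would relate it to the lowering-then-raising operator $\mathfrak B_d^+=\sum_i\psi_i\mathcal F_+(\phi_i)$. Applying the automorphism $\mathcal F_+$ to $\mathfrak B_d^+$ gives $\sum_i\mathcal F_+(\psi_i)\phi_i$, using $\mathcal F_+^2=\Id$. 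Because the invariant tensor is symmetric under swapping factors, this equals $\mathfrak B_d^{+,\op}$; alternatively, one can simply relabel the dual bases. The normalization of the pairing must match the one used in Proposition~\ref{Shapovalovdet}, namely $C_d\propto B^+(x,y)^d$ with the same scalar. I would check this scalar by the same test vector computation, so that $\mathfrak B_d^{+,\op}=\mathcal F_+(\mathfrak B^+_d)$ exactly.

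Then Proposition~\ref{Shapovalovdet} with $k\mapsto k+1$ gives
\[
\mathfrak B_d^+=\prod_{j=1}^d(E_+-j+1)\prod_{j=1}^d(E_++k-j).
\]
From Proposition~\ref{QuadFourierProp} with $k\mapsto k+1$ we have $\mathcal F_+(E_+)=-E_+-2k$. Substituting yields the factors $-(E_++2k+j-1)$ and $-(E_++k+j)$, and the $2d$ signs cancel. This gives \eqref{OppositeShapovalovConePlusFormula}; as a check, it agrees with the second formula of Proposition~\ref{Shapovalovdet} under $k\mapsto k+1$. Evaluating at $E_+=\lambda_0=1-k$ gives $\prod_j(k+j)\prod_j(j+1)$, which is nonzero in characteristic $0$.

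The main obstacle is to justify that ``$\mathfrak B_d^{+,\op}$ acts on the $\lambda_0$-weight space'' makes sense, since $\lambda_0$ is negative and $E_+$-weights on $\kappa[C^+]$ are nonnegative. The polynomial identity in $D_{C^+}$ is proved on $\kappa[C^+]$ by the test vector $x_0^r$ together with Zariski density in $r$, exactly as in Proposition~\ref{degwt0}. Once established, it holds as an identity of operators in $D_{C^+}$. So on any $D_{C^+}$-module, and in particular on $\Gamma(C^{+o},\widetilde{\mathcal M})$ via restriction of $D_{C^+}$ to $C^{+o}$, the operator acts on the $E_+$-eigenspace of weight $\lambda_0$ by the scalar obtained by substituting $\lambda_0$. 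The one care point is the sign and ordering convention of $\mathcal F_+$ and the pairing; I would fix these by recomputing on $x_0^r$ directly if the symmetry argument proves ambiguous.
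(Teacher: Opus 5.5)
Your proposal is correct and follows the paper's own proof: apply Proposition~\ref{Shapovalovdet} to $C^+$ with $k\mapsto k+1$, use $\mathcal F_+(E_+)=-E_+-2k$ to obtain $\mathfrak B_d^{+,\op}=\mathcal F_+(\mathfrak B_d^+)$ as the stated product, and substitute $\lambda_0=1-k$. Your additional remarks fill in points the paper leaves implicit: the symmetry of the invariant tensor, which justifies $\mathfrak B_d^{+,\op}=\mathcal F_+(\mathfrak B_d^+)$; the normalization of the pairing; and why an identity in $D_{C^+}$ can be evaluated on the negative-weight eigenspace of modules on $C^{+o}$.
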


\begin{proof}
By Proposition~\ref{Shapovalovdet}, applied to the larger cone $C^+\subset V^+$, the usual degree-$d$ Shapovalov operator is
\begin{equation}
\mathfrak B_d^+
=
\sum_i \psi_i\mathcal F_+(\phi_i)
=
\prod_{j=1}^{d}(E_+-j+1)
\prod_{j=1}^{d}(E_+ + k-j).
\end{equation}
\noindent Here the second product has $E_+ + k-j$ rather than $E_+ + k-j-1$ because $\dim V^+=2k+2$, so the parameter $k$ in Proposition~\ref{Shapovalovdet} is replaced by $k+1$.

By Proposition~\ref{QuadFourierProp}, again applied to $C^+$, we have
\begin{equation}
\mathcal F_+(E_+)=-E_+-2k.
\end{equation}
\noindent Applying $\mathcal F_+$ to the preceding formula gives
\begin{equation}
\mathfrak B_d^{+,\op}
=
\mathcal F_+(\mathfrak B_d^+)
=
\prod_{j=1}^{d}(E_+ + 2k+j-1)
\prod_{j=1}^{d}(E_+ + k+j).
\end{equation}
\noindent Finally, substituting $\lambda_0=1-k$ gives
\begin{equation}
\mathfrak B_d^{+,\op}(\lambda_0)
=
\prod_{j=1}^{d}(k+j)
\prod_{j=1}^{d}(j+1),
\end{equation}
\noindent which is nonzero because $\kappa$ has characteristic $0$.
\end{proof}

\begin{prop}\label{GlobalSectionsExactnessProp}
The global sections functor
\begin{equation}
\Gamma: \mathcal D_{\mathcal L}\textbf{-mod}^{\,\mathrm{coh}}(G/P)
\longrightarrow
\mathcal{U}(\g)\textbf{-mod}
\end{equation}
\noindent is exact.
\end{prop}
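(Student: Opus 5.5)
Since $\Gamma$ is left exact, it suffices to show that for every surjection $q:\mathcal N\twoheadrightarrow\mathcal M$ of coherent $\mathcal D_{\mathcal L}$-modules, the map $\Gamma(q):\Gamma(G/P,\mathcal N)\to\Gamma(G/P,\mathcal M)$ is surjective. Exactness in the middle and on the left then follows from the long exact sequence, since left exactness of $\Gamma$ is automatic. We will follow the outline given before Lemma~\ref{WeightSpaceGlobalSectionsLemma}, working on the punctured cone $C^{+o}$ with $\widetilde{\mathcal N}=\pi_+^*\mathcal N$ and $\widetilde{\mathcal M}=\pi_+^*\mathcal M$. These are monodromic $\mathcal D_{C^{+o}}$-modules. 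Since $C^+$ is normal with vertex of codimension $\geq 2$, the algebra $D_{C^+}$ acts on their global sections through $j_*$, and $\widetilde q:\widetilde{\mathcal N}\to\widetilde{\mathcal M}$ is $D_{C^+}$-linear and preserves $E_+$-weights. By Lemma~\ref{WeightSpaceGlobalSectionsLemma}, the task is to lift a given $m\in\Gamma(C^{+o},\widetilde{\mathcal M})_{\lambda_0}$ to the $\lambda_0$-weight space of $\widetilde{\mathcal N}$.

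First, fix $m$ and apply Lemma~\ref{HighTwistLiftingLemma}. This gives some $d\gg0$ such that, for each basis element $\psi_i$ of $\mathcal O(C^+)_d$, the section $\psi_i m$ lifts to some $n_i\in\Gamma(C^{+o},\widetilde{\mathcal N})_{\lambda_0+d}$. Next, let $\phi_i$ be the dual basis as in Lemma~\ref{OppositeShapovalovConePlusLemma}. Each $\mathcal F_+(\phi_i)\in D_{C^+}$ has $E_+$-degree $-d$ by \eqref{Fourdeg} applied to $C^+$. Hence $n:=\sum_i\mathcal F_+(\phi_i)n_i$ lies in $\Gamma(C^{+o},\widetilde{\mathcal N})_{\lambda_0}=\Gamma(G/P,\mathcal N)$. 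Since $\widetilde q$ is $D_{C^+}$-linear, we get
\[
\widetilde q(n)=\sum_i\mathcal F_+(\phi_i)\psi_i\,m=\mathfrak B_d^{+,\op}\,m=\mathfrak B_d^{+,\op}(\lambda_0)\,m .
\]
The last equality uses Lemma~\ref{OppositeShapovalovConePlusLemma}: $\mathfrak B_d^{+,\op}$ is a polynomial in $E_+$, and $m$ has weight $\lambda_0$. The scalar $\mathfrak B_d^{+,\op}(\lambda_0)$ is nonzero by \eqref{OppositeShapovalovConePlusScalar}, so $n/\mathfrak B_d^{+,\op}(\lambda_0)$ is the desired lift. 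Finally, $\Gamma$ lands in $\mathcal U(\g)$-modules via the infinitesimal action (Proposition~\ref{UGSurjectsOntoGlobalTDOLemma}), and $\Gamma(q)$ is $\mathcal U(\g)$-linear. Exactness as $\kappa$-vector spaces therefore suffices.

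The main obstacle is justifying that $D_{C^+}$, and in particular $\mathcal F_+(\phi_i)$, acts on $\Gamma(C^{+o},\widetilde{\mathcal N})$ compatibly with $\widetilde q$ and with the weight grading. We cannot simply use the $\mathcal D_{\mathcal L}$-structure for this, because $\mathcal F_+(\phi_i)$ is not a differential operator on $G/P$. The approach we will try first is to note that $\widetilde{\mathcal N}$ is a coherent $\mathcal D_{C^{+o}}$-module. Then $\Gamma(C^{+o},\widetilde{\mathcal N})=\Gamma(C^+,j_*\widetilde{\mathcal N})$ is a module over $\Gamma(C^+,j_*\mathcal D_{C^{+o}})\supseteq D_{C^+}$, where the inclusion holds because a Grothendieck operator on the normal cone $C^+$ restricts to $C^{+o}$ and is determined there. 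Naturality of $j_*$ then gives the $D_{C^+}$-linearity of $\widetilde q$. The weight statements follow from $[E_+,\xi]=\deg(\xi)\,\xi$. A secondary point is the identification $\Gamma(G/P,\mathcal N(d))=\Gamma(C^{+o},\widetilde{\mathcal N})_{\lambda_0+d}$, which is built into Lemma~\ref{HighTwistLiftingLemma}. Beyond these checks, only the nonvanishing of the Shapovalov scalar is needed; this uses the fact that $\lambda_0=1-k$ avoids the roots $\{-2k-j+1\}\cup\{-k-j\}$, $1\le j\le d$.
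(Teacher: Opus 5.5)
Your proposal is correct and follows the paper's proof step for step: you lift $\psi_i m$ using Lemma~\ref{HighTwistLiftingLemma}, lower with the weight $-d$ operators $\mathcal F_+(\phi_i)$, and divide by the nonzero opposite-Shapovalov scalar at $\lambda_0=1-k$ from Lemma~\ref{OppositeShapovalovConePlusLemma}. Your extra check, that $D_{C^+}\cong\Gamma(C^{+o},\mathcal D_{C^{+o}})$ acts on sections over $C^{+o}$ compatibly with $\widetilde q$ because $C^+$ is normal, spells out a point the paper only asserts.
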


\begin{proof}
Since $\Gamma$ is left exact, it is enough to prove that it sends surjections to surjections. Let
\begin{equation}
q:\mathcal N\twoheadrightarrow \mathcal M
\end{equation}
\noindent be a surjection of coherent $\mathcal D_{\mathcal L}$-modules, and let $m\in \Gamma(G/P,\mathcal M)$. By Lemma~\ref{WeightSpaceGlobalSectionsLemma}, we may regard $m$ as an element $m\in \Gamma(C^{+o},\widetilde{\mathcal M})_{\lambda_0}$.
\noindent Choose $d\gg 0$ so that the conclusion of Lemma~\ref{HighTwistLiftingLemma} holds for this section $m$. Let $\{\psi_i\}$ be a basis of $\mathcal O(C^+)_d$, and let $\{\phi_i\}$ be the dual basis with respect to the pairing induced by $B^+$.

For each $i$, the raised section $\psi_i m\in \Gamma(C^{+o},\widetilde{\mathcal M})_{\lambda_0+d}$ lifts to a section
\begin{equation}
n_i\in \Gamma(C^{+o},\widetilde{\mathcal N})_{\lambda_0+d}.
\end{equation}
\noindent Since $\mathcal F_+(\phi_i)$ lowers $E_+$-weight by $d$, the section
\begin{equation}
n:=\sum_i\mathcal F_+(\phi_i)n_i
\end{equation}
\noindent lies in $\Gamma(C^{+o},\widetilde{\mathcal N})_{\lambda_0}$, and hence, again by Lemma~\ref{WeightSpaceGlobalSectionsLemma}, corresponds to an element of $\Gamma(G/P,\mathcal N)$.

We compute the image of $n$ under $q$. Since $q$ is a morphism of $\mathcal D_{\mathcal L}$-modules, its pullback commutes with the action of differential operators on $C^{+o}$. Therefore
\begin{equation}
q(n)
=
\sum_i \mathcal F_+(\phi_i)q(n_i)
=
\sum_i \mathcal F_+(\phi_i)\psi_i m
=
\mathfrak B_d^{+,\op}m.
\end{equation}
\noindent By Lemma~\ref{OppositeShapovalovScalarConePlusLemma}, the operator $\mathfrak B_d^{+,\op}$ acts on the $\lambda_0$-weight space by the nonzero scalar $\mathfrak B_d^{+,\op}(\lambda_0)
=
\prod_{j=1}^{d}(k+j)
\prod_{j=1}^{d}(j+1).$ Hence
\begin{equation}
\frac{1}{\mathfrak B_d^{+,\op}(\lambda_0)}\,n
\end{equation}
\noindent is a global section of $\mathcal N$ mapping to $m$. Thus $\Gamma(q):\Gamma(G/P,\mathcal N)\to\Gamma(G/P,\mathcal M)$ is surjective. This proves exactness.
\end{proof}

Now we have the final payoff:

\begin{proof}[Proof of Theorem~\ref{SecondEquiv}]
The existence and uniqueness of $\mathcal J_\Delta$, the $\mathcal D_{\mathcal L}$-$D_C$ bimodule structure on $\mathcal H$, and the calculation
\begin{equation}
\Gamma(G/P,\mathcal H)=\Gamma_\Delta(\mathcal H)\cong D_C
\end{equation}
\noindent were proved in Theorem~\ref{HarmonicSummary}. By Proposition~\ref{GlobalSectionsExactnessProp}, the ordinary global sections functor is exact on coherent $\mathcal D_{\mathcal L}$-modules.

We first show that ordinary global sections are harmonic on $\mathscr H$. Let $\mathcal M\in\mathscr H$. By definition, $\mathcal M$ admits a finite $(\mathcal H,D_C)$-presentation
\begin{equation}
\mathcal H^a \longrightarrow \mathcal H^b \longrightarrow \mathcal M \longrightarrow 0.
\end{equation}
\noindent Applying the exact functor $\Gamma(G/P,-)$, we see that every global section of $\mathcal M$ is the image of a global section of $\mathcal H^b$. Since
\begin{equation}
\Gamma(G/P,\mathcal H)=\Gamma_\Delta(\mathcal H),
\end{equation}
\noindent every global section of $\mathcal H^b$ is harmonic, and hence its image in $\mathcal M$ is harmonic. Therefore
\begin{equation}
\Gamma(G/P,\mathcal M)=\Gamma_\Delta(\mathcal M)
\end{equation}
\noindent for every $\mathcal M\in\mathscr H$. In particular, $\Gamma(G/P,\mathcal M)$ carries the induced left $D_C$-module structure described in the statement of the theorem.

Now define
\begin{equation}
F:D_C\textbf{-mod}^{\,\mathrm{fg}}\longrightarrow \mathscr H,
\qquad
N\longmapsto \mathcal H\otimes_{D_C}N.
\end{equation}
\noindent This functor is well-defined. Indeed, by Lemma~\ref{NoetherianFinitePresentationLemma}, every finitely generated left $D_C$-module $N$ admits a finite presentation
\begin{equation}
D_C^a \longrightarrow D_C^b \longrightarrow N \longrightarrow 0.
\end{equation}
\noindent Tensoring with the right $D_C$-module $\mathcal H$ gives an exact sequence
\begin{equation}
\mathcal H^a \longrightarrow \mathcal H^b \longrightarrow \mathcal H\otimes_{D_C}N \longrightarrow 0,
\end{equation}
\noindent so $\mathcal H\otimes_{D_C}N$ admits a finite $(\mathcal H,D_C)$-presentation, hence lies in $\mathscr H$.

We next compute global sections of $F(N)$. Choose a finite presentation
\begin{equation}
D_C^a \longrightarrow D_C^b \longrightarrow N \longrightarrow 0.
\end{equation}
\noindent By construction, $F(N)$ has presentation
\begin{equation}
\mathcal H^a \longrightarrow \mathcal H^b \longrightarrow F(N) \longrightarrow 0.
\end{equation}
\noindent Applying exact global sections and using $\Gamma(G/P,\mathcal H)\cong D_C$, we obtain
\begin{equation}
\Gamma(G/P,F(N))
\cong
\coker\left(D_C^a\longrightarrow D_C^b\right)
\cong
N.
\end{equation}
\noindent Thus $\Gamma\circ F\cong \Id$ on $D_C\textbf{-mod}^{\,\mathrm{fg}}$.

Conversely, let $\mathcal M\in\mathscr H$, and choose a finite $(\mathcal H,D_C)$-presentation
\begin{equation}
\mathcal H^a \xrightarrow{A} \mathcal H^b \longrightarrow \mathcal M \longrightarrow 0,
\end{equation}
\noindent where $A$ is induced by a homomorphism
\begin{equation}
D_C^a \xrightarrow{A} D_C^b
\end{equation}
\noindent of finite free left $D_C$-modules. Applying exact global sections gives
\begin{equation}
\Gamma(G/P,\mathcal M)
\cong
\coker\left(D_C^a\xrightarrow{A}D_C^b\right).
\end{equation}
\noindent Tensoring this cokernel with $\mathcal H$ over $D_C$, and using right exactness of tensor product, gives
\begin{equation}
\mathcal H\otimes_{D_C}\Gamma(G/P,\mathcal M)
\cong
\coker\left(\mathcal H^a\xrightarrow{A}\mathcal H^b\right)
\cong
\mathcal M.
\end{equation}
\noindent Hence $F\circ\Gamma\cong \Id$ on $\mathscr H$.

It follows that $F$ and $\Gamma$ are quasi-inverse equivalences. Since $D_C$ is left Noetherian, the category $D_C\textbf{-mod}^{\,\mathrm{fg}}$ is abelian; therefore $\mathscr H$ is abelian as well. The functor $F(N)=\mathcal H\otimes_{D_C}N$ is precisely the harmonic transform, and its inverse is ordinary global sections, which agree with global harmonic sections on $\mathscr H$. This proves the theorem.
\end{proof}

\begin{rem}
There is a small point here which is worth making explicit. Since $D_C$ is left Noetherian, the category $D_C\textbf{-mod}^{\,\mathrm{fg}}$ is abelian: cokernels of maps between finitely generated modules are finitely generated automatically, while kernels are finitely generated because submodules of finitely generated modules over a left Noetherian ring are finitely generated. 

By contrast, it is not immediate from the definition that $\mathscr H$ is abelian. Indeed, $\mathscr H$ was defined as the full subcategory of coherent $\mathcal D_{\mathcal L}$-modules admitting finite $(\mathcal H,D_C)$-presentations, and it is not a priori clear that kernels and cokernels computed in the ambient category of coherent $\mathcal D_{\mathcal L}$-modules again admit such presentations.
\end{rem}

\begin{cor}\label{HarmonicCategoryAbelianCor}
The category $\mathscr H$ is abelian.
\end{cor}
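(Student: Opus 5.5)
The plan is to transport abelianness across the equivalence of Theorem~\ref{SecondEquiv}, rather than to verify directly that kernels and cokernels computed among coherent $\mathcal D_{\mathcal L}$-modules admit finite $(\mathcal H,D_C)$-presentations. The proof of Theorem~\ref{SecondEquiv} provides quasi-inverse functors $F=\mathcal H\otimes_{D_C}(-)$ and $\Gamma$ between $D_C\textbf{-mod}^{\,\mathrm{fg}}$ and the full subcategory $\mathscr H$. Since $D_C$ is left Noetherian by Theorem~\ref{rhoSurjective}, $D_C\textbf{-mod}^{\,\mathrm{fg}}$ is abelian. Abelianness is invariant under equivalence of categories, because it is a property of the category itself: additivity, and the existence of kernels and cokernels with the canonical coimage-to-image map an isomorphism. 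Hence $\mathscr H$ is abelian.

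I would also make explicit that $\mathscr H$ is an \emph{exact} abelian subcategory of $\mathcal D_{\mathcal L}\textbf{-mod}^{\mathrm{coh}}$. In other words, kernels and cokernels in $\mathscr H$ coincide with the ambient ones. The steps are as follows.

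First, for a morphism $f:\mathcal M\to\mathcal M'$ in $\mathscr H$, apply $\Gamma$ to obtain $\Gamma(f)$ between finitely generated $D_C$-modules. Its cokernel $N$ and kernel $N'$ are finitely generated.

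Second, use right exactness of $\mathcal H\otimes_{D_C}-$ together with $F\circ\Gamma\cong\Id$ to identify $F(N)$ with the ambient cokernel of $f$. Thus the ambient cokernel lies in $\mathscr H$.

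Third, for the kernel $\mathcal K$ computed among coherent $\mathcal D_{\mathcal L}$-modules, use exactness of $\Gamma$ (Proposition~\ref{GlobalSectionsExactnessProp}) to get $\Gamma(\mathcal K)=N'$. There is then a natural map $F(N')\to\mathcal K$, and one wants it to be an isomorphism.

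The main obstacle is this last point. $\Gamma$ is not conservative (indeed $\Gamma(\mathcal L)=0$), so equality of global sections alone does not force $F(N')\to\mathcal K$ to be an isomorphism. I would first try to show that $\mathcal K$ is generated by its global sections. Choose a presentation of $\mathcal M$; the image of $\mathcal K$ in $\mathcal H^b$ then has singular support in $G\times^P C$ and is annihilated locally by $\mathcal J_\Delta$-type relations. If this fails, I would fall back on the purely categorical statement, which follows formally from Theorem~\ref{SecondEquiv} and is all that the corollary asserts.
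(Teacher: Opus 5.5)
Your first paragraph is exactly the paper's proof: Theorem~\ref{SecondEquiv} makes $\mathscr H$ equivalent to $D_C\textbf{-mod}^{\,\mathrm{fg}}$, this category is abelian because $D_C$ is left Noetherian (Theorem~\ref{rhoSurjective}), and abelianness transfers across an equivalence. The extra claim that $\mathscr H$ is an exact subcategory of coherent $\mathcal D_{\mathcal L}$-modules is not needed, and you have not established it: your cokernel step is fine, but the kernel step remains open, as the paper's remark just before the corollary also acknowledges, so the corollary rests on the transport argument alone.
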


\begin{proof}
By Theorem~\ref{SecondEquiv}, the category $\mathscr H$ is equivalent to $D_C\textbf{-mod}^{\,\mathrm{fg}}$. Since $D_C$ is left Noetherian, $D_C\textbf{-mod}^{\,\mathrm{fg}}$ is abelian. Therefore $\mathscr H$ is abelian.
\end{proof}

\section{Appendix: Some Examples}\label{examplesSect}

In this section, we calculate some explicit examples of the quadric Fourier transform for $D_C$-modules, and their associated harmonic transforms on $G/P$. As above, we let $\mathcal{F}$ denote the quadric Fourier transform and the non-calligraphic $F$ denote the harmonic transform $D_C\textbf{-mod}^{\,\mathrm{fg}}\to \mathscr{H}$ from Theorem \ref{SecondEquiv}.

\subsection{\texorpdfstring{The Algebra $D_C$}{The Algebra DC}} Manifestly this algebra is isomorphic to its quadric Fourier transform. The harmonic transform $F(D_C)$ is, by construction, precisely the harmonic sheaf $\mathcal H$.

\subsection{\texorpdfstring{The Structure Sheaf $\kappa[C]$}{The Structure Sheaf kappa[C]}}\label{ExStructureSheafCone}
As a left $D_C$-module, $\kappa[C]$ is the natural structure sheaf of the cone, generated by $1$. Its quadric Fourier transform, as discussed in Remark \ref{DeltaMassRem}, is the apex delta distribution:
\begin{equation}
\mathcal{F}(\kappa[C])\cong \delta_0.
\end{equation}

The corresponding harmonic transform is naturally a coherent $\mathcal{D}_{\mathcal L}$-submodule of the lower-$*$ extension from the opposite big Bruhat cell. On the standard big cell $V$, one has
\begin{equation}
F(\kappa[C])|_V \cong D_V\cdot \bigl((-Q(v))^{-(k-1)}\sigma_0\bigr)\subset \kappa[V][1/Q]\sigma_0.
\end{equation}

Indeed, $1\in \kappa[C]$ is annihilated by the fundamental operators $\mathfrak X_i,\mathfrak Y_i$, while their inverse images under $\widehat{\tau}^{-1}$ are the conformal boost operators
$Q(v)\partial_{y_{k+1-i}}-x_i(E+k-1)$ and $Q(v)\partial_{x_{k+1-i}}-y_i(E+k-1)$, which annihilate $(-Q(v))^{-(k-1)}\sigma_0$. On the overlap $V\cap w_0V$, this section is exactly the trivializing section $\sigma_{w_0}$ of $\mathcal L$ on $w_0V$, so there is a natural monomorphism
\begin{equation}
F(\kappa[C])\hookrightarrow j_{w_0,*}\bigl(\mathcal O_{w_0V}\sigma_{w_0}\bigr).
\end{equation}

This inclusion is proper. Indeed, we have:

\begin{prop}\label{structureextprop}
Let $\delta_{[1:0:0]}$ denote the Dirac mass at the point $[1:0:0]\in V\subset G/P$. Then there is a short exact sequence
\begin{equation}
0\to F(\kappa[C])\to j_{w_0,*}\bigl(\mathcal O_{w_0V}\sigma_{w_0}\bigr)\to \delta_{[1:0:0]}\to 0.
\end{equation}
\end{prop}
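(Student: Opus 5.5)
The plan is to compute both sides chart by chart on the cover $G/P\setminus \P(C_V)=V\cup w_0V$, show that the quotient is supported at the single point $[1:0:0]$, and then recognize it as the Dirac mass. The natural monomorphism $F(\kappa[C])\hookrightarrow j_{w_0,*}(\mathcal O_{w_0V}\sigma_{w_0})$ is the one already described before the statement. Write $\mathcal Q$ for its cokernel; it is a coherent, indeed holonomic, $\mathcal D_{\mathcal L}$-module.

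\textbf{Step 1: $\mathcal Q$ vanishes on $w_0V$.} On $w_0V$ the target restricts to $\mathcal O_{w_0V}\sigma_{w_0}$. The generator $(-Q)^{-(k-1)}\sigma_0=\sigma_{w_0}$ of $F(\kappa[C])$ is a nowhere-vanishing regular section there, so $\mathcal D_{\mathcal L}\cdot\sigma_{w_0}$ already contains $\mathcal O_{w_0V}\sigma_{w_0}$. Equivalently, after Kelvin conjugation, $1$ generates $\kappa[V]$ as a $D_V$-module. Hence $\mathcal Q|_{w_0V}=0$.

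\textbf{Step 2: $\mathcal Q$ is supported at the origin of $V$.} On $V$ the target is $\kappa[V][1/Q]\sigma_0$, and the inclusion reads
\[
D_V\cdot Q^{1-k}\subset \kappa[V][Q^{-1}].
\]
Away from the origin $\{v=0\}$ this is an equality. The element $Q^{1-k}$ is harmonic, and the $\mathfrak{sl}_2$-relation \eqref{DeltaIntegerPowerQ} shows that $\Delta$ lowers pole order: $\Delta Q^{-j}$ is a nonzero multiple of $Q^{-j-1}$ for $j\ge k$. Combined with $\partial_u Q^{-j}=-j\,dQ_v(u)\,Q^{-j-1}$, this shows that off the origin all $Q^{-N}$ are reachable. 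Near a point with some coordinate nonzero one can divide by that coordinate, using the relation $\sum x\,\partial(\cdot)$ through $E$.

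I would organize this via the Bernstein--Sato polynomial of $Q$,
\[
b(s)=(s+1)(s+k).
\]
It gives $D_VQ^{-N}=D_VQ^{-N+1}$ whenever $b(-N)\neq 0$, so that
\[
\kappa[V][1/Q]=D_VQ^{-k},
\]
and the only obstruction is the root $s=-k$. Thus $\mathcal Q|_V$ is generated by the class of $Q^{-k}$. Moreover $\partial_{u}Q^{1-k}=(1-k)\,dQ_v(u)\,Q^{-k}$, and $dQ_v(u)$ ranges over all linear forms. Together with $Q\cdot Q^{-k}=Q^{1-k}$, this shows that the class of $Q^{-k}$ is killed by every coordinate function, i.e.\ by the maximal ideal of $0\in V$. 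By Kashiwara's lemma $\mathcal Q$ is then a quotient of $\delta_{[1:0:0]}$. Since that module is simple, $\mathcal Q$ is either $0$ or $\delta_{[1:0:0]}$.

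\textbf{Step 3 (main obstacle): the inclusion is proper, i.e.\ $Q^{-k}\notin D_VQ^{1-k}$.} My first attempt is to produce a nonzero $D_V$-linear functional out of the quotient. Equivalently, I would show that the delta module has a nonzero map from $\kappa[V][1/Q]$ that kills $Q^{1-k}$; the natural candidate is a residue or Fourier pairing. A cleaner route is through the Fourier side. Under $\tau$ and the $\delta_C$-correction, $D_VQ^{1-k}$ corresponds to the $D_{V^*}$-module generated by the distribution $\delta_C$, namely $\kappa[V^*]\delta_C\cong\kappa[C]$. This is supported on $C$ and has no origin-supported subquotient (compare Remark \ref{DeltaMassRem}, where $\kappa[C]$ and $\delta_0$ are exchanged). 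On the other hand, $\kappa[V][1/Q]$ has composition factors whose Fourier transforms include a module supported at $0\in V^*$, coming from the Fourier transform of the constant function. These two facts cannot both hold if the submodule were everything, which contradicts equality.

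As a fallback I would use the standard fact that $-k$ being a root of $b$ with $Q^{-k}$ not reachable is detected by the nonvanishing of the residue of $\int Q^s\varphi$ at $s=-k$, which is a multiple of $\varphi(0)$; this gives exactly the nonzero functional onto $\delta_0$. Once properness is established, $\mathcal Q\cong\delta_{[1:0:0]}$ and the short exact sequence follows.
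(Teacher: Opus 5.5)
Your Steps 1 and 2 are sound and agree with the paper. In fact, your observation that $\Delta Q^{-j}=j(j+1-k)Q^{-j-1}\neq0$ for $j\geq k$ justifies the paper's unproved remark that the quotient is generated by the class of $Q^{-k}$.

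The gap is Step 3, which is the heart of the statement. Your Fourier argument conflates two different objects.
\begin{itemize}
\item $\kappa[V^*]\delta_C\cong\kappa[C]$ is not a $D_{V^*}$-submodule of $H^1_{(Q^*)}(R)$. For instance, $\partial_{x_1}\delta_C=[-y_k/(Q^*)^2]$ has a genuine double pole. It is stable under $R$ and $\rho_{\mathrm{amb}}(\g)$, but not under $D_{V^*}$. It models the $D_C$-module $\kappa[C]\cong\Gamma(F(\kappa[C]))$, not the $D_V$-module $F(\kappa[C])|_V=D_VQ^{1-k}$.
\item The Fourier transform $\tau_*(D_VQ^{1-k})$ (same space, with $D_{V^*}$ acting through $\tau^{-1}$) is also not isomorphic to $D_{V^*}\delta_C$. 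Because $1=Q^{k-1}\cdot Q^{1-k}$, the module $D_VQ^{1-k}$ contains $\kappa[V]$. The Fourier transform of $\kappa[V]$ is the $\delta$-module at $0\in V^*$. By contrast, $H^1_{(Q^*)}(R)$ has no nonzero submodule supported at the origin, since $\operatorname{depth}R\geq 2$.
\item This also destroys the contradiction you are aiming for. The composition factor you invoke, the Fourier transform of the constant function, lies in $D_VQ^{1-k}$ as well as in $\kappa[V][1/Q]$. The factor genuinely missing from $D_VQ^{1-k}$ is $\delta_0$ on $V$, whose Fourier transform is $\mathcal O_{V^*}$, with full support.
\end{itemize}
Your fallback is not a proof either. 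The vanishing $b(-k)=0$ only removes the sufficient condition for $Q^{-k}\in D_VQ^{1-k}$; it gives no obstruction by itself. The residue of $\int Q^s\varphi$ at $s=-k$ is an analytic computation that you neither carry out nor transfer to an arbitrary field $\kappa$.

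What is missing is the contrast between the harmonicity of $Q^{1-k}$ and the non-harmonicity of $Q^{-k}$. The paper argues globally:
\begin{itemize}
\item $Q^{-k}\sigma_0=\pm Q(w)\sigma_{w_0}$ is regular on $w_0V$, hence a global section of $j_{w_0,*}(\mathcal O_{w_0V}\sigma_{w_0})$.
\item But $\Delta Q^{-k}=kQ^{-k-1}\neq0$.
\item By Theorem~\ref{SecondEquiv}, all global sections of objects of $\mathscr H$ are harmonic, so $Q^{-k}\sigma_0$ cannot lie in $F(\kappa[C])$.
\end{itemize}
There is also a purely local version, which is the correct form of your Fourier intuition:
\begin{itemize}
\item Since $\Delta Q^{1-k}=0$, one has $\Delta^N(AQ^{1-k})=\ad_\Delta^N(A)(Q^{1-k})$ for $A\in D_V$.
\item $\ad_\Delta$ is locally nilpotent on $D_V$ (Proposition~\ref{DeltaLocalNilp}), so $\Delta$ is locally nilpotent on $D_VQ^{1-k}$. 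Equivalently, $\tau_*(D_VQ^{1-k})$ is supported on $C$.
\item On the other hand, $\Delta^NQ^{-k}=\prod_{j=k}^{k+N-1}j(j+1-k)\,Q^{-k-N}\neq0$ for every $N$, so $Q^{-k}\notin D_VQ^{1-k}$.
\end{itemize}

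Separately, $V\cup w_0V$ does not cover $G/P$. The cokernel is a priori supported on $G/P\setminus w_0V=\overline{C_V}$, which contains $\P(C_V)$. You therefore also need an argument there, for example the $P$-equivariance of $F(\kappa[C])$ together with the transitivity of $P$ on $\overline{C_V}\setminus\{[1:0:0]\}$. The paper's proof is equally terse on this point.
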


\begin{proof}
It suffices to work on the chart $V$. Write
\begin{equation}
M:=D_V\cdot Q^{-(k-1)}\subset \kappa[V][1/Q].
\end{equation}

\noindent Then the quotient in question is generated by the class of $Q^{-k}$. We first claim that this class is nonzero. Indeed, $Q^{-k}\sigma_0$ extends across $w_0V$, since on $V\cap w_0V$ it becomes, up to sign, $Q(w)\sigma_{w_0}$; thus it is a global section of $j_{w_0,*}(\mathcal O_{w_0V}\sigma_{w_0})$. On the other hand, $\Delta(Q^{-k})=kQ^{-k-1}\neq 0$, so $Q^{-k}\sigma_0$ is not harmonic. By Theorem~\ref{SecondEquiv}, every global section of $F(\kappa[C])$ is harmonic. Hence $Q^{-k}\notin M$.

Next, the identities
$\partial_{y_{k+1-i}}(Q^{-(k-1)})=-(k-1)x_iQ^{-k}$ and
$\partial_{x_{k+1-i}}(Q^{-(k-1)})=-(k-1)y_iQ^{-k}$ show that
$x_iQ^{-k}$ and $y_iQ^{-k}$ lie in $M$. Therefore the class of $Q^{-k}$ in
$\kappa[V][1/Q]/M$ is annihilated by all $x_i$ and $y_i$. It follows that there is a surjective morphism of left $D_V$-modules
\begin{equation}
D_V/D_V(x_1,\dots,x_k,y_1,\dots,y_k)\twoheadrightarrow \kappa[V][1/Q]/M
\end{equation}

\noindent sending the class of $1$ to the class of $Q^{-k}$.

Finally, the module $D_V/D_V(x_1,\dots,x_k,y_1,\dots,y_k)$ is simple. Thus the quotient is the usual Dirac mass at $0\in V$, equivalently at $[1:0:0]\in G/P$.
\end{proof}

\begin{rem}
The proper inclusion above is the jump at $s=-k$ in the Bernstein--Sato filtration for $Q$. More precisely, for a nondegenerate quadratic form in $2k$ variables one has
\begin{equation}
b_Q(s)=(s+1)(s+k).
\end{equation}

\noindent The root $s=-1$ is the generic divisor jump along $Q=0$, while the root $s=-k$ is precisely the point-supported quotient above.  
\end{rem}

\begin{rem}
There is also an $\ell$-adic analogue of this boundary phenomenon, to be discussed in a forthcoming companion paper.
\end{rem}

\subsection{The Apex Delta Distribution}\label{ExApexDelta}
The apex delta distribution is
\begin{equation}
\delta_0:=D_C/D_C\mathfrak m_0,
\qquad
\mathfrak m_0=(x_1,\dots,x_k,y_1,\dots,y_k)\subset \kappa[C].
\end{equation}

\noindent Its quadric Fourier transform is the structure sheaf of the cone:
\begin{equation}
\mathcal F(\delta_0)\cong \kappa[C].
\end{equation}

The corresponding harmonic transform\footnote{From the physicists' point of view, this object is akin to a ``soft,'' or ``extreme infrared,'' photon state.} is the $w_0$-translate of Example~\ref{ExStructureSheafCone}. In particular, it is naturally a coherent $\mathcal D_{\mathcal L}$-submodule of the lower-$*$ extension of the trivial rank-one twisted $\mathcal D_{\mathcal L}$-module from the standard big Bruhat cell. More precisely, if $j:V\hookrightarrow G/P$ denotes the inclusion, then there is a natural monomorphism
\begin{equation}
F(\delta_0)\hookrightarrow j_*\bigl(\mathcal O_V\sigma_0\bigr).
\end{equation}

On $V$, its restriction is generated by the constant section $\sigma_0$, and hence coincides with $\mathcal O_V\sigma_0$. On the opposite chart $w_0V$, it is the cyclic $D_{w_0V}$-submodule
\begin{equation}
D_{w_0V}\cdot \bigl((-Q(w))^{-(k-1)}\sigma_{w_0}\bigr)\subset \kappa[w][1/Q(w)]\sigma_{w_0}.
\end{equation}

\noindent $F(\delta_0)$ is obtained by gluing $\mathcal O_V\sigma_0$ on $V$ with the above cyclic $D_{w_0V}$-submodule on $w_0V$.

Arguing as in Proposition~\ref{structureextprop}, this inclusion is proper, and one has a short exact sequence
\begin{equation}
0\to F(\delta_0)\to j_*\bigl(\mathcal O_V\sigma_0\bigr)\to \delta_{[0:0:1]}\to 0.
\end{equation}

\noindent Indeed, by Remark~\ref{DeltaMassRem} the quadric Fourier transform exchanges $\delta_0$ and $\kappa[C]$, while by Proposition~\ref{IndActionDef} the functor $F$ transports the $w_0$-action on $D_C$-modules to the corresponding pullback action on the harmonic category $\mathscr H$. Applying $w_0$ to the short exact sequence of Proposition~\ref{structureextprop} therefore yields the above exact sequence, since $w_0$ carries the ``origin" in $V$, $[1:0:0]$, to ``spatial infinity," $[0:0:1]$ (see \eqref{SpiDef}).

\subsection{A Nonzero Dirac Mass}
Let $x\in C\setminus\{0\}$, and let
\begin{equation}
\delta_x:=D_C/D_C\mathfrak m_x,
\end{equation}

\noindent where $\mathfrak m_x\subset \kappa[C]$ is the maximal ideal of the point $x$.

We begin with the quadric Fourier transform $\mathcal F(\delta_x)$. This is again an algebraic $D_C$-module, obtained by transporting the defining ideal of $\delta_x$ across the algebra automorphism $\mathcal F$. Thus, if
\begin{equation}
I_x:=D_C\mathfrak m_x,
\end{equation}

\noindent then $\mathcal F(\delta_x)\cong D_C/\mathcal F(I_x)$. To see this concretely, let us take $x=e_1=(1,0,\dots,0;0,\dots,0)\in C$. Then $\mathfrak m_x$ is generated by 
\begin{equation}
    x_1-1,\; x_i\ (i>1),\; y_j\ (1\le j\le k),
\end{equation}

\noindent so $\mathcal F(\delta_x)$ is the quotient of $D_C$ by the left ideal generated by
\begin{equation}
\mathfrak X_1-1,\qquad \mathfrak X_i\ (i>1),\qquad \mathfrak Y_j\ (1\le j\le k),
\end{equation}

\noindent where $\mathfrak X_i=(E+k-1)\frac{\partial}{\partial y_{k+1-i}}-x_i\Delta, \; \mathfrak Y_i=(E+k-1)\frac{\partial}{\partial x_{k+1-i}}-y_i\Delta$. Thus $\mathcal F(\delta_x)$ is characterized algebraically by this system of differential equations.

Now set
\begin{equation}
t:=B(x,y)=y_k.
\end{equation}

\noindent Since $\delta_x$ is equivariant for the stabilizer $H_x\subset H$ of $x$, so is $\mathcal F(\delta_x)$. On the dense open set $B(x,y)\neq 0$, the generic $H_x$-orbits in $C$ are cut out by the single invariant $t=B(x,y)$. Thus, on this open set, the $H_x$-equivariant local solutions of the transformed system depend only on $t$. Let us write down the differential equation satisfied by $f(t)$. Because $f$ only depends upon $t=y_k$, we have $\Delta f=0,\;
Ef=t f'(t),\;
\frac{\partial f}{\partial y_k}=f'(t)$, so the equation $(\mathfrak X_1-1)f=0$ becomes $\bigl((E+k-1)\partial_{y_k}-1\bigr)f=0$, that is,
\begin{equation}
t f''(t)+(k-1)f'(t)-f(t)=0.
\end{equation}

\noindent In the classical analytic setting, this is a Bessel-type equation. Thus generically the solutions of the $\mathcal{D}$-module $\mathcal F(\delta_x)$ look like Bessel functions in the invariant $B(x,y)$. Kobayashi and Mano explain how the $L^2$ quadric Fourier transform is given by an integral kernel on $C \times C$; the kernel is generically a Bessel distribution pulled back along the bilinear form $B: C \times C \to \A^1$ \cite{Kobayashi:Mano}. We see that the same holds from the $\mathcal{D}$-module perspective.\footnote{There is also an $\ell$-adic analogue of this Bessel kernel, using Kloosterman sheaves; see \cite{slippernormalized}.}

Now consider the harmonic transform $F(\delta_x)$.\footnote{From the physicists' point of view, this is analogous to a ``hard," or ``ultra-violet," photon state.} On the standard big cell $V$, with coordinates $v$, it is the rank-one algebraic connection generated by a section $s_x$ satisfying
\begin{equation}
\partial_{v_i}s_x=-(\partial_{v_i}B(x,v))\,s_x.
\end{equation}

\noindent The ``solution" to this differential equation will be an exponential function
\begin{equation}
s_x=e^{-B(x,v)}\sigma_0.
\end{equation}

\noindent Since $x\in C$, one has $Q(x)=0$, and hence $\Delta\bigl(e^{-B(x,v)}\bigr)=Q(x)e^{-B(x,v)}=0$. Thus $s_x$ is harmonic. We would like to understand the behavior of $s_x$ as it approaches the conformal boundary.

To this end, consider the coordinate $w = -v/Q(v)$ of the opposite Bruhat cell $w_0V$.\footnote{Recall that pulling back by the Weyl involution $w_0$ on the harmonic side corresponds to the quadric Fourier transform $\mathcal F$ on the $D_C$-side. So we are actually examining $\widehat{\tau}^{-1}(\mathcal{F}(\delta_x))$ in this boundary calculation.} In $w$-coordinates the same harmonic section is represented by
\begin{equation}
(-Q(w))^{-(k-1)}\exp\!\left(\frac{B(x,w)}{Q(w)}\right)\sigma_{w_0}.
\end{equation}

\noindent Thus the relevant phase is the meromorphic function $\phi_x(w):=\frac{B(x,w)}{Q(w)}$. We claim that the phase fails to have a genuine pole along the boundary only on the one-dimensional locus
\begin{equation}
\{[0:x:\lambda]\}\subset G/P.
\end{equation}

\noindent Indeed, on the boundary of the opposite cell one has $Q(w)=0$. If $B(x,w)\neq 0$, then $\phi_x(w)=B(x,w)/Q(w)$ has a genuine first-order pole, so the phase is non-stationary in the usual heuristic sense. The only way this pole can disappear is if $B(x,w)$ vanishes along with $Q(w)$.

However, the simultaneous vanishing of $B(x,w)$ and $Q(w)$ is not by itself enough to remove the pole. Let $z\in V$ be a nonzero boundary point in the $w$-coordinates, so that $Q(z)=0$ and $z\neq 0$. In the local ring at $z$, the quotient $B(x,w)/Q(w)$ fails to have a genuine pole only if the numerator is divisible by the local equation $Q(w)$ of the boundary divisor. Since $Q=0$ is smooth at $z$, this can happen only if the differentials $dB(x,-)|_z$ and $dQ|_z$ are proportional. But
\begin{equation}
dB(x,-)|_z=B(x,-),
\qquad
dQ|_z=B(z,-).
\end{equation}
\noindent Hence the pole degenerates precisely when $B(x,-)$ and $B(z,-)$ are proportional, equivalently when $z$ is proportional to $x$.

Now let $w=\lambda x+\varepsilon u$ near the line parallel to $x$. Since $Q(x)=0$ and $B(x,x)=0$, one computes
\begin{equation}
B(x,w)=\varepsilon B(x,u),
\qquad
Q(w)=\lambda\varepsilon B(x,u)+\varepsilon^2Q(u).
\end{equation}

\noindent Hence, whenever $\lambda\neq 0$ and $B(x,u)\neq 0$,
\begin{equation}
\phi_x(w)=\frac{B(x,w)}{Q(w)}
=
\frac{1}{\lambda+\varepsilon\,Q(u)/B(x,u)}
=
\frac1\lambda+O(\varepsilon).
\end{equation}

\noindent Thus along the line $w=\lambda x$, with $\lambda\neq 0$, the phase has a finite limiting value rather than a pole. In particular, this is the unique boundary locus where the meromorphic phase degenerates from pole-type behavior to finite behavior.

Heuristically, then, the harmonic transform of $\delta_x$ ``almost'' extends cleanly to the conformal boundary $C_\infty \cup \P(C)$, and in fact does so away from the line $[0:x:\lambda]$.\footnote{Here $\lambda \in \A^1$; the phase of the extension has a pole at spatial infinity $[0:0:1]$ (see \eqref{SpiDef}).} There is an $\ell$-adic analogue of this phenomenon, which will be developed in a forthcoming companion paper to this work: the intermediate extension of an exponential (i.e., Artin--Schreier) sheaf from the big cell $v$ to $G/P$\footnote{This is a kind of Whittaker sheaf \cite{BezrukavnikovBravermanMirkovic2004}.} is clean away from the line $[0:x:\lambda]$. Restricting to this line, the intermediate extension becomes an exponential-type local system (with perverse cohomological degree shift and Tate twist).

\subsection{\texorpdfstring{The Gelfand-Graev Action and Triality}{The Gelfand-Graev Action and Triality}}

We observe that when $n=6$ and $k=3$, the quadric cone $C$ is isomorphic to the affinization of $\SL_3/U$, where $U$ is a maximal unipotent subgroup. In this case, the quasiclassical Fourier involution on the affine closure
\begin{equation}\label{AffCl3}
\overline{T^*(\SL_3/U)}
\end{equation}

\noindent and the corresponding involution on the differential algebra $D_C$ extend to an action of $S_3$, the full Weyl group of $\SL_3$, with our quadric Fourier transform $\mathcal F$ corresponding to the longest element. This is the Gelfand-Graev action; see \cite{GinzburgRiche2015, GinzburgKazhdan2022, Jia2025AffineClosure}.\footnote{Strictly speaking, we consider only the $\mathbb Z/2$-glued Kazhdan--Laumon category, obtained by gluing two copies of the category of $\mathcal D$-modules on $C^o$ along the longest Weyl group element. By contrast, \cite{BezrukavnikovBravermanPositselskii2002Gluing} consider the full $W=S_3$-glued category of $\mathcal D$-modules on $\SL_3/U$, where $\SL_3/U=\{(u,v)\in \A^3\times \A^3 : u\cdot v=0,\ u\neq 0,\ v\neq 0\}$. This is a proper open subset of $C\setminus\{0\}=C^o$. The argument of loc.\ cit.\ genuinely uses gluing along all six Weyl group translates of $\SL_3/U$: their Shapovalov determinant $P_\lambda$ has common zeroes with $w_0(P_\lambda)$, although no zero is common to every element of the full $S_3$-orbit $w(P_\lambda)$. Our polynomial $\mathfrak{B}_d$ plays an analogous role to the Shapovalov determinant, but is defined using orthogonal rather than $\SL_3$-invariants. As we have seen, $\mathfrak{B}_d$ and $w_0(\mathfrak{B}_d)$ already have no common zeroes; this is the key input in Proposition~\ref{Generation1Prop}, and hence in our proof of the Kazhdan--Laumon gluing equivalence.}

As explained in \cite{Jia2025AffineClosure}, this exceptional $S_3$-symmetry is a manifestation of triality for $\so(8)$. More precisely, the full automorphism group of the Lie algebra (over an algebraically closed field) satisfies $\Aut(\so(8))\cong PSO(8)\rtimes S_3$. Jia constructs a splitting $S_3 \to \Aut(\so(8))$ so that the 6 automorphisms in the image preserve the minimal nilpotent orbit; this triality action agrees with the quasiclassical Gelfand-Graev action.

On the other hand, the adjoint map $\Ad:\O(8)\to \Aut(\so(8))$ has image $PO(8)$, which accounts for only a $\mathbb Z/2$-subgroup of this outer automorphism group. Thus $\O(8)$ captures one outer involution (which we may lift to $w_0 \in \O(8)$), but not the full triality symmetry.

This ``extra" $S_3$-symmetry, as an action by automorphisms of a single affine variety, is genuinely exceptional to the $k=3$, $n=6$, $\SO(8)$ case. For $\so(2m)$ with $m\neq 4$, one has only a $\mathbb Z/2$ of outer automorphisms. 

However, there is a broader phenomenon behind this exceptional symmetry. In joint work with N. Grantcharov \cite{GrantcharovSlipperBKIntertwiners}, we explain that, for general $k$, the quadric cone $C$ fits into a family of related Braverman--Kazhdan spaces attached to the parabolics whose block sizes are the permutations of $(1,1,k-2)$. The quadric cone $C$ itself is the affinization of $\SL_k/[P,P]$ for $P$ the $1 + (k-2) + 1$ parabolic. Loc. cit. constructs ``reflection functors" giving isomorphisms (intertwiners) among the corresponding affinized cotangent bundles, all of which realize the same minimal nilpotent orbit closure of type $D_{k+1}$.

For the middle parabolic $P_{1,k-2,1}$, the transposition exchanging the two outer blocks preserves the ordered block sizes. The corresponding reflection functor is therefore an \textit{automorphism} of the affinized cotangent bundle of the quadric cone itself; under the identification with the minimal orbit closure, this automorphism is exactly the quasiclassical quadric Fourier transform induced by conjugation by the Weyl element $w_0$ in the conformal group $\O(Q^+)$. For the other permutations of $(1,1,k-2)$ one obtains intertwiners between affinized cotangent bundles of different, generally non-isomorphic, ``phantom" cones $\SL_k/[P',P']$, rather than automorphisms of $T^*(\SL_k/[P,P])$.

The $k=3$ case is special because all three block sizes are equal, and so all of the above intertwiners become automorphisms, resulting in the full $S_3$ Gelfand--Graev/triality action. For general $k$, the quadric Fourier transform should instead be viewed as one distinguished member of this larger system of normalized intertwining operators, in the sense of Braverman--Kazhdan theory \cite{BK:normalized, Getz:Hsu:Leslie, Hsu2021Asymptotics, HsuWeylAlgebrasBK}.

\bibliography{refs}{}
\bibliographystyle{alpha}

\end{document}